\documentclass[letterpaper]{amsart}
\usepackage[T1]{fontenc}
\usepackage{amssymb}
\usepackage{amsmath}
\usepackage{amsthm}
\usepackage[dvipsnames]{xcolor}
\usepackage[shortlabels]{enumitem}
\usepackage{braket,comment,soul,cancel}
\usepackage{hyperref}
\usepackage{pdfpages,faktor}
\usepackage{graphicx,mathabx,bbm}
\usepackage{caption}
\usepackage{subcaption}
\usepackage{mathrsfs} 
\usepackage{tikz-cd} 
\usepackage{bm}
\usepackage{enumitem}
\usepackage{mathtools}
\usepackage{pgfplots}
\usepackage{import}
\usepackage{xifthen}
\usepackage{transparent}
\usepackage{xargs} 
\usepackage{mathabx}
\usepackage{array,tabularx}
\newcolumntype{C}[1]{>{\centering\arraybackslash\hspace{0pt}}p{#1}}
\newcolumntype{Y}{>{\centering\arraybackslash}X}

\usepackage[colorinlistoftodos,prependcaption,textsize=tiny]{todonotes}
\newcommandx{\unsure}[2][1=]{\todo[linecolor=red,backgroundcolor=red!25,bordercolor=red,#1]{#2}}
\newcommandx{\change}[2][1=]{\todo[linecolor=blue,backgroundcolor=blue!25,bordercolor=blue,#1]{#2}}
\newcommandx{\info}[2][1=]{\todo[linecolor=OliveGreen,backgroundcolor=OliveGreen!25,bordercolor=OliveGreen,#1]{#2}}
\newcommandx{\improvement}[2][1=]{\todo[linecolor=Plum,backgroundcolor=Plum!25,bordercolor=Plum,#1]{#2}}

\makeatletter
\DeclareFontFamily{U}{tipa}{}
\DeclareFontShape{U}{tipa}{m}{n}{<->tipa10}{}
\newcommand{\arc@char}{{\usefont{U}{tipa}{m}{n}\symbol{62}}}%

\newcommand{\arc}[1]{\mathpalette\arc@arc{#1}}

\newcommand{\arc@arc}[2]{%
  \sbox0{$\m@th#1#2$}%
  \vbox{
    \hbox{\resizebox{\wd0}{\height}{\arc@char}}
    \nointerlineskip
    \box0
  }%
}
\makeatother

\definecolor{mainred}{RGB}{185,40,40}
\definecolor{extgreen}{RGB}{10,120,55}
\definecolor{intblue}{RGB}{20,75,160}
\definecolor{cardfill}{RGB}{255,250,246}
\definecolor{lightgray}{RGB}{248,248,248}
\newcommand{\ext}[1]{\textcolor{black}{#1}}
\newcommand{\inn}[1]{\textcolor{black}{#1}}
\newcommand{\mainthm}[1]{\textcolor{mainred}{\bfseries #1}}

\usetikzlibrary{arrows.meta,positioning,calc}
\definecolor{myblue}{RGB}{22,52,120}
\definecolor{lightblue}{RGB}{245,248,255}
\definecolor{mygreen}{RGB}{31,94,54}
\definecolor{lightgreen}{RGB}{242,248,243}
\definecolor{mypurple}{RGB}{103,66,148}
\definecolor{lightpurple}{RGB}{248,244,252}
\definecolor{mygold}{RGB}{184,127,20}
\definecolor{lightgold}{RGB}{255,249,236}
\definecolor{myred}{RGB}{150,45,52}
\definecolor{lightred}{RGB}{253,244,245}
\definecolor{myteal}{RGB}{17,180,190}
\definecolor{lightteal}{RGB}{214,239,239}

\numberwithin{equation}{section}
\theoremstyle{plain}
\newtheorem{theorem}{Theorem}[section]

\newtheorem{prop}[theorem]{Proposition}
\newtheorem{lem}[theorem]{Lemma}
\newtheorem{cor}[theorem]{Corollary}

\newtheorem*{question*}{Question}

\theoremstyle{definition}
\newtheorem{defn}[theorem]{Definition}
\newtheorem{remark}[theorem]{Remark}

\theoremstyle{plain}
\newtheorem{thmx}{Theorem}

\newcommand{\Hyp}{\mathbb{H}}
\newcommand{\R}{\mathbb{R}}

\newcommand{\C}{\mathbb{C}}

\newcommand{\Proj}{\mathbb{P}}

\newcommand{\Z}{\mathbb{Z}}

\newcommand{\N}{\mathbb{N}}
\newcommand{\D}{\mathbb{D}}

\newcommand{\cA}{\mathcal{A}}
\newcommand{\cB}{\mathcal{B}}
\newcommand{\cC}{\mathcal{C}}
\newcommand{\cE}{\mathcal{E}}
\newcommand{\cF}{\mathcal{F}}
\newcommand{\cG}{\mathcal{G}}
\newcommand{\cH}{\mathcal{H}}

\newcommand{\cP}{\mathcal{P}}

\newcommand{\cK}{\mathcal{K}}
\newcommand{\cL}{\mathcal{L}}
\newcommand{\cD}{\mathcal{D}}
\newcommand{\cT}{\mathcal{T}}

\newcommand{\cW}{\mathcal{W}}

\newcommand{\id}{\mathrm{id}}
\newcommand{\qs}{\mathrm{qs}}
\newcommand{\qc}{\mathrm{qc}}
\newcommand{\homeo}{\mathrm{h}}
\newcommand{\ah}{\mathrm{ah}}
\newcommand{\gf}{\mathrm{gf}}
\newcommand{\cc}{\mathrm{cc}}
\newcommand{\twoended}{two-ended}
\newcommand{\mhf}{MHF}
\newcommand{\rigid}{rigid}
\newcommand{\ranktwo}{rank-two}
\newcommand{\disk}{disk}
\newcommand{\ball}{ball}
\newcommand{\nonel}{non-elementary}
\newcommand{\para}{parabolic}

\renewcommand{\epsilon}{\varepsilon}

\DeclareMathOperator{\dist}{dist}

\DeclareMathOperator{\dep}{dep}
\DeclareMathOperator{\conf}{conf}

\DeclareMathOperator{\QS}{QS}

\DeclareMathOperator{\PSL}{PSL}

\DeclareMathOperator{\Isom}{Isom}
\DeclareMathOperator{\Conf}{Conf}

\DeclareMathOperator{\Int}{Int}
\DeclareMathOperator{\chull}{Cvx\, Hull}
\DeclareMathOperator{\diam}{diam}

\DeclareMathOperator{\val}{val}

\DeclareMathOperator{\Teich}{Teich}
\DeclareMathOperator{\Homeo}{Homeo}
\DeclareMathOperator{\HQ}{HQ}

\DeclareMathOperator{\fil}{fill}

\DeclareMathOperator{\core}{core}

\DeclareMathOperator{\height}{ht}

\def\emty{\emptyset}
\def\cbar{\widehat{\C}}
\def\stab{\mbox{\rm Stab}}

\def\ep{\varepsilon}
\def\g{\gamma}
\def\la{\lambda}
\def\La{\Lambda}

\renewcommand{\SS}{\mathbb S}
\renewcommand{\tilde}{\widetilde}

\renewcommand{\hat}{\widehat}

\newcommand\ben{\begin{enumerate}}
\newcommand\een{\end{enumerate}}
\newcommand\bit{\begin{itemize}}
\newcommand\eit{\end{itemize}}

\makeatletter
\newsavebox{\@brx}
\newcommand{\llangle}[1][]{\savebox{\@brx}{\(\m@th{#1\langle}\)}%
  \mathopen{\copy\@brx\kern-0.5\wd\@brx\usebox{\@brx}}}
\newcommand{\rrangle}[1][]{\savebox{\@brx}{\(\m@th{#1\rangle}\)}%
  \mathclose{\copy\@brx\kern-0.5\wd\@brx\usebox{\@brx}}}
\makeatother

\makeatletter
\@ifpackageloaded{lmodern}{%
  \UndeclareTextCommand{\k}{T1}%
  \DeclareTextCommand{\k}{T1}[1]%
   {\hmode@bgroup\ooalign{\null#1\crcr\hidewidth\char12\kern-.2ex}\egroup}%
  \DeclareTextComposite{\k}{T1}{a}{161}%
  \DeclareTextCompositeCommand{\k}{T1}{o}{\textogonekcentered{o}}%
  \DeclareTextCompositeCommand{\k}{T1}{O}{\textogonekcentered{O}}%
}{}
\makeatother

\pgfplotsset{compat=1.18} 

\numberwithin{figure}{section}

\title[QS universality, QI classification and Topological rigidity]{Quasisymmetric universality, quasi-isometric classification and topological rigidity of Kleinian groups}

\begin{author}[P.~Ha\"issinsky]{Peter Ha\"issinsky}
\address{Université d'Aix-Marseille, CNRS, Institut de Mathématiques de Marseille (I2M) - UMR 7373}
\email{phaissin@math.cnrs.fr}
\thanks{P.H. was partially supported by ANR-22-CE40-0004 GoFR}
\end{author}

\begin{author}[Y.~Luo]{Yusheng Luo}
\address{Department of Mathematics, Cornell University, 212 Garden Ave, Ithaca, NY 14853, USA}
\email{yusheng.s.luo@gmail.com, yl3769@cornell.edu}
\thanks{Y.L. was partially supported by NSF Grant DMS-2349929}
\end{author}

\begin{document}
\maketitle
\vspace*{-0.5cm}{\centerline{\small\it This paper is dedicated to the memory of Misha Kapovich}}

\begin{abstract}
We study the quasisymmetric classification of limit sets of Kleinian groups and obtain a characterization of those geometrically finite limit sets that are quasisymmetrically universal.
This allows us to obtain a quasi-isometric classification of finitely generated Kleinian groups. 
We also obtain a classification of virtually topologically rigid geometrically finite Kleinian groups.
\end{abstract}

\setcounter{tocdepth}{1}
\tableofcontents

\section{Introduction}
Our primary concern in this work is the classification of homeomorphic compact metric spaces up to quasisymmetry. A homeomorphism $h: X \rightarrow Y$ between two metric spaces is {\em quasisymmetric} if there exists a homeomorphism $\eta:[0, \infty) \rightarrow [0, \infty)$ so that for any three distinct points $x, y, z \in X$, we have
$$
\frac{d_Y(h(x), h(y))}{d_Y(h(x), h(z))} \leq \eta \left(\frac{d_X(x,y)}{d_X(x,z)}\right).
$$
Limit sets of geometrically finite Kleinian groups form a particularly interesting class, as they share many qualitative geometric features, and exhibit enough flexibility to support a rich and subtle classification theory. 
They also serve as the initial step for understanding the boundaries of relatively hyperbolic groups, and fit into a broader framework that includes Julia sets of rational maps and other limit sets in conformal dynamics.
We give a complete answer to the quasisymmetric classification problem within this class, and find a sharp set of quasi-isometry invariants of finitely generated Kleinian groups in terms of their boundaries.

The guiding question we consider is:
\textit{Given a topological type of limit set, which quasisymmetric equivalence classes can arise?}
    
Two sharply different phenomena emerge, see Figure~\ref{fig:sum}.
\begin{itemize}
    \item In the {\em rigid regime}, such as {\em Schottky sets}, i.e., compact subsets of $\widehat{\mathbb C}$ whose complements are unions of at least three pairwise disjoint open round disks, quasisymmetric equivalence is governed by a strong algebraic rigidity: two geometrically finite Schottky limit sets are quasisymmetrically equivalent if and only if the corresponding groups are commensurable (up to conjugacy within $\PSL_2(\C)$) \cite{Fri06, BKM09};
    \item In the {\em universality regime}, such as topological circles, any two geometrically finite limit sets are quasicircles and hence are quasisymmetrically equivalent \cite{AB60, Ber60}.
\end{itemize}

Our main theorems (Theorem~\ref{thm:quasiconformalUniv} and Theorem~\ref{thm:quasiconformalUnivGF}) give a complete classification of limit sets in the universality regime. We show that the only obstructions to promoting topological equivalence of limit sets to quasisymmetric equivalence are carpet subsets and the coarse order structures at rank-two cut points; see Theorem~\ref{thm:qsunivgeneral} and Theorem~\ref{thm:qsunivgeneralGF}.

This also provides us with a quasi-isometric classification of finitely generated Kleinian groups with non-spherical boundary (Theorem~\ref{thm:qiclassification}): the quasi-isometry class is determined by the topology of the boundary together with the conformal gauges of carpet subsets of its boundary and coarse order structures at rank-two cut points.  In particular, if the boundary of a minimally parabolic representative is carpet-free and without rank-two cut points, then the quasi-isometry class is determined solely by the topology of the boundary.

A key ingredient in establishing this quasisymmetric universality phenomenon is the classification of the virtually topologically rigid geometrically finite Kleinian groups (Theorem~\ref{thm:topoRigid}), i.e., those groups that have finite index in the homeomorphism group of their limit set \cite{KK00}. 
We show this rigidity occurs exactly when the limit set is homeomorphic to a carpet-free Schottky set. 
It follows that every homeomorphism between two such limit sets is quasisymmetric, and every homeomorphism between two geometrically finite carpet-free Schottky limit sets is M\"obius; see Figure~\ref{fig:sch}.

We now turn to a detailed statement of results.

\begin{figure}[!ht]
\captionsetup{width=0.96\linewidth}
  \centering
  \resizebox{0.95\textwidth}{!}{%
\begin{tikzpicture}[every node/.style={align=center}]

\filldraw[
    fill=lightgreen,
    fill opacity=0,
    draw=black,
    line width=1.0pt
] (2.4,0) ellipse [x radius=5.2, y radius = 2.9];

\filldraw[
    fill=lightblue,
    fill opacity=0,
    draw=black,
    line width=1.0pt
] (-2.4,0) ellipse [x radius=5.2, y radius=2.9];


\node[
    font=\bfseries\large,
    text width=4.8cm
] at (-5,0.82) {Carpet-free limit sets};

\node[
    font=\bfseries\large,
    text width=4.4cm
] at (0,1) {Carpet-free \\ Schottky limit sets};

\node[
    font=\bfseries\large,
    text width=4.6cm
] at (5,0.82) {Schottky limit sets};

\node[
    draw=myblue,
    rounded corners=6pt,
    line width=0.9pt,
    fill=lightblue,
    fill opacity=0.95,
    text opacity=1,
    minimum width=4.0cm,
    minimum height=1.25cm,
    inner sep=5pt,
    font=\normalsize
] at (-5,-0.3)
{QC universality\\[2pt] {\footnotesize topology $\leftrightarrow$ geometry}};

\node[
    draw=mygold,
    rounded corners=6pt,
    line width=0.9pt,
    fill=lightgold,
    fill opacity=0.95,
    text opacity=1,
    minimum width=4.35cm,
    minimum height=1.25cm,
    inner sep=5pt,
    font=\normalsize
] at (0,-0.3)
{Topological rigidity\\[2pt]
{\footnotesize topology $\leftrightarrow$ geometry $\leftrightarrow$ dynamics}};

\node[
    draw=mygreen,
    rounded corners=6pt,
    line width=0.9pt,
    fill=lightgreen,
    fill opacity=0.95,
    text opacity=1,
    minimum width=3.9cm,
    minimum height=1.25cm,
    inner sep=5pt,
    font=\normalsize
] at (5,-0.3)
{QC rigidity\\[2pt] {\footnotesize geometry $\leftrightarrow$ dynamics}};

\end{tikzpicture}%
}
  \caption{A schematic summary of the dichotomy of quasiconformal universality and quasiconformal rigidity, and their connection to topological rigidity.}
  \label{fig:sum}
\end{figure}

\begin{figure}[!ht]
\captionsetup{width=0.96\linewidth}
  \centering
  \includegraphics[width=0.48\textwidth]{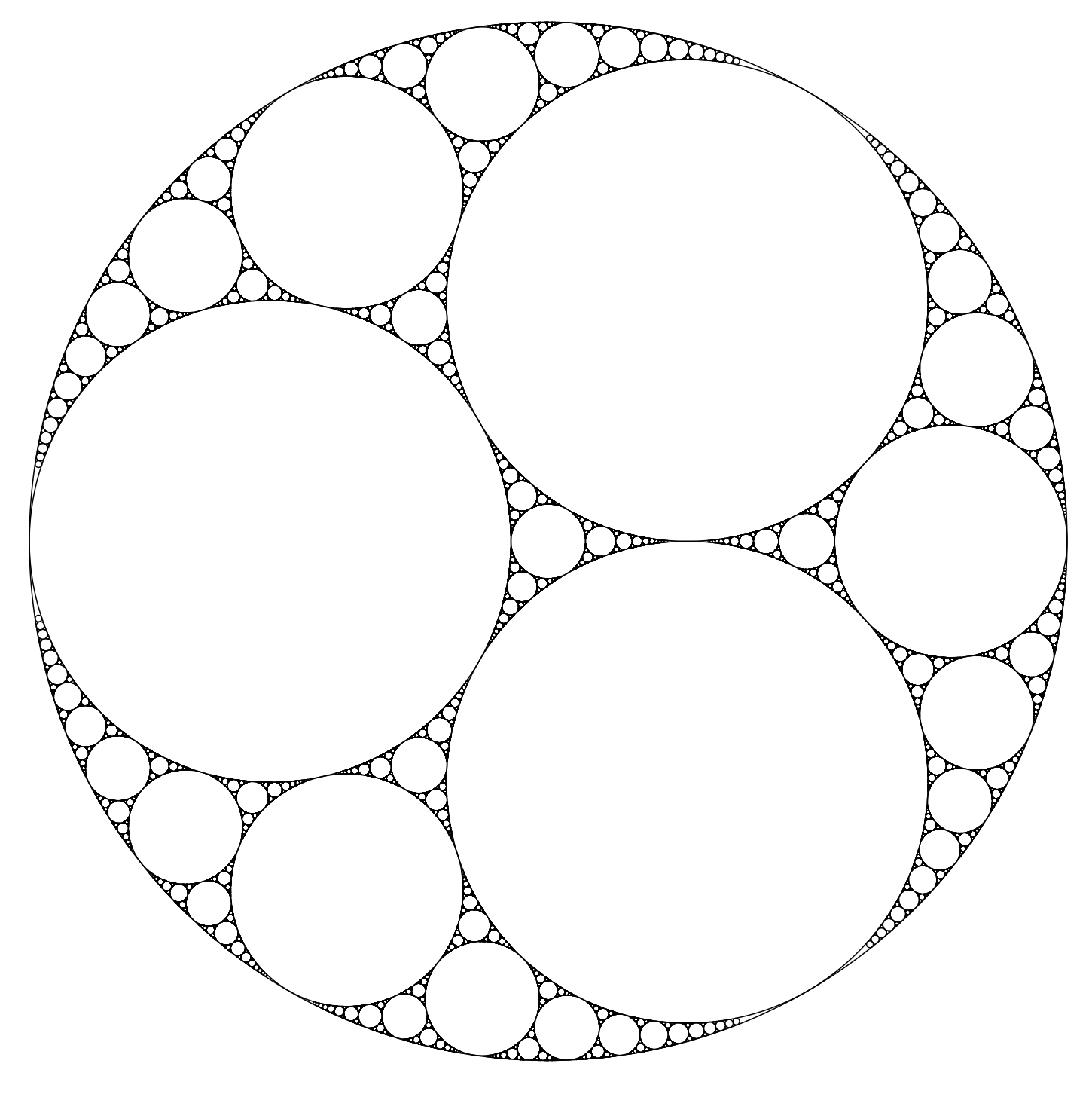}
  \includegraphics[width=0.48\textwidth]{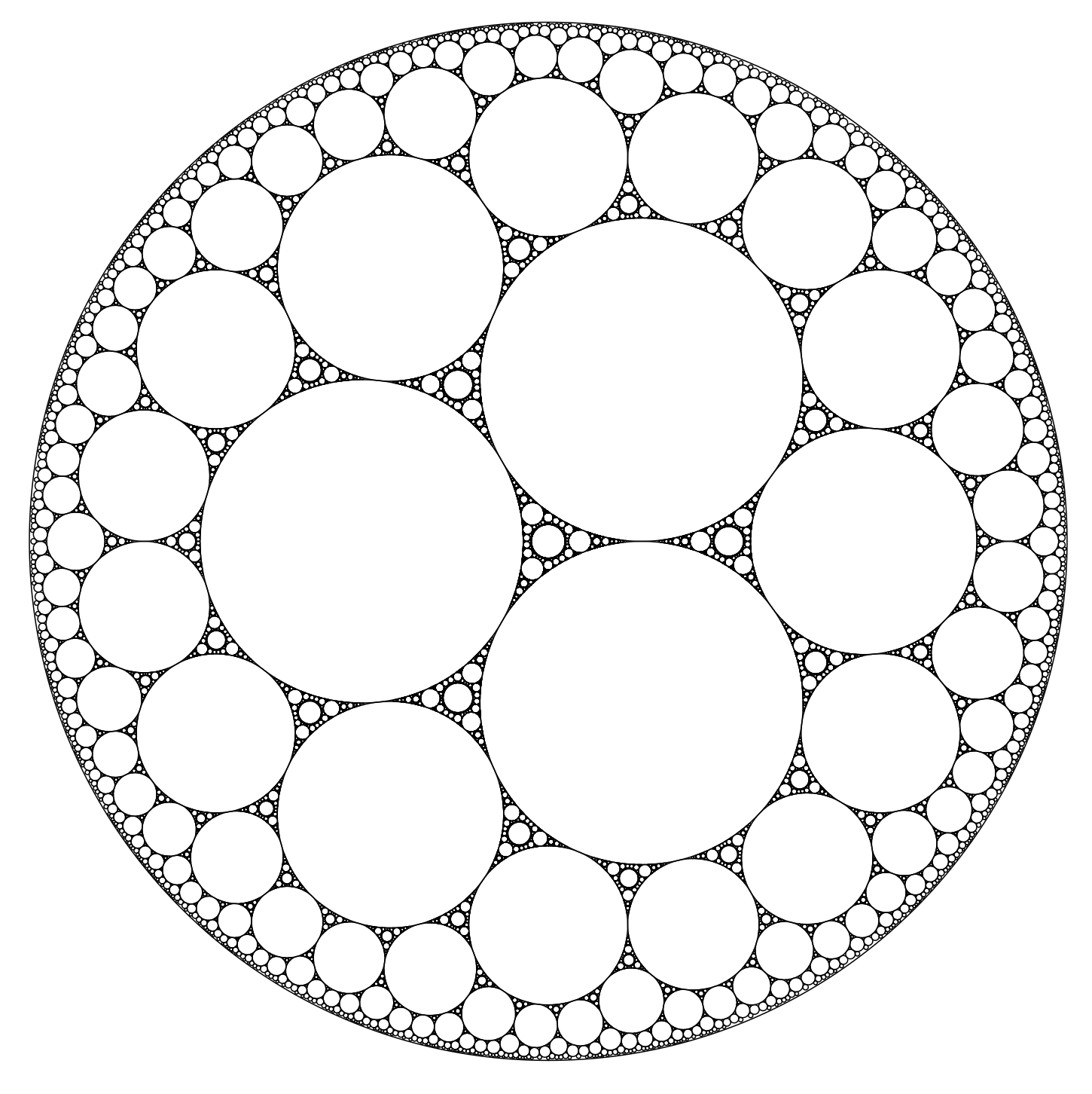}

  \includegraphics[width=0.48\textwidth]{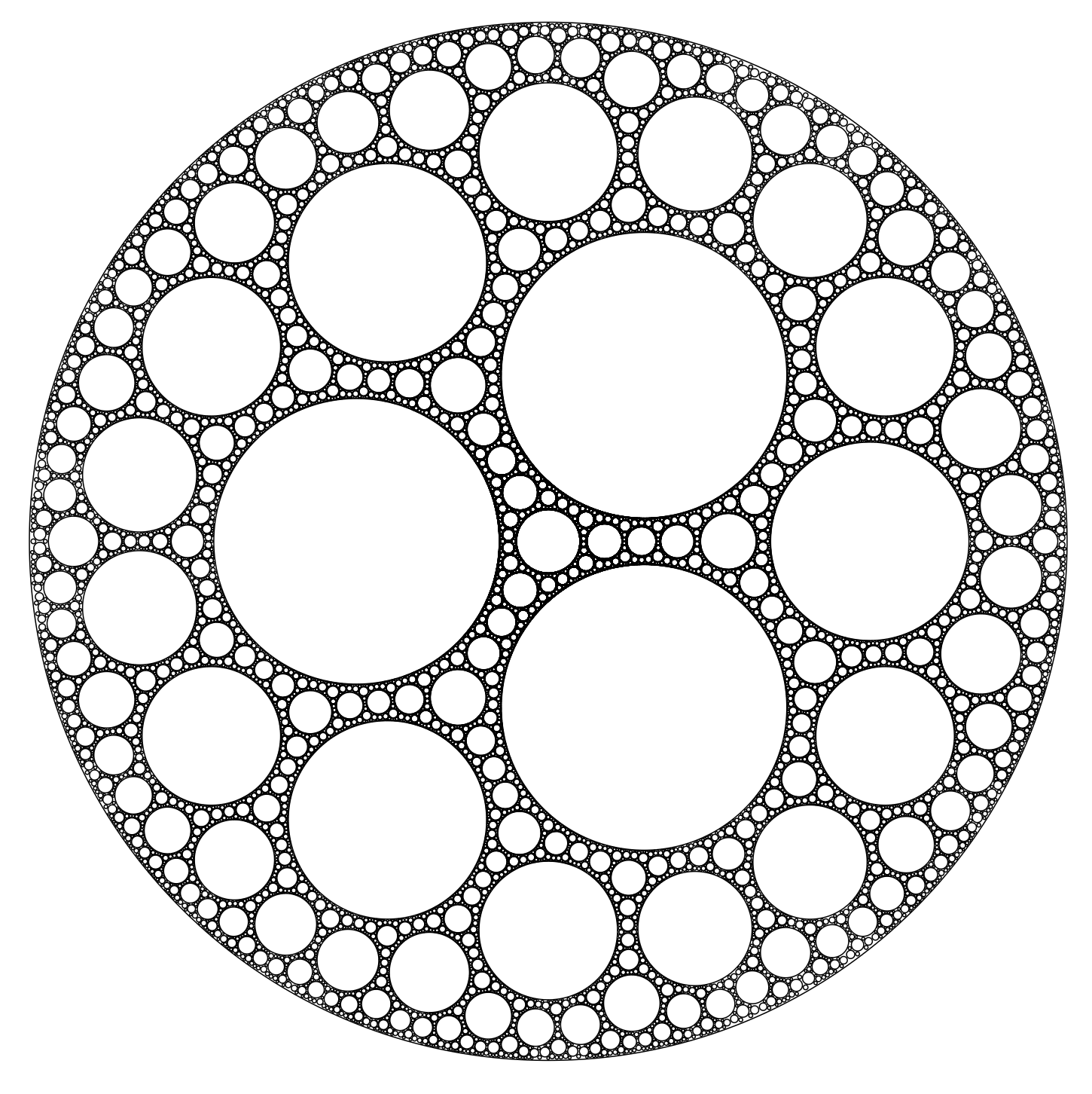}
  \includegraphics[width=0.48\textwidth]{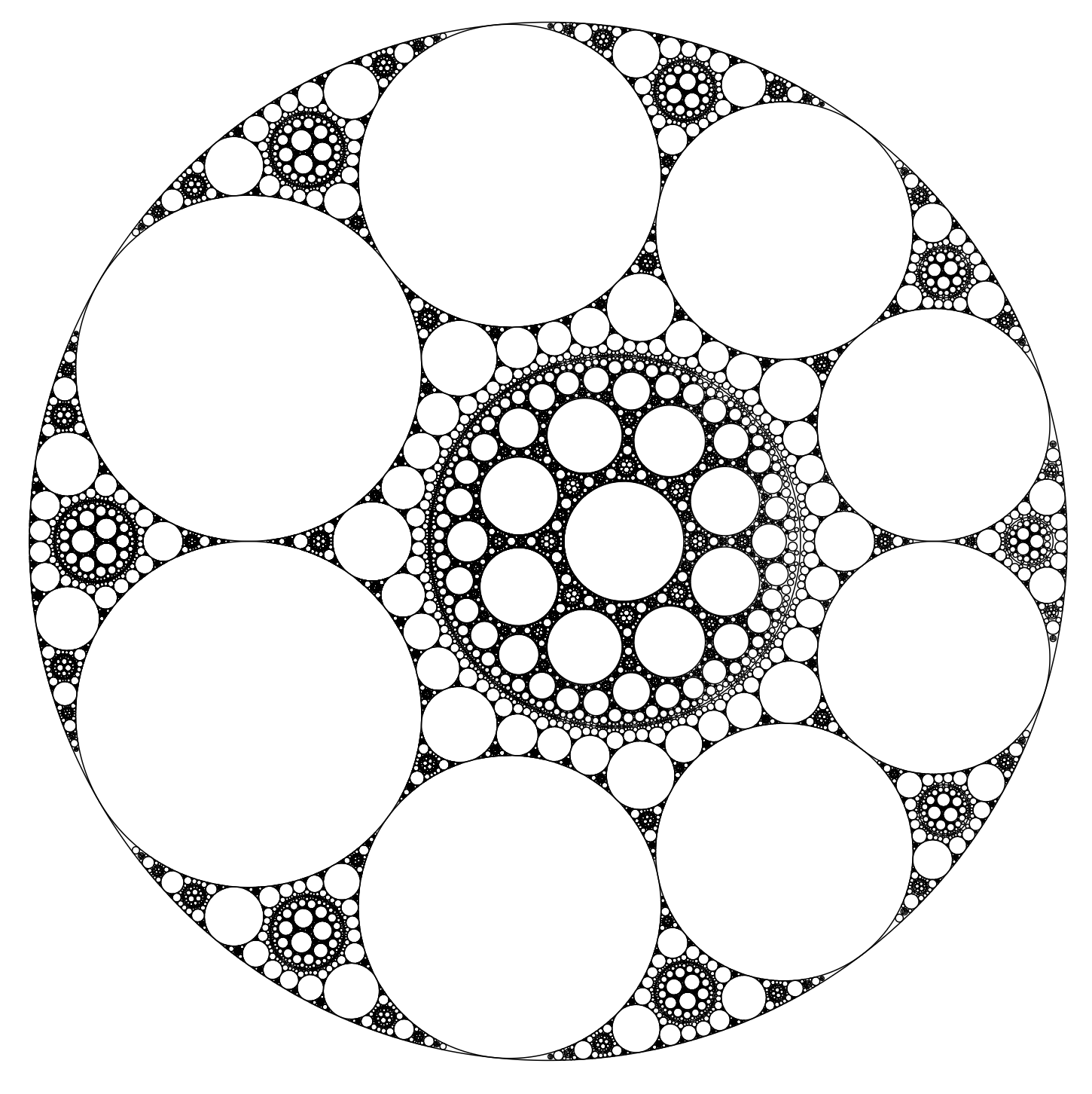}
  \caption{Four examples of Schottky limit sets. The top two are carpet-free, while the bottom two contain carpets. The first example is the Apollonian circle packing and has connected nerve. In contrast, the nerve of the second has infinitely many components. 
  The third is a carpet, whose nerve consists of infinitely many isolated vertices. The last example contains M\"obius copies of both the second and third, and thus it contains carpets but is not itself a carpet. }
  \label{fig:sch}
\end{figure}

\subsection{Quasisymmetric universality}
Let $\mathfrak{S}$ be a collection of closed subsets of $\widehat\C$. 
We denote by
$$
[X]_{\homeo}, \;\; [X]_{\ah}, \;\; [X]_{\qs}, \;\; [X]_{\qc},
$$
the equivalence classes of $X$ in $\mathfrak{S}$ under homeomorphisms, ambient homeomorphisms in $\widehat\C$, quasisymmetric homeomorphisms, and ambient quasiconformal homeomorphisms in $\widehat\C$, respectively.

Inspired by the recent work of McMullen \cite{McM26}, we say a set $X \in \mathfrak{S}$ is 
\begin{itemize}
    \item {\em quasisymmetrically universal} in $\mathfrak{S}$ if $[X]_{\homeo} = [X]_{\qs}$, i.e., any $Y \in \mathfrak{S}$ homeomorphic to $X$ is quasisymmetrically homeomorphic to $X$; and
    \item {\em quasiconformally universal} in $\mathfrak{S}$ if $[X]_{\ah} = [X]_{\qc}$, i.e., any $Y\in \mathfrak{S}$ ambiently homeomorphic to $X$ is quasiconformally homeomorphic to $X$ in $\widehat\C$.
\end{itemize}

A {\em carpet} is by definition a compact set homeomorphic to the standard Sierpi\'nski carpet, which is topologically characterized as a compact, locally connected, 1-dimensional continuum that has no local cut-points \cite{Why58}. 
We say $X$ is {\em carpet-free} if no subset of $X$ is homeomorphic to a carpet.

\medskip

We first focus on the collection of convex cocompact limit sets that are precisely  all the conformally autosimilar subsets in $\widehat\C$ \cite{BH18}.
\begin{thmx}\label{thm:quasiconformalUniv}
    Consider the collection
    $$
    \mathfrak{X}_{\cc} = \{\Lambda(G): G \text{ is a convex cocompact Kleinian group}\}.
    $$ 
    Then the following are equivalent.
    \begin{enumerate}
        \item\label{eqn:thm:quasiconformalUniv:0} $\Lambda(G)$ is quasisymmetrically universal in $\mathfrak{X}_{\cc}$;
        \item\label{eqn:thm:quasiconformalUniv:1} $\Lambda(G)$ is quasiconformally universal in $\mathfrak{X}_{\cc}$;
        \item\label{eqn:thm:quasiconformalUniv:2} either $\Lambda(G)$ is carpet-free, or $\Lambda(G) = \widehat\C$;
        \item\label{eqn:thm:quasiconformalUniv:3} either $\dim_{\conf}(\Lambda(G)) \leq 1$ or $\dim_{\conf}(\Lambda(G)) = 2$, 
        where $\dim_{\conf}(\La(G))$ stands for the conformal dimension of $\La(G)$.
    \end{enumerate}
   Moreover, if $\Lambda(G)$ is not quasisymmetrically universal, then $[\Lambda(G)]_\homeo$ (or $[\Lambda(G)]_\ah$) contains infinitely many quasisymmetric (or quasiconformal) classes.
\end{thmx}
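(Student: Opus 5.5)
The plan is to prove the cycle $(3)\Rightarrow(2)\Rightarrow(1)$, then $(1)\Rightarrow(3)$ together with the ``moreover'' clause, and finally $(3)\Leftrightarrow(4)$. For $(2)\Rightarrow(1)$, note first that a homeomorphism between limit sets of convex cocompact groups need not extend to $\widehat\C$. So we would show that if $\Lambda(G)$ and $\Lambda(H)$ are homeomorphic and $\Lambda(G)$ is carpet-free, then after replacing $H$ by a quasiconformal deformation (which stays in $\mathfrak X_{\cc}$ and leaves the qs class unchanged), there is an ambient homeomorphism. The deformation would realize the topology of the boundary by an embedding type determined by the group. The easier implication for the main argument is $(2)\Rightarrow(1)$ modulo this realization step. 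The converse $(1)\Rightarrow(2)$ follows because qs maps between limit sets are boundary values of quasi-isometries. Hence the groups are quasi-isometric, and rigidity of the geometric pieces in the decomposition gives the ambient qc map.

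The core implication is $(3)\Rightarrow(2)$. If $\Lambda(G)=\widehat\C$ the statement is trivial. Otherwise, carpet-free should force, via the JSJ/Whyburn decomposition of $G$ (and hence of the compact hyperbolic 3-manifold with boundary $\mathbb H^3\cup\Omega(G)/G$), that every rigid (non-elementary, non-Fuchsian) vertex piece has a limit set that is a Schottky-type set with no carpet. Each piece is then a ``book of I-bundles'' or a quasicircle-type piece glued along annuli. Given $Y=\Lambda(H)$ ambiently homeomorphic to $\Lambda(G)$, we would: (a) use the ambient homeomorphism to get a homotopy equivalence, indeed by Johannson/Canary--McCullough a homeomorphism, of the convex cores up to deformation of characteristic submanifolds; (b) show that carpet-freeness kills the acylindrical pieces, so the manifolds differ only by twists along essential annuli and by the choice of hyperbolic structure; (c) realize each annulus twist by a quasiconformal deformation, using that the relevant parts are Fuchsian or quasi-Fuchsian and therefore flexible. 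Then Sullivan/Marden stability gives an equivariant qc conjugacy. The gap between an equivariant map and an arbitrary ambient homeomorphism is closed by the virtual topological rigidity classification earlier in the paper (Theorem~\ref{thm:topoRigid}). On carpet-free Schottky-type pieces every homeomorphism is, up to finite index, induced by the group, so any homeomorphism is a bounded modification of an equivariant one. It therefore extends qc after gluing the pieces along quasicircles, using a qc welding/gluing lemma.

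For $(1)\Rightarrow(3)$ and the ``moreover'' clause, suppose $\Lambda(G)\neq\widehat\C$ contains a carpet. Then some rigid piece has carpet limit set, i.e.\ corresponds to an acylindrical manifold with boundary. Varying the conformal structure on its boundary components while keeping the rest of the manifold fixed yields infinitely many groups $H_n$ with ambiently homeomorphic limit sets. By the Bonk--Kleiner--Merenkov carpet rigidity, qs maps between such carpets are Möbius. This forces the $H_n$ to be pairwise non-qs-equivalent, and we arrange that by taking non-commensurable boundary structures. For $(3)\Leftrightarrow(4)$: a carpet-free limit set other than $\widehat\C$ is built from quasicircles and Schottky pieces glued along countably many cut pairs. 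Its conformal dimension is then $\le 1$ (it is attained by Fuchsian-type pieces, or equals $0$ in the totally disconnected case), using the Carrasco/Mackay cut-point result. Conversely, a carpet subset forces $\dim_{\conf}>1$ by Mackay's bound for carpets, and $\dim_{\conf}<2$ unless the set is $\widehat\C$, by Bonk--Kleiner.

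The main obstacle is step (c) together with the extension to arbitrary, non-equivariant homeomorphisms in $(3)\Rightarrow(2)$. Here the topological rigidity theorem has to be combined with a uniform qc gluing along infinitely many cut pairs; we would first try this via an equivariant tree-of-spaces argument.
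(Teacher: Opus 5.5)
Your argument for $(1)\Rightarrow(3)$ and for the moreover clause fails. Varying the conformal structure on the boundary of the carpet piece is a quasiconformal deformation of $G$. All convex cocompact uniformizations of a fixed manifold are quasiconformally conjugate (Ahlfors--Bers, Marden). So every $H_n$ you build has limit set ambiently quasiconformally, hence quasisymmetrically, equivalent to $\Lambda(G)$, and no new classes appear. Bonk--Kleiner--Merenkov rigidity \cite{BKM09} concerns round Schottky sets and cannot separate quasiconformal deformations. The $H_n$ are all isomorphic, so ``non-commensurable boundary structures'' is not available either. What you need is to change the \emph{topology} of the acylindrical piece while keeping the topology of the limit set. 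The paper does this in Lemma~\ref{lem:nonQScarpet} and Theorem~\ref{thm:nonqs}:
\begin{enumerate}
\item via LERF, take a finite cover of the acylindrical manifold $M$ containing homeomorphic copies of all components of $\partial_0M$;
\item double it along its remaining boundary components, giving an acylindrical $M'$ with $\partial_0M'$ double covering $\partial_0M$;
\item glue the rest of $N$ on twice;
\item rule out a quasi-isometry between $\pi_1(M)$ and $\pi_1(M')$ using \cite{BKM09} on round carpets together with co-Hopficity.
\end{enumerate}
The resulting limit sets are ambiently homeomorphic, so this also gives $(2)\Rightarrow(3)$ directly. Your cycle otherwise reaches that implication only through $(2)\Rightarrow(1)$, which is itself problematic (see below).

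Your sufficiency argument rests on a false premise. A homeomorphism $\Lambda(G)\to\Lambda(H)$, even an ambient one, yields no homotopy equivalence of the manifolds, because $G$ and $H$ need not be isomorphic or commensurable. Examples are quasi-Fuchsian groups of different genera, Schottky groups of different ranks, and books of $I$-bundles with different pages. So steps (a)--(c), Marden--Sullivan stability, and ``bounded modification of an equivariant map'' are unavailable; universality is precisely about maps that are not equivariant. Topological rigidity (Theorem~\ref{thm:topoRigid}, via Theorem~\ref{thm:carpetfreeSchottkyimpliesrigid}) only handles the rigid JSJ vertices, where it makes the restricted homeomorphism virtually equivariant.

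The missing idea is at the MHF vertices. There you must produce quasisymmetries between \emph{different} convex cocompact Fuchsian groups of the second kind that preserve the colouring of gaps given by the JSJ edges. They must also be local conjugacies on definite neighbourhoods of the gap endpoints, so that they can be glued. The paper builds these with Bowen--Series maps and blown-up $d$-adic Markov models with prescribed multipliers (Theorem~\ref{thm:colors}). It then assembles the vertex quasi-isometries along the JSJ tree (Theorem~\ref{thm:qigluing}), using \cite{PW02} to reduce to connected limit sets.

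Finally, $(2)\Rightarrow(1)$ cannot go through a quasiconformal deformation of $H$, since such deformations never change the ambient topological type. The paper instead proves $(3)\Rightarrow(1)$ directly from an abstract homeomorphism. It proves $(3)\Rightarrow(2)$ separately (Theorem~\ref{thm:qcext}), which needs extra work to extend across the components of $\Omega(G)$ and the compression disks.
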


Background on quasiconformal and quasisymmetric maps includes \cite{Ahl66, LV73, heinonen:analysis}. We also refer to \cite{MT10} regarding conformal dimension.

\subsection*{Relative quasisymmetric universality}
Our methods can also be used to study quasisymmetric and quasiconformal universality for geometrically finite Kleinian groups with appropriate modifications. 
The existence of parabolic elements introduces two important subtleties in our analysis.

First, note that any quasisymmetry between geometrically finite limit sets must preserve parabolic points and their ranks unless they are homeomorphic 
to the unit circle or to the Riemann sphere; see Theorem~\ref{thm:qsparab} and Figure~\ref{fig:ccvsgf}, see also \cite{GSWW26}.  
Denote the parabolic locus 
$$
\mathfrak{P}(G) = (\mathfrak{P}_1(G), \mathfrak{P}_2(G)),
$$
where $\mathfrak{P}_j(G)$ is the collection of rank $j$ parabolic fixed points.
This leads us to consider {\em type-preserving homeomorphisms}
$$
h: (\Lambda(G), \mathfrak{P}(G)) \rightarrow (\Lambda(G'), \mathfrak{P}(G'))
$$ 
where $h$ sends rank $1$ (resp. rank $2$) parabolic fixed points of $G$ onto those of $G'$ (we discuss in \S\ref{subsec:parahomeo} when type-preservation can be relaxed).
Thus, we consider the collection of pairs
$$
\mathfrak{X}_{\gf}= \{(\Lambda(G), \mathfrak{P}(G)): G \text{ is geometrically finite}, \mathfrak{P}(G)= (\mathfrak{P}_1(G), \mathfrak{P}_2(G))\}.
$$
We say $(\Lambda(G), \mathfrak{P}(G)) \in \mathfrak{X}_{\gf}$ is 
\begin{itemize}
    \item {\em relatively quasisymmetrically universal} in $\mathfrak{X}_{\gf}$ if 
    $$
    [(\Lambda(G), \mathfrak{P}(G))]_\homeo = [(\Lambda(G), \mathfrak{P}(G))]_\qs;
    $$
    \item {\em relatively quasiconformally universal} in $\mathfrak{X}_{\gf}$  if 
    $$
    [(\Lambda(G), \mathfrak{P}(G))]_\ah = [(\Lambda(G), \mathfrak{P}(G))]_\qc.
    $$
\end{itemize}

\begin{figure}[ht]
\captionsetup{width=0.96\linewidth}
  \centering  
  \includegraphics[width=0.43\textwidth]{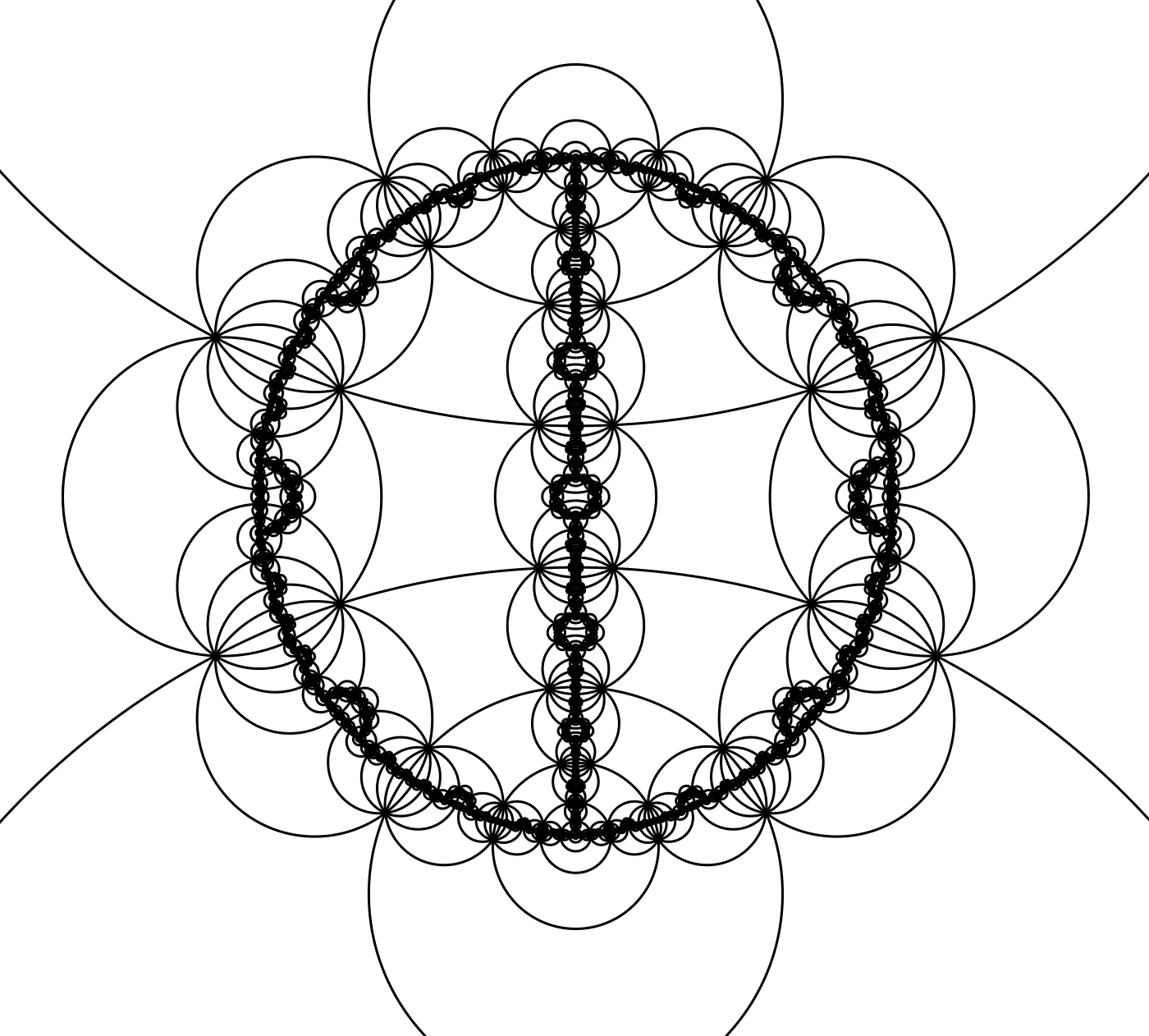}
  \includegraphics[width=0.43\textwidth]{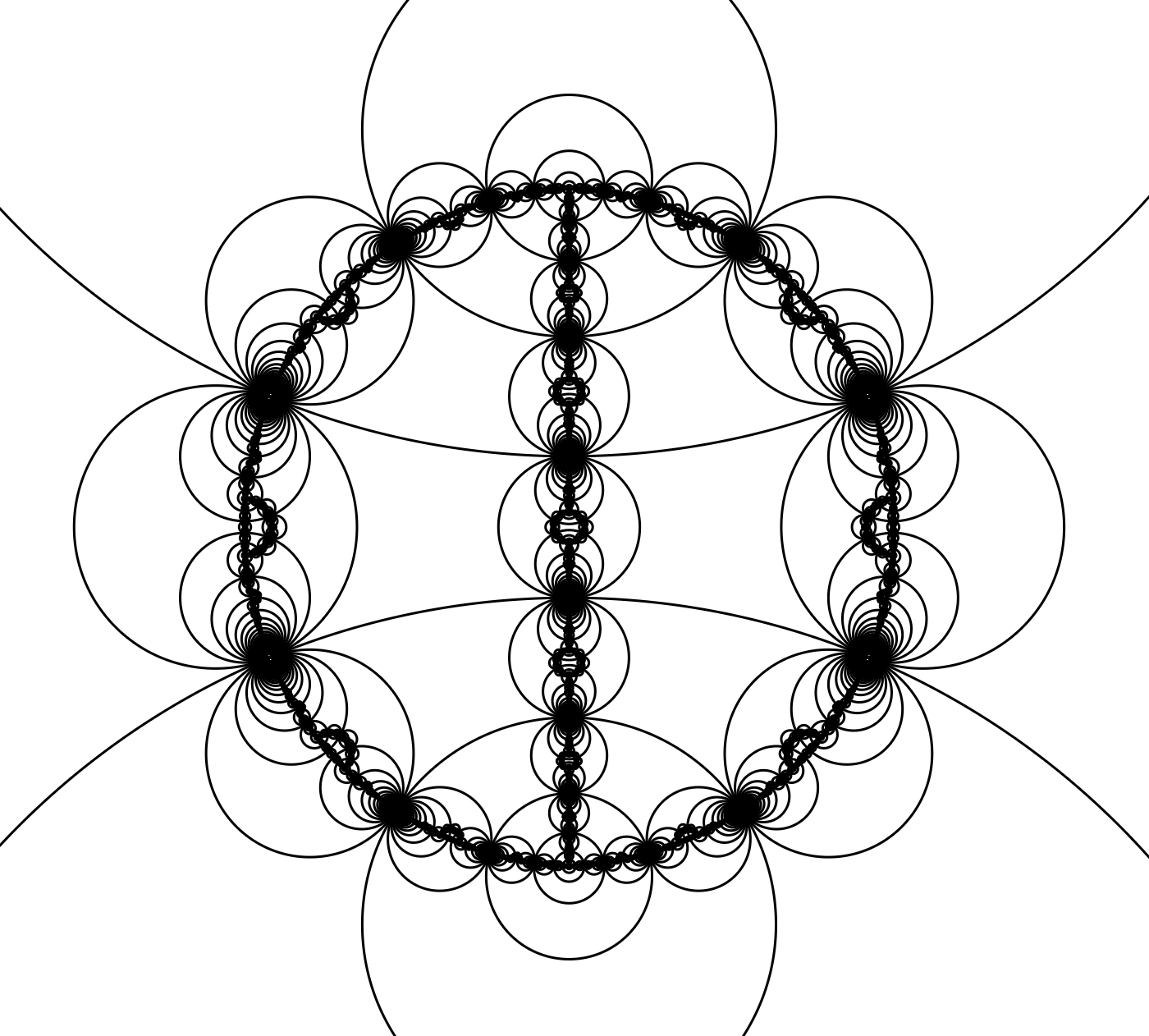}
  \caption{Left: A carpet-free convex cocompact limit set. Right: A geometrically finite limit set, which is homeomorphic but {\bf not} quasiconformally homeomorphic to the left limit set. Both limit sets are relatively quasisymmetrically and quasiconformally universal in $\mathfrak{X}_{\gf}$.}
  \label{fig:ccvsgf}
\end{figure}

The second subtlety created by parabolic elements is the existence of cut points. 
Let $a$ be a rank-two cut point (see Figure~\ref{fig:ranktwoSch}, right). 
Let $\mathcal{C}_a \subseteq \Lambda(G)$ be the component that contains $a$.
The natural linear order on the components of $\mathcal{C}^*_a := \mathcal{C}_a\setminus \{a\}$  given by its embedding in $\cbar\setminus\{a\}$
gives an identification $\pi_0(\mathcal{C}^*_a) \cong \Z$.

We say a homeomorphism is {\em coarsely order-respecting} at a rank-two cut point $a$ if the induced bijection $h_*: \pi_0(\mathcal{C}^*_a) \rightarrow \pi_0(\mathcal{C}^*_{h(a)})$ has the form $h_*(n) = \epsilon n + c + O(1)$, where $\epsilon = \pm 1$. 
Observe that any ambient homeomorphism is coarsely order-respecting, and any quasisymmetry between limit sets is coarsely order-respecting at any rank-two cut point (Proposition~\ref{prop:inducedbilipschitz}).
We say a rank-two cut point $a$ is {\em homogeneous} if all components of $\mathcal{C}^*_a$ are homeomorphic. Note that any homeomorphism at a homogeneous rank-two cut point $a$ can be modified to be coarsely order-respecting at $a$. This no longer holds for non-homogeneous cut points (Theorem~\ref{thm:nonqscutpoint}).

\begin{thmx}\label{thm:quasiconformalUnivGF}
    Let $G$ be geometrically finite with parabolic locus $\mathfrak{P}(G) = (\mathfrak{P}_1(G), \mathfrak{P}_2(G))$.
    \begin{enumerate}
        \item  The following are equivalent.
        \begin{enumerate}[label=(\theenumi\alph*)] 
        \item\label{thm:quasiconformalUnivGF:eq1a} $(\Lambda(G), \mathfrak{P}(G))$ is relatively quasiconformally universal in $\mathfrak{X}_{\gf}$;  
        \item\label{thm:quasiconformalUnivGF:eq1b} either $\Lambda(G)$ is carpet-free, or $\Lambda(G)= \widehat\C$.
        \end{enumerate}
        \item Similarly, the following are equivalent.
        \begin{enumerate}[label=(\theenumi\alph*)] 
            \item\label{thm:quasiconformalUnivGF:eq2a} $(\Lambda(G), \mathfrak{P}(G))$ is relatively quasisymmetrically universal in $\mathfrak{X}_{\gf}$;
            \item\label{thm:quasiconformalUnivGF:eq2b} either $\Lambda(G)$ is carpet-free and all rank-two cut points are homogeneous, or $\Lambda(G)= \widehat\C$.
        \end{enumerate}
    \end{enumerate}

    Moreover, if $(\Lambda(G), \mathfrak{P}(G))$ is not relatively quasisymmetrically (or quasiconformally) universal, then $[(\Lambda(G), \mathfrak{P}(G))]_\homeo$ (or $[(\Lambda(G), \mathfrak{P}(G))]_\ah$) contains infinitely many quasisymmetric (or quasiconformal) classes.

\end{thmx}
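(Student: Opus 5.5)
The plan is to prove the two ``universality'' implications, (1b)$\Rightarrow$(1a) and (2b)$\Rightarrow$(2a), by a decomposition--and--reassembly argument, and the two converses by explicit constructions of infinitely many inequivalent deformations.

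\emph{Universality.} Suppose $\Lambda(G)$ is carpet-free, and let $G'$ be geometrically finite with $h\colon(\Lambda(G),\mathfrak{P}(G))\to(\Lambda(G'),\mathfrak{P}(G'))$ a type-preserving homeomorphism. The case $\Lambda(G)=\widehat\C$ is immediate.

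\begin{enumerate}
\item I would first decompose $\Lambda(G)$ along its cut structure. Pass to finite index and split $G$ along rank-one parabolic fixed points and along the components of the domain of discontinuity. This is a relative JSJ/Bowditch-type splitting, whose peripheral structure is respected by $h$ because $h$ is type-preserving. The vertex pieces are then limit sets of geometrically finite groups that are either quasicircles or carpet-free Schottky-type sets.
\item On the Schottky-type pieces I would invoke the topological rigidity theorem (Theorem~\ref{thm:topoRigid}). It says that any homeomorphism between such limit sets is quasisymmetric, and in fact M\"obius up to finite index.
\item On circle pieces I would use that geometrically finite topological circles are quasicircles. I would choose the circle homeomorphisms equivariantly, by Bers simultaneous uniformization, so that they are compatible with the parabolic points.
\item I would then glue these local quasisymmetries along the tree of pieces. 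The tool is the standard combination technique: build a quasi-isometry between the convex cores (relative to cusps) whose boundary extension realizes the same combinatorics as $h$. This yields a quasisymmetry, possibly different from $h$, between the pairs.
\item For relative quasiconformal universality I would extend everything ambiently, using that ambient homeomorphisms are coarsely order-respecting at rank-two cut points.
\item For quasisymmetric universality, at a rank-two cut point $a$ the component sequence $\pi_0(\mathcal C_a^*)\cong\Z$ must be matched by a map $n\mapsto \pm n+c+O(1)$. Homogeneity lets me reorder the components so that this holds, and then the cusp gluing is bi-Lipschitz in the horoball, compatible with Proposition~\ref{prop:inducedbilipschitz}.
\end{enumerate}

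\emph{Non-universality.} If $\Lambda(G)$ contains a carpet and is not the sphere, the carpet comes from a quasiconvex (relatively) subgroup whose limit set is a carpet. I would take this to be a maximal piece with carpet limit set in the decomposition. I would then deform $G$ inside its quasiconformal deformation space and glue in non-commensurable carpet groups, for instance different hyperbolic structures with totally geodesic boundary or different Schottky carpets. By the rigidity results for Schottky sets \cite{BKM09} and the Bonk--Kleiner--Merenkov rigidity, any quasisymmetry must restrict to a M\"obius map between the carpet pieces. These pieces can be chosen with infinitely many distinct commensurability classes, which gives infinitely many quasisymmetric classes inside one ambient-homeomorphism class.

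For a non-homogeneous rank-two cut point, I would build homeomorphic limit sets in which the cyclic pattern of component types around the cusp is permuted in a way that is not coarsely order-respecting. The plan is to attach different cyclic sequences of component types via a $\Z^2$-combination. This gives homeomorphic pairs that are not quasisymmetric by Proposition~\ref{prop:inducedbilipschitz}, as in Theorem~\ref{thm:nonqscutpoint}, and infinitely many such sequences give infinitely many classes.

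\emph{Main obstacle.} I expect the main obstacle to be the gluing step: promoting the piecewise quasisymmetries to a global one with uniform control, and making it equivariant enough near parabolic points to deal with infinitely many pieces. I would attack this by constructing a quasi-isometry of the coned-off / cusped spaces, i.e.\ Bass--Serre trees decorated with horoballs, and then taking its boundary map.
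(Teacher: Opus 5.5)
Your overall architecture matches the paper's: free and JSJ splittings, topological rigidity on the rigid pieces, gluing vertex quasi-isometries along the tree and taking the boundary of a cusped space, coarse order at rank-two cut points, and explicit constructions for the converses. But the step that carries the weight is missing, and your substitute for it fails.

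\textbf{The {\mhf} pieces.} Your splitting must also cut along the loxodromic cut pairs. Once it does, the {\mhf} vertex limit sets are necklace Cantor sets, not circles. Moreover, the vertex groups $G_v$, $G'_{v'}$ paired by $h$ are in general non-isomorphic Fuchsian groups (e.g.\ free groups of different ranks). So there is no equivariant map to choose, and Bers simultaneous uniformization plays no role. What the gluing needs on each such piece is a quasisymmetry that
(i) preserves the colouring of the gaps by the type of adjacent JSJ edge, so that the tree isomorphism keeps extending; and
(ii) conjugates the cyclic stabilizers on a neighbourhood of definite relative size of every gap endpoint. Equivalently, it must induce a quasi-isometry uniformly close to a translation on every peripheral coset $\cong\Z$ (Corollary~\ref{cor:colors}).
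Without (ii) the induction of Theorem~\ref{thm:qigluing} breaks down. The restrictions of two adjacent vertex quasi-isometries to a shared edge space $X_w\cong\Z$ are unrelated quasi-isometries of $\Z$ (think $n\mapsto n$ versus $n\mapsto 2n$). Nothing makes them agree up to an error that is uniform over infinitely many edges. With (ii) one simply post-composes by an element of $G_{w'}$, at cost $2M$. Standard methods give (i) but not (ii). The paper gets (ii) in Theorem~\ref{thm:colors}: it models blown-up Bowen--Series maps by blown-up $d$-adic Markov maps whose periodic boundary multipliers are all prescribed to equal $e^k$ (Proposition~\ref{prop:qconj}). The models for the two groups then share the repeller $\mathcal J_d$ and the germs at boundary points. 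Your ``main obstacle'' paragraph senses the issue, but passing to coned-off or cusped spaces does not by itself provide this control.

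\textbf{The remaining steps.} These also need more than you indicate.
For (1b)$\Rightarrow$(1a), ``extend everything ambiently'' hides two real problems. First, components $U$ of $\Omega(G)$ whose stabilizer lies on a Bers boundary are not quasidisks. The paper extends across them using quasisymmetry on the ideal boundary (from the relative quasi-isometry of $\stab_G(U)$), conformal removability of such boundaries \cite{LMM26}, and a gluing lemma (Proposition~\ref{prop:connected}). Second, for disconnected limit sets the incompressible decomposition is not canonical, so the given ambient homeomorphism need not respect any fixed family of compression curves. The paper builds adapted decompositions on finite-index subgroups via LERF (Proposition~\ref{prop:newincompressiblecoredecomp}), matches marked orbits with Whyte's bi-Lipschitz maps, and glues across annuli of large modulus; the Cantor case is treated separately \cite{Hid26}.

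For the carpet obstruction, deforming $G$ in its quasiconformal deformation space is useless, since quasiconformally conjugate groups have quasisymmetric limit sets. Also, an acylindrical manifold carries only one structure with totally geodesic boundary. The real task is to produce a non-commensurable acylindrical replacement that can be glued back so that the pair stays ambiently homeomorphic. This is Lemma~\ref{lem:nonQScarpet}: take a LERF cover in which the boundary components lift homeomorphically, double it along the remaining ones, and glue through a degree-two boundary map. Non-quasi-isometry then follows from \cite{BKM09} and co-Hopficity.

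At a non-homogeneous cut point, permuting the cyclic pattern is not enough: $ABB$ and $BAB$ differ by a shift. You need an invariant of bijections $n\mapsto\pm n+c+O(1)$, namely the density of indices carrying a given component type. The paper moves this density from $|\mathcal I|/k$ to $(|\mathcal I|+1)/(k+1)$.

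Finally, $\Lambda(G)=\widehat\C$ is not immediate in the relative setting. For non-uniform lattices one needs a quasiconformal map of the sphere carrying one countable dense parabolic locus onto another (Theorem~\ref{thm:qcbetweencountabledensesubsets}).
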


We emphasize that the homeomorphisms considered in the theorems are a priori {\bf not} conjugacies: 
by \cite[Theorem 3.3]{tukia:ihes85}, topological conjugacies between geometrically finite limit sets are automatically quasisymmetric. Thus, understanding how far Tukia's theorem can be extended beyond the equivariant setting is another motivation for this work.
The related question concerning the geometry of complementary components of limit sets is addressed in Theorem~\ref{thm:geoOmega}.
\begin{figure}[ht]
\captionsetup{width=0.96\linewidth}
  \centering
  \includegraphics[width=0.45\textwidth]{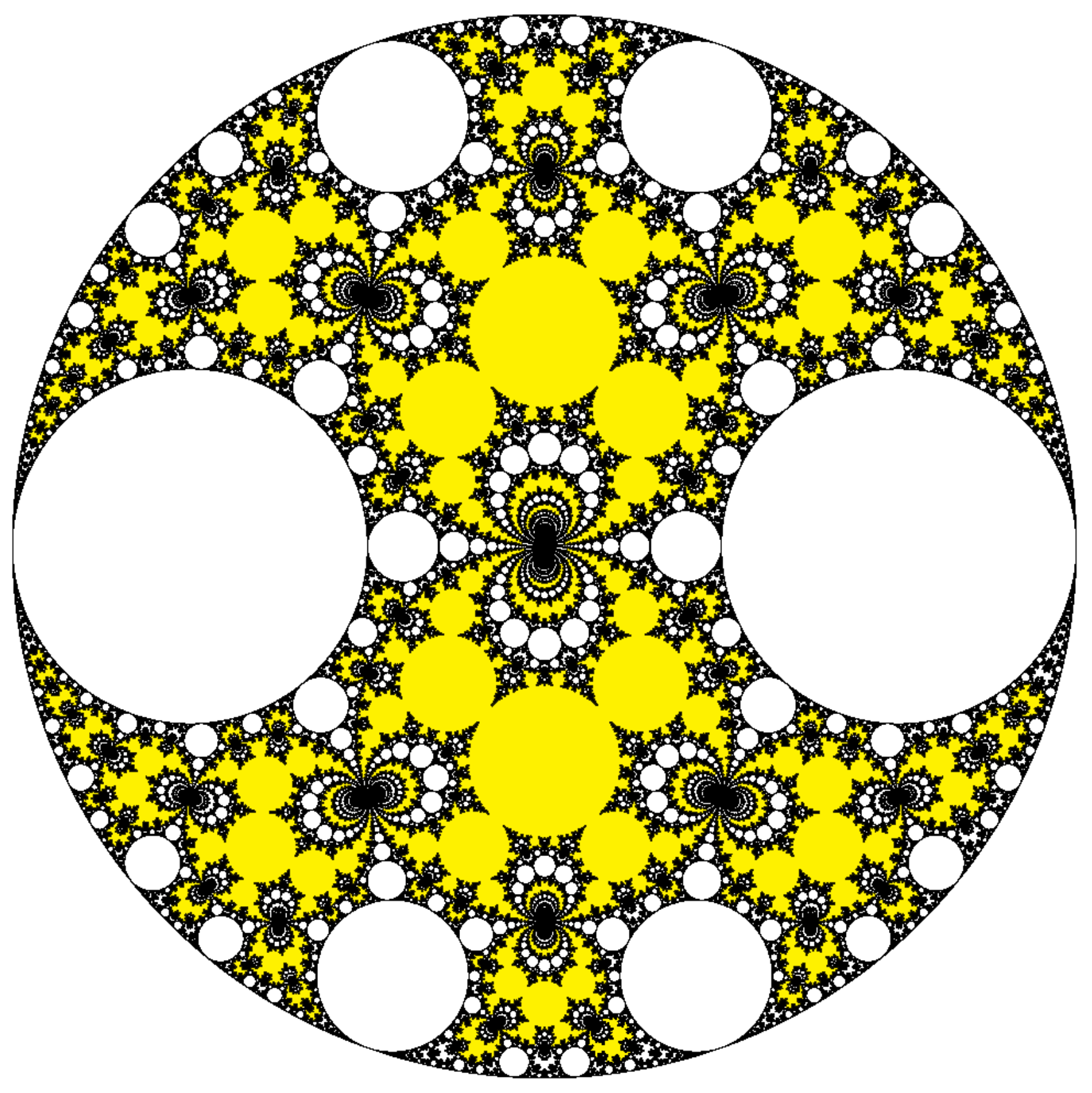}
  \includegraphics[width=0.45\textwidth]{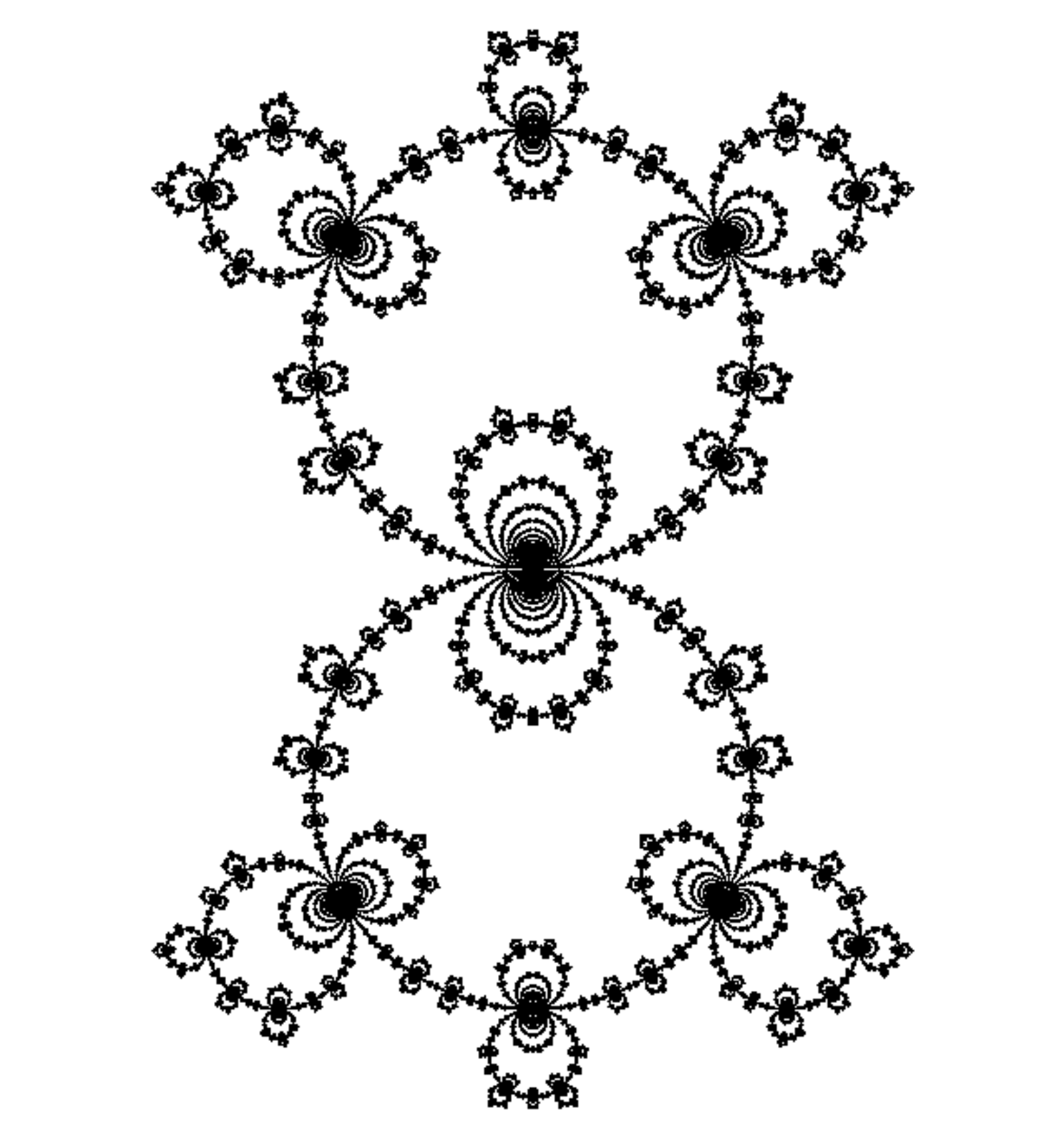}
  \caption{Left: a carpet-free Schottky limit set whose nerve has infinitely many components. Right: the corresponding de-parabolized Kleinian group, whose limit set contains homogeneous rank-two cut points forming double Hawaiian earrings. Thus, the limit set is relatively quasisymmetrically and quasiconformally universal in $\mathfrak{X}_{\gf}$.
  The left limit set is obtained from the right by taking the quotient along an acylindrical lamination.}
  \label{fig:ranktwoSch}
\end{figure}

\subsection*{General limit sets and carpet obstruction}
Theorem~\ref{thm:quasiconformalUniv} is a prototypical example illustrating the general philosophy that embedded carpets are the only obstruction to quasisymmetric universality for convex cocompact limit sets. 
This theme of potential carpet obstructions is enhanced by the Kapovich-Kleiner conjecture, which asserts that any word hyperbolic group with carpet boundary contains a 
finite-index subgroup isomorphic to a Kleinian group \cite{KK00}. See also \cite{ph:unifplanar}, and \cite[Theorem 1.2]{ph:abc} that shows that a word hyperbolic group whose boundary is homeomorphic to a carpet-free limit set of a convex Kleinian group is itself virtually Kleinian. 

We show that embedded carpets are indeed the only obstructions for general limit sets of convex cocompact Kleinian groups.
\begin{theorem}\label{thm:qsunivgeneral}
Let $h:\La(G)\to \La(G')$ be a homeomorphism between limit sets of two convex cocompact Kleinian groups. If the restriction of $h$
to every embedded carpet in $\La(G)$ is quasisymmetric, then $\La(G)$ and $\La(G')$ are quasisymmetric. 
\end{theorem}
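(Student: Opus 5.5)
Note that the conclusion only asks for \emph{some} quasisymmetry between $\La(G)$ and $\La(G')$, not that $h$ itself be one. The plan is to cut $\La(G)$ along a canonical decomposition, build quasisymmetries piece by piece (using $h$ only on the carpet pieces), and glue them equivariantly. By Tukia's theorem, a homeomorphism conjugating two convex cocompact actions is automatically quasisymmetric. So it suffices to produce, after possibly changing the homeomorphism, a group isomorphism $\phi: G\to G'$ (up to finite index) together with an equivariant homeomorphism $\La(G)\to\La(G')$.

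\textbf{Step 1: reduce to connected, one-ended pieces.} If $\La(G)$ is disconnected, use the Stallings/Dunwoody decomposition over finite subgroups. The limit set is then a Cantor-set arrangement of the limit sets of one-ended vertex groups, and a homeomorphism must match non-degenerate components. Components are permuted by $h$, and each component is the limit set of its stabilizer. By the standard construction for totally disconnected trees of spaces (as in the Cantor-set case), a quasisymmetry can be assembled from uniformly quasisymmetric maps between finitely many orbit types of components. So assume $\La(G)$ is connected.

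\textbf{Step 2: JSJ decomposition and the pieces.} Take the JSJ splitting over two-ended subgroups, which is read off from the local cut-point structure of $\La(G)$ (Bowditch). This structure is topological, so $h$ respects it: cut pairs go to cut pairs, and each vertex limit set goes to a vertex limit set of $G'$. The vertex types are as follows.
\begin{itemize}
\item \emph{Two-ended} and \emph{hanging/Fuchsian-type} vertices have limit sets that are circles or a necklace structure. These are flexible, and any homeomorphism of the boundary data can be realized quasisymmetrically.
\item \emph{Rigid} vertices have limit sets that are either carpets (Schottky-type, no local cut points) or the sphere. A sphere only occurs if $\La(G)=\widehat\C$, where the statement is trivial. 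A carpet piece sits inside $\La(G)$ as an embedded carpet, so by hypothesis $h$ restricts to a quasisymmetry there.
\end{itemize}
For quasi-circle and Fuchsian-type pieces, I would invoke Ahlfors--Bers style universality to get uniformly quasisymmetric identifications matching the gluing data.

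\textbf{Step 3: realize the pieces equivariantly and glue.} On a carpet piece, restrict $h$ to a vertex carpet $\La(H)$. Its peripheral circles correspond to the edge groups of the decomposition. Since the map is quasisymmetric and both carpets are limit sets of convex cocompact groups, the Bonk--Kleiner--Merenkov rigidity \cite{BKM09} upgrades it to a Möbius map. That map conjugates finite-index subgroups of $H$ and $h(H)$. Combining these with the flexible pieces, I build an abstract isomorphism of the graphs of groups on finite-index subgroups. This uses compatibility of $h$ with the tree of cylinders and Dehn-twist adjustments along edges, which do not affect quasisymmetry class. Tukia's theorem, or a Sullivan-type relative version for the gluing, then gives an equivariant quasisymmetry $\La(G)\to\La(G')$ between finite-index subgroups. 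This is a quasisymmetry of the limit sets.

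\textbf{Main obstacle.} The hardest step is the gluing in Step 3. A carpet piece's Möbius rigidity only pins down a commensurability class, so the finite-index subgroups chosen for the different vertices must be made compatible on edge groups to assemble a single finite-index subgroup of $G$. I would first try separability of edge groups (quasiconvex subgroups in virtually special/Kleinian settings) to pass to a common finite-index subgroup. If that fails, I would instead glue non-equivariantly, using uniform control: there are finitely many orbits of pieces, each piece map is $\eta$-quasisymmetric for a uniform $\eta$, and the relative geometry of the tree of pieces is uniform. A quasisymmetric gluing lemma for trees of uniformly quasisymmetric pieces would then finish the proof.
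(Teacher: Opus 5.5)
Your reduction to connected limit sets is fine; the paper does the same via \cite{PW02}. But Step 2 misidentifies the rigid vertices, and that is where the real difficulty lies. A rigid vertex $v$ corresponds to a pared acylindrical piece $(W,P_W)$ whose paring consists of the JSJ annuli. The set $\Lambda_v$ is the limit set of $G_v$ with these annuli \emph{not} pinched, and only the quotient $\Lambda_v/\sim_v$, obtained by collapsing the leaves of $\mathcal{L}_v$, is homeomorphic to a Schottky set (Proposition~\ref{prop:rigidvertexSchottky}).

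For example, take $W=F\times[-1,1]$ with $F$ closed, and let $P_W$ consist of annuli around $\alpha\times\{1\}$ and $\beta\times\{-1\}$ with $\alpha\cup\beta$ filling $F$. Then $(W,P_W)$ is acylindrical, $\Lambda_v$ is a quasicircle, and $\Lambda_v/\sim_v$ is a carpet-free Schottky set. In general, when the JSJ tree is nontrivial, the quotient has tangencies, so it is either carpet-free or contains carpets without being one. On such pieces your hypothesis says little or nothing about $h$. Yet these pieces are not flexible, and nothing in your plan shows that $G_v$ and $G'_{h(v)}$ are even quasi-isometric relative to their edge groups, compatibly with $h$.

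The missing ingredient is the paper's central result, Theorem~\ref{thm:carpetfreeSchottkyimpliesrigid}: for Schottky-type limit sets, $G$ has finite index in $\HQ^+(\Lambda(G))$. Consequently a homeomorphism that is quasisymmetric on carpets is virtually equivariant on every rigid vertex limit set. Its proof occupies most of the paper: de-parabolization, the interaction of the acylindrical lamination with the JSJ and incompressible laminations, linked and bordering leaves, ivy buds, and induction on the height of the hierarchy. Even on genuine carpet pieces, \cite{BKM09} gives M\"obius rigidity only for round carpets; for limit-set carpets you get virtual equivariance, \cite[Theorem 6.6]{haiss:lec}.

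Your gluing step has a second gap. Uniformly quasisymmetric maps on adjacent pieces that agree on the cut pairs need not patch into a quasisymmetry. In the tree of spaces, the quasi-isometries induced on a shared edge quasi-line from the two sides must be at bounded distance. A color-preserving quasisymmetry of a Fuchsian piece obtained from Ahlfors--Bers or MacManus-type arguments induces an essentially arbitrary quasi-isometry of $\mathbb{Z}$ on each edge group, and Dehn-twist adjustments only correct by translations.

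The paper handles this with Theorem~\ref{thm:colors} (cf.\ Remark~\ref{rmk:localconj}). Using blown-up Bowen--Series maps and $d$-adic models with prescribed multipliers, it builds color-preserving quasisymmetries between Fuchsian Cantor limit sets of possibly different topological types. These conjugate the stabilizers on definite neighborhoods of all gap endpoints, so the edge maps are close to translations and can be matched by an edge-group element. The paper then glues quasi-isometries along an exhaustion of the JSJ tree (Theorem~\ref{thm:qigluing}) and passes to the boundary only at the end.

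Finally, your primary target cannot exist in general: an equivariant map between finite-index subgroups, followed by Tukia, would make $G$ and $G'$ abstractly commensurable. For instance, $\pi_1(S_2)*\mathbb{Z}$ and $\pi_1(S_2)*\pi_1(S_2)$ have homeomorphic carpet-free limit sets. But by Kurosh, the ratio of the Euler characteristic of the surface factors to the total Euler characteristic of any finite-index subgroup is $2/3$ for the first and $4/5$ for the second. So separability is not the relevant obstacle.
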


The above theorem can be modified appropriately for geometrically finite Kleinian groups.
For a geometrically finite Kleinian group $G$, a subgroup $H\le G$ is called a {\em carpet-quotient subgroup} 
if its minimally parabolic representatives are carpet groups, see \S~\ref{subsec:rankonedeparalam}.
A subset $A \subseteq \Lambda(G)$ is called a {\em dynamical carpet-quotient} if it is the limit set of a carpet-quotient subgroup.
Note that $A$ is homeomorphic to the quotient of the Sierpi\'nski carpet by some lamination on the peripheral disks.

\begin{theorem}\label{thm:qsunivgeneralGF}
Let $h:(\La(G), \mathfrak{P}(G))\to (\La(G'), \mathfrak{P}(G'))$ be a homeomorphism between limit sets of two geometrically finite Kleinian groups. If $h$ is quasisymmetric on every dynamical carpet-quotient and is coarsely order-respecting at each rank-two cut point, then $(\La(G), \mathfrak{P}(G))$ and $(\La(G'), \mathfrak{P}(G'))$ are quasisymmetric. 
\end{theorem}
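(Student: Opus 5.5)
The plan is to reduce to a structured decomposition of the limit set and build a new quasisymmetry piece by piece, rather than prove that $h$ itself is quasisymmetric. The statement only asserts that the pairs are quasisymmetrically equivalent, so the final map may differ from $h$.

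\textbf{Step 1: decomposition.} Following the convex cocompact case (Theorem~\ref{thm:qsunivgeneral}), pass to a type-preserving deformation of $G$ to a minimally parabolic representative $\hat G$. Geometric finiteness with the parabolic locus tracked gives a quasiconformal, hence quasisymmetric, deformation inside $\mathfrak{X}_{\gf}$ wherever this is allowed. Then decompose $\La(G)$ along its cut point structure:
\begin{itemize}
\item first along rank-one parabolic cut pairs and rank-two cut points, via the JSJ/Bowditch tree of the relatively hyperbolic pair $(G,\mathbb{P})$;
\item then each vertex piece further, into a carpet-quotient part and a topologically rigid part, according to whether its minimally parabolic model contains carpets.
\end{itemize}
The homeomorphism $h$ is type-preserving and, by the topological characterization of these cut sets, induces an isomorphism of the decomposition trees. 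It is not equivariant, but it matches vertex pieces to vertex pieces.

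\textbf{Step 2: vertex pieces.} On carpet-quotient vertices, $h$ is quasisymmetric by hypothesis. On the remaining vertices, the limit sets are carpet-free relative to the peripheral structure. Here I would invoke the topological rigidity classification (Theorem~\ref{thm:topoRigid} as announced), which says such pieces are virtually topologically rigid, together with the Tukia-type statement that homeomorphisms between carpet-free Schottky-type geometrically finite limit sets are quasisymmetric. Quadratically-hanging and circle-type vertex pieces carry a flexible structure: they are quasicircles or Fuchsian-type pieces. There I would replace $h$ by a quasisymmetric map with the same combinatorics on the edge sets, using Ahlfors--Beurling extensions and the fact that the boundary data are parabolic fixed points.

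\textbf{Step 3: gluing.} Assemble the vertex maps into a global map. At rank-one cut pairs the local geometry is controlled by horoball/cusp neighborhoods, and gluing uniformly quasisymmetric pieces meeting at parabolic points yields a quasisymmetry, by a standard cusp-uniformity argument (Proposition-type statements on uniform quasisymmetry near parabolic points). At a rank-two cut point $a$, the components of $\cC_a^*$ accumulate at $a$ like a $\Z$-indexed chain in $\widehat\C\setminus\{a\}$, and their sizes in a normalization sending $a$ to $\infty$ grow linearly. The coarse order-respecting hypothesis $h_*(n)=\epsilon n+c+O(1)$ gives a bilipschitz map of the index sets, conversely to Proposition~\ref{prop:inducedbilipschitz}. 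One can then first adjust $h$ to be order-preserving exactly, which is possible since the bounded discrepancy is absorbed by permuting boundedly many neighbors among themselves, and then glue with a uniform control.

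\textbf{Main obstacle.} The hard step is making all the choices uniform, so the final gluing produces a single $\eta$. There are infinitely many vertex pieces, but only finitely many up to the group action. The vertex maps from the hypothesis on $h$ are quasisymmetric with a priori different distortions, since $h$ is not equivariant. I would handle this by using cocompactness of the action on the tree. Each vertex piece is, after rescaling by a group element, one of finitely many model pieces. The hypothesis on dynamical carpet-quotients then needs to be upgraded to uniform control. For this I would try first to show, by a compactness/contradiction argument with rescaled limits (Tukia's convergence-group blowups), that non-uniformity would produce a non-quasisymmetric homeomorphism on a single carpet-quotient piece, contradicting the hypothesis.
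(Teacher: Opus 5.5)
\textbf{The proposal has a genuine gap: Step~3, the gluing, fails as stated.} You assert that uniformly quasisymmetric vertex maps agreeing at the gluing points can be patched into a global quasisymmetry by a ``standard cusp-uniformity argument''. (The gluing points are loxodromic cut pairs and parabolic cut points; cut pairs are not rank-one parabolic.) This is false. On $\R$, the homeomorphism equal to $x$ on $(-\infty,0]$ and to $x^2$ on $[0,\infty)$ consists of two quasisymmetric pieces that agree at $0$, yet the triples $(0,-t,t)$ show it is not quasisymmetric. In a limit set this mismatch of scales can occur at infinitely many gluing points and at all scales, and uniform control of each piece separately says nothing about the relative scaling on the two sides of a gluing point; this is exactly Remark~\ref{rmk:localconj}.

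The missing idea is local equivariance at the gluing points. The paper obtains it on the {\mhf} pieces from Theorem~\ref{thm:colors}. It models blown-up Bowen--Series maps quasisymmetrically by blown-up $d$-adic Markov maps whose periodic boundary multipliers are all prescribed to equal $e^k$. This produces color-preserving quasisymmetries that conjugate the stabilizer of each gap endpoint on a neighbourhood of definite size. Even then the paper does not glue boundary maps. It proceeds as follows:
\begin{itemize}
\item it reduces to connected limit sets via \cite{PW02};
\item it passes to vertex quasi-isometries, which the local conjugacy makes $M$-close to translations on the two-ended vertex spaces;
\item it glues these coarsely over an exhaustion of the JSJ tree (Theorem~\ref{thm:qigluing});
\item it reads off the quasisymmetry from the resulting quasi-isometry of Groves--Manning cusped spaces.
\end{itemize}

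\textbf{Step~2 also does not cover all vertices.} A {\rigid} vertex has limit set homeomorphic to a Schottky set. Such a set may contain carpets without being a dynamical carpet-quotient (last example of Figure~\ref{fig:sch}), so ``carpet-quotient or carpet-free'' is not a dichotomy. Since Schottky sets have no cut points or cut pairs, such a piece also cannot be subdivided and reglued along finite sets. Step~1 is similarly flawed: de-parabolization is not a quasiconformal deformation, and it changes the topology of $(\La(G),\mathfrak{P}(G))$.

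What is needed is Theorem~\ref{thm:carpetfreeSchottkyimpliesrigid} in full strength: $G$ has finite index in $\HQ^+(\La(G))$ whenever $\La(G)$ is homeomorphic to a Schottky set. The paper proves this by induction on the height of the hierarchy of the de-parabolization, using the interplay between the acylindrical lamination and the JSJ and compression-disk laminations.

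\textbf{This result also removes your ``main obstacle''.} It makes $h|_{\La_v}$ virtually equivariant on each {\rigid} piece. Vertices and edges fall into finitely many colored orbits, so one fixes a single model map per color and translates it by group elements, and uniform constants follow automatically. Your rescaling/contradiction argument is therefore unnecessary, and as stated it does not work: rescaled limits of non-uniformly quasisymmetric homeomorphisms need not be homeomorphisms.

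\textbf{Rank-two cut points.} You cannot in general make $h_*$ exactly order-preserving. Suppose the components have two non-homeomorphic types, arranged $ABAB\cdots$ on one side and $AABB\cdots$ on the other. Then a type-preserving bijection with bounded displacement exists, but no order-preserving one does. The paper instead absorbs the $O(1)$ discrepancy coarsely in the $\Z^2$ vertex space.
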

\begin{remark} 
Note that Theorem~\ref{thm:qsunivgeneral} (or Theorem~\ref{thm:qsunivgeneralGF}) does {\bf not} say nor imply that $h$ is quasisymmetric. It only guarantees the existence of a quasisymmetric map between $\La(G)$ and $\La(G')$ (or $(\La(G), \mathfrak{P}(G))$ and $(\La(G'), \mathfrak{P}(G'))$); cf. Theorem~\ref{thm:qsveGeneral} below. 
\end{remark}

\subsection*{A broader picture of the universality for conformal dynamics}   
Quasisymmetric and quasiconformal universality can be formulated more broadly in conformal dynamics, by enlarging the collection $\mathfrak{S}$ to include Julia sets of rational maps and limit sets of other conformal dynamical systems, such as algebraic correspondences.
A general conjecture in this direction was proposed in \cite[Conjecture 1.14]{LMM26}, together with supporting evidence; see also \cite{McM26}.
Theorem~\ref{thm:quasiconformalUniv} and Theorem~\ref{thm:quasiconformalUnivGF} give a complete affirmative answer to this conjecture for Kleinian groups.

The existence of carpet subsets also provides an obstruction for universality in this broader setting. 
This is illustrated by \cite{BLM16}, where it is shown that no carpet Julia set of a hyperbolic rational map is quasiconformally equivalent to a limit set (see \cite[Corollary 1.3]{BLM16}). Moreover, quasisymmetries between such carpet sets are forced to arise from the underlying dynamics.
See \cite{LN24, LMM26} for further discussions of quasiconformal homeomorphic limit sets and Julia sets.

On the other hand, there are infinitely many non-homeomorphic limit sets in $\mathfrak{X}_{\cc}$ that are homeomorphic to Julia sets of hyperbolic rational maps (cf. \cite{LLM22}). The broader conjecture predicts that all of them are quasiconformally homeomorphic to such Julia sets.

\subsection{Quasi-isometric classification} 
Let $X, Y$ be two metric spaces. A $(K,C)$-quasi-isometry is a map $h: X\rightarrow Y$ such that
$$
\frac{1}{K}d_X(x,y) - C \leq d_Y(h(x),h(y)) \leq Kd_X(x,y) + C,
$$
and for each $y \in Y$, there exists $x \in X$ with $d_Y(y, h(x)) \leq C$.
Our universality results have consequences on the quasi-isometric classification of Kleinian groups.

Let $G$ be a finitely generated torsion-free Kleinian group.
By \cite[Prop.\,2.13]{haiss:lec}, $G$ is isomorphic to a minimally parabolic geometrically finite Kleinian group $G_{min}$ (see also \cite[Theorem 19.6]{Kap09}).
Thus, $G$ is hyperbolic relative to a (unique) finite collection of conjugacy classes of maximal rank-two free abelian subgroups $\Proj$.
One way to construct its Bowditch boundary is to glue horoballs to $\Proj$ and to their cosets following Groves and Manning \cite{groves:manning:dehn} so that it transforms the Cayley graph into a metric hyperbolic space. Its boundary at infinity 
defines the Bowditch boundary of the group pair $(G,\Proj)$ that we may equip with the conformal gauge induced by a visual distance, i.e., the collection of metrics that are quasisymmetric to a visual distance. 
By \cite[Prop.\,3.21]{haiss:lec}, the Bowditch boundary $(\partial_B (G,\Proj), \partial_{\mathrm{par}}(G,\Proj))$ 
equipped with its parabolic locus is quasisymmetric to the limit set $(\Lambda(G_{min}), \mathfrak{P}(G_{min}))$. 
Note that the natural homeomorphisms between limit sets of different minimally parabolic models are coarsely order-respecting, so the coarse order structures at rank two cut points are well-defined.

\begin{thmx}[Quasi-isometric classification for finitely generated Kleinian groups]\label{thm:qiclassification}
Let $G$ be a finitely generated Kleinian group with $\partial_B (G,\Proj) \neq S^2$. Then its quasi-isometric class is determined by
\begin{itemize}
    \item the topology of $(\partial_B (G,\Proj), \partial_{\mathrm{par}}(G,\Proj))$; 
    \item the conformal gauges of every embedded carpet in $\partial_B (G,\Proj)$;
    \item the coarse order structures at rank-two cut points.
\end{itemize}
More precisely, let $G, G'$ be two finitely generated Kleinian groups, with $\partial_B (G,\Proj) \neq S^2$.
    Then the following are equivalent.
    \begin{enumerate}
        \item\label{thm:qiclassification:eq1} $G, G'$ are quasi-isometric;
        \item\label{thm:qiclassification:eq2} there is a homeomorphism 
        $$
        h: (\partial_B (G,\Proj), \partial_{\mathrm{par}}(G,\Proj)) \to (\partial_B (G',\Proj'), \partial_{\mathrm{par}}(G',\Proj'))
        $$ 
    such that $h$ is quasisymmetric on every embedded carpet in $\partial_B (G,\Proj)$ and is coarsely order-respecting at each rank-two cut point.
    \end{enumerate}
\end{thmx}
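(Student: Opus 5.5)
The plan is to reduce the equivalence to Theorem~\ref{thm:qsunivgeneralGF} together with the standard dictionary between quasi-isometries of relatively hyperbolic groups and quasisymmetries of their Bowditch boundaries. First I would pass to the minimally parabolic models. By \cite[Prop.\,2.13]{haiss:lec}, $G\cong G_{min}$ and $G'\cong G'_{min}$. By \cite[Prop.\,3.21]{haiss:lec}, the pair $(\partial_B(G,\Proj),\partial_{\mathrm{par}}(G,\Proj))$ is quasisymmetric to $(\Lambda(G_{min}),\mathfrak{P}(G_{min}))$. These identifications preserve embedded carpets, with their conformal gauges, and coarse order structures. Hence both conditions may be read off on limit sets of geometrically finite groups.

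For (\ref{thm:qiclassification:eq1})$\Rightarrow$(\ref{thm:qiclassification:eq2}), let $G$ and $G'$ be quasi-isometric. Since $\partial_B(G,\Proj)\neq S^2$, the peripheral structure is quasi-isometrically rigid: the maximal $\Z^2$ subgroups are the only non-hyperbolic peripheral pieces. By Druţu--Sapir / Behrstock--Druţu--Mosher, a quasi-isometry coarsely maps peripheral cosets to peripheral cosets. This is where I will handle the degenerate cases, namely $G$ virtually $\Z^2$, $G$ elementary, or $\partial_B$ a circle; there one can argue by hand. The quasi-isometry then extends to a quasi-isometry of the Groves--Manning cusped spaces. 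That quasi-isometry induces a quasisymmetry $h$ of the Bowditch boundaries which sends parabolic points to parabolic points, and preserves rank because the Bowditch boundary only retains the rank-two cusps of the minimal model. A quasisymmetry is quasisymmetric on every carpet. By Proposition~\ref{prop:inducedbilipschitz} it is also coarsely order-respecting at every rank-two cut point.

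For (\ref{thm:qiclassification:eq2})$\Rightarrow$(\ref{thm:qiclassification:eq1}), transport $h$ to a type-preserving homeomorphism $(\Lambda(G_{min}),\mathfrak{P}(G_{min}))\to(\Lambda(G'_{min}),\mathfrak{P}(G'_{min}))$. Since the minimal models have only rank-two parabolics, every dynamical carpet-quotient subgroup is itself a carpet group, so its limit set is an embedded carpet. The hypotheses of Theorem~\ref{thm:qsunivgeneralGF} therefore hold. That theorem yields a quasisymmetry between the pairs, though not necessarily $h$ itself. A parabolic-preserving quasisymmetry between Bowditch boundaries extends to a quasi-isometry of the cusped spaces, via Bourdon--Pajot / Paulin-type extension for visual boundaries of Gromov hyperbolic proper spaces that are cobounded. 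It also respects horoballs, since horoballs correspond to the parabolic points. Collapsing the horoballs gives a quasi-isometry between $G$ and $G'$ relative to $\Z^2$ cosets. I would verify this last step using the Groves--Manning construction: horoballs over quasi-isometric $\Z^2$ cosets are coarsely compatible, and the extended map restricted to the complement of the horoballs is coarsely well-defined on Cayley graphs. This uses the quasi-isometry invariance result of Groff / Healy--Hruska.

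The main obstacle is this last step, the passage from a boundary quasisymmetry to a group quasi-isometry in the relative setting. An arbitrary quasisymmetry need not be equivariant, so the cusped-space quasi-isometry must be shown to map horoballs uniformly to within bounded distance of horoballs. I would first try to get this from the uniform perfectness of the parabolic points' neighborhoods together with the type-preservation of the quasisymmetry. Failing that, I would argue via the cut-point/JSJ structure already developed for Theorem~\ref{thm:qsunivgeneralGF}.
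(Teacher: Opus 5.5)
Your argument for (\ref{thm:qiclassification:eq1})$\Rightarrow$(\ref{thm:qiclassification:eq2}) matches the paper's: peripheral rigidity of the $\Z^2$ cosets, extension to cusped spaces, then Proposition~\ref{prop:inducedbilipschitz}. The hypothesis $\partial_B\neq S^2$ plays no role there, since peripheral rigidity comes from $\Z^2$ being unconstricted.

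The gap is in (\ref{thm:qiclassification:eq2})$\Rightarrow$(\ref{thm:qiclassification:eq1}), at the step you flag yourself. You want a type-preserving quasisymmetry of Bowditch boundaries to extend to a quasi-isometry of cusped spaces that carries horoballs uniformly close to horoballs, ``since horoballs correspond to parabolic points''. That principle is false, and \S\ref{sec:lattice} is exactly the counterexample. By Theorem~\ref{thm:lattice}\eqref{thm:lattice:eqn:3}, any two non-uniform lattices of $\PSL_2(\C)$ admit a bi-Lipschitz map $(\SS^2,\mathfrak{P}(G))\to(\SS^2,\mathfrak{P}(G'))$. Yet by Schwartz, non-commensurable ones are not quasi-isometric. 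That map does extend to a quasi-isometry of the cusped spaces, since both are quasi-isometric to $\Hyp^3$. But no extension can send horoballs uniformly near horoballs, because that would give a quasi-isometry of the lattices. A bijection between parabolic points gives no uniform control on where each horoball goes. Uniform perfectness and type-preservation also hold in that example, so your first proposed remedy cannot work. Put differently, your chain ``Theorem~\ref{thm:qsunivgeneralGF} $+$ extension'' never uses $\partial_B\neq S^2$. For lattices, condition (\ref{thm:qiclassification:eq2}) holds while (\ref{thm:qiclassification:eq1}) fails, so the argument proves too much.

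The paper never passes through a bare boundary quasisymmetry. Theorem~\ref{thm:qsclassG}, via Theorem~\ref{thm:qigluing}, builds the quasi-isometry relative to parabolic subgroups directly. It exhausts the JSJ tree and glues vertex quasi-isometries along the two-ended edge spaces, using three kinds of pieces:
\begin{itemize}
\item virtually equivariant maps on rigid pieces (Theorem~\ref{thm:carpetfreeSchottkyimpliesrigid}, which is where the carpet hypothesis enters);
\item color-preserving quasi-isometries of the MHF pieces that are within bounded distance of a translation on every peripheral coset (Corollary~\ref{cor:colors}, coming from the locally equivariant quasisymmetries of Theorem~\ref{thm:colors});
\item the identity of $\Z^2$ at rank-two vertices, after the reindexing that coarse order-respect allows.
\end{itemize}
The translation property is what lets adjacent pieces be matched with bounded error. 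The boundary quasisymmetry is a consequence of the relative quasi-isometry, not its source. So your ``fallback'' is the entire proof, and you should invoke Theorem~\ref{thm:qsclassG}, whose conclusion is already the quasi-isometry.

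Separately, your claim that every dynamical carpet-quotient in the minimal model is an embedded carpet is wrong. A subgroup of $G_{min}$ can meet a $\Z^2$ cusp group in a $\Z$, which is a rank-one parabolic subgroup for it; a rigid JSJ vertex group adjacent to a rank-two cut point is an example. Its limit set is then a carpet with pinched peripheral circles, not a carpet. So passing from ``quasisymmetric on every embedded carpet'' to the hypothesis of Theorem~\ref{thm:qsclassG} needs its own justification. The paper's appeal to Theorem~\ref{thm:qsclassG} leaves this translation implicit as well.
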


We remark that the quasi-isometric classification for finitely generated Kleinian group with $\partial_B (G,\Proj) = S^2$ follows from the work of Sullivan, Cannon and Cooper and Schwartz \cite{DS3, cannon:cooper, Sch95}; see \S~\ref{sec:lattice}.
Theorem~\ref{thm:qiclassification} also refines the quasi-isometric classification of \cite{paulin:determined} for convex cocompact Kleinian groups.

We also remark the first condition in Theorem~\ref{thm:qiclassification} can be replaced by the topology of $\partial_B (G,\Proj)$ and the location of parabolic components, cf. the discussion in \S \ref{subsec:parahomeo}.

\begin{cor}[Quasi-isometric classification, convex cocompact]\label{cor:qirigidity}  Let $G$, $G'$  be two convex cocompact Kleinian groups. 
Then $G$ and $G'$ are quasi-isometric if and only if there is a homeomorphism $h:\La(G)\to \La(G')$ such that the restriction of $h$ to every embedded carpet is quasisymmetric.

In particular, suppose the limit sets are carpet-free. Then $G$ and $G'$ are quasi-isometric if and only if their limit sets are homeomorphic.
\end{cor}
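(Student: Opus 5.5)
The corollary follows by specializing Theorem~\ref{thm:qsunivgeneral} to the convex cocompact setting and combining it with the standard dictionary between quasi-isometries of hyperbolic groups and quasisymmetries of their boundaries.

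For convex cocompact $G$, the orbit map $g\mapsto g\cdot o$ is a quasi-isometric embedding of $G$, with a word metric, onto a quasi-convex subset of $\Hyp^3$. Hence $G$ is word hyperbolic, and its Gromov boundary, with its visual conformal gauge, is quasisymmetric to $\La(G)\subset\widehat\C$ with the spherical metric. The parabolic locus is empty and there are no rank-two cut points, so the extra conditions in Theorem~\ref{thm:qiclassification} are vacuous. One could simply quote that theorem, but I will argue directly.

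\emph{($\Rightarrow$)} Suppose $G$ and $G'$ are quasi-isometric. A quasi-isometry between hyperbolic groups extends to a quasisymmetric homeomorphism $\partial G\to\partial G'$ of their visual boundaries. Composing with the identifications above gives a quasisymmetric homeomorphism $h:\La(G)\to\La(G')$. The restriction of a quasisymmetry to any subset is still quasisymmetric, so in particular $h$ is quasisymmetric on every embedded carpet.

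\emph{($\Leftarrow$)} Let $h:\La(G)\to\La(G')$ be a homeomorphism that is quasisymmetric on every embedded carpet.
\begin{enumerate}
\item By Theorem~\ref{thm:qsunivgeneral}, there is some quasisymmetric homeomorphism $f:\La(G)\to\La(G')$. This need not be $h$ itself.
\item The boundaries $\partial G\cong\La(G)$ and $\partial G'\cong\La(G')$ are therefore quasisymmetric.
\item By the Paulin/Bonk--Schramm theorem, a quasisymmetry between boundaries of non-elementary hyperbolic groups comes from a quasi-isometry of the groups. Thus $G$ and $G'$ are quasi-isometric.
\end{enumerate}

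The elementary cases need separate treatment. If the limit sets are empty or consist of two points, $G$ is finite or virtually $\Z$. A homeomorphism preserves the cardinality of the limit set, and groups in the same one of these classes are quasi-isometric.

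For the carpet-free statement, suppose $\La(G)$ is carpet-free and homeomorphic to $\La(G')$. Then $\La(G')$ is carpet-free as well, since the inverse homeomorphism would carry any carpet in $\La(G')$ to a carpet in $\La(G)$. The hypothesis on carpets is therefore vacuous, and the first part of the corollary applies. The converse holds because a quasi-isometry induces a homeomorphism of the boundaries.

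Essentially all of the content lies in Theorem~\ref{thm:qsunivgeneral}. The only potential obstacle is making sure the boundary-quasisymmetry-to-quasi-isometry theorem applies. It does: groups acting geometrically on proper geodesic hyperbolic spaces are quasi-isometric to those spaces, and the visual boundaries are quasi-Möbius to the limit sets with the spherical metric. Since both spaces are compact, these quasi-Möbius maps are quasisymmetric.
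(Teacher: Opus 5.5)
Your proposal is correct and follows the paper's argument: the corollary is the convex cocompact case of Theorem~\ref{thm:qiclassification}, whose nontrivial direction is Theorem~\ref{thm:qsclassG}, and the forward direction is the standard fact that a quasi-isometry induces a quasisymmetry of boundaries. The one difference is cosmetic. You pass through the quasisymmetry conclusion of Theorem~\ref{thm:qsunivgeneral} and then use Paulin's theorem to recover a quasi-isometry, whereas Theorem~\ref{thm:qsclassG} already produces the quasi-isometry directly (its gluing argument goes quasi-isometry $\Rightarrow$ quasisymmetry), so the appeal to Paulin is unnecessary, though harmless here because the groups are hyperbolic.
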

The classification for books of I-bundle groups was previously treated in \cite{Mal10, CM17} using the techniques introduced in \cite{BN08}.

\subsection*{Relative quasi-isometric classification}
We also obtain a refined relative quasi-isometric classification for geometrically finite Kleinian groups, where we take into account the type-preserving property as follows. 
A {\em quasi-isometry relative to their parabolic subgroups} between two geometrically finite groups $(G,\Proj_G)$ and $(H,\Proj_H)$ is
given by a quasi-isometry $\Phi:G\to H$ such that, for any $g\in G$ and $P\in\Proj_G$, there are $h\in H$ and $Q\in \Proj_H$ such that $\Phi(gP)$ is at bounded Hausdorff distance from $hQ$ and, for any $h\in H$ and $Q\in \Proj_H$, there are $g\in G$ and $P\in\Proj_G$ such that $\Phi(gP)$ is at bounded Hausdorff distance from $hQ$. 

\begin{theorem}[Relative quasi-isometric classification]\label{thm:qirigiditygeneralGF}  Let $G$, $G'$  be two geometrically finite Kleinian groups, with $\La(G), \La(G') \neq \widehat\C$.
Then $G$ and $G'$ are quasi-isometric relative to their parabolic subgroups if and only if there is a homeomorphism $h:(\La(G), \mathfrak{P}(G))\to (\La(G'), \mathfrak{P}(G'))$ such that $h$ is quasisymmetric on every dynamical carpet-quotient and is coarsely order-respecting at each rank-two cut point.

In particular, suppose the limit sets are carpet-free and has no non-homogeneous rank-two cut points. Then $G$ and $G'$ are quasi-isometric relative to their parabolic subgroups if and only if $(\Lambda(G), \mathfrak{P}(G))$ and $(\Lambda(G'), \mathfrak{P}(G'))$ are homeomorphic.
\end{theorem}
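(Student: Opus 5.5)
We would derive Theorem~\ref{thm:qirigiditygeneralGF} from Theorem~\ref{thm:qsunivgeneralGF} together with the standard dictionary between relative quasi-isometries and quasisymmetries of Bowditch boundaries. The organizing principle is the following. Geometrically finite groups with $\La \neq \widehat\C$ are hyperbolic relative to their maximal parabolic subgroups, and we use the Groves--Manning cusped space $X(G,\Proj_G)$. A quasi-isometry relative to parabolic subgroups extends to a quasi-isometry of cusped spaces; this extension sends horoballs to within bounded distance of horoballs. The cusped space is Gromov hyperbolic with boundary $\partial_B(G,\Proj_G)$, and this boundary is identified quasisymmetrically with $(\La(G),\mathfrak{P}(G))$ by \cite[Prop.\,3.21]{haiss:lec}. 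Here we use $G$ itself rather than a minimally parabolic model, so both rank-one and rank-two parabolics are in $\Proj_G$.

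For the forward direction, let $\Phi$ be a relative quasi-isometry. We first extend it horoball-by-horoball to a quasi-isometry of cusped spaces. On each combinatorial horoball over $gP$, we map to the horoball over the corresponding coset $hQ$. Depth is preserved up to additive and multiplicative constants because $\Phi|_{gP}$ is a quasi-isometry onto a neighborhood of $hQ$, and horoball distance is logarithmic in the base distance. The boundary extension $h=\partial\Phi$ is then quasisymmetric for visual metrics. It sends parabolic points (centers of horoballs) to parabolic points, preserving rank, since rank is a quasi-isometry invariant of $\Z^k$. Transporting $h$ through the quasisymmetric identifications gives a quasisymmetry $(\La(G),\mathfrak{P}(G))\to(\La(G'),\mathfrak{P}(G'))$. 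This map is in particular a homeomorphism. It is quasisymmetric on every dynamical carpet-quotient, and it is coarsely order-respecting at rank-two cut points by Proposition~\ref{prop:inducedbilipschitz}.

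For the converse, Theorem~\ref{thm:qsunivgeneralGF} produces a type-preserving quasisymmetry $f:(\La(G),\mathfrak{P}(G))\to(\La(G'),\mathfrak{P}(G'))$, not necessarily equal to $h$. We then run the Bonk--Schramm/Paulin machinery in reverse. We need to check that $\La(G)$ is uniformly perfect, and that the cusped spaces are visual, proper, and cocompact-by-group. Under these conditions a quasisymmetry of boundaries extends to a quasi-isometry of the hyperbolic cones, which in turn are quasi-isometric to the cusped spaces. Restricting this quasi-isometry to the orbit $G\cdot o$ (the thick part) and projecting to $G'$ gives a map $\Phi:G\to G'$. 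The remaining point is that $\Phi$ is a quasi-isometry carrying cosets $gP$ to within bounded Hausdorff distance of cosets $hQ$, and vice versa.

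The main obstacle is this last step. Distances in the cusped space are only logarithmically related to word distances along horospheres, so a quasi-isometry of cusped spaces does not a priori restrict to a quasi-isometry of the thick parts. We would handle it as follows. Since $f$ is type-preserving, horoballs are mapped near horoballs, because a horoball is coarsely the union of geodesic rays to its parabolic point. We then use a quasisymmetry-local argument around each parabolic point $p$. Normalizing $p=\infty$ via the stabilizer, the set $\La\setminus\{p\}$ is cocompact under $\stab(p)$ acting by Euclidean isometries. The quasisymmetry $f$ then induces a quasisymmetry between these $\stab(p)$-cocompact sets fixing $\infty$, and that map is coarsely bi-Lipschitz at large scales, in the spirit of Proposition~\ref{prop:inducedbilipschitz}. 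This gives linear, not logarithmic, control on horospheres. Combining it with the quasi-isometry on the complement of horoballs, for which we cite the standard result of Groves--Manning/Drutu--Sapir on relative quasi-isometries, yields the required relative quasi-isometry. The final ``in particular'' claim then follows from Theorem~\ref{thm:quasiconformalUnivGF}(2).
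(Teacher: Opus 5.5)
Your forward direction is correct and coincides with the paper's. A relative quasi-isometry gives a quasi-isometry of cusped spaces \cite[Thm.~3.18]{haiss:lec}, hence a type-preserving quasisymmetry, and Proposition~\ref{prop:inducedbilipschitz} supplies the coarse order condition.

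The converse has a genuine gap exactly at the step you flag: promoting the type-preserving quasisymmetry $f$ from Theorem~\ref{thm:qsunivgeneralGF} back to a quasi-isometry relative to parabolic subgroups. The Bonk--Schramm extension of $f$ to a quasi-isometry of cusped spaces is fine. The relative part is not, and your argument for it never uses $\La(G)\neq\widehat\C$. It would therefore apply verbatim to two non-commensurable non-uniform lattices in $\Isom^+(\Hyp^3)$. These are relatively quasisymmetric by Theorem~\ref{thm:lattice} but not quasi-isometric by \cite{Sch95}, which is precisely the remark following the statement. Both of your sub-claims fail.
\begin{itemize}
\item Every point of a proper hyperbolic space lies on a geodesic ray to $p$, so ``a horoball is coarsely the union of geodesic rays to $p$'' has no content. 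A quasi-isometry whose boundary map fixes $p$ may send a horoball at $p$ to a set at infinite Hausdorff distance from every horoball at $p$. For example, with $\alpha>1$, the hyperbolically bi-Lipschitz map $(\log r,\theta)\mapsto(\alpha\log r,\theta)$ of the upper half-plane (polar coordinates) extends $x\mapsto x|x|^{\alpha-1}$ and does this at $\infty$. In the lattice example, no quasi-isometric extension of $f$ can match the two horoball packings, since such a matching would restrict to a relative quasi-isometry.
\item A quasisymmetry fixing $\infty$ between $\stab(p)$-cocompact sets need not be coarsely bi-Lipschitz at large scales, even when it is type-preserving and $\La\neq\widehat\C$. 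On $\La=\widehat\R$ for a non-uniform Fuchsian lattice, compose $x\mapsto x|x|^{\alpha-1}$ with a bi-Lipschitz correction fixing $\infty$ from Theorem~\ref{thm:qcbetweencountabledensesubsets}. Proposition~\ref{prop:inducedbilipschitz} controls only the induced bijection of $\pi_0(\mathcal{C}^*_a)\cong\Z$ at rank-two cut points. It says nothing about the metric behaviour of $f$ along the components, and nothing at rank-one points or at rank-two points that are not cut points.
\end{itemize}

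The missing idea is that one should not pass through the quasisymmetry at all. In the paper, Theorem~\ref{thm:qsunivgeneralGF} is itself a consequence of Theorem~\ref{thm:qsclassG}, whose primary conclusion is the relative quasi-isometry. That quasi-isometry is built directly, not extracted from a boundary map:
\begin{itemize}
\item topological rigidity (Theorem~\ref{thm:carpetfreeSchottkyimpliesrigid}) makes the homeomorphism virtually equivariant on rigid vertex limit sets;
\item Corollary~\ref{cor:colors} gives quasi-isometries of the MHF vertex groups that stay within bounded distance of translations on the cyclic edge groups;
\item the coarse-order hypothesis handles the rank-two vertices;
\item Theorem~\ref{thm:qigluing} patches these vertex maps along the JSJ tree with uniform constants, with \cite{PW02} handling disconnected limit sets.
\end{itemize}
The quasisymmetry is deduced only afterwards, via the cusped spaces. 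So the ``if'' direction is simply Theorem~\ref{thm:qsclassG}. With that replacement, your derivation of the ``in particular'' clause from Theorem~\ref{thm:quasiconformalUnivGF}(2) goes through. A relative quasisymmetry satisfies the hypotheses of the main equivalence by Proposition~\ref{prop:inducedbilipschitz}.
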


\begin{remark}
By \cite{paulin:determined}, the quasi-isometry class of a hyperbolic group is determined by the conformal gauge of its boundary.
This is no longer true for relatively hyperbolic groups, even with the type-preserving condition; see \S~\ref{sec:lattice} for more details.
This phenomenon also implies that Theorem~\ref{thm:qirigiditygeneralGF} is false for $\Lambda(G) = \widehat\C$, so that the relative quasisymmetric equivalence is strictly weaker than 
the relative quasi-isometric equivalence.
\end{remark}

\subsection{Topological rigidity}
We now discuss a key ingredient in the proof of the universality phenomenon, which constitutes another central theme of the paper.

Let $X$ be a closed subset of $\widehat\C$. We denote by $\Homeo(X)$  the group of homeomorphisms of $X$. 
According to \cite[Definition 2]{KK00}, a {\nonel} Kleinian group $G$ is said to be {\em topologically rigid} if every homeomorphism $f: \Lambda(G) \longrightarrow \Lambda(G)$ is induced by an element of $G$, i.e., $\Homeo(\Lambda(G)) = G$.
A {\nonel} Kleinian group $G$ is said to be {\em virtually topologically rigid} if $G$ is a finite index subgroup of $\Homeo(\Lambda(G))$.

Recall that $X$ is a {\em Schottky set} if its complement $\widehat\C \setminus X$ is a union of at least three disjoint round open disks (see \cite{BKM09}); see Figure~\ref{fig:sch}, \ref{fig:ranktwoSch}, \ref{fig:SchottkyMore}.
\begin{thmx}\label{thm:topoRigid}
    Let $G$ be a {\nonel} geometrically finite Kleinian group.
    Then the following are equivalent.
    \begin{enumerate}
        \item\label{eqn:thm:topoRigid:1} $G$ is virtually topologically rigid;
        \item\label{eqn:thm:topoRigid:2} $\Lambda(G)$ is homeomorphic to a carpet-free Schottky set.
    \end{enumerate}
    Moreover, if $G$ is not virtually topologically rigid, then the group $\Homeo^+(\Lambda(G)):= \Homeo^+(\widehat\C, \Lambda(G))/\sim$ is uncountable, where $f\sim g$ if $f|_{\Lambda(G)} = g|_{\Lambda(G)}$. 
\end{thmx}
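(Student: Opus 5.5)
We prove both implications separately, together with the ``moreover'' clause, which we establish by producing many homeomorphisms whenever the limit set is not homeomorphic to a carpet-free Schottky set.

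\emph{(\ref{eqn:thm:topoRigid:2}) $\Rightarrow$ (\ref{eqn:thm:topoRigid:1}).} Suppose $\Lambda(G)$ is homeomorphic to a carpet-free Schottky set $S$. The complementary components of $\Lambda(G)$ are Jordan domains with pairwise disjoint closures, except possibly for tangencies at parabolic points.

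\begin{enumerate}
\item A homeomorphism $f$ of $\Lambda(G)$ permutes the peripheral circles, since these are topologically characterized as non-separating Jordan curves. It also preserves the tangency (nerve) structure. By Alexander-type extension across the disks, $f$ therefore extends to a homeomorphism of $\widehat\C$.
\item Carpet-freeness means every component of the nerve graph is ``thick'': chains of tangent disks accumulate enough to rigidify the picture. By the Apollonian-type rigidity argument (as for the Apollonian gasket), a homeomorphism preserving a connected tangency structure of round disks is determined by its action on finitely many disks. Indeed, a triple of mutually tangent disks forms an interstice, which is then filled by a unique inscribed disk, and this propagates.
\item Combining this with the fact that components of the nerve are limit sets of quasi-Fuchsian-like or Apollonian-type subgroups, we conclude that every homeomorphism is M\"obius on each nerve component.
\item Carpet-freeness also prevents the independent rearrangement of nerve components that a carpet would allow. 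So $f$ is globally M\"obius.
\item By Tukia-type arguments (the group of M\"obius maps preserving $\Lambda(G)$ is discrete and contains $G$ with finite index, since $G$ is geometrically finite with nonempty domain of discontinuity), $G$ has finite index in $\Homeo(\Lambda(G))$.
\end{enumerate}

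The main obstacle is the step asserting that homeomorphisms are M\"obius. This requires care when $\Lambda(G)$ is only homeomorphic to a Schottky set rather than being round. In that case I would first use the JSJ/deformation theory to quasiconformally conjugate $G$ to a group whose limit set is a genuine Schottky set (for instance by pinching or using acylindrical uniformization). I would then reduce to the round case.

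\emph{(\ref{eqn:thm:topoRigid:1}) $\Rightarrow$ (\ref{eqn:thm:topoRigid:2}) and the uncountability.} Suppose $\Lambda(G)$ is not homeomorphic to a carpet-free Schottky set. There are two cases.

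\begin{enumerate}
\item \textbf{Some complementary component is not separated enough, or there is a cut point / cut pair.} Here I would use the JSJ-type decomposition of $G$ along annuli or rank-one cusps. Quasi-Fuchsian pieces and Fuchsian boundary circles provide circles in $\Lambda(G)$ where two complementary components meet along an arc, or cut pairs. On such a configuration one can twist: take any homeomorphism of the circle supported in a fundamental arc, with uncountably many choices, and extend it by $G$-equivariance or by the identity elsewhere. This produces uncountably many distinct elements of $\Homeo^+(\Lambda(G))$, for example by local ``Dehn-twist-like'' modifications on a small separated piece.
\item \textbf{$\Lambda(G)$ contains a carpet.} In this case the carpet is the limit set of a carpet subgroup $H$. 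The homeomorphism group of the Sierpi\'nski carpet is uncountable, since one can modify inside a small peripheral-bounded region. I would realize such local modifications within a carpet region of $\Lambda(G)$ that is cut off from the rest by a peripheral circle or a finite set of points. This again gives uncountably many homeomorphisms, which extend ambiently by Schoenflies-type arguments.
\end{enumerate}

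In both cases $\Homeo^+(\Lambda(G))$ is uncountable, while $G$ is countable, so $G$ cannot have finite index. The delicate point is producing a genuinely local modification. To do this, I would pick a small clopen-in-boundary region of $\Lambda(G)$ isolated by a Jordan curve meeting $\Lambda(G)$ in at most two points or along a peripheral circle, which exists by the structure theory.
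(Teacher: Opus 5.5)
\textbf{There is a genuine gap in (2)$\Rightarrow$(1), at your steps 2--4.} Interstice-filling only works when the nerve is built from triples of mutually tangent disks, as for the Apollonian gasket. Carpet-free Schottky sets need not contain any such triple.

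Take a cocompact Fuchsian group with an acylindrical pair of multicurves on the two sides of $\mathbb{S}^1$. Then $\mathbb{S}^1/\sim_{\mathcal{L}}$ is a carpet-free Schottky set. Its complementary disks are the gaps of $\mathcal{L}^\pm$ and its tangency points are the leaves. So the nerve is the disjoint union of the dual trees of $\mathcal{L}^+$ and $\mathcal{L}^-$ (Figure~\ref{fig:SchottkyTwoComponents}). It has no cycles, hence no interstices, and each component is an infinite-valence tree with no combinatorial rigidity of its own.

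Consequently the claim that a homeomorphism is determined by finitely many disks on each nerve component is false. Your step 4 (carpet-freeness forbids independent rearrangement of nerve components) is exactly the content of the theorem, and you assert it without a mechanism.

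In the paper, rigidity in this base case comes from how the two trees interleave along the circle. The leaves linked with a fixed leaf are linearly ordered, so fixing the endpoints of one linked pair fixes every leaf linked with it. Acylindricity (the complementary regions of $\mathcal{C}^+\cup\mathcal{C}^-$ are polygons) then propagates this along finite linking chains; this is Theorem~\ref{thm:toprigidityconvexcocompact}.

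The general case is an induction on the hierarchy of the de-parabolization $\widehat G$:
\begin{itemize}
\item the JSJ pieces are handled in the connected case;
\item for disconnected $\Lambda(\widehat G)$, a compatible core decomposition with ivy buds and two-point rigidity is used;
\item at rigid vertices, Corollary~\ref{cor:toprig:finitext} applies;
\item the argument concludes with the orbit-counting criterion, Proposition~\ref{prop:finiteindex}.
\end{itemize}
Note also that ``every homeomorphism is M\"obius'' (Corollary~\ref{cor:mobius}) is deduced from finite index in the paper, not used as an input.

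\textbf{Your Case 2 of the converse also fails as proposed.} A Schottky set is connected and has no cut points or cut pairs. So no Jordan curve meeting $\Lambda(G)$ in at most two points separates it, and a peripheral circle bounds a disk containing no limit points. There is therefore no isolated carpet region in which to localize.

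The carpet comes from a hierarchy vertex $G_v$ of $\widehat G$ with carpet limit set $\Lambda_v$. The rest of $\Lambda(\widehat G)$ and the leaves of $\mathcal{L}$ sit inside the peripheral disks of $\Lambda_v$. Hence an arbitrary homeomorphism of $\Lambda_v$ does not extend to an $\mathcal{L}$-preserving homeomorphism.

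The missing idea is to use homeomorphisms of $\Lambda_v$ that agree with some element of $G_v$ on each peripheral disk. These extend equivariantly, preserve $\mathcal{L}$, and descend to $\Lambda(G)$ via Proposition~\ref{prop:homeomisom}. There are uncountably many of them, by an Ntalampekos-type subdivision argument that uses the density of orbits of peripheral disks (Proposition~\ref{prop:uncountableGhom}).

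Your Case 1 is close in spirit to the paper, but a single twist region yields only countably many maps. You need infinitely many pairwise disjoint, shrinking regions with independent binary choices. Either apply or do not apply the cut-pair stabilizer on each translate of a cut-off piece. For disconnected limit sets, either swap or do not swap $U$ and $aU$ inside each of the disjoint translates $a^{2n}W$.
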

Topologically rigid hyperbolic groups were first constructed in \cite{KK00}. These examples have boundary of topological dimension $2$ and this is the minimal possible dimension \cite[Corollary\,17]{KK00}.
Although Theorem~\ref{thm:topoRigid} follows formally from our universality result together with the rigidity of Schottky sets from \cite{BKM09}, we prove Theorem~\ref{thm:topoRigid} first and use it as an ingredient in the proof of the universality results.

\begin{figure}[ht]
\captionsetup{width=0.96\linewidth}
  \centering
  \includegraphics[width=0.45\textwidth]{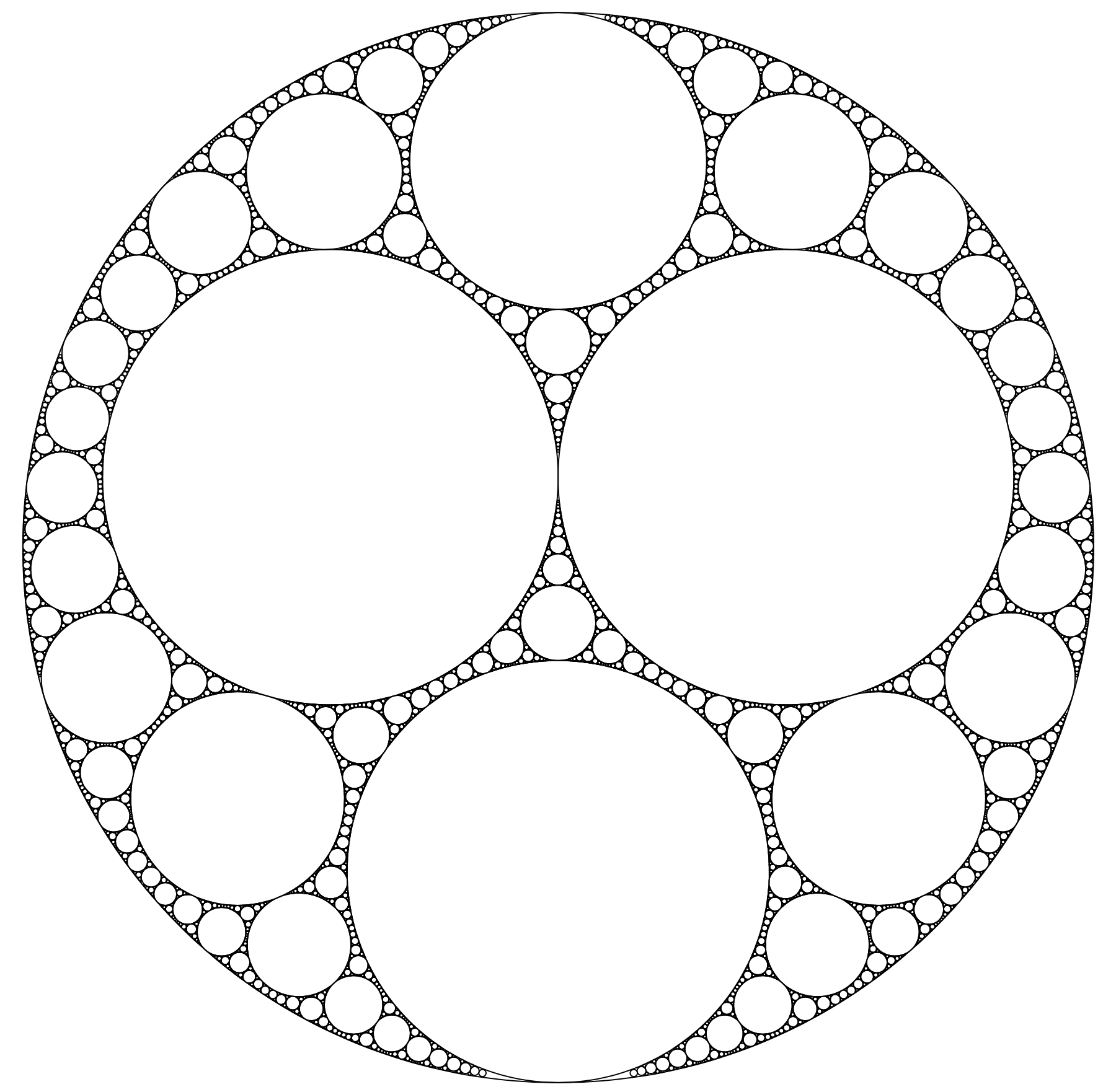}
  \includegraphics[width=0.45\textwidth]{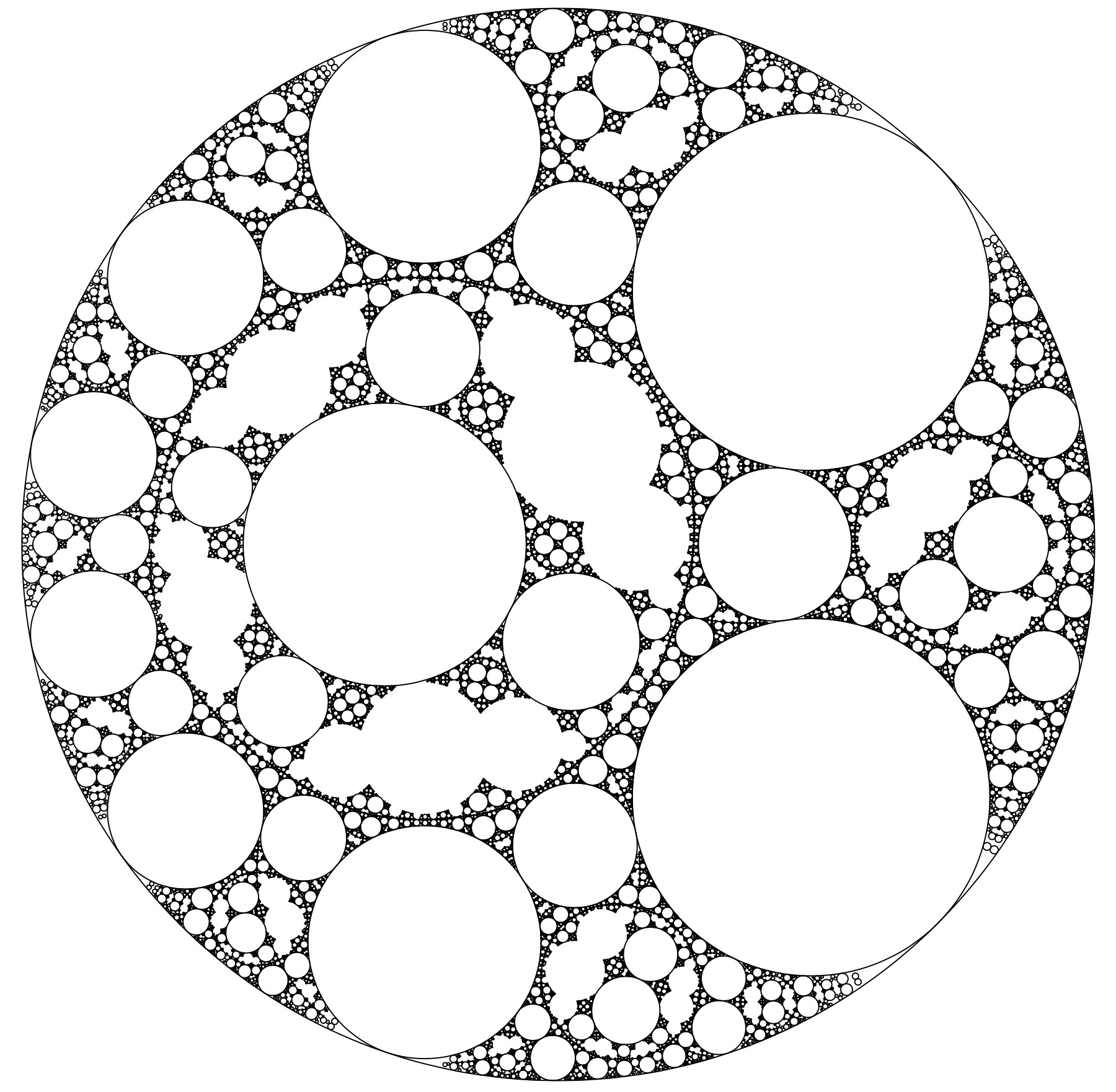}
  \caption{Two additional examples of limit sets homeomorphic to carpet-free Schottky sets. The nerve of the left example has exactly two components: one contains the outermost circle, and the other contains the two prominent horizontal circles. The nerve of the right example has infinitely many components. Note some peripheral disks in the right example are not round.}
  \label{fig:SchottkyMore}
\end{figure}

The following corollary follows from Theorem~\ref{thm:topoRigid} and Corollary~\ref{cor:toprig:finitext},  which completes the classification of topological rigidity for geometrically finite Kleinian groups, cf. \cite[Theorem 3.8]{LLMM23} and \cite[Corollary H]{LZ23}.
\begin{cor}\label{cor:mobius}
    Let $\Lambda$ be a geometrically finite carpet-free Schottky limit set. Then $$\Homeo(\Lambda) = \Conf(\Lambda).$$ 
\end{cor}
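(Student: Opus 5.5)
The plan is to show that every homeomorphism $f$ of $\Lambda$ is the restriction of a M\"obius (or anti-M\"obius) map preserving $\Lambda$. Write $\Lambda = \Lambda(G)$ with $G$ geometrically finite and $\Lambda$ a carpet-free Schottky set.

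\textbf{Step 1: reduce to finitely many cosets.} By Theorem~\ref{thm:topoRigid}, $G$ has finite index in $\Homeo(\Lambda)$. Corollary~\ref{cor:toprig:finitext} should let us pass from this finite extension to the statement that $\Homeo(\Lambda)$ is itself a discrete group of conformal maps. Concretely, I would argue as follows. Take $f \in \Homeo(\Lambda)$. The subgroup $H = G \cap f^{-1} G f$ has finite index in $G$, so $H$ and $fHf^{-1}$ are geometrically finite with the same limit set $\Lambda$. Then $f$ conjugates the action of $H$ on $\Lambda$ to the action of $fHf^{-1} \le G$. By Tukia's theorem \cite[Theorem 3.3]{tukia:ihes85}, a topological conjugacy between geometrically finite limit sets is quasisymmetric, so $f$ is quasisymmetric on $\Lambda$.

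\textbf{Step 2: promote quasisymmetric to M\"obius.} I would use the rigidity of Schottky sets from \cite{BKM09}: a quasisymmetric map between Schottky sets of measure zero is the restriction of a M\"obius or anti-M\"obius map. Limit sets of geometrically finite groups other than $\widehat\C$ have measure zero, and here $\Lambda$ is a genuine Schottky set (its complementary components are round disks). Hence $f$ extends to a conformal or anticonformal map of $\widehat\C$ preserving $\Lambda$, that is, $f \in \Conf(\Lambda)$. The reverse inclusion $\Conf(\Lambda) \subseteq \Homeo(\Lambda)$ is trivial.

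\textbf{Main obstacle.} The expected difficulty is justifying the hypotheses of \cite{BKM09} when the limit set is not a carpet: the rigidity there is stated for Schottky sets of measure zero, and carpet-freeness means the boundary circles touch, so the set may have cut points or tangencies. I would handle this as follows.
\begin{itemize}
\item Apply the \cite{BKM09} rigidity directly, which requires only measure zero and not the carpet property.
\item If that is insufficient, argue peripheral circle by circle. Since $f$ permutes peripheral circles (which are topologically characterized as non-separating circles), it maps each round circle to a round circle quasisymmetrically.
\item Use the group action and a Lebesgue density-point or blow-up argument to show that the derivative data is conformal.
\end{itemize}

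A second point to check is that Step 1 genuinely needs Tukia's equivariant theorem rather than the non-equivariant universality results. This is why the finite index from Theorem~\ref{thm:topoRigid} is essential: it is what makes $f$ a conjugacy between finite-index subgroups.
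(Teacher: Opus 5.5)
Your argument is correct, but its middle step takes a different route from the paper. The paper derives the corollary from Theorem~\ref{thm:topoRigid} and Corollary~\ref{cor:toprig:finitext}. The latter rests on Proposition~\ref{finitextension}, which handles the whole finite extension $\Homeo(\Lambda)\supseteq G$ at once. It shows the extension is a convergence group with limit set $\Lambda$ (hence geometrically finite), and transfers the uniformly quasi-M\"obius action on the boundary of a Groves--Manning cusped space to $\Lambda$. Then \cite[Theorem 6.6]{haiss:lec} shows the extension is Kleinian.

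You instead work one homeomorphism at a time. Intersecting $G$ with $f^{-1}Gf$ makes $f$ a conjugacy between finite-index subgroups, and Tukia's theorem \cite[Theorem 3.3]{tukia:ihes85} then makes $f$ quasisymmetric. Since $\Lambda$ is a genuine round Schottky set of measure zero (by Ahlfors), \cite{BKM09} applies verbatim. This is essentially the mechanism the paper sketches just before Corollary~\ref{cor:qsve}.

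Your route avoids the convergence-group and cusped-space machinery and needs no uniform constants, since \cite{BKM09} only requires each $f$ to be quasisymmetric. The paper's route instead proves in one stroke that the whole extension is a uniformly quasi-M\"obius, indeed Kleinian, group. That statement is used again in the induction for Theorem~\ref{thm:carpetfreeSchottkyimpliesrigid}.

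Two small points. First, Tukia's theorem requires a type-preserving isomorphism; you should state that this is automatic here, because a conjugacy preserves the number of fixed points in $\Lambda$ (one for parabolic elements, two for loxodromic ones). Second, your ``main obstacle'' is not actually one. Schottky sets in \cite{BKM09} are complements of disjoint open balls whose closures may touch, so your first bullet suffices and the vague fallback bullets can be dropped.
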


A homeomorphism between limit sets is {\em virtually equivariant} if it is a conjugacy between two finite index subgroups. 
In that case, it is also type-preserving, so it is automatically quasisymmetric according to \cite[Theorem 3.3]{tukia:ihes85}. 
If $G$ is a geometrically finite Kleinian group whose limit set is homeomorphic to a Schottky set, then $G$ is quasiconformally conjugate to some Kleinian group $G'$ with Schottky limit set (see \cite{McM90} and Proposition~\ref{prop:acylindricalSchottky}). Thus, we have the following immediate corollary.
\begin{cor}\label{cor:qsve}
    Let $\Lambda(G), \Lambda(G')$ be two geometrically finite limit sets that are homeomorphic to carpet-free Schottky sets.
    If $h: \Lambda(G) \rightarrow \Lambda(G')$ is a homeomorphism, then $h$ is quasisymmetric, virtually equivariant and extends to a quasiconformal map on $\widehat\C$.

    Suppose in addition $\Lambda(G), \Lambda(G')$ are actual carpet-free Schottky sets. Then any homeomorphism between them is the restriction of a M\"obius map (up to a reflection). Moreover, $G$ and $G'$ are commensurable (up to conjugacy within $\PSL_2(\C)$) if and only if their limit sets are homeomorphic.
\end{cor}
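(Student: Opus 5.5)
The plan is to reduce everything to Theorem~\ref{thm:topoRigid} and the quasiconformal model of Proposition~\ref{prop:acylindricalSchottky}.

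\emph{Step 1: $h$ is virtually equivariant.} By the remark preceding the statement (\cite{McM90} and Proposition~\ref{prop:acylindricalSchottky}), $G$ and $G'$ are quasiconformally conjugate to Kleinian groups $G_0$, $G_0'$ whose limit sets are actual carpet-free Schottky sets. So it suffices to treat $h:\Lambda(G)\to\Lambda(G')$ where both are carpet-free Schottky sets with geometrically finite groups, and then pre/post-compose with the conjugating quasiconformal maps. Apply Theorem~\ref{thm:topoRigid} to $G$: $G$ has finite index in $\Homeo(\Lambda(G))$. Conjugating by $h$, the group $hGh^{-1}\subset\Homeo(\Lambda(G'))$ is also of finite index in $\Homeo(\Lambda(G'))$, since $h$ induces an isomorphism $\Homeo(\Lambda(G))\cong\Homeo(\Lambda(G'))$. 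Likewise $G'$ has finite index in $\Homeo(\Lambda(G'))$. Hence $H:=hGh^{-1}\cap G'$ has finite index in both, and $h$ conjugates the finite-index subgroup $h^{-1}Hh\le G$ onto $H\le G'$. Thus $h$ is virtually equivariant.

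\emph{Step 2: quasisymmetry and extension.} A topological conjugacy between geometrically finite groups (finite-index subgroups remain geometrically finite with the same limit sets) preserves parabolic points and ranks, being equivariant. So Tukia \cite[Theorem 3.3]{tukia:ihes85} gives that $h$ is quasisymmetric. For the quasiconformal extension, I would extend $h$ to each complementary disk: the peripheral disks are quasidisks (round in the Schottky model), their boundaries are permuted by $h$, and stabilizers act cocompactly or with cusps on them. Using equivariance, extend equivariantly over representatives of the finitely many orbits of disks by a Douady--Earle or Beurling--Ahlfors extension, uniformly controlled. Alternatively, invoke that a type-preserving conjugacy of geometrically finite groups is induced by a quasiconformal map of $\widehat\C$ (Sullivan/Tukia) once the conjugacy respects the ambient orientation data on peripheral circles; after possibly composing with a reflection this holds.

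\emph{Step 3: Möbius rigidity.} When $\Lambda(G),\Lambda(G')$ are genuine Schottky sets, $h$ is a quasisymmetry between Schottky sets of measure zero (geometrically finite, $\neq\widehat\C$). By \cite{BKM09}, it is the restriction of a Möbius or anti-Möbius map. Finally, for commensurability: if $h$ exists, it is Möbius $\phi$ with $\phi\, h^{-1}Hh\,\phi^{-1}=H$, giving commensurability up to conjugacy. Conversely, commensurable groups share limit sets after conjugation, which gives a homeomorphism.

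The main obstacle is the quasiconformal extension in Step 2. The quasisymmetry on the limit set must be extended across infinitely many complementary disks with uniform dilatation. Equivariance together with finiteness of orbits of peripheral disks is what makes this work, and cusps touching disks need care near parabolic points.
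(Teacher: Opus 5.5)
Steps 1 and 3, and the quasisymmetry half of Step 2, are the argument the paper intends.

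\begin{itemize}
\item Theorem~\ref{thm:topoRigid}, applied to both groups, makes $hGh^{-1}$ and $G'$ two finite-index subgroups of $\Homeo(\Lambda(G'))$. Their intersection gives a conjugacy between finite-index subgroups.
\item Equivariance forces the conjugacy to be type-preserving, so \cite[Theorem 3.3]{tukia:ihes85} makes $h$ quasisymmetric.
\item After quasiconformal conjugation to the Schottky model, \cite{BKM09} applies, since the Schottky sets have measure zero because $\Lambda\neq\widehat\C$. It makes the map M\"obius or anti-M\"obius, and commensurability follows as you say. In the anti-M\"obius case the conjugating element reverses orientation, which is the ``up to a reflection'' caveat.
\end{itemize}

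The gap is the quasiconformal extension in Step 2. Douady--Earle extension over each complementary disk, with conformal naturality and finitely many orbits, gives a homeomorphism of $\widehat\C$ that is uniformly quasiconformal on $\Omega(G)$. That alone does not make it quasiconformal on $\widehat\C$. $\Lambda(G)$ has measure zero, but a homeomorphism that is $K$-quasiconformal off a compact null set need not be quasiconformal unless that set is removable, and you do not know this here. You would need a gluing statement such as Lemma~\ref{lem:qcextension}, which asks for quasisymmetry on $\Lambda(G)$ and uniform quasisymmetry on every closed complementary disk. The latter you would still have to check after M\"obius normalization.

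Your alternative appeal to ``Sullivan/Tukia'' is not a theorem as stated. A type-preserving conjugacy on limit sets is quasisymmetric, but for it to be induced by a quasiconformal map of the sphere, the isomorphism must be realized by a homeomorphism of the Kleinian manifolds. You have not established that.

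None of this is needed if you apply Step 3 before the extension claim. In the Schottky model, $h_0=\phi'\circ h\circ\phi^{-1}$ is a quasisymmetry between measure-zero Schottky sets. By \cite{BKM09}, $h_0$ is the restriction of a M\"obius or anti-M\"obius map $M$. Hence $h=\phi'^{-1}\circ M\circ\phi$ on $\Lambda(G)$ is the restriction of a (possibly orientation-reversing) quasiconformal map of $\widehat\C$, with no disk-by-disk construction.
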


\subsection*{General Schottky limit sets}
Similar to Theorem~\ref{thm:qsunivgeneral} and Theorem~\ref{thm:qsunivgeneralGF}, we show the carpets are the only obstruction for quasisymmetries.
\begin{theorem}[Homeomorphisms between Schottky limit sets]\label{thm:qsveGeneral}
    Let $\Lambda, \Lambda'$ be two geometrically finite limit sets that are homeomorphic to Schottky sets and let $h: \Lambda \rightarrow \Lambda'$ be a homeomorphism. Then the following are equivalent.
    \begin{enumerate}
        \item\label{thm:qsveGeneral:eq1} $h$ is quasisymmetric;
        \item\label{thm:qsveGeneral:eq2} $h$ extends to a quasiconformal map on $\widehat\C$;
        \item\label{thm:qsveGeneral:eq3} $h$ is virtually equivariant;
        \item\label{thm:qsveGeneral:eq4} $h$ restricts to a quasisymmetry on each dynamical carpet-quotient.
    \end{enumerate} 
\end{theorem}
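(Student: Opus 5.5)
We will prove the cycle of implications \eqref{thm:qsveGeneral:eq3}$\Rightarrow$\eqref{thm:qsveGeneral:eq2}$\Rightarrow$\eqref{thm:qsveGeneral:eq1}$\Rightarrow$\eqref{thm:qsveGeneral:eq4}$\Rightarrow$\eqref{thm:qsveGeneral:eq3}.

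\emph{The easy implications.} For \eqref{thm:qsveGeneral:eq3}$\Rightarrow$\eqref{thm:qsveGeneral:eq2}, let $h$ conjugate finite index subgroups $H\le G$, $H'\le G'$. Since $h$ is type-preserving, Tukia's theorem \cite[Theorem 3.3]{tukia:ihes85} makes $h$ quasisymmetric. Because the complementary components are Jordan disks permuted equivariantly, we extend $h$ across each $H$-orbit of components: first use the Ahlfors--Beurling extension on a representative disk, equivariantly with respect to its stabilizer (a quasi-Fuchsian-type extension via the Fuchsian uniformization of the quotient orbifold), then spread it over the orbit. This gives a quasiconformal map of $\widehat\C$; equivalently, one can invoke Sullivan's/McMullen's theorem that a type-preserving conjugacy of geometrically finite groups is induced by a quasiconformal conjugacy. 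The implications \eqref{thm:qsveGeneral:eq2}$\Rightarrow$\eqref{thm:qsveGeneral:eq1}$\Rightarrow$\eqref{thm:qsveGeneral:eq4} are immediate, since a quasiconformal map restricts to a quasisymmetry on any compact subset.

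\emph{Reduction for the main implication.} The substantive step is \eqref{thm:qsveGeneral:eq4}$\Rightarrow$\eqref{thm:qsveGeneral:eq3}. By Proposition~\ref{prop:acylindricalSchottky} and \cite{McM90}, after quasiconformal conjugation we may assume $\Lambda,\Lambda'$ are genuine Schottky sets. The geometrically finite group then has finite index in the reflection-extended stabilizer of the Schottky set, and we reduce to showing that $h$ is (virtually) the restriction of a M\"obius map. Decompose $\Lambda$ via the nerve of the peripheral circles: tangency components correspond to carpet-free pieces, while the carpet-quotient subgroups correspond to pieces where circles are pairwise separated modulo the parabolic laminations. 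On a carpet-free piece, Theorem~\ref{thm:topoRigid} and Corollary~\ref{cor:mobius} force $h$ to be M\"obius. On a dynamical carpet-quotient, $h$ is quasisymmetric by hypothesis, so the Bonk--Kleiner--Merenkov rigidity \cite{BKM09} for quasisymmetries of Schottky sets of measure zero makes $h$ M\"obius there too.

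\emph{Gluing, the main obstacle.} Adjacent pieces share peripheral circles, and a M\"obius map is determined by its values on a circle together with its orientation. Hence, if the pieces are arranged in a connected tree-like pattern, the local M\"obius maps must agree, and $h$ is globally M\"obius up to reflection. It then conjugates $\mathrm{Stab}(\Lambda)$ to $\mathrm{Stab}(\Lambda')$, and $G,G'$ are finite index in these stabilizers, giving virtual equivariance. The difficulty is that the tree of pieces may be disconnected through rank-two cut points or through pieces meeting only at a point. Across such junctions, the local M\"obius maps may a priori differ. Our first approach is to show that any junction between pieces occurs along a full peripheral circle, since pieces of a Schottky set always share boundary circles. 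Point junctions then arise only at tangencies inside carpet-free pieces, where Corollary~\ref{cor:mobius} already gives global control. If this fails, we would instead use the BKM fact that a quasisymmetry of a Schottky set preserving enough circles is M\"obius, applied to the union of two neighbouring pieces, which is again a Schottky subset.
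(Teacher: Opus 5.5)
\textbf{The easy directions.} These are essentially fine. (3)$\Rightarrow$(1) is Tukia's theorem \cite[Theorem 3.3]{tukia:ihes85}, since a virtual conjugacy is automatically type-preserving. Your equivariant Douady--Earle extension over the Jordan complementary components does give (2). However, to conclude that the assembled map is quasiconformal on all of $\widehat\C$ you must add a gluing statement such as Lemma~\ref{lem:qcextension}. Your alternative, that every type-preserving conjugacy of geometrically finite groups is induced by a quasiconformal conjugacy, is not true in general, so rely on the construction. (The paper gets (1)$\Leftrightarrow$(2)$\Leftrightarrow$(3) from \cite{BKM09} after the normalization of Proposition~\ref{prop:acylindricalSchottky}.)

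\textbf{The gap is in (4)$\Rightarrow$(3).} The piece-by-piece M\"obius scheme does not work, for three reasons.
\begin{enumerate}
\item You never produce pieces to which Theorem~\ref{thm:topoRigid} or Corollary~\ref{cor:mobius} applies. Nerve components are not the right pieces: two distinct nerve components share no circle, so there is nothing to glue along, and a nerve component can even be dense in $\Lambda$ (in Figure~\ref{fig:SchottkyTwoComponents} each of the two components is invariant under a subgroup of index at most two). In the paper, the carpet-free parts of the structure are the Fuchsian vertex pieces in the hierarchy of the de-parabolization $\widehat G$. These are rigid only relative to leaves of $\cL$ shared with neighbouring pieces (Theorem~\ref{thm:toprigidityconvexcocompact}), not as carpet-free Schottky limit sets in their own right.
\item You cannot derive from \cite{BKM09} that $h$ is M\"obius on a dynamical carpet-quotient $\Lambda(H)$. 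Even when $\Lambda$ is a round Schottky set, $\Lambda(H)$ need not be one. Its complementary components not coming from $\Omega(G)$ are bounded by limit sets of surface subgroups cut out by the JSJ annuli or compression disks of $\widehat G$, and these curves have no reason to be round. Applying \cite{BKM09} after a separate quasiconformal conjugation of $H$ only gives that $h|_{\Lambda(H)}$ is virtually equivariant; this is exactly how the paper uses \cite[Theorem 6.6]{haiss:lec}.
\item Local virtual conjugacies cannot be glued by the argument that M\"obius maps agreeing on a circle coincide. Your fallback also fails: you have no quasisymmetry of $h$ on a union of two pieces, and no reason that such a union is a Schottky set.
\end{enumerate}

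\textbf{What is actually needed.} You must show that a homeomorphism controlled only on the carpet-quotients, and arbitrary elsewhere, is globally rigid. This is Theorem~\ref{thm:carpetfreeSchottkyimpliesrigid}: $G$ has finite index in $\HQ^+(\Lambda(G))$. Its proof is an induction on the height of the hierarchy of $\widehat G$. The carpet pieces enter at every level through the induction hypothesis and \cite[Theorem 6.6]{haiss:lec}, combined with two-point rigidity for Fuchsian groups with acylindrical laminations and for ivy buds. Theorem~\ref{thm:topoRigid} is only the carpet-free special case and cannot be bootstrapped into the general statement by gluing.

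\textbf{How the implication then closes.} With Theorem~\ref{thm:carpetfreeSchottkyimpliesrigid} the implication is short. By (4), conjugation by $h$ carries $\HQ^+(\Lambda')$ onto $\HQ^+(\Lambda)$; note that $h^{-1}g'h$ is orientation preserving even if $h$ is not. So $G$ and $h^{-1}G'h$ both have finite index in $\HQ^+(\Lambda)$. Hence $K=G\cap h^{-1}G'h$ has finite index in $G$, and $h$ conjugates $K$ onto the finite-index subgroup $hKh^{-1}$ of $G'$. That is (3), and Tukia's theorem (or Proposition~\ref{finitextension}) then gives (1).
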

The notable difference between Theorem~\ref{thm:qsveGeneral} and Theorems~\ref{thm:qsunivgeneral} and \ref{thm:qsunivgeneralGF} is
that $h$ is quasisymmetric in the Schottky limit set setting.

\subsection{Structure of the paper and outline of the proofs} Since our results concern properties of limit sets, we may replace Kleinian groups by finite-index subgroups. In particular, by Selberg's lemma, we may pass to torsion-free Kleinian groups and invoke $3$-manifold topology.
We also assume throughout that Kleinian groups are non-elementary, since the elementary cases are trivial.

\begin{figure}[ht]
\centering
\begin{tikzpicture}[
    >=Latex,
    node distance=0.5cm and 0.5cm,
    box/.style={
        draw,
        rounded corners=3pt,
        fill=lightgray!10,
        minimum width=3.7cm,
        minimum height=1.15cm,
        align=center,
        font=\small
    },
    arr/.style={->, thick}
]

\node[box] (A) {\S~\ref{sec:lattice}. Case $\Lambda(G)=\widehat{\C}$};
\node[box, right=of A] (B) {\S~\ref{sec:prelim}. Decomposition of $G$};
\node[box, right=of B] (C) {\S~\ref{sec:tfou}. Necessity of the hypotheses};

\node[box, above=0.5cm of B] (G) {\S~\ref{sec:geompara}. Geometry of parabolic loci};
\node[box, below=0.5cm of B] (F) {\S~\ref{sec:uni}. Universality};
\node[box, left=of F] (E) {\S~\ref{sec:qsfuchsian}. QS on {\mhf} pieces\\ \& local conjugacy};
\node[box, right=of F] (D) {\S~\ref{sec:vtr}. QS on rigid pieces\\ \& top rigidity};

\draw[arr] (B) -- (G);
\draw[arr] (B) -- (C);
\draw[arr] (B) -- (F);
\draw[arr] (B) -- (D);
\draw[arr] (D) -- (F);
\draw[arr] (E) -- (F);

\end{tikzpicture}
\caption{Organization of the paper.}
\end{figure}

The case $\Lambda(G) = \widehat\C$ is straightforward and is treated separately in \S~\ref{sec:lattice}. Henceforth, we focus on groups with non-spherical limit sets.
In \S~\ref{sec:geompara}, we analyze the role of parabolic loci in the geometry of the limit set, and we justify our relative modification of the universality.
The proof of the necessity of the hypotheses in our main theorems on universality, quasi-isometric classification and topological rigidity is also comparatively simple, and is discussed in \S~\ref{sec:tfou}.

The main ingredient in proving the sufficiency of the hypotheses in our universality results, Theorems~\ref{thm:quasiconformalUniv} and~\ref{thm:quasiconformalUnivGF}, where topology implies geometry, comes from low-dimensional topology: geometrically finite groups admit splittings over elementary subgroups.
These splittings can be read off from the topology of the limit sets, and they endow the limit set with additional structures that we can exploit. In particular, these splittings are compatible with any homeomorphism, allowing us to impose a richer structure to each piece before patching them back together.

More precisely, the maximal free product decomposition records the decomposition of the limit set into its nontrivial connected components and the remaining totally disconnected part. The JSJ decomposition then further splits the nontrivial connected components along their cut points and cut pairs. In this way, we obtain simpler pieces that come from parabolic subgroups, loxodromic subgroups, maximal hanging Fuchsian groups and {\em rigid} groups (i.e, fundamental groups of pared acylindrical manifolds); see \S~\ref{sec:prelim}.

\begin{figure}[ht]
\centering
\begin{center}
\resizebox{\textwidth}{!}{
\begin{tikzpicture}[
  >=Latex,
  box/.style={draw=myblue, fill=lightblue, rounded corners=2pt,
    align=center, inner sep=6pt},
  rigidbox/.style={draw=mygreen, fill=lightgreen, rounded corners=2pt,
    align=left, inner sep=7pt},
  parabox/.style={draw=myred, fill=lightred, rounded corners=2pt,
    align=left, inner sep=7pt},
  fuchsbox/.style={draw=mypurple, fill=lightpurple, rounded corners=2pt,
    align=left, inner sep=7pt},
  goldbox/.style={draw=mygold, fill=lightgold, rounded corners=2pt,
    align=center, inner sep=6pt},
  every node/.style={font=\small}
]

\node[box, minimum width=4.4cm] (jsj) {
  {\color{myblue}\bfseries\large JSJ decomposition}\\[1pt]
  {\footnotesize $\Lambda(G) = \overline{\bigcup_{v\in \mathcal{T}} \Lambda_v}$ }
};

\node[rigidbox, below right=0.7cm and 0.55cm of jsj, minimum width=5.25cm] (rigid) {
  {\color{mygreen}\bfseries\large Rigid pieces}\\[3pt]
  \begin{minipage}{4.9cm}
  \begin{itemize}[leftmargin=2mm]\setlength\itemsep{0.18em}
    \item $\Lambda_v/\!\sim$ is a Schottky set
    \item Topological rigidity
  \end{itemize}
  \end{minipage}
};

\node[fuchsbox, below =0.7cm of jsj, minimum width=5.25cm] (fuchs) {
  {\color{mypurple}\bfseries\large Fuchsian pieces}\\[3pt]
  \begin{minipage}{4.9cm}
  \begin{itemize}[leftmargin=2mm]\setlength\itemsep{0.18em}
    \item $\Lambda_v/\!\sim$ is a circle
    \item Quasisymmetric and locally conjugate at gluing points
  \end{itemize}
  \end{minipage}
};


\node[parabox, below left=0.7cm and 0.55cm of jsj, minimum width=5.25cm] (para) {
  {\color{myred}\bfseries\large Elementary pieces}\\[3pt]
  \begin{minipage}{4.9cm}
  \begin{itemize}[leftmargin=2mm]\setlength\itemsep{0.18em}
    \item $\Lambda_v$ is a cut point/cut pair
    \item Coarsely order-respecting
  \end{itemize}
  \end{minipage}
};

\node[goldbox, below=3.5cm of jsj, minimum width=5.2cm] (patch) {
  {\color{mygold}\bfseries\large Patch local quasi-isometries}
};

\node[goldbox, below=0.55cm of patch, minimum width=4.4cm] (global) {
  {\color{mygold}\bfseries\large Global quasi-isometry}
};

\draw[->, myblue, line width=0.9pt]
  (jsj.east) -- (rigid.north);

\draw[->, myblue, line width=0.9pt]
  (jsj.south) -- (fuchs.north);

\draw[->, myblue, line width=0.9pt]
  (jsj.west) -- (para.north);

\draw[->, myblue, line width=0.9pt]
  (para.south) |- (patch.west);

\draw[->, myblue, line width=0.9pt]
  (rigid.south) |- (patch.east);

\draw[->, myblue, line width=0.9pt]
  (fuchs.south) -- (patch.north);

\draw[->, myblue, line width=0.9pt]
  (patch) -- (global);

\end{tikzpicture}
}
\end{center}
\caption{The road map to quasi-isometries}
\end{figure}

By \cite{PW02}, for the quasisymmetric universality it suffices to consider connected limit sets.
The rigid pieces are handled by Theorem~\ref{thm:topoRigid}, which implies that any homeomorphism on such pieces is already quasisymmetric. The planarity is essential in the proof. A Schottky limit set $\Lambda(G)$ can be obtained as the quotient $\Lambda(\widehat G)/\sim_{\cL}$ of a minimally parabolic geometrically finite Kleinian group $\widehat G$ 
by  some acylindrical lamination $\cL$. Any homeomorphism of $\Lambda(G)$ lifts to an $\cL$-preserving homeomorphism of $\Lambda(\widehat G)$. The key idea is to analyze the interaction of $\cL$ and the lamination coming from splitting the group over elementary subgroups. Planarity imposes a rich structure on the intersection pattern for the two laminations, which we exploit to prove the desired rigidity for $\cL$-preserving homeomorphisms; see \S~\ref{sec:vtr}.

Quasisymmetries between maximal hanging Fuchsian limit sets could be constructed by more standard methods, but a genuine difficulty arises: two quasisymmetric maps that coincide at finitely many points need not patch into a global quasisymmetry. Thus these quasisymmetries must be constructed with additional control. Theorem \ref{thm:colors} addresses this issue, by using the group dynamics to construct a quasisymmetry that is locally equivariant on definite neighborhoods of the gluing points. The key input is to consider the Bowen-Series map associated to the group: we show that it admits a quasisymmetric piecewise-linear model in which the local expansion at each break-point can be prescribed; see \S~\ref{sec:qsfuchsian}.

Quasisymmetric maps on the pieces induce quasi-isometries between the corresponding vertex groups. Our gluing result shows that these quasi-isometries can be assembled into a quasi-isometry between the two groups. This proves the sufficiency for the quasi-isometric classification, Theorem~\ref{thm:qiclassification}, as well as the quasisymmetric universality. The quasiconformal universality is then obtained by showing that, whenever the resulting quasisymmetry extends to a homeomorphism of the sphere, it in fact admits a quasiconformal extension, see \S~\ref{sec:uni}.

For the reader’s convenience, the following diagram records how the main theorems are broken down into the auxiliary results proved in the main text.

\begin{center}
\begin{tikzpicture}[
  font=\small,
  card/.style={draw, rounded corners=3pt, fill=lightgray!10, text width=0.48\textwidth, inner sep=6pt, align=left},
  card2/.style={draw, rounded corners=3pt, fill=lightgray!10, text width=0.96\textwidth, inner sep=6pt, align=left},
  title/.style={font=\bfseries\large, align=center},
  legend/.style={draw, rounded corners=2pt, fill=lightgray, text width=0.9\textwidth, inner sep=4pt, align=center}
]


\node[card] (A) at (0,0) {
\mainthm{Theorem~\ref{thm:quasiconformalUniv}}\quad Universality\\[2pt]
\begin{tabular}{@{}p{2.7cm}p{12.6cm}@{}}
\eqref{eqn:thm:quasiconformalUniv:2} $\iff$ \eqref{eqn:thm:quasiconformalUniv:3} & \inn{Prop.~\ref{prop:confdim}}.\\
\eqref{eqn:thm:quasiconformalUniv:0}, \eqref{eqn:thm:quasiconformalUniv:1} $\implies$ \eqref{eqn:thm:quasiconformalUniv:2} & \inn{Cor.~\ref{cor:nonqs}}.\\
\eqref{eqn:thm:quasiconformalUniv:2}$\implies$ \eqref{eqn:thm:quasiconformalUniv:0} & \inn{Thm.~\ref{thm:lattice}} \& \inn{Cor.~\ref{cor:relativeqsuniversalGF}}. \\
\eqref{eqn:thm:quasiconformalUniv:2}$\implies$  \eqref{eqn:thm:quasiconformalUniv:1} & \inn{Thm.~\ref{thm:lattice}} \& \inn{Cor.~\ref{cor:relativeqcuniversalGF}}.\\
Moreover part & \inn{Thm.~\ref{thm:nonqs}}.
\end{tabular}
};

\node[card, right=1mm of A] (B) {
\mainthm{Theorem~\ref{thm:quasiconformalUnivGF}}\quad Relative universality\\[2pt]
\begin{tabular}{@{}p{2.7cm}p{12.6cm}@{}}
\eqref{thm:quasiconformalUnivGF:eq1a} $\implies$ \eqref{thm:quasiconformalUnivGF:eq1b} & \inn{Cor.~\ref{cor:nonqs}}.\\
\eqref{thm:quasiconformalUnivGF:eq2a} $\implies$ \eqref{thm:quasiconformalUnivGF:eq2b} & \inn{Cor.~\ref{cor:nonqs}} \& \inn{Cor.~\ref{cor:nonqscutpoint}}.\\
\eqref{thm:quasiconformalUnivGF:eq1b} $\implies$ \eqref{thm:quasiconformalUnivGF:eq1a} & \inn{Thm.~\ref{thm:lattice}} \& \inn{Cor.~\ref{cor:relativeqcuniversalGF}}.\\
\eqref{thm:quasiconformalUnivGF:eq2b} $\implies$ \eqref{thm:quasiconformalUnivGF:eq2a} & \inn{Thm.~\ref{thm:lattice}} \& \inn{Cor.~\ref{cor:relativeqsuniversalGF}}.\\
Moreover part & \inn{Thm.~\ref{thm:nonqs}} \& \inn{Thm.~\ref{thm:nonqscutpoint}}.
\end{tabular}
};

\node[card, below=1mm of A] (C) {
\mainthm{Theorem~\ref{thm:qiclassification}}\quad QI classification\\[2pt]
\begin{tabular}{@{}p{2.7cm}p{12.6cm}@{}}
\eqref{thm:qiclassification:eq1} $\implies$ \eqref{thm:qiclassification:eq2} & \inn{Prop.~\ref{prop:inducedbilipschitz}}.\\
\eqref{thm:qiclassification:eq2} $\implies$ \eqref{thm:qiclassification:eq1} & \inn{Thm.~\ref{thm:qsclassG}}.
\end{tabular}
};

\node[card, below=1mm of B] (D) {
\mainthm{Theorem~\ref{thm:topoRigid}}\quad Virtual topological rigidity\\[2pt]
\begin{tabular}{@{}p{2.7cm}p{12.6cm}@{}}
\eqref{eqn:thm:topoRigid:1} $\implies$ \eqref{eqn:thm:topoRigid:2} & \inn{Thm.~\ref{thm:uncountablehom}}.\\
\eqref{eqn:thm:topoRigid:2} $\implies$ \eqref{eqn:thm:topoRigid:1} & \inn{Thm.~\ref{thm:carpetfreeSchottkyimpliesrigid}}.
\end{tabular}
};

\node[card2] (E) at (3.3,-4.55){
\mainthm{Theorems~\ref{thm:qsunivgeneral}, ~\ref{thm:qsunivgeneralGF}, ~\ref{thm:qirigiditygeneralGF} and ~\ref{thm:qsveGeneral}}\quad \\[2pt]
\begin{tabular}{@{}p{3.2cm}p{12.6cm}@{}}
Thms.~\ref{thm:qsunivgeneral} and \ref{thm:qsunivgeneralGF} & Both follow from \inn{Thm.~\ref{thm:qsclassG}}.\\
Thm.~\ref{thm:qirigiditygeneralGF} & ``Only if'' from \ext{\cite[Thm.~3.18]{haiss:lec}}; ``if'' from \inn{Thm.~\ref{thm:qsclassG}}.\\
Thm.~\ref{thm:qsveGeneral} & \eqref{thm:qsveGeneral:eq1} $\iff$ \eqref{thm:qsveGeneral:eq2} $\iff$ \eqref{thm:qsveGeneral:eq3} follows from \ext{\cite{BKM09}}; \\
& \eqref{thm:qsveGeneral:eq1} $\implies$ \eqref{thm:qsveGeneral:eq4} is trivial; \\
& \eqref{thm:qsveGeneral:eq4} $\implies$ \eqref{thm:qsveGeneral:eq1} follows from \inn{Thm.~\ref{thm:carpetfreeSchottkyimpliesrigid}} \& \inn{Prop.~\ref{finitextension}}.
\end{tabular}
};

\end{tikzpicture}
\end{center}

\subsection*{Acknowledgments}
The authors thank Michel Boileau, Curt McMullen and Luisa Paoluzzi for many useful discussions. The limit sets in the paper are produced with C. McMullen's program \textbf{lim}.

\section{On the classification of lattices}\label{sec:lattice}
In this section, we consider the case when the limit set is the full sphere. More generally, we compare the notions of quasi-isometry, (relative) quasisymmetry and quasiconformal equivalence among 
lattices of the group of orientation preserving isometries $\Isom^+(\Hyp^n)$  of the hyperbolic space $\Hyp^n$, $n\ge 2$. 
It enables us to point out some subtleties that arise from the presence of parabolic subgroups. 

Let us remark that if the lattice is uniform,
then it acts cocompactly on $\Hyp^n$ and is word hyperbolic, whereas if it is non-uniform, then it is 
geometrically finite, and hyperbolic relative to virtually free abelian groups $\Z^{n-1}$.

\begin{theorem}\label{thm:lattice}
Let us fix $n\ge 2$. The following statements hold.
\ben
\item\label{thm:lattice:eqn:1} There is a unique quasisymmetry class of limit sets of lattices in $\Isom^+(\Hyp^n)$.
\item\label{thm:lattice:eqn:2} Two uniform lattices of $\Isom^+(\Hyp^n)$ are quasi-isometric.
\item\label{thm:lattice:eqn:3} Two non-uniform lattices $G$ and $G'$ of $\Isom^+(\Hyp^n)$ are relatively quasisymmetric, i.e., there is a 
quasisymmetric homeomorphism $h:(\SS^{n-1},\mathfrak{P}(G))\to (\SS^{n-1},\mathfrak{P}(G'))$.
\een    
\end{theorem}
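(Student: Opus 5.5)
For \eqref{thm:lattice:eqn:1} and \eqref{thm:lattice:eqn:2}, we reduce everything to the geometry of $\Hyp^n$ itself. A lattice $G<\Isom^+(\Hyp^n)$ has finite covolume, so its limit set is the whole boundary sphere $\SS^{n-1}=\partial\Hyp^n$. Hence the limit sets of any two lattices are the same round sphere, and the identity map is a (Möbius, hence quasisymmetric) homeomorphism between them. This proves \eqref{thm:lattice:eqn:1}.

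For \eqref{thm:lattice:eqn:2}, a uniform lattice acts properly and cocompactly by isometries on $\Hyp^n$. By the Švarc--Milnor lemma, any orbit map $G\to\Hyp^n$ is a quasi-isometry. Composing the orbit map of one uniform lattice with a quasi-inverse of the orbit map of the other shows that the two lattices are quasi-isometric.

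The substance lies in \eqref{thm:lattice:eqn:3}. We need a quasisymmetry of $\SS^{n-1}$ that maps the countable dense set $\mathfrak{P}(G)$ of parabolic fixed points bijectively onto $\mathfrak{P}(G')$. The plan is to build a quasi-isometry $\Phi:\Hyp^n\to\Hyp^n$ that carries the $G$-invariant family of disjoint horoballs onto the $G'$-invariant family, up to bounded distance. Its boundary extension is then a quasisymmetry of $\SS^{n-1}$, since for $n\ge 3$ quasi-isometries of $\Hyp^n$ extend quasiconformally, and for $n=2$ they extend quasisymmetrically on the circle. Because $\Phi$ sends horoballs to horoballs coarsely, it sends their centers to centers, which gives $\mathfrak{P}(G)\to\mathfrak{P}(G')$ as a bijection.

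To construct $\Phi$, we remove equivariant disjoint horoball families $\cH$ and $\cH'$ and pass to the truncated spaces $X=\Hyp^n\setminus\cH$ and $X'=\Hyp^n\setminus\cH'$. These are the neutered spaces, on which $G$ and $G'$ act cocompactly.

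\emph{Case $n=2$.} Here the truncated spaces are quasi-isometric to trees thickened by horocyclic lines. Both groups are virtually free, and each has finitely many cusps, each cusp group being $\Z$. I would pass to finite-index free subgroups and arrange the same number of cusps. Alternatively, I would directly match the Farey-type combinatorics: the boundaries, together with their cusp sets, are both homeomorphic to $(\SS^1,\text{countable dense set})$, and I would realize a piecewise-Möbius quasisymmetry using a Bowen--Series/Farey tessellation model. Concretely, I would pass to finite-index subgroups of $\PSL_2(\Z)$-type tessellation groups by ideal triangles, which are all commensurable to the modular group up to a quasiconformal deformation. This reduces the $n=2$ case to showing that any two ideal-triangulated punctured surfaces have finite covers that are quasiconformally equivalent with cusps matched.

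\emph{Case $n\ge 3$.} Here Schwartz rigidity says that non-uniform lattices are quasi-isometric only when commensurable, so we cannot use a quasi-isometry of the groups. Instead I would build $\Phi$ by hand. Take Ford or Dirichlet-type decompositions of $X$ and $X'$ into compact pieces meeting horospheres. Note that each horosphere is a Euclidean $\R^{n-1}$, and that any two Euclidean lattices $\Z^{n-1}$ are bi-Lipschitz equivalent. The step I expect to be the main obstacle is this gluing: producing a map between the neutered spaces that is bi-Lipschitz on horospheres and extends coarsely over the cusps by "coning" along vertical geodesics, while remaining a global quasi-isometry.

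A plausible shortcut I would try first is a Whitney-type construction on $\SS^{n-1}$ directly. Enumerate both parabolic sets by horoball size, which is possible because visual diameters of the horoballs decay comparably and the packings are uniformly separated. Then match centers inductively using the "uniformly perfect, uniformly separated packing" property, and define the map with round-ball geometry controlled at each scale. This is akin to back-and-forth constructions of quasisymmetries of relative Schottky-like packings, with the nested-scale control (bounded ratio between a horoball's size and the sizes of its neighbours) giving quasisymmetry.
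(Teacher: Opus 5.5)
Parts (1) and (2) are fine and are exactly the paper's argument. The gap is in part (3) for $n\ge 3$. The quasi-isometry $\Phi$ of $\Hyp^n$ you want, carrying the horoball family $\cH$ of $G$ onto the family $\cH'$ of $G'$ up to bounded distance, does not exist unless $G$ and $G'$ are commensurable.

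Here is why. Such a $\Phi$ sends each horosphere $\partial B$, $B\in\cH$, to within bounded distance of the corresponding $\partial B'$. It is also coarsely bi-Lipschitz for the flat metrics: points at flat distance $\le 1$ go to points at bounded flat distance, and then you chain. Hence $\Phi$ restricts to a quasi-isometry between the neutered spaces $\Hyp^n\setminus\cH$ and $\Hyp^n\setminus\cH'$ with their path metrics, which is literally what your gluing step is designed to produce. Since $G$ and $G'$ act properly and cocompactly on these neutered spaces, the \v{S}varc--Milnor lemma would make $G$ and $G'$ quasi-isometric, even relative to their cusp subgroups. Schwartz's theorem then forces commensurability.

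So the step you single out as ``the main obstacle'' is not a technical difficulty. It is an impossibility in precisely the cases the theorem is meant to cover; this is the point of the corollary and remark following the theorem in the paper. Your Whitney-type shortcut has the same defect if it really matches centers by horoball size with scale control, since the extension of that boundary map to $\Hyp^n$ would coarsely preserve the horoball patterns. If it does not, the horoball sizes play no role, and you are left with a back-and-forth scheme that, as stated, has no mechanism guaranteeing the limit is even a homeomorphism, let alone quasisymmetric.

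The missing idea is that (3) has nothing to do with hyperbolic geometry. Since $\Lambda(G)=\SS^{n-1}$, the parabolic locus $\mathfrak{P}(G)$ is a countable dense subset of $\SS^{n-1}$ (finitely many orbits, each dense). All cusps of a lattice have rank $n-1$, so no type-matching is needed. The paper shows that any two countable dense subsets $X=\{x_i\}$, $Y=\{y_i\}$ of a sphere are matched by a bi-Lipschitz self-homeomorphism, via a quantitative back-and-forth:
\begin{enumerate}
\item At step $m$, if $x_m$ is not yet sent into $Y$, precompose $H_{m-1}$ with a push supported in a small ball around $x_m$ avoiding the finitely many points already matched. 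The push moves $x_m$ onto $H_{m-1}^{-1}(y_l)$ for some unused $y_l$.
\item By density of $Y$, the displacement can be made so small relative to the radius that the push is $\bigl(1+\frac{1}{(m+1)^2}\bigr)^{1/2}$-bi-Lipschitz.
\item Then do the symmetric step for $y_m$.
\end{enumerate}
Matched points are never moved again, and every $H_m$ is $\prod_{j\ge 0}\bigl(1+\frac{1}{(j+1)^2}\bigr)$-bi-Lipschitz. So the limit is a bi-Lipschitz, hence quasisymmetric, homeomorphism with $H(X)=Y$. This handles all $n\ge 2$ at once. Your separate $n=2$ argument, quasiconformally conjugating torsion-free finite-index subgroups into $\PSL_2(\Z)$, can be completed, but its equivariance is exactly what cannot survive to $n\ge 3$.
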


Let us comment on this statement. 
Point \eqref{thm:lattice:eqn:2} is well-known and follows from the \v{S}varc-Milnor lemma since they act cocompactly on $\Hyp^n$. 
Point \eqref{thm:lattice:eqn:1} says that we cannot distinguish lattices 
from the conformal
class of their limit sets, nor can we distinguish uniform lattices from non-uniform lattices. One way
to avoid this issue is to consider relative quasisymmetric maps, imposing that the parabolic loci coincide. In that setting, uniform and non-uniform lattices are distinguished since the parabolic locus is empty for
the former and non-empty for the latter. 

Nonetheless, the last point \eqref{thm:lattice:eqn:3} shows that two non-uniform lattices are always
relatively quasisymmetric. If $n=2$, then such a group is virtually free, so they are all
quasi-isometric. But if $n\ge 3$, then Schwartz \cite{Sch95} asserts that two non-uniform lattices are quasi-isometric if and only if they are commensurable. This shows that there are infinitely many quasi-isometry classes, and none of them can be distinguished from their (relative) conformal structure at infinity. 
This contrasts with the quasi-isometric classification of Paulin that ensures that two word hyperbolic groups are quasi-isometric if and only if their boundaries share the same quasiconformal structure \cite{paulin:determined}.
Thus, for relatively hyperbolic groups, no characterization as simple can hold, even if their Bowditch boundary can be equipped with a canonical conformal gauge.
Thus, we record the following corollary.

\begin{cor}
    Let $G$ be a non-uniform lattice of $\PSL_2(\C)$. Then there exists a non-uniform lattice $G'$ such that
    \begin{itemize}
        \item $(\Lambda(G'), \mathfrak{P}(G'))$ is quasiconformally equivalent to $(\Lambda(G), \mathfrak{P}(G))$; but
        \item $G'$ is not quasi-isometric to $G$.
    \end{itemize}
\end{cor}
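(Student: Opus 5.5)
Since $G$ is a non-uniform lattice of $\PSL_2(\C)$, we have $\Lambda(G)=\widehat\C$, and by point \eqref{thm:lattice:eqn:3} of Theorem~\ref{thm:lattice} any other non-uniform lattice $G'$ is relatively quasisymmetric to $G$. So the plan is to get the first bullet for \emph{every} non-uniform $G'$, and then choose one $G'$ that is not quasi-isometric to $G$ using Schwartz's rigidity \cite{Sch95}.

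For the first bullet, point \eqref{thm:lattice:eqn:3} supplies a quasisymmetric homeomorphism $h:(\widehat\C,\mathfrak{P}(G))\to(\widehat\C,\mathfrak{P}(G'))$. A quasisymmetric self-map of the sphere $\widehat\C$ is quasiconformal; this is the standard equivalence of metric and analytic definitions of quasiconformality for homeomorphisms of $\widehat\C$. Hence $h$ is itself an ambient quasiconformal homeomorphism. It sends $\mathfrak{P}(G)$ onto $\mathfrak{P}(G')$, so it preserves the parabolic loci; all parabolic points have rank two here, so no type issue arises.

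For the second bullet, Schwartz's theorem \cite{Sch95} says that two non-uniform lattices of $\PSL_2(\C)$ are quasi-isometric if and only if they are commensurable. It therefore suffices to produce a non-uniform lattice $G'$ not commensurable with $G$. Commensurability classes are countable unions of lattices, each finitely generated, so each class contains only countably many conjugacy classes of lattices. There are, however, infinitely many commensurability classes of non-uniform lattices. For example, the Bianchi groups $\PSL_2(\mathcal{O}_d)$ for distinct squarefree $d$ have distinct invariant trace fields $\mathbb{Q}(\sqrt{-d})$, which are commensurability invariants. Choosing $d$ so that $\mathbb{Q}(\sqrt{-d})$ differs from the invariant trace field of $G$ gives $G'$ non-commensurable with $G$, hence not quasi-isometric to it. Alternatively, one can use hyperbolic knot complements with distinct invariant trace fields or cusp fields.

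The only step with real content is the existence of a non-commensurable non-uniform lattice. I would settle it with the invariant trace field argument, which is classical (Maclachlan–Reid). The rest is direct quoting of Theorem~\ref{thm:lattice} and \cite{Sch95}.
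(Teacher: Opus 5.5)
Your proposal is correct and is essentially the paper's proof: Theorem~\ref{thm:lattice}\eqref{thm:lattice:eqn:3} (via Theorem~\ref{thm:qcbetweencountabledensesubsets}, which even gives a bi-Lipschitz, hence quasiconformal, map of $\widehat\C$) handles the first bullet for every non-uniform $G'$, and \cite{Sch95} reduces the second bullet to choosing a $G'$ not commensurable with $G$. The paper takes the existence of such a $G'$ for granted, and your invariant trace field argument with the Bianchi groups $\PSL_2(\mathcal{O}_d)$ supplies it correctly. Two small points: the countability remark about commensurability classes is superfluous, and because $\mathbb{Q}(\sqrt{-d})$ is stable under complex conjugation, your argument also rules out commensurability via orientation-reversing isometries.
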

\begin{proof}
    Let $G'$ be any non-uniform lattice that is not commensurable to $G$.
    By \cite{Sch95}, $G, G'$ are not quasi-isometric.
    By Theorem~\ref{thm:lattice}, $(\Lambda(G'), \mathfrak{P}(G'))$ is quasisymmetric and equivalently quasiconformal to $(\Lambda(G), \mathfrak{P}(G))$.
\end{proof}

For the proof of Theorem \ref{thm:lattice}, let us remark that the limit set of a lattice is always $\SS^{n-1}$, so they all coincide, and \eqref{thm:lattice:eqn:1} follows. Since uniform lattices act cocompactly on $\Hyp^n$, the \v{S}varc-Milnor lemma implies they are all 
quasi-isometric to $\Hyp^n$, so they belong to the same quasi-isometry class,  justifying \eqref{thm:lattice:eqn:2}. The last item
is a direct application of Theorem \ref{thm:qcbetweencountabledensesubsets} below.

\begin{theorem}\label{thm:qcbetweencountabledensesubsets}
   Let $X, Y \subseteq \SS^{n}$, $n\ge 1$ be countable and dense subsets. Then there exists a bi-Lipschitz map $H:(\SS^n, X) \rightarrow (\SS^n, Y)$. In particular, $H$ is quasisymmetric.    
\end{theorem}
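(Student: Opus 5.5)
We build $H$ by a back-and-forth construction, as in Cantor's theorem on countable dense linear orders, but with metric control. Each stage is a composition with a diffeomorphism that is supported in a tiny ball and is very close to the identity in $C^1$. Enumerate $X=\{x_1,x_2,\dots\}$ and $Y=\{y_1,y_2,\dots\}$. We construct bi-Lipschitz diffeomorphisms $H_k$ of $\SS^n$ and finite sets $A_k\subset X$ with $H_k(A_k)\subset Y$. At odd stages we add the first $x_i$ not yet in $A_k$ and move its image $H_k(x_i)$ onto some point of $Y$. At even stages we add the first $y_j$ not yet in $H_k(A_k)$ and pull back a point of $X$ onto $H_k^{-1}(y_j)$. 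The maps are $H_{k+1}=\phi_{k+1}\circ H_k$ at odd stages and $H_{k+1}=H_k\circ\psi_{k+1}$ at even stages.

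The basic tool is the following elementary lemma. For any $p,q\in\SS^n$ with $d(p,q)<\delta r$, there is a diffeomorphism $\phi$ of $\SS^n$ with these properties:
\begin{itemize}
\item $\phi(p)=q$;
\item $\phi$ is the identity outside $B(p,r)$;
\item $\|D\phi-\id\|\le C\delta$, with $C$ a universal constant.
\end{itemize}
To construct it, use a chart and translate by the vector $q-p$ multiplied by a bump function equal to $1$ near $p$ and supported in $B(p,r)$, with gradient of size $O(1/r)$. The perturbation then has derivative $O(|q-p|/r)=O(\delta)$.

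At stage $k$, let $p=H_k(x_i)$ in the forward case. Choose $r_k>0$ smaller than half the distance from $p$ to the finite set $H_k(A_k)\setminus\{p\}$. This guarantees that earlier matched points are never moved again. By density of $Y$, choose $q\in Y$ with $d(p,q)<\delta_k r_k$, where $\delta_k=\epsilon 2^{-k}$. The backward step is symmetric: it uses density of $X$ and works in the domain with $H_k^{-1}(y_j)$ in place of $p$. Then every perturbation has $\|D\phi_k-\id\|\le C\epsilon2^{-k}$. Hence the bi-Lipschitz constants of the partial compositions are bounded by $L=\prod_k(1+C\epsilon2^{-k})<\infty$, and the same holds for the inverses.

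Convergence needs only a $C^0$ estimate. We have $\sup|\phi_k-\id|\le r_k\delta_k\le \pi\epsilon 2^{-k}$. For right compositions, $\sup|H_k\circ\psi_k-H_k|\le L\sup|\psi_k-\id|$. Therefore $H_k$ is uniformly Cauchy, and likewise $H_k^{-1}$. The limits $H$ and $H^{-1}$ are $L$-Lipschitz and inverse to each other, so $H$ is bi-Lipschitz. Every $x_i$ is eventually matched and never moved afterwards, and every $y_j$ is eventually hit, so $H(X)=Y$. Bi-Lipschitz maps are quasisymmetric with $\eta(t)=L^2t$.

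The main subtlety is bookkeeping, not analysis. First, the radii must avoid previously matched points, so that matched pairs stay frozen. Second, the derivative perturbations must be summable independently of how small the $r_k$ become. The relative choice $d(p,q)<\delta_k r_k$ ensures this: the bound $\|D\phi-\id\|\le C\delta_k$ is independent of the scale $r_k$.
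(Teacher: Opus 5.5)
Your proof is correct and follows essentially the same back-and-forth scheme as the paper. At each stage you compose with a bi-Lipschitz map supported in a small ball that avoids the already-matched points, and the distortion factors have a convergent product. The paper uses factors $(1+\frac{1}{(m+1)^2})^{1/2}$ and puts both perturbations on the domain side, $H_m = H_{m-1}\circ\Phi_m\circ\Psi_m$. Your explicit perturbation lemma (displacement $<\delta r$ gives $\|D\phi-\id\|\le C\delta$) and your $C^0$ Cauchy estimate for $H_k$ and $H_k^{-1}$ usefully fill in what the paper leaves as ``taking the limit''.
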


\begin{proof}[Proof of Theorem~\ref{thm:qcbetweencountabledensesubsets}]
    Let $X = \{x_i: i\in \N\}$ and $Y =\{y_i:i\in \N\}$ be the list of elements.
    Let $X_m = \{x_0,..., x_m\}$ and $Y_m = \{y_0,..., y_m\}$.

    We construct inductively a $\prod_{j=0}^m(1+\frac{1}{(j+1)^2})$-bi-Lipschitz map $H_m: \SS^n \rightarrow \SS^n$ so that 
    \begin{itemize}
        \item $H_m(X_m) \subseteq Y$ and $H_m^{-1}(Y_m) \subseteq X$; 
        \item $H_m|_{X_{m-1}} = H_{m-1}|_{X_{m-1}}$ and $H_m^{-1}|_{Y_{m-1}} = H_{m-1}^{-1}|_{Y_{m-1}}$.
    \end{itemize}
    Since $\prod_{j=0}^\infty(1+\frac{1}{(j+1)^2}) < \infty$, the proposition follows by taking the limit.

    To prove the existence of such a sequence, we take $H_{-1}(z) = z$ for the base case. Suppose $H_{m-1}$ is constructed. If $H_{m-1}(x_m) \in Y$ and $y_m \in H_{m-1}(X)$, then we set $H_m = H_{m-1}$.
    Otherwise, without loss of generality, assume $H_{m-1}(x_m) \notin Y$. We choose $r$ sufficiently small so that 
    $$
    (X_{m-1} \cup H_{m-1}^{-1}(Y_{m-1})) \cap B(x_m,r) = \emptyset.
    $$
    Since $Y$ is dense, we can choose $y_l \in Y \setminus(H_{m-1}(X_{m-1}) \cup Y_{m-1})$ with $H_{m-1}^{-1}(y_l)$ sufficiently close to $x_m$ so that there is a $(1+\frac{1}{(m+1)^2})^{1/2}$-bi-Lipschitz map $\Phi_m$ with 
    $$
    \Phi_m(z) = z \text{ if } z \in \SS^n \setminus B(x_m,r) \text{ and } \Phi_m(x_m) = H_{m-1}^{-1}(y_l).
    $$
    Similarly, if $y_m \in H_{m-1}\circ \Phi_m(X)$, then set $H_m = H_{m-1}\circ \Phi_m$. Otherwise, there exist sufficiently small $s$ and $x_k \in X \setminus(X_m \cup \Phi_m^{-1}(H_{m-1}^{-1}(Y_{m-1})))$ and a $(1+\frac{1}{(m+1)^2})^{1/2}$-bi-Lipschitz map $\Psi_m$ so that 
    $$
    (X_m \cup \Phi_m^{-1}(H_{m-1}^{-1}(Y_{m-1}))) \cap B(x_k,s) = \emptyset,
    $$
    and $\Psi_m(z) = z$ if $z \in \SS^n \setminus B(x_k,s)$ and $\Psi_m(x_k) = \Phi_m^{-1}\circ H_{m-1}^{-1}(y_m)$.
    Set $H_m = H_{m-1} \circ \Phi_m \circ \Psi_m$. Then $H_m$ satisfies all properties. This concludes the construction of the sequence and the theorem follows.
\end{proof}

\section{Pared manifolds, laminations and decompositions}\label{sec:prelim}
In this section, we discuss some background on pared 3-manifold and the connection to geometrically finite Kleinian groups and their limit sets. We also discuss the JSJ and incompressible decomposition of geometrically finite Kleinian groups.

Since the statements of the main results are unchanged after passing to finite-index subgroups, we may assume that the Kleinian groups are torsion-free and work with 3-manifolds rather than orbifolds.

\subsection{Pared 3-manifolds}
Let $N$ be a compact orientable three-manifold with boundary. The manifold $N$ is {\it hyperbolizable} if its interior can be equipped with a complete Riemannian metric of constant negative curvature. We assume throughout this work that all the three-manifolds that will be considered are hyperbolizable.

An embedded surface $(S,\partial S)\to (N,\partial N)$ is {\it incompressible} if the inclusion $i:S\to N$ gives rise to an injective morphism $i_*:\pi_1(S,x)\to\pi_1(N,x)$. The {\it double} of  a manifold $N$ with boundary is the union of $N$ and of a copy of itself glued along its boundary. An embedded surface $S$ in $N$ is {\it boundary incompressible} if its double is incompressible in the double of $N$. A surface $S$ in $N$ is {\it non-peripheral} if it is {\it properly embedded}, i.e., $S\cap\partial N=\partial S$, and if the inclusion $i:S\to N$
is not homotopic relative to $\partial S$ to a map $f:S\to N$ such that $f(S)\subset\partial N$. A surface $S$ is {\it essential}
if it is properly embedded, two-sided, incompressible, boundary incompressible, non-peripheral and does not bound a $3$-ball. An essential disk is also called a {\it compression disk}. 
Note that $\pi_1(N)$ is one-ended if and only if there are no compression disks, i.e., the boundary is incompressible.

\medskip

Given a finite union $S$ of essential surfaces in $N$, we cut $N$ along $S$ and obtain a compact $3$-manifold, denoted by $N\setminus\!\setminus S$. Let $U(S)$ be a small neighborhood of $S$ homeomorphic to $S \times (-1, 1)$. Then each connected component of $N\setminus\!\setminus S$ is homeomorphic to a component of $N\setminus U(S)$.

\medskip

A compact {\it pared manifold} $(N,P)$ is given by a hyperbolizable compact three-manifold $N$ homeomorphic to neither a solid torus nor a thickened torus together with a {\it paring} $P\subset\partial N$ which is a finite
collection of pairwise disjoint incompressible cylinders $P_a$ and tori $P_t$ satisfying:
\begin{enumerate}[-]
\item  every non-cyclic abelian subgroup of $\pi_1(N)$ is conjugate to a subgroup of the fundamental group of a component of $P$ and
\item   any incompressible cylinder $(C,\partial C)\subset (N,P)$ can be homotoped relatively to its boundary into $P$.
\end{enumerate}

If $(N,P)$ is a pared manifold, then we write $\partial_0N =\partial N\setminus P$.

If $N$ is a hyperbolizable three-manifold that is non-homeomorphic to a solid torus nor a thickened torus, 
then  $(N,P_t)$ is a pared manifold, where $P_t$ denotes the union of the incompressible tori 
in $\partial N$.

An {\it acylindrical compact manifold} is a hyperbolizable manifold with no compression disk nor essential cylinders. We also say that $(N,P)$ is {\it acylindrical} 
if up to homotopy, there is no essential disk or cylinder in $N$ disjoint from $P$.

\subsubsection{Geometrically finite Kleinian groups and their associated pared 3-manifolds}
A Kleinian group is a discrete subgroup of $\PSL_2(\C)$. It acts by orientation preserving isometries on $\Hyp^3$ and by M\"obius transformations on $\cbar$.
The latter action defines a dynamical partition $\cbar=\Omega(G)\sqcup \La(G)$ where $\Omega(G)$ is the maximal open subset where the action is properly discontinuous : we call $\Omega(G)$ the ordinary set or the domain of discontinuity. As soon as $G$ is non elementary, i.e., $\Lambda(G)$ has at least three points, then we may equip every component of $\Omega(G)$ with its Poincar\'e metric. 

By Selberg's lemma, a Kleinian group admits a torsion-free finite index subgroup. The {\it Kleinian manifold} of a torsion-free Kleinian group $G$ 
corresponds to $\overline{M}_G = (\Hyp^3\sqcup\Omega(G))/G$.

\medskip

Recall that a torsion free Kleinian group $G$ (and the corresponding hyperbolic 3-manifold $M= \Hyp^3/G$) is called geometrically finite if there exists a finite-sided fundamental polyhedron for the action of $G$ on $\mathbb{H}^3$. Equivalently, the unit neighborhood of the \emph{convex core} of $M$, {defined by}
$$\core(M):=\chull(\Lambda(G))/G,$$
has finite volume, where $\chull(\Lambda(G))$ is the convex hull of the limit set $\Lambda(G)$ of $G$ in $\mathbb{H}^3$ \cite{marden:gf, beardon:maskit}. The connected components of its limit set are locally connected \cite{anderson:maskit}. If $\core(M)$ is compact,
then $G$ is {\it convex cocompact}. This implies that $G$ is word hyperbolic in the sense of Gromov.
We say that a Kleinian group is {\it minimally parabolic} if all its maximal parabolic subgroups are isomorphic to $\Z\times \Z$.

Suppose that $G$ is a geometrically finite Kleinian group. Let $N := \core_\epsilon(M)$ be the unit closed neighborhood of the convex core of $M=\Hyp^3/G$, 
to which we have removed the $\epsilon$-thin cuspidal neighborhoods for all cusps, where $\epsilon>0$ is smaller than the 3-dimensional  Margulis constant. Let $P\subseteq\partial\core_\epsilon(M)$ be the union of boundaries of all cuspidal neighborhoods, then $(N,P)$ is a pared 3-manifold.
We call $(N, P)$ the associated pared $3$-manifold for $M$ and for the Kleinian group $G$.

Note that if $G$ is convex cocompact, then $P$ is empty. If $G$ is minimally parabolic, then $P$ is the union of the torus components of $\partial N$.
 
\medskip

If $N$ is a hyperbolizable compact manifold, then we say that $G$ {\it uniformizes $N$} if $\Hyp^3/G$ is homeomorphic to the interior of $N$. This yields a faithful representation $\rho:\,\pi_1(N)\to G\,\le\PSL_2(\C)$. 
Thurston's  hyperbolisation theorem implies that if $N$ is a hyperbolizable manifold equipped with a paring $P$, then there exists a geometrically finite  
Kleinian group $G$ uniformizing $N$ such that $\overline{M}_G$ is homeomorphic to $N\setminus P$.

\subsubsection{Acylindrical geometrically finite Kleinian group}
We say that the geometrically finite hyperbolic $3$-manifold $M$ (and the corresponding Kleinian group $G$) is acylindrical if the associated pared 3-manifold $(N, P)$ is. Note that the connected components of $\partial_0N$ are in one-to-one correspondence with the boundary components of the Kleinian manifold $\overline{M}_G = (\Hyp^3 \cup \Omega(G))/G$.

Note that the quasiconformal deformation space $\mathcal{QC}(G)$ can be identified with the Teichm\"uller space $\Teich(\partial \overline{M}_G)$ when every boundary component is incompressible. 

We have the following proposition relating the notion of acylindricity and the topology of the limit set, see \cite[Corollary 4.3]{McM90}, \cite[Proposition~8.4]{LZ23}.
\begin{prop}\label{prop:acylindricalSchottky}
    Let $G$ be a geometrically finite Kleinian group. 
    Then $G$ is acylindrical if and only if its limit set $\Lambda(G)$ is homeomorphic to a Schottky set. 
    
    Suppose $G$ is acylindrical. Then there exists a geometrically finite Kleinian group $G'$ quasiconformally conjugate to $G$ so that $\Lambda(G')$ is a Schottky set. In particular, $\Lambda(G)$ is quasiconformally homeomorphic to a Schottky set.
\end{prop}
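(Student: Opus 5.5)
The proof splits into the two implications of the first assertion, and the second assertion comes out of the forward direction.

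\emph{Acylindrical $\Rightarrow$ Schottky.} Assume $(N,P)$ is acylindrical. Then there is no compression disk, so $\partial_0 N$ is incompressible. Hence $\Lambda(G)$ is connected and every component $\Omega_i$ of $\Omega(G)$ is simply connected. Its stabilizer $G_i$ is a geometrically finite surface group uniformizing a component of $\partial \overline{M}_G$. Next I invoke Thurston's hyperbolization for acylindrical pared manifolds in its totally geodesic form. The double of $(N,P)$ along $\partial_0 N$ is a finite volume hyperbolizable manifold, because acylindricity rules out essential tori and annuli in the double. By Mostow rigidity, the involution of the double is realized by an isometry, and its fixed set is totally geodesic. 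This yields a geometrically finite $G'$ uniformizing $(N,P)$ whose convex core has totally geodesic boundary. For such $G'$, each component of $\Omega(G')$ is bounded by the limit set of a Fuchsian subgroup, which is a round circle. So $\Omega(G')$ is a union of disjoint round open disks, at least three since $G'$ is non-elementary, and $\Lambda(G')$ is a Schottky set.

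The groups $G$ and $G'$ uniformize the same pared manifold and every boundary component is incompressible. Therefore $G'$ lies in $\mathcal{QC}(G) \cong \Teich(\partial \overline{M}_G)$ by the Ahlfors--Bers--Marden--Sullivan parametrization, i.e.\ $G'$ is quasiconformally conjugate to $G$. The conjugating map carries $\Lambda(G)$ onto $\Lambda(G')$. This gives the second paragraph of the statement and, in particular, the forward implication of the first.

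\emph{Schottky $\Rightarrow$ acylindrical.} Suppose $h:\Lambda(G)\to S$ is a homeomorphism onto a Schottky set. Since $S$ is connected, $G$ is freely indecomposable, so there is no compression disk. It remains to exclude an essential cylinder in $N$ disjoint from $P$. Such a cylinder has core curve representing either a loxodromic element $g$ or, after homotopy, a rank-one parabolic element. Splitting $G$ along it produces either a cut pair $\{g^{+},g^{-}\}$ or a cut point of $\Lambda(G)$. In both cases $\Lambda(G)$ minus this set has at least two components, each containing limit points of non-elementary vertex groups. In the cut-pair case, $g$ acts cocompactly on the set of these components. I would then use topological properties of Schottky sets (cf.\ \cite{BKM09}):
\begin{itemize}
\item the peripheral circles $\partial D_i$ are characterized topologically as the non-separating embedded circles of $S$;
\item a cut point or cut pair of $S$ can only arise from a chain of mutually tangent peripheral disks whose closures separate the sphere.
\end{itemize}
Transporting this structure by $h^{-1}$, the cut set would have to be a tangency between closures of two components of $\Omega(G)$. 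That is impossible: two such closures meet only in points of their Fuchsian limit sets fixed by a common element, and this produces an accidental parabolic or a cylinder homotopic into $P$. Hence $(N,P)$ is acylindrical.

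\emph{Main obstacle.} I expect the hard step to be the last one: showing that a cut pair or cut point coming from an essential cylinder cannot be realized inside a Schottky set, given only an abstract, non-ambient homeomorphism. My first attempt would be a counting argument. At a loxodromic cut pair of $\Lambda(G)$, infinitely many complementary pieces accumulate and are permuted by $\langle g\rangle$. At any cut pair of a Schottky set, exactly two chains of tangent disks meet. The difference in the number of local components, which is a topological invariant, would give the contradiction.
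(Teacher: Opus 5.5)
Your forward direction is sound. It is the standard argument behind the results the paper cites for this proposition (\cite[Corollary 4.3]{McM90}, \cite[Proposition 8.4]{LZ23}); the paper gives no proof of its own. The steps are: hyperbolize the double along $\partial_0 N$, use Mostow--Prasad rigidity to get a model with totally geodesic convex-core boundary, then use Marden's isomorphism theorem (equivalently the parametrization you invoke) for the quasiconformal conjugacy. One small point: ``at least three disks since $G'$ is non-elementary'' is not a valid reason, since quasi-Fuchsian groups have two. What you need is that $N$ is not an $I$-bundle. The thrice-punctured-sphere group and lattices must also be tacitly excluded.

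The converse has a genuine gap, at exactly the step you flag, and your proposed ways around it rest on false statements. Your reduction is fine: an essential cylinder disjoint from $P$ produces a cut pair $\{g^+,g^-\}$ or a cut point of $\Lambda(G)$ (cf.\ \S\ref{subsec:JSJ}). After that, you treat Schottky sets as having cut points and cut pairs, arising from chains of tangent disks with ``exactly two chains'' meeting at a cut pair. You also assert that two components of $\Omega(G)$ cannot have touching closures. Both are wrong.
\begin{itemize}
\item A Schottky set has \emph{no} cut point and \emph{no} cut pair, so there is nothing to transport or count.
\item In acylindrical groups with rank-one cusps, closures of components of $\Omega(G)$ do touch, at rank-one parabolic points; the Apollonian packing of Figure~\ref{fig:sch} is an example. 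The associated cylinders are homotopic into $P$, which is allowed.
\end{itemize}
So neither your transport argument nor your counting argument can be completed.

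The missing ingredient is exactly the first fact, \cite[Proposition 5.3]{ph:lp:gw:schottky}, which the paper itself uses in Lemma~\ref{lem:threeleaves}. Sketch: suppose $\{p\}$ or $\{p,q\}$ separated a Schottky set $S$.
\begin{itemize}
\item There would be a Jordan curve $J$ with $J\cap S\subseteq\{p,q\}$ separating points of $S$.
\item Each arc of $J\setminus\{p,q\}$ lies in a single complementary disk.
\item If all arcs lie in the same $D_i$, one side of $J$ is contained in $D_i$ and misses $S$.
\item Otherwise $p\neq q$ both lie in $\partial D_i\cap\partial D_j$ with $i\neq j$, which is impossible for disjoint round disks.
\end{itemize}
Being a cut point or cut pair is intrinsic to the topological space, so it is carried by \emph{any} homeomorphism $h$, ambient or not. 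Your ``main obstacle'' therefore disappears, and the converse follows at once.

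Finally, ``$S$ connected $\Rightarrow$ $G$ freely indecomposable'' is false in the presence of rank-one cusps. A Fuchsian group uniformizing a punctured sphere is free with connected limit set. Maximally cusped Schottky groups are free with Schottky-set limit sets. What you need, and what is true, is that $\Lambda(G)$ is connected if and only if there is no compression disk disjoint from $P$.
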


\subsubsection{Geodesic laminations} Let $G$ be a non-elementary Kleinian group with non-empty domain of discontinuity that we equip with the Poincar\'e metric on each of its components.
A {\it lamination} $\cL$ is a non-empty collection of pairwise disjoint geodesics in $\Omega(G)$ such that 
their union $|\cL|$, {\it the support} of $\cL$, is closed in $\Omega(G)$. The closure $\overline{\cL}$ consists of the collection of the closures in $\cbar$ of each geodesic. 

A typical example is obtained by lifting to $\Omega(G)$ a collection of pairwise disjoint geodesics
in the conformal boundary $\Omega(G)/G$. In this case, one obtains a $G$-invariant lamination.

\subsection{De-parabolizations and laminations}\label{subsec:rankonedeparalam}
Let us recall the following theorem of Tukia \cite[Theorem 2]{tukia:floyd}.

\begin{theorem}[Tukia]\label{thm:tukia}
Let $G$ be a geometrically finite Kleinian group. Then there is another geometrically finite Kleinian group $\widehat{G}$ 
without parabolic elements of rank one, a continuous map $\pi:\cbar\to\cbar$ and an isomorphism  $\Phi: \widehat{G}\to G$ such that
$\Phi(g)\pi = \pi g$ for all $g\in \widehat{G}$ and $\pi^{-1}(\{x\})$ is a point for all $x\in\cbar$ 
except if $x $ is a parabolic fixed point of rank 1 in which case $\pi^{-1}(\{x\})$ is a closed arc $J$. The
interior of $J$ lies in $\Omega(\widehat{G})$,  but its endpoints are the fixed points of a loxodromic $h\in\widehat{G}$  such that $\Phi(h)$ is parabolic with the fixed
point $x$.
Furthermore,  $\pi$ can be
extended to a continuous equivariant map  $F:\overline{\Hyp^3}\to \overline{\Hyp^3}$. 
\end{theorem}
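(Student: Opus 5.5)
The statement is Tukia's theorem, quoted from \cite{tukia:floyd}; here is how I would reprove it with the pared-manifold language of this section. Pass to a torsion-free finite-index subgroup and let $(N,P)$ be the associated pared manifold, $P=P_a\sqcup P_t$, where $P_a$ is the union of the annuli (rank one cusps) and $P_t$ the union of the tori. First I would check that $(N,P_t)$ is still a pared manifold. The tori carry all non-cyclic abelian subgroups. An essential cylinder whose boundary lies in $P_t$ cannot exist, because $(N,P)$ is pared and such a cylinder has no boundary on $P_a$. Thurston's hyperbolisation theorem then gives a geometrically finite $\widehat G$ uniformizing $N$ with $\overline{M}_{\widehat G}\cong N\setminus P_t$. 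This yields the isomorphism $\Phi:\widehat G\cong\pi_1(N)\to G$. By construction $\widehat G$ has no rank one parabolics. The core curve $\gamma$ of each annulus of $P_a$ is represented in $\widehat G$ by a loxodromic $h$, and $\Phi(h)$ is parabolic.

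Next I would build $\pi$ on $\Omega(\widehat G)$. Each annulus of $P_a$ now lies in $\partial_0 N$ for the new paring, so its core is an essential simple closed curve on the conformal boundary $\Omega(\widehat G)/\widehat G$. Realise these cores by disjoint simple closed geodesics. The conformal boundary $\Omega(G)/G$ is homeomorphic to the surface obtained by cutting along these geodesics, with the resulting boundary circles made into punctures. Choose a continuous map from $\Omega(\widehat G)/\widehat G$ to the noded surface (i.e.\ $\Omega(G)/G$ with each pair of punctures coming from one geodesic identified to a node). This map should collapse each geodesic to its node and be a homeomorphism elsewhere. Lift it $\Phi$-equivariantly. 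The lift of a geodesic is a complete geodesic $J^\circ$ in a component of $\Omega(\widehat G)$, joining the two fixed points of a conjugate of $h$. Send $J^\circ$ to the corresponding parabolic fixed point of $G$. Off the lifted geodesics the lift is a homeomorphism onto $\Omega(G)$.

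Then I would define $\pi$ on $\Lambda(\widehat G)$. $\widehat G$ is hyperbolic relative to its rank two subgroups, and $G$ is hyperbolic relative to these together with the cyclic subgroups $\langle h\rangle$. The Bowditch boundary of the second peripheral structure is obtained from $\Lambda(\widehat G)$ by collapsing, for each conjugate of $h$, its two fixed points to one point; this is a continuous equivariant quotient map (Yaman, Gerasimov's Floyd map). Identify that Bowditch boundary equivariantly with $\Lambda(G)$. Setting $\pi$ on $\Lambda(\widehat G)$ to be this quotient map, the fibre over a rank one parabolic point $x$ is the closed arc $J=\overline{J^\circ}$, and all other fibres are points.

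The main obstacle is continuity of the glued map at points of $\Lambda(\widehat G)$, where the two definitions meet. I would argue by the convergence property. A sequence in $\Omega(\widehat G)$ tending to $\xi\in\Lambda(\widehat G)$ lies in translates $g_n D$ of a fixed compact piece $D$ of a fundamental domain, plus thin collars of the geodesics. Discreteness and the convergence property make $\diam g_nD\to0$ and $\diam \Phi(g_n)\pi(D)\to0$, with both families accumulating at corresponding limit points. For the collars I would use the same argument applied to the translates of the arcs $J$, of which only finitely many have spherical diameter above any fixed $\epsilon$. Surjectivity and the fibre description then follow from the two constructions.

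Finally, for the extension $F$ to $\overline{\Hyp^3}$, I would define $F$ on a finite-sided fundamental polyhedron, compatibly with the side pairings, and extend equivariantly. On the convex cores this can be done as a coarse Lipschitz equivariant map. Continuity at infinity again follows from equivariance and the convergence property, exactly as in the previous step.
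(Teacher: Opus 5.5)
The paper gives no proof of this statement. It quotes it from \cite{tukia:floyd}, and under its standing torsion-free convention only uses it for torsion-free $G$. Your architecture is a sound modern route, and your first step is correct: hyperbolize $(N,P_t)$, collapse the core geodesics on the conformal boundary, and on $\Lambda(\widehat G)$ use the comparison of Bowditch boundaries for the two peripheral structures (equivalently, Floyd's theorem applied to $\widehat G$ and to $G$). As written, however, three pieces are missing.

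First, you pass to a torsion-free finite-index subgroup $G_0$ and never come back. That yields $\widehat{G_0}\cong G_0$ with $G_0$-equivariance, not $\widehat G\cong G$. With torsion you need one of the following:
\begin{itemize}
\item the pared orbifold version of hyperbolization; or
\item a $G/G_0$-invariant conformal structure on $\partial N\setminus P_t$, together with uniqueness of geometrically finite structures with prescribed boundary to realize $G/G_0$ by isometries (extensions of the centreless group $G_0$ are then determined by the outer action).
\end{itemize}
Otherwise, restrict the statement to torsion-free $G$.

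Second, the continuity argument does not work as stated. The set $\pi(D)$ is not a compact subset of $\Omega(G)$: it contains the rank-one parabolic points $\pi(D\cap|\cL|)$. So the convergence property alone does not give $\diam\Phi(g_n)\pi(D)\to0$ when the repelling point of $\Phi(g_n)$ lies there, e.g.\ for $g_n=h^n$. Your collar remark uses the null-sequence property of the arcs $J$, which lives on the $\widehat G$ side; what is needed lives on the $G$ side. Normalize so that $x=\infty$ and $\Phi(h)(z)=z+1$. Then the image of a collar of a compact subarc of $J^\circ$ lies in a set $\{|\mathrm{Im}\,z|\ge M,\ |\mathrm{Re}\,z|\le C\}$, whose $\Phi(h)^n$-images shrink to $x$. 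Moreover, the $G$-translates of the precisely invariant horodisk pairs at rank-one points form a null-sequence. The same estimate is what gives continuity of $\pi$ at points of $J^\circ$ themselves; lifting does not provide this, since nothing is a covering near the node. You also use implicitly that exactly one leaf lies over each rank-one point, which is needed for the fibre to be one arc rather than a circle. This follows from the pared condition, but it has to be said.

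Third, for $F$, coarse Lipschitz behaviour on the convex cores is beside the point. The difficulty is outside the convex hull: there $F$ must take the boundary values $\pi$ on $\Omega(\widehat G)$, and in particular push the region over $J^\circ$ into the horoball at $x$. Your polyhedron construction does not address this.

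A clean way to obtain the $\Omega$-part of $\pi$ and $F|_{\Hyp^3}$ simultaneously, while also justifying the matching of covering groups that your lifting step assumes, is as follows.
\begin{itemize}
\item Note that $\overline M_{\widehat G}\setminus\bigcup\gamma_i\cong N\setminus(P_t\cup\bigcup\gamma_i)$.
\item This is homeomorphic to $N\setminus P\cong\overline M_G$ by a map that is the identity off a neighbourhood of $P_a$, hence induces $\Phi$, and that is standard in the cusps.
\item Lift it to $(\Hyp^3\cup\Omega(\widehat G))\setminus|\cL|\to\Hyp^3\cup\Omega(G)$.
\item Extend by $J^\circ\mapsto x$ and by your boundary map on $\Lambda(\widehat G)$.
\item Check continuity at $\Lambda(\widehat G)$ using the cusp and null-sequence argument above.
\end{itemize}
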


\medskip

We call $\widehat G$ the {\em (rank-one) de-parabolized Kleinian group} associated to $G$, as each rank-one parabolic element of $G$ is turned into a loxodromic element in $\widehat G$. It is minimally parabolic by construction. For simplicity, we will just speak of the {\em de-parabolization} of the group $G$. 

Topologically, if $G$ is a Kleinian group uniformizing a pared $3$-manifold $(N, P)$, where $P = P_t \sqcup P_a$, then the de-parabolized group $\widehat G$ 
of $G$ uniformizes $(N,P_t)$. Note that $\widehat G$ is convex cocompact if $P_t = \emptyset$, i.e., $\partial N$ has no torus component, 
but it contains rank two cusps if $P_t \neq \emptyset$.

\subsubsection{Quotient laminations}
Let $G$ be a torsion free geometrically finite Kleinian group with de-parabolization $\widehat{G}$ that uniformizes a manifold $N$ with parings $P_t \sqcup P_a$ and $P_t$ 
respectively. We assume that $P_a\ne \emty$. Then the annuli of $P_a$ define conjugacy classes of loxodromic elements of $\widehat G$, and a collection of
 pairwise disjoint 
closed hyperbolic geodesics $\mathcal{C}$ on the conformal boundary $\Omega({\widehat G})/\widehat{G}$. The {\it quotient lamination} is the lift $\mathcal{L}$ 
of $\mathcal{C}$ in $\Omega({\widehat G})$.

If $\Delta$ is a component of $\Omega({\widehat G})$, we let  $\mathcal{L}_\Delta$ be the restriction of $\mathcal{L}$ to $\Delta$.
We call a component of $\Delta \setminus |\mathcal{L}|$ a {\em gap} of the lamination $\mathcal{L}$.

We remark that the geodesic lamination $\mathcal{L}$ induces an equivalence relation $\sim_{\mathcal{L}}$ on $\Lambda(\widehat G)$, by collapsing the closure of each leaf of $\mathcal{L}$ to a single point.

Theorem \ref{thm:tukia} implies the following by replacing each arc $J$ by the hyperbolic geodesic in $\Omega({\widehat G})$ in its isotopy class relative to its endpoints.

\begin{prop}
    Let $G$ be a torsion free geometrically finite Kleinian group, and let $\widehat G$ be a de-parabolized Kleinian group associated to $G$, with corresponding geodesic lamination $\mathcal{L}$.
    Then the limit set $\Lambda(G)$ is homeomorphic to $\Lambda(\widehat G) / \sim_{\mathcal{L}}$.
\end{prop}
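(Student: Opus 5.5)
We use Theorem~\ref{thm:tukia} to pass the quotient from $\Lambda(\widehat G)$ down to $\Lambda(G)$. Let $\pi:\cbar\to\cbar$ be the equivariant map given there. It is continuous and surjective onto $\cbar$; the surjectivity is part of the construction, since each fiber over $x\in\cbar$ is nonempty. Because $\pi$ semiconjugates $\widehat G$ to $G$ via $\Phi$, it maps $\Lambda(\widehat G)$ onto $\Lambda(G)$: the limit set is the closure of the fixed points of loxodromic elements, $\pi$ sends fixed points of $h$ to fixed points of $\Phi(h)$, and $\Lambda(\widehat G)$ is compact, so its image is closed, $G$-invariant and contains all loxodromic fixed points of $G$. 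Thus the restriction $\pi|_{\Lambda(\widehat G)}:\Lambda(\widehat G)\to\Lambda(G)$ is a continuous surjection between compact Hausdorff spaces, hence a quotient map. The task therefore reduces to checking that its fibers are exactly the $\sim_{\mathcal L}$ classes.

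We identify the fibers using Theorem~\ref{thm:tukia}. For $x$ not a rank-one parabolic point, $\pi^{-1}(x)$ is a single point. For $x$ a rank-one parabolic point, $\pi^{-1}(x)=J$ is an arc whose interior lies in $\Omega(\widehat G)$ and whose endpoints are the fixed points $a,b$ of a loxodromic $h\in\widehat G$ with $\Phi(h)$ parabolic. Hence $\pi^{-1}(x)\cap\Lambda(\widehat G)=\{a,b\}$. We then check that the arcs $J$ correspond bijectively to the leaves of $\mathcal L$.

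1. The element $h$ represents an annulus of $P_a$, since $\Phi(h)$ is parabolic of rank one.

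2. The interior of $J$ lies in a single component $\Delta$ of $\Omega(\widehat G)$ and joins the two fixed points of $h$. Its hyperbolic geodesic representative $\gamma$ in $\Delta$, taken rel endpoints, therefore has endpoints $a,b$.

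3. The geodesic $\gamma$ is $\langle h\rangle$-invariant and projects to the closed geodesic in the class of $h$ on $\Omega(\widehat G)/\widehat G$. That closed geodesic is a component of $\mathcal C$, so $\gamma$ is a leaf of $\mathcal L$.

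4. Conversely, each leaf of $\mathcal L$ is stabilized by a conjugate of a loxodromic corresponding to $P_a$. Its image under $\Phi$ is a rank-one parabolic element with some fixed point $x$, and the leaf has the same endpoints as the arc $\pi^{-1}(x)$.

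5. Two distinct leaves never share an endpoint. If they did, equivariance would give a single point of $\Lambda(\widehat G)$ lying in two distinct fibers of $\pi$, which is impossible since fibers of a map are disjoint.

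Consequently $\sim_{\mathcal L}$ identifies exactly the pairs $\{a,b\}$ that are endpoints of a common leaf, and these are precisely the nontrivial fibers of $\pi|_{\Lambda(\widehat G)}$. The restriction $\pi|_{\Lambda(\widehat G)}$ therefore induces a continuous bijection $\Lambda(\widehat G)/\sim_{\mathcal L}\to\Lambda(G)$. The quotient is compact, and it is Hausdorff because the relation equals the fiber relation of a continuous map to a Hausdorff space, so this bijection is a homeomorphism.

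The main point needing care is step 3, identifying the geodesic straightening of $J$ with a leaf of the quotient lamination. This requires that the annulus of $P_a$ corresponds to the simple closed curve in $\mathcal C$ in the same conjugacy class, and that the isotopy class of $J$ rel endpoints in $\Delta$ is the lift of that curve. I would justify this by $\langle h\rangle$-invariance: $J$ can be replaced by an $\langle h\rangle$-invariant arc, which is isotopic rel endpoints to the unique $h$-invariant geodesic in $\Delta$.
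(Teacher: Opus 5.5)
Your proposal takes the same route as the paper, which simply applies Theorem~\ref{thm:tukia} and straightens each arc $J$ to the geodesic in its isotopy class rel endpoints; you additionally supply the compact-to-Hausdorff quotient argument and the fiber bookkeeping that the paper leaves implicit.

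One caution concerns step 3. Your justification via ``the closed geodesic in the class of $h$'' does not hold when $\Phi(h)$ is an accidental parabolic, because the conjugacy class of $h$ then need not determine a unique closed geodesic on $\Omega(\widehat G)/\widehat G$. For example, if you pinch a curve on one side of a quasi-Fuchsian group, the class of $h$ is represented by a simple closed geodesic on each of the two components of $\Omega(\widehat G)/\widehat G$, but only the one on the pinched side lies in $\mathcal C$. The step is dispensable, however. The relation $\sim_{\mathcal L}$ on $\Lambda(\widehat G)$ only sees endpoint pairs, and the definition of $\mathcal L$ together with your steps 1 and 4 already shows these are exactly the fixed-point pairs of loxodromics $h$ with $\Phi(h)$ rank-one parabolic, i.e.\ the nontrivial fibers of $\pi|_{\Lambda(\widehat G)}$.
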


\subsubsection{Lifting homeomorphisms}
Let $G$ be a Kleinian group with limit set $\Lambda(G)$.
We define the group of homeomorphisms of $\Lambda(G)$ by
\begin{align}\label{eqn:homeogroup}
    \Homeo^+(\Lambda(G)):=\{h:\widehat\C \longrightarrow\widehat \C: &\ h \text{ is an orientation preserving homeomorphism } \nonumber\\ &\text{with } h(\Lambda(G)) = \Lambda(G)\}/\sim,
\end{align}
where $h_1 \sim h_2$ if $h_1|_{\Lambda(G)} = h_2|_{\Lambda(G)}$.
By a slight abuse of notation, we let $h \in \Homeo^+(\Lambda(G))$ denote a homeomorphism, though it strictly represents an equivalence class of homeomorphisms of
the sphere.
One may also regard an element of $\Homeo^+(\Lambda(G))$ as a homeomorphism of $\Lambda(G)$ that admits an orientation-preserving ambient extension. 

Now suppose $G$ is geometrically finite with a corresponding de-parabolized Kleinian group $\widehat G$ and geodesic lamination $\mathcal{L}$.
Consider the group of orientation preserving homeomorphisms of $\Lambda(\widehat{G})$ that preserves $\mathcal{L}$:
\begin{align}
\Homeo^+(\Lambda(\widehat{G}), \mathcal{L}) = \{h:\,\widehat\C \rightarrow \widehat\C: &\ h \text{ is an orientation preserving homeomorphism }\nonumber\\ &\text{with }h(\Lambda(\widehat{G})) = \Lambda(\widehat{G}), h(|\mathcal{L}|) = |\mathcal{L}|\}/\sim
\end{align}
where $h_1 \sim h_2$ if $h_1|_{\Lambda(\widehat{G})} = h_2|_{\Lambda(\widehat{G})}$.

Let $\pi: \Lambda(\widehat{G})  \longrightarrow \Lambda(G)$ be the projection map. Then we have a homomorphism $p: \Homeo^+(\Lambda(\widehat{G}), \mathcal{L}) \longrightarrow \Homeo^+(\Lambda(G))$ so that $p(h) \circ \pi = \pi \circ h$.
\begin{prop}\label{prop:homeomisom}
    Let $G$ be a geometrically finite Kleinian group, and let $\widehat G$ be a de-parabolized Kleinian group associated to $G$, with corresponding geodesic lamination $\mathcal{L}$.
    Then the homomorphism $p: \Homeo^+(\Lambda(\widehat{G}), \mathcal{L}) \longrightarrow \Homeo^+(\Lambda(G))$ is an isomorphism.
\end{prop}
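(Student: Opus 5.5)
We must show that $p$ is well defined, injective and surjective.

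\emph{Well-definedness and injectivity.} Let $h\in\Homeo^+(\Lambda(\widehat G),\mathcal{L})$. Since $h$ is a homeomorphism of $\cbar$ preserving $\Lambda(\widehat G)$ and $|\mathcal{L}|$, it permutes the leaves of $\mathcal{L}$: a leaf is a connected component of $|\mathcal{L}|$, because the leaves are disjoint and properly embedded. Hence $h$ permutes the closures $\overline{\ell}$, and therefore preserves the relation $\sim_{\mathcal{L}}$ on $\Lambda(\widehat G)$. It thus descends to a homeomorphism $\bar h$ of $\Lambda(G)\cong\Lambda(\widehat G)/\sim_{\mathcal{L}}$ with $\bar h\circ\pi=\pi\circ h$. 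To see that $\bar h$ extends to an orientation-preserving homeomorphism of $\cbar$, I would use the extension $\pi:\cbar\to\cbar$ of Theorem~\ref{thm:tukia}, which collapses exactly the arcs $J$, i.e.\ the closures of the leaves after isotopy. By Moore's theorem the decomposition of $\cbar$ into closed leaves and points is upper semicontinuous and cell-like, with quotient $\cbar$. The map $h$ preserves this decomposition, so it descends to an orientation-preserving homeomorphism of the quotient, whose restriction is $\bar h$. The restriction depends only on $h|_{\Lambda(\widehat G)}$, so $p$ is a well-defined homomorphism.

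For injectivity, suppose $\pi\circ h=\pi$ on $\Lambda(\widehat G)$. Then $h$ fixes every point of $\Lambda(\widehat G)$ not an endpoint of a leaf. Such points are dense: the endpoints of leaves form a countable set, while $\Lambda(\widehat G)$ is perfect. By continuity $h|_{\Lambda(\widehat G)}=\id$.

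\emph{Surjectivity.} This is the main step. Given $f\in\Homeo^+(\Lambda(G))$, choose an orientation-preserving extension $F:\cbar\to\cbar$. The key claim is that $f$ permutes the rank-one parabolic points of $G$ that are images of leaves. Theorem~\ref{thm:qsparab} is not available here, since the statement is purely topological. I would instead characterise these points as the points $x\in\Lambda(G)$ whose preimage $\pi^{-1}(x)$ is a non-degenerate arc, i.e.\ points at which two complementary components $U,V$ of $\Lambda(G)$ touch ``tangentially''. This suggests characterising them topologically as the points $x$ lying on the boundaries of two distinct components of $\Omega(G)$ such that $x$ is a cut point of $\overline U\cup\overline V$ locally, or equivalently a point at which a local arc in $\Omega(G)\cup\{x\}$ separates $\Lambda(G)$. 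These properties are preserved by ambient homeomorphisms. Some care is needed at rank-two cusps and at cut points, where more than two components may meet. Tukia's description shows that at each $x=\pi(J)$ exactly two components of $\Omega(G)$, the images of the two sides of the gaps adjacent to $J$, form the pinch.

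Granting the claim, I would lift $F$ by blowing up each such point back into the arc. Concretely, $\cbar\setminus\bigcup J\to\cbar\setminus\{\text{pinch points}\}$ is a homeomorphism via $\pi$, and we define $H=\pi^{-1}\circ F\circ\pi$ there. We then extend $H$ over each arc $J$ by a homeomorphism onto the arc $J'$ over $F(\pi(J))$, matching endpoints. The endpoints correspond because each endpoint of $J$ is the limit of $\Lambda(\widehat G)$ approached from a definite side, and $F$ is orientation preserving. Continuity at the arcs follows because the arcs form a null family: only finitely many have spherical diameter greater than $\epsilon$, by geometric finiteness. Finally, we isotope $H$ in $\Omega(\widehat G)$ to send $|\mathcal{L}|$ to $|\mathcal{L}|$ exactly, which is possible since the arcs are isotopic to the geodesic leaves rel endpoints. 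This gives $p(H)=f$.

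I expect the topological characterisation of the pinched points, and the resulting matching of the sides, to be the main obstacle.
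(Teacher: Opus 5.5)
\textbf{The gap: surjectivity fails in this generality.}
Your arguments for well-definedness and injectivity are fine; the density of non-endpoints in the perfect set $\Lambda(\widehat G)$ is exactly what is needed. The problem is the key claim in the surjectivity step, and it cannot be repaired. In the generality of the statement, an element of $\Homeo^+(\Lambda(G))$ need not permute the points $\pi(J)$. So no topological characterisation of them exists, and $p$ is not surjective.

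Take $G$ a torsion-free non-uniform lattice of $\PSL_2(\R)$ with $\Lambda(G)=\SS^1$, for instance a thrice-punctured sphere group. Then $\widehat G$ is a convex cocompact free group with Cantor limit set. The rank-one parabolic fixed points form a countable subset of $\SS^1$. Hence some rotation $f(z)=e^{i\theta}z$, which is M\"obius and lies in $\Homeo^+(\Lambda(G))$, sends a parabolic point $x$ to a non-parabolic point $y$. By Theorem~\ref{thm:tukia}, $\pi^{-1}(x)\cap\Lambda(\widehat G)$ consists of two points, while $\pi^{-1}(y)\cap\Lambda(\widehat G)$ is a single point. So no injective $h$ with $\pi\circ h=f\circ\pi$ can exist.

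The same happens for Cantor limit sets with rank-one cusps. This matches the remarks in \S\ref{subsec:parahomeo}: rank-one cusps on \mhf\ pieces or on trivial components are topologically indistinguishable from conical points. Your candidate criteria visibly fail in these examples. On the circle, every point admits a small crosscut through it that separates $\Lambda(G)$ locally. For a Cantor set, $\Omega(G)$ is connected, so there are no two distinct components to pinch.

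\textbf{What does hold, and how it compares with the paper.}
The true statement is surjectivity onto the subgroup of homeomorphisms that permute the rank-one parabolic points of $G$ together with their two cusp accesses. The paper's sketch assumes this implicitly when it asserts that every element has a representative preserving the complement $Y$ of a $G$-invariant family of pairs of horodisks. That assumption is legitimate where surjectivity is actually used, namely the proof of Theorem~\ref{thm:carpetfreeSchottkyimpliesrigid}, where $\Lambda(G)$ is homeomorphic to a Schottky set. There your first criterion is exactly right: the points $\pi(J)$ are precisely the points where the closures of two distinct components of $\Omega(G)$ meet. The two horodisks at such a point lie in those two components, and any ambient homeomorphism preserving $\Lambda(G)$ permutes these data.

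With that hypothesis added, your blow-up construction is essentially the paper's argument: conjugate by $\pi$ off the arcs, extend across the null family of arcs, then isotope onto the geodesic leaves. The paper's horodisk device makes your endpoint matching automatic, since each horodisk access at $x$ singles out one fixed point of the corresponding loxodromic of $\widehat G$.
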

\begin{proof}  It is clear that the homomorphism is injective. A proof of the surjectivity follows from \cite[\S 4, Step 5]{ph:lp:gw:schottky}. Here is a sketch. For each rank one parabolic point for $G$, one considers equivariantly two disjoint open horoballs. Their complement is a $G$-invariant compact subset $Y$; let $X=\pi^{-1}(Y)$ so that
$\pi:X\to Y$ is one-to-one except over the parabolic points where it is $2-1$. An orientation preserving homeomorphism of $\La(G)$ has a representative that preserves $Y$.
We may lift $h$ to a homeomorphism on $X$ (each access of a rank-one parabolic point provides an access to one of the fixed points of the corresponding loxodromic), 
and then extend it continuously to its complement since its components form a null sequence.   \end{proof}

\subsection{JSJ decomposition}\label{subsec:JSJ}
We start by introducing the characteristic cylinder decomposition which defines  the JSJ decomposition by Johannson-Jaco-Shalen \cite{johannson:jsj,jaco:shalen:seifert}
for hyperbolizable pared manifolds, see also \cite[Theorem 3.8]{bonahon:3-var} and \cite[Theorem 4.10]{haiss:lec} for the present form.
We describe how it yields a decomposition of the limit set of a Kleinian group uniformizing the manifold in \S~\ref{subsubsec:decomplimit}. 
In  \S~\ref{subsec:separatinggeodandlam}, we further 
discuss some properties of the corresponding decomposition of the limit set by introducing some canonical laminations on the ordinary set.
Then, we briefly summarize some additional properties of the JSJ tree for convex cocompact Kleinian groups in \S~\ref{subsubsec:ccp}.

\begin{theorem}[Characteristic cylinder decomposition]	\label{thm:charac annulus}
Let $(N,P)$ be a hyperbolizable pared three-manifold with incompressible boundary. 
Then, up to isotopy, there is a unique compact $2$-dimensional submanifold $(A,\partial A)\subset (N,\partial N\setminus P)$ of $N$ such that:
\begin{enumerate}[(i)]
\item Every component of $A$ is an essential cylinder.
\item Each component of $N\setminus\!\setminus A$, is either pared acylindrical or a pared $I$-bundle or a solid torus or a thickened torus.
\item Property (ii) fails when any component of $A$ is removed.
\end{enumerate}
\end{theorem}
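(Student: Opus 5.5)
This is the classical Jaco--Shalen--Johannson theory adapted to the pared setting. I would deduce it from the existence and uniqueness of the characteristic submanifold rather than prove it from scratch. The pared manifold $(N,P)$ has incompressible boundary, and $N$ is hyperbolizable, hence irreducible and atoroidal. Every essential torus is therefore parallel to a torus component of $P$, so only essential annuli can appear in the decomposition.

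First I would treat $(N,P)$ as a manifold with boundary pattern, in which the components of $P$ are marked as faces that the admissible surfaces must avoid. By \cite{johannson:jsj,jaco:shalen:seifert} (in the form of \cite[Theorem 3.8]{bonahon:3-var}) there is a characteristic submanifold $\Sigma\subset N$, unique up to isotopy, with the following two properties. Each component of $\Sigma$ is either an $I$-bundle meeting $\partial N$ in its horizontal boundary, or a Seifert fibered piece. Every essential annulus in $(N,\partial N\setminus P)$ is isotopic into $\Sigma$. Atoroidality and the fact that $N$ is neither a solid torus nor a thickened torus imply that the Seifert fibered pieces are solid tori or thickened tori (the latter being collars of components of $P_t$).

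I would then define $A$ to be the frontier $\overline{\partial\Sigma\setminus\partial N}$, after discarding components that are parallel into $\partial N$ and identifying parallel copies. This frontier consists of annuli that are properly embedded in $(N,\partial N\setminus P)$. They are incompressible and boundary incompressible because $\Sigma$ is characteristic, which gives (i). For (ii), each component of $N\setminus\!\setminus A$ is either a component of $\Sigma$, hence an $I$-bundle, a solid torus or a thickened torus, or a component of the complement of $\Sigma$. In the second case, an essential annulus in the piece avoiding the induced paring would, after isotopy, not lie in $\Sigma$, contradicting the engulfing property. Hence such pieces are pared acylindrical once they are given the paring $P\cup A$.

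The main obstacle is uniqueness together with minimality (iii). My plan is to show that any $A'$ satisfying (i)--(iii) is isotopic to $A$. Given such an $A'$, the union of the non-acylindrical pieces of $N\setminus\!\setminus A'$ is a union of $I$-bundles and Seifert pieces with essential frontier, so by the characteristic property it isotopes into $\Sigma$. Conversely, every piece of $\Sigma$ meets the acylindrical pieces of $N\setminus\!\setminus A'$ only in product regions, since those pieces contain no essential annuli. Together these show that the two decompositions agree up to removing annuli that separate two $I$-bundle pieces or a solid torus from a parallel region. Condition (iii) forbids exactly such removable annuli, so $A'$ and $A$ coincide up to isotopy. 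The delicate point is that when two $I$-bundles, or an $I$-bundle and a solid torus, are glued along an annulus, the union may again be an $I$-bundle or a solid torus. This is where the precise bookkeeping of \cite[Theorem 4.10]{haiss:lec} is needed, and I would follow it for the case analysis.
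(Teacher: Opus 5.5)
Your proposal is correct in outline and takes the same route as the paper. The paper does not prove this theorem itself but quotes it from Johannson and Jaco--Shalen in the form of \cite[Theorem 3.8]{bonahon:3-var} and \cite[Theorem 4.10]{haiss:lec}, which are exactly the sources you reduce to. There are two imprecisions in your sketch, both repaired by the cited theory. First, acylindricity of the complementary pieces is the ``simple complement'' clause of the characteristic submanifold theorem (an essential annulus lying in a complementary piece is parallel to its frontier), not a consequence of engulfing alone, since an annulus lying outside $\Sigma$ can still be isotopic into $\Sigma$. Second, your uniqueness comparison requires the frontier of $\Sigma$ to be minimal in the sense of (iii), which you assume rather than check.
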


Given a component $W$ of $N\setminus\!\setminus A$,
we set $P_W=(P\cap W)\cup (\partial W\setminus (W\cap\partial N))$. When $W$ is not a solid or thickened torus, item (ii) of Theorem \ref{thm:charac annulus} says that  $(W,P_W)$
is a pared manifold.
A {\em pared $I$-bundle} is a pared manifold $(N,P)$ such that $N$ is homeomorphic to a product $F\times I$ over a compact surface $F$ by a homeomorphism that maps $P$ into $\partial F\times I$.

If a cylinder is not on the boundary of a solid torus, it is convenient to add another parallel copy of the cylinder so that they bound a solid torus in $N$.
Let $B$ be the union of $A$ and these parallel cylinders. After such a modification, 
a component that is pared acylindrical, a pared $I$-bundle or a thickened torus is only adjacent to solid tori. We call the resulting surface $B$ the {\it balanced cylinder decomposition of $(N,P)$} that is uniquely defined (up to isotopy) by properties (i), (ii), (ii') and (iii') where
{\it \bit
\item[ (ii')]  Every component of $B$ lies in the closure of a component of $N\setminus B$ whose compactification is a solid torus. 
\item[(iii')]  Property (ii) or (ii') fails when any component of $B$ is removed.
\eit
}

\medskip

The (balanced) JSJ decomposition yields a cofinite action of $\pi_1(N)$ on a bipartite simplicial tree $\cT$, where
\bit
\item the vertices are partitioned into five sets
$$
V(\mathcal{T}) = V_{1,p}(\mathcal{T}) \sqcup V_{1,l}(\mathcal{T}) \sqcup V_2(\mathcal{T}) \sqcup V_3(\mathcal{T}) \sqcup V_4(\mathcal{T}), 
$$
where $V_{1,p}$ corresponds to (the universal coverings of) solid tori that contain a component of $P$,  
$V_{1,l}$ to solid tori disjoint from $P$, $V_2$ to pared $I$-bundles, $V_3$ to the remaining acylindrical pieces and $V_4$ to thickened tori;   
\item an edge joins a vertex from $v\in V_{1,p}(\cT)\cup V_{1,l}(\cT)$ to a vertex $w\in V_2(\cT)\cup V_3(\cT) \cup V_4(\cT)$ if their intersection $v\cap w$  corresponds to a cylinder from $B$.
Moreover, two distinct type IV vertices have distance at least $2$ in $\cT$.
\eit

We interpret below the JSJ decomposition in terms of the topology of the  limit set of a Kleinian group uniformizing $(N,P)$ following \cite{Bow98, Bow01, HH23}.
The relationships between the topology of $N$ and the limit set are detailed in \cite[\S\,4.4]{haiss:lec}.

\subsubsection{Decomposition of the limit set}\label{subsubsec:decomplimit}
Let $(N,P)$ be a pared manifold with incompressible boundary that is uniformized by a Kleinian group $G$ that provides us with a faithful representation $\rho:\pi_1(N)\to G$. 
It follows from the assumption that $\Lambda=\Lambda(G)$ is connected.

In \cite[\S 8]{HH23}, the authors identify the JSJ tree $\mathcal{T}$ with a canonical {\em exact cut pair/cut point tree}, which depends only on the topology of the limit set, and that captures
the JSJ decomposition. Let us recall some terminology.

\medskip

{\noindent\bf Topological terminology for limit sets.}
Let $x \in \Lambda$. We define its {\em valence}, denoted by $\val(x)$, as the number of ends of the locally compact space $\Lambda \setminus \{x\}$. 
A point $x$ is a {\it local cut point} if $\val(x) \geq 2$.
We say it is a {\em (global) cut point} if $\Lambda \setminus \{x\}$ is disconnected. 

Given another point $y\in\Lambda$, we let $N(x, y)$ be the number of connected components of $\Lambda \setminus \{x, y\}$.
A pair of local cut points is an  {\em exact cut pair} if they are not global cut points and if
\begin{equation}\label{eqn:exactcutpair}
    N(x,y) = \val(x) = \val(y).
\end{equation}

A subset of $\Lambda$ is {\it inseparable} if no cut point nor any other cut pair separate them.

A finite subset $S$ of a connected topological space $Y$ is called a {\it cyclic subset} if either $S$ is a cut pair or there is an ordering $S  =\{s_j,\ j\in\Z/n\Z\}$, $n\ge 3$, and continua $M_j\subset Y$, $j\in\Z/n\Z$, such that
\begin{enumerate}[- ]
\item $M_i \cap M_{i+1} = \{s_i\}$, $i\in\Z/n\Z$,
\item $M_i \cap M_j = \emptyset$ whenever $|i - j| > 1$,
\item $\bigcup M_i = Y$.
\end{enumerate}
An infinite subset in which all finite subsets of cardinality at least $2$ are cyclic is also called
{\it cyclic}. A maximal cyclic subset with at least 3 elements is called a {\it necklace}.

\medskip

We now describe the JSJ splitting from the point of view of the limit set of $G$. We first relate the paring with cut points: a point $c\in \Lambda$ is a cut point if and only 
if there is a component $P_i$ of $P$ and a solid torus or a thickened torus $W$ in $N\setminus B$ containing $P_i$ such that $c$ is the fixed point 
of a conjugate of $\rho(\pi_1(W))=\rho(\pi_1(P_i))$ \cite[Lemma 4.15]{haiss:lec}. The stabilizer of the cut point $c$ contains a primitive accidental parabolic element, i.e.,
a parabolic transformation that stabilizes a component $\Delta$ of the ordinary set together with a geodesic in $\Delta$.

\medskip

Now we explain how each vertex $v\in \cT$ is associated to a compact subset $\Lambda_v \subseteq \Lambda$.

\begin{enumerate}
    \item[($\mathrm{I}_p$)] Let $v$ be a type $\mathrm{I}_p$, or a {\em parabolic \twoended} vertex, i.e., a solid torus that contains a component of $P$. Then 
    \begin{itemize}
        \item $\Lambda_v = \{a_v\}$ consists of exactly one point, which is a global cut point; and 
        \item the stabilizer $G_v$ is generated by some parabolic element $g_v \in G$ that fixes $a_v$.
    \end{itemize} 
    \item[($\mathrm{I}_l$)] Let $v$ be a type $\mathrm{I}_l$, or a {\em loxodromic \twoended} vertex, i.e., a solid torus disjoint from $P$. Then 
    \begin{itemize}
        \item $\Lambda_v = \{a_v, b_v\}$ consists of exactly two points forming an  inseparable exact cut pair (see \eqref{eqn:exactcutpair}); and 
        \item the stabilizer $G_v$ is generated by some loxodromic element $g_v \in G$ that fixes $a_v, b_v$.
    \end{itemize}
   \item[(II)] Let $v$ be a type II, or a {\em maximally hanging Fuchsian (\mhf)} vertex, i.e., a pared $I$-bundle. Then 
    \begin{itemize}
        \item $\Lambda_v$ is a necklace Cantor set;  and
        \item the stabilizer $G_v$ is conjugate to a geometrically finite Fuchsian group $Q$ by
        a homeomorphism between their limit sets that preserves their cyclic order.
    \end{itemize} 

    \item[(III)] Let $v$ be a type III, or a {\em \rigid} vertex, i.e., an acylindrical pared submanifold. Then 
    \begin{itemize}
        \item $\Lambda_v$ is a maximal inseparable set with at least $3$ points; and
        \item the stabilizer $G_v$ is a geometrically finite Kleinian group that is not maximal hanging Fuchsian (see \cite[Proposition 8.7]{HH23}).
    \end{itemize}
    
    \item[(IV)] Let $v$ be a type IV, or a {\em \ranktwo} vertex, i.e., a thickened torus. Then 
    \begin{itemize}
        \item $\Lambda_v = \{a_v\}$ consists of exactly one point, which is a global cut point; and 
        \item the stabilizer $G_v$ is a rank-two parabolic subgroup that fixes $a_v$.
    \end{itemize}
\end{enumerate}

\subsubsection{Separating geodesics and laminations}\label{subsec:separatinggeodandlam}
Let $(N,P)$ be a pared manifold with incompressible boundary that is uniformized by a  geometrically finite Kleinian group $G$ with connected limit set.
In this subsection, we associate to each vertex $v \in V(\mathcal{T})$ of the JSJ-tree a canonical lamination $\cL_v$.

\medskip

We proceed as follows.
Let $B$ be the balanced cylinder decomposition of $(N,P)$. Let $\cC_{JSJ}$ denote the collection of boundary curves of the cylinders in $B$ that are not homotopic to a curve in $P$ in $\partial N$, i.e., those boundary curves that are represented by closed geodesics in $\Omega(G)/G$. 
We let $\cL_{JSJ}$ denote its lift to $\Omega(G)$. It is a $G$-invariant lamination such that every leaf
is a geodesic in $\Omega(G)$ stabilized by a loxodromic element and its limit set is contained in some vertex limit set $\Lambda_v$. 

For every vertex $v$ of the JSJ-tree, we let $\cL_v\subset \cL_{JSJ}$ consist of those geodesic leaves in $\Omega(G)$ that
have both endpoints in $\Lambda_v$. Two such laminations have leaves in common if they correspond to pieces of $N\setminus\!\setminus B$ that share a cylinder in their boundaries. Let us describe their properties in terms of the vertex types.

\subsection*{Type $\mathrm{I}_p$}
Let $v$ be a  parabolic {\twoended} vertex with $\Lambda_v = \{a_v\}$. Thus $a_v$ is an accidental parabolic 
cut point and every leaf $l\in\cL_v$ defines a Jordan curve $l\cup\{a_v\}\subset\Omega(G)\cup\{a_v\}$.
Let $\val(v)$ be the number of connected components of $\Lambda \setminus \{a_v\}$. Then $\cL_v$ has $\val(v)-1$ leaves, and 
$a_v$ is on the boundary of $\val(v)+1$ components of $\Omega(G)$, two of which do not contain any leaves (see Figure~\ref{fig:I_p}).

\begin{figure}[ht]
\captionsetup{width=0.96\linewidth}
  \centering
  \includegraphics[width=0.6\textwidth]{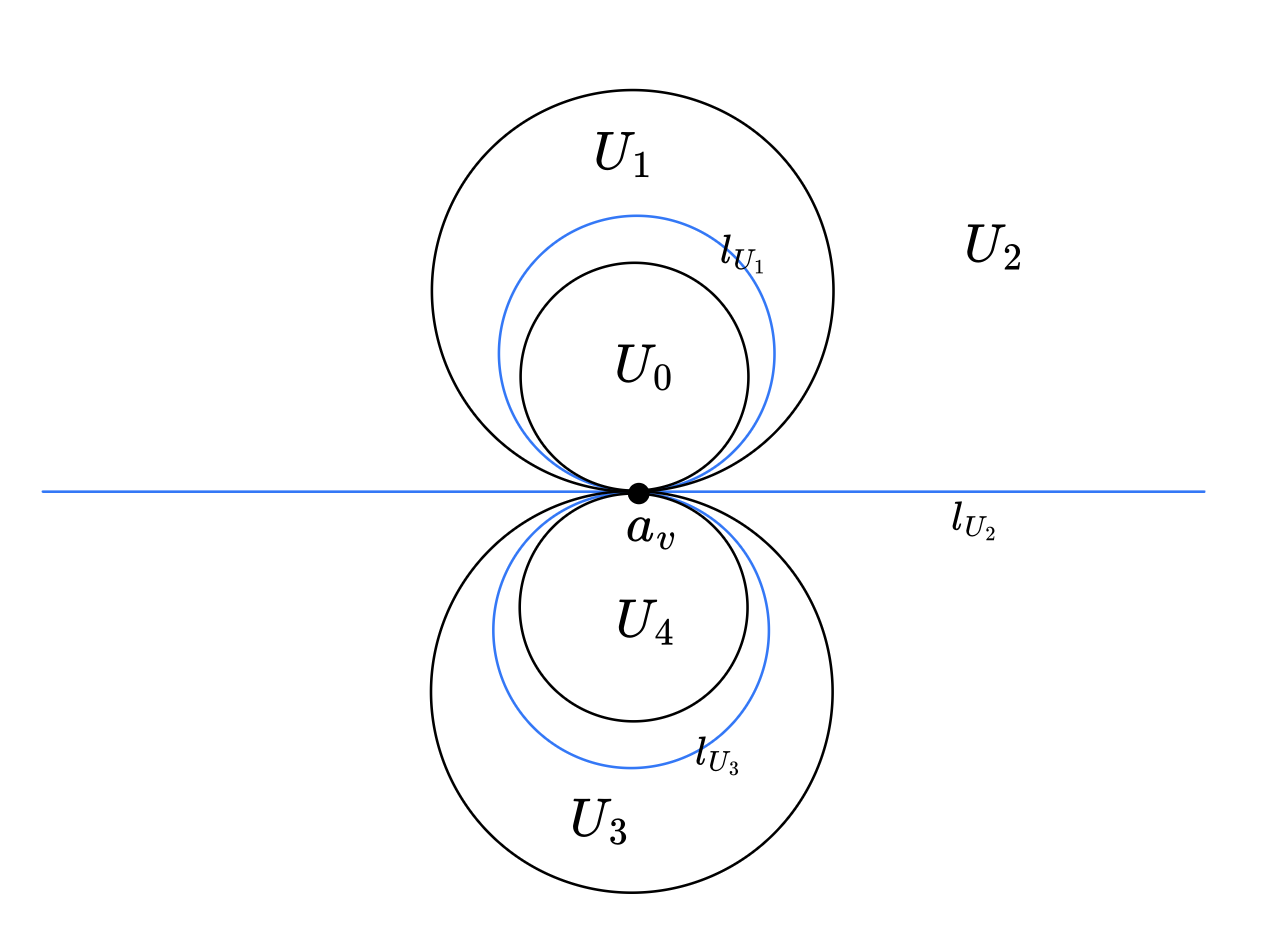}
  \caption{A schematic drawing of the lamination $\mathcal{L}_v$ for a type $\mathrm{I}_p$ vertex with finite valence $\val(v) = 4$. Each black circle represents a component of $\Lambda\setminus \{a_v\}$. The blue curves represent the leaves in $\mathcal{L}_v$.}
  \label{fig:I_p}
\end{figure}

\subsection*{Type $\mathrm{I}_l$}
Let $v$ be a loxodromic {\twoended} vertex with $\Lambda_v = \{a_v, b_v\}$. 
Then $\cL_v$ contains exactly  $N(a_v,b_v)$ leaves.

\subsection*{Type II and type III}
Let $v$ be a {\mhf} or a {\rigid} vertex. There exists a unique connected component $U_v$ of $\cbar\setminus \overline{|\cL_v|}$ such that $\Lambda_v\subset \overline{U_v}$.

\subsection*{Type IV}
Let $v$ be a {\ranktwo} vertex. Then $\cL_v$ is infinite and $\overline{|\cL_v|}$ is homeomorphic to a double Hawaiian earring (see Figure~\ref{fig:ranktwoSch}, right).

\medskip

The lamination $\cL_v$ naturally induces an equivalence relation $\sim_v$ on $\Lambda_v$.
Theorem~\ref{thm:charac annulus} (ii) enables us to understand the {\mhf} and {\rigid} vertices as follows.

\begin{prop}\label{prop:rigidvertexSchottky}
    \noindent\begin{enumerate}[leftmargin=8mm]
    \item Let $v \in \mathcal{T}$ be a {\mhf} vertex. Then $\Lambda_v/\sim_v$ is homeomorphic to a circle.
    \item Let $v \in \mathcal{T}$ be a {\rigid} vertex. Then $\Lambda_v/\sim_v$ is homeomorphic to a Schottky set.
\end{enumerate}
\end{prop}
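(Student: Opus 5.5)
The plan is to realize the quotient $\Lambda_v/\sim_v$ as the limit set of an auxiliary Kleinian group. We use the stabilizer $G_v$ together with the pared manifold structure $(W,P_W)$ of the corresponding component $W$ of $N\setminus\!\setminus B$ given by Theorem~\ref{thm:charac annulus}~(ii).

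\textbf{Identifying the limit set of $G_v$.} For $v$ of type II or III, $G_v=\rho(\pi_1(W))$ is geometrically finite, and $\Lambda(G_v)=\Lambda_v$. The boundary of $W$ splits into two parts. The first is $\partial W\cap\partial N$ minus $P$, which corresponds to pieces of components of $\Omega(G)$. The second is the cylinders of $B$, which are now added to the paring $P_W$. Each frontier cylinder is a loxodromic cylinder, so the associated curves in the conformal boundary of $\Hyp^3/G_v$ are exactly the closed geodesics whose lifts form $\cL_v$; each such lift has both endpoints in $\Lambda_v$. Thus $G_v$, viewed as uniformizing $(W,P\cap W)$, has the de-parabolization structure of \S\ref{subsec:rankonedeparalam} relative to the paring $(W,P_W)$.

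\textbf{Applying Tukia's theorem in reverse.} By Thurston's hyperbolization there is a geometrically finite $G'_v$ uniformizing $(W,P_W)$. In $G'_v$ the frontier cylinders become rank-one cusps, and its de-parabolization is (qc conjugate to) $G_v$, with quotient lamination $\cL_v$. This uses the fact that de-parabolization only depends on $(W,P\cap W)$ up to quasiconformal conjugacy, which is why conjugacy suffices. The Proposition following Theorem~\ref{thm:tukia} then gives
\[
\Lambda_v/\!\sim_v\;\cong\;\Lambda(G'_v).
\]
The point to check is that the lamination produced by Theorem~\ref{thm:tukia} for $G'_v$ agrees with $\cL_v$. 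Both are lifts of the same geodesic multicurve, the cores of the frontier annuli, so they agree, and isotopic laminations give homeomorphic quotients.

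\textbf{The two cases.}
\begin{itemize}
\item \emph{Type II.} $(W,P_W)$ is a pared $I$-bundle $F\times I$ with $\partial F\times I\subset P_W$. Hence $G'_v$ is a finite-covolume Fuchsian group, possibly twisted, up to index two. Its limit set is a round circle, so $\Lambda_v/\!\sim_v$ is a circle. Alternatively, this follows from the necklace description: collapsing each gap leaf pairs up consecutive points of the cyclically ordered Cantor set.
\item \emph{Type III.} $(W,P_W)$ is pared acylindrical, so $G'_v$ is a geometrically finite acylindrical group. Proposition~\ref{prop:acylindricalSchottky} then gives that $\Lambda(G'_v)$ is homeomorphic to a Schottky set.
\end{itemize}

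\textbf{Main obstacle.} The hardest step is the rigorous identification of $\cL_v$ with the Tukia quotient lamination of $G'_v$. Two things must be verified. First, the leaves of $\cL_v$ are exactly the lifts of frontier-cylinder core curves seen from $G_v$, and not leaves belonging only to neighbouring vertices. Second, the ordinary-set components of $G_v$ containing these leaves correspond to the boundary of $W$, so that no further collapsing occurs. I would handle this with the defining property of $\cL_v$ (leaves with both endpoints in $\Lambda_v$) together with the fact that a loxodromic of $G$ with fixed points in $\Lambda_v$ lies in $G_v$, since $\Lambda_v$ is a maximal inseparable set or necklace.
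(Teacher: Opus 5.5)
Your proposal follows the paper's proof: Theorem~\ref{thm:charac annulus}~(ii) makes $(W,P_W)$ either a pared $I$-bundle or a pared acylindrical manifold. The $I$-bundle case then amounts to identifying the endpoints of each gap of the necklace, and the acylindrical case is Proposition~\ref{prop:acylindricalSchottky}. Your Tukia step makes explicit what the paper leaves implicit, namely that $\Lambda_v/\!\sim_v$ is the limit set of a group uniformizing $(W,P_W)$.

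One correction: frontier cylinders need not be loxodromic. Those facing type $\mathrm{I}_p$ vertices, or solid tori adjacent to rank-two vertices, are parabolic, and $P\cap W$ may also contain annuli. So $G_v$ can already have rank-one cusps, and it is only a \emph{partial} de-parabolization of $G'_v$, not the one produced by Theorem~\ref{thm:tukia}. To repair this, compare $G_v$ and $G'_v$ with their common (up to quasiconformal conjugacy) full de-parabolization. Also note that leaves of $\cL_v$ whose two ends lie at the same parabolic point do not affect $\sim_v$.
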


\begin{proof} Theorem \ref{thm:charac annulus} (ii) ensures that each {\mhf} and {\rigid} vertex corresponds to a pared I-bundle and to a pared acylindrical manifold respectively.

\medskip

Thus, the first statement follows from the fact that the paring consists of the holes of the surface of the bundle, and 
therefore the equivalence relation consists in identifying points from the same gap.
    
The second statement follows from the fact that the limit set of a Kleinian group uniformizing a pared acylindrical manifold is homeomorphic to a Schottky set.
\end{proof}

\subsubsection{Convex cocompact Kleinian groups}\label{subsubsec:ccp}
If the uniformization of $N$ is a convex cocompact Kleinian group, then we obtain the following simplifications. As there are no global cut point nor any parabolic point, all the vertices are convex cocompact and their possible types are loxodromic {\twoended}, {\mhf} and {\rigid}, and the edges correspond to inseparable exact cut pairs.

\subsection{Incompressible decomposition}\label{subsec:icd}
In this subsection, we discuss a decomposition of a geometrically finite Kleinian group with disconnected limit sets.

\subsubsection{Compression disks and incompressible cores}
Let $(N, P)$ be a pared 3-manifold uniformized by a geometrically finite Kleinian group $G$. 
It follows from \cite[Theorem 3.7]{bonahon:3-var} that there is a (possibly disconnected) submanifold $\Sigma$ of $N$ so that $(\Sigma, P\cap \partial \Sigma)$ has incompressible boundary, and $N$ is obtained from $\Sigma$ by adding $1$-handles (see also \cite{CMT99}). The submanifold $\Sigma$, which is unique up to isotopy, is called the {\em incompressible core of $(N, P)$}. Its complement with a neighborhood of $\partial_0N$ contains every compression disk up to homotopy. 

We may find a finite and minimal collection $\cE$ of compression disks so that each component of $N\setminus\!\setminus \bigcup_{D\in\cE}D$
 is either a ball or a  pared $3$-manifold with incompressible boundary.
 We call such a decomposition {\it a core decomposition of $(N,P)$ along  compression disks}. 
 More precisely, 
 a collection $\mathcal{E}:=\{D_1,\dots,D_s\}$ of disjoint compression disks is called a
{\em core decomposition} if $\partial D_i \subseteq \partial_0 N$ and, after cutting $N$ along
$\{D_j\}$ to obtain
$$
N' = N \setminus\!\setminus \bigcup_{j=1}^s D_j,
$$
so that each $D_j$ gives rise to two boundary disks $D_j^+$ and $D_j^-$,
the following hold:
\begin{enumerate}
    \item\label{completecompressiondisk1} each component $X$ of $N'$ is either a topological $3$-ball with $P \cap \partial X = \emptyset$, or $(X, P\cap \partial X)$ has incompressible boundary;
    \item the collection $\mathcal{E}$ is minimal in the sense that no proper subset of $\mathcal{E}$ satisfies condition~\eqref{completecompressiondisk1}.
\end{enumerate}

 We remark that such a decomposition is generally not unique (cf. Proposition~\ref{prop:admiss}).

Let $\Sigma$ be the union of the components of $N'=N \setminus\!\setminus \bigcup_{j=1}^s D_j$ that has incompressible boundary.
Then $\Sigma$ is the incompressible core for $N$. Note that $\Sigma$ may be disconnected, and each connected component $X$ of $\Sigma$ is either 
a non-elementary pared-manifold or homeomorphic to a thickened torus or to a solid torus with $P \cap \partial X \neq \emptyset$.
Each component of $N' \setminus\!\setminus \Sigma$ is a topological $3$-ball.
Note that the boundary $\partial \Sigma$ consists of a part of $\partial N$ and of the compression disks. 

\subsubsection{Decomposition of the group and the limit set}
Let $\mathcal{E} = \{D_1,..., D_s\}$ be a core decomposition.
In this subsection, we describe how it gives a free splitting of $G$, which we call the {\em incompressible decomposition}.
This is the analogue of the JSJ decomposition along essential cylinders (see \S~\ref{subsec:JSJ}).

The decomposition can be better described in terms of the decomposition of the limit set.
We assume that $\partial D_i \subseteq \partial_0N$ which is identified with the conformal boundary $\Omega(G)/G$.
We assume that $\partial D_i, i=1,...,s$ are hyperbolic geodesics on the conformal boundary.

Let $\mathcal{D}$ be the collection of all lifts of $\{\partial D_1,..., \partial D_s\}$ in $\Omega(G)$.
Since we assume that $\partial D_i, i=1,...,s$ are hyperbolic geodesics, $\mathcal{D}$ is a collection of Jordan curves.

Let $|\mathcal{D}|:=\bigcup_{C\in \mathcal{D}} C$.
We call $\mathcal{D}$ the {\em circle lamination} associated to the incompressible decomposition.

Given three Jordan curves $C_1, C_2, C_3$, we say $C_2$ separates $C_1, C_3$ if $C_1, C_3$ are contained in two different components of $\widehat\C \setminus C_2$.
A maximal $\mathcal{S} \subseteq \mathcal{D}$ so that no Jordan curve in $\mathcal{S}$ separates two Jordan curves in $\mathcal{S}$ is called a {\em star}.
Note that each star $\mathcal{S}$ corresponds to a component of $N\setminus\!\setminus \bigcup_{j=1}^s D_j$.

We construct a simplicial tree $\mathcal{T}$ by associating a vertex to each Jordan curve in $\mathcal{D}$ and to each star of $\mathcal{D}$.
Let $v_\mathcal{S}, v_C$ be vertices corresponding to a star $\mathcal{S}$ and a Jordan curve $C \in \mathcal{D}$ respectively.
We connect them by an edge if $C \in \mathcal{S}$. We obtain a cofinite simplicial action of $G$ on $\mathcal{T}$ with no edge inversion and 
a function $p:\Lambda(G)\to \cT\cup\partial \cT$ defined as follows. If $z\in\Lambda(G)$ belongs to a star component, then we map it to the vertex it defines. Otherwise,
$z$ is contained in a nested sequence of Jordan domains $D_0 \supset D_1 ...$ bounded by Jordan curves of $\mathcal{D}$; note that the diameters of $D_i$ shrink to $0$, thus $\bigcap_n \overline{D_n} \cap \Lambda(G)$ is a singleton. We let $p(z)$ denote the end of the corresponding ray in $\mathcal{T}$. Let us define $G_v=\stab_G(v)$.

\subsubsection{Limit set $\Lambda_v$ and its stabilizer $G_v$}
Let $v$ be a vertex. We now associate to $v$ the limit set $\La_v$ of the stabilizer $G_v = \stab_G(v)$ for the simplicial action of $G$ on $\cT$.

Suppose that $v$ corresponds to a Jordan curve $C \in \mathcal{D}$. Then we have 
$$
\Lambda_v = \emptyset.
$$

Suppose that $v$ corresponds to a star $\mathcal{S}$, and $C \in \mathcal{S}$, then there is a unique component $D_C^-$ of $\widehat\C \setminus C$ that is disjoint from any Jordan curve in $\mathcal{S}$.
Let $K_v:= \widehat\C \setminus \bigcup_{C\in \mathcal{S}} D_C^-$.
Since $K_v$ is invariant under $G_v$, we have $\Lambda_v \subseteq \Lambda(G) \cap K_v$.
For the other inclusion, let $x \in \Lambda(G) \cap K_v$. By definition of the limit set, there is a sequence $C_n \in \mathcal{S}$ with $C_n \to x$. Then $x \in \Lambda_v$ as the action of $G_v$ on $\mathcal{S}$ is cofinite. 
Thus, we have
$$
\Lambda_v= \Lambda(G) \cap K_v \text{ and } G_v = \stab_G(\Lambda_v).
$$ 

The topology of these limit sets provides us with a partition of the vertex set into four categories
$$
V(\mathcal{T}) = V_D(\mathcal{T}) \sqcup V_B(\mathcal{T}) \sqcup V_{NE}(\mathcal{T}) \sqcup V_P(\mathcal{T})
$$
where
\begin{enumerate}
    \item[(D)] $V_D(\mathcal{T})$ consists of the vertices corresponding to the Jordan curves in $\mathcal{D}$ so that $G_v = \{\id\}$ and $\Lambda_v = \Lambda(G_v) = \emptyset$;
    \item[(B)] $V_B(\mathcal{T})$ consists of the vertices corresponding to the $3$-balls in $N\setminus\!\setminus\bigcup D_i$ so that $G_v = \{\id\}$, $\Lambda_v = \Lambda(G_v) = \emptyset$ and $K_v$ is finitely connected;
    \item[(NE)] $V_{NE}(\mathcal{T})$ consists of the vertices corresponding to the pared manifolds of $\Sigma$ so that $G_v$ is a non-elementary geometrically finite Kleinian group and $\Lambda_v = \Lambda(G_v)$ is connected;
    \item[(P)] $V_P(\mathcal{T})$ consists of the vertices corresponding to a thickened torus component of $\Sigma$ or a solid torus component $X$ of $\Sigma$ with $P \cap \partial X \neq \emptyset$ so that $G_v = \Z^2$ or $\Z$ and $\Lambda_v = \Lambda(G_v) = \{p\}$, where $p$ is the parabolic fixed point of any nontrivial element in $G_v$.
\end{enumerate}
We call them {\em \disk} (type D), {\em \ball} (type B), {\em \nonel} (type NE) and {\em \para} (type P) vertices respectively.

\subsection{Strong accessibility}\label{subsec:sa}
Let $G$ be a minimally parabolic geometrically finite Kleinian group. Then $G$ is strongly accessible by \cite{LT17}.
Therefore, we obtain a {\em hierarchy} of $G$ by finite iterations of JSJ or/and incompressible decompositions such that each terminal vertex group does not admit any further splitting. Thus, the terminal
vertex groups are either elementary, Fuchsian or carpet groups.
We call the total levels of iterations of JSJ or incompressible decomposition the {\em height of the hierarchy} of $G$, and is denoted by $\height(G)$.

Let $G$ be a geometrically finite Kleinian group.
Recall that its de-parabolization $\widehat G$ is a minimally parabolic geometrically finite Kleinian group as introduced in \S~\ref{subsec:rankonedeparalam}. We define the height of the hierarchy of $G$ by $\height(G) = \height(\widehat G)$.

\begin{prop}\label{prop:finitehierarchy}
Let $G$ be a geometrically finite Kleinian group with $\Lambda(G) \neq \widehat\C$.
    Then $\Lambda(G)$ contains a carpet if and only if the limit set of its de-parabolization $\hat{G}$ contains a carpet. In this case, $\Lambda(\hat{G})$ contains a carpet subgroup.

    In particular, $\Lambda(G)$ is carpet-free if and only if the terminal groups arising from the strong accessibility of its de-parabolization $\hat{G}$    are elementary or Fuchsian.
\end{prop}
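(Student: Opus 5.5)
The plan is to use the quotient map $\pi:\Lambda(\widehat G)\to\Lambda(G)=\Lambda(\widehat G)/\sim_{\mathcal L}$ from Tukia's theorem (Theorem~\ref{thm:tukia}) in both directions. The first task is to understand how carpets pass through $\pi$. The second is to identify carpets in $\Lambda(\widehat G)$ with carpet subgroups arising at the bottom of the hierarchy.

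\emph{From $\Lambda(G)$ to $\Lambda(\widehat G)$.} Suppose $K\subseteq\Lambda(G)$ is a carpet. I would look at $\pi^{-1}(K)$, or more precisely at the closure of $\pi^{-1}(K\setminus\mathfrak P_1(G))$.

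\begin{itemize}
\item Rank-one parabolic points are countable, so most of $K$ lifts injectively.
\item A carpet has no local cut points. Hence a point of $K$ that is a rank-one parabolic fixed point is not a local cut point of $K$.
\item Consequently $K$ meets each $\sim_{\mathcal L}$-class in a way compatible with lifting: the lift of $K$ near such a point lies on one side of the corresponding leaf.
\item I would then check that the lifted set is again a compact, locally connected, one-dimensional planar continuum without local cut points. By Whyburn's theorem \cite{Why58} it is a carpet.
\end{itemize}

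The converse implication is simpler, because $\pi$ is monotone. Let $K'\subseteq\Lambda(\widehat G)$ be a carpet. Peripheral circles of $K'$ can be collapsed by $\pi$ only along leaves. Since $\mathcal L$ is a lamination of geodesics whose endpoints are fixed by loxodromics, I would choose a subcarpet whose peripheral disks avoid leaf endpoints. For example, take a sub-carpet inside a small region missing the countably many leaf endpoints' identifications, which is possible because carpets contain carpets in every open subset. On that subcarpet $\pi$ is injective, so its image is a carpet in $\Lambda(G)$.

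\emph{Carpets versus the hierarchy.} The next step is to use strong accessibility (\S\ref{subsec:sa}) for $\widehat G$, by induction on $\height(\widehat G)$.

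Suppose first that a terminal vertex group is a carpet group. Its limit set is then a carpet contained in $\Lambda(\widehat G)$, which gives both the carpet and the carpet subgroup.

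Conversely, suppose all terminal groups are elementary or Fuchsian, and let $K$ be a carpet in $\Lambda(\widehat G)$. At each level of the hierarchy, $\Lambda$ decomposes into vertex limit sets glued along cut points, cut pairs, or separated by the Jordan curves of $\mathcal D$.

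\begin{itemize}
\item A carpet has no cut pairs and no local cut points.
\item So $K$ cannot cross an inseparable exact cut pair, a cut point, or a curve of $\mathcal D$ essentially.
\item This forces $K$ to lie in a single vertex limit set $\Lambda_v$. The one alternative is that $K$ lies in a nested limit, which is a single point.
\item Inducting down, $K$ lies in a terminal limit set. That set is a point, a pair, or a circle (or a Cantor subset thereof), none of which contains a carpet. This is a contradiction.
\end{itemize}

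The same argument shows that when a carpet exists, some terminal group is a carpet group. Combined with the first paragraph, this yields the ``in particular'' statement.

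\emph{Main obstacle.} The delicate step is showing that a carpet $K$ localizes inside a single vertex set $\Lambda_v$ of a JSJ decomposition. Here $K$ need not be a union of pieces, and a priori it could pass through an exact cut pair $\{a_v,b_v\}$ while meeting infinitely many components. My first approach would be this: removing $\{a_v,b_v\}$ disconnects $\Lambda$, and $K\setminus\{a_v,b_v\}$ remains connected because carpets have no cut pairs. Hence $K\setminus\{a_v,b_v\}$ lies in a single complementary component, which closes up to the union of the pieces on one side. Iterating over all edges of the tree, and using the fact that the limit sets of the pieces along any ray shrink to a point, traps $K$ in one $\Lambda_v$.
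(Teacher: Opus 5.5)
\textbf{The gap is in the direction ``$\Lambda(\widehat G)$ contains a carpet $\Rightarrow$ $\Lambda(G)$ contains a carpet''.} Your starting observation is right. If $x,y\in K'$ are the two endpoints of a leaf $l$, then $l\subseteq\Omega(\widehat G)\subseteq\widehat\C\setminus K'$, so $x,y$ lie on a common peripheral circle of $K'$, and these are the only identifications $\pi$ makes on $K'$.

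Your way of producing a sub-carpet on which $\pi$ is injective does not work, because there is no ``small region'' free of identifications. The endpoint pairs of $\mathcal L$ form finitely many $\widehat G$-orbits. So for any open $U$ meeting $\Lambda(\widehat G)$, iterating a loxodromic element whose attracting fixed point lies in $U$ pushes the closure of some leaf entirely into $U$. For instance, if $\widehat G$ is itself a carpet group and $\mathcal L\neq\emptyset$ (the carpet-quotient situation), every open subset of $K'=\Lambda(\widehat G)$ contains both endpoints of some leaf. Shrinking the sub-carpet gives no control, so ``on that subcarpet $\pi$ is injective'' does not follow. What must be controlled is where the peripheral circles of the sub-carpet sit, not its size. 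The ``in particular'' statement uses this direction (a terminal carpet group must produce a carpet in $\Lambda(G)$), so it is affected as well.

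\textbf{The missing idea is a buried sub-carpet, which is what the paper uses.} Take a carpet subgroup $H$ from the hierarchy and, by Moore's theorem \cite{Moo25}, a carpet $K\subseteq\Lambda(H)$ whose peripheral circles are disjoint from the peripheral circles of $\Lambda(H)$; the same works inside any carpet $K'$. Every leaf endpoint lying in $\Lambda(H)$ is on a peripheral circle of $\Lambda(H)$, since the component of $\Omega(\widehat G)$ containing the leaf lies in a peripheral disk of $\Lambda(H)$. Hence the peripheral circles of $K$ contain no leaf endpoints, so no leaf has both ends in $K$. Therefore $\pi|_K$ is injective and $\pi(K)$ is a carpet. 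This realizes exactly the target you stated, with ``peripheral disks avoid leaf endpoints'' corrected to peripheral circles.

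The rest of your proposal is fine. Your localization of a carpet in one vertex limit set, via the absence of cut points and cut pairs, is a more detailed version of the paper's one-line claim. Your lift in the other direction is a valid alternative to the paper's:
\begin{itemize}
\item You use that a carpet cannot accumulate on a rank-one parabolic point from both horn regions between the two tangent horodisks, since that point would then be a local cut point.
\item The paper instead chooses a sub-carpet whose peripheral circles avoid the boundaries of the components of $\Omega(G)$. Such a sub-carpet meets no such boundary, in particular contains no rank-one parabolic point, and so lifts homeomorphically.
\end{itemize}
In your version, $\pi$ restricted to the closure of $\pi^{-1}(K\setminus\mathfrak P_1(G))$ is a continuous bijection onto $K$, so Whyburn's theorem is not needed.
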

\begin{proof}
    Suppose $\Lambda(G)$ contains a carpet (this happens in particular if there is a carpet-quotient). We may choose an embedded carpet whose peripheral circles do not intersect the boundary of the components of the domain of discontinuity.
    Then $\Lambda(\widehat G)$ contains a carpet as well by taking its preimage under $\La(\hat{G})\to \La(G)$. This carpet is contained in the  (carpet) limit set of a terminal vertex group coming from the strong accessibility of $\hat{G}$ and the fact carpets have no cut points or cut pairs. 

    Conversely, suppose $\widehat G$ contains a carpet.
    By \cite[Corollary 2.7]{LT17}, $\widehat G$ admits a hierarchy over elementary subgroups.
    Therefore, $\widehat G$ contains a subgroup $H$ with carpet limit set $\Lambda(H)$. By Moore's theorem \cite{Moo25}, $\Lambda(H)$ contains a carpet subset $K$ whose peripheral Jordan curves are buried, i.e., does not intersect the peripheral Jordan curves of $\Lambda(H)$. Then $K \subseteq \Lambda(\widehat G)$ projects to a carpet in the quotient $\Lambda(G)$, so $\Lambda(G)$ is not carpet-free.
\end{proof}

We record the following well-known proposition regarding carpet subsets and conformal dimensions.
\begin{prop}\label{prop:confdim}
    Let $G$ be a convex cocompact Kleinian group. Then 
    \begin{enumerate}
        \item $\Lambda(G)$ is carpet-free if and only if $\dim_{\conf}(\Lambda(G)) \leq 1$;
        \item $\Lambda(G) = \widehat\C$ if and only if $\dim_{\conf}(\Lambda(G)) = 2$.
    \end{enumerate}
\end{prop}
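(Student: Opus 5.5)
We prove the two statements separately, using known facts about conformal dimension together with the structure results for carpets recalled above.

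\emph{Part (2).} If $\Lambda(G)=\widehat\C$, then the conformal dimension is at least the topological dimension $2$. It is also at most the Hausdorff dimension $2$ of the spherical metric. Hence $\dim_{\conf}=2$.

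Conversely, suppose $\Lambda(G)\neq\widehat\C$ and $G$ is convex cocompact. Then the Hausdorff dimension $\delta(G)$ of $\Lambda(G)$ satisfies $\delta(G)<2$; this is the theorem of Sullivan/Tukia for convex cocompact groups with nonempty domain of discontinuity. Since the spherical metric lies in the conformal gauge, $\dim_{\conf}\le\delta(G)<2$. This proves (2).

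\emph{Part (1), carpet $\Rightarrow$ $\dim_{\conf}>1$.} Suppose $\Lambda(G)$ contains a carpet. By Proposition~\ref{prop:finitehierarchy}, $G$ then contains a convex cocompact carpet subgroup $H$, whose limit set $\Lambda(H)\subset\Lambda(G)$ is a carpet. Conformal dimension is monotone under inclusion of quasi-self-similar subsets, so $\dim_{\conf}\Lambda(G)\ge\dim_{\conf}\Lambda(H)$.

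It remains to show $\dim_{\conf}\Lambda(H)>1$. The carpet $\Lambda(H)$ contains a circle (a peripheral circle), so its conformal dimension is at least $1$. To get strict inequality we use two standard facts:
\begin{itemize}
\item The conformal dimension of a boundary of a hyperbolic group equals $1$ only if it is attained, by Keith--Kleiner/Carrasco.
\item A boundary attaining conformal dimension $1$ has local cut points, by Bonk--Kleiner and Carrasco's characterization. More concretely, an Ahlfors $1$-regular metric in the gauge would force the space to be a circle or to split.
\end{itemize}
A carpet has no local cut points, so neither alternative occurs. Hence $\dim_{\conf}\Lambda(H)>1$. I expect citing the correct strict-inequality statement to be the main step; Carrasco's theorem ($\dim_{\conf}=1$ iff the group virtually splits over elementary subgroups into virtually free or Fuchsian pieces) is the cleanest reference.

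\emph{Part (1), carpet-free $\Rightarrow$ $\dim_{\conf}\le1$.} Here I would use Proposition~\ref{prop:finitehierarchy} in the other direction. If $\Lambda(G)$ is carpet-free, the hierarchy of $G$ from strong accessibility has only elementary or Fuchsian terminal groups. Thus $G$ is built by iterated splittings over finite or two-ended subgroups, ending in virtually free and Fuchsian pieces. Carrasco's theorem then gives $\dim_{\conf}\Lambda(G)\le1$; equality holds as soon as there is a Fuchsian piece, and the dimension is $0$ otherwise. This completes (1).
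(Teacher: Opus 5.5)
Your overall plan matches the paper's proof, which consists only of citations: part (2) via $\dim_H\Lambda(G)<2$ when $\Lambda(G)\neq\widehat\C$, and part (1) via Proposition~\ref{prop:finitehierarchy}, monotonicity of conformal dimension, \cite[Corollary~1.2]{Mac10} for the carpet direction and \cite[Corollary~1.2]{CM22} for the carpet-free direction. Your part (2) and your carpet-free direction are fine.

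\textbf{The gap is the strict inequality $\dim_{\conf}\Lambda(H)>1$.} Your first bullet is false: conformal dimension $1$ need not be attained. Take a book of $I$-bundles, i.e.\ two closed surface groups amalgamated along a cyclic subgroup and realized convex cocompactly. Its limit set is connected, carpet-free and not a circle. It has conformal dimension exactly $1$: it is $\le 1$ by the carpet-free direction, and $\ge 1$ because it contains a quasicircle. If an Ahlfors $1$-regular metric existed in the gauge, Bonk--Kleiner's rigidity theorem would make the limit set a circle, since attained conformal dimension equal to the topological dimension forces a M\"obius action on a round sphere. Keith--Kleiner only describes what happens \emph{if} the dimension is attained. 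The non-attained case is exactly the difficulty, so your two bullets prove nothing for a carpet.

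\textbf{The missing ingredient is Mackay's theorem.} A compact, doubling, annularly linearly connected space has conformal dimension strictly greater than $1$. In particular, a connected boundary of a hyperbolic group without local cut points has $\dim_{\conf}>1$ \cite[Corollary~1.2]{Mac10}. Apply this to the carpet $\Lambda(H)$ and combine with monotonicity to get $\dim_{\conf}\Lambda(G)>1$; this is the paper's route.

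Your fallback can also be made to work, with corrections:
\begin{itemize}
\item The right reference is the Carrasco--Mackay characterization \cite{CM22}.
\item It characterizes $\dim_{\conf}\le 1$, not $=1$, since virtually free groups give $0$.
\item You must apply it to $H$ itself. $H$ admits no splitting over finite or two-ended subgroups (its boundary is connected with no cut points or cut pairs), and $H$ is not virtually Fuchsian.
\end{itemize}
For such an $H$, with trivial hierarchy, that characterization says precisely $\dim_{\conf}\Lambda(H)>1$, so it is just Mackay's theorem again.

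Finally, Proposition~\ref{prop:finitehierarchy} assumes $\Lambda(G)\neq\widehat\C$. The case $\Lambda(G)=\widehat\C$ of part (1) (not carpet-free, $\dim_{\conf}=2$) should be handled by part (2).
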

\begin{proof}
    The first part follows from \cite[Theorem~3.4.6]{MT10}, \cite[Corollary~1.2]{Mac10} and \cite[Corollary~1.2]{CM22}. The second part follows from \cite[Corollary~1.3]{BJ97}.
\end{proof}

\section{Geometry of limit sets and parabolic loci}\label{sec:geompara}
In this section, we discuss how parabolic points affect the geometry of the limit sets and the components of the ordinary sets.

In \S~\ref{sec:qsparabolic}, we show that, except in the circle and sphere cases, any quasisymmetry between limit sets must preserve the parabolic loci; see Theorem~\ref{thm:qsparab}. This explains why a relative modification of universality is necessary for geometrically finite Kleinian groups.
In \S~\ref{subsec:parahomeo}, we discuss when a parabolic point and/or its rank can be detected topologically.
Finally, in \S~\ref{subsec:geomofordinary}, we complete our discussion by analyzing the geometry of components of the ordinary set in terms of the parabolic locus.

\subsection{Quasisymmetric maps and parabolic points}\label{sec:qsparabolic}
We prove that a quasisymmetric map between the limit sets of two geometrically finite Kleinian groups preserve the parabolic points provided the limit set is not a circle nor the sphere.

\begin{theorem}\label{thm:qsparab} Let $G,G'$ be two {\nonel} geometrically finite Kleinian groups with limit sets different from a Jordan curve and a sphere. If $h:\La(G)\to \La(G')$ is quasisymmetric, then 
$h$ maps the parabolic locus of $G$ onto the parabolic locus of $G'$ and preserves their ranks.
\end{theorem}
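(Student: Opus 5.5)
The plan is to compare \emph{weak tangents}: Hausdorff limits of rescalings of the limit set at a point. First show that the weak tangents at a parabolic point and at a conical point look completely different. Then use that a quasisymmetry induces a quasisymmetry between corresponding weak tangents. Since $G$ is geometrically finite, every point of $\La(G)$ is either conical or a bounded parabolic fixed point. So it suffices to show that $h$ cannot send a conical point to a parabolic point, and cannot send a rank-one parabolic point to a rank-two one. Applying the same argument to $h^{-1}$, which is also quasisymmetric, then gives the statement.

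\emph{Step 1: weak tangents at parabolic points.} Let $p$ be a parabolic fixed point of rank $k$, and conjugate so that $p=\infty$ and $\stab(p)$ contains translations forming a lattice of rank $k$. Bounded parabolicity means $(\La(G)\setminus\{\infty\})/\stab(p)$ is compact.
\begin{itemize}
\item If $k=1$, then $\La(G)\setminus\{\infty\}$ lies in a bounded neighbourhood of a line $L$ and is periodic along $L$. Zooming into $p$ in the original coordinates amounts to zooming out here. Hence every weak tangent at $p$ (normalized to be rescaled around a point of $\La$ at scale comparable to its distance to $p$) is a round circle, namely $L\cup\{\infty\}$ up to M\"obius maps.
\item If $k=2$, the set is doubly periodic and cobounded in $\C$, so the weak tangents are all of $\widehat\C$.
\end{itemize}

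\emph{Step 2: weak tangents at conical points.} If $x$ is conical, take $g_n\in G$ realizing the conical approach, and compose the rescaling maps at $x$ with $g_n^{-1}$. The rescaled copies of $\La(G)$ then converge in the Hausdorff sense, after passing to a subsequence, to $\phi(\La(G))$ for some M\"obius $\phi$; possibly a point is removed when we work in $\C$. Equivalently, the weak tangents at $x$ are M\"obius images of $\La(G)$ itself. This is the standard self-similarity of limit sets at conical points, coming from cocompactness of the action on the relevant part of the convex hull.

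\emph{Step 3: passing $h$ to the limit.} Take rescalings of $\La(G)$ at $x$ along a sequence of scales, and the corresponding rescalings of $\La(G')$ at $h(x)$. Normalize them so that a fixed pair of points goes to points at unit distance. The rescaled maps $h_n$ are $\eta$-quasisymmetric with the same $\eta$. By Arzel\`a--Ascoli type compactness for uniformly quasisymmetric normalized maps, a subsequence converges to an $\eta$-quasisymmetric homeomorphism between a weak tangent at $x$ and a weak tangent at $h(x)$.
\begin{itemize}
\item If $x$ is conical and $h(x)$ is parabolic of rank $1$, then $\La(G)$, a M\"obius image of which is a weak tangent at $x$, is homeomorphic to a circle. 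This contradicts the hypothesis that $\La(G)$ is not a Jordan curve.
\item If $h(x)$ is parabolic of rank $2$, we get $\La(G)\cong\SS^2$, again excluded.
\item If $x$ is parabolic of rank $1$ and $h(x)$ is parabolic of rank $2$, we get a circle quasisymmetric to a sphere, which is impossible.
\end{itemize}

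\emph{Main obstacle.} The delicate step is Step 3: choosing the scales compatibly on both sides so that the normalized sequences $h_n$ stay nondegenerate, so that neither collapses nor blows up. Quasisymmetry controls ratios, so I would fix three points at comparable scales in each rescaled set and use the $\eta$-distortion bound to keep their images comparable. A second subtlety sits in Step 1: the rescaling scales at $p$ must be chosen as distances to points of $\La(G)$, so that the tangent is the full line or plane and not a degenerate piece. I would handle this by always rescaling so that a fixed lattice-translate fundamental domain maps to unit size.
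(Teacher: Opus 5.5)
Your argument is correct, but for the conical-versus-parabolic part it takes a different route from the paper; for ranks it is the same.

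\textbf{Rank part.} Your Step~3 for a rank-one point against a rank-two point matches the paper. The paper also passes an equicontinuous normalized family of quasisymmetries to weak tangents, and contrasts ``tangent contained in a line'' with ``tangent equal to the plane''.

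\textbf{Conical versus parabolic: the paper's route.} The paper does not use weak tangents here. It uses the quasisymmetry-invariant relative size $\sigma_{GK}(p)$ of a null-sequence orbit $GK$. Here $K$ is topologically distinguished: the boundary of a complementary disk in the Schottky case, otherwise a nontrivial vertex limit set of the JSJ or incompressible decomposition. It shows $\sigma_{GK}>0$ at conical points and $\sigma_{GK}=0$ at bounded parabolic points lying in only finitely many translates of $K$. This needs three extra ingredients:
\begin{enumerate}
\item the decomposition theory, to guarantee that $h(GK)$ is again finitely many orbits;
\item a separate topological argument for rank-two cut points;
\item a separate uniform-separation argument when $\Lambda(G)$ is a Cantor set.
\end{enumerate}

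\textbf{Conical versus parabolic: your route.} You use a single invariant. Along the conical scales, the tangent at $x$ is $M(\Lambda(G))\setminus\{\infty\}$ for a M\"obius map $M$. At a bounded parabolic point, every tangent is a line or $\C$. So a conical-to-parabolic quasisymmetry forces $\Lambda(G)$ minus a point to be homeomorphic to $\R$ or $\R^2$, making $\Lambda(G)$ a Jordan curve or the sphere.

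\textbf{Comparison.} Your version is uniform: Cantor sets and rank-two cut points need no special treatment. It also shows transparently why the Jordan curve and the sphere are exactly the exceptions. In exchange, the paper's route yields the quantitative Lemma~\ref{lma:conpar_qs} on how translates of a compact set shrink at parabolic points.

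\textbf{One correction to Step~2.} The claim that ``the weak tangents at $x$ are M\"obius images of $\Lambda(G)$'' holds only along the conical scale sequence. When $G$ has cusps, a conical point can be non-uniformly conical. Along scales where the geodesic ray to $x$ makes deep cusp excursions, the tangent at $x$ is then a line or the plane. So in Step~3 you must start from the conical scales at $x$, and let the normalization dictate the scales at $h(x)$. This is harmless, because at a bounded parabolic point every scale sequence gives the same tangent.

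\textbf{Minor point.} The point at infinity is always removed from $M(\Lambda(G))$, since it corresponds to a limit point of $G$. Your final topological step therefore applies to $\Lambda(G)$ minus one point, as you intended.
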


To distinguish the geometry of the limit sets in neighborhoods of a point, we introduce the notion of relative size of collection of subsets. 
Let $X$ be a metric space and $\cC$ a  collection of non-empty subsets of $X$. Given $p\in X$ and $r>0$, set $$\sigma_{\cC}(p,r)= \frac{1}{r}\sup\{\diam\,K: K\subset B(p,r), K\in\cC\}$$
with the convention that $\sigma_\cC(p,r)=0$ if no such $K$ exists.
Set  $$\sigma_{\cC}(p)= \limsup_{r\to 0} \sigma_\cC(p,r)\,.$$

\begin{lem}[Quasisymmetry invariance]\label{prop:qminv}
Let $h:X\to Y$ be a quasisymmetric map and $\cC$ a collection of subsets in $X$. Then, given $p\in X$,
$$\sigma_\cC(p)>0\quad \hbox{if and only if}\quad \sigma_{h(\cC)}(h(p))>0\,.$$
\end{lem}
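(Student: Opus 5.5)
By symmetry (the inverse of an $\eta$-quasisymmetric map is $\eta'$-quasisymmetric with $\eta'(t)=1/\eta^{-1}(1/t)$), it suffices to prove one implication: if $\sigma_\cC(p)>0$ then $\sigma_{h(\cC)}(h(p))>0$. The plan is to turn the hypothesis into a sequence of sets $K_n\in\cC$, near $p$ and not too small compared with their distance-scale to $p$, and then use quasisymmetry to compare diameters and positions after applying $h$.

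First, assume $\sigma_\cC(p)>\delta>0$. Then there are radii $r_n\to0$ and sets $K_n\in\cC$ with
\[
K_n\subset B(p,r_n),\qquad \diam K_n\ge \delta r_n .
\]
Pick $a_n,b_n\in K_n$ with $d(a_n,b_n)\ge \delta r_n/2$. Since $K_n\subset B(p,r_n)$, every $x\in K_n$ satisfies $d(p,x)<r_n$. We may also assume $X$ contains points at distance comparable to $r_n$ from $p$; in the limit set setting this holds because $X$ is connected near $p$ or uniformly perfect, and I would state this as a mild hypothesis. If $X$ is not locally uniformly perfect, the comparison still goes through using only $a_n$ and $b_n$ as reference points.

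Second, set $R_n=\sup_{x\in K_n} d(h(p),h(x))$, so that $h(K_n)\subset \overline B(h(p),R_n)$; use the ball of radius $2R_n$ to keep the inclusion strict. We have $R_n\to0$ by continuity of $h$. Choose $x_n\in K_n$ with $d(h(p),h(x_n))\ge R_n/2$. Without loss of generality $d(x_n,a_n)\ge \delta r_n/4$ (otherwise use $b_n$). Also $d(p,x_n)<r_n$ and $x_n\ne p$; if $p\in K_n$, take $x_n$ to be a point other than $p$. Applying the quasisymmetry inequality at the base point $x_n$ to the pair $p,a_n$ gives
\[
\frac{d(h(x_n),h(p))}{d(h(x_n),h(a_n))}\le \eta\!\left(\frac{d(x_n,p)}{d(x_n,a_n)}\right)\le \eta(4/\delta).
\]
Hence
\[
\diam h(K_n)\ge d(h(x_n),h(a_n))\ge \frac{R_n}{2\eta(4/\delta)}.
\]
Therefore $\sigma_{h(\cC)}(h(p),2R_n)\ge 1/(4\eta(4/\delta))$, and letting $n\to\infty$ gives $\sigma_{h(\cC)}(h(p))>0$.

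The main obstacle is degenerate configurations: $p$ may lie in $K_n$, $K_n$ may be a single point of positive distance (impossible once $\diam K_n>0$), or $x_n$ may coincide with $a_n$. These are handled by choosing among $a_n,b_n$ and by using $R_n>0$, which follows from injectivity and $\diam K_n>0$.
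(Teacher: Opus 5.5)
Your argument is correct and is essentially the paper's: the paper applies the diameter-distortion estimate for nested sets (Heinonen's Prop.~10.8) to $K\subset B(p,r)$ with image scale $\diam h(B(p,r))$, while you prove the needed lower bound directly from one three-point inequality based at $x_n$, using the scale $R_n=\sup_{x\in K_n}d(h(p),h(x))$ instead. Your aside about points at distance comparable to $r_n$ is never used, so no extra hypothesis is needed; and the one degenerate case you leave implicit, $a_n=p$, is immediate, since then $d(h(x_n),h(a_n))=d(h(x_n),h(p))\ge R_n/2$.
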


\begin{proof} Let us assume that  $\sigma_\cC(p)>0$. Let $\eta$ be the distortion function of $h$. 
It follows that if $K\subset B(p,r)$ with $r>0$ small enough then, by \cite[Prop.\,10.8]{heinonen:analysis}, 
we have
\[ \frac{1}{2\eta\left(\frac{\hbox{\rm diam}\,B(p,r)}{\diam\,K}\right)} \leq \frac{\diam  \,
h(K)}{\diam\,h(B(p,r))} \leq
\eta\left(2\frac{\diam\,K}{\diam\,B(p,r)}\right)\,.\]

Thus, if  $K\in\cC$, $K\subset B(p,r)$ and $\diam\,K\ge \tau r$, then $\diam\,K\ge  (\tau/2)\diam\,B(p,r) $, $h(K)\subset B(h(p),\diam\, h(B(p,r)))$ and
$$\sigma_{h(\cC)} (h(p),\diam\, h(B(p,r)) \ge \frac{ \diam\,h(K)}{\diam\, h(B(p,r))} \ge \frac{1}{2\eta\left(\frac{\diam\,B(p,r)}{\diam\,K}\right)} \ge \frac{1}{2\eta(2/\tau)}\,.$$
Hence, $\sigma_{\cC}(p)>0$ implies $\sigma_{h(\cC)}(h(p))>0$. We obtain the lemma by symmetry.\end{proof}

Note that a point in $\La(G)$ is either a conical point or a parabolic point.
We show that the relative size of orbits of subsets distinguishes conical limit points from parabolic points. 

\begin{lem}\label{lma:conpar_qs} Let $G$ be a  Kleinian group and $K\subset \La(G)$ with at least two points. Let us assume that $GK$ is a null-sequence.
\ben
\item If $p$ is a conical point then $\sigma_{GK}(p)>0$.
\item If $p$ is a bounded  parabolic point that belongs to finitely many elements of $GK$, then $\sigma_{GK}(p)=0$.
\een
\end{lem}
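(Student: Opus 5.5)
For (1), I will use the standard "blow-up" argument at conical points. Work in the ball model and normalize so that $p$ is not $\infty$, using the spherical metric. Since $p$ is conical, there is a sequence $g_n\in G$ and a compact set $\mathcal{K}\subset\Hyp^3$ such that $g_n^{-1}(o)$ tends to $p$ radially within bounded distance of the geodesic ray $[o,p)$. Put $r_n = e^{-d(o,g_n^{-1}o)}$. Then $g_n$ restricted to $B(p, C r_n)$ is, up to bounded distortion, a similarity of ratio $\asymp r_n^{-1}$ onto a set of definite size. Equivalently, $g_n^{-1}$ maps the complement of a small ball around $g_n(\text{far point})$ into $B(p,Cr_n)$ with derivative $\asymp r_n$.

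Because $K$ has at least two points, $\diam K=\delta>0$. Since $g_n(o)$ stays boundedly far from the ray, the repelling point of $g_n$ stays away from a fixed point of $K$ after possibly replacing $K$ by a translate $gK$ from a finite list. The sets $g_n^{-1}(gK)$ then lie in $B(p,Cr_n)$ and have diameter $\ge c\,r_n$. These sets belong to $GK$, which gives $\sigma_{GK}(p,Cr_n)\ge c/C$. Hence $\sigma_{GK}(p)>0$.

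The technical step is to choose $K$'s translate so that it avoids a neighbourhood of the "bad" point $g_n(\infty)$-type region. I will handle this by compactness. The limit points of $g_n(o)$-normalized maps converge (after passing to a subsequence) to a Möbius map up to bounded distortion. Since $K$ has two points, at least one of two fixed translates works.

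For (2), conjugate so that $p=\infty$, with stabilizer $G_p$ acting cocompactly on $\Lambda(G)\setminus\{\infty\}$ (bounded parabolic). Choose a compact fundamental set $F\subset\C$ for this action, and use the chordal metric near $\infty$, or equivalently invert to put $p=0$. An element $K'\in GK$ not containing $p$ is a compact subset of $\C$. Since $GK$ has only finitely many elements through $p$, which shrinking balls eventually miss, the elements $K'$ near $p$ do not contain $p$.

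Write $K'=\gamma K''$ with $\gamma\in G_p$ and $K''$ meeting $F$. The null-sequence property gives that only finitely many $K''$ meeting $F$ have Euclidean diameter $\ge\epsilon$, and none of them contains $\infty$ by assumption. So the Euclidean diameters of such $K''$ are uniformly bounded, say by $D$, while $\gamma$ acts as a Euclidean isometry (rank one or two translation group, up to finite index). After inversion $z\mapsto 1/z$, a set of Euclidean diameter $\le D$ located at distance $R$ from the origin has spherical diameter $\lesssim D/R^2$, while it lies in a ball about $p$ of radius $\asymp 1/R$.

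This gives the ratio $\lesssim D/R\to 0$. Hence $\sigma_{GK}(p,r)\to0$, that is, $\sigma_{GK}(p)=0$.

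I expect the main obstacle to be the uniform bound in (2). The key point is the finiteness of elements through $p$: it ensures that the representatives $K''$ meeting $F$ stay bounded in $\C$ (any unbounded one would accumulate at $\infty=p$, and by the null-sequence property together with the finiteness assumption there are only finitely many such, which can be discarded).
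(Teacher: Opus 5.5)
Your proposal is correct and follows essentially the paper's argument. In (1) you zoom in at the conical point with bounded distortion and pull back a translate of $K$ of definite size; the paper does the same via the univalent rescalings $g_n(p+r_nz)$ and Koebe distortion. In (2) you move an element of $GK$ lying in $B(p,r)\setminus\{p\}$ back to a compact fundamental set for $\stab(p)$ and use the quadratic contraction near $p$ to get diameter $\lesssim r^2$; your inversion estimate is exactly the paper's $d(hx,hy)\lesssim d(hp,hx)\,d(hp,hy)$.

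The one point to tighten is the choice of translate in (1). The region where $g_n^{-1}$ fails to contract is a neighbourhood of the attracting point of $g_n$ (the direction of $g_n(o)$), not of its repelling point $p$. One of two fixed translates avoids it because you can pick $g_1K,g_2K$ at mutual distance $>2\epsilon$, using the null-sequence property and non-elementarity. The fact that $K$ has two points does not give this; it only serves to make $\diam g_iK>0$.
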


\begin{proof}
Let us first assume that $p$ is a conical point. It follows that there exist sequences  $(g_n)$ in $G$  and $(r_n)_n$ tending to $0$ such that $h_n(z):= g_n(p+r_nz)$ is a convergent
sequence of univalent maps defined on the unit disk. Let us fix $g\in G$ such that $g(K) \subset h_n(B(0,1/2))$ for $n$ large enough. Thus, we may find $r,\tau >0$ such that
$g(K)\subset B(g_n(p),r)$ and $\diam g(K)\ge \tau r$. The Koebe distortion theorem implies that $(B(g_n(p), r)\stackrel{g_n^{-1}}{\longrightarrow} g_n^{-1}(B(g_n(p), r))$ are uniformly
quasisymmetric. Thus, \cite[Prop.\,10.8]{heinonen:analysis} implies as above that $\sigma_{GK}(p)>0$.

We now assume that $p$ is parabolic, with stabilizer $H<G$.  Let $L\subset \La(G)\setminus\{p\}$ be a compact set
containing a fundamental domain of the action of $H$. Since $GK$ is a null sequence, there exists another compact set $L'$ disjoint from $\{p\}$ that contains every element
of $A \in GK$ which both intersects $L$ and is disjoint from $p$.  

By compactness of $L'$, there exists $\delta >0$ such that $d( x, p )\ge \delta$ for all $x\in L'$. Let us assume that $g(K)\subset B(p,r)$ for $r>0$ small enough. Since there are only finitely many elements of $GK$ containing $p$, by shrinking $r$ if necessary, we assume $p \notin g(K)$. 
Let $h\in H$ such that
$h^{-1}g(K)\subset L'$. Using the fact that $d(h(x),h(y)) \lesssim d(h(p),h(x))\cdot d(h(p),h(y))$ for all $h\in H$, it follows that  
$\diam\, g(K)  \lesssim r^2$ so that $\sigma_{GK}(p)=0$.
\end{proof}

\begin{proof}[Proof of Theorem \ref{thm:qsparab}]
Suppose $\La(G)$ is not a Cantor set, a Jordan curve or a sphere. If $\La(G)$ is homeomorphic to a Schottky set, then we let $K$ be the boundary of a component of $\Omega(G)$. Otherwise, $\La(G)$ admits a nontrivial JSJ or incompressible decomposition and we let $K = \Lambda_v$ be a nontrivial vertex limit set, i.e., $|\Lambda_v| \geq 2$. Then  $GK$ is a null-sequence and is mapped under the quasisymmetry to finitely many orbits. For any parabolic point $p$ that is not a rank-two cut point, there are only finitely many elements in $GK$ that contains $p$. Since rank-two cut points are preserved by any homeomorphism, the theorem follows from Lemmas \ref{prop:qminv} and \ref{lma:conpar_qs}.

It remains to treat the case when $\La(G)$ is a Cantor set. Even if there are no remarkable subsets as above, we also rely on the conformal elevator principle  to distinguish conical points from 
parabolic points. 
For any $\pi/2>r_0>0$, there exists a constant $C_0 = C_0(r_0)\ge 1$ such that for any  $x\in \La( G)$ and $r \in [r_0, \pi/2]$,
 one can find a subset  $A\subset \La( G)$ such that 
$$B(x,r/C) \cap \La(G) \subset A\subset B(x,r) \quad\hbox{and}\quad \dist(A, \La(G) \setminus  A)\ge r/C_0\,.$$

Let us now assume that $p$ is a conical point. Then we consider sequences  $(g_n)_n$ in $G$  and $(r_n)_n$ tending to $0$ such that $g_n(p+r_n\cdot)$ is a convergent
sequence of univalent maps defined on the unit disk. By applying the previous remark to $g_n(p)$, we get by the Koebe distortion theorem a sequence $(r_n')_n$ tending to zero
and sets $A_n$ such that 
$$B(p,r_n'/C) \cap \La(G) \subset A_n\subset B(p,r_n') \quad\hbox{and}\quad \dist(A_n, \La(G) \setminus  A_n)\ge r_n'/C\,.$$
This property is invariant under quasisymmetries by  \cite[Prop.\,10.8]{heinonen:analysis}.

Assume now that $p$ is parabolic and let $h\in\stab(p)$ be an infinite order element and $x\in \La(G)\setminus \{p\}$.  
Let us fix a sufficiently small radius $d(x,p)/2\ge r>0$  and a subset $A\subset \La(G)$ such that
$(B(p,r/C) \cap \La(G))\subset A\subset B(p,r)$ for some constant $C>1$. 
Since the positive orbit $\{h^n(x), n\ge 0\}$ tends to $p$, it gets
into $A$, but is not contained in $A$. Therefore, we may find $k\ge 0$ such that $h^{k+1}(x)\in A$ but $h^{k}(x)\notin A$. Therefore, as $p$ is the fixed point of the parabolic element $h$,
$$\dist(A,\La(G)\setminus A)\le d(h^k(x),h^{k+1}(x))\lesssim r^2\,.$$
This shows that the above property does not hold for parabolic points. Therefore, parabolic points are preserved under quasisymmetries.

We now deal with the invariance of their ranks. Let $p\in\La(G)$ be a parabolic point. If its rank is one then any weak tangent of $\La(G)$ at $p$ is contained in a line since it is contained in the complement of two tangent disks at $p$, whereas if it is two then any weak tangent is the plane since $\La(G)$ contains the inversion with pole
$p$ of a lattice of the plane. Let $\phi:\La(G)\to \La(G')$ be quasisymmetric and assume that $p=0=\phi(0)$ and $\phi(\infty)=\infty$.
For each sequence $(1/\nu_n)_n \in \Lambda(G)$ with $1/\nu_n \to 0$, there exists an unbounded positive sequence $(\la_n)_{n\ge 0}$ such that $|\la_n\phi(1/\nu_n)|=1$. This defines an equicontinuous family of maps $\la_n\phi(\cdot/\nu_n):\nu_n\La(G) \rightarrow \la_n\La(G')$ \cite[Prop.\,10.26]{heinonen:analysis},
so the weak tangent spaces are homeomorphic: the ranks are preserved.
\end{proof}

\subsection{Parabolic loci and homeomorphisms between limit sets}\label{subsec:parahomeo}
We pause to analyze our assumptions concerning the parabolic loci for limit sets different from the circle and the sphere. 
Recall that $\mathfrak{P}(G) = (\mathfrak{P}_1(G), \mathfrak{P}_2(G))$ where $\mathfrak{P}_j(G)$ is the collection of rank $j$ parabolic fixed points of $G$. A type-preserving homeomorphism is a map $h: (\Lambda(G), \mathfrak{P}(G)) \rightarrow (\Lambda(G'), \mathfrak{P}(G'))$ that sends rank $1$ (resp. rank $2$) parabolic points of $G$ to those of $G'$. 
We first note that a quasisymmetric map is automatically type-preserving as above by Theorem \ref{thm:qsparab}. What about topological homeomorphisms\,? We organize this discussion
from the splittings over elementary subgroups that provide us with four cases. 
\bit
\item A trivial component of a limit set can be either conical or parabolic of any rank and they cannot be distinguished topologically. For instance, all planar Cantor sets are ambiently homeomorphic \cite[Theorem 13.7]{moi78}. Therefore, neither parabolic points nor their ranks are topologically distinguishable. 
\item Only parabolic points can be cut points of components of the limit set. In that case, their ranks are distinguishable since the JSJ tree will have finitely many  incident edges for rank one points, and infinitely many for rank two points. Thus, a homeomorphism will preserve the parabolic cut points with their ranks. 
\item Limit sets of MHF subgroups contain only conical and rank one parabolic points. As above, they are not distinguishable (but their rank is determined); see Figure~\ref{fig:ccvsgf}.
\item Similarly, the residual limit set, i.e., the set of limit points that do not belong to the boundary of any component of the ordinary set \cite{Abi73}, contains conical points and possibly rank two parabolic points; they are not distinguishable (but their rank is determined). 
\eit

\subsection{Geometry of the ordinary set}\label{subsec:geomofordinary}
Let $G$ be an analytically finite Kleinian group, i.e., $\Omega(G)/G$ is a finite union of Riemann surfaces of finite type. The stabilizer of a component $\Omega$ of $\Omega(G)$ is then a finitely generated function group. We refine our knowledge on the possible geometry of $\Omega$ by adding the characterization of the uniformity of $\Omega$ in the multiply connected case.  We gather these results as follows (we work with the spherical metric). The definitions of John and uniform domains are provided below.

\begin{theorem}\label{thm:geoOmega}
Let $G$ be a finitely generated function group with an invariant component $\Omega$.

\ben
\item\label{item:john} The following are equivalent.
\ben
\item $\Omega$ is a John domain.
\item The group $G$ is geometrically finite and every maximal parabolic subgroup of $G$ admits an invariant disk in $\Omega$. 
\item There exists a quasiconformal map $\phi:\cbar\to \cbar$  that is conformal on $\Omega$ and that conjugates $G$ to a function group $G'$ such that $\phi(\Omega)$ is a circle domain.
\een
\item\label{item:unif} The following are equivalent.
\ben
\item $\Omega$ is a uniform domain.
\item $\Omega$ is an inner uniform domain.
\item $G$ is geometrically finite and every parabolic point is non-accidental and admits no pair of tangent disks in $\Omega$.
\een

\een
\end{theorem}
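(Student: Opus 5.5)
Both parts reduce to the standard dictionary between the geometry of $\Omega$ near its boundary and the dynamics of $G$ near $\La(G)$. The only places $\Omega$ can degenerate are horoball-type cusps at parabolic points. By Ahlfors finiteness and the hypothesis that $\Omega$ is invariant, $\Omega/G$ is a finite-type surface, and $\partial\Omega=\La(G)$. I would use the conformal elevator principle on conical points, exactly as in Lemma~\ref{lma:conpar_qs}, and a direct local model at each parabolic point.

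\textbf{Part (\ref{item:john}).} The implication (c)$\Rightarrow$(a) is the quickest. A circle domain $\phi(\Omega)$ whose stabilizer is a geometrically finite function group (geometric finiteness is preserved under quasiconformal conjugacy) is a John domain: its complementary components are round disks and points forming a null-sequence. The John property is preserved by quasiconformal maps of $\cbar$ (Gehring--Martio, Näkki--Väisälä), so $\Omega$ is John.

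For (a)$\Rightarrow$(b), the John condition forbids outward-pointing cusps. At a rank-one parabolic point $p$ with no invariant disk in $\Omega$, the domain near $p$ lies in a cusp region between two tangent horodisks, or inside a strip squeezed to $p$. A John curve from a point near $p$ would then have to stay quadratically close to $\partial\Omega$, contradicting the linear distance growth. If $G$ is geometrically infinite, the finitely generated function group has degenerate ends. Thurston--Bonahon--Canary then give non-locally-connected or non-John behaviour. I would cite the known result that a John invariant component forces $\La(G)$ to be locally connected with bounded turning, hence geometric finiteness via Anderson--Maskit and Canary.

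For (b)$\Rightarrow$(c), first apply Maskit's planarity theorem to conjugate $G$ quasiconformally, conformally on $\Omega$, to a group whose $\Omega$-component is a circle domain. This is Koebe uniformization for finitely generated function groups, refined by He--Schramm in the countably connected case. The invariant-disk hypothesis guarantees that no parabolic element is accidental in $\Omega$, so the uniformizing map extends quasiconformally. I expect this extension step to be the main obstacle: one must check that the boundary correspondence is quasisymmetric near rank-one cusps. I would first try to do this equivariantly, via a fundamental domain and Tukia's theorem on type-preserving conjugacies \cite{tukia:ihes85}.

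\textbf{Part (\ref{item:unif}).} The implication (a)$\Rightarrow$(b) is trivial. For (b)$\Rightarrow$(c), inner uniformity already implies John, so part (\ref{item:john}) gives geometric finiteness and invariant disks. An accidental parabolic point $p$ is approached from two sides of a leaf, where $\Omega$ meets $\cbar\setminus\Omega$ in a pinched configuration. Likewise, a pair of tangent disks in $\Omega$ at $p$ produces two points at distance about $r$ that can only be joined inside $\Omega$ through a neck of width about $r^2$, so the inner diameter is at least $r$ while the distance to the boundary is at most $r^2$. This contradicts inner uniformity, by the same quadratic estimate $d(h^kx,h^{k+1}x)\lesssim r^2$ used in Lemma~\ref{lma:conpar_qs} and Theorem~\ref{thm:qsparab}.

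For (c)$\Rightarrow$(a), argue by contradiction. Suppose the uniform constants fail along pairs $x_n,y_n$. If the pairs accumulate at a conical point, rescale them by group elements via the conformal elevator. Koebe distortion then transports the failure to a compact part of a fundamental domain, where $\Omega$ is locally uniform. If they accumulate at a parabolic point, the hypothesis says that locally $\Omega$ is a single horodisk complement modulo the parabolic group. That region is explicitly uniform, like the complement of a disk or of a strip in $\C$ after inversion.
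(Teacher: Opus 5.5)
The main gap is your (c)$\Rightarrow$(a) in part~(1).

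First, (c) says nothing about geometric finiteness, so you cannot assume it. The paper derives it, together with the cusp structure, from Maskit's structure theory of function groups \cite{maskit:decfuncgps}, and then returns through (b)$\Rightarrow$(a).

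More seriously, the principle you invoke is false: a circle domain invariant under a geometrically finite function group need not be John, even though its complementary disks and points form a null-sequence. Here is a counterexample.
\begin{itemize}
\item Let $P$ be a fundamental parallelogram for $\langle z+1,z+\tau\rangle$. Let $h$ be loxodromic, pairing the boundaries of two small disjoint closed disks in the interior of $P$.
\item By Klein's combination theorem, $G=\langle z+1,z+\tau\rangle*\langle h\rangle$ is geometrically finite. Its quotient $\Omega(G)/G$ is a closed genus-$2$ surface, so $\Omega=\Omega(G)$ is connected.
\item $\La(G)$ is a Cantor set containing the rank-two cusp $\infty$. Hence $\Omega$ is a circle domain, and (c) holds with $\phi=\mathrm{id}$.
\item Every point of $\C$ lies within $\diam P$ of $\La(G)$. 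By uniform perfectness, the Poincar\'e density of $\Omega$ is therefore bounded below by a positive constant times $|dz|$.
\item Consequently $d_\Omega(x_0,y)\gtrsim |y|$, while $j_\Omega(x_0,y)\lesssim \log|y|$ for $y$ deep inside translates of the fundamental domain. So $\Omega$ is not John.
\end{itemize}
Rank-two cusps must therefore be excluded for (c)$\Rightarrow$(a),(b) to hold at all. The paper's one-line claim that (c) rules out accidental parabolics has the same blind spot: here $z\mapsto z+1$ has $d_\Omega$-translation length bounded below, i.e.\ it is accidental.

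For rank-one cusps the correct argument runs as follows. Normalize a cusp of $G'$ to $\infty$, so that $\La(G')\setminus\{\infty\}$ lies in a horizontal strip. A complementary round disk of $\phi(\Omega)$ containing one of the two half-planes off the strip is invariant under the parabolic, hence is a closed half-plane through $\infty$. Two distinct complementary components cannot both contain $\infty$, so one half-plane is an invariant disk in $\phi(\Omega)$. This gives (b), and (b)$\Rightarrow$(a) finishes.

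Elsewhere in part~(1) you lean on the wrong tools.
\begin{itemize}
\item In (a)$\Rightarrow$(b), ``$\La$ locally connected with bounded turning, hence geometrically finite via Anderson--Maskit and Canary'' is not a valid inference. Anderson--Maskit \cite{anderson:maskit} is the converse direction, and singly degenerate groups have locally connected limit sets. That a John invariant component forces geometric finiteness is exactly McMullen's theorem \cite[Theorem~1.1]{McM98}, which the paper simply quotes for (a)$\Leftrightarrow$(b).
\item In (b)$\Rightarrow$(c), He--Schramm does not apply, since $\Omega$ typically has a Cantor set of boundary components. Maskit's Koebe-type theorem \cite{maskit:koebe}, applied to the factor groups stabilizing the nontrivial components of $\La(G)$, already yields the quasiconformal conjugacy. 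So there is no separate extension problem.
\end{itemize}

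In part~(2), your cycle (a)$\Rightarrow$(b)$\Rightarrow$(c)$\Rightarrow$(a) is a legitimate alternative to the paper's route. The paper instead gets (a)$\Leftrightarrow$(b) from \cite{maskit:koebe} plus ``inner uniform with bounded turning implies uniform''. Your local obstructions for (b)$\Rightarrow$(c) are fine.

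In (c)$\Rightarrow$(a), however, your cusp model is wrong. Near a cusp $p$, $\Omega$ is not a horodisk complement. It contains exactly one invariant horodisk at $p$, and the rest of $\Omega$ near $p$ lies in the horn beside it, so points there must be routed through that horodisk. Also, a failing sequence of pairs need not concentrate at a single point. The paper avoids both issues with:
\begin{itemize}
\item the John center and a $G$-invariant family of small disjoint horodisks;
\item the Gehring--Osgood quasi-invariance of $j_\Omega$ \cite{GO79}, to move one point into a compact Dirichlet domain;
\item a separate estimate when the two points lie in different horodisks.
\end{itemize}
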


Since a proper simply connected uniform domain of the plane is a quasidisk \cite[Cor. 2.33]{MS79}, we have the following corollary if $\Omega$ is simply connected, cf. \cite{Mas70} and \cite[Corollary 1.2]{McM98}. 
\begin{cor}
    Let $G$ be a finitely generated function group with a simply connected invariant component $\Omega$. Then the following are equivalent.
    \ben
    \item $\Omega$ is a quasidisk.
    \item $\Omega$  is a John disk.
    \item $G$ is geometrically finite and has no accidental parabolic points.
    \een
\end{cor}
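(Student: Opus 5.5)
The plan is to derive each implication from Theorem~\ref{thm:geoOmega} together with the characterization \cite[Cor.~2.33]{MS79} of simply connected uniform domains. The only genuinely new ingredient is a translation, valid because $\Omega$ is simply connected, between the conditions on parabolic points in Theorem~\ref{thm:geoOmega} and the notion of an accidental parabolic point. We prove $(1)\Rightarrow(2)\Rightarrow(3)\Rightarrow(1)$.

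\emph{$(1)\Rightarrow(2)$.} A quasidisk is a uniform domain, and uniform domains are John domains. Hence $\Omega$ is a John disk, and nothing about $G$ is needed for this step.

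\emph{$(2)\Rightarrow(3)$.} By Theorem~\ref{thm:geoOmega}\eqref{item:john}, $G$ is geometrically finite and every maximal parabolic subgroup admits an invariant disk in $\Omega$, i.e.\ a round horodisk $D_p\subset\Omega$ tangent at its fixed point $p$. Suppose some $p$ were accidental, with primitive accidental parabolic $g$. Then $g$ stabilizes a geodesic $l\subset\Omega$, and $J=l\cup\{p\}$ is a Jordan curve through $p$. Since $g$ corresponds to a loxodromic element of the Fuchsian uniformization of $\Omega$, both complementary components of $J$ contain limit points. Applying powers of $g$ to these limit points produces points of $\Lambda(G)$ accumulating on $p$ from both sides of $J$, tangentially to $J$ at $p$. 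A $g$-invariant horodisk at $p$ is tangent to $J$ there and contains a full $g$-invariant neighbourhood of $J$ near $p$ on at least one side. It therefore meets $\Lambda(G)$, contradicting $D_p\subset\Omega$. So $G$ has no accidental parabolic points.

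\emph{$(3)\Rightarrow(1)$.} Assume $G$ is geometrically finite with no accidental parabolics. To apply Theorem~\ref{thm:geoOmega}\eqref{item:unif} we must exclude a pair of tangent disks in $\Omega$ at a parabolic point $p$. Since no parabolic is accidental, every parabolic $g$ stabilizing $\Omega$ corresponds, under the Riemann map $\D\to\Omega$, to a parabolic element of the Fuchsian group. Consequently $p$ is the image of a single cusp, i.e.\ exactly one prime end of $\Omega$ lands at $p$ with a cusp neighbourhood.

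Now suppose there were two disjoint $g$-invariant disks $D_1,D_2\subset\Omega$ tangent at $p$. Join a point of $D_1$ to its $g$-image by an arc inside $D_1$, and do the same in $D_2$. Since $\Omega$ is simply connected, these arcs together with their $g$-translates assemble into a $g$-invariant arc in $\Omega$ passing from $D_1$ to $D_2$ through $p$. Its geodesic representative in $\Omega$ is then $g$-invariant with both ends at $p$, which would make $g$ accidental, a contradiction. Parabolic points of $G$ not stabilizing $\Omega$ lie on $\partial\Omega$ only if some invariant disk in $\Omega$ exists for them, so the same argument applies to them. Thus Theorem~\ref{thm:geoOmega}\eqref{item:unif} gives that $\Omega$ is uniform, and being a proper simply connected planar uniform domain it is a quasidisk by \cite[Cor.~2.33]{MS79}.

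The main obstacle is making the two topological translations rigorous, namely that an accidental parabolic is incompatible with an invariant horodisk, and that two tangent disks force accidentality. The first needs care about why limit points approach $p$ inside the horodisk, and I would use the explicit normal form $g(z)=z+1$ with $p=\infty$. In these coordinates $J$ becomes a $1$-periodic curve, and a horodisk becomes a half-plane $\{\mathrm{Im}\,z>c\}$ that contains translates of limit points lying above $J$. If this becomes delicate, I would instead cite \cite{Mas70} and \cite[Corollary 1.2]{McM98}, which handle the simply connected case directly.
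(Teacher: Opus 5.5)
Your reduction to Theorem~\ref{thm:geoOmega} and \cite[Cor.~2.33]{MS79} is the route the paper takes, and $(1)\Rightarrow(2)$ is fine. However, neither of the two translations you propose works as written.

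\textbf{The step $(2)\Rightarrow(3)$.} Normalize so that $p=\infty$ and $g(z)=z+1$. Since $G$ is geometrically finite, $p$ is a bounded parabolic point, so $\Lambda(G)\setminus\{\infty\}$ lies in a horizontal strip $\{|\mathrm{Im}\,z|\le R\}$. Hence every half-plane $\{\mathrm{Im}\,z>c\}$ with $c>R$ is a $g$-invariant disk at $p$ contained in $\Omega(G)$. Such a half-plane contains no neighbourhood of $J$ on either side and misses $\Lambda(G)$ entirely. For a cusp group, these half-planes are cusp neighbourhoods lying in components of $\Omega(G)$ other than $\Omega$.

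So the claim that the horodisk meets $\Lambda(G)$ is false. The only contradictory feature is that the horodisk lies in the \emph{same component} $\Omega$ as $l$, and detecting this needs a conformal argument rather than a topological one. For example, by Schwarz--Pick, $d_\Omega(z,gz)\le d_{D_p}(z,gz)\to 0$ as $z\to p$ inside $D_p$. On the other hand, an accidental $g$ acts on $(\Omega,d_\Omega)\cong\D$ as a hyperbolic isometry, so $d_\Omega(z,gz)$ is bounded below by its positive translation length.

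\textbf{The step $(3)\Rightarrow(1)$.} Here the argument is circular. A curve in $\Omega$ cannot pass through $p\notin\Omega$. Moreover, since $\Omega$ is simply connected, all the $g$-invariant curves you can build are $g$-equivariantly homotopic: horocycle-like curves in $D_1$ or $D_2$, and concatenations through a path from $D_1$ to $D_2$ alike. Speaking of their ``geodesic representative'' presupposes that $g$ has an invariant geodesic in $\Omega$, which is exactly the accidentality you are trying to derive. Separately, every element of $G$ stabilizes $\Omega$, so your sentence about parabolic points not stabilizing $\Omega$ is vacuous.

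A correct argument runs as follows. If $g$ is not accidental, then $\Omega/\langle g\rangle$ is conformally a once-punctured disk. Each $D_i/\langle g\rangle$ is a punctured disk embedded essentially in it. The same Schwarz--Pick estimate shows that its puncture maps to the puncture of $\Omega/\langle g\rangle$. By the removable singularity theorem, $D_i/\langle g\rangle$ together with that puncture is then a neighbourhood of it, and two disjoint such neighbourhoods cannot exist.

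With these fixes the proof goes through. Alternatively, you can use your fallback of citing \cite{Mas70} and \cite[Corollary~1.2]{McM98}, which is exactly what the paper does.
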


\medskip

Since $\La(G)$ is uniformly perfect \cite{Pom84}, the Poincar\'e metric and the so-called quasi-hyperbolic metric are bi-Lipschitz in $\Omega$. This enables us to define John domains and uniform domains as follows. For $x\in \Omega$, let $\delta_\Omega(x) =\dist(x,\partial\Omega)$ and define the distance ratio metric by  
$$j_\Omega(x,y) =\log \left( 1+ \frac{d(x,y)}{\min\{\delta_\Omega(x),\delta_\Omega(y)\}}\right)$$
for all $x,y\in\Omega$. Let us denote by $d_{\Omega}$ the Poincar\'e metric of $\Omega$. 

Following \cite[Corollary 1]{GO79}, $\Omega$ is uniform if there exist  $a,b >0$ such that
$$d_\Omega(x,y) \le a j_\Omega(x,y) +b \quad\hbox{for all } x,y\in\Omega.$$

Following \cite[Main corollary (5)]{herron:john}, $\Omega$ is John if there exist $x_0\in \Omega$ and $a,b >0$ such that
$$d_\Omega(x_0,y) \le a j_\Omega(x_0,y) +b \quad\hbox{for all } y\in\Omega.$$

\begin{proof}[Proof of Theorem \ref{thm:geoOmega}]
We start with (\ref{item:john}). The items (a) and (b) are equivalent according to \cite[Theorem 1.1]{McM98}. That (a) and (b) imply (c) follows from \cite{maskit:koebe} since the stabilizer $H$
of any nontrivial component of $\La(G)$ is a so-called factor group (i.e., the component of $\Omega(H)$ that contains $\Omega$ is simply connected, there are no accidental parabolic for $H$
and every $G$-parabolic point in $\La(H)$ is parabolic for $H$). For the converse, (c) implies that $G$ is geometrically finite with \cite{maskit:decfuncgps} and has no accidental parabolic points.

\medskip

For the item (\ref{item:unif}), the equivalence between (a) and (b) follows from  \cite{maskit:koebe} and from the fact that an inner uniform domain that has bounded turning is uniform. 
Let us remark that if $\La(G)$ is a Cantor set, then the equivalence of (a) and (c) follows from \cite[Theorem 3]{macmanus:cantor:qcircle} with Theorem \ref{thm:qsparab}.

We prove (a) implies (c). If $\Omega$ is uniform, then $G$ is geometrically finite and has no accidental parabolic points according to (\ref{item:john}). Since the uniformity of a domain is quantitatively invariant under M\"obius transformations \cite[Corollary 3]{GO79}, any Carath\'eodory limit of a sequence $(g_n(\Omega))_n$ will be uniform and connected, so that no parabolic point has two tangent disks in $\Omega$ \cite[Theorem 1.3]{McM98}.

For the converse, (c) implies (a), we proceed as follows. We know from the equivalence (\ref{item:john}) that $\Omega$ is a John domain with some center $x_0\in \Omega$.
As $G$ is geometrically finite, there exists $m>0$ such that any pair of points in $\La(G)$ can be $m$-separated by $G$, cf. \cite{beardon:maskit}. 
We consider a $G$-invariant family of pairwise disjoint disks $\cH=\{D_p, p\in\mathfrak{P}(G)\}$ in $\Omega$ attached to each parabolic point. We may assume that the diameter of each disk is smaller than $m/4$ and that it does not contain $x_0$. Let $\Omega'= \Omega\setminus \sqcup D_p$ and observe that the action of $G$ on $\Omega'$ is cocompact. Let $K\subset\Omega'$ be a 
compact Dirichlet domain centered at $x_0$. Enlarging the constants if necessary, we may assume that
$$d_\Omega(x,y) \le a j_\Omega(x,y) +b \quad\hbox{for all } x\in K,\ y\in\Omega.$$

Let us now consider $x,y\in \Omega$. We consider three cases. 

If $x,y$ belong to the same disk $D_p$, then there is an arc $\gamma$ joining $x$ and $y$  of length at most $2|x-y|$ such that $\delta_\Omega(z) \ge \min\{l(\gamma [x,z]), l(\gamma [y,z])\}/2$ for all $z\in\gamma$. It follows
that $d_{\Omega}(x,y)\le a_0j_{\Omega}(x,y)+b_0$ for some universal constants $a_0,b_0 >0$ \cite[Theorem 1]{GO79}. 

If $x\in \Omega'$, there exists $g\in G$ such that $g(x)\in K$. Therefore,  we have
$$d_\Omega(g(x),g(y)) \le a j_\Omega(g(x),g(y))+b.$$
Since M\"obius maps are isometries in the Poincar\'e metric and since $j_{\Omega}(g(x),g(y))\le \la j_\Omega(x,y) +\mu$ for some uniform constants
$\la,\mu$ \cite[Theorem 4]{GO79}, it follows that
$$d_\Omega(x,y) \le a' j_\Omega(x,y) +b' \quad\hbox{for all } x\in \Omega',\ y\in\Omega,$$
for some constants  $a',b'$ that only  depend on $a,b$  and the fundamental domain $K$ of the action of $G$.

We now assume that $x,y$ belong to two disjoint disks $D_p$ and $D_q$ in $\cH$ with $p\ne q$. By quasi-invariance, we may assume that $d(p,q)\ge m$ so that $d(x,y)\ge m/2$. 
By the above case, we have 
$$d_\Omega(x,y) \le  d_{\Omega}(x,x_0)+ d_{\Omega}(x_0,y) \le a( j_\Omega(x,x_0) + j_\Omega(x_0,y)) + 2b\,.$$
Since $\delta_\Omega(x_0)\gtrsim \max\{\delta_{\Omega}(x), \delta_\Omega(y)\}$ and  $d(x,x_0), d(x_0,y) \lesssim d(x,y)$ hold, we have
$$j_\Omega(x,x_0) + j_{\Omega}(x_0,y) \lesssim \max\left\{ \log\left( 1 +\frac{d(x,y)}{\delta_{\Omega}(x)}\right) , \log\left( 1 +\frac{d(x,y)}{\delta_{\Omega}(y)}\right) \right\} \lesssim j_\Omega(x,y)$$
where we have used the Bernoulli inequality $\log (1+as) \le a\log (1+s)$ for $a\ge 1$ and $s>0$ several times.
This concludes this case as well. \end{proof}

\section{Topological flexibility and obstructions to universality}\label{sec:tfou}
In this section, we show the necessity of the conditions for universality, quasi-isometric classification and topological rigidity in our main theorems.

In \S~\ref{sec:uncountableGroup}, we show that if the limit set is not a carpet-free Schottky set, then its homeomorphism group is uncountable; see Theorem~\ref{thm:uncountablehom}. 
This shows that both the carpet-free and the Schottky-set conditions are necessary for topological rigidity.

In \S~\ref{sec:obstruction}, we show that the existence of carpet subset is an obstruction to both quasisymmetric and quasiconformal universality, and the existence of non-homogeneous rank-two cut points is an obstruction to quasisymmetric universality; see Theorem~\ref{thm:nonqs} and Theorem~\ref{thm:nonqscutpoint}, together with their immediate corollaries. Thus, carpet-freeness and the homogeneity of rank-two cut points are necessary conditions for universality.
\subsection{Uncountable homeomorphism groups}\label{sec:uncountableGroup}
Recall that we have $\Homeo^+(\Lambda(G)) := \Homeo^+(\widehat\C, \Lambda(G))/\sim$ where $f\sim g$ if $f|_{\Lambda(G)} = g|_{\Lambda(G)}$, see \eqref{eqn:homeogroup}.
\begin{theorem}\label{thm:uncountablehom}
    Let $G$ be a {\nonel} geometrically finite Kleinian group whose limit $\Lambda(G)$ is not homeomorphic to a carpet-free Schottky limit set.
    Then $\Homeo^+(\Lambda(G))$ is uncountable. In particular, $G$ is not virtually topologically rigid.
\end{theorem}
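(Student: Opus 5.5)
The plan is to show that whenever $\Lambda(G)$ is not homeomorphic to a carpet-free Schottky set, the limit set has some local "flexible" feature. By flexible we mean a region on which we can build a continuum of pairwise distinct elements of $\Homeo^+(\Lambda(G))$, each supported near that region. Since $G$ is countable, uncountability of $\Homeo^+(\Lambda(G))$ immediately rules out $G$ having finite index, so virtual topological rigidity fails. We may pass to a torsion-free finite-index subgroup. By Proposition~\ref{prop:homeomisom} we may also work with $\cL$-preserving homeomorphisms of $\Lambda(\widehat G)$, but it is simpler to build ambient homeomorphisms of $\widehat\C$ preserving $\Lambda(G)$ directly.

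The key device is a countable family of pairwise disjoint Jordan domains $U_1,U_2,\dots$ shrinking to a point, each with boundary meeting $\Lambda(G)$ in a controlled way. On each $U_n$ we choose a nontrivial homeomorphism $\phi_n$ of $\widehat\C$ that is supported in $U_n$, preserves $\Lambda(G)$, and acts nontrivially on $\Lambda(G)\cap U_n$. For every subset $S\subseteq\N$, the composition $\prod_{n\in S}\phi_n$ converges, because the supports form a null sequence. This gives $2^{\aleph_0}$ distinct classes. The proof therefore reduces to finding one such local piece $\phi$ with arbitrarily small support. Once one piece exists, we translate it by group elements $g_n$ so that the supports $g_n(U)$ are disjoint and form a null sequence; this is possible since a $G$-orbit of a small set is a null sequence.

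We now split into cases according to Proposition~\ref{prop:acylindricalSchottky}, \S\ref{subsec:icd}, and \S\ref{subsec:JSJ}.

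\emph{Case (a): $\Lambda(G)$ is disconnected.} Either there is a Cantor-like part or there are several components. Choose a Jordan curve in $\Omega(G)$ from the circle lamination $\mathcal{D}$ bounding a small disk $D$ whose intersection with $\Lambda(G)$ is clopen. Consider two $G$-translates of such clopen pieces that lie in the same orbit, $A$ and $gA$, both inside a small disk. The homeomorphism swapping them, namely $g$ on $A$, $g^{-1}$ on $gA$, and the identity elsewhere, extends ambiently.

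\emph{Case (b): $\Lambda(G)$ is connected but $G$ is not acylindrical.} The JSJ tree then contains an $\mathrm{I}_l$, $\mathrm{I}_p$ or IV vertex. At an inseparable cut pair $\{a_v,b_v\}$, or at a cut point, the complementary components can be twisted. For a loxodromic cut pair, take the homeomorphism equal to the element $g_v$ on one complementary piece and the identity on the others. This is a Dehn-twist-type map; it is continuous because $g_v$ fixes $a_v$ and $b_v$, and it extends ambiently because the pieces are separated by leaves of $\cL_v$. For a cut point, use the parabolic $g_v$ in the same way. These twists are supported near the cut set, but they are not small. To make them local we instead apply the twist at a small translate $g\Lambda_v$, restricted to a small branch. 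Equivalently, we rotate or twist only the branches hanging off $g\Lambda_v$ that lie within a small disk; deep-down translates of $v$ give branches of small diameter.

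\emph{Case (c): $\Lambda(G)$ is homeomorphic to a Schottky set containing a carpet.} By Proposition~\ref{prop:finitehierarchy} there is a carpet subset $K$, and by Moore's theorem we may take $K$ with buried peripheral curves inside a carpet-subgroup limit set. Inside a small copy of a carpet we use the large homeomorphism group of the Sierpiński carpet. Whyburn's theorem lets us permute peripheral circles, and the ambient flexibility lets us realize a homeomorphism that moves a peripheral circle while fixing the complement of a small disk meeting $K$ only in a sub-carpet bounded by a Jordan curve through it. The main obstacle is carrying this out compatibly with the rest of $\Lambda(G)$, which is attached to $K$ along the other pieces. We will handle it by choosing the small disk inside the region where $\Lambda(G)$ coincides with the carpet-group limit set, that is, a buried sub-carpet away from any attachments. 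Such a region exists because the attachments form a null sequence of pieces hanging off peripheral circles, so a buried sub-carpet avoiding them can be chosen. Any Whyburn homeomorphism of that sub-carpet fixing its outer boundary then extends by the identity.
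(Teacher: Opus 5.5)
Your cases (a) and (b) follow the paper's scheme: independent local modifications on a null sequence of pairwise disjoint supports, indexed by $\sigma\in\{0,1\}^{\N}$. The paper swaps $U$ and $aU$ for a loxodromic $a\in\stab_{G_v}(\Delta)$ and conjugates the swap by $a^{2j}$; at a cut pair it twists the branches $U_i$ hanging off the translates $v_i\in G_w v$ adjacent to a fixed vertex $w$. Two small repairs are needed there.
\begin{itemize}
\item In (a), the swap needs $D\cap gD=\emptyset$ with $\partial D$ and $g(\partial D)$ in the same component of $\Omega(G)$. Only then is there a Jordan domain $W$ with $W\cap\Lambda(G)=A\cup gA$ in which to perform it. Taking $g$ in the stabilizer of that component arranges this; \emph{both inside a small disk} does not.
\item Your case split misses $\Lambda(G)$ a Jordan curve or $\widehat\C$, where the JSJ tree has no two-ended vertex. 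These cases are trivial.
\end{itemize}

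The genuine gap is case (c). The region you need, an open disk $W$ with $W\cap\Lambda(G)$ equal to a piece of the carpet, does not exist unless $\Lambda(G)$ is itself a carpet.

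In $\Lambda(\widehat G)$, the rest of the limit set and the leaves of $\cL$ (which become the tangency points of $\Lambda(G)$) lie in finitely many $G_v$-orbits of peripheral disks of the carpet $\Lambda_v$. Each such orbit accumulates on all of $\Lambda_v$; being a null sequence does not prevent this density. Consequently every complementary disk of Moore's buried sub-carpet $K$ contains such material.

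A Whyburn homeomorphism of $K$ permutes infinitely many of its peripheral circles, so it cannot simply be \emph{extended by the identity}. For each peripheral disk $D$ sent to $D'$ you would need a homeomorphism $(\overline D,\Lambda\cap\overline D)\to(\overline{D'},\Lambda\cap\overline{D'})$ extending the prescribed boundary map. An arbitrary Whyburn map gives no control over this.

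The missing idea is to build the carpet homeomorphism so that on every peripheral disk $D$ of $\Lambda_v$ it coincides with some $g_D\in G_v$. Everything inside the peripheral disks, including $\cL$, is then carried along automatically. Such a map therefore lies in $\Homeo^+(\Lambda(\widehat G),\cL)\cong\Homeo^+(\Lambda(G))$ by Proposition~\ref{prop:homeomisom}.

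Producing uncountably many such $G_v$-homeomorphisms is exactly Proposition~\ref{prop:uncountableGhom}. It is an Ntalampekos-type back-and-forth through finer and finer $\varepsilon$-subdivisions (Lemma~\ref{lem:epsilondivision}). At each stage it uses the density of $G_v$-orbits of peripheral disks to match all large peripheral disks by group elements.
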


\subsubsection{$G$-homeomorphisms for carpet limit sets}
In this subsection, we prepare ourselves with the following definition and proposition, which are of independent interest.
\begin{defn}\label{defn:Ghomeo}
    Let $G$ be a geometrically finite Kleinian group with a Sierpi\'nski carpet limit set $\Lambda(G)$.
    We say a homeomorphism $h:(\widehat\C, \Lambda(G)) \longrightarrow (\widehat\C, \Lambda(G))$ is a {\em $G$-homeomorphism} if for each peripheral disk $D$ of $\Lambda(G)$, there exists $g_D \in G$ so that $h|_D = g_D|_D$.
\end{defn}
\begin{prop}\label{prop:uncountableGhom}
    Let $G$ be a geometrically finite Kleinian group with a Sierpi\'nski carpet limit set $\Lambda(G)$. Then the group of $G$-homeomorphisms is uncountable.
\end{prop}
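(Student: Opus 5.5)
The plan is to construct $G$-homeomorphisms by choosing, independently on infinitely many disjoint pieces, between the identity and a nontrivial group element, and to show that the uncountably many resulting choices assemble into genuine homeomorphisms of $(\widehat\C,\Lambda(G))$. The natural building block is a loxodromic element $g\in G$ stabilizing a peripheral disk $D_0$ of $\Lambda(G)$, so that $g$ acts on the peripheral circle $\partial D_0$ with two fixed points $a,b$. For a carpet group (after passing to a torsion-free finite-index subgroup) the stabilizer $G_{D_0}$ is a non-elementary Fuchsian group, namely the fundamental group of a boundary component of the acylindrical pared manifold, so such loxodromics exist in abundance.

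The first step is to build a single nontrivial ``local'' $G$-homeomorphism supported near a small region. Choose a Jordan curve $\gamma$ in $\widehat\C$ that meets $\Lambda(G)$ only in two points lying on peripheral circles, or in finitely many points on peripheral circles, and that crosses only finitely many peripheral disks, each along an arc. Since the carpet is planar and locally connected, such curves exist and can be taken inside any prescribed small disk around a point of $\Lambda(G)$. Inside the Jordan domain $U$ bounded by $\gamma$ we want a homeomorphism $h_U$ that is the identity outside $U$, maps $\Lambda(G)\cap U$ to itself, and on each peripheral disk $D$ agrees with some element of $G$. The cleanest way I would try is a Dehn-twist-type construction: take two nested peripheral circles argument is unavailable in a carpet, so instead use the fact that $\Lambda(G)$ is a carpet and that any homeomorphism between carpets extends to the sphere (Whyburn). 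Concretely, pick two distinct peripheral disks $D_1,D_2$ inside $U$ whose circles are separated in a controlled way, and take a homeomorphism of the ``annular'' sub-carpet between a small neighborhood of $\partial U$ and these disks that permutes peripheral circles and on each peripheral circle equals the restriction of a group element. I expect to obtain this by conjugating with the dynamics: use Tukia's or the standard fact that $G$ acts on the dual tree/graph of peripheral disks, and build $h_U$ as a limit of finitely-many-step modifications where at each stage one applies, on a smaller peripheral disk, an element of its (Fuchsian) stabilizer, with all modifications forming a null sequence so that the limit is continuous.

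The second step is the independence/assembly argument: choose a null sequence of pairwise disjoint Jordan domains $U_n$ with disjoint closures, each carrying a nontrivial local $G$-homeomorphism $h_n$ which is the identity off $U_n$. For any subset $S\subseteq\N$ define $h_S$ to be $h_n$ on $U_n$ for $n\in S$ and the identity elsewhere. Because $\diam U_n\to0$, $h_S$ is a homeomorphism of $\widehat\C$ preserving $\Lambda(G)$, and every peripheral disk lies in at most one $U_n$ or meets $\gamma_n$ along a controlled arc, so $h_S$ agrees with an element of $G$ on each peripheral disk. Distinct $S$ give distinct restrictions to $\Lambda(G)$, yielding $2^{\aleph_0}$ $G$-homeomorphisms.

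The main obstacle is the first step: producing a nontrivial $G$-homeomorphism that is the identity outside a small region yet agrees with group elements on every peripheral disk, including those crossed by the boundary curve $\gamma$. My first attempt will be to choose $\gamma$ to pass through peripheral disks $D$ only along hyperbolic geodesics of $D$ invariant under a loxodromic $g_D\in \stab(D)$, and to define $h_U$ on the part of the carpet inside $U$ as the limit of iteratively ``pushing along'' by $g_D$ on the region bounded between consecutive axes, checking via the Schottky-type combinatorics of the peripheral circles that all peripheral circles inside $U$ get matched by group elements.
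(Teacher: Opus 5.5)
Your Step~1 has a genuine gap: the regions $U$ you describe cannot exist. A carpet has no local cut points, so no finite subset of $\Lambda(G)$ separates it. Concretely, suppose a Jordan curve $\gamma$ meets $\Lambda(G)$ in finitely many points. Then each arc of $\gamma\setminus\Lambda(G)$ lies in a single peripheral disk, and consecutive arcs lie in disks whose closures share the crossing point. Distinct peripheral circles of a carpet are disjoint, so $\gamma\subset\overline{D}$ for one peripheral disk $D$. Consequently all of $\Lambda(G)\setminus\gamma$ lies in the component of $\widehat\C\setminus\gamma$ containing $\widehat\C\setminus\overline D$.

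So there is no piece of the carpet for your ``push along the axes of loxodromics $g_D$'' to act on. That Dehn-twist mechanism is exactly what the paper uses in Case~(1) of Theorem~\ref{thm:uncountablehom}, where cut points or cut pairs exist; it is unavailable for carpets. Even for a genuinely separating $\gamma$, pushing by $g_D$ on a crossed disk fails. A peripheral disk $D$ meeting both $U$ and $\widehat\C\setminus\overline U$ has an arc of $\partial D$ fixed by $h_U$, which forces $g_D=\id$.

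Your other suggestion, letting $h$ act on each peripheral disk by an element of its own stabilizer, yields only the identity on $\Lambda(G)$. Every $p\in\Lambda(G)$ is a Hausdorff limit of peripheral disks $\overline{D_n}$, and if $h(D_n)=D_n$ for all $n$ then $h(p)=p$. So a nontrivial $G$-homeomorphism must move peripheral disks to other members of their $G$-orbits. Building an ambient homeomorphism that does this consistently at all scales is the whole content of the proposition. Whyburn's theorem does not supply it: it extends a given carpet homeomorphism, but does not produce one with prescribed group elements on the peripheral circles.

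The missing ingredient is the $\varepsilon$-subdivision lemma of Ntalampekos used in the paper (Lemma~\ref{lem:epsilondivision}). Take lists of peripheral disks $D_0,\dots,D_M$ and $D_0',\dots,D_M'$, each containing every disk of diameter at least $\varepsilon$, together with orientation-preserving homeomorphisms $D_i\to D_i'$. The lemma gives matching $\varepsilon$-subdivisions into subcarpets and an ambient homeomorphism extending the given maps and sending cells to cells. Because every $G$-orbit of peripheral disks is dense, these maps can be taken to be elements $g_i\in G$. One then repeats inside each subcarpet, with the map on its outer peripheral circle already prescribed, along a sequence $\varepsilon_n\to 0$. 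The uniform limit agrees with an element of $G$ on every peripheral disk, and the freedom in choosing the $g_i$ at every stage gives uncountably many such limits.

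Your Step~2 (independent choices on a null sequence of disjoint supports) is sound and could be kept. For that, take $\gamma_n\subset\Lambda(G)$ to be Jordan curves avoiding all peripheral circles; such curves exist in any carpet near any point, and then each peripheral disk lies entirely inside or outside $U_n$. Each nontrivial $h_n$ would be produced by running the relative construction above inside $\overline{U_n}$ with $h_n|_{\gamma_n}=\id$. But then Step~1 rests on precisely the paper's key lemma.
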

\begin{proof}
    The proof is a modification of \cite[Theorem~1.3]{Nta21}. We sketch the proof here, and refer the reader to \cite{Nta21} for more details.

    We first introduce a definition adapted from \cite{Why58}, see \cite[Definition~2.1]{Nta21}.
    \begin{defn}
    Let $\varepsilon > 0$ and let $Y$ be a closed subset of $\widehat\C$. 
    A partition $\{Y_\alpha\}$ of $Y$ is called an \emph{$\varepsilon$-subdivision} if the collection $\{Y_\alpha\}$ consists of finitely many closed Jordan regions, also called \emph{$2$-cells}, with disjoint interiors and diameter less than $\varepsilon$ (where we use the spherical metric on $\widehat\C$).
    
    A partition $\{S_\alpha\}$ of a Sierpi\'nski carpet $S \subset \widehat\C$ into Sierpi\'nski carpets is called an \emph{$\varepsilon$-subdivision rel.\ ($Q_0,\ldots,Q_N$)} if there exist peripheral disks $Q_0,\ldots,Q_N$ of $S$ and an $\varepsilon$-subdivision $\{Y_\alpha\}$ of the closed domain
    \[
        Y := \mathbb{C} \setminus \bigcup_{i=0}^{N} Q_i,
    \]
    such that the boundaries of the $2$-cells $Y_\alpha$ are contained in 
    \[
        S \setminus \bigcup_{i=N+1}^{\infty} \partial Q_i,
    \]
    where $Q_i$, $i \ge N+1$, are the remaining peripheral disks of $S$; we let $S_\alpha = S \cap Y_\alpha$. Note that each $S_{\alpha}$ is homeomorphic to a Sierpi\'nski carpet.
    \end{defn}
    We need the following lemma, which is proved in \cite[Lemma~2.2]{Nta21}.
    \begin{lem}\label{lem:epsilondivision}
    Let $S, S' \subseteq \widehat\C$ be two Sierpi\'nski carpets. Let $\varepsilon > 0$ and let $D_0,..., D_M$ and $D'_0,..., D'_M$ be a collection of peripheral disks of $S$ and $S'$ respectively, so that both collections contain all peripheral disks of diameter $\geq \varepsilon$.
    Given any orientation-preserving homeomorphism $g_i: D_i \rightarrow D_i'$,
    there exist two $\varepsilon$-subdivisions (with the same set of indices) $\{S_{\alpha}\}$ rel.\ ($D_0,..., D_M$) and $\{S'_{\alpha}\}$ rel.\ ($D'_0,..., D'_M$)  and a homeomorphism $h: \widehat\C \longrightarrow \widehat\C$ so that
    \begin{itemize}
        \item $h|_{D_i} = g_i$;
        \item $h(S_\alpha) = S'_{\alpha}$.
    \end{itemize}
    \end{lem}

    Choose a sequence $\varepsilon_n \to 0$ as $n \to \infty$. Let $D_0,..., D_N$ be the list of peripheral disks of $\La(G)$ of diameter $\geq \epsilon_1$.
    We append to these collections some peripheral disks $D_{N+1},..., D_M$ and choose $g_i \in G$ so that
    \begin{enumerate}
        \item $D_i':= g_i(D_i), i = 0,..., M$ is a list of $M+1$ distinct peripheral disks;
        \item $\{D_0',..., D_M'\}$ contains all peripheral disks of diameter $\geq \epsilon_1$.
    \end{enumerate}
    By Lemma~\ref{lem:epsilondivision}, there exist two $\varepsilon_1$-subdivisions (with the same set of indices) $\{S_{\alpha}\}$ rel.\ ($D_0,..., D_M$) and $\{S'_{\alpha}\}$ rel.\ ($D'_0,..., D'_M$)  and a homeomorphism $h_1: \widehat\C \longrightarrow \widehat\C$ so that
    \begin{itemize}
        \item $h_1|_{D_i} = g_i$;
        \item $h_1(S_\alpha) = S'_{\alpha}$.
    \end{itemize}
    For each $\alpha$, there is a unique peripheral disk $D_{0,\alpha}$ (resp. $D'_{0,\alpha}$) of $S_\alpha$ (resp. $S'_\alpha$) that separates it from the other carpets in the subdivision{\tiny , denoted by $D_{0,\alpha}$ (or $D'_{0,\alpha}$)}. We let $D_{1,\alpha},..., D_{K,\alpha}$ be the list of peripheral disks of $S_{\alpha}$ that contain all of those with diameter $\geq \varepsilon_2$, and choose $g_{i, \alpha} \in G$ so that
    \begin{enumerate}
        \item $D_{i, \alpha}':= g_{i,\alpha}(D_{i, \alpha}), i = 1,..., K$ is a list of $K$ distinct peripheral disks of $S'_{\alpha}$, and $D_{i, \alpha}' \neq D'_{0,\alpha}$ for $i = 1,..., K$;
        \item $\{D_{1, \alpha}',..., D_{K,\alpha}'\}$ contains all peripheral disks of diameter $\geq \epsilon_2$ in $S'_\alpha$.
    \end{enumerate}
    Note that the above is possible as the orbit of every peripheral disk is dense in $\Lambda(G)$.
    We apply Lemma~\ref{lem:epsilondivision} again to obtain two $\varepsilon_2$-subdivisions (with the same set of indices) $\{S_{\alpha, \beta}\}_{\beta}$ rel.\ ($D_{0, \alpha},..., D_{K,\alpha}$) and $\{S'_{\alpha, \beta}\}_{\beta}$ rel.\ ($D_{0, \alpha}',..., D_{K,\alpha}'$) and a homeomorphism 
    $h_{2, \alpha}: \widehat\C \longrightarrow \widehat\C$ so that
    \begin{itemize}
        \item $h_{2, \alpha}|_{\partial D_{0, \alpha}} = h_1$;
        \item $h_{2, \alpha}|_{D_{i, \alpha}} = g_{i, \alpha}$ for $i = 1,..., K$;
        \item $h_{2, \alpha}(S_{\alpha, \beta}) = S'_{\alpha, \beta}$ for each $\beta$.
    \end{itemize}
    We repeat this procedure for each of the carpets $S_\alpha$, and glue $h_{2, \alpha}$ to $h_1$ along $\partial D_{0, \alpha}$ to obtain a homeomorphism $h_2: \widehat\C \longrightarrow \widehat\C$. We iterate ths construction for each carpet in the $\varepsilon_2$-subdivisions, and inductively, we obtain a sequence of homeomorphisms $h_n: \widehat\C \longrightarrow \widehat\C, n\geq 1$. The sequence $(h_n)_n$ is uniformly Cauchy as $\epsilon_n \to 0$ and similarly for its inverses. Thus, $(h_n)_n$ converges to a homeomorphism $h: \widehat\C \longrightarrow \widehat\C$. Note that $h$ gives a bijection of the set of peripheral disks, thus $h(\Lambda(G)) = \Lambda(G)$. For any peripheral disk $D$, $h|_D = h_n|_D$ for all sufficiently large $n$, thus there exists $g_D \in G$ so that $h|_D = g_D$. Thus, $h$ is a $G$-homeomorphism.

    From the construction, we see that at any stage, we have at least two (in fact, infinitely many) choices, which gives different homeomorphisms in the limit. Therefore, we conclude that the group of $G$-homeomorphisms is uncountable.
\end{proof}
\subsubsection{Proof of Theorem~\ref{thm:uncountablehom}}
    We consider two cases.

\vspace{0.3cm}
\noindent{\bf Case (1):} Suppose that $\Lambda(G)$ is not homeomorphic to a Schottky set.
Then $\Lambda(G)$ is either disconnected, equals to $\widehat\C$, or contains a global cut point or an inseparable exact cut pair.

Suppose that $\Lambda(G)$ is disconnected. If each component of $\Lambda(G)$ is a point, then $\Lambda$ is a Cantor set and $\Homeo^+(\Lambda(G))$ is uncountable. 
Thus, we can assume we have a nontrivial component $\Lambda_v$ of $\Lambda(G)$ and let $G_v$ be the stabilizer of $\Lambda_v$. 
Choose a compression disk $D$ in $(N, P)$ that cut off the boundary incompressible piece associated to $\Lambda_v$, and let $C \subseteq \Delta$ be a lift of $\partial D$, where $\Delta$ is a component of $\Omega(G_v)$.
Let $U$ be the Jordan domain bounded by $C$ that is disjoint from $\Lambda_v$.
Let $a \in \stab_{G_v}(\Delta)$ be a loxodromic element. Then $\{a^n U, n \in \N\}$ are pairwise disjoint.
Let $W \subseteq \Delta$ be a Jordan domain that contains $U, aU$ such that $W \cap \Lambda(G) = (U \cup aU) \cap \Lambda(G)$ and $\{a^{2n} W, n \in \N\}$ are pairwise disjoint.
We can define a homeomorphism $\eta: W \rightarrow W$ so that $\eta|_{\partial W} = \id$, $\eta|_{U} = a$, $\eta|_{aU} = a^{-1}$.
With the convention $\eta^0 = \id$, for each sequence $\sigma: \N \longrightarrow \{0,1\}$, we construct a homeomorphism on $\widehat\C$ so that
$$
h_\sigma(z) = \begin{cases}
    a^{2j}\circ \eta^{\sigma(j)}\circ a^{-2j}(z) &\text{if } z\in  a^{2j}W, j \in \N\\
    z &\text{otherwise }
\end{cases}.
$$
Since $h_{\sigma} \neq h_{\sigma'}$ if $\sigma \neq \sigma'$, $\Homeo^+(\Lambda(G))$ is uncountable.

Now suppose $\Lambda(G)$ is connected. 
If $\Lambda(G)$ is a Jordan curve or equals to $\widehat\C$, then $\Homeo(\Lambda(G))$ is uncountable.
Thus, we can assume we have nontrivial JSJ decomposition.
Let us assume we have an inseparable exact cut pair $a, b\in \Lambda(G)$ associated to a loxodromic {\twoended} vertex $v$ in the JSJ decomposition (see \S~\ref{subsec:JSJ}). The argument for a (global) parabolic cut point in the JSJ decomposition is similar.
Let $w$ be adjacent to $v$ with the corresponding limit set $\Lambda_w \subseteq \Lambda(G)$ and stabilizer $G_w$.
Let $v_i, i\in \N$ be the list of adjacent vertices to $w$ that is in the $G_w$ orbit of $v$.
For each $v_i$, we have a corresponding cut pair $a_i, b_i \in \Lambda(G)$.
Note that there are finitely many components $\Delta_{i,1},...\Delta_{i,k}$ of $\Omega(G)$ that has $a_i, b_i$ on their boundary, where we assume the boundary of $\Delta_{i,1}, \Delta_{i,2}$ intersects $\Lambda_w$.
Let $U_i$ be the Jordan domain bounded by the two geodesics connecting $a_i, b_i$ in $\Delta_{i,1}$ and $\Delta_{i,2}$ that is disjoint from $\Lambda_w$.
Let $g_i$ be a generator of $G_{v_i}$, the stabilizer of $a_i, b_i$.
Then for each sequence $\sigma: \N \longrightarrow \{0,1\}$, we can construct a homeomorphism on $\widehat\C$ so that for $z \in \Lambda(G)$, we have
$$
h_\sigma(z) = \begin{cases}
    g_i^{\sigma(i)}(z) &\text{if } z\in U_{i}\\
    z & \text{otherwise}
\end{cases}
$$
Thus, $\Homeo^+(\Lambda(G))$ is uncountable.

\vspace{0.3cm}
\noindent{\bf Case (2):} Suppose that $\Lambda(G)$ is homeomorphic to a Schottky set and contains a Sierpi\'nski carpet as a subset. 
    Let $\widehat G$ be a de-parabolization of $G$ and let $\mathcal{L}$ be the corresponding lamination so that 
    $\Lambda(G) = \Lambda(\widehat G)/\sim_\mathcal{L}$.
    
    Let $(N, P = P_t \cup P_a)$ be the corresponding pared 3-manifold for $G$. Then $(N, P = P_t)$ is the corresponding pared 3-manifold for $\widehat G$.

    By Proposition~\ref{prop:finitehierarchy}, after a finite iteration of the JSJ and/or incompressible decomposition, we have a vertex group $G_v$ whose limit set $\Lambda_v$ is a Sierpi\'nski carpet. 
    By Proposition~\ref{prop:uncountableGhom}, the group of $G_v$-homeomorphisms of $\Lambda_v$ is uncountable. Since each $G_v$-homeomorphism is in $\Homeo(\Lambda(\widehat G), \mathcal{L})$, we conclude that $\Homeo(\Lambda(\widehat G), \mathcal{L})$ is uncountable. Therefore, $\Homeo^+(\Lambda(G))$ is uncountable too.
\qed

\subsection{Obstructions for universality}\label{sec:obstruction}
In this subsection, we show there are two obstructions for (relative) universality: the existence of carpet subsets and the existence of non-homogeneous rank-two cut points.

\subsubsection{Carpet obstruction for universality}
\begin{theorem}\label{thm:nonqs}
    Let $G$ be a geometrically finite Kleinian group. Suppose that $\Lambda(G)$ contains a carpet and $\Lambda(G) \neq \widehat\C$. Then there exists a geometrically finite Kleinian group $G'$ so that
    \begin{itemize}
        \item $(\Lambda(G), \mathfrak{P}(G))$ is ambiently homeomorphic to $(\Lambda(G'), \mathfrak{P}(G'))$;
        \item $(\Lambda(G), \mathfrak{P}(G))$ is not quasisymmetrically homeomorphic to $(\Lambda(G'), \mathfrak{P}(G'))$.
    \end{itemize}

    Moreover, the homeomorphism class of $(\Lambda(G), \mathfrak{P}(G))$ contains infinitely many quasisymmetric classes in $\mathfrak{X}_{\gf}$.
\end{theorem}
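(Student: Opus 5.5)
The plan is to build $G'$ by grafting a rigid, quasisymmetrically distinguishable carpet piece into the structure of $G$. First reduce to the de-parabolization $\widehat G$ with lamination $\cL$, so that $\Lambda(G)=\Lambda(\widehat G)/\sim_{\cL}$. By Proposition~\ref{prop:finitehierarchy}, the strong-accessibility hierarchy of $\widehat G$ has a terminal vertex group $H$ whose limit set $\Lambda(H)$ is a carpet. Topologically, $H$ uniformizes an acylindrical pared piece $(W,P_W)$ with incompressible boundary, glued to the rest of $N$ along annuli and disks. Thurston's hyperbolization, or the Ahlfors--Bers deformation theory together with the identification $\mathcal{QC}(G)\cong \Teich(\partial\overline M_G)$, lets us vary the hyperbolic structure. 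However, quasiconformal deformations of $G$ preserve the quasisymmetry class, so varying the structure alone is useless. Instead we change the topology of the piece $W$ while keeping the topology of the limit set unchanged.

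For the construction, replace $W$ by another acylindrical pared manifold $W'$ with the same peripheral combinatorics as $W$: the same boundary pattern of annuli, tori and compression attachments, and the same number of boundary components of each type. We choose $W'$ so that its fundamental group is not commensurable with that of $W$. One concrete way is to take finite covers of $W$ of large degree that are cyclic along a boundary curve, or to glue in differently chosen acylindrical pieces with prescribed boundary. Let $N'$ be the resulting manifold, with $G'$ geometrically finite uniformizing $(N',P')$. The decomposition of \S\ref{subsubsec:decomplimit}, read off topologically, shows that the limit set is assembled from the vertex limit sets along the same tree pattern. The carpet pieces are all ambiently homeomorphic to the standard carpet, the rest of the structure is unchanged, and the parabolic types are preserved. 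Hence $(\Lambda(G),\mathfrak P(G))$ and $(\Lambda(G'),\mathfrak P(G'))$ are ambiently homeomorphic. This gluing of ambient homeomorphisms piece by piece, with the null-sequence property giving continuity, is the step I would write carefully.

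For non-quasisymmetry, suppose $f$ is a quasisymmetry between the two pairs. By Theorem~\ref{thm:qsparab}, $f$ preserves the parabolic points. Since cut points and cut pairs are topological, $f$ maps vertex limit sets of the hierarchy to vertex limit sets, so it maps the carpet $\Lambda(H)$ quasisymmetrically onto some carpet $\Lambda(H')$ of $G'$. Now apply the rigidity of carpet limit sets from \cite{BKM09}, whose use is already indicated in Theorem~\ref{thm:qsveGeneral}: after uniformizing both carpets as round Schottky sets via Proposition~\ref{prop:acylindricalSchottky}, a quasisymmetry between them is M\"obius, so $H$ and $H'$ are commensurable up to conjugacy. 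This contradicts the choice of $W'$ once every carpet vertex group of $G'$ is chosen non-commensurable with $H$. Infinitely many classes come from choosing an infinite sequence $W'_k$ of pairwise non-commensurable pieces; for instance, hyperbolic volumes or commensurator invariants such as trace fields distinguish infinitely many of them.

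The main obstacle is producing the pieces $W'_k$ with exactly the same peripheral gluing data, so that the ambient homeomorphism exists, while keeping them pairwise non-commensurable. A weaker but cleaner alternative is to keep $W$ fixed and vary the gluing, or to pass to covers, but then the limit-set topology must be rechecked. I would first try taking $W'=W$ with a different hyperbolic structure, using Mostow--Prasad-type rigidity only in the doubled manifold. If that fails, I would use the Schottky-set criterion from \cite{BKM09}, which states that quasisymmetric Schottky limit sets come from commensurable groups, together with the fact that there are infinitely many commensurability classes of acylindrical manifolds with a given boundary pattern.
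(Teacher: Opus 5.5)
\textbf{There is a genuine gap: the replacement piece $W'$ is never constructed.} Your overall architecture is the paper's. You replace the carpet piece by a non-quasi-isometric acylindrical piece, obtain the ambient homeomorphism from the tree-of-compacta description \cite{HPW16}, and detect non-quasisymmetry via Schottky-set rigidity (\cite{BKM09}, \cite[Theorem 6.6]{haiss:lec}). But the step that carries the whole proof, the existence of $W'$, is left open, and none of the candidates you list works.
\begin{itemize}
\item A finite cover of $W$ has fundamental group of finite index in $\pi_1(W)$ and the same limit set. So it is commensurable with $W$, which contradicts your own requirement, and can never be distinguished quasisymmetrically.
\item A different geometrically finite structure on the same pared manifold is quasiconformally conjugate to the original. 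As you yourself note, this preserves the quasisymmetry class.
\item The claim that there are infinitely many commensurability classes of acylindrical pared manifolds with a prescribed boundary pattern (boundary surfaces, annular paring and attaching data) is unsupported. It is essentially the content of the theorem.
\end{itemize}
Also, requiring the \emph{same number} of boundary components is more than you need. For the limit sets to be ambiently homeomorphic, only the peripheral \emph{types} must match.

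\textbf{The missing idea is Lemma~\ref{lem:nonQScarpet}.}
\begin{itemize}
\item \emph{Finite cover.} Realize $M$ with totally geodesic boundary, so its limit set is a round carpet. Let $X$ be the boundary surfaces $S_1,\dots,S_n$ joined to a base point by arcs. By LERF \cite{Ago13}, there is a finite cover $M''\to M$ of degree $\ge 2$ whose fundamental group contains $\pi_1(X)$. Each $S_j$ then lifts homeomorphically, and the cover has extra boundary components $S''_{n+1},\dots,S''_m$.
\item \emph{Doubling.} Double $M''$ along exactly these extra components. The result is an acylindrical $M'$ with $\partial_0 M'$ equal to two copies of $\partial_0 M$, swapped by an involution.
\item \emph{Non-quasi-isometry.} $M'$ is uniformized by an index-two subgroup of $\langle G'', r_{n+1},\dots,r_m\rangle$, so it again has a round carpet limit set. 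A quasi-isometry between $\pi_1(M')$ and $\pi_1(M)$ would give a quasisymmetry of round carpets, hence a M\"obius map by \cite{BKM09}. This contradicts co-Hopficity of groups with carpet limit set, because $\pi_1(M'')$ embeds with infinite index in $\pi_1(M')$.
\item \emph{Gluing.} Build $N'$ by attaching two copies of $N\setminus\!\setminus M$, and of the annular paring $P^a_M$, along $\partial_0M'$ through the degree-two map $\partial_0 M'\to\partial_0 M$. This reproduces the peripheral data automatically, so the tree-of-compacta argument goes through.
\item \emph{Infinitely many classes.} Iterate the construction.
\end{itemize}
Without an explicit construction of this kind, your argument does not yet prove the theorem.
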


As immediate corollaries, we have
\begin{cor}\label{cor:nonqs}
    Let $G$ be a geometrically finite Kleinian group. 
    Suppose that $\Lambda(G)$ contains a carpet and $\Lambda(G) \neq \widehat\C$. Then $(\Lambda(G), \mathfrak{P}(G))$ is neither relatively quasisymmetrically universal nor relatively quasiconformally universal in $\mathfrak{X}_{\gf}$.

    In particular, if $G$ is in addition convex cocompact, then $\Lambda(G)$ is neither quasisymmetrically universal nor quasiconformally universal in $\mathfrak{X}_{\cc}$.
\end{cor}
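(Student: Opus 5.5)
The corollary is a direct reformulation of Theorem~\ref{thm:nonqs}, so the plan is to unwind the definitions of (relative) universality and take the group $G'$ supplied there.

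Fix $G$ geometrically finite with $\Lambda(G)$ containing a carpet and $\Lambda(G)\neq\widehat\C$. Theorem~\ref{thm:nonqs} gives a geometrically finite $G'$ such that $(\Lambda(G'),\mathfrak{P}(G'))$ is ambiently homeomorphic to $(\Lambda(G),\mathfrak{P}(G))$ but not quasisymmetrically homeomorphic to it.

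\emph{Relative quasisymmetric universality fails.} Since $(\Lambda(G'),\mathfrak{P}(G'))$ lies in $[(\Lambda(G),\mathfrak{P}(G))]_\homeo$ but not in $[(\Lambda(G),\mathfrak{P}(G))]_\qs$, these two classes differ.

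\emph{Relative quasiconformal universality fails.} Here $(\Lambda(G'),\mathfrak{P}(G'))$ lies in $[(\Lambda(G),\mathfrak{P}(G))]_\ah$. Suppose it also lay in the $\qc$ class. Then the restriction to $\Lambda(G)$ of a global quasiconformal map of $\widehat\C$ would be a quasisymmetry $\Lambda(G)\to\Lambda(G')$, since quasiconformal maps of the sphere are quasisymmetric and restrictions of quasisymmetries are quasisymmetric. This contradicts Theorem~\ref{thm:nonqs}.

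Parabolic loci need no separate care. If a type-preserving quasiconformal map exists, its restriction is a type-preserving quasisymmetry, which is excluded outright. Alternatively, by Theorem~\ref{thm:qsparab} any quasisymmetry between such limit sets is automatically type-preserving whenever the limit sets are neither circles nor spheres, and a set containing a carpet is neither.

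\emph{Convex cocompact case.} If $G$ is convex cocompact, then $\mathfrak{P}(G)=\emptyset$, so the relative statements become absolute ones, provided $G'$ can be taken convex cocompact. This is the only point needing care. An ambient homeomorphism preserving parabolic loci sends the empty locus to the empty locus, so $\mathfrak{P}(G')=\emptyset$. A geometrically finite group with no parabolics is convex cocompact, so $G'$ is convex cocompact. Therefore $\Lambda(G')\in\mathfrak{X}_{\cc}$ witnesses the failure of both quasisymmetric and quasiconformal universality in $\mathfrak{X}_{\cc}$.
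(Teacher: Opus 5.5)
Your proposal is correct and follows the paper's route. The paper records this corollary as an immediate consequence of Theorem~\ref{thm:nonqs}, taking the group $G'$ supplied there as a witness in $[(\Lambda(G),\mathfrak{P}(G))]_\ah$ that is not in the $\qs$ class, and hence not in the $\qc$ class, since global quasiconformal maps of $\widehat\C$ restrict to quasisymmetries. Your observation that type-preservation forces $\mathfrak{P}(G')=\emptyset$, so that $G'$ is convex cocompact, correctly supplies the one detail the paper leaves implicit for the $\mathfrak{X}_{\cc}$ statement.
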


The lack of quasisymmetric/quasiconformal universality comes from the presence of Sierpi\'nski carpets. Indeed, Bourdon and Kleiner construct 
many examples of non quasi-isometric convex-cocompact Kleinian groups with Sierpi\'nski carpet limit sets \cite{BK15}. 
We provide a different approach of the latter that will also be used for the general case.

\begin{lem}\label{lem:nonQScarpet} Let $(M,P)$ be an acylindrical pared 3-manifold uniformized by a minimally parabolic geometrically finite Kleinian group.
Let $\partial_0 M = \partial M \setminus P$. 
Then there exists an acylindrical pared $3$-manifold $(M', P')$ uniformized by a minimally parabolic, geometrically finite Kleinian group with the following properties.
\ben
\item There is a continuous involution $f:M'\to M'$ such that $\partial_0 M'/(f)$ is homeomorphic to $\partial_0 M$.
\item The fundamental groups of $M$ and $M'$ are not quasi-isometric.
\een
\end{lem}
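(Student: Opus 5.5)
The natural construction is a \emph{double-type} cover: take a two-sheeted (branched or unbranched) covering of the boundary and realize it as the boundary of a new acylindrical manifold carrying an involution. Concretely, I would build $M'$ as a ``doubled'' manifold. Pick an essential simple closed curve, or better a properly embedded essential disk-free surface, and form $M'$ from two copies of $M$ glued along a suitable subsurface $F\subset \partial_0 M$. The gluing is chosen so that the swap $f$ of the two copies is a continuous involution of $M'$. The requirement $\partial_0M'/(f)\cong\partial_0 M$ forces the gluing region to be exactly the fixed locus of $f$ on the boundary. So I would glue two copies of $M$ along a compact incompressible subsurface $F\subsetneq\partial_0M$ whose complement in each boundary component is nonempty. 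Then $\partial_0 M'$ is two copies of $\partial_0M\setminus F$ glued along $\partial F$, and its quotient by the swap is $\partial_0 M$ folded along $\partial F$, which is homeomorphic to $\partial_0M$.

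First I check that $(M',P')$, with $P'$ the doubled paring, is again an acylindrical pared manifold. I would choose $F$ so that the gluing creates no essential annuli or disks. A standard choice is to take $F$ with incompressible, non-annular boundary-pattern: every component of $F$ and of $\partial_0M\setminus F$ is not a disk or annulus, and each curve of $\partial F$ is essential and not peripheral to $P$. An essential annulus or disk in $M'$ is then cut by $F$ into pieces in the copies of $M$. By acylindricity of $M$, each piece is boundary parallel in $M$, and an innermost/outermost argument shows it can be isotoped off $F$, which is a contradiction. Thurston's hyperbolization then gives a minimally parabolic geometrically finite uniformization. The involution is continuous by construction, giving item (1).

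The main obstacle is item (2), non-quasi-isometry. Here I would use a quasi-isometry invariant sensitive to the doubling: the (conformal) volume-type invariant from Bourdon--Kleiner, or more robustly Schwartz/Frigerio-style rigidity. Quasi-isometries between fundamental groups of such acylindrical manifolds induce quasisymmetries of the limit sets, which are Schottky sets up to quasiconformal change by Proposition~\ref{prop:acylindricalSchottky}. By the rigidity of Schottky sets \cite{BKM09} and the commensurability result \cite{Fri06}, such a quasisymmetry is M\"obius and the groups are commensurable. So it suffices to make $\pi_1(M)$ and $\pi_1(M')$ non-commensurable. I would detect this through a commensurability invariant. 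The first attempt is the ratio of hyperbolic volume of the convex core with totally geodesic boundary to the Euler characteristic of the boundary, $\mathrm{vol}/|\chi(\partial_0)|$. For $M'$, the volume doubles while $|\chi(\partial_0 M')|=2|\chi(\partial_0M)|-2|\chi(F)|$ changes by a different factor whenever $\chi(F)\neq 0$. This ratio is multiplicative-invariant under finite covers, so $\pi_1(M)$ and $\pi_1(M')$ are not commensurable. The delicate point is that the volume of $M'$ (with totally geodesic boundary) is not exactly twice that of $M$, since the geometry changes. I would therefore replace exact equality by a Gromov-norm/simplicial volume argument, which is additive under gluing along incompressible surfaces with amenable-free gluing, or else vary $F$ to produce infinitely many values and pick one that differs from the value for $M$.
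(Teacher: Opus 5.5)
Your construction does not satisfy item (1), and that item is the core of the lemma. Because the uniformizing group is minimally parabolic, $P$ consists of tori, so $\partial_0 M$ is a union of closed surfaces of genus at least $2$. For $M'=M\cup_F M$ one has $\partial_0 M'=(\partial_0 M\setminus \mathrm{int}\,F)\cup_{\partial F}(\partial_0 M\setminus \mathrm{int}\,F)$, and the swap acts on it as the reflection fixing $\partial F$. The quotient is therefore $\partial_0 M\setminus \mathrm{int}\,F$. This surface has nonempty boundary (or, if $F$ is a union of whole components, fewer components than $\partial_0 M$), so it is never homeomorphic to $\partial_0 M$. Folding along $\partial F$ does not restore $F$.

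Even the property actually used in the proof of Theorem~\ref{thm:nonqs} fails. That proof needs a degree-two covering $\partial_0 M'\to\partial_0 M$ along which each piece of $N\setminus\!\setminus M$ is reattached twice. But $\chi(\partial_0 M')=2\chi(\partial_0 M)-2\chi(F)>2\chi(\partial_0 M)$, since $\chi(F)<0$ for your choice of $F$. What is needed is that $\partial_0 M'$ consist of two disjoint copies of $\partial_0 M$ exchanged freely by $f$, and gluing copies of $M$ itself cannot produce this.

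The missing idea is to pass first to a finite cover $M''\to M$ of degree $d\ge 2$ with two features: every component $S_1,\dots,S_n$ of $\partial_0 M$ has a homeomorphic lift, and extra boundary components $S''_{n+1},\dots,S''_m$ appear. The paper obtains this cover from LERF \cite{Ago13}, applied to $\pi_1(X)$ where $X$ is a point joined by arcs to all the $S_j$, together with an element outside $\pi_1(X)$ to force $d\ge 2$. One then doubles $M''$ only along $S''_{n+1},\dots,S''_m$. With totally geodesic boundary, this double is uniformized by the orientation-preserving index-two subgroup of $\langle G'', r_{n+1},\dots,r_m\rangle$. Its limit set is again a round carpet, so acylindricity is automatic and no annulus cut-and-paste argument is needed.

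Item (2) is also not established as written. Your reduction is fine: a quasi-isometry gives a quasisymmetry of round carpets, hence a M\"obius map by \cite{BKM09}, hence commensurability. It is also true that $\mathrm{vol}/|\chi(\partial_0)|$ of the totally geodesic structure is a commensurability invariant. But you have no way to evaluate it on $M\cup_F M$, whose geodesic-boundary metric bears no simple relation to that of $M$. The simplicial-volume fallback does not help either. Relative simplicial volume is not the quantity in your ratio. It is also not additive under gluing along surfaces of genus $\ge 2$: Gromov's additivity needs amenable gluing loci, and it already fails for doubles of manifolds with totally geodesic boundary. ``Varying $F$'' is not an argument.

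In the paper's construction, by contrast, your invariant settles the matter at once. The double is isometric, so $\mathrm{vol}(M')=2\,\mathrm{vol}(M'')=2d\,\mathrm{vol}(M)$, while $\chi(\partial_0 M')=2\chi(\partial_0 M)$; the ratio is multiplied by $d\ge 2$. The paper itself concludes differently. It combines M\"obius rigidity with the co-Hopf property of geometrically finite groups with carpet limit sets, using that $\Lambda(G'')$ sits inside $\Lambda(G')$ as a proper sub-carpet.
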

\begin{proof} 
Let us consider a minimally parabolic geometrically finite Kleinian group $G$ with Schottky limit set and that uniformizes $(M, P)$. It follows that the limit set is a round Sierpi\'nski carpet. 
Let $S_1,\ldots , S_n$ be the connected components of $\partial_0 M$. By assumption, these are closed surfaces of genus at least $2$. Pick $x\in M$, and consider curves $c_1,\ldots, c_n$
that join $x$ to each surface $S_j$. Let $X= \{x\} \cup (\bigcup_j (c_j\cup S_j)) \subset M$ that is a connected set such that $\pi_1(X,x)$ embeds in $\pi_1(M,x)$. 

Let $g\in \pi_1(M)\setminus \pi_1(X)$. Since $\pi_1(X)$ is finitely generated and $\pi_1(M)$ is LERF \cite[Theorem 9.2]{Ago13}, there is a finite index subgroup  $G''$ of $G= \pi_1(M)$ that contains 
$\pi_1(X)$ but not $g$. 
This yields a covering $p:M''\to M$ of finite degree $d\ge 2$. 
Let $S_1'',..., S_m''$ be the components of $\partial_0 M''$. Since $\pi_1(X)\subset \pi_1(M'')$, after relabeling the indices if necessary, we may assume $S_i''$ is homeomorphic to $S_i$ for all $i = 1,..., n < m$.
Let us note that the limit set of $G''$ coincides with that of $G$, so it is a round
Sierpi\'nski carpet and $M''=\chull(\Lambda(G''))/G''$ has totally geodesic boundary.  

We consider its copy $(-M'')$ with opposite orientation, that comes with a homeomorphism $f:M''\to (-M'')$.
Let $M'= M'' \sqcup (-M'')/ (f|_{\cup_{n < j \le m} S_j''})$ be the double of $M''$ glued along the surfaces in $\partial_0 M''\setminus ( \cup_{1\le j\le n}S_j'' )$. 
The components of $\partial_0 M'$ correspond to $\{S_j'',f(S_j''),\ 1\le j\le n\}$. This concludes the first point.

The manifold $M'$ is uniformized by the index $2$ Kleinian subgroup $G'$ of the group generated by $\langle G'', r_{n+1}\ldots, r_m\rangle$ where $r_j$ is the conformal reflection with respect to the boundary round circle of a component $\Delta_j$ of $\Omega({G''})$ such that $\Delta_j/\stab_{G''}(\Delta_j)$ corresponds to $S_j''$. Therefore, $\La({G'})$ is also a round Sierpi\'nski carpet. 
 
 If $\pi_1(M')$ and $\pi_1(M'')$ were quasi-isometric, then there would be a quasisymmetric homeomorphism $h:\Lambda({G'}) \to \Lambda({G''}) = \Lambda(G)$. It follows from \cite{BKM09} that $h$ would be a 
 M\"obius map. 
 This induces an isomorphism between $\Conf(\Lambda({G'}))$ and $\Conf(\Lambda({G''}))$. On the other hand, $\Conf(\Lambda({G''}))$ is an infinite index subgroup of $\Conf(\Lambda({G'}))$. Since any geometrically finite Kleinian group with carpet limit set is co-Hopfian (see \cite[Theorem 1.1]{OP98} or \cite[Theorem 1]{PW99}, cf. \cite{DS08}), this is a contradiction.
 It follows that $\pi_1(M)$ and $\pi_1(M')$ are not quasi-isometric.
 \end{proof}

 \begin{proof}[Proof of Theorem~\ref{thm:nonqs}]
    Let $G$ be a geometrically finite Kleinian group that is uniformizing a pared 3-manifold $(N, P)$. Since $\Lambda(G)$ contains a carpet and $\Lambda(G) \neq \widehat\C$, there is a subgroup $H\le G$ with de-parabolized Kleinian group $\widehat H$ whose limit set is a Sierpi\'nski carpet by Proposition~\ref{prop:finitehierarchy}.
    Let $(M, P_M^a \sqcup P_M^t)$ be the corresponding pared 3-manifold for $H$, where $M \subseteq N$ and $P_M^a, P_M^t$ are the union of annuli and tori components of $P \cap M$.
    Then $(M, P_M^t)$ is the pared 3-manifold for $\widehat H$ and is acylindrical.
    We may assume that the JSJ decomposition of $(N, P)$ has a component $(M, P_M^a \sqcup P_M^t)$.

    Let $(M', P_{M'}^t)$ be the pared $3$-manifold given by Lemma~\ref{lem:nonQScarpet}. We construct a new manifold $(N', P')$ by 
    \begin{itemize}
        \item first gluing to $M'$ along $\partial_0 M'$ twice the components of $N\setminus\setminus M$ through the degree $2$ map $\partial_0 M'\to \partial_0 M$;
        \item then adding twice the paring $P_M^a$ to $\partial_0 M'$ through the degree $2$ map $\partial_0 M'\to \partial_0 M$.
    \end{itemize}
    Let $G'$ be the geometrically finite Kleinian group uniformizing $(N', P')$.

    The JSJ decompositions of $N$ and $N'$ yield a topological model of their limit sets as a tree of compacta where each compact piece corresponds to the limit set of a vertex of the JSJ \cite[Theorem 4.2]{HPW16}. By construction, we have just replaced copies of the limit set of the conjugacy class of the representation of $\pi_1(M)$ by the class of $\pi_1(M')$ that are homeomorphic. This implies that the pairs $(\Lambda(G), \mathfrak{P}(G))$ and $(\Lambda(G'), \mathfrak{P}(G'))$ are ambiently homeomorphic.
    If they were quasisymmetrically homeomorphic, then, after conjugation, the corresponding subpairs $(\Lambda(H), \mathfrak{P}(H))$ and $(\Lambda(H'), \mathfrak{P}(H'))$ associated to $M, M'$ would also be quasisymmetrically homeomorphic. 
    Note that $\Lambda(H)$ and $\Lambda(H')$ are homeomorphic to Schottky sets. Thus by \cite[Theorem 6.6]{haiss:lec}, $\pi_1(M)$ and $\pi_1(M')$ are commensurable. But this is a contradiction to Lemma~\ref{lem:nonQScarpet}.

    For the moreover part, we can iterate the construction of the new group in our proof and obtain infinitely many quasisymmetric classes of such limit sets.
\end{proof}

\subsubsection{Cut point obstruction for universality}
Recall that a rank-two cut point $a \in \Lambda(G)$ is {\em homogeneous} if all components $\mathcal{C}^*_{a} = \mathcal{C}_{a}\setminus \{a\}$ are homeomorphic, where $\mathcal{C}_{a} \subseteq \Lambda(G)$ is the connected component that contains $a$.
\begin{theorem}\label{thm:nonqscutpoint}
    Let $G$ be a geometrically finite Kleinian group. Suppose that $\Lambda(G)$ contains a non-homogeneous rank-two cut point. Then there exists a geometrically finite Kleinian group $G'$ so that
    \begin{itemize}
        \item $(\Lambda(G), \mathfrak{P}(G))$ is homeomorphic to $(\Lambda(G'), \mathfrak{P}(G'))$;
        \item $(\Lambda(G), \mathfrak{P}(G))$ is not quasisymmetrically homeomorphic to $(\Lambda(G'), \mathfrak{P}(G'))$.
    \end{itemize}

    Moreover, the homeomorphism class of $(\Lambda(G), \mathfrak{P}(G))$ contains infinitely many quasisymmetric classes in $\mathfrak{X}_{\gf}$.
\end{theorem}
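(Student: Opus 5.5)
The plan is to construct $G'$ by re-ordering, via a homeomorphism, the components of $\mathcal{C}^*_a$ at a non-homogeneous rank-two cut point $a$. We would then use the fact that quasisymmetries are coarsely order-respecting (Proposition~\ref{prop:inducedbilipschitz}) to show that no quasisymmetry exists.

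\textbf{Setup at the cut point.} Let $a$ be a non-homogeneous rank-two cut point with stabilizer $\Z^2$, corresponding to a rank-two (type IV) vertex $v$ of the JSJ tree of the component $\mathcal{C}_a$. The components of $\mathcal{C}^*_a$, identified with $\Z$ via the linear order, correspond to the edges at $v$. The $\Z^2$-action translates this order by a homomorphism $\Z^2\to\Z$, with image $k\Z$ for some $k\ge 1$. So the sequence of homeomorphism types of components is $k$-periodic, and non-homogeneity means at least two types $A\neq B$ occur in one period.

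\textbf{Construction of $G'$.} On the manifold side, the thickened torus $T^2\times I$ is attached to the adjacent pieces along annuli in $T^2\times\{1\}$. The cyclic order of these annuli around the torus determines the periodic pattern, for instance $(A,B,\dots)$. We build $(N',P')$ by regluing, passing first to a finite cover of the torus piece if needed so that the period becomes longer, say $(A,A,B,B,\dots)$ or more generally $(A^m,B^m,\dots)$. We then rearrange the annuli so that the new pattern has a different coarse structure; one option is a pattern whose period contains a different number of $B$'s. Since the pared manifold is still hyperbolizable, this yields a geometrically finite $G'$.

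\textbf{Homeomorphism of the limit sets.} A topological homeomorphism of the limit sets exists by the tree-of-compacta model \cite[Theorem 4.2]{HPW16}. At each rank-two cut point we only need a bijection of $\Z$ matching component types, and both sequences contain infinitely many $A$'s and $B$'s in both directions, so such a bijection exists. Choosing such bijections compatibly and equivariantly along the tree, using a back-and-forth argument, gives the homeomorphism, which is also type-preserving.

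\textbf{Non-quasisymmetry.} This is the main obstacle. A quasisymmetry $h$ must send $a$ to a rank-two cut point $a'$ in the corresponding position, by Theorem~\ref{thm:qsparab}, and must induce $h_*(n)=\pm n+c+O(1)$ on components while preserving types. If the density of $B$-components per unit length differs between the two patterns, a bounded perturbation of a translation cannot match them, which gives a contradiction. The subtle point is that $h$ could send $a$ to any rank-two cut point of $G'$, not only to the modified one, since these cut points have possibly different patterns. We therefore choose the modification so that the set of type-densities, or more precisely the coarse pattern invariants, at the rank-two cut points of $G$ differs from that of $G'$. To handle types defined only up to homeomorphism, we use the finitely many homeomorphism classes of components.

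\textbf{Infinitely many classes.} Using densities $m/(m+1)$ for varying $m$ produces infinitely many pairwise non-quasisymmetric pairs, which gives the moreover statement.
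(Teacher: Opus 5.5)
Your strategy is the paper's: change the periodic pattern of component types around the thickened torus $T$ carrying $a$, obtain a type-preserving homeomorphism from the tree-of-compacta model \cite{HPW16}, and rule out quasisymmetries with Proposition~\ref{prop:inducedbilipschitz} and a density count.

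\textbf{Gap in the construction.} The gap is in the construction of $(N',P')$, which is the step that carries the content of the proof. Passing to a cyclic cover of $T$ (with copies of the adjacent pieces glued on) and then permuting annuli leaves every type's count per period proportional to the original. So no density changes. The option you mention last, a different number of $B$'s per period, is the one that works. It requires \emph{inserting} extra copies of a piece, and you do not say how to do this.

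When each component $M_i$ of $N\setminus\!\setminus T$ meets $T$ in a single annulus, the paper glues $M_0,\dots,M_{k-1},M_0$ along $k+1$ annuli. This moves the density of the class of $L_0$ from $|\mathcal{I}|/k$ to $(|\mathcal{I}|+1)/(k+1)$.

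If $M_0$ meets $T$ in $l\ge 2$ annuli, a new copy of $M_0$ has $l$ annuli that must all be glued somewhere. Gluing them all back onto $T$ adds $l$ components at once and can leave every density unchanged. For instance, if $N\setminus\!\setminus T=M_0$ is connected, this just doubles every count per period. The paper avoids this by taking $l$ copies $N_1,\dots,N_l$ of $N$ and one extra copy of $M_0$, whose $j$-th annulus is glued to a new annulus on the copy $T^j$. Each $T^j$ then gains exactly one component.

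\textbf{Where $h(a)$ goes.} Your point that $h(a)$ need not be the modified cut point is correct, and more careful than the paper, which simply assumes $h(a)=a'$. The finite set of type-density vectors over all rank-two cut points is indeed forced to agree by $h$ and $h^{-1}$. However, ``choose the modification so that this set changes'' still needs an argument. Here is one.
\begin{itemize}
\item Insert $m$ copies instead of one. The new density is $(c+m)/(k+m)$, where $c<k$ (by non-homogeneity) counts the annuli per period in the inserted class. This is strictly increasing in $m$.
\item Only finitely many densities occur in $G$.
\item Density vectors at unmodified tori are unchanged. Homeomorphism types of components depend only on which types occur around each rank-two cut point, not on how often, so the same back-and-forth identifies new components with old ones.
\end{itemize}
Large $m$ then gives the theorem, and distinct large $m$ give the moreover part.

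\textbf{Minor slip.} The homeomorphism $\Lambda(G)\to\Lambda(G')$ cannot be chosen equivariantly: a type-preserving conjugacy would be quasisymmetric by Tukia. It is only compatible along the trees.
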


The limit sets in Theorem~\ref{thm:nonqscutpoint} are {\bf not} ambiently homeomorphic. Thus, the existence of non-homogeneous rank-two cut point only provides obstruction to quasisymmetric universality.
\begin{cor}\label{cor:nonqscutpoint}
    Let $G$ be a geometrically finite Kleinian group. 
    Suppose that $\Lambda(G)$ contains a non-homogeneous rank-two cut point. Then $(\Lambda(G), \mathfrak{P}(G))$ is not relatively quasisymmetrically universal in $\mathfrak{X}_{\gf}$.
\end{cor}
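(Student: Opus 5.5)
The statement to prove is Corollary~\ref{cor:nonqscutpoint}, and it follows directly from Theorem~\ref{thm:nonqscutpoint}. I would argue by contradiction from the definition of relative quasisymmetric universality in $\mathfrak{X}_{\gf}$. Universality would mean $[(\Lambda(G), \mathfrak{P}(G))]_\homeo = [(\Lambda(G), \mathfrak{P}(G))]_\qs$. That is, every pair in $\mathfrak{X}_{\gf}$ that is homeomorphic to $(\Lambda(G), \mathfrak{P}(G))$ through a type-preserving homeomorphism would also be type-preservingly quasisymmetric to it.

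Now assume $\Lambda(G)$ contains a non-homogeneous rank-two cut point. Apply Theorem~\ref{thm:nonqscutpoint} to obtain a geometrically finite Kleinian group $G'$ with the following two properties. First, $(\Lambda(G'), \mathfrak{P}(G'))$ is homeomorphic to $(\Lambda(G), \mathfrak{P}(G))$ as pairs, so it lies in $\mathfrak{X}_{\gf}$ and in the class $[(\Lambda(G), \mathfrak{P}(G))]_\homeo$. Second, the two pairs are not quasisymmetrically homeomorphic, so $(\Lambda(G'), \mathfrak{P}(G'))$ does not lie in $[(\Lambda(G), \mathfrak{P}(G))]_\qs$. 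The two classes therefore differ, and universality fails. The moreover part of the theorem strengthens this: the homeomorphism class contains infinitely many quasisymmetric classes.

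The only point that needs checking is that "homeomorphic" and "quasisymmetric" in Theorem~\ref{thm:nonqscutpoint} are meant in the pair sense, i.e.\ type-preserving. This matches how $\mathfrak{X}_{\gf}$ and the relative classes are defined, so no real obstacle arises. Nothing about ambient homeomorphisms is claimed; as the remark after the theorem notes, the limit sets need not be ambiently homeomorphic, so the argument yields no statement about relative quasiconformal universality.
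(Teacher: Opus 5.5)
Your proof is correct and takes the same route as the paper, which states this corollary as an immediate consequence of Theorem~\ref{thm:nonqscutpoint}. The group $G'$ given by that theorem yields a pair in $[(\Lambda(G), \mathfrak{P}(G))]_\homeo$ that is not in $[(\Lambda(G), \mathfrak{P}(G))]_\qs$, so the two classes differ and universality fails.
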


The lack of quasisymmetric universality comes from coarsely incompatible order structure at rank-two cut points.

Let $G$ be a geometrically finite Kleinian group and assume that $a\in\La(G)$ is a rank-two parabolic fixed point that is also a cut point of the component $\mathcal{C}_{a}\subseteq \La(G)$ it belongs to. Each component of $\mathcal{C}^*_{a} = \mathcal{C}_{a}\setminus \{a\}$ is two-ended and each end accumulates at $a$. 
Moreover, if $L\in \pi_0(\cC^*_a)$ then two components of $\C\setminus L$ separate $\cC^*_a\setminus\{L\}$ into two infinite collections. Thus, we may label
$\cC^*_a =\{L_n\}_n$ so that, for each $n\in\Z$ and all $k,m >0$, $L_n$ separates $L_{n-k}$ from $L_{n+m}$; see Figure~\ref{fig:ranktwoSch}, right.

\begin{prop}\label{prop:inducedbilipschitz}
    Let $h: (\Lambda(G), \mathfrak{P}(G)) \rightarrow (\Lambda(G'), \mathfrak{P}(G'))$ be quasisymmetric. Then for each rank-two cut point $a$, $h$ is coarsely order-respecting at $a$, i.e., the induced map $h_*: \pi_0(\mathcal{C}^*_{a}) \cong \Z \rightarrow \pi_0(\mathcal{C}^*_{h(a)}) \cong \Z$ has the form $h_*(n) = \epsilon n + c + O(1)$, where $\epsilon = \pm 1$.
\end{prop}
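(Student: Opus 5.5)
Normalize so that $a=\infty$ and $h(a)=\infty$ by M\"obius maps; then $h$ is quasisymmetric on $\La(G)\setminus\{\infty\}$ in the Euclidean metric, and by Theorem~\ref{thm:qsparab} the image point is again rank-two parabolic. The stabilizer of $\infty$ is a lattice $\Gamma\cong\Z^2$ of translations, and $\cC_a\setminus\{a\}$ is $\Gamma$-invariant. Since the components $L_n$ are permuted by $\Gamma$ and are linearly ordered, there is a primitive translation $\tau\in\Gamma$, say $\tau(z)=z+\omega$, with $\tau(L_n)=L_{n+k}$ for a fixed period $k\ge 1$ (finitely many $\Gamma$-orbits of components by geometric finiteness). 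The components are then strips separated by the leaves of the double Hawaiian earring. Each $L_n$ lies within bounded distance $R$ of the line $\R\cdot i\omega$ translated by $n|\omega|/k$, up to bounded error, and each $L_n$ contains points at distance at most $R$ from any point of that line. The same holds at $h(a)$ with constants $\omega',k',R'$.

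Introduce the index function $\iota(z)=n$ for $z\in L_n$. We will show $|\iota(z)-\iota(w)|$ is comparable to $|z-w|$ up to additive constants when $z,w$ are chosen appropriately. Concretely, define a coarse map $\Z\to\Z$ by picking base points $z_n\in L_n$ on a fixed horizontal line (perpendicular to the strips). Then $|z_n-z_m|\asymp |n-m|$ up to an additive constant. The key step is to show that $h$ restricted to $\{z_n\}$ is a quasi-isometry onto its image in the Euclidean metric at the scale of the lattice.

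Here use that $h$, extended as a quasisymmetry fixing $\infty$, is an $\eta$-quasisymmetry of unbounded sets. For such maps, the ratio $|h(z)-h(w)|/|h(z)-h(u)|$ is controlled. Since $\Gamma$ acts cocompactly on $\La(G)\setminus\{\infty\}$ and $\Gamma'$ likewise on the image, I would conjugate $h$ by translations. Namely, $g_n=\tau'^{-m}\circ h\circ\tau^{n}$ for suitable $m$, chosen to put $h(z_n)$ back in a compact fundamental region. Because these are precompositions and postcompositions by isometries, the family $\{g_n\}$ is uniformly $\eta$-quasisymmetric and normalized at two points whose distance is bounded below, by separation of components. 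This gives equicontinuity, hence bounded distortion of the unit scale: $|h(z)-h(w)|\asymp 1$ whenever $|z-w|\asymp 1$.

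This is the main obstacle. One must rule out the scale of $h$ drifting, e.g.\ $|h(z_{n+1})-h(z_n)|\to\infty$ or $\to0$. I would handle it using that every point of $\La(G)\setminus\{\infty\}$ is within $R$ of both neighbouring components and of a $\Gamma$-translate forming a coarse net. Quasisymmetry then sends $R$-nets to uniformly comparable nets, since distinct components in the image are at least a definite distance apart while consecutive image components lie within bounded distance of each other. Thus $h$ restricted to the net $\Gamma\cdot z_0$ is a $(K,C)$-quasi-isometry; equivalently, a quasisymmetry of $\C$ fixing $\infty$ that coarsely preserves two lattices is coarsely bi-Lipschitz. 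Given this, $n\mapsto h_*(n)$ is coarsely bi-Lipschitz on $\Z$ and order-preserving or reversing, since $h$ maps separating components to separating components. The intermediate-value property from separation forces the form $h_*(n)=\epsilon\lambda n+c+O(1)$, with $\lambda>0$ coming from $\Gamma$-equivariance: comparing $h\circ\tau^N$ with $\tau'^{M}\circ h$ along a subsequence gives a linear growth rate.

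Finally, absorb $\lambda$ into the identification $\pi_0\cong\Z$. If the stated form requires slope exactly $\pm1$, I would argue that $\lambda$ must be $1$ after normalizing both indexings by their periods $k,k'$.
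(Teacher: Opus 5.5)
There is a genuine gap at the decisive last step, and the middle step is not actually proved.

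\textbf{The final step.} You derive $h_*(n)=\epsilon n+c+O(1)$ from three things you do not have.
\begin{enumerate}
\item[(i)] \emph{Monotonicity of $h_*$.} The linear order on $\pi_0(\mathcal{C}^*_a)$ comes from the embedding in $\widehat\C$, but $h$ is only a homeomorphism of limit sets. So ``$h$ maps separating components to separating components'' is not available. Topologically, $\mathcal{C}_a$ is just a wedge of the $L_n$ at $a$, and homeomorphisms can permute the $L_n$; the proof of Theorem~\ref{thm:nonqscutpoint} exploits exactly this. Quasisymmetry bounds the displacement but does not force order preservation.
\item[(ii)] \emph{A growth rate from $\Gamma$-equivariance.} $h$ is not equivariant. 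Comparing $h\circ\tau^N$ with $\tau'^M\circ h$ therefore gives nothing, and a coarsely bi-Lipschitz map of $\Z$ need not have an asymptotic slope, let alone an $O(1)$ error.
\item[(iii)] \emph{Absorbing $\lambda$.} The identification $\pi_0(\mathcal{C}^*_a)\cong\Z$ is fixed up to translation and reflection (consecutive components get consecutive integers), so $\lambda$ cannot be absorbed. Normalizing by the periods $k,k'$ is also wrong: the slope is exactly $\pm1$ whatever $k,k'$ are. The density argument in Theorem~\ref{thm:nonqscutpoint}, with periods $k$ and $k+1$, relies on precisely this.
\end{enumerate}
The missing idea is \emph{bijectivity}. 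Since $h$ is a homeomorphism sending components onto components, $h_*$ is a bijection of $\Z$, and a bi-Lipschitz bijection of $\Z$ lies at bounded distance from $n\mapsto\pm n$ \cite{BS15}. Elementarily: a bi-Lipschitz map of $\Z$ is coarsely monotone, say $n<m\Rightarrow h_*(n)\le h_*(m)+C$. Every integer strictly between $h_*(0)+C$ and $h_*(N)-C$ then has its unique preimage in $\{0,\dots,N\}$. Counting in both directions gives $|h_*(N)-h_*(0)-N|\le 2C+2$.

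\textbf{The Lipschitz step.} All you need is that $|h_*(n+1)-h_*(n)|$ is uniformly bounded, and the same for $h_*^{-1}$. The stronger claim that $h$ is coarsely bi-Lipschitz on the net $\Gamma z_0$ is unnecessary, and your justification of it does not work.
\begin{itemize}
\item Equicontinuity of the family $g_n$ presupposes the normalization $|h(z_{n+1})-h(z_n)|\asymp 1$, which is what you are trying to prove.
\item The separation facts you cite concern components. But $\Gamma z_0$ has infinitely many points in each component, so these facts say nothing about $h$ along a component.
\end{itemize}
The paper bounds the index jump by counting. Take $x_n\in L_n$ and $x_{n+1}\in L_{n+1}$ with $|x_n-x_{n+1}|\le M$. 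The segment $[h(x_n),h(x_{n+1})]$ meets every $L'_s$ with $s$ between $h_*(n)$ and $h_*(n+1)$. Each crossing point $y_s$ is closer to $h(x_n)$ than $h(x_{n+1})$ is, so quasisymmetry of $h^{-1}$ gives $|x_n-h^{-1}(y_s)|\le\eta(1)M$. By periodicity, only boundedly many components come that close to $x_n$.

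Your ingredients can be made to work directly. Pick $q\in L'_{h_*(n)+1}$ at bounded distance from $h(x_n)$. Then $u=h^{-1}(q)$ lies in another component, so $|x_n-u|\ge\delta$. Quasisymmetry gives $|h(x_n)-h(x_{n+1})|\le\eta(M/\delta)\,|h(x_n)-q|$, which bounds the index jump. But this must actually be argued, and bijectivity must still be used at the end.
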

\begin{proof}
    First, we show that $h_*$ is bi-Lipschitz.
    Let us normalize so that $a = h(a) = \infty$. Suppose that both $h, h^{-1}$ are $\eta$-quasisymmetric. Let us fix the notation and label the components of $\Lambda(G) \setminus \{\infty\}$ and $\Lambda(G') \setminus \{\infty\}$ by $L_n$ and $L'_n$, $n \in \Z$, as above. We have $h(L_n) = L'_{h_*(n)}$.
    By $G$-invariance, there exists $M$ so that $d(L_n, L_{n+1}) \leq M$ for all $n$.
    Pick two points $x_n \in L_n, x_{n+1} \in L_{n+1}$ with $d(x_n, x_{n+1}) = d(L_n,L_{n+1})$,
and let $\gamma$ be the line segment connecting $h(x_n), h(x_{n+1})$. Then $\gamma$ intersects $L_s'$ for each $s$ between $h_*(n)$ and $h_*(n+1)$, pick one such intersection point and denote it by $y_s$. Then $d(h(x_n), y_s) < d(h(x_n), h(x_{n+1}))$.
    Thus $d(x_n, h^{-1}(y_s)) \leq \eta(1) d(x_n, x_{n+1}) \le \eta(1)M$. Since there are only finitely many numbers $k$ with $d(x_n, L_k) \leq \eta(1)M$, we conclude that $|h_*(n)-h_*(n+1)|$ is uniformly bounded. Thus $h_*$ is Lipschitz. Similarly, $h_*^{-1}$ is Lipschitz. Thus $h_*$ is bi-Lipschitz.
    
    The lemma now follows from the classification of bi-Lipschitz bijections on $\Z$, see \cite[Theorem 1]{BS15}.
\end{proof}

\begin{proof}[Proof of Theorem~\ref{thm:nonqscutpoint}]
    Let $(N, P)$ be the corresponding pared 3-manifold for $G$, and let $T$ be the thickened torus in the JSJ decomposition for the non-homogeneous rank-two cut point $a$.
    We first treat the case where $(N, P)$ is obtained by gluing $k$ pared $3$-manifolds $(M_0, P_0),..., (M_{k-1}, P_{k-1})$ to $T$ along $k$ cyclically ordered parallel annuli on the outer boundary of $T$, i.e., each piece is attached to $T$ along a single annulus. 
    Then the components $\mathcal{C}^*_{a}$ are linearly ordered by $$... L_0, L_1,..., L_{k-1}, L_0, L_1,..., L_{k-1},...,$$ where $L_i$ is the component associated to $M_i$. Since we assume $a$ is non-homogeneous, there is a nontrivial partition $\{0,1,..., k-1\} = \mathcal{I} \sqcup \mathcal{J}$ so that $L_0$ is homeomorphic to $L_i, i \in \mathcal{I}$ but not homeomorphic to $L_j, j \in \mathcal{J}$.

    Let $(N', P')$ be the pared manifold obtained by gluing 
    $$
    (M_0, P_0), (M_1, P_1), ..., (M_{k-1}, P_{k-1}), (M_0, P_0)
    $$ 
    along $k+1$ cyclically ordered parallel annuli on the outer boundary of $T$.
    Let $G'$ be the corresponding geometrically finite Kleinian group. Since the JSJ decomposition yields a topological model of the limit sets as a tree of compacta \cite[Theorem 4.2]{HPW16}, one sees that $(\Lambda(G), \mathfrak{P}(G))$ is homeomorphic to $(\Lambda(G'), \mathfrak{P}(G'))$.
    Suppose they were quasisymmetrically homeomorphic. We may assume that the quasisymmetric map $h$ sends the rank-two cut point $a$ to $a'$. Then the quasisymmetric homeomorphism is coarsely order-respecting at $a$ by Proposition~\ref{prop:inducedbilipschitz}. Since $h$ is a homeomorphism, by construction, 
    $$
    h_*(\{n: n \mod k = i, i \in \mathcal{I}\}) = \{n: n \mod k+1 = i, i \in \mathcal{I}\cup \{k\}\}.
    $$
    This is a contradiction, as $\{n: n \mod k = i, i \in \mathcal{I}\}$ has strictly lower density $0 < |\mathcal{I}|/k < (|\mathcal{I}|+1)/(k+1) < 1$, and the theorem follows.

    The case in which a piece is attached to $T$ along several annuli can be handled by applying the following modification. Suppose that $(M_0,P_0)$ is glued to $T$ along $l$ distinct annular components. We consider $l$ copies $(N_1,P_1),\ldots , (N_l,P_l)$ of $(N,P)$ and mark the 
     $l$ copies $T^1,..., T^l$ of $T$. Let $(M_0',P_0')$ be a further copy of $(M_0,P_0)$ with its $l$ gluing annuli $A_1',\ldots, A_l'$. 
     We now construct a new manifold $(N',P')$ by gluing $(M_0',P_0')$ to $(N_j,P_j)$ by identifying $A_j'$ to a new disjoint and parallel annulus in $T^j$, in such a way that each resulting torus $T^j$ contains now $k+1$ gluing parallel annuli.  The theorem now follows by the same argument as in the previous case. 

    For the moreover part, we can iterate the construction of the new group in our proof and obtain infinitely many quasisymmetric classes of such limit sets.
\end{proof}

\section{Virtual topological rigidity}\label{sec:vtr}
In this section, we prove that a geometrically finite Kleinian group with carpet-free Schottky limit set is virtually topologically rigid.

Let $G$ be a geometrically finite Kleinian group. 
Recall that a subgroup $H \le G$ is a carpet-quotient subgroup if its de-parabolization has carpet limit set, and a subset $A \subseteq \Lambda(G)$ is a dynamical carpet-quotient if $A = \Lambda(H)$ for some carpet-quotient subgroup $H$.
Let 
$$
\HQ^+(\Lambda(G)) \le \Homeo^+(\Lambda(G))
$$ 
be the subgroup of homeomorphisms $f: \Lambda(G) \rightarrow \Lambda(G)$ that are quasisymmetric on each dynamical carpet-quotient.
Note that if $\Lambda(G)$ is carpet-free, then $\HQ^+(\Lambda(G)) = \Homeo^+(\Lambda(G))$. Also note that if $\Lambda(G)$ is a Schottky set, then any homeomorphism on $\Lambda(G)$ extends to an ambient homeomorphism.

\begin{theorem}\label{thm:carpetfreeSchottkyimpliesrigid}
    Let $G$ be a geometrically finite Kleinian group with $\Lambda(G)$ homeomorphic to a Schottky set.
    Then $G$ has finite index in $\HQ^+(\Lambda(G))$.
    
    In particular, if $G$ is a geometrically finite Kleinian group with $\Lambda(G)$ homeomorphic to a carpet-free Schottky set, then $G$ is virtually topologically rigid.
\end{theorem}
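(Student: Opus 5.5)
By Proposition~\ref{prop:acylindricalSchottky} we may first conjugate $G$ quasiconformally so that $\Lambda(G)$ is an actual Schottky set. We may also pass to a torsion-free finite-index subgroup. This does not affect the statement, since finite index is preserved and $\HQ^+$ is conjugation-invariant up to quasisymmetry. By Proposition~\ref{prop:homeomisom}, an element $f\in\HQ^+(\Lambda(G))$ lifts uniquely to an $\mathcal{L}$-preserving homeomorphism $\hat f$ of $\Lambda(\widehat G)$, where $\widehat G$ is the de-parabolization and $\mathcal{L}$ is the acylindrical quotient lamination.

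The goal is to show that $\hat f$ is \emph{virtually equivariant}: there is a finite-index subgroup $G_0\le \widehat G$ and an automorphism $\phi$ such that $\hat f g=\phi(g)\hat f$ for all $g\in G_0$. The number of such $f$ up to post-composition by $G$ should then be bounded by a constant depending only on $G$. The conclusion would follow from three facts. First, $\hat f$ conjugating $G_0$ to $\phi(G_0)$ forces it to be quasisymmetric, by Tukia's theorem \cite[Theorem 3.3]{tukia:ihes85}. Second, quasisymmetries of Schottky sets are M\"obius by \cite{BKM09}. Third, $\Conf(\Lambda(G))$ is discrete and contains $G$ with finite index, because $\Lambda(G)$ is a Schottky set with at least three circles.

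To get virtual equivariance, I would induct on the hierarchy height $\height(\widehat G)$ from \S\ref{subsec:sa}.

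\emph{Topological invariance of the splitting.} The incompressible decomposition and the JSJ tree of $\widehat G$ are read off from the topology of $\Lambda(\widehat G)$. Components are determined by connectedness, and the JSJ tree is determined by cut points and inseparable exact cut pairs \cite{HH23}. Hence $\hat f$ induces an automorphism of the splitting tree $\mathcal{T}$, permuting vertex limit sets $\Lambda_v$ and respecting vertex types.

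\emph{Terminal pieces.} These are elementary, Fuchsian, or carpet pieces. On carpet pieces, $\hat f$ is quasisymmetric by hypothesis (they are dynamical carpet-quotients), hence M\"obius by \cite{BKM09} and virtually equivariant. For rigid vertices, $\Lambda_v/\!\sim_v$ is a Schottky set (Proposition~\ref{prop:rigidvertexSchottky}) and $\hat f$ preserves the induced lamination, so the induction hypothesis applies.

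\emph{MHF and two-ended pieces, where planarity is used.} On an MHF vertex, $\Lambda_v/\!\sim_v$ is a circle and a priori $\hat f$ could be an arbitrary circle homeomorphism. The constraint comes from the acylindrical lamination $\mathcal{L}$. Its leaves cross the gluing geodesics of $\mathcal{L}_{JSJ}$, and acylindricity of $(N,P_a\sqcup P_t)$ forbids $\mathcal{L}$ from being disjoint from an essential annulus. So every JSJ cylinder and every MHF surface is ``pinned'': leaves of $\mathcal{L}$ and their endpoints give, in each component $\Delta$ of $\Omega(\widehat G)$, a combinatorial pattern of leaves intersecting the separating circles and geodesics. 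I would argue that planarity makes this intersection pattern a locally finite, $\widehat G$-invariant combinatorial structure whose automorphisms preserving $\mathcal{L}$ form a group containing $\widehat G$ with finite index, similar to a Fuchsian group acting on the lifts of a filling multicurve. In particular, twisting along cut pairs and the uncountable freedom on MHF circles (cf. Theorem~\ref{thm:uncountablehom}) are both killed by the requirement to preserve $\mathcal{L}$.

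\emph{Assembly.} I would then assemble the local equivariance along $\mathcal{T}$ and use that $G$ acts cofinitely on $\mathcal{T}$. Together with the finite-index property of each vertex stabilizer in its local automorphism group, this bounds the index of $G$ in $\HQ^+(\Lambda(G))$.

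The main obstacle is the MHF/cylinder step: showing rigorously that an $\mathcal{L}$-preserving homeomorphism of an MHF necklace, which carries no metric information, is determined up to finitely many choices by the crossing pattern with $\mathcal{L}$. I would first try to prove that the leaves of $\mathcal{L}$ meeting $U_v$ project to a filling collection of arcs on the $I$-bundle base surface. An automorphism of a filling arc system on a hyperbolic surface is then virtually induced by a mapping class, and is therefore equivariant.
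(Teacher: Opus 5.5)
Your opening reduction is sound: if every $f\in\HQ^+(\Lambda(G))$ were virtually equivariant, then Tukia's theorem, \cite{BKM09} and the discreteness of $\Conf(\Lambda(G))$ would finish the proof. But that claim is essentially the whole theorem, and your proposal never establishes it.

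\textbf{The assembly step.} The step as you describe it cannot work. Local virtual equivariance on each vertex piece, together with a cofinite action on $\mathcal{T}$, does not bound the index. The twists $h_\sigma$ from the proof of Theorem~\ref{thm:uncountablehom}, built on $\Lambda(\widehat G)$ from its cut pairs, act on every vertex limit set by an element of $\widehat G$, yet they form an uncountable family. What you must control is the kernel of restriction to a piece. That requires a global statement you never formulate: an element of $\HQ^+(\Lambda(\widehat G),\mathcal{L})$ that fixes the ends of two leaves of $\mathcal{L}$ adjacent along a JSJ geodesic $\gamma$ is the identity (Lemma~\ref{lem:stabbordering}). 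You remark that preserving $\mathcal{L}$ kills twists, but your assembly never uses it. The paper uses it as follows:
\begin{itemize}
\item the linear order on $\gamma\cap|\mathcal{L}|$ transfers fixed data across each edge (Lemma~\ref{lem:infinitegammaid}); this is exactly where the twists die;
\item Theorem~\ref{thm:toprigidityconvexcocompact} is applied at MHF vertices, and the induction hypothesis plus Corollary~\ref{cor:toprig:finitext} at rigid vertices;
\item the fixed data is then propagated through all of $\mathcal{T}$;
\item finite index follows from Proposition~\ref{prop:finiteindex}, applied to the cofinite action on such pairs.
\end{itemize}

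\textbf{The MHF step.} This is only asserted, and your suggested route is circular. The map $\hat f$ lives on the circle and does not descend to the base surface, so ``virtually induced by a mapping class'' is the conclusion, not a tool. Moreover, leaves of $\mathcal{L}^+$ and $\mathcal{L}^-$ lie on opposite sides of the circle and never meet, so there is no intersection pattern to exploit. What works instead has three ingredients:
\begin{itemize}
\item the linking relation on endpoint or envelope pairs (Definition~\ref{defn:link}), which cyclic-order-preserving maps respect;
\item the linear order on leaves linked to a fixed leaf, so that fixing one linked pair fixes all of them;
\item connectivity of the linking graph (Lemma~\ref{lem:finitepath}). This is where acylindricity enters: the two multicurves jointly cut $S$ into compact polygons, although neither need fill alone.
\end{itemize}

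\textbf{The disconnected case.} This case is missing, and it rests on a false premise. The incompressible decomposition is not determined by the topology of $\Lambda(\widehat G)$; only its non-elementary and parabolic vertices are canonical. So $\hat f$ need not preserve the circle lamination $\mathcal{D}$, nor act on the corresponding tree. In addition, the leaves of $\mathcal{L}$ meeting $K_v$ include arcs joining curves of $\mathcal{D}_v$, and possibly no leaf has both endpoints in $\Lambda_v$. Hence $\mathcal{L}$ induces no Schottky quotient of a component $\Lambda_v$ on which to run your induction. Before the induction hypothesis can be applied, the paper must:
\begin{itemize}
\item build a compatible core decomposition (Proposition~\ref{prop:admiss});
\item replace $\mathcal{D}$ by the invariant families $\widehat{\mathcal{D}}(k)$ of induced cycles in the nerve;
\item prove two-point rigidity for ivy buds (Corollary~\ref{cor:twopointrigidity});
\item construct an $H_v$-invariant acylindrical lamination on each non-elementary component (Proposition~\ref{prop:schottkysettypeIII}).
\end{itemize}

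\textbf{Carpet and rigid pieces.} The hypothesis concerns $f$ on $\Lambda(G)$, not $\hat f$ on pieces of $\Lambda(\widehat G)$. Transferring it needs \cite[Theorem 6.6]{haiss:lec} and Tukia's theorem, which you also skip.
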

The proof is by induction on the height of the hierarchical decomposition of the group discussed in \S~\ref{subsec:sa}. We start by presenting a simple mechanism to show finite index in \S~\ref{subsec:fiandfe}. We prove two key rigidity results in \S~\ref{sec:ccfrigidity} and \S~\ref{sec:bcrigidity}, which serve as essential inputs for the induction steps associated to the JSJ decomposition and the incompressible decomposition, respectively. The final induction argument is presented in \S~\ref{sec:proofThmRigid}.

\subsection{Finite index and finite extensions}\label{subsec:fiandfe}
We first record the following proposition, which gives a simple mechanism to show finite index of $G$ in $H$.
\begin{prop}\label{prop:finiteindex}
Let $G \le H$ be two groups, and let $X$ be a set  so that
\begin{enumerate}
    \item\label{prop:finiteindex:eqn1} both $G$ and $H$ act on $X$;
    \item\label{prop:finiteindex:eqn2} $X/G$ is finite;
    \item\label{prop:finiteindex:eqn3} for each $x\in X$, $\stab_G (x)$ has finite index in  $\stab_{H} (x)$.
\end{enumerate}
Then $G$ has finite index in $H$.
\end{prop}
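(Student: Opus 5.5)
This is an orbit–stabilizer count. Fix a basepoint $x_0 \in X$. The plan is to show that the left cosets of $G$ in $H$ map finite-to-one into a finite set built from the orbits and the stabilizer indices.

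Let $x_1,\dots,x_k$ be representatives of the finitely many $G$-orbits on $X$, using hypothesis \eqref{prop:finiteindex:eqn2}. Consider the map
$$
\Phi: H \to X/G, \qquad h \mapsto G\cdot (h^{-1}x_0).
$$
It is constant on left cosets $gh$ with $g \in G$, because $(gh)^{-1}x_0 = h^{-1}g^{-1}x_0$. This is not in general $G$-equivalent to $h^{-1}x_0$, so I will use right cosets instead.

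Define $\Psi: H \to X/G$ by $\Psi(h) = G\cdot h x_0$. If $h' = h s$ with $s \in \stab_H(x_0)$, then $h'x_0 = hx_0$, so $\Psi$ factors through $H/\stab_H(x_0)$. We then need to control the fibers, and it is cleaner to argue directly by orbits.

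The direct argument goes as follows. Partition $H x_0 \subseteq X$ into $G$-orbits; by \eqref{prop:finiteindex:eqn2} there are at most $k$ of them, say $Gy_1,\dots,Gy_m$ with $y_j = h_j x_0$. For any $h \in H$ we have $hx_0 = g h_j x_0$ for some $j$ and some $g \in G$. Then $h_j^{-1}g^{-1}h \in \stab_H(x_0)$, so
$$
h \in G\, h_j\, \stab_H(x_0).
$$
Write $\stab_H(x_0) = \bigcup_{i=1}^{n} \stab_G(x_0)\, s_i$, a finite union by \eqref{prop:finiteindex:eqn3}. Then $h \in G h_j \stab_G(x_0) s_i$ for some $i$.

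The key point is that $h_j \stab_G(x_0) h_j^{-1} \subseteq \stab_H(h_j x_0)$, but it need not lie in $G$. To handle this, rewrite everything at the points $y_j$ instead. We have $\stab_H(y_j) = h_j \stab_H(x_0) h_j^{-1}$. By \eqref{prop:finiteindex:eqn3} applied at $y_j$, this is a finite union $\bigcup_{i} \stab_G(y_j)\, t_{j,i}$. Then
$$
G h_j \stab_H(x_0) = G\, \stab_H(y_j)\, h_j = \bigcup_i G\, t_{j,i}\, h_j,
$$
where the last equality uses $\stab_G(y_j) \subseteq G$. Hence $H$ is covered by the finitely many right cosets $G\, t_{j,i} h_j$, and $[H:G] \le \sum_j [\stab_H(y_j):\stab_G(y_j)] < \infty$.

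No step is a real obstacle. The only care needed is to apply hypothesis \eqref{prop:finiteindex:eqn3} at the orbit representatives $y_j$ rather than at $x_0$, so that the $G$-stabilizer can be absorbed into $G$. One should also note that $X$ may be assumed nonempty, since otherwise the statement is degenerate.
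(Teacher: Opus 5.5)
Your final argument is correct and is essentially the paper's proof. Both fix $x_0$, pick $h_j\in H$ carrying $x_0$ to representatives of the $G$-orbits that meet $Hx_0$, and apply hypothesis (3) at those representatives (not at $x_0$) to obtain $H=\bigcup_j\bigcup_{t\in T_j} G\,t\,h_j$, where $T_j$ is a transversal of $\stab_G(h_jx_0)$ in $\stab_H(h_jx_0)$; the abandoned attempts with $\Phi$ and $\Psi$ can simply be deleted. One small correction: $X\neq\emptyset$ is not a harmless normalization but a genuinely needed implicit hypothesis (for example $G=\{1\}$, $H=\Z$, $X=\emptyset$ satisfies (1)--(3)), and the paper's proof likewise relies on it when choosing $x_0$.
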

\begin{proof}
By our assumption \eqref{prop:finiteindex:eqn2}, let $\{x_j,\ 0\le j \le k\}$ be a finite set of representatives of $X/G$.
For each $j\in \{0, ..., k\}$, let $T_j$ be a transversal of $\stab_G( x_j) \backslash \stab_H (x_j)$.
Note that $|T_j|$ is finite by our assumption \eqref{prop:finiteindex:eqn3}.

Let $H_j=\{h\in H,\ h(x_0)=x_j\}$, and let $J=\{j\in \{0,\ldots, k\},\ H_j\ne \emty\}$, i.e., $J$ contains the indices so that $x_j$ is in the orbit of $x_0$ under $H$.
For each $j\in J$, let us fix an element $h_j\in H_j$.

Let $h\in H$. Since $H$ acts on $X$ by our assumption \eqref{prop:finiteindex:eqn1}, $h(x_0) \in X$. Thus, there is some $j\in J$ such that $h(x_0)\in Gx_j$. Therefore, we may find $g_1\in G$ such that $g_1h\in H_j$.
Therefore, $g_1h h_j^{-1} \in \stab_H (x_j)$. Thus, by assumption, there is some $t\in T_j$ and $g_0\in \stab_G(x_j)$ such that
$g_1hh_j^{-1}= g_0 t$ and $h= g_1^{-1}g_0 t h_j$.
It follows that we may find $g\in G$ and $t\in T_j$ such that $h= g th_j$. Therefore, $$H = \bigcup_{j\in J}  \left[ \bigcup_{t\in T_j} G th_j \right]$$
so that $G$ has finite index in $H$. 
\end{proof} 

The following proposition shows that finite extensions of Kleinian groups are uniformly quasi-M\"obius,  i.e., the metric cross-ratios of four points  are controlled \cite{vam}.
\begin{prop}\label{finitextension} Let $G$ be a geometrically finite Kleinian group. Let $H$ be a subgroup of homeomorphism group of $\Lambda(G)$ that contains $G$ as a finite index subgroup.
Then $H$ is a group of uniform quasi-M\"obius maps. \end{prop}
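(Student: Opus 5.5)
The plan is to write $H=\bigcup_{i=1}^k G t_i$ with finitely many coset representatives $t_1,\dots,t_k\in H$. Since each $t_i$ is a homeomorphism of the compact metric space $\Lambda(G)$, it is uniformly continuous together with its inverse. Each element of $G$ is a M\"obius transformation and hence preserves cross-ratios exactly. So any $h=gt_i\in H$ distorts cross-ratios exactly as $t_i$ does. It therefore suffices to show that each single homeomorphism $t\in H$ is quasi-M\"obius on $\Lambda(G)$: there should be a homeomorphism $\theta$ with $[t(x),t(y),t(z),t(w)]\le\theta([x,y,z,w])$. One then takes the worst of the finitely many $\theta$'s. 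Here we use that $G$ is normal-ish only through the coset decomposition: elements $t_ig$ are also in $\bigcup_j Gt_j$, so the right-coset form covers all of $H$.

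To show that a single $t\in H$ is quasi-M\"obius, I would use the dynamics, via a conformal elevator argument. Replacing $G$ by the finite-index normal core $G_0=\bigcap_{h\in H}hGh^{-1}$, we may assume $t$ normalizes $G_0$. Then $t$ conjugates $G_0$ to itself via the automorphism $\phi(g)=tgt^{-1}$: we have $t\circ g=\phi(g)\circ t$ on $\Lambda(G)$. Thus $t$ is a topological conjugacy between the actions of $G_0$ and of $\phi(G_0)=G_0$ on the limit set. By Tukia's theorem \cite[Theorem 3.3]{tukia:ihes85}, quoted in the introduction, any topological conjugacy between geometrically finite limit sets that is type-preserving is quasisymmetric. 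Type-preservation is automatic here, because an equivariant homeomorphism sends fixed points of parabolic elements to fixed points of their images under $\phi$. An automorphism preserves parabolic-ness for geometrically finite groups, since parabolic subgroups are exactly the maximal virtually abelian subgroups with a single fixed point in the limit set; for rank one one may need the peripheral structure, but the fixed point is a single point, which is preserved. Hence each $t$ is quasisymmetric, and on a compact space a quasisymmetry is quasi-M\"obius (\cite{vam}).

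The main obstacle is to make the constants uniform, which needs care. I would avoid invoking Tukia's theorem abstractly and instead argue directly. Given four points, we can use an element $g\in G_0$, via the conformal elevator at conical points or the cusp stabilizers near parabolic points, to map the configuration to one at a definite scale. Post-composing with $\phi(g)$ and using the equivariance $t=\phi(g)\,t\,g^{-1}$, this reduces the cross-ratio distortion of $t$ at small scales to that at definite scale. At definite scale the distortion is controlled by the uniform continuity of $t$ and $t^{-1}$ on a compact set. Because only finitely many $t_i$ occur, the resulting control function is uniform over $H$. Near parabolic points the cusp geometry must be handled, and I expect this to be the delicate step; there I would rely on the parabolic estimates used in the proof of Theorem~\ref{thm:qsparab}.
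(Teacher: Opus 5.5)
Your argument in the first two paragraphs is correct and complete, but it follows a different route from the paper. The paper first shows that $H$ is itself a geometrically finite convergence group with $\Lambda(H)=\Lambda(G)$, hence relatively hyperbolic. It then builds a Groves--Manning cusped space $X$ for $H$ and uses the $G$-equivariant quasi-isometry between $X$ and the convex hull of $\Lambda(G)$ from \cite[Prop.\,3.21]{haiss:lec}. Finally it transports to $\Lambda(G)$ the action of $H$ on $\partial X$, which is uniformly quasi-M\"obius because $H$ acts on $X$ by isometries, so uniformity is automatic.

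You instead use the coset decomposition $H=\bigcup_{i=1}^k Gt_i$ and the exact invariance of chordal cross-ratios under M\"obius maps to reduce to finitely many single maps $t_i$. You then observe that each $t\in H$ normalizes the finite-index normal core $G_0$, which is a geometrically finite Kleinian group with $\Lambda(G_0)=\Lambda(G)$ acting faithfully on it. So $t$ is equivariant for the automorphism $g\mapsto tgt^{-1}$ of $G_0$, hence quasisymmetric by \cite[Theorem 3.3]{tukia:ihes85}, hence quasi-M\"obius by \cite{vam}.

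Your route is shorter and avoids the relative hyperbolicity and cusped-space machinery, at the cost of relying on Tukia's theorem. The paper itself uses that theorem in exactly this way for virtually equivariant maps. The paper's route additionally shows that $H$ is a geometrically finite convergence group.

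Two corrections are needed.

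First, the type-preservation required by Tukia's theorem should rest only on the fixed-point count. On $\Lambda(G)$, $\phi(g)=tgt^{-1}$ has fixed set $t(\mathrm{Fix}_{\Lambda}(g))$, and finite order is preserved. Loxodromic elements of $G_0$ have two fixed points in $\Lambda(G)$ while parabolic ones have one, so parabolics go to parabolics and loxodromics to loxodromics. Your alternative claim that automorphisms of geometrically finite groups preserve parabolicity is false in general. In a thrice-punctured-sphere group $\langle a,b\rangle$ with $a,b,ab$ parabolic, the automorphism $a\mapsto a$, $b\mapsto ab$ sends the parabolic element $ab$ to the loxodromic element $a^2b$. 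Only the fact that $\phi$ is induced by conjugation by the homeomorphism $t$ makes it type-preserving.

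Second, your last paragraph is unnecessary, and it misidentifies the obstacle. Uniformity over $H$ is already secured by taking the maximum of the finitely many control functions $\theta_{t_i}$. There is no need to replace Tukia's theorem by a direct conformal-elevator argument. If you did pursue that route, the cusp step you leave open is precisely what Tukia's theorem supplies.
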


\begin{proof}
Let us prove that the natural action of $H$ on $\Lambda(G)$ is a convergence action. Let us consider a sequence of distinct elements $(h_n)$. Since $G$ has finite index in $H$,
we may assume that, up to a subsequence,  there exists $h\in H$ and $g_n\in G$ such that $h_n=hg_n$. Since $G$ is a convergence group, there exists $a,b\in \Lambda(G)$
such that $(g_n)$ tends uniformly to $b$ on compact subsets of $\Lambda(G)\setminus\{a\}$. Therefore, $(hg_n)$ is also a convergence subsequence, that converges to
$h(b)$ on compact subsets of $\Lambda(G)\setminus\{a\}$. It follows that $H$ is a convergence group with $\Lambda(H)=\Lambda(G)$.

Since $G$ is geometrically finite, it follows that $H$ is too (every limit point is either conical or bounded parabolic since this is the case for $G$, cf. \cite{beardon:maskit}). 
Therefore, $H$ is a finitely generated hyperbolic group  relative to the stabilisers of its parabolic points. Let us consider
a Cayley graph for $H$. We glue horoballs following Groves and Manning to obtain a cusped space $X$ and a cusp-uniform action of $H$ (and $G$) \cite{groves:manning:dehn}. 
 According to \cite[Prop.\,3.21]{haiss:lec}, there is a $G$-equivariant quasi-isometry between this cusped space $X$ and the convex  hull of $\Lambda(G)$ in $\Hyp^3$.
 
Note that the action of $H$ on $\partial X$ equipped with a visual distance is uniformly quasi-M\"obius. Moreover, 
the aforementioned quasi-isometry extends as a quasi-M\"obius boundary map between $\partial X$ and $\Lambda(G)$, so conjugates the action of $H$ on $\partial X$ to 
a uniform quasi-M\"obius action on $\Lambda(G)$. 
\end{proof}

\begin{cor}\label{cor:toprig:finitext}
 Let $G$ be a geometrically finite Kleinian group with Schottky limit set. If the group $H$ of homeomorphisms of $\Lambda(G)$ is a finite extension of $G$, then $H$ is Kleinian and
 $\Lambda(H)=\Lambda(G)$. In particular, $H$ is a topologically rigid Kleinian group and any homeomorphism $h\in H$ that fixes three points is the identity.
 \end{cor}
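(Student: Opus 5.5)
The plan is to upgrade $H$ from a uniformly quasi-Möbius group to a group of Möbius transformations, using Proposition~\ref{finitextension} together with quasiconformal rigidity of Schottky sets.

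\textbf{Step 1: normalize and pass to a uniformly quasisymmetric action.} By Proposition~\ref{prop:acylindricalSchottky}, after a quasiconformal conjugation we may assume $\Lambda(G)$ is a Schottky set. The hypothesis says the full homeomorphism group $H$ of $\Lambda(G)$ contains $G$ with finite index. Proposition~\ref{finitextension} then shows that $H$ acts on $\Lambda(G)$ by uniformly quasi-Möbius maps. Since $\Lambda(G)$ is compact, every element of $H$ is in particular quasisymmetric.

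\textbf{Step 2: rigidity.} We invoke the rigidity theorem of \cite{BKM09}: every quasisymmetric self-map of a Schottky set of measure zero is the restriction of a Möbius transformation, possibly composed with a reflection. Measure zero holds because $G$ is geometrically finite and $\Lambda(G)\ne\widehat\C$. Thus every $h\in H$ is the restriction of an element of the extended Möbius group. The orientation-preserving ones form a subgroup $H^+$ of index at most two. To obtain a Kleinian group we restrict to $H^+$, or else use the convention in the corollary that $H$ is taken inside $\Homeo^+$. Then $H^+\le \PSL_2(\C)$ contains $G$ with finite index.

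\textbf{Step 3: discreteness and limit set.} A group containing a discrete group with finite index is itself discrete. Indeed, if $h_n\to \id$ with $h_n$ distinct, then infinitely many $h_n$ lie in one coset $Gt$, say $h_n=g_nt$. The differences $g_n g_m^{-1}=h_nh_m^{-1}\to\id$ then contradict the discreteness of $G$. Hence $H$ is Kleinian. Its limit set contains $\Lambda(G)$, and since $G$ has finite index the two limit sets coincide. This also follows from the convergence-group argument in the proof of Proposition~\ref{finitextension}.

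\textbf{Step 4: consequences.} $H$ is by hypothesis the whole homeomorphism group of $\Lambda(H)=\Lambda(G)$, so $H$ is topologically rigid. If $h\in H$ fixes three points of $\Lambda(G)$, it is a Möbius map fixing three points of $\widehat\C$, so it is the identity. In the reflection case, $h$ would fix the circle through those points, but an orientation-reversing map of a Schottky set fixing a circle pointwise is excluded once we restrict to $H^+$.

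The main obstacle is Step 2, which requires the applicability of \cite{BKM09}: it needs the Schottky set to be of measure zero, and it needs $\Lambda(G)$ to be genuinely round after conjugation. Both follow from geometric finiteness and Proposition~\ref{prop:acylindricalSchottky}.
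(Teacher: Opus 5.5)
Your proof is correct and follows essentially the paper's route: Proposition~\ref{finitextension} makes $H$ a uniformly quasi-M\"obius, hence quasisymmetric, group, and rigidity of Schottky sets then forces it to be Kleinian. The paper simply cites \cite[Theorem 6.6]{haiss:lec} for the second step, whereas you unpack it via the measure-zero rigidity theorem of \cite{BKM09} plus the elementary finite-index discreteness argument. As you note, the three-fixed-point claim requires $H\subseteq\Homeo^+$, and after your quasiconformal normalization ``Kleinian'' is meant up to that conjugation.
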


\begin{proof} According to the above proposition, $H$ acts as a group of uniform quasi-M\"obius maps. The result now follows from \cite[Theorem 6.6]{haiss:lec}.
\end{proof}

\subsection{Convex cocompact Fuchsian groups with acylindrical laminations}\label{sec:ccfrigidity}
In this subsection, we prove a topological rigidity theorem (Theorem~\ref{thm:toprigidityconvexcocompact}) for non-elementary convex cocompact Fuchsian groups equipped with an acylindrical lamination. This result provides a key ingredient for the inductive steps in the proof of Theorem~\ref{thm:carpetfreeSchottkyimpliesrigid}.

The convex cocompact pieces arise from the JSJ decomposition of the de-parabolized group associated to the original Kleinian group (see \S~\ref{subsec:rankonedeparalam} and \S~\ref{subsec:JSJ}). The lamination constructed on the de-parabolized group naturally restricts to laminations on these convex cocompact pieces.

On each such piece, the induced lamination may contain leaves associated to proper arcs in addition to simple closed curves. We discuss the structure and role of these proper arcs in \S~\ref{subsec:lamination}.

\subsubsection{Lamination for convex cocompact Fuchsian groups}\label{subsec:lamination}
Let $G$ be a non-elementary convex cocompact Fuchsian group. 
Then the convex core 
$$
S:= \core(M) = \chull(\Lambda(G))/G
$$ 
is a compact hyperbolic surface with potentially non-empty totally geodesic boundary,
where the convex hull $\chull(\Lambda(G))$ is considered in $\Hyp^3$.

Note that $\core(M)$ is a closed hyperbolic surface if and only if $G$ is cocompact.
Thus, the associated pared manifold $(N,P)$ is
$$
N = S \times [-1,1], \;\; P = \emptyset.
$$

\subsection*{Mixed multicurves}
Let $\Sigma$ be a compact surface.
Recall that a {\em proper arc} on $\Sigma$ is a map $\alpha:[0,1] \longrightarrow \Sigma$ such that $\alpha^{-1}(\partial \Sigma) = \{0,1\}$ and is an embedding on its interior. We identify an arc with its image. 
The homotopy class of a proper arc is taken to be the homotopy class within the class of proper arcs.
For our purposes, we assume the homotopy is {\em relative to the boundary}, i.e., the endpoints stay fixed throughout the homotopy.
An arc $\alpha$ is {\em essential} if it not homotopic into a boundary component of $\Sigma$.

Similarly, we say a simple closed curve $\gamma$ is {\em nontrivial} if it is not homotopic to a point, and we say $\gamma$ is {\em essential} if it is not homotopic to a point nor to a boundary component of $\Sigma$.

Suppose that $X$ is a compact hyperbolic surface with totally geodesic boundary.
Then each homotopy class of an essential proper arc or of an essential loop has a geodesic representative. 
The geodesic representative is unique and is simple.
Homotopy classes of essential proper arcs and essential simple closed curves are said to be disjoint if their geodesic representatives do not intersect.

A {\em mixed multicurve} $\mathcal{C}$ on $X$ is a finite collection of pairwise disjoint homotopy classes of essential proper arcs and/or essential simple closed curves.
We call the union of geodesic representatives of a mixed multicurve a {\em geodesic mixed multicurve}. 
For convenience, we will also denote by $\mathcal{C}$ a geodesic mixed multicurve.

\subsection*{Acylindrical pairs of a mixed multicurve}
Recall that $G$ is a non-elementary convex cocompact Fuchsian group. 
Let $\D^-$ and $\D^+$ be the open unit disk and the exterior of the closed unit disk.
Denote by $\chull_{\D^-}(\Lambda(G)) \subseteq \D^-$ and $\chull_{\D^+}(\Lambda(G))\subseteq \D^+$ the convex hulls of the limit set in $\D^-$ and in $\D^+$ with respect to the Poincar\'e metric.
Let
$$
S^\pm:= \chull_{\D^\pm}(\Lambda(G))/G.
$$
Then $S^\pm$ are compact hyperbolic surfaces with totally geodesic boundary, and are canonically isometric to $S$.
Note that there is a canonical orientation on each $S^\pm$ coming from the orientation of $\D^\pm$ and the natural identification between $S^+$ and $S^-$ reverses the orientation.

There is a natural identification of $S^\pm$ with $S \times \{\pm 1\}$ in $N = S \times [-1, 1]$.
A pair of geodesic mixed multicurves $\mathcal{C}^\pm$ on $S^\pm$ is called {\em acylindrical} if there is no embedded cylinder in $N = S \times [-1,1]$ whose boundary components $\gamma^\pm$ are nontrivial simple closed curves in $S^\pm \setminus \mathcal{C}^\pm$. Note that we allow the possibility that $\gamma^\pm$ is homotopic to $\partial S^\pm$.
    Let $\pi^\pm: S^\pm \rightarrow S$ be the canonical identification. Then we have the following criterion for acylindricity.
\begin{prop}
    A pair $\mathcal{C}^\pm$ on $S^\pm$ is acylindrical if and only if each component of $S \setminus (\pi^+(\mathcal{C}^+) \cup \pi^-(\mathcal{C}^-))$ is simply connected.
\end{prop}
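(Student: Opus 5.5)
The plan is to work directly in the product $N = S\times[-1,1]$, using that an embedded cylinder joining $S^+$ to $S^-$ is, up to homotopy, a vertical annulus. Concretely, if $A\subset N$ is a cylinder with boundary curves $\gamma^\pm\subset S^\pm$, then composing with the projection $N\to S$ shows that $\pi^+(\gamma^+)$ and $\pi^-(\gamma^-)$ are freely homotopic in $S$. Conversely, any simple closed curve $\gamma\subset S$ gives the embedded vertical cylinder $\gamma\times[-1,1]$, with boundary curves $\gamma\times\{\pm1\}$.

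\emph{If some complementary component $U$ of $S\setminus(\pi^+(\mathcal{C}^+)\cup\pi^-(\mathcal{C}^-))$ is not simply connected, the pair is not acylindrical.}
\begin{enumerate}
\item First show that $U$ is $\pi_1$-injective in $S$. Suppose a loop $\delta\subset U$ is null-homotopic in $S$. By taking a simple representative if needed, $\delta$ bounds an embedded disk $D\subset S$, and $D$ contains no component of $\partial S$.
\item A closed geodesic of the multicurves cannot lie inside $D$, since it would then be null-homotopic.
\item A geodesic proper arc has its endpoints on $\partial S$, which is disjoint from $D$. So if such an arc met the interior of $D$, it would have to cross $\delta$. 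This is impossible because $\delta\subset U$.
\item Hence $D\subset U$, and $\delta$ is trivial in $U$.
\item Since $U$ is not simply connected, it contains a simple closed curve $\gamma$ that is nontrivial in $U$, for example a boundary curve of a regular neighbourhood of a non-contractible loop. By the first step, $\gamma$ is nontrivial in $S$.
\item The vertical cylinder $\gamma\times[-1,1]$ is then an embedded cylinder whose boundary curves $\gamma\times\{\pm1\}$ are nontrivial and lie in $S^\pm\setminus\mathcal{C}^\pm$. So the pair is not acylindrical.
\end{enumerate}

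\emph{If every complementary component is simply connected, the pair is acylindrical.}
\begin{enumerate}
\item Suppose, for contradiction, that there is a cylinder with nontrivial boundary curves $\gamma^\pm\subset S^\pm\setminus\mathcal{C}^\pm$. By the first paragraph, $\pi^\pm(\gamma^\pm)$ represent the same nontrivial free homotopy class $c$ in $S$.
\item Because $\gamma^+$ misses $\mathcal{C}^+$, the geometric intersection number of $c$ with each arc and curve of $\pi^+(\mathcal{C}^+)$ is zero. The same holds for $\pi^-(\mathcal{C}^-)$.
\item Let $c^*$ be the closed geodesic representing $c$. If $c$ is boundary-parallel, instead take a curve pushed slightly into the interior, off a collar neighbourhood of the boundary.
\item Geodesic representatives realize minimal intersection, so $c^*$ is disjoint from all geodesics of $\pi^+(\mathcal{C}^+)\cup\pi^-(\mathcal{C}^-)$. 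In the boundary-parallel case, choose the collar thin enough that it meets the geodesic arcs only in short segments orthogonal to the boundary.
\item Then $c^*$ lies in a single complementary component, and it is nontrivial there because it is nontrivial in $S$. This contradicts simple connectivity.
\end{enumerate}

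The main obstacle is the minimal-position claim in step 4 for \emph{mixed} multicurves, where proper arcs meet closed curves. I would prove it in the universal cover $\chull_{\D^\pm}(\Lambda(G))$. Lift $c^*$ to the axis of a loxodromic element. Suppose a lift of a geodesic arc or curve crossed this axis. Because two hyperbolic geodesics meet at most once, the endpoints of the lifted arc or curve (on the boundary geodesics, or at infinity) would lie on opposite sides of the axis. This crossing is then forced for every curve in the class $c$, so $c$ would have positive intersection number with that arc or curve, contradicting step 2. This is the standard bigon criterion, adapted to surfaces with totally geodesic boundary.
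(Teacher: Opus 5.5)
Your route is the paper's. One direction uses the vertical cylinder $\gamma\times[-1,1]$. For the converse, you project the cylinder to get one free homotopy class in $S$ and replace it by its geodesic representative; the paper just says ``we may assume the boundaries $\gamma^\pm$ are geodesic representatives''. Your $\pi_1$-injectivity check and your universal-cover minimal-position argument supply details the paper leaves implicit.

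There is a small hole in step~4 of the converse, which the paper also glosses over. Zero geometric intersection does not give \emph{disjointness} when $c$ is the class of one of the closed curves of $\pi^+(\mathcal{C}^+)\cup\pi^-(\mathcal{C}^-)$. For example, take $\gamma^+$ to be a parallel copy of a curve $\delta\in\mathcal{C}^+$. Then $c^*=\pi^+(\delta)$ lies in the removed set rather than in a complementary component, so step~5 fails as written. The fix is easy. By your lifting argument, every other geodesic of the two multicurves is disjoint from or equal to $c^*$. There are finitely many of them and they are compact, so a sufficiently close parallel push-off of $c^*$ misses all of them and is still nontrivial.

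Your treatment of the boundary-parallel case also points the wrong way. If the thin collar of the boundary component $\beta$ meets any arc at all, the pushed-in curve crosses that arc. What you actually need is that this cannot happen: zero intersection with $c$ forces that no arc of either multicurve ends on $\beta$.

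Your ``crossing the axis'' argument does not give this. Here the axis is the boundary geodesic $\tilde\beta$ of the convex hull, and an arc ending on it has an endpoint \emph{on} the axis, not endpoints on opposite sides. Use separation instead. A lift of such an arc is a properly embedded segment in the convex hull, running from $\tilde\beta$ to a different boundary geodesic. It separates the hull into two pieces containing the two ideal endpoints of $\tilde\beta$. Hence every $g$-invariant lift of a representative of $c$ must meet it, contradicting your step~2. With that established, $\beta$ itself (or a push-off inside a thin collar, which now meets nothing) lies in a complementary component, and the argument closes.
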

\begin{proof}
    Suppose that $S \setminus (\pi^+(\mathcal{C}^+) \cup \pi^-(\mathcal{C}^-))$ is not simply connected. Then there exists a nontrivial simple closed curve $\gamma \subseteq S$ disjoint from $\pi^+(\mathcal{C}^+) \cup \pi^-(\mathcal{C}^-)$. Then $\gamma \times [-1,1]$ is an embedded cylinder, so the pair $\mathcal{C}^\pm$ is cylindrical.

    Conversely, suppose there is an embedded cylinder. We may assume the boundaries $\gamma^\pm$ are geodesic representatives in $S^\pm$. 
    Since they bound a cylinder in $N$, they are homotopic and so $\pi^+(\gamma^+) = \pi^-(\gamma^-)$ holds. 
    Since $\gamma^\pm$ is disjoint from $\mathcal{C}^\pm$, $\pi^+(\gamma^+)$ is a nontrivial simple closed curve in $S \setminus (\pi^+(\mathcal{C}^+) \cup \pi^-(\mathcal{C}^-))$. Therefore, there is a component of $S \setminus (\pi^+(\mathcal{C}^+) \cup \pi^-(\mathcal{C}^-))$ that is not simply connected.
\end{proof}
It follows that the pair $\mathcal{C}^\pm$ is acylindrical if and only if the closure of each component of $S \setminus (\pi^+(\mathcal{C}^+) \cup \pi^-(\mathcal{C}^-))$ is a compact hyperbolic polygon with totally geodesic boundary (see Figure~\ref{fig:lam}).

\subsection*{Lifting a pair of mixed multicurves}\label{subsec:acylindricallamfuchsian}
By lifting to the universal cover, we obtain a pair of geodesic laminations $\mathcal{L}^\pm \subseteq \D^\pm$. 
We call the pair $\mathcal{L}^\pm$ {\em acylindrical} if the corresponding pair of mixed multicurves is acylindrical.
We also denote the lift of the totally geodesic boundary $\partial S^\pm$ in $\D^\pm$ by $\mathcal{L}^\pm_{\partial} \subseteq \D^\pm$ (see Figure~\ref{fig:lam}).
The following lemma follows immediately from the construction.
\begin{lem}\label{lem:descriptionlamination}
    \noindent\begin{enumerate}[leftmargin=8mm]
    \item A leaf $l \in \mathcal{L}^\pm_{\partial} \subseteq \D^\pm$ is a complete geodesic connecting two boundary points of a component of $\Omega(G)\subset\mathbb{S}^1$. Moreover, let $r$ be the reflection along $\mathbb{S}^1$. Then $r(l) \in \mathcal{L}^\mp_{\partial} \subseteq \D^\mp$ and $\overline{l \cup r(l)}$ form a round circle in $\widehat\C$.
    \item A leaf $l\in \mathcal{L}^\pm$ is either 
    \begin{itemize}
        \item (lift of simple closed curves) a complete geodesic in $\D^\pm$ connecting two points of $\La(G)\subset \mathbb{S}^1$; or
        \item (lift of proper arcs) a geodesic segment in $\D^\pm$ joining two leaves $l_1, l_2 \in \mathcal{L}^\pm_{\partial}$.
    \end{itemize}
    Moreover, for any two leaves $l_1, l_2 \in \mathcal{L}^\pm_{\partial}$, there are finitely many leaves in $\mathcal{L}^\pm$ joining them.
\end{enumerate}
\end{lem}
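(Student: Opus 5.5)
The plan is to work directly with the geometry of the convex core $S$ and its two copies $S^\pm$, lifted to $\D^\pm$. Since $G$ is convex cocompact and Fuchsian, $\chull_{\D^\pm}(\Lambda(G))$ is the universal cover of $S^\pm$. Its boundary in $\D^\pm$ consists of the complete geodesics whose endpoints are the two endpoints of a complementary interval of $\Lambda(G)$ in $\mathbb{S}^1$, that is, of a component of $\Omega(G)\cap\mathbb{S}^1$.

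For item (1), the key step is that a leaf $l\in\mathcal{L}^\pm_\partial$ is the geodesic in $\D^\pm$ joining the endpoints $x,y$ of such an interval $I$. The reflection $r$ across $\mathbb{S}^1$ is an isometry between the Poincar\'e metrics of $\D^+$ and $\D^-$. It commutes with $G$, which consists of real Möbius maps preserving $\mathbb{S}^1$. It also maps $\chull_{\D^\pm}(\Lambda(G))$ to $\chull_{\D^\mp}(\Lambda(G))$, so it sends boundary leaves to boundary leaves. Hyperbolic geodesics in $\D^\pm$ are arcs of circles orthogonal to $\mathbb{S}^1$, and $r$ preserves each such circle. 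Hence $l$ and $r(l)$ are the two halves of the one circle through $x,y$ orthogonal to $\mathbb{S}^1$, and $\overline{l\cup r(l)}$ is that round circle.

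For item (2), I will lift each component of the geodesic mixed multicurve $\mathcal{C}^\pm$ to $\D^\pm$.
\begin{itemize}
\item A lift of an essential simple closed geodesic $\gamma$ is the axis of a loxodromic element of $G$. Its endpoints are the fixed points of that element, so they lie in $\Lambda(G)$, and the lift is a complete geodesic.
\item A lift of an essential proper geodesic arc is a geodesic segment whose endpoints lie on lifts of $\partial S^\pm$, that is, on two leaves of $\mathcal{L}^\pm_\partial$. These two leaves are distinct: otherwise the segment together with a subarc of one boundary geodesic would bound a bigon. This contradicts uniqueness of geodesics in the simply connected $\chull_{\D^\pm}(\Lambda(G))$, or equivalently the arc would be homotopic into $\partial S$, contradicting essentiality.
\end{itemize}

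The finiteness statement is the only point needing a small argument, and I expect it to be the main (mild) obstacle. Fix leaves $l_1,l_2\in\mathcal{L}^\pm_\partial$. Their stabilizers are cyclic, generated by loxodromics $g_1,g_2$, or the stabilizer is trivial if a boundary component is an arc. Since $l_1\neq l_2$ are disjoint geodesics in the convex hull, the distance between them is a positive number $d$ realized along a common perpendicular. Any segment joining them passes within a bounded distance of this perpendicular: concretely, the portions of $l_1,l_2$ at distance $\le R$ from the perpendicular form compact sets, and a segment with endpoints farther out has length growing to infinity. Instead of quantifying this, I will use the projection to $S$. A leaf joining $l_1$ and $l_2$ is a lift of one of finitely many arcs in $\mathcal{C}^\pm$. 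Two lifts of the same arc that both join $l_1$ and $l_2$ differ by some $g\in G$ preserving both $l_1$ and $l_2$, so $g\in\stab(l_1)\cap\stab(l_2)$. This intersection is trivial: a nontrivial element preserving both would be a loxodromic with axis equal to both $l_1$ and $l_2$, forcing $l_1=l_2$. Hence each of the finitely many arcs of $\mathcal{C}^\pm$ contributes at most one leaf joining $l_1$ to $l_2$, which gives finiteness.
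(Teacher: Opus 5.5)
Your proposal is correct and takes the same route as the paper, which only says the lemma follows immediately from the construction; you have written out those details. One small slip is in the finiteness step: two lifts of the same arc joining $l_1$ and $l_2$ could a priori differ by some $g$ with $g(l_1)=l_2$ and $g(l_2)=l_1$. In that case $g^2\in\stab(l_1)\cap\stab(l_2)=\{\id\}$, which is impossible for the torsion-free $G$ in force here (and in general it would add at most one lift per arc), so your conclusion is unaffected.
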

A leaf $l\in \mathcal{L}^\pm$ is {\em loop type} or {\em arc type} if it is the lift of a simple closed curve or a proper arc.

\begin{figure}[ht]
\captionsetup{width=0.96\linewidth}
  \centering
  \includegraphics[width=0.46\textwidth]{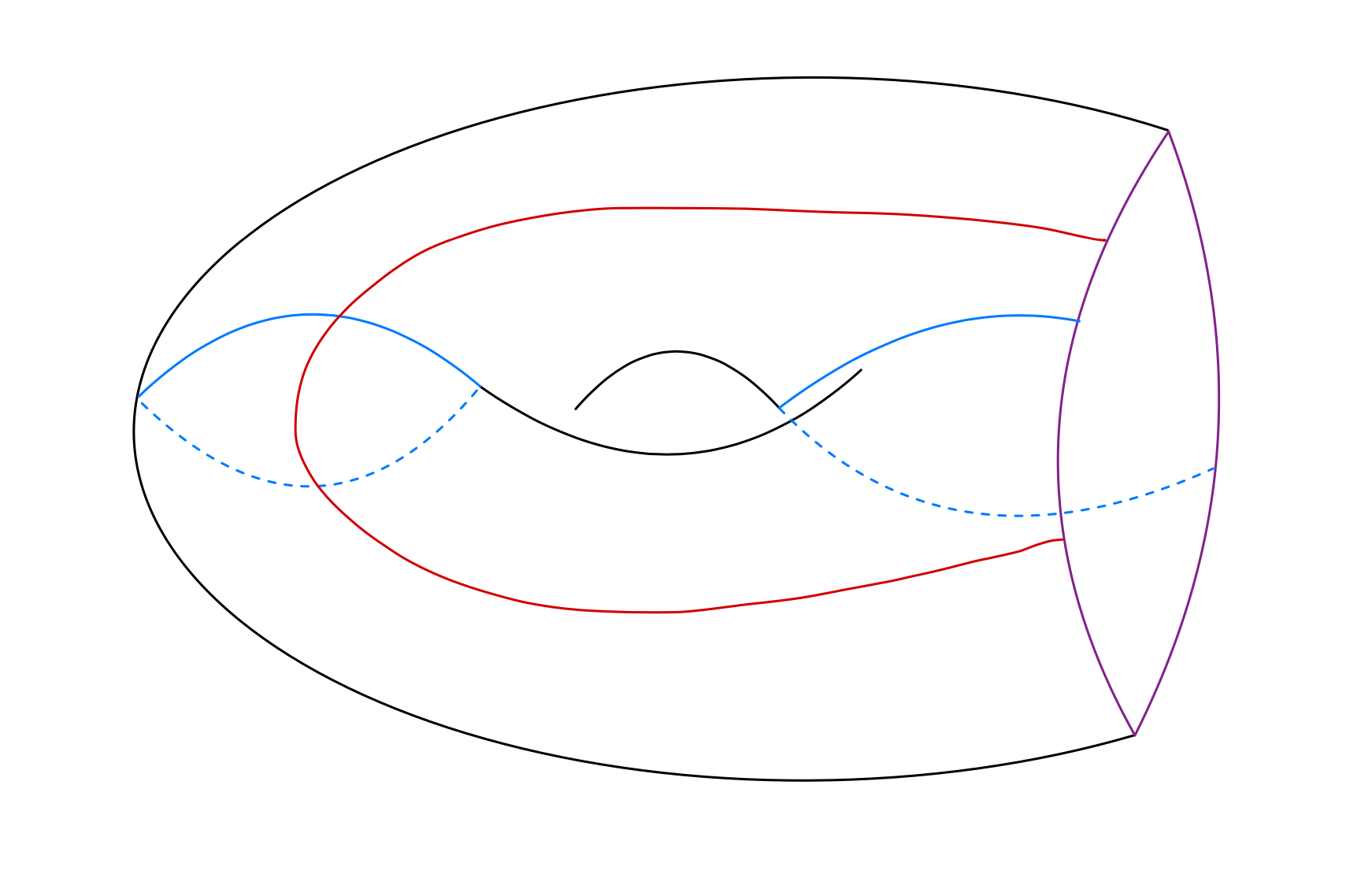}
  \includegraphics[width=0.46\textwidth]{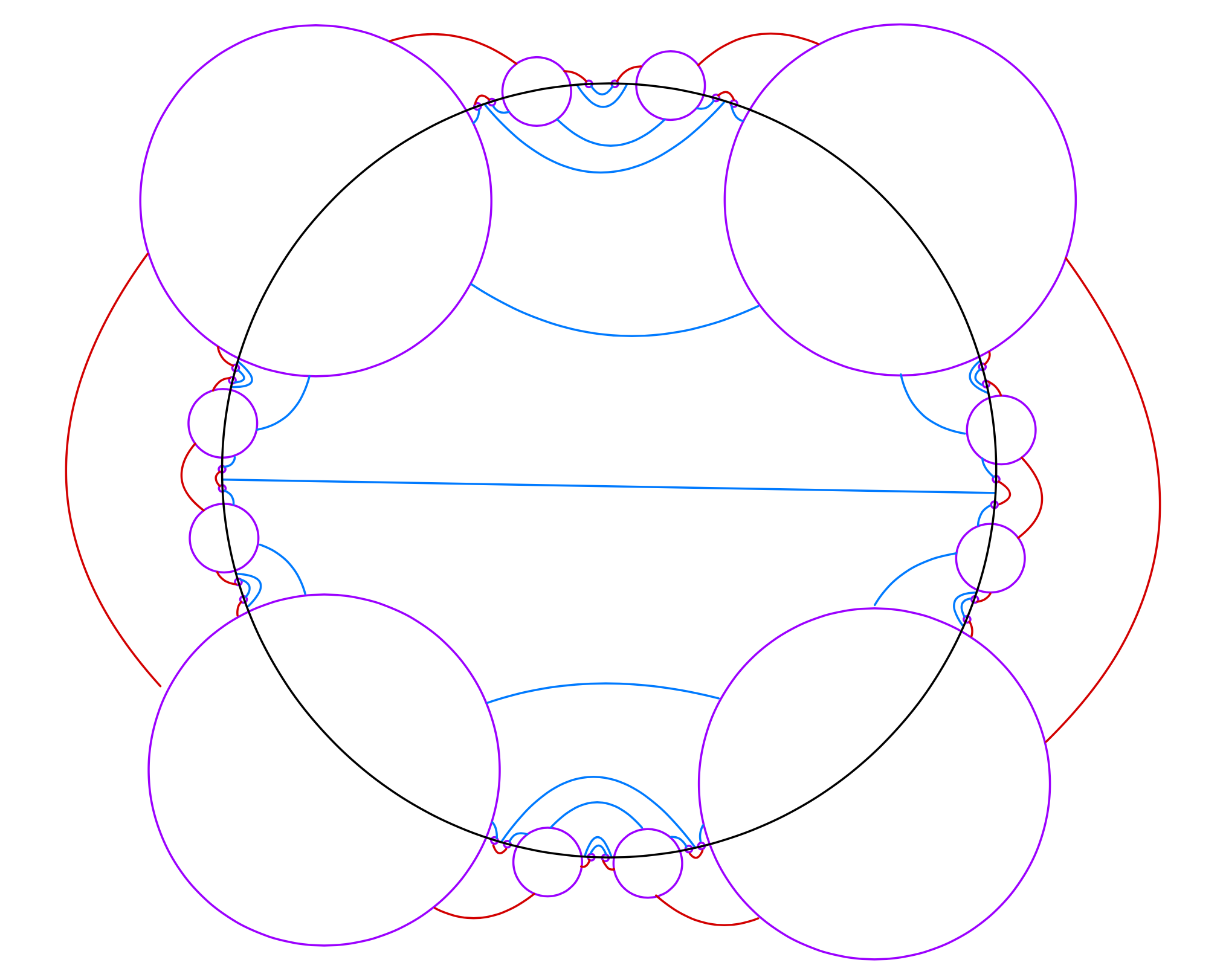}
  \caption{Left: a pair of mixed multicurves, shown in red and blue. For simplicity, both are drawn on a single surface. The pair is acylindrical.
  Right: the associated geodesic laminations. The purple leaves arise from lifts of the boundary components $\partial S^\pm$ and form $\mathcal{L}_\partial = \mathcal{L}_\partial^+ \cup \mathcal{L}_\partial^-$. The red and blue leaves correspond to $\mathcal{L}^+$ and $\mathcal{L}^-$ respectively.}
  \label{fig:lam}
\end{figure}

\subsection*{Lamination-preserving homeomorphisms}
Let $G$ be a convex cocompact Fuchsian group. Let $\mathcal{L}^\pm$ and $\mathcal{L}^\pm_{\partial}$ be the pair of geodesic laminations for a pair of mixed multicurves on $\partial S^\pm$.
We let 
$$
\mathcal{L}: = \mathcal{L}^+ \cup \mathcal{L}^- \text{ and } \mathcal{L}_{\partial}:= \mathcal{L}^+_{\partial} \cup \mathcal{L}^-_{\partial}.
$$
We denote the support of the lamination by $|\mathcal{L}|:= \bigcup_{l\in \mathcal{L}} l$ and $|\mathcal{L}_{\partial}|:= \bigcup_{l\in\mathcal{L}_{\partial}} l$.
Let $$
\partial \mathcal{L}:= \{\partial l = \{x, y\}: l \in \mathcal{L}\} \text{ and }\partial \mathcal{L}_\partial:= \{\partial l = \{x, y\}: l \in \mathcal{L}_\partial\}
$$ 
denote the collection of pairs of endpoints with their corresponding supports
$$
|\partial \mathcal{L}|:= \bigcup_{l\in \mathcal{L}} \partial l \text{ and } |\partial \mathcal{L}_\partial|:= \bigcup_{l\in \mathcal{L}_\partial} \partial l.
$$
Note that $|\partial \mathcal{L}_\partial| \subseteq \Lambda(G)$ but $|\partial \mathcal{L}|$ may contain points on $|\mathcal{L}_{\partial}|$ by Lemma~\ref{lem:descriptionlamination}.
For the lamination $\mathcal{L}^\pm_{\partial}$, we call the component of $\D^\pm \setminus |\mathcal{L}^\pm_{\partial}|$ that projects to $S^\pm$ under the universal covering map the {\em central gap}, and denote them by $U^\pm$.
Note that $U^\pm = \D^\pm$ if $G$ is cocompact.

We consider the group of homeomorphisms that preserve the laminations $\cL\cup \cL_{\partial}$:
\begin{align}\label{eqn:homeoconvexcocompactlamination}
\Homeo^+(\Lambda(G), \mathcal{L}, \mathcal{L}_\partial) = \{h:\widehat\C \rightarrow \widehat\C: &h \text{ is an orientation preserving homeomorphism }\nonumber\\ &\text{with }h(\Lambda(G)) = \Lambda(G), h(|\mathcal{L}|) = |\mathcal{L}|, \nonumber\\ &h(|\mathcal{L}_{\partial}|) = |\mathcal{L}_{\partial}|\}/\sim
\end{align}
where $h_1 \sim h_2$ if $h_1|_{\Lambda(G)\cup |\partial \mathcal{L}|} = h_2|_{\Lambda(G) \cup |\partial \mathcal{L}|}$.

\subsection*{Linked pairs of leaves}
Let $l \in \mathcal{L}$. We define the {\em envelope boundary} of $l$ as 
\begin{itemize}
    \item $\{a^1,a^2\}$ if $l$ connects two points $ a^1,a^2 \in \mathbb{S}^1$; and 
    \item $[a^1,b^1] \cup [a^2,b^2] \subseteq \mathbb{S}^1$ if $l$ joins $l^i \in \mathcal{L}_\partial,  i = 1,2$, with $\partial l^i = \{a^i, b^i\}$ and $(a^i, b^i) \subseteq \mathbb{S}^1 \setminus \Lambda(G)$.
\end{itemize}
Given two pairs of (possibly degenerate) closed intervals $\{I_1, J_1\}$ and $\{I_2, J_2\}$ in $\mathbb{S}^1$, we say they are {\em linked} if the intervals are pairwise disjoint and if every geodesic joining $I_1$ to $J_1$ intersects every geodesic joining $I_2$ to $J_2$ (see Figure~\ref{fig:link}).

Let $\eta : \D^+ \cup \D^- = \widehat\C \setminus \mathbb{S}^1 \longrightarrow \D$ be defined by
    $$
        \eta(z) = \begin{cases}
        z &\text{if }z \in \D^-\\
        \frac{1}{\bar z} & \text{if } z\in \D^+
        \end{cases}.
    $$
\begin{defn}\label{defn:link}
Given a pair of leaves $l_1, l_2 \in \mathcal{L} \cup \mathcal{L}_\partial$, we say they are {\em linked} if either
\begin{enumerate}
    \item\label{eqn:defn:link1} $l_1 \in \mathcal{L}$ and $l_2\in \mathcal{L}_\partial$ (or $l_1 \in \mathcal{L}_\partial$ and $l_2\in \mathcal{L}$) and $\overline{\eta(l_1)} \cap \overline{\eta(l_2)} \neq \emptyset$; or
    \item\label{eqn:defn:link2} $l_1, l_2 \in \mathcal{L}$ and the envelope boundaries of $l_1$ and $l_2$ are linked as pairs of closed intervals in $\mathbb{S}^1$.
\end{enumerate}
We define the collection of linked pairs by
\begin{align}
    \mathcal{X}_{link}&:=\{(l_1, l_2)\in (\mathcal{L} \cup \mathcal{L}_{\partial})^2: l_1, l_2 \text{ are linked}\}, \text{ and } \\
    \partial \mathcal{X}_{link} &:= \{(\partial l_1, \partial l_2): (l_1, l_2) \in \mathcal{X}_{link}\}.
\end{align}
\end{defn}

\begin{figure}[ht]
\captionsetup{width=0.96\linewidth}
  \centering
  \includegraphics[width=0.6\textwidth]{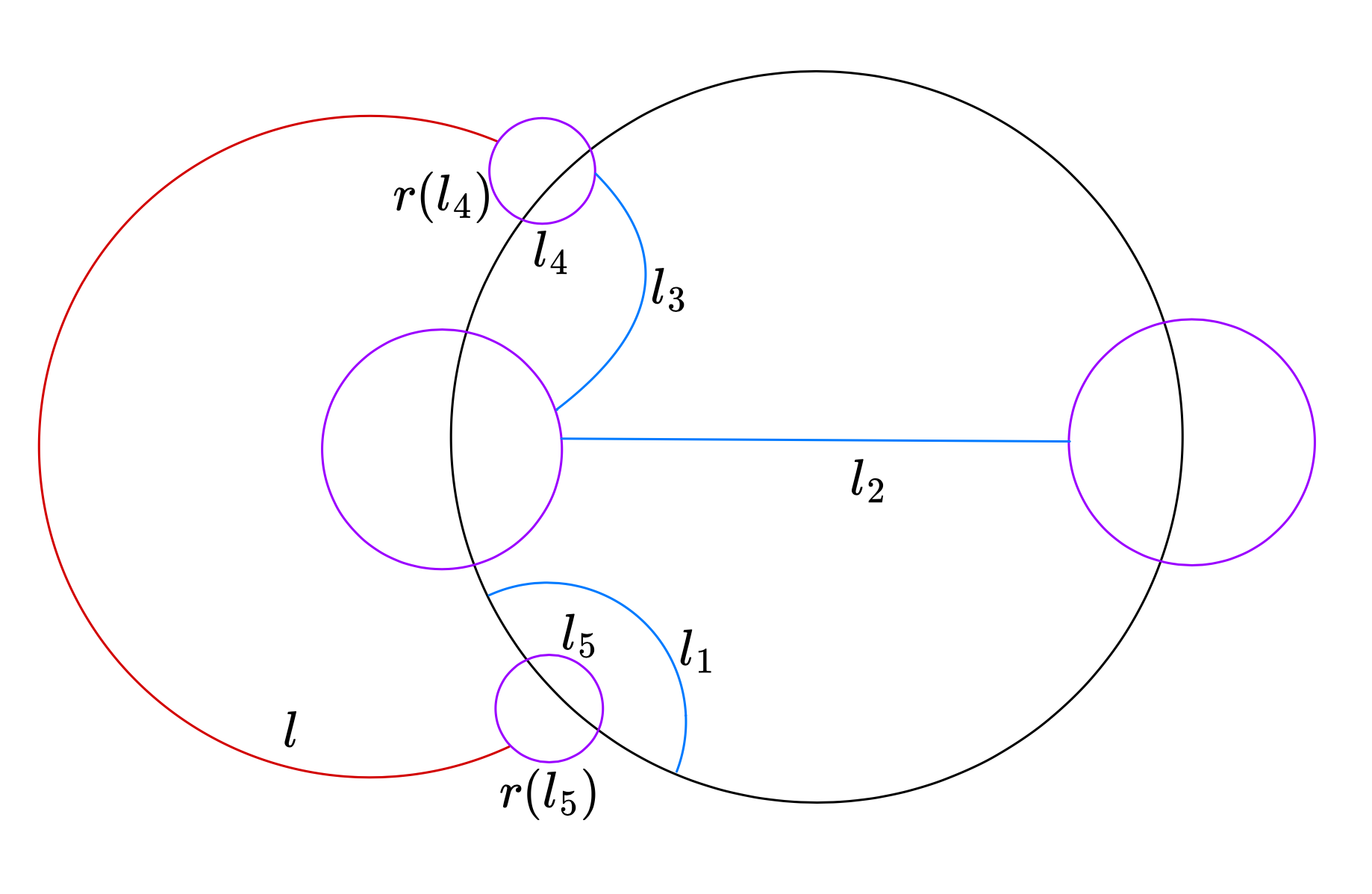}
  \caption{An illustration of the linking condition. The leaf $l$ is linked with $l_1, l_2, l_4, r(l_4), l_5, r(l_5)$, where $r$ is the reflection along $\mathbb{S}^1$. It is not linked with $l_3$, although $l$ intersects $r(l_3)$. Note that $l$ is linked with $l_4$ and $l_4$ is linked with $l_3$.}
  \label{fig:link}
\end{figure}

\begin{lem}\label{lem:actionLinked}
    Let $h \in \Homeo^+(\Lambda(G), \mathcal{L}, \mathcal{L}_\partial)$.
    If $(l_1, l_2) \in \mathcal{X}_{link}$, then $(h(l_1), h(l_2)) \in \mathcal{X}_{link}$. Thus, $\Homeo^+(\Lambda(G), \mathcal{L}, \mathcal{L}_\partial)$ acts on both $\mathcal{X}_{link}$ and $\partial \mathcal{X}_{link}$.
\end{lem}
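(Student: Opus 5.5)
We first check that $h$ permutes the two families of leaves separately. Since $h$ preserves $|\mathcal{L}|$, $|\mathcal{L}_\partial|$ and $\Lambda(G)$, and since each leaf is a connected component of the corresponding support, $h$ sends leaves of $\mathcal{L}$ to leaves of $\mathcal{L}$ and leaves of $\mathcal{L}_\partial$ to leaves of $\mathcal{L}_\partial$. The only exception would be an arc-type leaf abutting a boundary leaf. Here we treat $|\mathcal{L}|$ and $|\mathcal{L}_\partial|$ as distinct sets, so a component of $|\mathcal{L}|$ is still a single geodesic leaf or segment. Because $h$ preserves $\mathbb{S}^1\cap(\Lambda(G)\cup\ldots)$ only up to the structure we can see, we should work with endpoints. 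The endpoints of a loop-type leaf lie in $\Lambda(G)$. The endpoints of an arc-type leaf lie on boundary leaves. The endpoints of a boundary leaf are the two endpoints in $\Lambda(G)$ of a complementary interval of $\Omega(G)\cap\mathbb{S}^1$. Hence $h$ maps envelope boundaries of leaves of $\mathcal{L}$ to envelope boundaries, in the sense that $h$ sends $\partial l$ to $\partial h(l)$ and the complementary interval $(a^i,b^i)$ associated with $l^i$ to the one associated with $h(l^i)$. The point here is that $h|_{\Lambda(G)}$ determines the action on gaps of $\Lambda(G)$ in $\mathbb{S}^1$.

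The key reformulation is that linking is a statement about the cyclic order of finitely many points of $\Lambda(G)$ on $\mathbb{S}^1$. In case \eqref{eqn:defn:link2}, two pairs of disjoint closed intervals $\{I_1,J_1\},\{I_2,J_2\}$ are linked precisely when the four intervals are pairwise disjoint and $I_2,J_2$ lie in different components of $\mathbb{S}^1\setminus(I_1\cup J_1)$. This is a separation condition on their endpoints, all of which lie in $\Lambda(G)$. In case \eqref{eqn:defn:link1}, $\overline{\eta(l_1)}\cap\overline{\eta(l_2)}\neq\emptyset$ means one of two things. Either the arc-type leaf $l_1$ abuts $l_2$ or its reflection, which is preserved because $h$ maps the boundary leaf on which $l_1$ ends to the boundary leaf on which $h(l_1)$ ends. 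Or the loop-type or arc-type leaf crosses $r(l_2)$, which again amounts to separation of the endpoints of $l_2$ by the envelope boundary of $l_1$. So it suffices to show that $h|_{\Lambda(G)}$ preserves or reverses the cyclic order on $\Lambda(G)\subset\mathbb{S}^1$. Reversal does no harm, since linking is symmetric under reflection.

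For this we use that $h$ is an orientation-preserving homeomorphism of $\widehat\C$ preserving $\Lambda(G)$. We claim $h$ maps $\mathbb{S}^1$ to a Jordan curve through $\Lambda(G)$ whose cyclic order on $\Lambda(G)$ agrees with that of $\mathbb{S}^1$. The argument uses planarity. Four points $x_1,x_2,y_1,y_2\in\Lambda(G)$ are unlinked on the circle if and only if there are disjoint arcs joining $x_1$ to $x_2$ and $y_1$ to $y_2$ that avoid some structure preserved by $h$. The natural choice is arcs inside the closed gaps: take the disk $\overline{\D^-}$ with its geodesics, or, when $G$ is not cocompact, chains of leaves and complementary intervals. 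We expect this to be the main obstacle. $h$ need not preserve $\mathbb{S}^1$ or $\D^\pm$ when $G$ is not cocompact, because $\Omega(G)$ is connected and $h$ could swap sides. The first thing to try is to note that $\Lambda(G)\cup|\mathcal{L}_\partial|$ is a connected set whose complement has the central gaps $U^\pm$ as distinguished components. $h$ permutes the components of $\widehat\C\setminus(\Lambda(G)\cup\overline{|\mathcal{L}_\partial|})$, and $U^+,U^-$ are the only ones with infinitely many boundary leaves, so $h$ preserves $\{U^+,U^-\}$. Then $h$ restricted to $\overline{U^-}$ or $\overline{U^+}$ gives a homeomorphism of closed disks, and it therefore preserves or reverses the boundary cyclic order. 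That order restricts to the cyclic order of $\Lambda(G)$ on $\mathbb{S}^1$. Applying the separation criterion from the previous paragraph then gives $(h(l_1),h(l_2))\in\mathcal{X}_{link}$. The action on $\partial\mathcal{X}_{link}$ is well defined because $h_1\sim h_2$ agree on $\Lambda(G)\cup|\partial\mathcal{L}|$. It is a group action because the same holds for $h^{-1}$.
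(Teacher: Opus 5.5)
Your argument is correct and is essentially the paper's proof. Case (1) of Definition~\ref{defn:link} is preserved because $h$ sends boundary leaves to boundary leaves with $h(r(l_2))=r(h(l_2))$; this holds since the only leaves of $\mathcal{L}_\partial$ with endpoints $\partial h(l_2)$ are $h(l_2)$ and $r(h(l_2))$, a point you leave implicit. Case (2) is preserved because $h|_{\Lambda(G)}$ preserves or reverses cyclic order. You justify this step more fully than the paper, which only asserts it: $h$ permutes the complementary components of $\Lambda(G)\cup|\mathcal{L}_\partial|$, hence preserves $\{U^+,U^-\}$. In the cocompact case there are no boundary leaves, but the claim is then immediate since $\Lambda(G)=\mathbb{S}^1$.

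One inaccuracy: the second alternative you list in case (1), a leaf crossing $r(l_2)$, never occurs. Leaves of $\mathcal{L}^\pm$ lie in $\overline{U^\pm}$, and by discreteness a loop-type leaf cannot share an endpoint with a boundary leaf. So case (1) is exactly an arc-type leaf ending on $l_2$ or $r(l_2)$.
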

\begin{proof}
    Suppose $(l_1, l_2)$ is in case~\eqref{eqn:defn:link1} of Definition~\ref{defn:link}. Assume that $l_1 \in \mathcal{L}$ and $l_2 \in \mathcal{L}_\partial$. 
    Thus, either $\partial l_1 \cap l_2 \neq \emptyset$ or $\partial l_1 \cap r(l_2) \neq \emptyset$, where $r$ is the reflection along $\mathbb{S}^1$.
    Hence, $\partial h(l_1) \cap h(l_2) \neq \emptyset$ or $\partial h(l_1) \cap h(r(l_2)) = \partial h(l_1) \cap r(h(l_2)) \neq \emptyset$. Therefore, $(h(l_1), h(l_2)) \in \mathcal{X}_{link}$.

    Suppose $(l_1, l_2)$ is in case~\eqref{eqn:defn:link2} of Definition~\ref{defn:link}.
    Since $h(|\mathcal{L}_\partial|) = |\mathcal{L}_\partial|$, $h$ preserves the cyclic order of the limit set $\Lambda(G)$.
    Therefore, $(h(l_1), h(l_2)) \in \mathcal{X}_{link}$.
\end{proof}

\begin{lem}\label{lem:finitepath}
    Suppose that the pair $\mathcal{L}^\pm$ is acylindrical.
    Then any two leaves $l, l'\in \mathcal{L} \cup \mathcal{L}_\partial$ are connected via a finite path $l^0=l, l^1,..., l^k = l'$ so that $(l^i, l^{i+1}) \in \mathcal{X}_{link}$.
\end{lem}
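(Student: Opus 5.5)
The plan is to make the linking relation into a graph and show it is connected. Let $\Gamma$ be the graph whose vertex set is $\mathcal{L}\cup\mathcal{L}_\partial$ and whose edges are the linked pairs. Work downstairs on the compact surface $S$ via the projections $\pi^\pm$. By the acylindricity criterion proved above, the closure of each component of $S\setminus(\pi^+(\mathcal{C}^+)\cup\pi^-(\mathcal{C}^-))$ is a compact hyperbolic polygon. Its sides lie alternately on curves/arcs of $\mathcal{C}^+$, of $\mathcal{C}^-$, or on $\partial S$. Lifting to $\D$ through $\eta$, this gives a $G$-invariant tiling of the central gap (identified with the convex hull of $\Lambda(G)$ in $\D$) by compact polygons. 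Each side of a tile lies on $\eta(l)$ for some $l\in\mathcal{L}\cup\mathcal{L}_\partial$.

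\textbf{Step 1: adjacent sides of a tile are linked.} Suppose two leaves $l_1,l_2$ supply consecutive sides of a tile, meeting at a vertex $p$. There are three cases.
\begin{itemize}
\item If one leaf lies in $\mathcal{L}$ and the other in $\mathcal{L}_\partial$, then $p$ is an endpoint of an arc-type leaf on a boundary geodesic. Hence $\overline{\eta(l_1)}\cap\overline{\eta(l_2)}\neq\emptyset$, and the pair is linked by case~\eqref{eqn:defn:link1} of Definition~\ref{defn:link}.
\item If both leaves lie in $\mathcal{L}$, they cannot come from the same side $\pm$, since leaves of $\mathcal{L}^+$ are pairwise disjoint. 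So one is in $\mathcal{L}^+$ and the other in $\mathcal{L}^-$, and $\eta(l_1),\eta(l_2)$ cross transversally at $p$ in the interior of the central gap. Two crossing geodesics in $\D$ have linked endpoint pairs. For arc-type leaves, any geodesic joining the two end-intervals of the envelope boundary must cross the other leaf's envelope, because boundary geodesics in $\mathcal{L}_\partial$ are pairwise disjoint and cut off the intervals. So the envelope boundaries are linked and we are in case~\eqref{eqn:defn:link2}.
\item Two boundary leaves are never consecutive sides, since boundary geodesics are disjoint. So only the first two cases arise.
\end{itemize}
Consequently, the leaves carrying the sides of a single tile form a cycle in $\Gamma$.

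\textbf{Step 2: propagate across tiles.} Two tiles sharing a side share the corresponding leaf. The dual graph of the tiling is connected, because the central gap is connected and the tiling is locally finite there. Therefore all leaves that carry a side of some tile lie in a single component of $\Gamma$.

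It remains to check that every leaf carries a side of some tile. A leaf of $\mathcal{L}_\partial$ bounds the central gap, so it does. A leaf $l\in\mathcal{L}$ is either reflected into the central gap by $\eta$, or, for arc type, is contained in it. Because the complementary regions are simply connected, $l$ cannot avoid being a side: otherwise a closed curve or boundary-parallel loop would survive in a region.

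Finally, the path from $l$ to $l'$ is finite. Both leaves meet compact sets, and the tiling is locally finite, so any two tiles are joined by a finite chain of adjacent tiles.

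\textbf{Main obstacle.} The delicate part is Step 1 for arc-type leaves: I must show that transversal crossing of $\eta(l_1)$ and $\eta(l_2)$ inside the central gap really forces the envelope boundaries to be disjoint and linked. The example of $l_3$ in Figure~\ref{fig:link} shows that intersection after reflection alone is not enough. My first attempt is to use that the arc endpoints lie on distinct boundary geodesics of the same sign, and that crossing inside the central gap separates those boundary geodesics cyclically on $\mathbb{S}^1$. If they shared a boundary geodesic, the crossing would have to occur in a non-central region, or would contradict disjointness within $\mathcal{C}^\pm$.
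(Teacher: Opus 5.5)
There is a genuine gap in Step~1, second bullet, and your proposed way around the ``main obstacle'' does not work. Linking in case~\eqref{eqn:defn:link2} of Definition~\ref{defn:link} requires the four envelope intervals to be \emph{pairwise disjoint}. Suppose $l_1\in\mathcal{L}^+$ and $l_2\in\mathcal{L}^-$ are arc-type leaves that both end on the same boundary geodesic $\eta(m)$, with $m\in\mathcal{L}_\partial$. Then their envelope boundaries share the interval cut off by $m$, so $(l_1,l_2)\notin\mathcal{X}_{link}$, however $\eta(l_1)$ and $\eta(l_2)$ meet.

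This situation really occurs. Arc-type leaves always lie in the central gap. Arcs of $\mathcal{C}^+$ and $\mathcal{C}^-$ leaving the same boundary component of $S$ can cross there, or share an endpoint on $\eta(m)$. The pair $l$, $l_3$ in Figure~\ref{fig:link}, both joined to the boundary leaf $l_4$, is such an instance. So your escape, that a crossing with a shared boundary geodesic must lie outside the central gap or contradict disjointness within $\mathcal{C}^\pm$, is not available. Such a pair can be consecutive sides of a tile, so the assertion that the sides of every tile form a cycle in $\Gamma$ is false, and Step~2 rests on it.

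The repair is short and is exactly the paper's argument. In this case both $\overline{\eta(l_1)}$ and $\overline{\eta(l_2)}$ meet $\overline{\eta(m)}$, so $(l_1,m)$ and $(m,l_2)$ are linked by case~\eqref{eqn:defn:link1}. Hence two leaves whose $\eta$-images have intersecting closures are either linked, or joined through a single boundary leaf. When the end intervals are distinct, your interleaving argument does give case~\eqref{eqn:defn:link2}.

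With Step~1 weakened to this dichotomy, your tiling and dual-graph argument goes through. It then essentially coincides with the paper's proof: take a finite chain of leaves whose consecutive $\eta$-closures intersect, coming from the compact-polygon tiling of the central gap, and apply this two-case claim. One small point remains. A boundary side lies on $\eta(l)=\eta(r(l))$ for two leaves $l,r(l)\in\mathcal{L}_\partial$, which are never linked to each other. Both are reached through the arc-type leaves ending on that geodesic, and such leaves exist by acylindricity.
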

\begin{proof}
    Let $\mathcal{C}^\pm \subseteq S$ be the geodesic mixed multicurve associated to $\mathcal{L}^\pm$. Since the pair is acylindrical, the closure of each complementary component of $S \setminus (\mathcal{C}^+ \cup \mathcal{C}^-)$ is a compact hyperbolic polygon with totally geodesic boundary. Let $U = \eta(U^+) = \eta(U^-)$ be the central gap of the lamination $\eta(\mathcal{L}_{\partial})$. 
    Then any gap of $\eta(\mathcal{L}\cup \mathcal{L}_\partial)$ in $U$ is a compact hyperbolic polygon, and the union of these polygons gives a tiling of $U$.
    Therefore, we can find a finite path 
    \begin{equation}\label{eqn:finitepath}
    \widetilde{l}^0 = l, \widetilde{l}^1,..., \widetilde{l}^m = l',
    \end{equation}
    so that $\widetilde{l}^i \neq \widetilde{l}^j$ for $i \neq j$ and $\overline{\eta(\widetilde{l}^i)} \cap \overline{\eta(\widetilde{l}^{i+1})} \neq \emptyset$.
    
    Suppose we have two distinct leaves $l_1, l_2\in \mathcal{L} \cup \mathcal{L}_\partial$ with $\overline{\eta(l_1)} \cap \overline{\eta(l_2)} \neq \emptyset$. 
    We claim that 
    \begin{itemize}
        \item either $(l_1, l_2) \in \mathcal{X}_{link}$; or
        \item there exists $l_3 \in \mathcal{L} \cup \mathcal{L}_\partial$ with $(l_1, l_3), (l_3, l_2) \in \mathcal{X}_{link}$.
    \end{itemize}
    Indeed, if $l_1 \in \mathcal{L}$ and $l_2\in \mathcal{L}_\partial$ (or $l_1 \in \mathcal{L}_\partial$ and $l_2\in \mathcal{L}$), then $(l_1, l_2) \in \mathcal{X}_{link}$ by definition.
    Otherwise, $l_1, l_2 \in \mathcal{L}$. Since $\overline{\eta(l_1)} \cap \overline{\eta(l_2)} \neq \emptyset$, either $(l_1,l_2) \in \mathcal{X}_{link}$, or there exists a leaf $l_3 \in \mathcal{L}_\partial$ so that both $\eta(l_1)$ and $\eta(l_2)$ have an endpoint in $\eta(l_3)$. In the latter case, we have $(l_1, l_3), (l_3, l_2) \in \mathcal{X}_{link}$ (see Figure~\ref{fig:link}). This proves the claim.
    
    The lemma follows from the claim and the existence of the sequence in \eqref{eqn:finitepath}.
\end{proof}

\subsubsection{Topological rigidity for convex cocompact Fuchsian groups}
The main theorem of this section is the following.
We remark that when $G$ is cocompact, the theorem follows from \cite[Lemma 21]{KK00}. The proof for the convex cocompact case follows a similar strategy.
\begin{theorem}\label{thm:toprigidityconvexcocompact}
    Let $G$ be a convex cocompact Fuchsian group, and let $\mathcal{L}^\pm$ be a pair of geodesic laminations associated to a pair of mixed multicurves, which is acylindrical.
    Let $h \in H:= \Homeo^+(\Lambda(G), \mathcal{L}, \mathcal{L}_\partial)$ and let $(l_1, l_2) \in \mathcal{X}_{link}$.
    Suppose that 
    $$
    h|_{\partial l_1 \cup \partial l_2} = \id.
    $$
    Then $h = \id$.   
\end{theorem}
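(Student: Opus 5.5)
The plan is to propagate the identity along linked pairs, following the strategy of \cite[Lemma 21]{KK00}, and then to conclude by density. By Lemma~\ref{lem:actionLinked}, $h$ maps linked pairs to linked pairs. The key local claim is a propagation step: if $(l_1,l_2)\in\mathcal{X}_{link}$, $h$ fixes $\partial l_1\cup\partial l_2$ pointwise, and $(l_2,l_3)\in \mathcal{X}_{link}$, then $h$ fixes $\partial l_3$ pointwise. Granting this, Lemma~\ref{lem:finitepath} together with acylindricity shows that every leaf of $\mathcal{L}\cup\mathcal{L}_\partial$ is reached from $l_1$ by a finite chain of linked pairs. Induction along the chain then gives $h|_{\partial l}=\id$ for every leaf $l$. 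The endpoints of leaves of $\mathcal{L}_\partial$ and of loop type leaves are $G$-orbits of finitely many points, so they are dense in $\Lambda(G)$ (orbits are dense in the limit set). Hence $h|_{\Lambda(G)}=\id$. Since the equivalence relation in \eqref{eqn:homeoconvexcocompactlamination} only records $h$ on $\Lambda(G)\cup|\partial\mathcal{L}|$, and $|\partial\mathcal{L}|$ consists of endpoints of leaves, we conclude $h=\id$ in $H$.

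For the propagation step I would first record what $h$ preserves.

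\emph{Cyclic order.} Because $h$ preserves $|\mathcal{L}_\partial|$ and $\Lambda(G)$, and is orientation preserving, it preserves the cyclic order on $\Lambda(G)\subset\mathbb{S}^1$. It also preserves each complementary interval of $\mathbb{S}^1\setminus\Lambda(G)$ together with the pair of leaves $l,r(l)$ of $\mathcal{L}_\partial$ spanning it.

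\emph{The sides $\D^\pm$.} Orientation preservation should also force $h$ to send $\D^+$-leaves to $\D^+$-leaves, not swap the two sides. This holds once $h$ fixes an arc-type or loop-type configuration on one side. If the sides could be swapped, that would only contribute a finite ambiguity, which I would handle separately.

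\emph{Uniqueness of the leaf $l_3$.} Now let $l_3$ be linked with $l_2$. The idea is that the set of leaves linked with $l_2$ is discrete. For a loop-type leaf $l_2$ with endpoints $a,b$, these leaves cross $l_2$ at points ordered along $l_2$, and the linked partner is determined by its envelope intervals, one on each side of $\{a,b\}$. I would show that $l_3$ is characterized by finitely many order relations relative to $\partial l_1\cup\partial l_2$, together with its position in the sequence of leaves crossing $l_2$. The fixed pair $l_1$ serves as a base point fixing the indexing of that sequence. Both the sequence and the base point are preserved by $h$, so the sequence is fixed leafwise and $h(l_3)=l_3$.

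\emph{Endpoints of $l_3$.} It remains to see that $h$ fixes the endpoints of $l_3$ themselves, not just the leaf. For arc-type leaves the endpoints lie on leaves of $\mathcal{L}_\partial$. Since there are only finitely many leaves joining two given boundary leaves (Lemma~\ref{lem:descriptionlamination}) and $h$ preserves their order, each is fixed. Fixing the endpoint points on the boundary leaves uses that $|\partial\mathcal{L}|\cap l$ is discrete and ordered along $l$, with $h$ fixing two reference points; this last step is the delicate one.

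The main obstacle is this propagation step when leaves accumulate. A single linked pair $(l_1,l_2)$ does not obviously pin down the whole infinite family crossing $l_2$: $h$ could a priori act as a translation along $l_2$ that is compatible with the order but shifts leaves. I would first rule this out using the fixed endpoints of $l_1$, which anchor the order, and the local finiteness of leaves crossing a compact subarc, which comes from the compact polygon tiling given by acylindricity. If that is insufficient, the fallback is the argument of \cite{KK00}: consider the polygonal tiling of the central gap and show that $h$ fixes one polygon, hence all its edges, hence, by adjacency, every polygon of the tiling.
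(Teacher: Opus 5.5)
Your overall structure is the paper's: a local propagation claim along linked pairs, Lemma~\ref{lem:finitepath} to reach every leaf, and density at the end. The gap is in the propagation step when the middle leaf is a boundary leaf and the chain switches sides.

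\textbf{The failing case.} Take $l_2\in\mathcal{L}^-_\partial$, $l_1\in\mathcal{L}^-$ an arc ending on $l_2$, and $l_3\in\mathcal{L}^+$ an arc ending on $r(l_2)$. The leaves linked with $l_2$ then form two separate discrete families, each empty or of order type $\Z$: one along $l_2\subset\D^-$, one along $r(l_2)\subset\D^+$.
\begin{itemize}
\item How the two families interleave along $\eta(l_2)$ is metric information coming from $r$. A homeomorphism $h$ acts independently on $l_2$ and on $r(l_2)$, so it need not respect this interleaving.
\item Hence $l_1$ anchors only the first family. On the second, order preservation still allows exactly the translation you worry about.
\item The fixed endpoints of $l_2$ do not prevent this: they are the two ends of the $\Z$-sequence, which is what your ``two reference points'' amount to. Local finiteness does not prevent it either.
\item Your fallback fails here too. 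The polygon tiling of the central gap is built from $\eta$-images. Whether an $\eta(\mathcal{L}^+)$-arc and an $\eta(\mathcal{L}^-)$-arc ending on the same boundary geodesic cross is not a topological invariant, so $h$ does not act on that tiling. In \cite{KK00} all leaves are loop type, so the issue never arises.
\end{itemize}
This is not a corner case. Lemma~\ref{lem:finitepath} produces exactly such transitions $l\to L\to l'$ through a boundary leaf $L$, and the theorem allows $l_2\in\mathcal{L}_\partial$ from the start.

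\textbf{The missing idea} (the paper's Case (3)) is to anchor the second family using the other end of $l_1$.
\begin{itemize}
\item $l_1$ joins the circle $\overline{l_2\cup r(l_2)}$ to a second circle $\overline{l_2'\cup r(l_2')}$. Since $h$ fixes $\partial l_1$, it preserves this circle, and hence fixes $r(l_2)$ and $r(l_2')$.
\item Removing the two corresponding closed gap intervals from $\mathbb{S}^1$ leaves two arcs $I_1,I_2$ meeting $\Lambda(G)$. Each is preserved by $h$, since $h$ fixes their endpoints and preserves the cyclic order.
\item If some leaves of $\mathcal{L}^+$ join $r(l_2)$ to $r(l_2')$, there are only finitely many (Lemma~\ref{lem:descriptionlamination}). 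They are therefore fixed and serve as anchor.
\item Otherwise, the $\mathcal{L}^+$-leaves ending on $r(l_2)$ split by planarity into an initial segment $\{L_n\}_{n<j}$ whose other ends lie over $I_2$ and a final segment $\{L_n\}_{n\ge j}$ whose other ends lie over $I_1$. Both are nonempty, because leaves accumulating at an endpoint of $r(l_2)$ end on boundary leaves accumulating at that same endpoint.
\item Since $h$ preserves $I_1$, $I_2$ and the order, it preserves this cut. So $h(L_j)=L_j$, and the order argument then fixes the whole family, including $l_3$ and its endpoints.
\end{itemize}
With this step added, the rest of your argument goes through as in the paper.
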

\begin{proof}
    Consider the sets
    $$
    A_{l_1}^\pm := \{l\in \mathcal{L}^\pm\cup \mathcal{L}^\pm_\partial: (l_1, l) \in \mathcal{X}_{link}\}, \text{ and } A_{l_1} = A_{l_1}^+ \cup A_{l_1}^-.
    $$ 
    By Lemma~\ref{lem:actionLinked}, the homeomorphism $h$ induces a bijection $h:A_{l_1}^\pm \rightarrow A_{l_1}^\pm$.
    We claim that $h|_{\partial l} = \id$ for all $l \in A_{l_1}$.
    \begin{proof}[Proof of the claim]
        We first note that $A_{l_1}^\pm$ is discrete and we have a linear order on each $A_{l_1}^\pm$. More precisely, this means that there exists a continuous map
        $$
        \phi^\pm: |A_{l_1}^\pm| = \bigcup_{l\in A_{l_1}^\pm} l \longrightarrow \Z
        $$
        so that the fiber of a point is a leaf, and $\phi^\pm(l) < \phi^\pm (l') < \phi^\pm(l'')$ if and only if $l'$ separates $l, l''$ in the central gap $U^\pm$.
        Since $h$ preserves the cyclic order of $\Lambda(G)\subset \mathbb{S}^1$, it preserves the linear order on $A_{l_1}^\pm$.
        We prove the claim on a case by case basis.

        \noindent {\bf Case (1)}: $l_1 \in \mathcal{L}^+$, and $l_2 \in \mathcal{L}^-$ (or similarly $l_1 \in \mathcal{L}^-$, and $l_2 \in \mathcal{L}^+$).
        Since $h$ preserves the linear order on $A_{l_1}^-$, and $h|_{\partial l_1 \cup \partial l_2} =\id$, we conclude that $h|_{\partial l} = \id$ for each $l \in A_{l_1}^-$.
        Note that $\mathcal{A}_{l_1}^+$ is either empty if $l_1$ is loop type; or consists of two leaves in $\mathcal{L}^+_\partial$ if $l_1$ is arc type. Since $h|_{\partial l_1} = \id$, we have $h|_{\partial l} = \id$ for each $l \in A_{l_1}^+$. The claim follows.

        \noindent {\bf Case (2)}: $l_1 \in \mathcal{L}^+$, and $l_2 \in \mathcal{L}_\partial$ (or similarly $l_1 \in \mathcal{L}^-$, and $l_2 \in \mathcal{L}_\partial$).
        Note that $l_1$ is arc type. Then each $A_{l_1}^\pm$ consists of finitely many leaves. Since $h$ preserves the linear order, $h|_{\partial l} =\id$ for each $l \in A_{l_1}^\pm$. The claim follows.
        
        \noindent {\bf Case (3)}: $l_1 \in \mathcal{L}_\partial$, and $l_2 \in \mathcal{L}^-$ (or similarly $l_1 \in \mathcal{L}_\partial$, and $l_2 \in \mathcal{L}^+$). The same argument as in Case (1) shows that $h|_{\partial l} = \id$ for each $l \in A_{l_1}^-$.

        Note that $A_{l_1}^+$ is either empty or infinite. We assume that $A_{l_1}^+$ is infinite, and we order the leaves by $L_n, n \in \Z$, with respect to the linear order.
        Let $l_1' \in \mathcal{L}_\partial$ so that $l_2$ connects $\overline{l_1\cup r(l_1)}$ and $\overline{l_1'\cup r(l_1')}$, where $r$ is the reflection along $\mathbb{S}^1$.
        Note that $h(\overline{l_1'\cup r(l_1')}) = \overline{l_1'\cup r(l_1')}$.
        Denote the two components of $\mathbb{S}^1 \setminus \partial (l_1 \cup \partial l_1')$ that intersects $\Lambda(G)$ by $I_1, I_2$, where the label is chosen so that $I_1 > I_2$ by the induced linear order on $L_n$.
        
        If there exists a leaf $L_j$ which connects $\overline{l_1\cup r(l_1)}$ and $\overline{l_1'\cup r(l_1')}$, then there are only finitely many such leaves by Lemma~\ref{lem:descriptionlamination}. Thus $h$ fixes their boundaries point-wise. Then the same argument as in Case (1) shows that $h|_{\partial l} = \id$ for each $l \in A_{l_1}^+$ and the claim follows.

        Otherwise, there exists $j \in \N$ so that the following holds.
        \begin{itemize}
            \item For all $n \geq j$, $L_n$ connects $l_1$ and $L_n' \in \mathcal{L}_\partial$ with $\partial L_n' \subseteq I_1$; and 
            \item for all $n < j$, $L_n$ connects $l_1$ and $L_n'\in \mathcal{L}_\partial$ with $\partial L_n' \subseteq I_2$
        \end{itemize}
        Then $h|_{\partial L_j} = \id$. The same argument as in Case (1) shows that $h|_{\partial l} = \id$ for each $l \in A_{l_1}^+$, and the claim follows.
    \end{proof}
    
    Let $l \in \mathcal{L} \cup \mathcal{L}_\partial$. By Lemma~\ref{lem:finitepath}, there exists a finite path $l^0=l_1, l^1,..., l^k = l$ so that $(l^i, l^{i+1}) \in \mathcal{X}_{link}$. Thus, we can inductively apply this argument and conclude that $h|_{\partial l} = \id$ for all $l \in \mathcal{L}\cup \mathcal{L}_\partial$. Since $|\partial \mathcal{L}|\cup |\partial \mathcal{L}_\partial|$ is dense in $\Lambda(G) \cup |\partial \mathcal{L}|$, we conclude that $h = \id$ in $\Homeo^+(\Lambda(G), \mathcal{L}, \mathcal{L}_\partial)$.
\end{proof}

\begin{figure}[ht]
\captionsetup{width=0.96\linewidth}
  \centering
  \includegraphics[width=0.45\textwidth]{Figures/SchottkyTwoComponents.png}
  \includegraphics[width=0.45\textwidth]{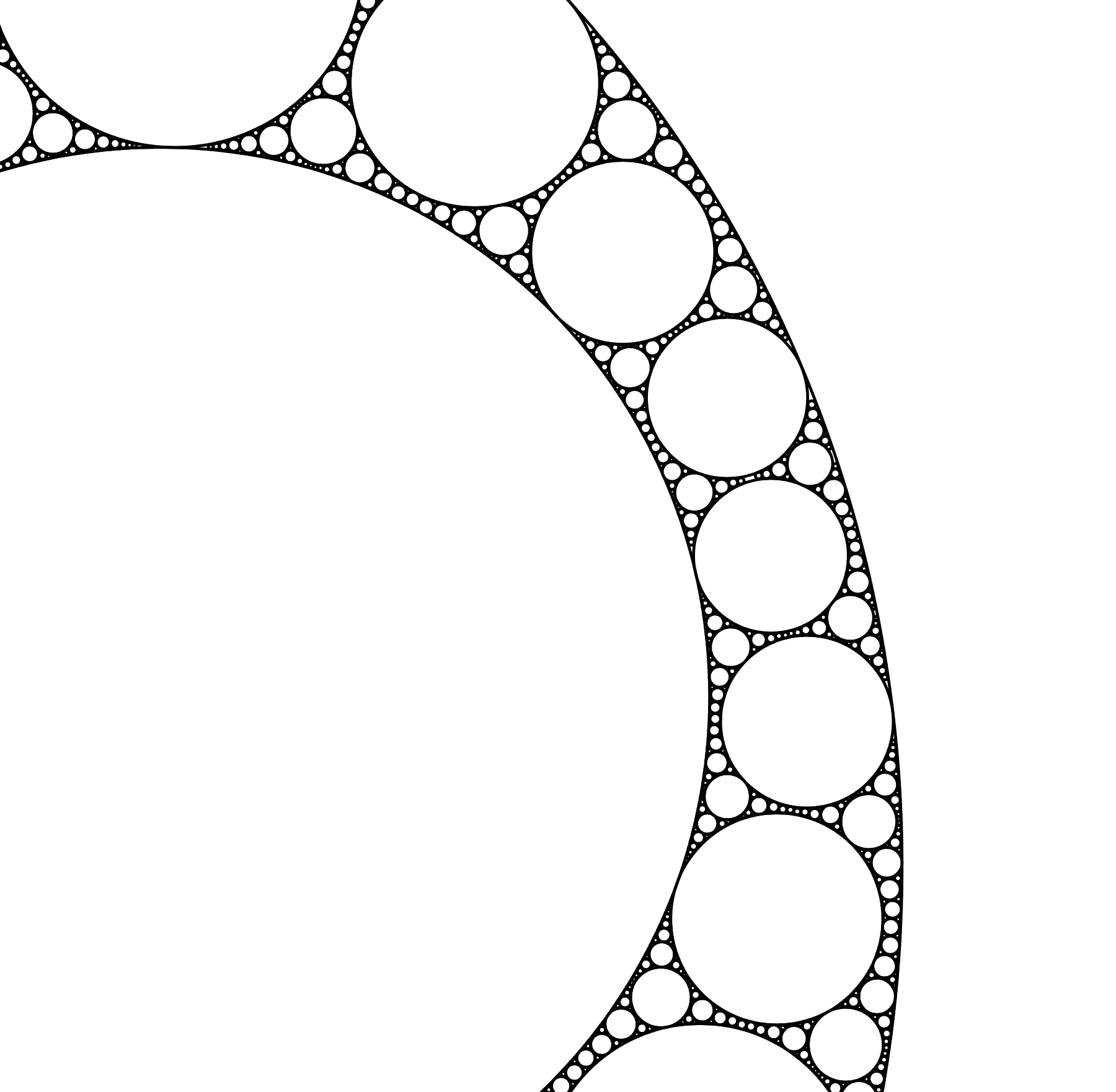}
  \caption{Left: A carpet-free Schottky set associated to $\Lambda(G)/{\sim_{\mathcal{L}}}$ for a cocompact Fuchsian group with an acylindrical lamination; its nerve has two components. Right: A zoom showing that the outer disk and the largest disk are not connected by any finite chain of touching disks.}
  \label{fig:SchottkyTwoComponents}
\end{figure}

As an immediate corollary, we have the following (see Figure~\ref{fig:SchottkyTwoComponents}).
\begin{cor}\label{cor:finiteindexfuchsian}
    Under the same assumption as in Theorem~\ref{thm:toprigidityconvexcocompact}, $G$ has finite index in $\Homeo^+(\Lambda(G), \mathcal{L}, \mathcal{L}_\partial)$.
\end{cor}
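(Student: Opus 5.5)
The plan is to apply Proposition~\ref{prop:finiteindex} with $H = \Homeo^+(\Lambda(G), \mathcal{L}, \mathcal{L}_\partial)$, the subgroup $G \le H$, and the set $X = \partial\mathcal{X}_{link}$ of pairs of endpoint-pairs of linked leaves. Lemma~\ref{lem:actionLinked} already shows that $H$, and hence $G$, acts on $\partial\mathcal{X}_{link}$. So it remains to verify hypotheses \eqref{prop:finiteindex:eqn2} and \eqref{prop:finiteindex:eqn3}.

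For \eqref{prop:finiteindex:eqn2}, I will use that $\mathcal{L}\cup\mathcal{L}_\partial$ consists of finitely many $G$-orbits of leaves, since it is the lift of a finite mixed multicurve together with $\partial S$. Next I show that each leaf $l_1$ is linked with only finitely many leaves modulo $\stab_G(l_1)$. For arc-type leaves and for linkings of type \eqref{eqn:defn:link1}, Lemma~\ref{lem:descriptionlamination} gives finitely many partners. For a loop-type leaf, the stabilizer is generated by a loxodromic element, and the leaves crossing it project to finitely many arcs or curves crossing the corresponding closed geodesic in $S$. This is a compactness argument on the convex core $S$, and I expect it to be routine. It follows that $X/G$ is finite.

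For \eqref{prop:finiteindex:eqn3}, fix $x = (\partial l_1, \partial l_2)$. Any $h \in \stab_H(x)$ permutes the finite set $\partial l_1 \cup \partial l_2$; this set has at most four points, or is a finite union of endpoints of $\mathcal{L}_\partial$-leaves in the arc case. So the restriction map $\stab_H(x) \to \mathrm{Sym}(\partial l_1\cup\partial l_2)$ has finite image. Its kernel consists of the $h$ fixing $\partial l_1\cup\partial l_2$ pointwise, and Theorem~\ref{thm:toprigidityconvexcocompact} says this kernel is trivial in $H$. Hence $\stab_H(x)$ is finite, and in particular $\stab_G(x)$ has finite index in it. Proposition~\ref{prop:finiteindex} then yields the corollary.

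The only step needing care is the finiteness of orbits of linked pairs for loop-type leaves in \eqref{prop:finiteindex:eqn2}. There I would argue in $S$: linked pairs correspond to intersection points between the projected geodesics of $\mathcal{C}^\pm$ and $\partial S$, together with configurations sharing a boundary leaf. These are finite in number because all curves involved are compact geodesics in general position.
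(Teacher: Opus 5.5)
Your proposal is correct and is essentially the paper's proof: apply Proposition~\ref{prop:finiteindex} to the action on $\partial\mathcal{X}_{link}$. Finiteness of $\partial\mathcal{X}_{link}/G$ comes from the finitely many arcs and curves of $\mathcal{C}^+\cup\mathcal{C}^-\cup\partial S$ and their finitely many intersection points in $S$, and finiteness of stabilizers comes from Theorem~\ref{thm:toprigidityconvexcocompact}. Your injection of $\stab_H(x)$ into the permutations of $\partial l_1\cup\partial l_2$ just makes the paper's one-line stabilizer claim explicit. Note that for an arc-type leaf, $\partial l$ is simply the two endpoints of the arc on boundary leaves, so this set always has at most four points.
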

\begin{proof}
    By Lemma~\ref{lem:actionLinked}, both $G$ and $\Homeo^+(\Lambda(G), \mathcal{L}, \mathcal{L}_\partial)$ act on $\partial \mathcal{X}_{link}$. Note that $\partial \mathcal{X}_{link}/G$ is finite as there are only finitely many pairs of arcs/curves in $\mathcal{C}^+\cup \mathcal{C}^- \cup \partial S$, and each pair of distinct arcs/curves intersects at finitely many points. By Theorem~\ref{thm:toprigidityconvexcocompact}, for each $x \in \partial \mathcal{X}_{link}$, both $\stab_G(x), \stab_H(x)$ are finite.
    The corollary now follows from Proposition~\ref{prop:finiteindex}.
\end{proof}

\subsection{Boundary-compressible 3–manifolds with acylindrical laminations}\label{sec:bcrigidity}
In this subsection, we discuss the interaction between the incompressible decomposition (see \S~\ref{subsec:icd}) with an acylindrical lamination $\mathcal{L}$. We prove various topological rigidity theorems in this setting, which provide another ingredient for the inductive steps in the proof of Theorem~\ref{thm:carpetfreeSchottkyimpliesrigid}.

Let $G$ be a minimally parabolic geometrically finite Kleinian group, with corresponding pared 3-manifold $(N, P_t)$, where $P_t$ only consists of tori. Suppose that $(N, P_t)$ has compressible boundary, i.e., $G$ has disconnected limit set.
Let us complete $P_t$ into an acylindrical paring $P_t\sqcup P_a$ for $N$. Let $\cL \subseteq \Omega(G)$ denote the lamination induced by $P_a$.
The lifts of the boundaries of the compression disks of $(N,P_t)$ to $\Omega(G)$ together with the leaves in $\mathcal{L}$ form {\em ivy buds} (see \S~\ref{sec:defnsw} and Definition~\ref{defn:spiderweb}).
We prove topological rigidity results for ivy buds (see Proposition~\ref{prop:twopointrigiditygeneral} and its corollaries). 
They are then used to analyze vertex groups of different types of the incompressible decomposition in \S~\ref{subsec:swtypeII} and \S~\ref{subsec:swtypeIII}.

\subsubsection{Acylindrical laminations}\label{subsec:al}
Let $G$ be a minimally parabolic geometrically finite Kleinian group, with corresponding pared 3-manifold $(N, P_t)$.
Let $\mathcal{E}:=\{D_1,..., D_s\}$ be a core decomposition; see \S\,\ref{subsec:icd}.

Let $\mathcal{C}$ be an {\em acylindrical multicurve} on $\partial_0N = \partial N \setminus P_t$.
This means that if $P_{\mathcal{C}}$ is the union of pairwise disjoint neighborhoods of the curves in $\mathcal{C}$ homeomorphic to annuli in $\partial_0 N$, 
then the pared $3$-manifold $(N, P_t \sqcup P_{\mathcal{C}})$ is acylindrical.
Let 
$\mathcal{L} = \mathcal{L}_\mathcal{C}$ 
be the geodesic lamination associated to the lift of $\mathcal{C}$ in $\Omega(G)$.
Let $\mathcal{D}$ be the circle lamination, i.e., the lift of geodesic representatives of the boundary of core compression disks.

\subsection*{Correspondence between components}
Since $\mathcal{L}$ is acylindrical, the quotient $\Lambda(G)/\sim_{\mathcal{L}}$ is homeomorphic to a Schottky set.
It is useful to keep in mind the following correspondences between the quotient and the lamination.

\begin{equation}\label{eqn:correspodencecompcontact}
\begin{array}{rcl}
\text{complementary components of $\Lambda(G)/\sim_{\mathcal{L}}$}
& \longleftrightarrow &
\text{components of } \widehat{\C} \setminus \overline{|\mathcal{L}|}, \\[4pt]
\text{contact points between components}
& \longleftrightarrow &
\text{leaves in } \mathcal{L}.
\end{array}
\end{equation}

\begin{lem}\label{lem:threeleaves}
    Each Jordan curve $C\in \mathcal{D}$ intersects only finitely many and at least three leaves in $\mathcal{L}$.
\end{lem}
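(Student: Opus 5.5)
Each $C\in\mathcal{D}$ is the lift of the geodesic representative $\gamma$ of $\partial D_i$ for some core compression disk $D_i$, and lies in a component $\Delta$ of $\Omega(G)$. Let $H=\stab_G(C)$; since $D_i$ is a disk, $\partial D_i$ is null-homotopic in $N$, so $H$ is trivial and $C$ is a compact Jordan curve inside $\Delta$.

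For finiteness, we use that $|\mathcal{L}|$ is closed in $\Omega(G)$ and that $\mathcal{L}$ is the lift of finitely many closed geodesics $\mathcal{C}$ on the compact part of the conformal boundary (away from cusps). A compact set $C\subset\Delta$ meets only finitely many lifts of a closed geodesic, because the leaves form a locally finite family in the Poincar\'e metric of $\Delta$. Equivalently, the closed geodesic $\gamma$ and the closed multicurve $\mathcal{C}$ on $\partial_0N$ intersect in finitely many points, and each intersection point of $\gamma$ with a curve of $\mathcal{C}$ lifts to exactly one point of $C$ since $C$ projects homeomorphically onto $\gamma$. Hence $C$ meets finitely many leaves, each in finitely many points.

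For the lower bound we use acylindricity of $(N,P_t\sqcup P_{\mathcal{C}})$. First, $\gamma$ must meet $\mathcal{C}$. Otherwise $D_i$ is a compression disk whose boundary lies in $\partial N\setminus(P_t\sqcup P_{\mathcal{C}})$. That would be an essential disk disjoint from the paring, which contradicts acylindricity, since that notion forbids essential disks disjoint from $P$. So $C$ meets at least one leaf.

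Next, we rule out $C$ meeting exactly one or exactly two leaves. The key facts are that leaves are geodesics, $C$ is a geodesic-type Jordan curve in $\Delta$ (a lift of a geodesic bounding a disk in $N$), and geodesics intersect minimally.
\begin{enumerate}
\item A leaf $l$ that meets $C$ must cross it an even number of times. This is because the endpoints of $l$ lie in $\Lambda(G)$, and $\Lambda(G)$ meets both sides of $C$ (each side contains limit points, since $C$ separates $\Lambda(G)$ when the disk is essential).
\item A complete geodesic in $\Delta$ and the Jordan curve $C$, as lifts of geodesics, cross at most twice and essentially once in each direction, so each leaf meeting $C$ crosses it exactly twice.
\item Suppose only one leaf $l$ meets $C$. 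Then the arc of $C$ together with a subarc of $l$ bounds a disk, and projecting gives a disk in $N$ whose boundary meets the annulus $P_{\mathcal{C}}$ in a single arc. Band-summing $D_i$ with the corresponding annulus of $P_{\mathcal{C}}$ produces an essential disk or an essential annulus disjoint from the paring. This contradicts acylindricity.
\item Suppose exactly two leaves $l_1,l_2$ meet $C$, each twice. Then the four intersection points split $C$ into arcs, and the strip between $l_1$ and $l_2$ on one side produces an annulus. Concretely, the boundary of a regular neighborhood of $D_i\cup(\text{the relevant annuli of }P_{\mathcal{C}})$ contains an essential cylinder (or a compression disk) with boundary in $\partial_0 N\setminus P_{\mathcal{C}}$. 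This again contradicts acylindricity.
\end{enumerate}

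The main obstacle is to carry out the two-leaf case cleanly: one has to exhibit the forbidden cylinder explicitly and check that it is essential, meaning not boundary-parallel. The likely way to do this is to argue that its core curve is homotopic to the loop made of the two arcs of $C$ and the two leaves' arcs, which is nontrivial in $\pi_1(N)$. Incompressibility then follows from the disk being essential.
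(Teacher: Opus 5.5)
Your finiteness argument is the same as the paper's. Your treatment of the case where $C$ meets no leaf is a valid alternative: acylindricity of $(N,P_t\sqcup P_{\mathcal{C}})$ forbids a compression disk disjoint from the paring, where the paper instead notes that $\Lambda(G)/\sim_{\mathcal{L}}$ would be disconnected.

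The one- and two-leaf cases, however, rest on claims that are false. In step (1), the fact that $\Lambda(G)$ meets both sides of $C$ says nothing about where the two endpoints of a \emph{given} leaf lie, and it is their position that governs the parity of crossings. In fact the claim always fails for some leaf. Let $D^\pm$ be the components of $\widehat\C\setminus C$ and $\Lambda^\pm=\Lambda(G)\cap D^\pm$. If every leaf meeting $C$ had both endpoints on the same side, no point of $\Lambda^+$ could be identified with a point of $\Lambda^-$. The quotient would then be disconnected, contradicting that it is a Schottky set. So some leaf has endpoints on opposite sides of $C$ and crosses it an odd number of times.

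Step (2) is also unjustified. $\Delta$ is an infinite-type planar surface, and a region bounded by an arc of $l$ and an arc of $C$ that contains limit points is not a bigon. Hence two geodesics in minimal position can cross many times. For the same reason, the disk in step (3) contains limit points and does not project to a disk in $N$. Steps (3)--(4) therefore start from the wrong configuration. Even setting that aside, the band-sum and regular-neighborhood surfaces are only asserted, not shown to be essential and disjoint from the paring, as you acknowledge.

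The missing idea is to argue in the quotient, where the number of crossings is irrelevant. The sets $\Lambda^\pm$ are nonempty and closed. Suppose $x\in\Lambda^+$ and $y\in\Lambda^-$ are identified by $\sim_{\mathcal{L}}$. The union of the leaf closures realizing the identification is connected and joins $D^+$ to $D^-$. Since $C\subset\Omega(G)$, that union meets $C$ in the interior of some leaf meeting $C$.

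Each leaf closure collapses to a single point, however many times the leaf crosses $C$. So the images of $\Lambda^+$ and $\Lambda^-$ cover $\Lambda(G)/\sim_{\mathcal{L}}$ and intersect in at most $k$ points, where $k$ is the number of leaves meeting $C$. For $k=0,1,2$ this gives, respectively, a disconnection, a cut point, or a cut pair (equivalently, two complementary disks whose boundaries meet twice). None of these occur in a Schottky set by \cite[Proposition 5.3]{ph:lp:gw:schottky}, and $\Lambda(G)/\sim_{\mathcal{L}}$ is a Schottky set precisely because $\mathcal{L}$ is acylindrical. This is the paper's argument.
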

\begin{proof}
    Note that under the quotient map $\Omega(G) \rightarrow \Omega(G)/G$, $C$ is mapped to the boundary of a compression disk homeomorphically. Thus, there are finitely many intersections.

    If there is no leaf intersecting $C$, then $\Lambda(G)/\sim_{\mathcal{L}}$ is disconnected.
    If there is exactly one leaf intersecting $C$, then $\Lambda(G)/\sim_{\mathcal{L}}$ has a cut point.
    If there are exactly two leaves intersecting $C$, then there are two complementary components of $\Lambda(G)/\sim_{\mathcal{L}}$ intersecting at (at least) two points so that $\Lambda(G)/\sim_{\mathcal{L}}$ has a cut pair.
    None of the above is possible since $\Lambda(G)/\sim_{\mathcal{L}}$ is a Schottky set \cite[Proposition 5.3]{ph:lp:gw:schottky}.
    Therefore, $C$ intersects at least three leaves in $\mathcal{L}$.
\end{proof}

\subsection*{Cycles in the nerve}
Recall that the {\em nerve} of a Schottky set is an infinite planar graph whose vertices are complementary components and two vertices are connected by an edge if the corresponding components touch \cite[Def. 6.1]{ph:lp:gw:schottky}.
\begin{defn}
A {\em cycle} in a graph $\Gamma$ is a closed path consisting of finitely many vertices and edges. It is {\em simple} if the vertices and edges are all distinct.
A simple cycle is an {\em induced simple cycle} if it is an induced subgraph, i.e, it contains all edges in $\Gamma$ between its vertices.
\end{defn}
Note that given a simple cycle, there is a canonical way to decompose it into finitely many induced simple cycles.
The following correspondence between the Schottky set and the corresponding lamination is useful to record.
\begin{lem}\label{lem:cycle}
    Let $C\in \mathcal{D}$. It induces a cycle $\mathcal{C}= \mathcal{C}_C$ in the nerve of the Schottky set $\Lambda(G)/\sim_{\mathcal{L}}$, where 
\[
\begin{array}{rcl}
\text{vertices of the cycle}
& \longleftrightarrow &
\text{components of } C \setminus |\mathcal{L}|, \\[4pt]
\text{edges of the cycle}
& \longleftrightarrow &
\text{points in } |\mathcal{L}|\cap C.
\end{array}
\]
\end{lem}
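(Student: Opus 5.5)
Let $C\in\mathcal{D}$ and write $C\cap|\mathcal{L}|=\{p_1,\dots,p_n\}$, listed in cyclic order along $C$. By Lemma~\ref{lem:threeleaves} this set is finite, and $n\ge 3$ since each point lies on a distinct leaf or a leaf crosses $C$ at least twice. Since $C$ meets the lamination transversally in finitely many points, $C\setminus|\mathcal{L}|$ consists of $n$ open arcs $\alpha_1,\dots,\alpha_n$, where $\alpha_i$ runs from $p_i$ to $p_{i+1}$ (indices mod $n$). I propose to build the cycle directly from this decomposition, using the correspondence \eqref{eqn:correspodencecompcontact}.

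\emph{Vertices.} Each arc $\alpha_i$ is connected. It is disjoint from $|\mathcal{L}|$, and also from $\Lambda(G)$ because $C\subseteq\Omega(G)$. Moreover $\overline{|\mathcal{L}|}\setminus|\mathcal{L}|\subseteq\Lambda(G)$, since $|\mathcal{L}|$ is closed in $\Omega(G)$. Hence $\alpha_i$ lies in a single component $V_i$ of $\widehat\C\setminus\overline{|\mathcal{L}|}$. By \eqref{eqn:correspodencecompcontact}, $V_i$ corresponds to a complementary component of the Schottky set $\Lambda(G)/\sim_{\mathcal{L}}$, that is, to a vertex of the nerve.

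\emph{Edges.} The point $p_i$ lies on a leaf $l_i\in\mathcal{L}$. The two arcs $\alpha_{i-1}$ and $\alpha_i$ approach $p_i$ from opposite sides of $l_i$, so $V_{i-1}$ and $V_i$ are the two components of $\widehat\C\setminus\overline{|\mathcal{L}|}$ adjacent along $l_i$. Here I use that a leaf of a geodesic lamination in $\Omega(G)$ has exactly one gap on each side. This uses that $G$ is geometrically finite, so that $\mathcal{L}$ is the lift of finitely many closed geodesics and leaves do not accumulate inside $\Omega(G)$.

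Under \eqref{eqn:correspodencecompcontact}, the leaf $l_i$ corresponds to the contact point obtained by collapsing $\overline{l_i}$. This contact point is exactly where the complementary components associated with $V_{i-1}$ and $V_i$ touch. Therefore $p_i$ determines an edge between $V_{i-1}$ and $V_i$. Traversing $C$ once gives the closed path $V_1,l_1,V_2,\dots,V_n,l_n,V_1$ in the nerve, which realises both bijections in the statement.

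\emph{Main obstacle.} The delicate step is making sure this is a genuine finite cycle whose vertices and edges are \emph{labelled} by the arcs and the points $p_i$, rather than a cycle whose vertices are distinct gaps. Different arcs $\alpha_i$ may well lie in the same gap $V$, since $C$ can cross a leaf and later return, so the path need not be simple. The lemma only asserts the correspondence at the level of the combinatorial closed path, and the remark preceding it says how to decompose such a path into induced simple cycles.

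So I would phrase the correspondence as a map of cycles: the cyclic graph with $n$ vertices and $n$ edges maps into the nerve. The one thing to check is that consecutive vertices are distinct, so that no loop edges occur. This holds because the two sides of a leaf $l_i$ lie in different components of $\widehat\C\setminus\overline{|\mathcal{L}|}$: the Jordan curve $\overline{l_i}$ separates the sphere, and no other part of $\overline{|\mathcal{L}|}$ reconnects the two sides near $p_i$.
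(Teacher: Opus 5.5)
Your construction is the paper's own: list $C\cap|\mathcal{L}|$ cyclically, put each complementary arc of $C$ in a single gap, pass to complementary components of $\Lambda(G)/\sim_{\mathcal{L}}$ via \eqref{eqn:correspodencecompcontact}, and note that the components on the two sides of each crossing point touch at the point obtained by collapsing the leaf through it. The paper does exactly this in three lines. It does not argue separately that consecutive vertices are distinct. Instead it treats this as part of \eqref{eqn:correspodencecompcontact}: a leaf corresponds to a contact point between two complementary components.

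Your extra justification of that distinctness, however, does not work.
\begin{itemize}
\item The closure $\overline{l_i}$ is not a Jordan curve. Here $G$ is minimally parabolic and $(N,P_t\sqcup P_{\mathcal{C}})$ is a pared manifold, so the curves of $\mathcal{C}$ are loxodromic. Each leaf therefore has two distinct endpoints in $\Lambda(G)$, so $\overline{l_i}$ is an arc and separates nothing.
\item Since $\Lambda(G)$ is disconnected in this subsection, $l_i$ need not even separate its component of $\Omega(G)$, for instance when its endpoints lie on opposite sides of $C$. So no argument local to $p_i$ can give separation.
\end{itemize}

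The correct reason is global and uses the Schottky structure, exactly as in the proof of Lemma~\ref{lem:threeleaves}. Suppose one gap contained both sides of $l_i$ near $p_i$. Join the two sides by an arc in that gap and close it up with a short transversal through $p_i$. The resulting Jordan curve lies in $\Omega(G)$ and meets $|\mathcal{L}|$ only at $p_i$. Hence it separates the two endpoints of $l_i$, together with nearby limit points, and no other leaf joins its two sides. Its image in the quotient is a Jordan curve meeting $\Lambda(G)/\sim_{\mathcal{L}}$ only at $[\overline{l_i}]$, with points of the quotient on both sides. So $[\overline{l_i}]$ would be a cut point, which a Schottky set does not have.

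Two minor points. First, $n\ge 3$ holds simply because, by Lemma~\ref{lem:threeleaves}, at least three pairwise disjoint leaves meet $C$. Second, local finiteness of $|\mathcal{L}|$ in $\Omega(G)$ comes from $\mathcal{L}$ being the lift of a finite multicurve, not from geometric finiteness.
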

\begin{proof}
    Denote the points in $|\mathcal{L}|\cap C$ by $a_1,..., a_k$ labeled cyclically and denote the components of $C \setminus |\mathcal{L}|$ by $I_j = (a_j, a_{j+1})$.
    Then each $I_j$ is contained in a unique component of $C \setminus |\mathcal{L}|$, and thus a complementary component $U_j$ of $\Lambda(G)/\sim_{\mathcal{L}}$ by \eqref{eqn:correspodencecompcontact}.
    Moreover, the closures of the two components $U_j, U_{j+1}$ touch.
    Therefore, $C$ induces a cycle in the nerve of the Schottky set $\Lambda(G)/\sim_{\mathcal{L}}$. 
\end{proof}

\subsubsection{Compatible core decomposition}\label{subsec:compatiblecompressiondisks}
Note that in general, the cycle in Lemma~\ref{lem:cycle} may not be simple nor induced.
In this subsection, we show that we can always select the compression disks so that the cycles are induced simple cycles.
\begin{defn}
    Let $(N,P)$ be a pared 3-manifold for a minimally parabolic geometrically finite Kleinian group. Let $\mathcal{L}$ be an acylindrical lamination.   
    We say that a collection of compression disks is {\em compatible} (with respect to the acylindrical lamination $\mathcal{L}$) if for each Jordan curve above the boundary of  one of its compression disks, the corresponding cycle $\mathcal{C}_C$ is an induced simple cycle.
\end{defn}
\begin{prop}\label{prop:admiss}
There exists a compatible core decomposition $\cE$ of compression disks.
\end{prop}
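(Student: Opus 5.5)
Start from an arbitrary core decomposition $\cE=\{D_1,\dots,D_s\}$ with each $\partial D_i$ a closed geodesic on $\Omega(G)/G$. Among all core decompositions, choose one minimizing the complexity
$$
c(\cE)=\sum_{i=1}^s \#\bigl(\partial D_i\cap \mathcal{C}\bigr),
$$
the total geometric intersection number of the boundary curves with the acylindrical multicurve $\mathcal{C}$. We then show that a minimizer is compatible. By Lemma~\ref{lem:threeleaves}, each lift $C\in\mathcal{D}$ meets at least three leaves of $\mathcal{L}$. By Lemma~\ref{lem:cycle}, $C$ determines a cycle $\mathcal{C}_C$ in the nerve of the Schottky set $\Lambda(G)/\sim_{\mathcal{L}}$.

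Suppose $\mathcal{C}_C$ is not simple. Then two distinct arcs $I_j, I_k$ of $C\setminus|\mathcal{L}|$ lie in the same complementary component $V$ of $\widehat\C\setminus\overline{|\mathcal{L}|}$, which is a gap. Join a point of $I_j$ to a point of $I_k$ by an arc $\beta$ inside $V\cap\Omega(G)$; we may take $\beta$ inside the gap, away from $\Lambda(G)$. The arc $\beta$ cuts $C$ into two subarcs $C_1$ and $C_2$, and the Jordan curves $C_1\cup\beta$ and $C_2\cup\beta$ each meet $|\mathcal{L}|$ strictly fewer times than $C$.

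Suppose instead $\mathcal{C}_C$ is simple but not induced. Then two non-consecutive vertices $U_j,U_k$ of the cycle touch at a leaf $\ell\in\mathcal{L}$ not crossing $C$. Take $\beta$ to cross $\ell$ once, running from $I_j$ through $U_j$, across $\ell$, and through $U_k$ to $I_k$. Again the two new curves each meet $|\mathcal{L}|$ fewer times than $C$, since the original count was at least $4$ when there is a chord.

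The key step is to make this surgery equivariant and to keep a core decomposition. Work in the quotient $N$. The arc $\beta$ projects to an arc in $\partial_0 N$ from $\partial D_i$ to itself, disjoint from the other disks after isotopy. Band-summing, that is, cutting $D_i$ along $\beta$ in the standard disk-exchange way, produces two disks $D_i',D_i''$ with boundaries $C_1\cup\beta$ and $C_2\cup\beta$ projected down. At least one of them is essential, because $C$ separates $\Lambda(G)$: its two sides contain limit points. Here I would argue that some new curve still separates two nontrivial parts of the limit set, since both sides of $C$ meet $\Lambda(G)$. Replacing $D_i$ by that disk, or by both if needed and then discarding redundant ones by minimality, yields again a core decomposition. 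Indeed, cutting along the new system gives pieces that are balls or have incompressible boundary, by the standard handle-slide argument: the new disk is obtained from $D_i$ by sliding along the ball component adjacent to it. The new complexity is strictly smaller, contradicting minimality.

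The main obstacle I expect is verifying that after this exchange the system is still a core decomposition and that the essential disk chosen genuinely reduces $c$. The first issue is that minimality and the "ball or incompressible" condition could fail. The second is that $\beta$ may have to pass near parabolic points in the rank-two case. I would handle the first via the uniqueness of the incompressible core $\Sigma$ (\cite[Theorem 3.7]{bonahon:3-var}): the new disks still lie in $N\setminus\Sigma$ and cut off the same core components, so the complementary pieces remain balls or components of $\Sigma$. I would handle the second by keeping $\beta$ in the compact part of the gap, which is possible because gaps are open subsets of $\Omega(G)$.
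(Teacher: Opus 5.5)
Your route (minimize $c(\mathcal{E})$ over core decompositions, then do a disk exchange) is different from the paper's. It breaks at exactly the two points you flag as the main obstacle.

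\textbf{First gap: the surgery arc.} An arc $\beta$ in the gap joining $I_j$ to $I_k$ may be forced to cross other curves of $\mathcal{D}$, either lifts of other $\partial D_{i'}$ or translates $gC$. When a crosscut of such a curve separates $I_j$ from $I_k$ in the gap, no isotopy removes the crossing. An innermost choice repairs this in the non-simple case. A curve of $\mathcal{D}$ disjoint from $C$ has $I_j,I_k$ on the same side, so by parity it meets the simply connected gap in at least two crosscuts between them; it is therefore itself non-simple with a nested pair.

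You also need $p(\beta)$ to be an \emph{embedded} arc in $\partial_0 N$. Otherwise $p(C_1\cup\beta)$ is not a simple closed curve, and Dehn's lemma gives you no embedded disk. This is not automatic: in a general surface, not every homotopy class of arcs has a simple representative. It is exactly the content of the paper's Lemma~\ref{lem:crosscut}. That lemma uses planarity of $\tilde S$, simple connectivity of the gaps, and the hypothesis that the crosscut meets $|\mathcal{L}|$ at most once to deform it to one with embedded projection.

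\textbf{Second gap: the exchange step.} Essentiality does not select a valid replacement.
\begin{itemize}
\item If $\beta$ runs on a ball piece $X$ carrying both copies $D_i^\pm$, the arc splits $\partial X\setminus D_i^+$ into two regions. Keeping the disk on the side not containing $D_i^-$ creates a solid-torus piece, even when that disk is essential.
\item If $\beta$ runs on an incompressible piece $X$, then both new curves bound disks in $\partial X$, because $\pi_1X\hookrightarrow\pi_1N$ and $\partial X$ is incompressible. The move is then a handle slide over other disks of $\mathcal{E}$, not a slide across a ball. The wrong choice glues $X$ to the piece on the other side of $D_i$ along a disk, which is compressible unless that piece is a ball.
\end{itemize}
Uniqueness of $\Sigma$ does not rescue this: with the wrong choice, the complementary pieces are simply no longer $\Sigma$ plus balls.

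Your fallback of keeping both disks does not lower the sum complexity either. The two new curves meet $|\mathcal{L}|$ in $c(C)$ points in total in the non-simple case, and $c(C)+2$ in the non-induced case.

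\textbf{How the paper avoids this.} It never modifies a given core decomposition. Instead it picks the curves one at a time, each a simple compatible loop in $H\setminus H_q$ (Proposition~\ref{prop:surg}). Crosscut surgery gives $w\in\llangle w_1,w_2\rrangle$, so one surgered loop stays outside the normal closure $H_q$. This bookkeeping replaces your ``which disk to keep'' step, and termination comes from Ahlfors finiteness.

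Your route might be salvageable with a multiset ordering on the intersection numbers, keeping both surgered disks and passing to a minimal subsystem at the end. That would have to be argued, and the embeddedness issue from the first gap would still need Lemma~\ref{lem:crosscut} or an equivalent.
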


We will build our argument from Maskit's analysis of planar coverings of surfaces \cite{maskit:planar}.  
We localize our construction as follows. Let $S\subset \partial N$ be a boundary component, let $\tilde{N}$ be the universal covering of $N$ and let $\tilde{S}$ be
a component of $\partial\tilde{N}$ lying over $S$. Then $p:\tilde{S}\to S$ is a regular covering over a surface of finite type $S$ with
defining normal group $H < \pi_1(S)$, planar surface $\tilde{S}$ and deck transformation group $G$. 

Let us consider a multicurve $\cC_{\cL}\subset S$ such that $\cL= p^{-1}(\cC_{\cL})$ is a collection of arcs such that the connected components of $\tilde{S}\setminus |\cL|$ are open and simply
connected. 

\medskip

We will make use of the following construction. 

\begin{lem}[Crosscut surgery]\label{lem:crosscut}
Let $w\subset S$ be a simple loop in $H$ that  lifts  homeomorphically to a loop $W$ in $\tilde{S}$ under $p$. 
Let us consider a crosscut $C\subset \tilde{S}$ that joins 
two distinct points of $W$ that is disjoint from $G(W)$ otherwise, and let us consider a neighborhood $\tilde{S}'$ of $W\cup C$. If $C\cap |\cL|$ has at most one point, then $C$ can be deformed
in $\tilde{S}'$ so that $p(C)$ is an arc in $S$ and  there exist two simple loops $w_1$ and
$w_2$ in $H$ that are contained in $p(\tilde{S}')$ such that  $w\in \llangle  w_1, w_2\rrangle$.
 \end{lem}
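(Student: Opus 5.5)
We will prove the Crosscut surgery lemma by a local cut-and-paste argument carried out inside the regular neighborhood $\tilde S'$ of $W\cup C$. The crosscut $C$ splits the Jordan curve $W$ into two arcs $W_1,W_2$ with common endpoints $x,y$, and $W_i\cup C$ are two simple closed curves in $\tilde S'$; pushing them slightly off $W\cup C$ inside $\tilde S'$ gives disjoint simple loops $W_1',W_2'$ with $W$ homotopic (in $\tilde S'$) to the band sum of $W_1'$ and $W_2'$ along $C$. Since $\tilde S$ is planar and $W$ is a Jordan curve, the pair of pants bounded by $W_1',W_2'$ and a push-off of $W$ shows that, after choosing base-point paths along $C$, $[W]=[W_1'][W_2']^{\pm1}$ in $\pi_1(\tilde S')$. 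Projecting by $p$, we set $w_i=p(W_i')$; since $W_i'\subset\tilde S$ are closed loops, their classes lie in $H=p_*\pi_1(\tilde S)$, and $w=w_1w_2^{\pm1}$ gives $w\in\llangle w_1,w_2\rrangle$ (even in the subgroup, a fortiori in the normal closure).

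The first step is to arrange that $p|_{C}$ is injective so that $p(C)$ is an arc and $w_1,w_2$ are simple. Because $C$ is disjoint from $G(W)$ except at its endpoints and $p|_W$ is injective, the only self-intersections of $p(C\cup W)$ come from pairs $C\cap gC\neq\emptyset$ for $g\in G\setminus\{\id\}$. We deform $C$ rel endpoints within $\tilde S'$ by a standard innermost-bigon argument: take $C$ to minimize $|C\cap G(C)|$ in its homotopy class rel $W$; an intersection point of $C$ with $gC$ produces, via the planarity of $\tilde S$ and the fact that $gW$ and $W$ are disjoint Jordan curves, a bigon or a half-bigon bounded by subarcs of $C$, $gC$ and possibly $W$, which can be removed equivariantly. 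Here the hypothesis $|C\cap|\cL||\le1$ is used to ensure that the deformation can be chosen to keep $C$ meeting $|\cL|$ in at most one point (the isotopy moves $C$ only within gaps of $\cL$ or across the single leaf it meets, which is itself embedded under $p$), which is what is needed later to control the induced cycles.

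Once $p|_C$ is injective and $p(C)\cap p(W)=p(\partial C)$, the neighborhood $\tilde S'$ can be chosen thin enough to embed under $p$ (it is a regular neighborhood of the embedded graph $W\cup C$, whose image is an embedded theta-graph in $S$), so $w_1,w_2$ are simple loops contained in $p(\tilde S')$, finishing the lemma. The main obstacle is the equivariant removal of intersections $C\cap gC$: one must check that the innermost bigon does not contain a translate of $W$ (this is where disjointness of $C$ from $G(W)$ and planarity of $\tilde S$ enter) and that the simultaneous modification along the $G$-orbit does not create new intersections; I would first try handling it by choosing $C$ to be a shortest representative in a $G$-invariant hyperbolic metric, where geodesic arcs automatically realize minimal intersection.
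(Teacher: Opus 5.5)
Your steps (a) and (c) are fine and match the last paragraph of the paper's proof: once $p(C)$ is an arc whose interior misses $w$, push-offs of $W_\pm\cup C$ give simple loops $w_1,w_2\in H$ in $p(\tilde S')$ with $w\in\llangle w_1,w_2\rrangle$.

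\textbf{The gap is step (b), which is the entire content of the lemma.} An innermost-bigon argument needs the region cut out by $C$ and a translate $gC$ to be a disk in $\tilde S$ that can be collapsed without crossing $G(W)$. Planarity and $C\cap G(W)=\partial C$ do not guarantee this, because $\tilde S$ has many ends and a bigon can enclose them.

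Here is a concrete obstruction.
\begin{itemize}
\item Take $S$ of genus two, $\tilde S$ its Schottky cover, and $w=a_1$.
\item Let $R$ be a component of $\tilde S\setminus G(W)$ with $W\subset\partial R$. Its stabilizer is infinite cyclic, generated by some $g$ (the image of a conjugate of $b_2$), and $g(W)\subset\partial R$.
\item Let $C\subset R$ leave $W$, run once around $g(W)$, and return along a parallel path.
\end{itemize}
This $C$ is an embedded crosscut meeting $G(W)$ only at its endpoints. Yet it must cross $g^{-1}(C)$, which encircles the disk bounded by $W$ on the side away from $R$, and that disk contains ends of $\tilde S$.

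Algebraically, close $p(C)$ up along $w$ in the cut surface $S\setminus\!\setminus w$. This gives the class $\beta\,\partial\,\beta^{-1}$, where $\partial$ is the boundary curve through the endpoints and $\beta$ maps to $g\neq 1$, so $\beta\notin\langle\partial\rangle$. A simple arc would instead give a class in $\langle\partial\rangle$. Hence no deformation of $C$ avoiding $G(W)$ makes $p(C)$ simple.

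Your geodesic fallback does not help either: a shortest representative still self-intersects when the class has no simple representative. Any argument that uses the hypothesis on $C\cap|\cL|$ only as bookkeeping, as yours does, therefore cannot work.

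\textbf{The missing idea} is how that hypothesis, together with simple connectivity of the components of $\tilde S\setminus|\cL|$, rules out essential bigons. The paper argues as follows.
\begin{itemize}
\item Homotope $p(C)$ so that it has a single multiple point $o$, taken to be the image of $C\cap|\cL|$ if that set is nonempty.
\item Write $p(C)=v_1''\cdot v_2\cdots v_n\cdot v_1'$ with $n$ minimal, and suppose $n\ge 2$.
\item The lift of $v_1\cdots v_n$ from $\tilde o\in C$ is $W_1=W_+\cup C$. The lift $W_2$ of $v_2\cdots v_n\cdot v_1$ from $\tilde o$ is a translate of $W_1$.
\item Minimality forces $W_1$ and $W_2$ to cross transversally at $\tilde o$. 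By the Jordan curve theorem in the planar surface $\tilde S$, they meet again at some $\tilde o'$ over $o$.
\item Since $C\cap|\cL|$ would otherwise contain both $\tilde o$ and $\tilde o'$, we get $o\notin|\cC_{\cL}|$ and hence $C\cap|\cL|=\emptyset$.
\item The loop $A_2\cdot B_1$, built from the subarcs of $W_2$ and $W_1$ between $\tilde o$ and $\tilde o'$ that avoid lifts of $v_1$, then lies in a single component of $\tilde S\setminus|\cL|$.
\item That component is simply connected, so the loop is null-homotopic and $n$ can be decreased, a contradiction.
\end{itemize}
In your terms, this is exactly the statement that the bigon contains no end of $\tilde S$ and can be removed. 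Your proposal asserts that step but cannot justify it from the ingredients you name.
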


\begin{proof} The crosscut $C$ decomposes $W$ into two arcs $W_+$ and $W_-$ parametrized so that $W=W_+\cdot W_-$ is their concatenation. 
It follows that $w=w_+\cdot w_-$ where $w_\pm=p(W_\pm)$. Let $D$ be the Jordan domain bounded by $W$ that contains $C$. 
The image $p(C)$ is a path that joins two points of $w$. We may deform $p(C)$ (and $C$) so that $p(C)$ has at most a single multiple point $o\in S$  
in its interior while keeping its endpoints fixed. 
Thus $p(C)$  can be written as a concatenation of two arcs $v_1', v_1''$ that joins $o$ to $w$ and $(n-1)$ simple closed curves $v_2,\ldots, v_n$ that either intersect exactly at $o$ or that coincide up to their orientation, i.e., $p(C)= v_1''\cdot v_2\cdot \ldots \cdot v_n \cdot v_1'$. We assume that $n$ is minimal.  If $n=1$, then $p(C)$ is an arc, so let us assume that  $n\ge 2$
and write  $v_1= v_1'\cdot  w_+\cdot v_1''$.  
If $C$ intersects $\cL$, then we choose the origin $o$  to be this intersection point. Note that  each curve $v_j$  intersects $|\cC_{\cL}|$ only at $o$, $1\le j\le n$.

Let us choose a lift $\tilde{o}\in C$ of $o$ so that $v_1'$ lifts to an arc in $C$ that joins $\tilde{o}$ to the origin of $W_+$. It follows that the lift $W_1$ of $v_1\cdot\ldots \cdot  v_n$ 
starting at $\tilde{o}$ coincides setwise with $W_+\cup C$. 
Let $W_2$ be the lifting of $v_2\cdot \ldots \cdot v_n \cdot v_1$ starting also at $\tilde{o}$. If the loop $W_2$ starts and ends on the same side of $D\setminus C$, then $v_1''\cdot v_2$ can 
be deformed in the neighborhood $S'=p(\tilde{S}')$ of $v_1''\cdot v_2$ 
close to $o$ to be disjoint from the curves $\cC_{\cL}\cup\{ v_1',v_3,\ldots,v_n\}$. 
This contradicts the minimality of $n$. It follows that $W_1$ and $W_2$ intersect transversely 
at $\tilde{o}$. From the planarity of $\tilde{S}$, it follows that $W_1$ and $W_2$ have a second point $\tilde{o}'$ in
common, where $p(\tilde{o}') = p(\tilde{o}) = o$. This implies that $C\cap |\cL|=\emty$ for otherwise $C\cap |\cL|$ would contain at least the two points
$\tilde{o}$ and $\tilde{o}'$.
We now write $W_1=  A_1\cdot B_1$ and $W_2= A_2\cdot B_2$ where $A_1$, $B_1^{-1}$, $A_2$ and $B_2^{-1}$ 
are arcs joining $\tilde{o}$ to $\tilde{o}'$ and where $B_1$ and $A_2$ do not contain any lift of $v_1$. 
Thus $A_2\cdot B_1$
is a  loop based at $\tilde{o}$ contained in $(\tilde{S}\setminus |\cL|)$.
Since the components of $\tilde{S}\setminus |\cL|$ are simply connected, 
it follows that $A_2\cdot B_1$ is homotopic to $\tilde{o}$. This also contradicts the minimality of $n$. Therefore, $C$ can be deformed in $\tilde{S}'$ so that $p(C)$ is an arc in $S$.

 To complete the proof, we deform the Jordan curves $W_+\cup C$ and $W_-\cup C$ into $D\setminus C$ so as to define two simple closed curves $W_1$ and $W_2$ in $\tilde{S}'$. 
 Their projections $w_1=p(W_1)$ and $w_2=p(W_2)$ are simple loops in $S$ that are in $H$ 
and that have been defined in such a way that $w\in  \llangle w_1, w_2 \rrangle$ in $H$.
\end{proof}

Let us now also consider a collection  $\cC_{\cD}$  of $q\ge 0$ simple curves in $H$ which lift as compatible closed curves in $\tilde{S}$. We set $\cD=p^{-1}(\cC_{\cD})$. 
Let $H_q =\llangle \cC_{\cD}\rrangle$ be the normal closure of $\cC_{\cD}$ in $H$. We let $\Gamma_{\cL}$ denote the nerve of the components of $\tilde{S}\setminus |\cL|$ and say that a collection of curves in $H$ is compatible (with respect to $\cL$) if the lifts of their boundary curves define induced simple cycles in $\Gamma_{\cL}$. 
The main step of the proof of Proposition \ref{prop:admiss} is the following.

\begin{prop}\label{prop:surg} Under the above notation, if $H\ne H_q$, there exists $w\in H\setminus H_q$ that is a simple loop whose lifts are compatible with $\cL$. \end{prop}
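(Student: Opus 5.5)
Since $H\ne H_q$, Maskit's planarity theorem applies. The quotient $\tilde S/H_q$ is a planar surface covering $S$ with normal group $H_q\subsetneq H$, so it is not yet $\tilde S$. By \cite{maskit:planar}, $H$ is the normal closure of finitely many simple loops, and we may choose such a loop $w\in H\setminus H_q$; in particular $w$ lifts homeomorphically to a Jordan curve $W\subset\tilde S$. Among all simple loops in $H\setminus H_q$, I would take $w$ minimizing a complexity: the number of points of $W\cap|\cL|$, or equivalently the length of the cycle $\mathcal C_W$ that $W$ induces in the nerve $\Gamma_{\cL}$ (as in Lemma~\ref{lem:cycle}). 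The claim is that a minimizer is compatible, i.e.\ $\mathcal C_W$ is an induced simple cycle.

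I would argue by contradiction in two cases. Suppose first that $\mathcal C_W$ is not simple. Then $W$ meets some gap $U$ of $\tilde S\setminus|\cL|$ in two distinct arcs. Since $U$ is simply connected, it contains a crosscut $C$ of the Jordan domain bounded by $W$ joining these two arcs, with $C\cap|\cL|=\emptyset$. Suppose instead that $\mathcal C_W$ is simple but not induced. Then there are two gaps $U,U'$, met by $W$ in non-consecutive arcs, whose closures touch along a leaf $\ell$. A crosscut $C$ passing from $U$ to $U'$ through $\ell$ then meets $|\cL|$ in exactly one point.

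In both cases $C$ can be taken disjoint from $G(W)\setminus W$: since translates of the lift are disjoint Jordan curves, we work inside the innermost region, shrinking the gap if needed. Lemma~\ref{lem:crosscut} then applies and gives simple loops $w_1,w_2\in H$ with $w\in\llangle w_1,w_2\rrangle$. Because $w\notin H_q$, at least one of $w_1,w_2$ lies outside $H_q$. Their lifts are the curves $W_\pm\cup C$, pushed off $C$. Each crosses $|\cL|$ only at the points of $W_\pm\cap|\cL|$ together with at most one point from $C$. Since the two arcs cut by $C$ were non-consecutive in the cycle, both $W_+$ and $W_-$ omit at least one crossing of $W$ (in the induced case, at least two). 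So each new loop has strictly smaller complexity than $w$, contradicting minimality.

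The step I expect to be hardest is this complexity count. It requires checking two things carefully. First, the deformation of $C$ inside $\tilde S'$ performed in Lemma~\ref{lem:crosscut} must not add crossings with $|\cL|$. Second, in the non-induced case the single point of $C\cap\ell$, counted on both sides, is genuinely outweighed by the crossings removed. To handle this I would count edges of the nerve cycle rather than raw intersection points, using that the gaps are simply connected and every leaf is separating.
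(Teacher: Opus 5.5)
Your argument is correct and follows the paper's route. Maskit's theorem supplies a simple loop in $H\setminus H_q$, and crosscut surgery via Lemma~\ref{lem:crosscut} (inside a gap met twice, or through a leaf shared by two non-consecutive gaps) produces a simple loop outside $H_q$ with strictly fewer crossings with $|\cL|$. You simply package the paper's iterative descent as a minimal counterexample. The count you worry about is purely positional along the cycle: each of $W_\pm$ carries at least one crossing in the first case and at least two in the non-induced case, while $C$ adds zero, resp.\ one. So no separation property of leaves is needed, which is fortunate, since leaves need not disconnect $\tilde S$ (e.g.\ when $\Lambda(G)$ is a Cantor set).

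The one thing the paper's version adds is that $w$ and the crosscuts are kept disjoint from $\cC_{\cD}$, via Maskit's Lemma~5 and crosscuts chosen in $U\setminus\cD$. This is not part of the statement, but it is what Proposition~\ref{prop:admiss} uses to obtain pairwise disjoint compression disks.
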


\begin{proof}  Let $w\subset S$ be a loop in $H\setminus H_q$ that we may assume to be disjoint from $\cC_{\cD}$ by \cite[Lemma 5]{maskit:planar}. It follows from
\cite[Theorem 2 and Lemma 7]{maskit:planar} that we may also assume that $w$ is a simple loop. 
We deform it locally so that it intersects minimally $\cC_{\cL}$.  
Let $W$ be a lift of $w$.  If $W$ has at least two components in some component $U$ of $\tilde{S}\setminus |\cL|$,
then we consider a crosscut of $W$ in $U\setminus \cD$ that joins two components  of $W\cap U$. By Lemma \ref{lem:crosscut}, we can decompose $w$ and find $w_1$ and $w_2$ such that 
$w\in \llangle w_1,w_2\rrangle$. Since $w\notin H_q$, we can assume that $w_1\notin H_q$. Since its lift is $W_1$, it has fewer components in $U$. As the surgery is local, it remains
disjoint from $\cC_{\cD}$. Since there are only finitely many such components to deal with, we obtain by induction a loop $w'\in H\setminus H_q$ disjoint from $\cC_{\cD}$ that defines a simple cycle
in the graph associated to $\tilde{S}\setminus |\cL|$, Figure~\ref{fig:phantomlam}, left. 

If one of its lifts $W'$ (hence all of them) does not define an induced cycle in the nerve, then we may find two components 
of $\tilde{S}\setminus |\cL|$ that are visited by $W'$ and that share a leaf 
of $\cL$ on their boundaries that is not intersected by $W'$. We may then consider a crosscut $C'$ of $W'$ that goes through this leaf. As above, 
by Lemma \ref{lem:crosscut}, we can decompose $w'$ and find $w_1'$ and $w_2'$ such that 
$w'\in \llangle w_1',w_2'\rrangle$. Since $w'\notin H_q$, we can assume that $w_1'\notin H_q$. The  combinatorial length of its lifts $\Gamma_{\cL}$
has decreased (compared to $w$). As the surgery is local, it remains
disjoint from $\cC_{\cD}$ and simple. 
Since there are only finitely many possibilities for obstructing the fact it defines an induced cycle, we may iterate finitely many times this procedure
so as to obtain in the end a loop $W''\subset \tilde{S''}$, so that  $w'' = p(W'')$ is a simple induced loop in $H\setminus H_q$ disjoint from $\cC_{\cD}$, Figure~\ref{fig:phantomlam}, right.
\end{proof}
 
 We now prove Proposition \ref{prop:admiss}.
 
 \begin{proof}[Proof of Prop. \ref{prop:admiss}]
 We proceed by induction. Since $\partial N$ is compressible, it contains  a simple loop that is not homotopically trivial in $\partial N$ but in $N$ by
 \cite[Theorem 2 and Lemma 7]{maskit:planar}. By Proposition \ref{prop:surg}, we may assume that
 it is a simple induced loop. By Dehn's lemma, this loop is the boundary 
 of a compression disk in $N$.
 
 As long as we did not select a full collection of loops that normally generate $\pi_1(\Omega(G))$, we may iterate this construction. It stops by Ahlfors' finiteness theorem since
 $\Omega(G)/G$ has finite type. 
 \end{proof}

\begin{figure}[ht]
\captionsetup{width=0.96\linewidth}
  \centering
  \includegraphics[width=0.45\textwidth]{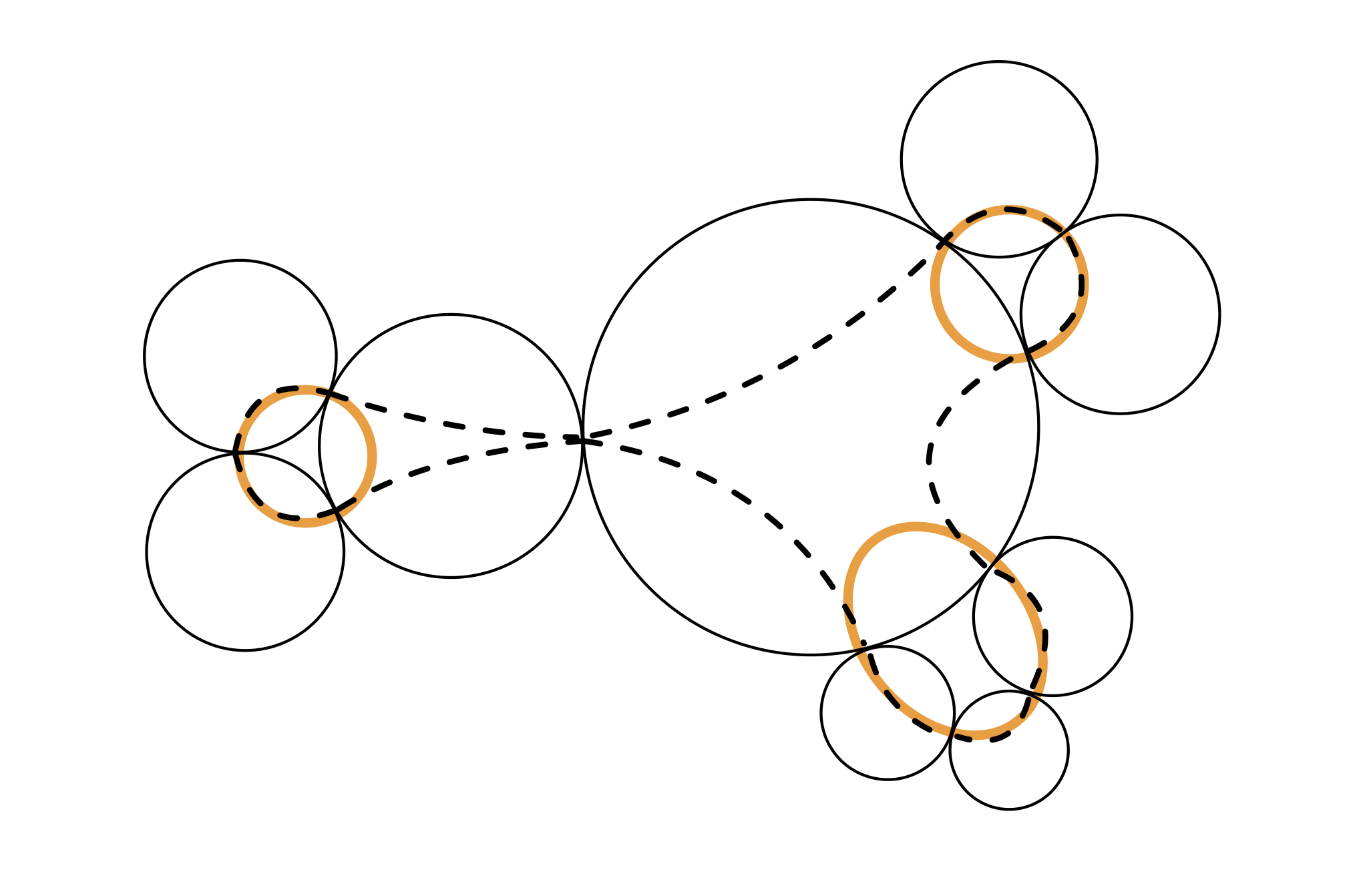}
  \includegraphics[width=0.45\textwidth]{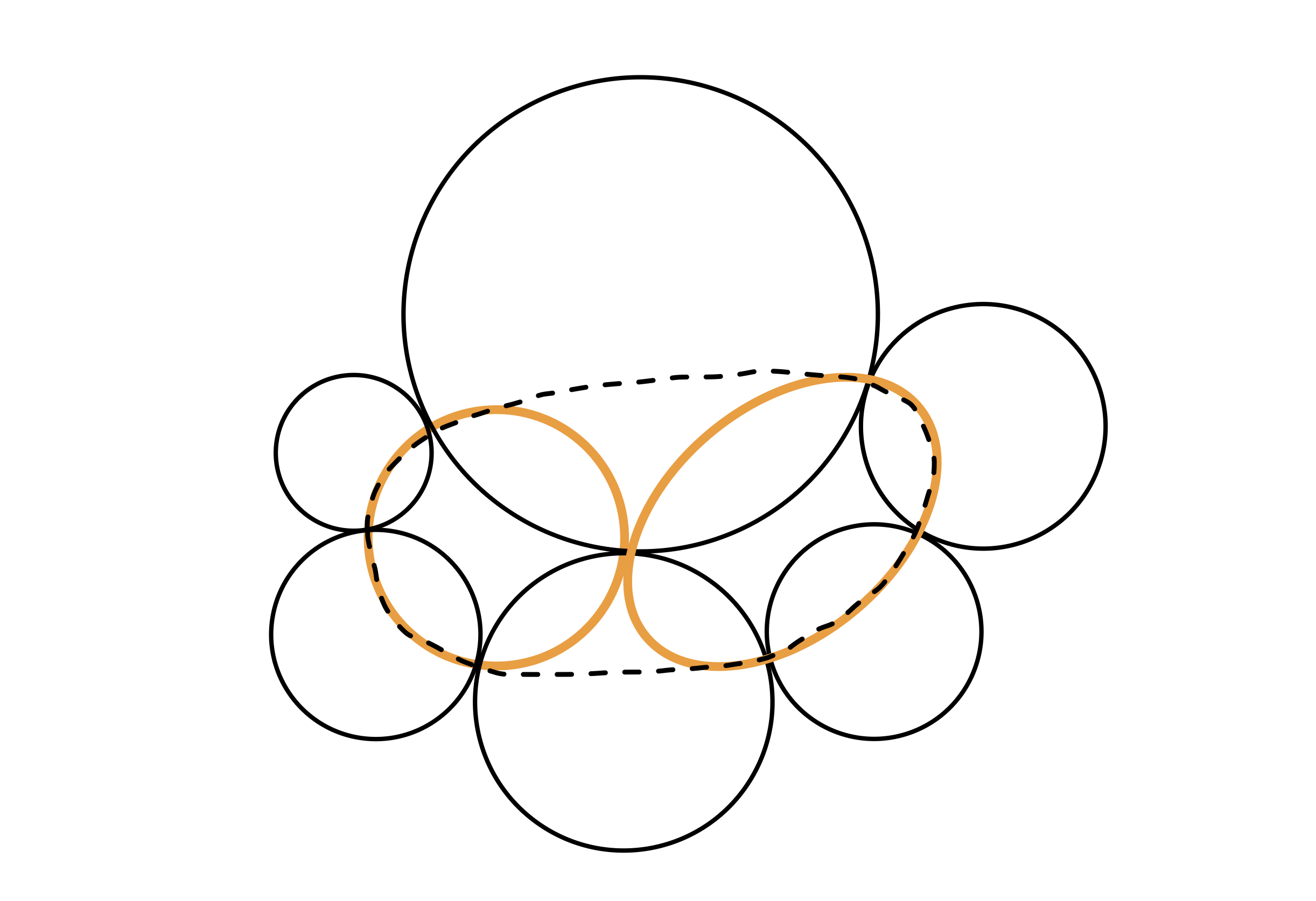}
  \caption{An illustration of a non-simple cycle in $\Lambda(G)/\sim_{\mathcal{L}}$ on the left and a non-induced simple cycle in $\Lambda(G)/\sim_{\mathcal{L}}$ on the right.
  The black dashed curve represents the image of the Jordan curve $C$ under the quotient map. The images after the modification are represented in orange.}
  \label{fig:phantomlam}
\end{figure}

\subsubsection{Ivy Buds}\label{sec:defnsw}
Let $\cT$ be the Bass-Serre tree of the incompressible decomposition of $(N,P_t)$ associated to a compatible core decomposition $\cE = \{D_1,..., D_s\}$ of compression disks.
Let $v$ be a non-{\disk} vertex. With notations as in \S\,\ref{subsec:icd}, 
we associate a lamination $\mathcal{L}_v$ and a circle lamination $\mathcal{D}_v$ as follows.
Note that $v$ corresponds to some star $\mathcal{S} \subseteq \mathcal{D}$. 
We set the circle lamination at $v$ by 
\begin{equation}\label{eqn:circlelaminationatv}
    \mathcal{D}_v = \mathcal{S}.
\end{equation}
Let $C \in \mathcal{D}_v$ and $D_C^-$ be the component of $\widehat\C \setminus C$ that is disjoint from $K_v$, so that $K_v = \widehat\C \setminus \bigcup_{C \in \mathcal{D}_v} D_C^-$.
We define 
 \begin{equation}\label{eqn:L_vlamination}
  {\mathcal{L}_v}:= \{l\cap K_v, l\in \widehat{\mathcal{L}_v}\} \text{ where } \widehat{\mathcal{L}_v}= \{l \in \mathcal{L} \text{ such that } l\cap K_v \ne \emty\}.
\end{equation}

\begin{remark}
    Let $X_v$ be the component of $N\setminus\!\setminus \bigcup_i D_i$ associated to the vertex $v$.
    Then $\mathcal{L}_v$ is simply the lift of the mixed multicurve $\mathcal{C}_v:= \{\gamma \cap X_v: \gamma \in \mathcal{C}\}$ to $\Omega(G_v)$.
\end{remark}

We summarize some simple properties of the leaves in $\mathcal{L}_v$.
\begin{lem}
    Let $l\in \mathcal{L}_v$. Then $l$ is either
\begin{enumerate}
    \item\label{lem:L_v:eq1} a geodesic segment (in $\Omega(G)$) that connects a pair of distinct Jordan curves $C_1, C_2 \in \mathcal{D}_v$; or
    \item\label{lem:L_v:eq2} a geodesic (in $\Omega(G)$) that connects a pair of points in $\Lambda_v$.
\end{enumerate}
\end{lem}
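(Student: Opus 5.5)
The plan is to take an arbitrary $l\in\mathcal{L}_v$, write $l=\hat l\cap K_v$ for some leaf $\hat l\in\widehat{\mathcal{L}_v}$, and read off the two cases from how the complete geodesic $\hat l\subseteq\Omega(G)$ meets the Jordan curves of the star $\mathcal{D}_v=\mathcal{S}$. Since $\hat l$ is a complete geodesic in some component $\Delta$ of $\Omega(G)$, its closure in $\widehat\C$ consists of $\hat l$ together with two endpoints $x,y\in\partial\Delta\subseteq\Lambda(G)$. Every Jordan curve $C\in\mathcal{D}$ is a lift of a geodesic representative of a compression disk boundary, so $\hat l$ and $C$ are geodesics in the same hyperbolic surface $\Delta$ (or lie in different components and are disjoint). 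Hence $\hat l$ meets each such $C$ transversally, in finitely many points (Lemma~\ref{lem:threeleaves} for finiteness on the $C$ side), and in fact at most once. Indeed, two intersection points would produce a bigon between a geodesic and a closed geodesic in the simply connected region of $\Delta$ cut out by $C$, which is impossible for geodesics in a hyperbolic surface.

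Next I would describe $\hat l\cap K_v$. Since $K_v=\widehat\C\setminus\bigcup_{C\in\mathcal{D}_v}D_C^-$ and each $C$ meets $\hat l$ at most once, $\hat l\setminus\bigcup_C D^-_C$ is connected. The reason is that each $D_C^-$ with $C$ crossing $\hat l$ contains exactly one end (tail) of $\hat l$, and the disks $D^-_C$ for $C\in\mathcal{S}$ are pairwise disjoint, because no curve of a star separates two others. So at most two curves of the star cut $\hat l$, one removing a neighbourhood of $x$ and one of $y$, and $l=\hat l\cap K_v$ is an interval of $\hat l$. Intersection with the closed set $K_v$ is nonempty by the definition of $\widehat{\mathcal{L}_v}$.

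The case split then follows. If both ends are cut off by curves $C_1,C_2\in\mathcal{D}_v$, then $l$ is a geodesic segment joining them. We have $C_1\neq C_2$, since one curve meets $\hat l$ at most once, and this gives case~\eqref{lem:L_v:eq1}. If neither end is cut off, then $x,y\in K_v\cap\Lambda(G)=\Lambda_v$, and $l=\hat l$ is case~\eqref{lem:L_v:eq2}; here $x\neq y$ since $\hat l$ is a complete geodesic.

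The main obstacle is excluding the mixed case, where exactly one end is cut off by some $C_1$ and the other end $y$ lies in $\Lambda_v$. To handle it, I would pass to the quotient $\Omega(G)/G$ and use the Remark identifying $\mathcal{L}_v$ with the lift of the mixed multicurve $\mathcal{C}_v=\{\gamma\cap X_v\}$. Each component of $\gamma\cap X_v$ for $\gamma\in\mathcal{C}$ is either a whole closed curve of $\partial_0X_v$ disjoint from the disks, or a proper arc with both endpoints on the copies $D_j^\pm$. A half-infinite lift would project to a ray in the compact surface $\partial_0 X_v$ that starts on a disk boundary and never returns to one, but also never closes up. That is impossible for a compact arc component of a closed geodesic $\gamma$ cut by finitely many transverse curves. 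So the mixed case cannot occur, and every $l\in\mathcal{L}_v$ is of type~\eqref{lem:L_v:eq1} or~\eqref{lem:L_v:eq2}.
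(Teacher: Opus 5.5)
The gap is your first step, the claim that $\hat l$ meets each $C\in\mathcal{D}$ at most once. Everything after it depends on this: that $\hat l\cap K_v$ is a single interval, and that $C_1\neq C_2$. The bigon argument you give for it does not work.

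Since $C$ lifts an essential curve lying in $\ker(\pi_1(S)\to\pi_1(N))$, it is a nontrivial element of $\pi_1(\Delta)$. So both disks bounded by $C$ contain limit points, and neither side of $C$ in $\Delta$ is simply connected. An arc of $\hat l$ and an arc of $C$ joining two crossing points therefore form a loop that may encircle limit points. The no-bigon principle says nothing about such a loop: in the universal cover of $\Delta$, several distinct lifts of $C$ can cross the same lift of $\hat l$.

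For an arbitrary core decomposition the conclusion itself can fail. At a ball vertex, $K_v$ is a homeomorphic lift of a planar piece $Q$ of $\partial_0N$ cut along the curves $\partial D_i$. A geodesic $\gamma\in\mathcal{C}$ can contain an arc of $Q$ with both endpoints on the same boundary circle, as long as that arc is essential in $Q$. Such an arc lifts to a piece of $\hat l\cap K_v$ that enters through some $C$ and leaves through the same $C$. This contradicts both your connectedness claim and $C_1\neq C_2$.

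What you are missing is the standing hypothesis of that subsection: the core decomposition is compatible (Proposition~\ref{prop:admiss}). Suppose $\hat l$ met $C$ at two points $p\neq q$. In the cycle $\mathcal{C}_C$ of Lemma~\ref{lem:cycle}, the edges at $p$ and at $q$ would both be the nerve edge given by the contact point corresponding to $\hat l$. So $\mathcal{C}_C$ would repeat an edge and not be simple, contradicting compatibility.

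With this replacing the bigon argument, the rest of your proof goes through:
\begin{itemize}
\item the disks $D_C^-$, $C\in\mathcal{S}$, are pairwise disjoint;
\item each end of $\hat l$ is cut at most once;
\item your exclusion of the mixed case is correct, since a nonempty $g$-invariant set $\hat l\cap|\mathcal{D}|$ accumulates at both ends of $\hat l$ (equivalently, the components of $\gamma\cap X_v$ are compact).
\end{itemize}
The last point is exactly the content of the Remark identifying $\mathcal{L}_v$ with the lift of $\mathcal{C}_v$. That Remark is how the paper justifies the lemma, without further detail.

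Finally, your aside that $x\neq y$ ``since $\hat l$ is a complete geodesic'' is not needed. It is also not a valid reason when $\partial\Delta$ is not a Jordan curve, and the paper explicitly allows the two endpoints in case (2) to coincide.
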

We remark that in the second case \eqref{lem:L_v:eq2}, the whole geodesic is in $K_v$ and it is possible that the two endpoints of $l$ are the same. This happens exactly when the common endpoint corresponds to a rank-two cusp of $G_v$.
We remark that by Lemma~\ref{lem:threeleaves}, the first case \eqref{lem:L_v:eq1} always occurs, but the second case \eqref{lem:L_v:eq2} might not happen (see Figure~\ref{fig:spiderwebFullBoundary} and ~\ref{fig:spiderweb}).

\begin{defn}[Ivy bud]\label{defn:spiderweb}
    Let $v$ be a non-{\disk} vertex.
    We define an {\em ivy bud} at $v$ as a subset
    $$
    \mathcal{W} \subseteq \mathcal{L}_v \cup \mathcal{D}_v
    $$
    so that its support $|\mathcal{W}|$ is a connected component of $|\mathcal{L}_v| \cup |\mathcal{D}_v|$ (see Figure~\ref{fig:spiderwebFullBoundary} and Figure~\ref{fig:spiderweb}).
    We also denote the leaves of $\mathcal{L}$ (or Jordan curves of $\mathcal{D}$) in $\mathcal{W}$ by
    \begin{equation}\label{eqn:L_W}
        \mathcal{L}_{\mathcal{W}}:= \{l\in \mathcal{L}: l \cap |\mathcal{W}| \neq\emptyset\} \text{ and }
        \mathcal{D}_{\mathcal{W}}:= \{C\in \mathcal{D}: C \cap |\mathcal{W}| \neq\emptyset\}.
    \end{equation}
    
    Note $C \in \mathcal{D}_v$ bounds a disk $D_{C}^-$ that is disjoint from $\Lambda_v$. We also denote
    \begin{equation}\label{eqn:W_full}
    \fil(|\mathcal{W}|):= |\mathcal{W}| \cup \bigcup_{C\in \mathcal{D}_{\mathcal{W}}} D_{C}^-
    \end{equation}
\end{defn}
We call an ivy bud $\mathcal{W}$ {\em elementary} if $\mathcal{W}$ consists of a single leaf and {\em non-elementary} otherwise. Note that any non-elementary ivy bud contains some $C \in \mathcal{D}_v$.

\begin{figure}[ht]
\captionsetup{width=0.96\linewidth}
  \centering
  \includegraphics[width=0.45\textwidth]{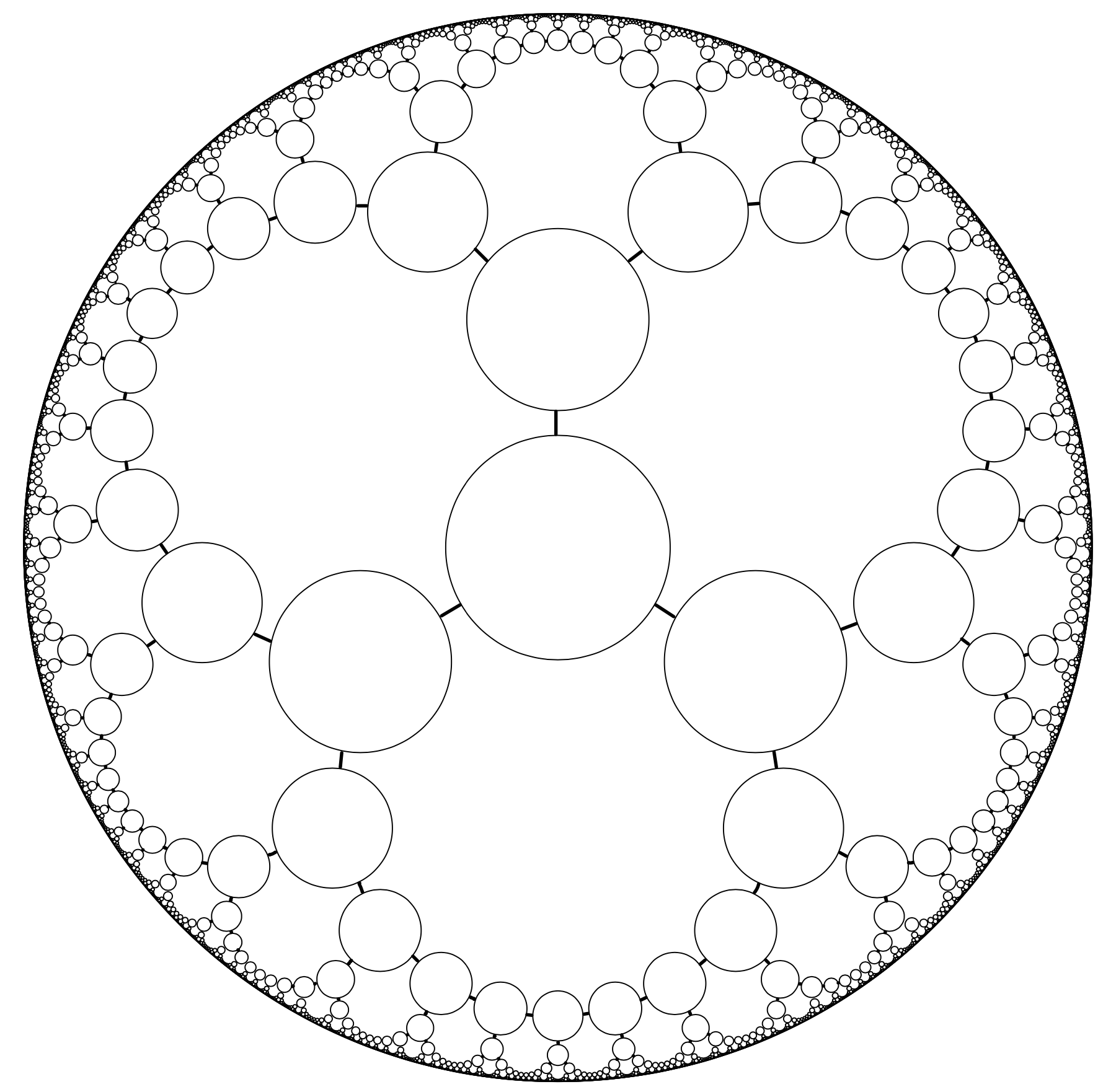}
  \includegraphics[width=0.45\textwidth]{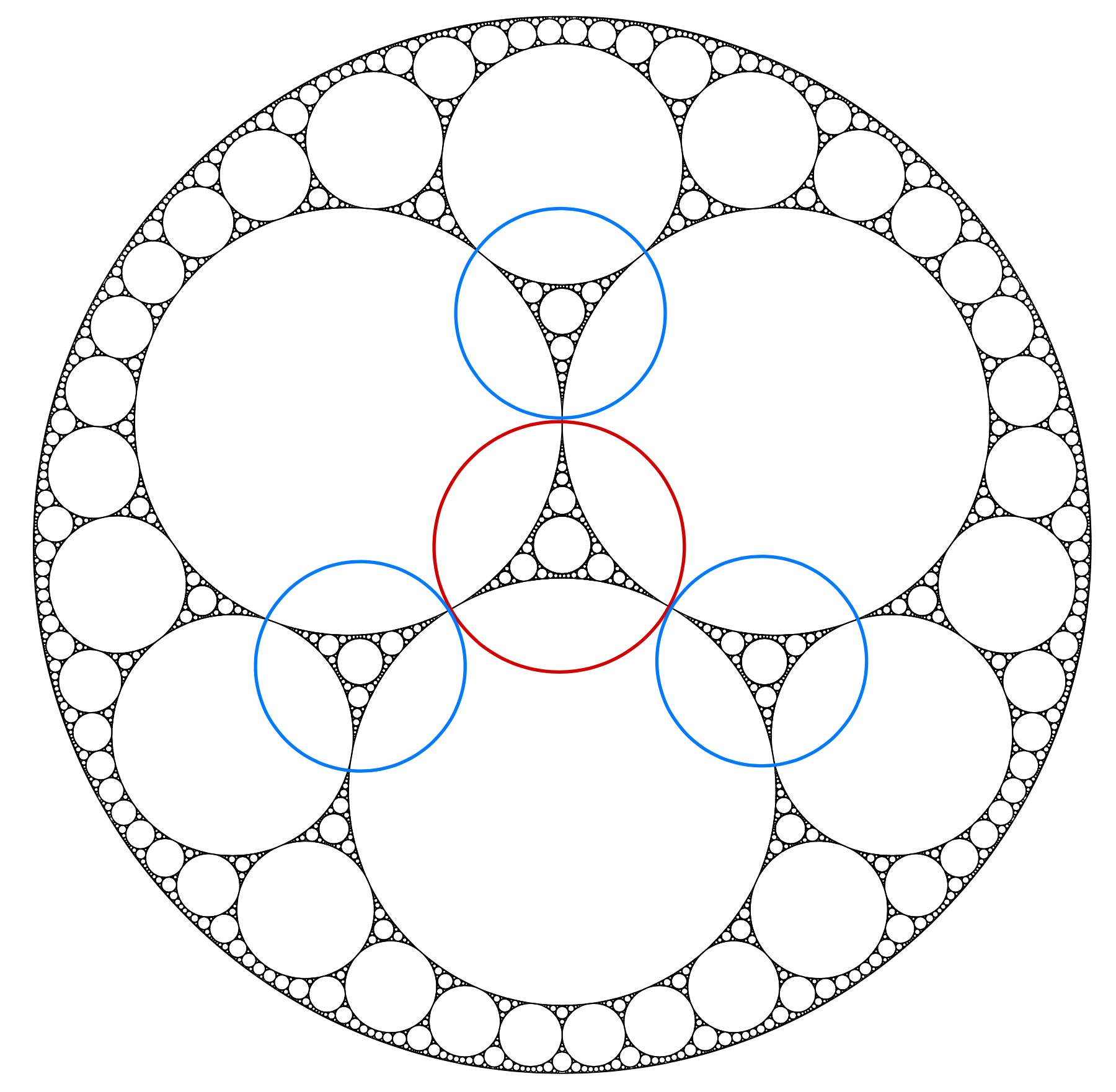}
  \caption{Left: An illustration of an ivy bud $\mathcal{W}$. Its limit set $\Lambda_{\mathcal{W}}$ is the full circle $\Lambda_v$. Each complementary component of $\fil(|\mathcal{W}|)$ gives a disk component on the right figure. Right: The Schottky limit set $\Lambda(\widehat G)$ associated with the quotient $\Lambda(G)/{\sim_{\mathcal{L}}}$. The red circle corresponds to the compression disk associated with the central circle on the left figure, while the three blue circles correspond to the compression disks associated with the three adjacent circles.}
  \label{fig:spiderwebFullBoundary}
\end{figure}

\subsection*{Limit sets of ivy buds}

We define the stabilizer and the limit set of an ivy bud $\mathcal{W}$ as follows:
\begin{align*}
    G_{\mathcal{W}} := \stab(|\mathcal{W}|) \le G_v, & \text{ and } \Lambda_\mathcal{W}:= \Lambda(G_{\mathcal{W}}).
\end{align*}
\begin{lem}\label{lem:intersectionboundary}
Let $\mathcal{W}$ be an ivy bud at $v$.
    Then $\Lambda_\mathcal{W} = \overline{|\mathcal{W}|} \cap \Lambda_v$.
\end{lem}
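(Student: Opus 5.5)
We prove the two inclusions separately, working inside the vertex $v$ of the incompressible decomposition. There $G_v$ is geometrically finite with connected limit set $\Lambda_v = \Lambda(G)\cap K_v$, and $\mathcal{L}_v$, $\mathcal{D}_v$ are $G_v$-invariant.

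\emph{Easy inclusion $\Lambda_\mathcal{W}\subseteq \overline{|\mathcal{W}|}\cap\Lambda_v$.} Since $G_\mathcal{W}\le G_v$, we have $\Lambda_\mathcal{W}\subseteq\Lambda_v$. If $\Lambda_\mathcal{W}$ is empty there is nothing to prove. Otherwise pick $x\in|\mathcal{W}|$. Every point of $\Lambda_\mathcal{W}$ is an accumulation point of the orbit $G_\mathcal{W}x$, and this orbit is contained in $|\mathcal{W}|$ because $G_\mathcal{W}$ stabilizes $|\mathcal{W}|$. Hence $\Lambda_\mathcal{W}\subseteq\overline{|\mathcal{W}|}$.

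\emph{Reverse inclusion.} Let $z\in\overline{|\mathcal{W}|}\cap\Lambda_v$.

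First suppose $z$ is already an endpoint of a leaf $l\in\mathcal{W}$ of type (2), joining two points of $\Lambda_v$. Such a leaf is the lift of a closed curve of $\mathcal{C}_v$ and is stabilized by a loxodromic element, or by a parabolic element in the rank-two cusp case. That element preserves $l$, hence preserves the connected component $|\mathcal{W}|$ of $|\mathcal{L}_v|\cup|\mathcal{D}_v|$, so it lies in $G_\mathcal{W}$ and its fixed points lie in $\Lambda_\mathcal{W}$.

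Otherwise $z$ is a limit of points $x_n\in|\mathcal{W}|$ lying on pairwise distinct pieces, that is, curves $C_n\in\mathcal{D}_\mathcal{W}$ or leaves $l_n\in\mathcal{W}$. The key tool is a finiteness statement: $|\mathcal{W}|$ meets only finitely many $G_\mathcal{W}$-orbits of pieces. To see this, note that the pieces of $\mathcal{L}_v\cup\mathcal{D}_v$ form finitely many $G_v$-orbits, because they are lifts of the finitely many arcs and curves of $\mathcal{C}_v$ and of the boundaries of the compression disks on $\partial X_v$. Moreover, two pieces of $\mathcal{W}$ that are $G_v$-equivalent are $G_\mathcal{W}$-equivalent: if $g\in G_v$ maps a piece of $\mathcal{W}$ to another piece of $\mathcal{W}$, then $g$ maps the component $|\mathcal{W}|$ to a component meeting $|\mathcal{W}|$, hence to $|\mathcal{W}|$ itself, so $g\in G_\mathcal{W}$.

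Passing to a subsequence, we may then write the pieces as $g_n(P)$ for a fixed piece $P$ and distinct $g_n\in G_\mathcal{W}$. By the convergence property of $G_\mathcal{W}$, the pieces $g_n(P)$ shrink to a point of $\Lambda_\mathcal{W}$, unless $P$ accumulates on the repelling point. That exceptional case only happens if the closure of $P$ contains a point of $\Lambda_v$, which is the type (2) case already handled, since $\overline{P}$ is then stabilized by an element of $G_\mathcal{W}$. Curves in $\mathcal{D}$ and arcs of type (1) have compact closure in $\Omega(G)$ (a null-sequence family, as noted in \S\ref{subsec:icd}), so their images shrink to points of $\Lambda(G_\mathcal{W})$. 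Hence $z=\lim g_n(x)\in\Lambda_\mathcal{W}$.

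\emph{Main obstacle.} The delicate step is ruling out that $z$ is approached inside a \emph{single} piece without being an endpoint of it. For $\mathcal{D}$-curves and arcs of type (1) this cannot happen, since they are compact subsets of $\Omega(G)$. For geodesics of type (2) the only points of $\overline{l}\cap\Lambda_v$ are the endpoints, which were treated above.

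A secondary check is that the $G_v$-orbits of pieces are indeed finite. This holds because each $C\in\mathcal{D}$ meets only finitely many leaves (Lemma~\ref{lem:threeleaves}) and because $\mathcal{C}$ is a finite multicurve.
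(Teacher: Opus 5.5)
Your proof is correct and is essentially the paper's argument. Both rest on the observation that two pieces of $\mathcal{W}$ that are $G_v$-equivalent are already $G_\mathcal{W}$-equivalent, so $|\mathcal{W}|$ consists of finitely many $G_\mathcal{W}$-orbits of pieces, and both then identify the accumulation set of these translates with $\Lambda_\mathcal{W}$: the paper uses a compact fundamental domain $K\subseteq|\mathcal{W}|$, and you use the convergence property. Your handling of the exceptional type (2) case is loose, but that case is vacuous: a type (2) leaf lies in $K_v$, hence is disjoint from every curve of $\mathcal{D}_v$ and from every other leaf, so it forms an elementary ivy bud by itself and can never occur among infinitely many distinct pieces of the same $\mathcal{W}$.
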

\begin{proof}
    Given $g\in G_v$, we have $g \in G_{\mathcal{W}}$ if and only if $gC \in \mathcal{D}_{\mathcal{W}}$ for some, and hence every, $C \in \mathcal{D}_{\mathcal{W}}$. Thus if two Jordan curves $C_1, C_2 \in \mathcal{D}_{\mathcal{W}}$ are in the same $G_v$ orbit, then they are in the same $G_\mathcal{W}$ orbit.
    Since $\mathcal{D}_{v}/G_{v}$ is finite, so is $\mathcal{D}_{\mathcal{W}}/G_{\mathcal{W}}$. Choose a fundamental domain $K\subseteq |\mathcal{W}|$ for the action of $G_\mathcal{W}$, consisting of finitely many Jordan curves in $\mathcal{D}_{\mathcal{W}}$ and finitely many segments in $\mathcal{L}_{\mathcal{W}}$. Then 
    $\Lambda_\mathcal{W}$ is the accumulation set of $G_\mathcal{W} K$. Therefore, $\Lambda_\mathcal{W} = \overline{G_\mathcal{W} K}\cap \Lambda_v = \overline{|\mathcal{W}|} \cap \Lambda_v$. 
\end{proof}

\begin{figure}[ht]
\captionsetup{width=0.96\linewidth}
  \centering
  \includegraphics[width=0.45\textwidth]{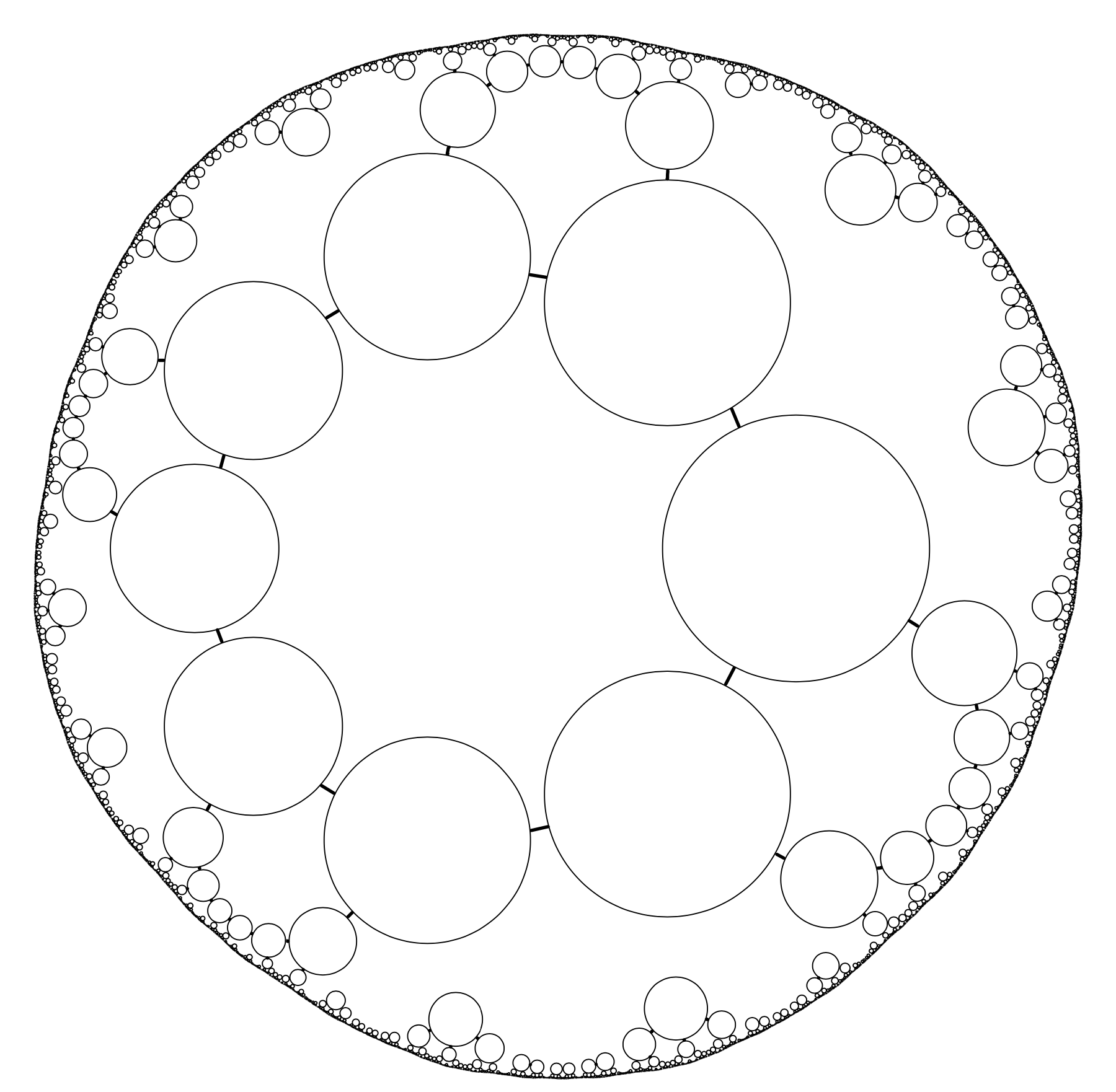}
  \includegraphics[width=0.45\textwidth]{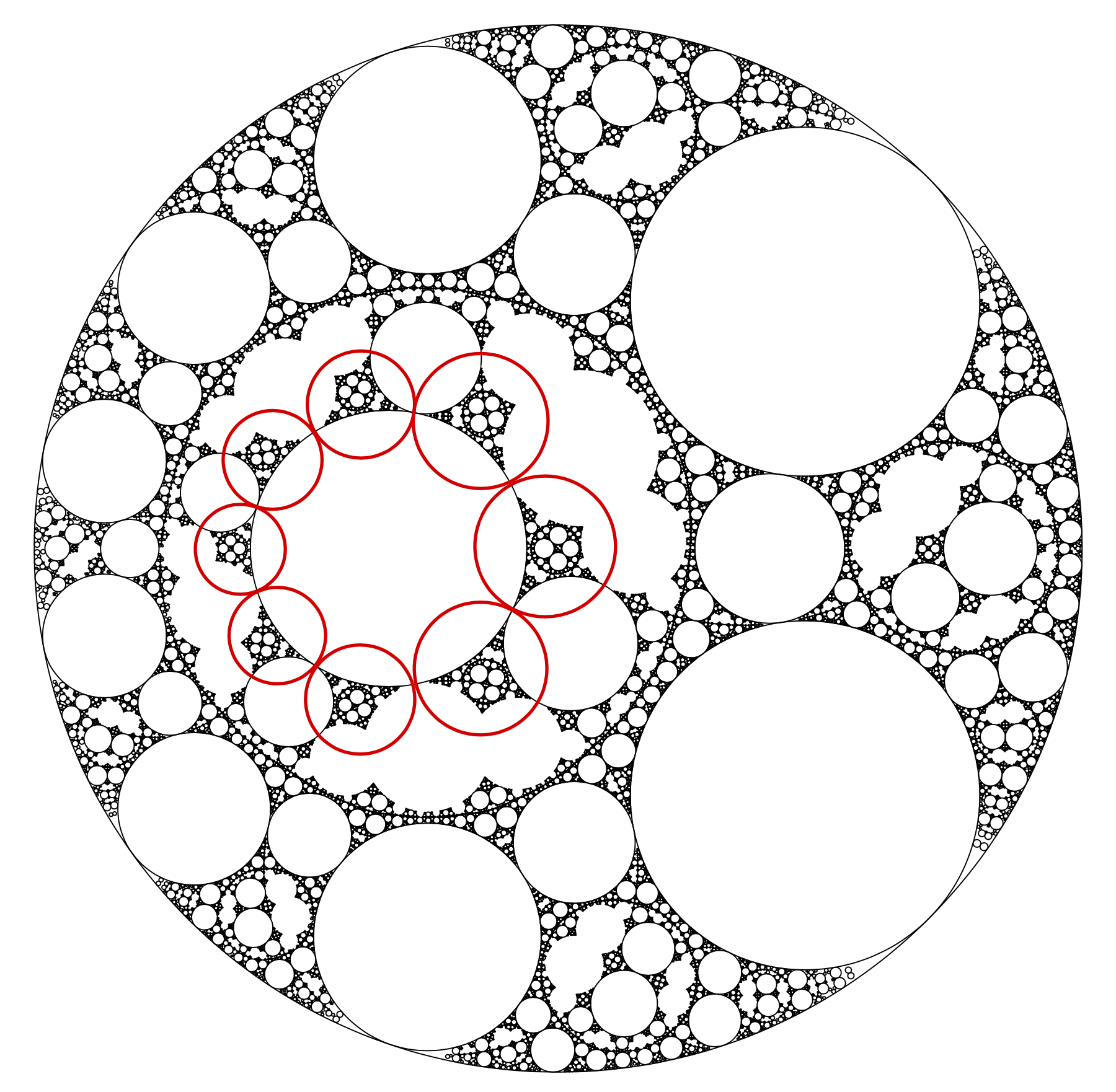}
  \caption{Left: An illustration of the union of ivy buds at a vertex $v$ from inside. Each ivy bud has limit set $\Lambda_{\mathcal{W}}$ equal to a Cantor subset of the quasicircle $\Lambda_v$. Right: The limit set $\Lambda(\widehat G) = \Lambda(G)/{\sim_{\mathcal{L}}}$, homeomorphic to a Schottky limit set. Note that infinitely many complementary components are not round disks. The quasicircle $\Lambda_v$ embeds as a subset, which intersects the boundary of some complementary components in a Cantor set. The $8$ red circles correspond to the compression disks associated with the most prominent $8$ circles in the left.}
  \label{fig:spiderweb}
\end{figure}

By the compatible condition on the collection of compression disks, for each $C \in \mathcal{D}$, distinct components of $C \setminus |\mathcal{\mathcal{L}}|$ correspond to distinct components of $\widehat\C \setminus \overline{|\mathcal{L}|}$ (see Figure~\ref{fig:spiderwebFullBoundary} and \ref{fig:spiderweb}). Therefore, we have 
\begin{lem}\label{lem:componentcorrespondence}
Let $v$ be a non-disk vertex. Then
\begin{enumerate}
    \item each component of $\widehat\C \setminus (\Lambda_v \cup \bigcup_{\mathcal{W}} \fil(|\mathcal{W}|))$ corresponds to a component of $\widehat\C \setminus \overline{|\mathcal{L}|}$; and 
    \item distinct components of $\widehat\C \setminus (\Lambda_v \cup \bigcup_{\mathcal{W}} \fil(|\mathcal{W}|))$ correspond to distinct components of $\widehat\C \setminus \overline{|\mathcal{L}|}$.
\end{enumerate}
\end{lem}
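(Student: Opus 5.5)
The plan is to argue through the quotient map $\pi:\Lambda(G)\to\Lambda(G)/\sim_{\mathcal{L}}$ together with the correspondence \eqref{eqn:correspodencecompcontact}, and to use the compatibility of the core decomposition to keep the identification of components injective.

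\emph{Step 1 (building the map).} Let $U$ be a component of $\widehat\C \setminus (\Lambda_v \cup \bigcup_{\mathcal{W}} \fil(|\mathcal{W}|))$. The set $U$ avoids $\Lambda_v$, every leaf of $\mathcal{L}_v$, and every disk $D_C^-$ with $C\in\mathcal{D}_v$. Outside $K_v$, each leaf of $\widehat{\mathcal{L}_v}$ continues only inside some $D_C^-$. A leaf of $\mathcal{L}$ that misses $K_v$ lies inside some $D_C^-$ as well. It follows that $U$ is disjoint from $\overline{|\mathcal{L}|}$.

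Next we check that $U\cap \Lambda(G)=\emptyset$. The set $\Lambda(G)\cap K_v$ equals $\Lambda_v$, and the part of $\Lambda(G)$ outside $K_v$ lies in $\bigcup D_C^-$. Since $U$ is connected, it is contained in a single component $V(U)$ of $\widehat\C \setminus \overline{|\mathcal{L}|}$. This proves (1).

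\emph{Step 2 (a boundary arc for $U$).} By Lemma~\ref{lem:threeleaves}, every ivy bud meets $U$ along curves from $\mathcal{D}_v$. Fix such a $C$ on $\partial U$. Because each $\fil(|\mathcal{W}|)$ is a union of curves, leaves and disks, the boundary $\partial U\setminus\Lambda_v$ consists of arcs of leaves in $\mathcal{L}_v$ and of curves $C\in \mathcal{D}_v$. We then argue that $U$ contains a component arc $I$ of $C\setminus|\mathcal{L}|$ on its closure, or more precisely that $U$ is adjacent to such an arc. Near that arc, $U$ and $V(U)$ agree locally. If $\partial U\subseteq \Lambda_v\cup|\mathcal{L}_v|$, we use a leaf instead.

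\emph{Step 3 (injectivity).} Suppose $U_1\neq U_2$ but $V(U_1)=V(U_2)=V$. Take a path $\gamma$ in $V$ from $U_1$ to $U_2$. This path must exit $K_v\setminus \bigcup\fil$, so it crosses some $C\in\mathcal{D}_v$, enters $D_C^-$, and comes back out. We may assume $\gamma$ leaves and reenters through the same curve $C$, after arguing with an innermost such excursion and using nesting of the curves of $\mathcal{D}$.

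Since $\gamma$ avoids $|\mathcal{L}|$, the exit and entry points lie in components $I_1$ and $I_2$ of $C\setminus|\mathcal{L}|$ that sit in the same component $V$. By compatibility, the cycle $\mathcal{C}_C$ is an induced simple cycle (Lemma~\ref{lem:cycle}, Proposition~\ref{prop:admiss}). Distinct vertices of this cycle are therefore distinct complementary components, which forces $I_1=I_2$.

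Given $I_1=I_2$, the excursion of $\gamma$ can be homotoped along $I_1$ so that it stays off $D_C^-$. Inducting on the number of crossings then yields a path inside $\widehat\C \setminus (\Lambda_v\cup\bigcup\fil)$ joining $U_1$ and $U_2$, which is a contradiction.

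The main obstacle is Step 3. One must make sure that rerouting the path does not create intersections with other ivy buds. One must also rule out the possibility that $\gamma$ passes through $\Lambda_v$ itself. Within $V$ this cannot happen, but it has to be checked that $\overline{V}\cap\Lambda_v$ does not separate $U_1$ from $U_2$.
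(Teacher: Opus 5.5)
Your proposal is correct and follows the paper's route. The paper derives the lemma in one line from the compatibility of the core decomposition: the cycle $\mathcal{C}_C$ is simple, so distinct arcs of $C\setminus|\mathcal{L}|$ lie in distinct components of $\widehat\C\setminus\overline{|\mathcal{L}|}$. Your Step~1, which gives the well-defined map $U\mapsto V(U)$, together with your Step~3, which is the path-rerouting argument, makes that deduction explicit.

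Some small clean-ups:

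\begin{enumerate}
\item Step~2 is not needed, and Lemma~\ref{lem:threeleaves} does not say what you attribute to it.
\item The ``innermost excursion/nesting'' discussion can be replaced by something simpler. The compact path $\gamma\subset V$ meets only finitely many of the pairwise disjoint disks $\overline{D_C^-}$, $C\in\mathcal{D}_v$. For each of them, take the first and last times $\gamma$ hits $C$.
\item The worries in your last paragraph are non-issues. The path $\gamma\subset V$ already avoids $\Lambda(G)\cup|\mathcal{L}|$. A thin collar of an arc of $C\setminus|\mathcal{L}|$ on the $K_v$-side stays in $V$ and away from the other curves and disks.
\end{enumerate}
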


We now analyze the symmetries of ivy buds.

\begin{lem}\label{lem:unboundedspider}
    Let $\mathcal{W}$ be an ivy bud at $v$. If $v$ is {\nonel} or {\para}, then the limit set $\Lambda_{\cW}$ is non-empty.
\end{lem}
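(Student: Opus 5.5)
By Lemma~\ref{lem:intersectionboundary}, $\Lambda_{\mathcal{W}} = \overline{|\mathcal{W}|}\cap\Lambda_v$. So it suffices to show that $|\mathcal{W}|$ is not relatively compact in $\widehat\C\setminus\Lambda_v$. Equivalently, we must show that $|\mathcal{W}|$ accumulates on $\Lambda_v$.

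\textbf{Elementary ivy buds.} Suppose $\mathcal{W}$ consists of a single leaf $l$ and contains no Jordan curve. Then $l$ cannot be a segment joining two curves of $\mathcal{D}_v$, since those curves would lie in the same component of $|\mathcal{L}_v|\cup|\mathcal{D}_v|$. Hence $l$ is of type~\eqref{lem:L_v:eq2}, and its endpoints lie in $\Lambda_v$ and in $\overline{l}$. So $\Lambda_{\mathcal{W}}\neq\emptyset$.

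\textbf{Non-elementary ivy buds.} Now suppose $\mathcal{W}$ contains some $C\in\mathcal{D}_v$. The plan is to argue by contradiction: assume $\overline{|\mathcal{W}|}\cap\Lambda_v=\emptyset$.

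First, $|\mathcal{W}|$ is then a compact subset of $\Omega(G_v)$ disjoint from $\Lambda_v$. Since $\mathcal{D}_v$ and $\mathcal{L}_v$ are locally finite in $\Omega(G_v)$, $\mathcal{W}$ consists of finitely many curves and segments.

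Second, by the proof of Lemma~\ref{lem:intersectionboundary}, the accumulation set of $G_{\mathcal{W}}K$ is $\Lambda_{\mathcal{W}}$. With this set empty and $G_{\mathcal{W}}$ discrete, the group $G_{\mathcal{W}}$ must be finite, hence trivial since $G$ is torsion-free. Then $\fil(|\mathcal{W}|)$ is a compact connected set whose complement contains $\Lambda_v$. Because $v$ is non-elementary or parabolic, $\Lambda_v$ is nonempty and connected (it is a point in the parabolic case). So $\Lambda_v$ lies in a single complementary component $V$ of $\fil(|\mathcal{W}|)$.

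Third, the outer boundary of $\fil(|\mathcal{W}|)$ facing $V$ consists of finitely many arcs of curves $C_i\in\mathcal{D}_{\mathcal{W}}$ and finitely many sides of segments $l_j\in\mathcal{L}_{\mathcal{W}}$. Traversing this boundary gives a closed curve $\gamma$ in $\Omega(G)\cap K_v$. It is disjoint from $|\mathcal{L}|$ except where it runs along the leaves $l_j$. The other components of $\widehat\C\setminus\fil(|\mathcal{W}|)$ contain no points of $\Lambda_v$. By Lemma~\ref{lem:componentcorrespondence}, each such component must therefore be a gap of $\mathcal{L}$ whose closure meets $\Lambda(G)$ only inside $\fil(|\mathcal{W}|)$.

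Project to $\partial_0 X_v\subset\partial X_v$, where $X_v$ is the piece of $N\setminus\!\setminus\bigcup D_i$ at $v$. Consider such a bounded complementary region, or the region $V$ together with $\gamma$ pushed slightly into $V$. Its projection lies in one boundary surface of $X_v$. Doubling along the compression disks, it bounds an essential disk or annulus in $X_v$ disjoint from the paring $P_{\mathcal{C}}$. This would be a compression disk for $(X_v, P\cap\partial X_v)$, or an essential cylinder for $(N,P_t\sqcup P_{\mathcal{C}})$.

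\textbf{Where the difficulty lies.} The incompressibility of $X_v$ (for $v$ non-elementary or parabolic) and the acylindricity of $\mathcal{C}$ rule out both outcomes. Lemma~\ref{lem:threeleaves} and the compatibility of the core decomposition (Proposition~\ref{prop:admiss}) exclude the degenerate configurations, for instance $\gamma$ meeting $|\mathcal{L}|$ in at most two points, in which case it would be peripheral.

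The main obstacle is this third step. It requires a careful, essential identification of the boundary loop of the compact ivy bud with a curve on $\partial X_v$. The key is to use the fact that each gap is simply connected, together with the induced simple cycle structure, to show that $\gamma$ is homotopically nontrivial in $\Omega(G_v)$ yet trivial in $X_v$.
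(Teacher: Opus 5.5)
Your treatment of elementary ivy buds is fine, and so are your first two steps: finiteness of $\mathcal{W}$, triviality of $G_{\mathcal{W}}$, and $\Lambda_v$ lying in a single complementary component $V$ of $\fil(|\mathcal{W}|)$. The gap is that the proof stops exactly where the contradiction has to be produced. As you acknowledge, the third step is only a sketch, and it also aims at the wrong obstruction.

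Let $\gamma'$ be your curve pushed slightly into $V$, so that it avoids $\Lambda_v$, $|\mathcal{L}|$ and every $\fil(|\mathcal{W}'|)$. The side of $\gamma'$ containing $\fil(|\mathcal{W}|)$ misses $\Lambda_v$, so $\gamma'$ bounds a disk in $\widehat\C\setminus\Lambda_v=\Omega(G_v)$. Hence its projection is null-homotopic in $\Omega(G_v)/G_v$, which is the boundary of $X_v$ with the compression disks capped off. So this curve can never contradict the incompressibility of $X_v$, and no annulus is involved. Lemma~\ref{lem:threeleaves} and the compatibility from Proposition~\ref{prop:admiss} are not needed either. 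Separately, your claim that the bounded components of $\widehat\C\setminus\fil(|\mathcal{W}|)$ are gaps of $\mathcal{L}$ is unjustified, since they may contain other compact ivy buds.

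The missing idea is to play the fact that $\gamma'$ avoids every leaf while separating limit points against the Schottky structure of $\Lambda(G)/\sim_{\mathcal{L}}$. This is the paper's argument.
\begin{enumerate}
\item $\gamma'$ lies in one component of $\widehat\C\setminus(\Lambda_v\cup\bigcup_{\mathcal{W}'}\fil(|\mathcal{W}'|))$. By Lemma~\ref{lem:componentcorrespondence}, it therefore lies in one component of $\widehat\C\setminus\overline{|\mathcal{L}|}$.
\item No $\sim_{\mathcal{L}}$-class crosses $\gamma'$, because leaf closures are connected and disjoint from $\gamma'$. So $\gamma'$ projects to a closed curve inside a single complementary Jordan domain of the Schottky set. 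Such a curve bounds a disk in that domain and cannot separate points of the Schottky set.
\item Yet $\gamma'$ separates $\Lambda_v$ from $\Lambda(G)\cap D_C^-$ for $C\in\mathcal{D}_{\mathcal{W}}$. This set is nonempty: otherwise $C$ would bound a disk in $\Omega(G)$, contradicting that it lifts the essential boundary of a compression disk. This is the contradiction.
\end{enumerate}

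In $3$-manifold language, the contradiction is with the acylindricity of $(N,P_t\sqcup P_{\mathcal{C}})$, not with anything about $X_v$. The projection of $\gamma'$ is a loop in $\partial N\setminus(P_t\sqcup P_{\mathcal{C}})$. It is nontrivial there, since its lift $\gamma'$ separates limit points and so is not null-homotopic in its component of $\Omega(G)$. It is null-homotopic in $N$. The loop theorem then yields a compression disk disjoint from $P_{\mathcal{C}}$, which acylindricity forbids.
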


\begin{proof}
    Suppose $|\mathcal{W}|$ is bounded in $\widehat\C \setminus \Lambda_v$. Then we can construct a closed curve in $\widehat\C \setminus (\Lambda_v \cup \bigcup_{\mathcal{W}} \fil(|\mathcal{W}|))$ that separates $|\mathcal{W}|$ and $\Lambda_v$. This closed curve projects to a closed curve in the complement of the Schottky set $\Lambda(G)/\sim_\mathcal{L}$ that separates points in the limit set. This is a contradiction.
\end{proof}
\begin{cor}\label{cor:non-trival}
    If $v$ is {\nonel} or {\para}, then the stabilizer $G_{\mathcal{W}} \le G_v \le G$ of an ivy bud $\mathcal{W}$ is infinite.
\end{cor}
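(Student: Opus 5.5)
The plan is to derive the corollary from Lemma~\ref{lem:unboundedspider} together with the definition $\Lambda_{\cW}=\Lambda(G_{\cW})$. By Lemma~\ref{lem:unboundedspider}, when $v$ is {\nonel} or {\para}, the limit set $\Lambda_{\cW}=\Lambda(G_{\cW})$ is non-empty. A finite group acting on $\widehat\C$ has empty limit set, since its orbits are finite and therefore have no accumulation points. So a non-empty limit set already forces $G_{\cW}$ to be infinite. The inclusions $G_{\cW}\le G_v\le G$ hold by definition, because $G_{\cW}=\stab(|\cW|)$ is taken inside $G_v$.

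The only point that needs care is that $\Lambda(G_{\cW})$ is really the limit set of the subgroup $G_{\cW}$, and not just the set $\overline{|\cW|}\cap\Lambda_v$. Lemma~\ref{lem:intersectionboundary} identifies the two, so nonemptiness of either one is enough. As a sanity check on the underlying mechanism, I would also recall why the relevant set is non-empty. Each $C\in\mathcal{D}_{\cW}$ is a Jordan curve with finitely many $G_{\cW}$-orbits. If $|\cW|$ accumulates on $\Lambda_v$, then there are infinitely many distinct curves $C\in\mathcal{D}_{\cW}$. Since $G_{\cW}K$ covers $|\cW|$ for a compact fundamental piece $K$, infinitely many distinct translates $gK$ are needed. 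This again gives $|G_{\cW}|=\infty$.

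I do not expect a real obstacle here. The main content, namely that $|\cW|$ is not bounded away from $\Lambda_v$, was already established in Lemma~\ref{lem:unboundedspider} using the Schottky-set structure of $\Lambda(G)/\sim_{\cL}$, so the corollary is a two-line deduction.
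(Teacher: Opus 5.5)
Your proposal is correct and follows the paper's own route: the corollary is the immediate consequence of Lemma~\ref{lem:unboundedspider}, since $\Lambda_{\cW}=\Lambda(G_{\cW})\neq\emptyset$ and a finite group has empty limit set, so $G_{\cW}$ must be infinite. One small point in your optional ``sanity check'': for an elementary ivy bud, i.e.\ a single leaf joining two points of $\Lambda_v$, one has $\mathcal{D}_{\cW}=\emptyset$. In that case accumulation on $\Lambda_v$ does not produce infinitely many curves in $\mathcal{D}_{\cW}$, but the main deduction does not depend on this aside.
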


\subsection*{Rigidity of ivy buds}
In this subsection, we show that ivy buds satisfy a {\em two-point rigidity} for any homeomorphism of $\Lambda(G)$ that preserves $\mathcal{L}$.
One of the difficulties is that homeomorphisms in $\Homeo^+(\Lambda(G), \mathcal{L})$  may not preserve the circle lamination $\mathcal{D}$.
To overcome this, we enlarge $\cD$ to a broader collection that is invariant under such homeomorphisms as follows.
By Lemma~\ref{lem:cycle} and the compatible condition, a loop $C\in \cD$ yields an induced simple cycle $\mathcal{C}_C$ on the nerve of $\Lambda(G)/\sim_\mathcal{L}$. 
Let $\widehat{\cD}(k)$ be the set of all simple closed geodesics in $\Omega(G)$ that define induced simple cycles of length at most $k$.
We define $\widehat{\cD}(\infty) = \bigcup_k \widehat{\cD}(k)$. 
Let $N_0$ denote the largest combinatorial length of cycles $\cC_C$, $C\in \cD$. Then $\cD \subseteq \widehat{\cD}(N_0)$.

\begin{defn}\label{defn:linkedbordering_new}
Let $k \in \N \cup \{\infty\}$ and $\widehat{\mathcal{D}} = \widehat{\mathcal{D}}(k)$ or $\widehat{\mathcal{D}} = \cD$.

Let $l_1, l_2 \in \mathcal{L}$ and let $C \in \widehat{\mathcal{D}}$.
We say that $l_1$ and $l_2$ are \emph{bordering at $C$}
if the sets 
    $C \cap l_1$ and $C \cap l_2$ are adjacent in $C \cap |\mathcal{L}|$, 
    that is, if they form the two endpoints of a connected component of 
    $C \setminus |\mathcal{L}|$.

We say that $l_1$ and $l_2$ are \emph{bordering} with respect to $\widehat{\mathcal{D}}$
if there exists $C \in \widehat{\mathcal{D}}$ such that they are bordering at $C$. 

We say $l_1$ and $l_2$ are \emph{connected} with respect to $\widehat{\mathcal{D}}$ if there is a sequence $l_1 = l^0, ..., l^k = l_2$ so that $l^i, l^{i+1}$ are bordering with respect to $\widehat{\mathcal{D}}$.
\end{defn}

\begin{lem}\label{lem:homeoborderingleaves}
For each $k \in \N$, the group $\Homeo^+(\Lambda(G), \mathcal{L})$ acts on the set $X = X(k)$ of pairs of bordering leaves with respect to $\widehat{\mathcal{D}}(k)$, and the restriction of this action to $G$ has finitely many orbits.
\end{lem}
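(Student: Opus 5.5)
The statement has two parts. First, $\Homeo^+(\Lambda(G),\mathcal{L})$ acts on the set $X(k)$ of bordering pairs with respect to $\widehat{\mathcal{D}}(k)$. Second, $G$ has finitely many orbits on $X(k)$. I will prove them separately. Throughout, the dictionary \eqref{eqn:correspodencecompcontact} between the lamination and the Schottky set $\Lambda(G)/\sim_{\mathcal{L}}$ is the key tool.

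\emph{Invariance.} Let $h\in\Homeo^+(\Lambda(G),\mathcal{L})$, represented by a homeomorphism of $\widehat\C$ preserving $\Lambda(G)$ and $|\mathcal{L}|$.
\begin{enumerate}
\item Since $h$ maps $\overline{|\mathcal{L}|}$ to itself, it permutes the leaves of $\mathcal{L}$. It also permutes the components of $\widehat\C\setminus\overline{|\mathcal{L}|}$. By \eqref{eqn:correspodencecompcontact}, $h$ therefore induces an automorphism $h_\#$ of the nerve of $\Lambda(G)/\sim_{\mathcal{L}}$.
\item Take $C\in\widehat{\mathcal{D}}(k)$. The image $h(C)$ is a Jordan curve in $\Omega(G)$. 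It meets the same number of leaves as $C$, and its cycle in the nerve is $h_\#(\mathcal{C}_C)$. This cycle is again induced, simple, and of length at most $k$.
\item I then replace $h(C)$ by the closed geodesic $C'$ in its isotopy class rel $\overline{|\mathcal{L}|}$ inside its component of $\Omega(G)$. Equivalently, I pull $h(C)$ tight transversely to the leaves, keeping the sequence of crossed leaves and gaps. This step needs care. I would argue that the cyclic sequence of gaps visited determines the homotopy class in the complement of the leaf endpoints. Since the gaps are simply connected, this sequence also determines the geodesic representative and its intersections with $\mathcal{L}$ in cyclic order.
\item Hence $C'\in\widehat{\mathcal{D}}(k)$, and $C'$ meets exactly the leaves $h(l)$ for $l$ meeting $C$, in the same cyclic order. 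So if $l_1,l_2$ are bordering at $C$, then $h(l_1),h(l_2)$ are bordering at $C'$.
\item The main subtlety is whether the reduction to an induced simple cycle lets us define ``the'' geodesic canonically. Bordering only depends on the cyclic sequence of leaves, so any choice of representative works.
\end{enumerate}

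\emph{Finiteness of $G$-orbits.} Every bordering pair $(l_1,l_2)$ is witnessed by some $C\in\widehat{\mathcal{D}}(k)$ and an arc of $C\setminus|\mathcal{L}|$, so it suffices to show $\widehat{\mathcal{D}}(k)/G$ is finite. Each $C\in\widehat{\mathcal{D}}(k)$ projects to a closed geodesic in the finite-type surface $\Omega(G)/G$. That geodesic crosses the finite multicurve $\mathcal{C}$ at most $k$ times, and between crossings it runs in a gap.
\begin{enumerate}
\item Fix a finite-area fundamental region. Up to $G$, the leaves of $\mathcal{L}$ and the gaps of $\Omega(G)\setminus|\mathcal{L}|$ form finitely many orbits. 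For this, I use that the components of $\Omega(G)$ are finitely many up to $G$ by Ahlfors finiteness.
\item Normalize so that the first gap visited by $C$ lies in a fixed finite set. Consecutive gaps share a leaf, and each gap has leaves forming finitely many orbits under its stabilizer.
\item The main obstacle is that infinitely many choices could remain, since a gap has infinitely many boundary leaves. Here I would use that the cycle closes up after at most $k$ steps and is simple and induced. A closed geodesic is determined by its homotopy class, and the class is determined by the gap sequence. The constraint that a short cycle closes up, combined with discreteness and the fact that the quotient is a Schottky set with locally finite tangencies, should leave only finitely many cycles of length at most $k$ through a given gap modulo its stabilizer.
\item An alternative route is to note that, by the collar lemma, each intersection of the projection with $\mathcal{C}$ costs definite length. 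Gaps are ideal-free compact polygons, so the length of $C$ is bounded in terms of $k$. The finiteness of closed geodesics of bounded length on $\Omega(G)/G$ then gives finitely many orbits.
\end{enumerate}

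Each $C$ yields at most $k$ bordering pairs, so $X(k)/G$ is finite.
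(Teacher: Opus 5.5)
Your invariance half is sound and is the paper's argument made more explicit. The map $h$ induces a simplicial automorphism of the nerve of $\Lambda(G)/\sim_{\mathcal{L}}$. Because the gaps are simply connected, an induced simple cycle of length at most $k$ determines a unique simple closed geodesic crossing the corresponding leaves in the same cyclic order, so bordering pairs go to bordering pairs.

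The gap is in the finiteness half. Step~3 only asserts the conclusion (``should leave only finitely many''). The mechanism you cite is also not available: tangencies are not locally finite, since a complementary disk of a Schottky set generally touches infinitely many others. Step~4 rests on a false premise, namely that the gaps of $\Omega(G)\setminus|\mathcal{L}|$ are compact polygons. Each gap is the universal cover of a component of $(\Omega(G)/G)\setminus\mathcal{C}$, which is a compact hyperbolic surface of negative Euler characteristic. So a gap has infinitely many boundary leaves at unbounded mutual distance. Crossing at most $k$ leaves therefore gives no a priori bound on the length of $C$; such a bound is equivalent to the finiteness you want. Note also that this route never uses acylindricity, which finiteness genuinely requires: twisting along an essential annulus whose boundary misses $\mathcal{C}$ would in general give infinitely many inequivalent meridians with the same number of crossings.

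The missing idea is the planar geometry of the Schottky set, which is what the paper invokes (Figure~\ref{fig:rigidityspiderweb}).
\begin{enumerate}
\item Realize $\Lambda(G)/\sim_{\mathcal{L}}$ as a round Schottky set (Proposition~\ref{prop:acylindricalSchottky}). Its complementary disks are pairwise disjoint, hence form a null sequence. Two of them either touch at one point (adjacent in the nerve) or have disjoint closures.
\item Non-consecutive disks of an induced cycle therefore lie at some distance $\delta>0$. Any chain of at most $N$ touching disks joining them contains a disk of diameter at least $\delta/N$, and there are only finitely many such disks.
\item Recursing on the two sub-chains shows that only finitely many induced chains of bounded length join two given disks with disjoint closures.
\item To start the recursion, use $G$ to move an edge $[a_1,a_2]$ of the cycle into a finite set of representatives. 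After pinching, the stabilizer of the corresponding leaf becomes the parabolic fixing the tangency point $p$.
\item Send $p$ to $\infty$. The rest of the cycle is then a chain crossing the strip between two half-planes, so it contains a disk of definite size, which lies in a finite set modulo the parabolic.
\end{enumerate}
This yields finitely many $G$-orbits of induced simple cycles of length at most $k$, hence finitely many orbits on $X(k)$.
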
 
\begin{proof}
Since $\Homeo^+(\Lambda(G), \mathcal{L})$ acts on the Schottky set $\La(G)/\sim_{\cL}$, it also acts on its nerve. In particular, its action is by simplicial isometries, so it preserves $\widehat{\cD}(k)$, and also the property for a pair of leaves to be bordering.

Note that there are only finitely many $G$-orbits of induced simple cycles of length $\leq k$ in the nerve of $\Lambda(G)/\sim_\mathcal{L}$ (see Figure~\ref{fig:rigidityspiderweb}). Since each cycle contains finitely many pairs in $X$, we have $X/G$ is finite.
\end{proof}

A leaf $l \in \mathcal{L}$ is said to be \emph{oriented} if it is equipped with a choice of an orientation, that is, a choice of one of the two possible directions along $l$.
Let $h, h' \in \Homeo^+(\Lambda(G), \mathcal{L})$ be two homeomorphisms, and let 
$l \in \mathcal{L}$ be an oriented leaf.
We say that $h$ agrees with $h'$ on $l$ if $h(l) = h'(l)$ and the map 
$h^{-1} \circ h' : l \to l$ preserves the orientation of $l$.
It is convenient to denote this by
$$
h|_{\partial l} = h'|_{\partial l}.
$$
Note that here we interpret $\partial l$ as the two ends of $l$ (as it is possible that the two ends converge to a single point on $\Lambda(G)$ if $G$ has parabolic elements).

\begin{prop}\label{prop:twopointrigiditygeneral}
    Let $l_1, l_2 \in \mathcal{L}$ be two bordering leaves with respect to $\widehat{\mathcal{D}}(\infty)$. Let $h, h' \in \Homeo^+(\Lambda(G), \mathcal{L})$.
    Suppose that
    $$
    h|_{\partial l_1\cup \partial l_2} = h'|_{\partial l_1\cup \partial l_2}.
    $$
    Then for all $l \in \mathcal{L}$ so that $l_1, l$ are connected with respect to $\widehat{\cD}(\infty)$, we have
    $h|_{\partial l} = h'|_{\partial l}$.

    In particular, if $h|_{\partial l_1 \cup \partial l_2} = \id$, then for all $l \in \mathcal{L}$ so that $l_1, l$ are connected with respect to $\widehat{\cD}(\infty)$, we have $h|_{\partial l} = \id$.
\end{prop}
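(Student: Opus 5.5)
By replacing $h,h'$ with $h'^{-1}\circ h$, we reduce to the case $h'=\id$. This uses the convention that $h|_{\partial l}=h'|_{\partial l}$ means $h(l)=h'(l)$ with matching orientation. So assume $h|_{\partial l_1\cup\partial l_2}=\id$. We prove by induction along a chain $l_1=l^0,\dots,l^k=l$ of successively bordering leaves that $h|_{\partial l^i}=\id$ for each $i$.

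The inductive step is a \emph{one-step propagation} claim. Suppose $l,l'$ are bordering at some $C\in\widehat{\cD}(\infty)$ and $h$ fixes both (oriented) leaves. Then every leaf $l''$ bordering $l'$ at some $C'\in\widehat{\cD}(\infty)$ is also fixed with its orientation.

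The bordering relation is intrinsic to the nerve of the Schottky set $\Lambda(G)/\sim_{\cL}$. By \eqref{eqn:correspodencecompcontact}, leaves are edges of the nerve and complementary components of $\widehat\C\setminus\overline{|\cL|}$ are vertices. By Lemma~\ref{lem:cycle}, two leaves bordering at $C$ are consecutive edges of an induced simple cycle $\mathcal{C}_C$, so they share a vertex, namely a component $U$ of $\widehat\C\setminus\overline{|\cL|}$. As in Lemma~\ref{lem:homeoborderingleaves}, $h$ acts on the nerve by simplicial automorphisms preserving induced simple cycles and the planar (cyclic) structure around each vertex.

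With this structure in place, the step runs as follows. Since $h$ fixes $l$ and $l'$ with orientations, it fixes their common vertex $U$ and fixes the two edges $l,l'$ at $U$. The boundary $\partial U$ is a Jordan curve in the quotient (the Schottky set). The contact points on $\partial U$, which are the leaves incident to $U$, inherit a cyclic order from $\partial U$, and $h$ preserves this order because it is orientation-preserving. Fixing two oriented edges pins down the orientation of $h|_{\partial U}$, but by itself it does not fix every contact point, since there may be infinitely many.

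The key input is therefore to fix the other vertex $U'$ of $l'$, and then show that a leaf $l''$ bordering $l'$ at $C'$ is determined by combinatorial data that $h$ fixes. The vertex $U'$ is fixed because $h$ fixes $l'$ and $U$. The leaf $l''$ is incident to $U'$ or to $U$; say to $U'$. The cycle $\mathcal{C}_{C'}$ is an induced simple cycle through the edge $l'$, and $l''$ is the next edge after $l'$ in that cycle. I would use planarity of the nerve together with the rigidity of the previously fixed cycle $\mathcal{C}_C$ to show that $h(\mathcal{C}_{C'})=\mathcal{C}_{C'}$. Granting that, $h$ restricts to an automorphism of a finite cycle that fixes the oriented edge $l'$, so it fixes every edge of the cycle, including $l''$ with its orientation.

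To prove $h(\mathcal{C}_{C'})=\mathcal{C}_{C'}$, I would proceed by contradiction. Two distinct induced cycles through $l'$ with the same orientation data would have to cross each other or $\mathcal{C}_C$ in the planar nerve. Using Jordan separation in $\widehat\C$ together with the fact that $h$ fixes $l$, $l'$ and the side of $l'$ on which $\mathcal{C}_C$ lies, $h(\mathcal{C}_{C'})$ would separate fixed vertices differently from $\mathcal{C}_{C'}$. I expect this step to be the main obstacle: showing that an induced simple cycle through a fixed oriented edge, adjacent to a fixed cycle, cannot be moved. A fallback is to use finiteness instead. The set of induced cycles of length at most $k$ through a fixed edge and in a fixed position relative to $\mathcal{C}_C$ is finite, and $h$ permutes it while preserving the cyclic order at $U'$. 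The order-preserving argument of Theorem~\ref{thm:toprigidityconvexcocompact} would then force a fixed point.

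Finally, the concluding special case of the proposition, $h|_{\partial l}=\id$ for every $l$ connected to $l_1$, is exactly the statement proved after the reduction $h'=\id$.
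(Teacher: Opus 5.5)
Your reduction to $h'=\id$ and the induction along a chain of bordering leaves are fine. The gap is the one-step propagation: the step $h(\mathcal{C}_{C'})=\mathcal{C}_{C'}$, which carries all the content, is not proved, and neither route you sketch works.

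\emph{Circularity.} Appealing to ``the rigidity of the previously fixed cycle $\mathcal{C}_C$'' is circular. Only the two edges $l,l'$ of $\mathcal{C}_C$ are known to be fixed, and showing that $\mathcal{C}_C$ itself is fixed is an instance of the claim you are proving.

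\emph{The finiteness fallback is false.} Let $g$ generate $\stab_G(l')$. It is a loxodromic fixing both ends of $l'$, so $g\in\Homeo^+(\Lambda(G),\mathcal{L})$ fixes $l'$ with its orientation and fixes $U$ and $U'$. It fixes no other complementary component, since in the quotient it acts as a parabolic at the contact point of $U$ and $U'$. Hence the induced cycles of length $\le k$ through $l'$ (for instance the $g^n(\mathcal{C}_C)$) form infinite $\langle g\rangle$-orbits, and no such cycle is $g$-invariant.

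\emph{The crossing argument fails.} For $|n|$ large, $g^n(\mathcal{C}_C)$ is a distinct induced cycle through $l'$ that does not cross $\mathcal{C}_C$. Jordan separation at best tells you $h$ preserves the side of $\mathcal{C}_C$, which does not exclude a shift on that side.

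\emph{The reduction to $U'$ is not harmless.} Your choice ``say $l''$ is incident to $U'$'' throws the problem onto the vertex where you know only one fixed edge. The cyclic order at $U'$, cut at $l'$, has no anchor there, and $g$ defeats any argument that uses only $l'$ and the order at $U'$.

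The missing idea, which is the paper's argument, is to work at the shared vertex $U$, where the two fixed edges meet.
\begin{itemize}
\item Order the neighbours of $U$ other than $U'$ counterclockwise starting from $U'$.
\item Let $T$ be the set of vertices $x$ that follow $U',U$ in some induced simple cycle of length $\le k$ through $l'$. The paper uses that this order on $T$ is discrete: there are only finitely many elements of $T$ between any two.
\item Since $h$ is orientation preserving and fixes $U$ and $U'$, $h_*$ maps $T$ onto itself preserving the order. It also fixes one element of $T$, namely the other endpoint of $l$.
\item An order automorphism of such a locally finite linear order with a fixed point is the identity. Hence every leaf bordering $l'$ at $U$ is fixed.
\end{itemize}
This is exactly where the second edge $l$ enters, and it is what rules out the shift by $g$.

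To continue along a cycle, and in particular to reach the edge at $U'$, one then uses the finiteness the paper records. Only finitely many induced cycles of a given length contain a fixed path $U',U,x$, by the annulus bounded by the disjoint round disks $x$ and $U'$. At the later vertices the sets being permuted are therefore finite, so the order-preserving argument needs no anchor there.
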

\begin{proof}
    It suffices to prove the statement for each $k \in \N$ since $\{\hat{\cD}(k)\}$ is an increasing sequence.
    Let $C \in \widehat{\mathcal{D}}(k)$ so that $l_1 \cap C, l_2 \cap C \neq\emptyset$.
    
    Let $h_*$ and $h_*'$ be the induced maps on the nerve.
    Let $E_1 = [a_1,a_2], E_2=[a_2,a_3]$ be the edges of $\mathcal{C}_C$ corresponding to $l_1, l_2$. Then $h_*(a_i) = h_*'(a_i), i = 1,2,3$ by assumption.
    Let $\mathcal{S}$ be the set of all induced simple cycles of length $\leq k$ passing through $E_1$.
    We define a linear order $\prec$ on the vertices adjacent to $a_2$, other than $a_1$, by declaring that
    $$
    x \prec y 
    $$
    if the triple $a_1, x, y$ is oriented counterclockwise around $a_2$.
    This induces a discrete total preorder on $\mathcal{S}$ by
    $$
    \mathcal{C}_1 = (a_1, a_2, x_3,...,x_N, x_{N+1} = a_1) \preceq \mathcal{C}_2 = (a_1, a_2, y_3,...,y_M, y_{M+1} = a_1)
    $$
    if $x_3 \preceq y_3$. 
    We write $\mathcal{C}_1 \asymp \mathcal{C}_2$ if $\mathcal{C}_1 \preceq \mathcal{C}_2$ and $\mathcal{C}_2 \preceq \mathcal{C}_1$, i.e., $\mathcal{C}_1, \mathcal{C}_2$ have the same third vertex.
    Note that for each $N \geq 3$ and $x$ adjacent to $a_2$, there are only finitely many induced simple cycles $\mathcal{C} \in \mathcal{S}$ of length $N$ that contain $a_1, a_2, x$, i.e., there are only finitely many induced simple cycles of length $N$ in a $\asymp$-class (see Figure~\ref{fig:rigidityspiderweb}).

    \begin{figure}[ht]
\captionsetup{width=0.96\linewidth}
  \centering
  \includegraphics[width=0.6\textwidth]{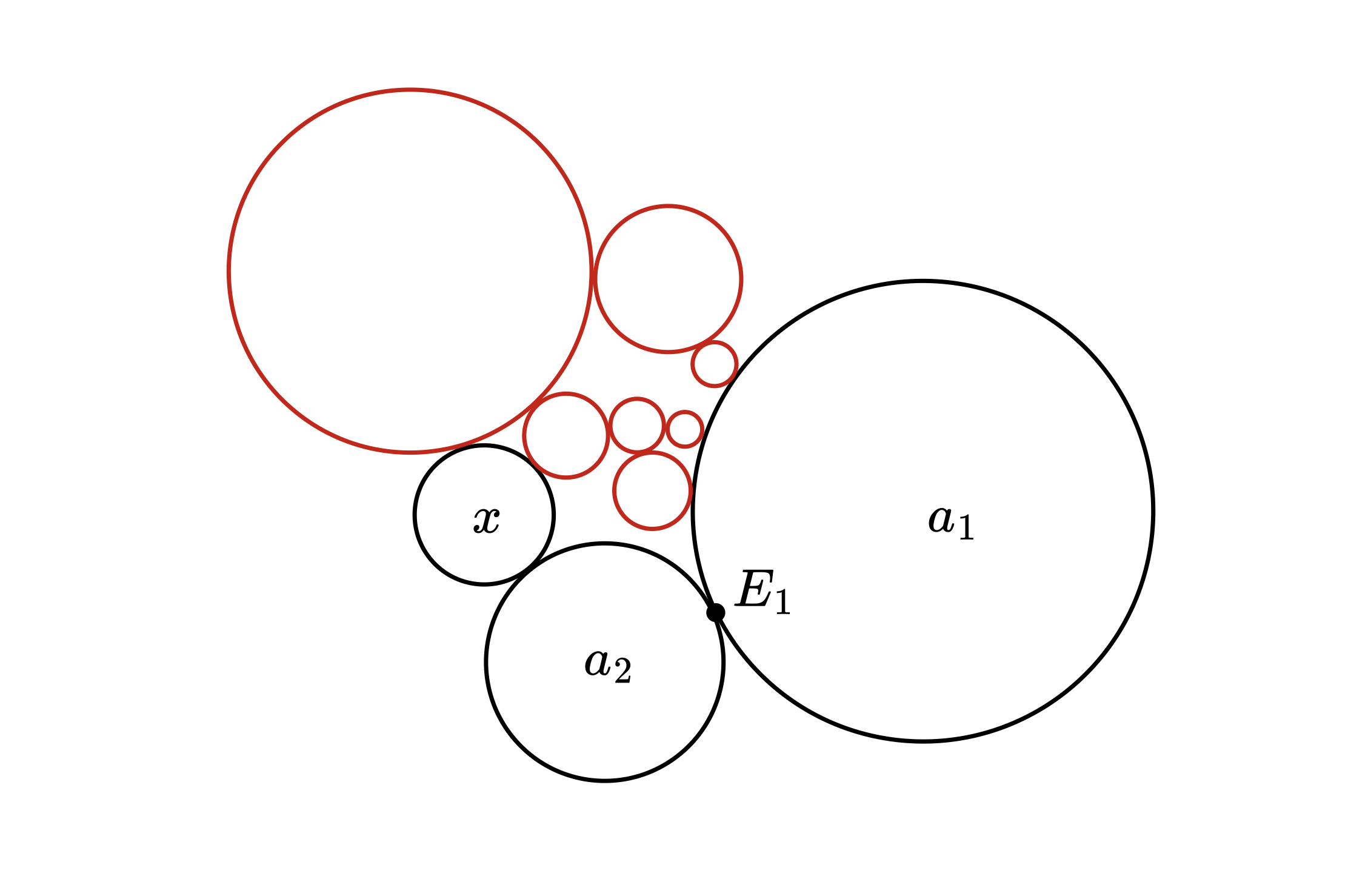}
  \caption{An illustration of finitely many induced simple cycles of length $N$ that contains $a_1, a_2, x$ in the proof of Corollary~\ref{cor:twopointrigidity}. The round circles associated to $x$ and $a_1$ have disjoint closures, so they bound an annulus in $\widehat\C$. Thus, given any $N$, there are only finitely many chains of round circles of length $N$ that connects $x$ to $a_1$.}
  \label{fig:rigidityspiderweb}
\end{figure}

    Let $\widetilde{\mathcal{S}}$ be the set of all induced simple cycles passing through $h_*(E_1)$ of length at most $k$. Then a similar preorder is defined on $\widetilde{\mathcal{S}}$.
    Since $h|_{\partial l_1\cup \partial l_2} = h'|_{\partial l_1\cup \partial l_2}$,
    $$
    h_*(\mathcal{C}_1) \preceq h_*(\mathcal{C}_2) \iff h_*'(\mathcal{C}_1) \preceq h_*'(\mathcal{C}_2) \iff \mathcal{C}_1 \preceq \mathcal{C}_2.
    $$
    Since $h_*(E_2) = h_*'(E_2)$, we conclude that $h_*(\mathcal{C}) \asymp h_*'(\mathcal{C})$ for all $\mathcal{C} \in \mathcal{S}$. 
    Thus if $l$ is a leaf such that $(l,l_1)$ or $(l,l_2)$ are bordering at some $C\in\hat{\cD}(k)$, we have $h|_{\partial l} = h'|_{\partial l}$.
    
    Let $l \in \mathcal{L}$ so that $l_1, l$ are connected. We may assume each edge is in an induced cycle of some common $\widehat{\cD}(k)$. 
    We conclude, by inductively applying the above argument, that $h|_{\partial l} = h'|_{\partial l}$.
\end{proof}

Since we chose the compression disks to give induced simple cycles (Proposition~\ref{prop:admiss}) and since any two leaves $l, l' \in \mathcal{L}_{\mathcal{W}}$ are connected with respect to $\cD \subseteq \widehat{\cD}(N_0) \subseteq \widehat{\cD}(\infty)$ by our definition (Definition~\ref{defn:spiderweb}), we immediately have the following corollaries establishing
the rigidity of ivy buds.
\begin{cor}[Two point rigidity]\label{cor:twopointrigidity}
    Let $\mathcal{W}$ be a non-elementary ivy bud and let $l_1, l_2 \in \mathcal{L}_{\mathcal{W}}$ be two bordering leaves.
    Let $h, h' \in \Homeo^+(\Lambda(G), \mathcal{L})$.
    Suppose that
    $$
    h|_{\partial l_1\cup \partial l_2} = h'|_{\partial l_1\cup \partial l_2}.
    $$
    Then for all $l \in \mathcal{L}_{\mathcal{W}}$, we have
    $h|_{\partial l} = h'|_{\partial l}$.

    In particular, if $h|_{\partial l_1 \cup \partial l_2} = \id$, then for all $l \in \mathcal{L}_{\mathcal{W}}$, we have $h|_{\partial l} = \id$. This implies that  $h|_{\Lambda_\mathcal{W}} = \id$.
\end{cor}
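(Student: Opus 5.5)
The statement has two parts. The propagation claim follows from Proposition~\ref{prop:twopointrigiditygeneral} once any two leaves of $\mathcal{L}_{\mathcal{W}}$ are shown to be connected with respect to $\widehat{\mathcal{D}}(\infty)$. The claim $h|_{\Lambda_{\mathcal{W}}}=\id$ then follows from a density argument based on Lemma~\ref{lem:intersectionboundary}.

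\emph{Step 1: connectivity.} Take $l,l'\in\mathcal{L}_{\mathcal{W}}$. Since $|\mathcal{W}|$ is a connected component of $|\mathcal{L}_v|\cup|\mathcal{D}_v|$, there is a finite chain of elements of $\mathcal{W}$ from $l$ to $l'$ in which consecutive elements intersect. The elements alternate between leaves and Jordan curves $C\in\mathcal{D}_v$, because leaves of $\mathcal{L}$ are pairwise disjoint.

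For each such $C$, Lemma~\ref{lem:threeleaves} says that $C$ meets finitely many leaves of $\mathcal{L}$, and the points of $C\cap|\mathcal{L}|$ are cyclically ordered along $C$. Any two leaves through $C$ are therefore joined by a sequence of leaves in which consecutive ones are bordering at $C$. Since the core decomposition is compatible (Proposition~\ref{prop:admiss}), we have $C\in\mathcal{D}\subseteq\widehat{\mathcal{D}}(N_0)\subseteq\widehat{\mathcal{D}}(\infty)$. Concatenating these sequences gives the required connection.

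The intermediate bordering leaves may leave $\mathcal{W}$ itself. This causes no difficulty, since they are still leaves of $\mathcal{L}$ meeting $|\mathcal{W}|$, hence in $\mathcal{L}_{\mathcal{W}}$, and Proposition~\ref{prop:twopointrigiditygeneral} only needs connectivity within $\mathcal{L}$. Applying that proposition with the hypothesis $h|_{\partial l_1\cup\partial l_2}=h'|_{\partial l_1\cup\partial l_2}$ gives $h|_{\partial l}=h'|_{\partial l}$ for all $l\in\mathcal{L}_{\mathcal{W}}$. The special case $h'=\id$ gives the second sentence of the statement.

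\emph{Step 2: the limit set.} Suppose $h|_{\partial l}=\id$ for all $l\in\mathcal{L}_{\mathcal{W}}$. Then $h$ fixes the closure of the set $E$ of endpoints in $\Lambda(G)$ of leaves of $\widehat{\mathcal{L}_v}$ meeting $|\mathcal{W}|$. It therefore suffices to show that $E$ accumulates on all of $\Lambda_{\mathcal{W}}$.

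By Lemma~\ref{lem:intersectionboundary}, $\Lambda_{\mathcal{W}}=\overline{|\mathcal{W}|}\cap\Lambda_v$. So a point $x\in\Lambda_{\mathcal{W}}$ is a limit of points on $G_{\mathcal{W}}$-translates of a finite fundamental piece $K\subseteq|\mathcal{W}|$. These translates form a null sequence, since $G_{\mathcal{W}}$ acts on $\widehat{\mathcal{C}}$ by M\"obius maps. Each translate $gK$ contains a leaf of $\mathcal{L}_{\mathcal{W}}$, and that leaf extends to a geodesic of $\mathcal{L}$. Provided the full geodesics also shrink along the sequence, their endpoints converge to $x$, so $x\in\overline{E}$ and $h(x)=x$.

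\emph{Main obstacle.} The delicate point is this null-sequence claim for the full geodesics of $\mathcal{L}$ extending the translated segments. Points of $\Lambda_{\mathcal{W}}$ might otherwise be approached only through pieces of $|\mathcal{W}|$ lying in $D_C^-$ rather than through leaf endpoints. The plan is to use that $\mathcal{L}/G$ is finite, so each $g\widehat{l}$ is a translate of one of finitely many geodesics, and that diameters of $G$-translates of a fixed compact set accumulating at a point tend to zero.
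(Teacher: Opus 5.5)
Your Step~1 is the paper's own proof. The paper deduces the corollary directly from Proposition~\ref{prop:twopointrigiditygeneral}, using that any two leaves of $\mathcal{L}_{\mathcal{W}}$ are connected through bordering pairs at curves $C\in\mathcal{D}\subseteq\widehat{\mathcal{D}}(N_0)\subseteq\widehat{\mathcal{D}}(\infty)$. That connectivity comes from the definition of an ivy bud and the compatibility of the core decomposition (Proposition~\ref{prop:admiss}). Your caveat that intermediate leaves ``may leave $\mathcal{W}$'' is unnecessary: a leaf meeting some $C\in\mathcal{W}$ has its truncation $l\cap K_v$ in the same component of $|\mathcal{L}_v|\cup|\mathcal{D}_v|$.

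The last claim, $h|_{\Lambda_{\mathcal{W}}}=\id$, is asserted in the paper without argument, and here your proposal is not finished. You leave the null-sequence step open, and the way you propose to close it fails as phrased.
\begin{itemize}
\item Every leaf $\widehat{l}\in\mathcal{L}$ is a lift of a closed curve, so it is invariant under an infinite cyclic subgroup of $G$. Hence $G$-translates of $\overline{\widehat{l}}$ need not shrink.
\item The shrinking principle holds only for compact subsets of a domain of discontinuity, and $\overline{\widehat{l}}$ always meets $\Lambda(G)$.
\end{itemize}
The missing observation is that the relevant group is $G_{\mathcal{W}}$, and that $\overline{\widehat{l}}$ is disjoint from $\Lambda(G_{\mathcal{W}})$. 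Here is why.
\begin{itemize}
\item A non-elementary ivy bud contains no leaf of the second kind in the description of $\mathcal{L}_v$ (a complete geodesic with both ends in $\Lambda_v$). Such a leaf lies in $K_v$, so it cannot cross any $C\in\mathcal{D}_v$. It is also disjoint from the other leaves, so it forms an elementary bud on its own.
\item Hence each $\widehat{l}\in\mathcal{L}_{\mathcal{W}}$ meets $K_v$ in a single segment joining two curves $C_1,C_2\in\mathcal{D}_v$. The rest of $\widehat{l}$ lies in $D_{C_1}^-\cup D_{C_2}^-$.
\item So its ends lie in $\Lambda(G)\cap(D_{C_1}^-\cup D_{C_2}^-)$, which is disjoint from $\Lambda_v\supseteq\Lambda(G_{\mathcal{W}})$.
\end{itemize}
Therefore $Q=K\cup\bigcup\overline{\widehat{l}}$, taken over the finitely many leaves meeting $K$, is a compact subset of $\Omega(G_{\mathcal{W}})$. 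Translates $g_nQ$ by distinct $g_n\in G_{\mathcal{W}}$ have diameters tending to $0$. Hence the endpoints of $g_n\widehat{l}\in\mathcal{L}_{\mathcal{W}}$ converge to $x$, and your density argument is complete.

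Alternatively, use the convergence property. After passing to a subsequence, $g_n\to x$ locally uniformly off a point $b\in\Lambda(G_{\mathcal{W}})$, and $b$ is never an endpoint of a leaf of $\mathcal{L}_{\mathcal{W}}$.

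Finally, your worry that points of $\Lambda_{\mathcal{W}}$ might be approached only through pieces of $|\mathcal{W}|$ inside some $D_C^-$ is moot. Indeed $|\mathcal{W}|\subseteq K_v$, which misses these open disks.
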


\begin{cor}\label{cor:finiteindexHW}
    Let $\mathcal{W}$ be a non-elementary ivy bud, and let 
    $$
    H_\mathcal{W}:=\{h \in \Homeo^+(\Lambda(G), \mathcal{L}): h(\Lambda_\mathcal{W}) = \Lambda_\mathcal{W}\}/\sim,
    $$
    where $h \sim h'$ if $h|_{\Lambda_\mathcal{W}} = h'|_{\Lambda_\mathcal{W}}$.
    Then $G_\mathcal{W}$ has finite index in $H_\mathcal{W}$.
\end{cor}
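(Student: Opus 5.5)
The plan is to apply Proposition~\ref{prop:finiteindex} with $G=G_{\mathcal{W}}$ and $H=H_{\mathcal{W}}$, in the same way Corollary~\ref{cor:finiteindexfuchsian} was deduced from Theorem~\ref{thm:toprigidityconvexcocompact}. As the set $X$ I take the collection of ordered pairs $(l_1,l_2)$ of oriented leaves in $\mathcal{L}_{\mathcal{W}}$ that border at some $C\in\mathcal{D}_{\mathcal{W}}$. I record each such pair through its ends $(\partial l_1,\partial l_2)$, following the convention introduced before Proposition~\ref{prop:twopointrigiditygeneral}.

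\textbf{Step 1: $H_{\mathcal{W}}$ acts on $X$.} An element $h\in\Homeo^+(\Lambda(G),\mathcal{L})$ preserving $\Lambda_{\mathcal{W}}$ acts simplicially on the nerve of $\Lambda(G)/\sim_{\mathcal{L}}$. I will check two things.
\begin{enumerate}
\item $h$ permutes the leaves of $\mathcal{L}_{\mathcal{W}}$. A leaf meets $|\mathcal{W}|$ exactly when it has an endpoint in $\Lambda_{\mathcal{W}}$, or lies in the same ivy-bud component as such a leaf. Lemma~\ref{lem:intersectionboundary} gives $\Lambda_{\mathcal{W}}=\overline{|\mathcal{W}|}\cap\Lambda_v$, and the compatibility of $\mathcal{D}$ (Lemma~\ref{lem:componentcorrespondence}) should make this characterization topological.
\item $h$ sends bordering pairs to bordering pairs. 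This is Lemma~\ref{lem:homeoborderingleaves}, with $\widehat{\mathcal{D}}(N_0)$ in place of $\mathcal{D}$.
\end{enumerate}
To handle $\widehat{\mathcal{D}}(N_0)$, I enlarge $X$ to all bordering pairs with respect to $\widehat{\mathcal{D}}(N_0)$ whose leaves lie in $\mathcal{L}_{\mathcal{W}}$. The action is well defined on $\sim$-classes because the ends of these leaves lie in $\Lambda_{\mathcal{W}}$.

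\textbf{Step 2: $X/G_{\mathcal{W}}$ is finite.} Lemma~\ref{lem:homeoborderingleaves} already gives finitely many $G$-orbits of pairs. It remains to pass to $G_{\mathcal{W}}$. If $g\in G$ maps one pair of $X$ to another, then $g$ maps a leaf of $\mathcal{L}_{\mathcal{W}}$ to a leaf of $\mathcal{L}_{\mathcal{W}}$. Hence $g|\mathcal{W}|$ meets $|\mathcal{W}|$, and since distinct ivy buds are disjoint components, $g$ stabilizes $|\mathcal{W}|$. So $g\in G_{\mathcal{W}}$, and each $G$-orbit meets $X$ in at most one $G_{\mathcal{W}}$-orbit.

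\textbf{Step 3: stabilizers.} Take $x=(l_1,l_2)\in X$ and $h,h'\in\stab_{H_{\mathcal{W}}}(x)$. Orientations are recorded in $x$, so $h|_{\partial l_1\cup\partial l_2}=h'|_{\partial l_1\cup\partial l_2}$. Corollary~\ref{cor:twopointrigidity} (equivalently Proposition~\ref{prop:twopointrigiditygeneral}, since leaves of $\mathcal{L}_{\mathcal{W}}$ are connected through $\mathcal{D}\subseteq\widehat{\mathcal{D}}(\infty)$) then gives $h|_{\partial l}=h'|_{\partial l}$ for all $l\in\mathcal{L}_{\mathcal{W}}$. The ends of these leaves are dense in $\Lambda_{\mathcal{W}}$, again by Lemma~\ref{lem:intersectionboundary}, because $\Lambda_{\mathcal{W}}$ is the accumulation set of $G_{\mathcal{W}}K$. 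Therefore $h\sim h'$, so $\stab_{H_{\mathcal{W}}}(x)$ is trivial, and trivially $\stab_{G_{\mathcal{W}}}(x)$ has finite index in it. Proposition~\ref{prop:finiteindex} then yields the corollary.

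The step I expect to be most delicate is Step 1, namely that every $h\in H_{\mathcal{W}}$ preserves $\mathcal{L}_{\mathcal{W}}$ and bordering with respect to the right family of curves, since $h$ need not preserve $\mathcal{D}$. I would handle this by working throughout with $\widehat{\mathcal{D}}(N_0)$, which is $h$-invariant. A leaf of $\mathcal{L}$ would then be identified as belonging to $\mathcal{L}_{\mathcal{W}}$ through its endpoints in $\Lambda_{\mathcal{W}}$ together with the chains of bordering leaves that start from such leaves.
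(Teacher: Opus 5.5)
Steps 1 and 2 rest on an incorrect picture of the leaves of $\mathcal{L}_{\mathcal{W}}$. Take a non-elementary ivy bud at $v$. A leaf of $\mathcal{L}$ contained in $K_v$ cannot meet any $C\in\mathcal{D}_v$, since it would have to cross $C$ into $D^-_C$. So every leaf of $\mathcal{L}_{\mathcal{W}}$ meets $|\mathcal{W}|$ in segments joining curves $C_1,C_2\in\mathcal{D}_v$, and then continues into $D^-_{C_1}$ and $D^-_{C_2}$. In general its ends lie outside $K_v$, in limit sets of other vertices (compare Figure~\ref{fig:spiderwebFullBoundary}). Hence the ends of these leaves do not in general lie in $\Lambda_{\mathcal{W}}$. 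This invalidates three things in your proposal: your characterization of $\mathcal{L}_{\mathcal{W}}$ in Step 1, the claimed $H_{\mathcal{W}}$-invariance of your $X$ that depends on it, and your justification that the action descends to $\sim$-classes. Step 3 is fine: you only need that these ends accumulate on $\Lambda_{\mathcal{W}}$, which is what the last sentence of Corollary~\ref{cor:twopointrigidity} gives.

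Step 2 fails for the same reason. A leaf of $\mathcal{L}$ is not confined to $K_v$, so $g(l)\in\mathcal{L}_{\mathcal{W}}$ does not imply that $g(|\mathcal{W}|)$ meets $|\mathcal{W}|$: the part $g(l)\cap K_{gv}$ lying in $g(|\mathcal{W}|)$ may be a different segment of $g(l)$ from the one meeting $|\mathcal{W}|$. Disjointness of ivy buds holds only at a fixed vertex. For example, take a leaf $l\in\mathcal{L}_{\mathcal{W}}$ whose ends lie outside $\Lambda_v$. The loxodromic element stabilizing $l$ maps $l$ into $\mathcal{L}_{\mathcal{W}}$, but it is not in $G_{\mathcal{W}}\le G_v$.

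The missing idea is that two-point rigidity, not disjointness, is what forces $g\in G_{\mathcal{W}}$, and that you should work with an $H_{\mathcal{W}}$-orbit rather than restrict to leaves of $\mathcal{L}_{\mathcal{W}}$. This is the paper's route:
\begin{enumerate}
\item Let $X$ be the set of all bordering pairs with respect to $\widehat{\mathcal{D}}(N_0)$. All of $\Homeo^+(\Lambda(G),\mathcal{L})$ preserves $X$, and $X$ has finitely many $G$-orbits by Lemma~\ref{lem:homeoborderingleaves}.
\item Let $X_0$ be the $H_{\mathcal{W}}$-orbit of a pair $(l_1,l_2)$ of leaves of $\mathcal{L}_{\mathcal{W}}$ bordering at some $C\in\mathcal{D}_v$. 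Stabilizers in $H_{\mathcal{W}}$ of points of $X_0$ are trivial by Corollary~\ref{cor:twopointrigidity}.
\item Suppose $h'(l_1,l_2)=g\,h(l_1,l_2)$ with $h,h'\in H_{\mathcal{W}}$ and $g\in G$. The same corollary applied to $gh$ and $h'$ gives $gh=h'$ on $\Lambda_{\mathcal{W}}$. Hence $g(\Lambda_{\mathcal{W}})=\Lambda_{\mathcal{W}}$ and $g\in G_{\mathcal{W}}$, so $|X_0/G_{\mathcal{W}}|\le|X/G|$.
\item Proposition~\ref{prop:finiteindex} applied to $X_0$ concludes.
\end{enumerate}
This route never needs $\mathcal{L}_{\mathcal{W}}$ to be invariant under $H_{\mathcal{W}}$.
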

\begin{proof}  
    Let $N_0$ be so that each Jordan curve $C \in \mathcal{D}$ induces a cycle of length $\leq N_0$.
    Let $X$ be the collection of bordering pairs $(l,l')$ with respect to $\widehat{\cD}(N_0)$. Note that there is an induced action of $H_{\cW}$ on $X$.
    Let $l_1, l_2 \in \mathcal{L}_{\mathcal{W}}$ be a pair of bordering leaves at $C \in \mathcal{D}_v$ so that $(l_1, l_2) \in X$. 
    By Corollary~\ref{cor:twopointrigidity}, $h$ is uniquely determined by its image on $l_1, l_2$, i.e., the stabilizer of $H_\mathcal{W}$ for $(l_1,l_2) \in X$ is trivial.
    Let $X_0$ be the orbit of $(l_1,l_2)$ under $H_\mathcal{W}$. 

    By Lemma~\ref{lem:homeoborderingleaves}, $X/G$ is finite.
    We claim $X_0/G_\mathcal{W}$ is finite. Indeed, let $r = |X/G|$.
    We will show that $X_0/G_\mathcal{W}$ has at most $r$ elements.
    Consider $h, h'\in H_{\mathcal{W}}$ so that $h(l_1,l_2), h'(l_1,l_2)$ are in the same $G$-orbit, i.e.,
    $$
    h'(l_1,l_2) = gh(l_1,l_2),
    $$
    for some $g \in G$.
    By Corollary~\ref{cor:twopointrigidity}, we have $g h(\Lambda_\mathcal{W}) = h'(\Lambda_\mathcal{W})$. Thus, $g \in G_\mathcal{W}$.
    Therefore, $h(l_1,l_2), h'(l_1,l_2)$ are in the same $G_{\mathcal{W}}$-orbit. Since there are at most $r$ $G$-orbits, there are at most $r$ $G_\mathcal{W}$-orbits in $X_0$. This proves the claim.
    
    Therefore, by Proposition~\ref{prop:finiteindex}, $G_\mathcal{W}$ has finite index in $H_\mathcal{W}$.
\end{proof}

\subsubsection{Ivy buds for {\ball} or {\para} vertices}\label{subsec:swtypeII}
In this subsection, we show that there is a unique ivy bud at a {\ball} or {\para} vertex $v$.
\begin{prop}\label{prop:uniqueextspiderweb}
    Let $v$ be a {\ball} or a {\para} vertex. Then there is a unique ivy bud $\mathcal{W}$, i.e., $|\mathcal{L}_v| \cup |\mathcal{D}_v|$ is connected. Moreover, if $v$ is {\para}, then $G_{\mathcal{W}} = G_v = \Z^2$.
\end{prop}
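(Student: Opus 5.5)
The plan is to prove both statements with one \emph{filling} argument, carried out in the quotient $(K_v \setminus \Lambda_v)/G_v$. The key claim is that every complementary component of $|\mathcal{L}_v| \cup |\mathcal{D}_v|$ in $K_v \setminus \Lambda_v$ is a topological disk.

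\textbf{Setting.} If $v$ is a {\ball} vertex, then $G_v=\{\id\}$ and $\Lambda_v=\emptyset$. In this case $K_v$ is a closed sphere with finitely many disks $D^-_C$, $C\in\mathcal{D}_v$, removed. If $v$ is a {\para} vertex, then $G$ is minimally parabolic, so $G_v \cong \Z^2$ and $\Lambda_v=\{p\}$. The quotient $(K_v\setminus\{p\})/G_v$ is then a torus with finitely many disks removed, namely the boundary of the thickened torus component of $\Sigma$ after cutting along compression disks. In both cases the leaves of $\mathcal{L}_v$ are of type \eqref{lem:L_v:eq1}, i.e. segments joining two circles of $\mathcal{D}_v$. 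The only exception is in the {\para} case, where loop-type leaves ending at $p$ might appear; these project to closed curves on the torus.

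\textbf{Step 1: every gap is a disk.} Let $V$ be a component of $K_v\setminus(\Lambda_v\cup|\mathcal{L}_v|\cup|\mathcal{D}_v|)$. By Lemma~\ref{lem:componentcorrespondence}, $V$ lies in a single component $U$ of $\widehat\C\setminus\overline{|\mathcal{L}|}$, and distinct gaps correspond to distinct $U$'s. Suppose $V$ is not simply connected. Then there is a Jordan curve $\gamma\subset V$ that separates $\widehat\C$ into two pieces, each containing a circle of $\mathcal{D}_v$ or the point $p$, and hence points of $\Lambda(G)$. Pass to the quotient $\Lambda(G)/\sim_{\mathcal{L}}$: the curve $\gamma$ lies in a complementary Jordan domain of the Schottky set. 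It still separates limit points, because no leaf of $\mathcal{L}$ crosses $\gamma$, so points on the two sides are never identified. This is impossible, since a Jordan domain in the complement of a connected Schottky set cannot separate it. This is the same argument as in Lemma~\ref{lem:unboundedspider}.

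\textbf{Step 2: the projected graph is connected.} Let $\Gamma$ be the image of $|\mathcal{L}_v|\cup|\mathcal{D}_v|$ in the quotient surface $Q=(K_v\setminus\Lambda_v)/G_v$. This $\Gamma$ is a finite graph: it consists of finitely many circles (boundary components of $Q$) and finitely many arcs and loops, by cofiniteness. By Step 1, every component of $Q\setminus\Gamma$ is an open disk, so $\Gamma$ fills $Q$. A graph whose complement consists of disks in a connected surface is connected. Moreover, every essential loop of $Q$ is homotopic into $\Gamma$, so $\pi_1(\Gamma)\to\pi_1(Q)$ is surjective. In the {\ball} case $Q=K_v$ and we are already done.

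\textbf{Step 3: lifting in the {\para} case.} The covering $K_v\setminus\{p\}\to Q$ is the $\Z^2$-cover associated to $\pi_1(Q)\to G_v$. The preimage of a connected subgraph whose fundamental group surjects onto the deck group is connected. Hence $|\mathcal{L}_v|\cup|\mathcal{D}_v|$ is connected and there is a unique ivy bud $\mathcal{W}$. Its support is then $G_v$-invariant, so $G_{\mathcal{W}}=G_v=\Z^2$.

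\textbf{Main obstacle.} I expect the hard part to be Step 1: making precise that a nontrivial loop in a gap really separates points of the quotient Schottky set. This needs the compatibility of the core decomposition (Proposition~\ref{prop:admiss}), so that gaps on distinct arcs of a circle $C$ are genuinely distinct complementary components. It also needs a check that each removed disk $D^-_C$ contains limit points that survive in the quotient $\Lambda(G)/\sim_{\mathcal{L}}$. For the latter I would use that $C$ meets at least three leaves (Lemma~\ref{lem:threeleaves}).
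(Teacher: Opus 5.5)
Your route differs from the paper's. You fill the quotient surface $Q$ by the image graph $\Gamma$ and then lift using $\pi_1(\Gamma)\twoheadrightarrow G_v$. The paper instead treats a {\ball} vertex by turning a separating curve into a compression disk of the acylindrical $(N,P\cup P_{\mathcal C})$, and a {\para} vertex by a rank argument on $G_{\mathcal W}$.

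\textbf{The gap.} In the {\para} case your argument breaks at the sentence ``By Step~1, every component of $Q\setminus\Gamma$ is an open disk.'' Step~1 only shows that a gap $V\subset K_v\setminus\{p\}$ is simply connected in $\widehat\C$. Its image in $Q$ is $V/\stab_{G_v}(V)$, which is a disk only if $\stab_{G_v}(V)$ is trivial. The configuration the proposition must exclude is exactly one where this fails. Normalize $p=\infty$ and suppose the ivy buds lie in parallel rows, each invariant under the same $g\in G_v$, with strip-shaped gaps between consecutive rows. Such a strip is $g$-invariant and simply connected in $\widehat\C$, since its complement is connected through $p$. Every Jordan curve in it bounds a disk in it, so your Step~1 argument never detects it. Yet in $Q$ the strips become annuli, $\Gamma$ does not fill, $\pi_1(\Gamma)$ has infinite-index image in $G_v$, and the lift is disconnected. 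So the entire content of the {\para} case sits in the unjustified step.

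\textbf{How to repair it.} You need to show that no gap has a nontrivial stabilizer in $G_v$.
\begin{itemize}
\item A rank-two stabilizer is impossible: $V/\stab_{G_v}(V)$ would then be a non-compact surface with fundamental group $\Z^2$.
\item If the stabilizer has rank one, then $p\in\overline V$. Choose $h\in G_v$ with $\langle h\rangle\cap\stab_{G_v}(V)=\{\id\}$. The translates $h^kV$ are then infinitely many distinct gaps whose closures all contain $p$. By Lemma~\ref{lem:componentcorrespondence} they give infinitely many complementary components of the Schottky set $\Lambda(G)/\sim_{\mathcal L}$ touching the image of $p$. This is impossible, since a point lies on the boundary of at most two disjoint round disks.
\end{itemize}
This is precisely the paper's key step, phrased there for an ivy bud whose stabilizer $G_{\mathcal W}$ has rank one. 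With it added, your Steps~2--3 go through; the {\ball} case is fine as written.

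\textbf{On your stated main obstacle.} Step~1 is in fact easy. No leaf of $\mathcal L$ meets $\gamma$, so the two sides of $\gamma$ split $\Lambda(G)$ into two nonempty saturated open sets, and the quotient would be disconnected. Each $D^-_C$ meets $\Lambda(G)$ because $C$ covers an essential curve on the conformal boundary. So neither the compatibility condition nor Lemma~\ref{lem:threeleaves} is needed there.
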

\begin{proof}
    Suppose $v$ is {\ball} type. Suppose $A:=|\mathcal{L}_v| \cup |\mathcal{D}_v|$ is not connected. Then there exists a simple closed curve $\gamma \subseteq K_v$ disjoint from $A$ that separates points in $A$. Thus $\gamma$ projects to the boundary of a compression disk $(N, P\cup P_\mathcal{C})$. But since $(N, P\cup P_\mathcal{C})$ is acylindrical, in particular it has incompressible boundary, this is a contradiction.

    Suppose $v$ is {\para} type. Let $\mathcal{W}$ be an ivy bud at $v$. By Corollary~\ref{cor:non-trival}, $G_{\mathcal{W}}$ is not trivial.
    Let us normalize so that $\Lambda_v = \{\infty\}$.
    Suppose $G_{\mathcal{W}}$ has rank $1$. 
    Since $G_v$ is a rank-two abelian group, there exists $g \in G_v$ so that $g^k \fil(|\mathcal{W}|)$ are disjoint for all $k \in \Z$.
    By Lemma~\ref{lem:componentcorrespondence}, we have infinitely many components of $\widehat\C \setminus \overline{|\mathcal{L}|}$ that contain $\infty$ on their boundary. This is a contradiction.
    Therefore, $G_{\mathcal{W}}$ has rank $2$.
    Suppose $G_{\mathcal{W}} \neq G_v$. Then there exists $g \in G_v$ so that $\fil(|\mathcal{W}|)$ is disjoint from $g \fil(|\mathcal{W}|)$. But this is not possible as each component of $\C \setminus \fil(|\mathcal{W}|)$ is bounded. Therefore, $G_\mathcal{W} = G_v$. The same argument also shows that $\mathcal{W}$ is the unique ivy bud at $v$.
\end{proof}

\subsubsection{Ivy buds for {\nonel} vertices}\label{subsec:swtypeIII}
In this subsection, we show that any homeomorphism $h \in \Homeo^+(\Lambda(G), \mathcal{L})$ permutes the limit sets of ivy buds for {\nonel} vertices. Together with the rigidity results for ivy buds, we show that any homeomorphism $h \in \Homeo^+(\Lambda(G), \mathcal{L})$ that fixes a {\nonel} vertex must fix an acylindrical lamination at $v$ (that is in general not canonical).
\subsection*{Invariance under homeomorphisms}
We first recall that the {\em ideal boundary} $\partial^I U$ of a simply connected subset $U$ of $\cbar$ that omits at least three points of $\widehat\C$ is the set of its prime ends.

Let $v$ be a {\nonel} vertex and fix a complementary component $\Delta$ of $\Lambda_v$ (this is a simply connected domain with locally connected boundary).
Let $\mathcal{U}$ be a component of $\Delta \setminus \bigcup_{\mathcal{W}} \fil(|\mathcal{W}|)$ where the union is over all ivy buds $\cW$ in $\Delta$.  
This defines a subset
    \begin{equation}\label{eqn:KU}
    \partial^I \mathcal{U} :=\{ x\in \partial^I\Delta: x \text{ is on the boundary of } \mathcal{U}\}.
    \end{equation}
    We note that $\partial^I \mathcal{U} \subseteq \partial^I\Delta$ is either empty, a set consisting of two points, a Cantor set or the whole ideal boundary $\partial^I\Delta$ (see Figure~\ref{fig:spiderwebFullBoundary} where $\partial^I \mathcal{U}$ is empty, and see Figure~\ref{fig:spiderweb} where 
    we have examples of ideal boundaries $\partial^I \mathcal{U}$ that are either empty or Cantor sets).
    We associate a lamination $\mathcal{L}_\mathcal{U}$ by considering the geodesics that connect two endpoints in $\partial^I \Delta$ of each component of $\partial^I\Delta \setminus \partial^I \mathcal{U}$, and let 
    \begin{equation}\label{eqn:LDelta}
        \mathcal{L}_\Delta:= \bigcup\mathcal{L}_\mathcal{U}
    \end{equation} 
    over all components of  $\Delta \setminus \bigcup_{\mathcal{W}} \fil(|\mathcal{W}|)$. The Ahlfors finiteness theorem applied to $G_v$ ensures $\cL_{\Delta}$ is indeed a lamination.  
    \begin{lem}\label{lem:preservingLDelta}
        Let $h \in \Homeo^+(\Lambda(G), \mathcal{L})$. Let $\Delta$ be a complementary component of $\Lambda_v$ for some {\nonel} vertex. Then there exist a {\nonel} vertex $v'$ and a complementary component $\Delta'$ of $\Lambda_{v'}$ so that
        $$
        h(\mathcal{L}_\Delta) = \mathcal{L}_{\Delta'}.
        $$
    \end{lem}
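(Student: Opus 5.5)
The plan is to show that $h$ maps the configuration defining $\mathcal{L}_\Delta$, namely $\Lambda_v$, the ivy buds inside $\Delta$, and the components $\mathcal{U}$, onto the analogous configuration for some {\nonel} vertex $v'$. The geodesics of $\mathcal{L}_\Delta$ are then determined by this combinatorial data together with prime ends.

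\emph{Step 1: $h(\Lambda_v)=\Lambda_{v'}$.} The nontrivial components of $\Lambda(G)$ are exactly the limit sets $\Lambda_v$ of {\nonel} vertices. Since $h$ is a homeomorphism of $\widehat\C$ preserving $\Lambda(G)$, it permutes these components. Hence $h(\Lambda_v)=\Lambda_{v'}$ for a unique {\nonel} vertex $v'$. Because $h$ is ambient, it also carries the complementary component $\Delta$ of $\Lambda_v$ to a complementary component $\Delta'$ of $\Lambda_{v'}$.

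\emph{Step 2: $h$ permutes the filled ivy buds.} The sets $\fil(|\mathcal{W}|)$ are not canonical, since they depend on the chosen core decomposition and on the curves in $\mathcal{D}$, and $h$ need not preserve $\mathcal{D}$. So I would characterize them intrinsically through the Schottky quotient $\Lambda(G)/\sim_{\mathcal{L}}$, on which $h$ acts because it preserves $|\mathcal{L}|$.

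First, by Lemma~\ref{lem:componentcorrespondence}, the components of $\Delta\setminus\bigcup_{\mathcal W}\fil(|\mathcal W|)$ correspond injectively to components of $\widehat\C\setminus\overline{|\mathcal L|}$ meeting $\Delta$. These are complementary disks of the Schottky set.

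Second, the leaves of $\mathcal{L}$ crossing $\partial\Lambda$-side regions are grouped into ivy buds by a connectivity relation. Two leaves of $\mathcal{L}_{\mathcal W}$ are connected with respect to $\mathcal D\subseteq\widehat{\mathcal D}(N_0)$.

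I would then aim to show a converse: two leaves meeting $\Delta$ lie in the same ivy bud if and only if they are connected with respect to $\widehat{\mathcal D}(\infty)$ through leaves that avoid $\Lambda_v$. For the reverse direction, a simple closed geodesic giving an induced cycle with leaves in two different ivy buds would have to cross $\Lambda_v$. Otherwise it would produce a curve in the complement of $\Lambda_v\cup\bigcup\fil$ joining different buds, contradicting the separation used in Lemma~\ref{lem:unboundedspider}.

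This relation is $h$-invariant by Lemma~\ref{lem:homeoborderingleaves}, since $h$ preserves the nerve and hence $\widehat{\mathcal D}(\infty)$. It follows that $h$ sends the set of leaves of each bud $\mathcal{W}$ in $\Delta$ to the set of leaves of a bud $\mathcal W'$ in $\Delta'$. The closures of $|\mathcal L_{\mathcal W}|$ differ from $\fil(|\mathcal W|)$ only inside disks $D_C^-$, which contain no points of $\Lambda_v$. Therefore the trace on the ideal boundary is preserved, and $h$ maps each $\mathcal{U}$ to a corresponding $\mathcal{U}'$.

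\emph{Step 3: the laminations match.} Since $h$ is a homeomorphism of $\overline{\Delta}$ onto $\overline{\Delta'}$ and $\partial\Delta$ is locally connected, $h$ induces a homeomorphism $\partial^I\Delta\to\partial^I\Delta'$. By Step 2 this map carries $\partial^I\mathcal{U}$ to $\partial^I\mathcal{U}'$. Consequently it sends each complementary interval of $\partial^I\Delta\setminus\partial^I\mathcal U$ to such an interval for $\mathcal U'$, and so it maps endpoint pairs of $\mathcal L_{\mathcal U}$ to endpoint pairs of $\mathcal L_{\mathcal U'}$. Interpreting $h(\mathcal L_\Delta)$ as the set of geodesics with the image endpoints (the paper works modulo this identification), we obtain $h(\mathcal L_\Delta)=\mathcal L_{\Delta'}$.

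The main obstacle is Step 2, namely the intrinsic, $h$-invariant characterization of ivy buds despite the non-invariance of $\mathcal D$. I would try the $\widehat{\mathcal D}(\infty)$-connectivity criterion first, handling separately the elementary buds (single leaves), which are invariant as leaves.
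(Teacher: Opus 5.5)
Your Steps 1 and 3 agree with the paper. Step 2, however, has a genuine gap.

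\emph{The ``only if'' direction is not justified.} The characterization of ivy buds by $\widehat{\mathcal D}(\infty)$-connectivity is only asserted, and the argument you give for this direction does not work. An arc in $\Delta\setminus\bigcup_{\mathcal W}\fil(|\mathcal W|)$ joining two different buds is not contradictory: a single component $\mathcal U$ typically borders several buds, and infinitely many when $\partial^I\mathcal U$ is a Cantor set (cf.\ Figure~\ref{fig:spiderweb}). The argument of Lemma~\ref{lem:unboundedspider} only rules out a \emph{closed} curve in that complement separating a bud from $\Lambda_v$. Such a curve would lie in one complementary disk of the Schottky set $\Lambda(G)/\sim_{\mathcal L}$ and separate points of it; an arc between buds gives nothing of the sort.

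\emph{The criterion also fails as stated.} Consider leaves of the bud at the adjacent vertex that lie inside a disk $D_C^-$ with $C\in\mathcal D_v$. They meet $\Delta$, avoid $\Lambda_v$, and are connected with respect to $\mathcal D\subseteq\widehat{\mathcal D}(\infty)$ to the leaves crossing $C$. Yet they belong to no ivy bud at $v$. You could exclude them by restricting chains using $K_v$ or the disks $D_C^-$, but that destroys the $h$-invariance you need, since $\mathcal D$ is not preserved by $h$.

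\emph{The detour is unnecessary.} The lamination $\mathcal L_\Delta$ is defined purely in terms of the sets $\partial^I\mathcal U$. So you only need $h$ to permute the components $\mathcal U$ of $\Delta\setminus\bigcup_{\mathcal W}\fil(|\mathcal W|)$ that meet $\partial^I\Delta$; whether $h$ permutes the buds themselves is irrelevant. Since $h$ preserves $|\mathcal L|$, it permutes the components of $\widehat\C\setminus\overline{|\mathcal L|}$. Lemma~\ref{lem:componentcorrespondence}, which you already quote, identifies each such $\mathcal U$ injectively with one of these components. Combined with $h(\Delta)=\Delta'$, this sends each $\mathcal U$ to a corresponding $\mathcal U'$ in $\Delta'$ with $h(\partial^I\mathcal U)=\partial^I\mathcal U'$, and your Step 3 then finishes. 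This is exactly the paper's argument.
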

    \begin{proof}
        Since any homeomorphism sends components of $\Lambda(G)$ to components of $\Lambda(G)$, there exists a {\nonel} vertex $v'$ and a complementary component $\Delta'$ of $\Lambda_{v'}$ so that $h(\Delta) = \Delta'$. Therefore, $h(\partial^I \Delta) = \partial^I \Delta'$. Since $h$ preserves $\mathcal{L}$, $h$ sends a component of $\widehat\C \setminus \overline{|\mathcal{L}|}$ to another. Thus, by Lemma~\ref{lem:componentcorrespondence}, $h$ sends a component of $\Delta \setminus \bigcup_{\mathcal{W}} \fil(|\mathcal{W}|)$ that intersects $\partial^I\Delta$ to a component of $\Delta' \setminus \bigcup_{\mathcal{W}'} \fil(|\mathcal{W}'|)$ that intersects $\partial^I\Delta'$. 
        Therefore, $h(\mathcal{L}_\Delta) = \mathcal{L}_{\Delta'}$.
    \end{proof}

\begin{lem}\label{lem:singleleaf}
    Let $v$ be a {\nonel} vertex and $\mathcal{W}$ be an ivy bud at $v$. Then $\Lambda_{\mathcal{W}}$ is finite if and only if $\mathcal{W}$ is elementary, i.e., $\mathcal{W}$ consists of a single leaf.
\end{lem}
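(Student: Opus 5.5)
We prove the two implications separately; the backward one is elementary and the forward one carries the content.

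\emph{Elementary $\Rightarrow$ finite.} Suppose $\mathcal{W}=\{l\}$ consists of a single leaf. Since $\mathcal{W}$ contains no Jordan curve of $\mathcal{D}_v$, the leaf $l$ is of the second type in the lemma describing $\mathcal{L}_v$. It is therefore a complete geodesic in $\Omega(G)$ with both ends in $\Lambda_v$. By Lemma~\ref{lem:intersectionboundary}, $\Lambda_{\mathcal{W}}=\overline{l}\cap\Lambda_v=\partial l$, which consists of at most two points and so is finite.

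\emph{Non-elementary $\Rightarrow$ infinite, reduction.} Suppose $\mathcal{W}$ is non-elementary, so it contains some $C\in\mathcal{D}_v$. Since $v$ is non-elementary, Corollary~\ref{cor:non-trival} shows $G_{\mathcal{W}}$ is infinite, and Lemma~\ref{lem:unboundedspider} shows $\Lambda_{\mathcal{W}}\neq\emptyset$. Any infinite discrete subgroup has non-empty limit set. If $G_{\mathcal{W}}$ is non-elementary, its limit set is a perfect set and hence infinite, and we are done. It therefore suffices to rule out $G_{\mathcal{W}}$ being elementary, i.e.\ cyclic loxodromic, rank-one parabolic, or rank-two parabolic.

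\emph{Ruling out elementary stabilizers.} Assume for contradiction that $G_{\mathcal{W}}$ is elementary, so $\Lambda_{\mathcal{W}}$ has one or two points. The set $|\mathcal{W}|$ is connected. It contains $C$, and by Lemma~\ref{lem:threeleaves} it also contains at least three segment leaves crossing $C$. Modulo $G_{\mathcal{W}}$, the set $|\mathcal{W}|$ consists of finitely many curves and segments (as in the proof of Lemma~\ref{lem:intersectionboundary}), and it accumulates only on $\Lambda_{\mathcal{W}}$. We now compare $\fil(|\mathcal{W}|)$ with the complementary structure.

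\begin{itemize}
\item \emph{Rank-two parabolic $G_{\mathcal{W}}$.} Here $G_{\mathcal{W}}$ fixes a point $p\in\Lambda_v$, and $\fil(|\mathcal{W}|)$ is a $\Z^2$-periodic set near $p$. Normalize $p=\infty$. Because $\Lambda_v$ is a non-trivial continuum containing $\infty$ and invariant under a lattice, it meets every neighbourhood of $\infty$ in unbounded pieces. This contradicts $\fil(|\mathcal{W}|)\subseteq\widehat\C\setminus\Lambda_v$ being a connected doubly periodic set: the two would have to cross. I would make this precise as in the parabolic case of Proposition~\ref{prop:uniqueextspiderweb}, using the boundedness of the complementary components of $\fil(|\mathcal{W}|)$ in $\C$.

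\item \emph{Cyclic $G_{\mathcal{W}}=\langle g\rangle$, loxodromic or rank-one parabolic.} Here $\fil(|\mathcal{W}|)$ is a connected $g$-invariant set joining the (one or two) points of $\Lambda_{\mathcal{W}}$ through $\widehat\C\setminus\Lambda_v$. Its closure then separates $\Lambda_v$. One side contains a component of $\Lambda_v\setminus\Lambda_{\mathcal{W}}$, and that component is separated from the rest.
  \begin{itemize}
  \item In the loxodromic case, the two fixed points of $g$ form a cut pair of the component $\Lambda_v$.
  \item In the parabolic case, the fixed point is a cut point.
  \end{itemize}
  The Jordan curve used here can be taken inside $\fil(|\mathcal{W}|)\cup\Lambda_{\mathcal{W}}$, crossing only leaves of $\mathcal{L}$ and discs $D^-_C$. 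After passing to $\Lambda(G)/\sim_{\mathcal{L}}$, this produces a cut point or cut pair in the Schottky set $\Lambda(G)/\sim_{\mathcal{L}}$, which is impossible by \cite[Proposition 5.3]{ph:lp:gw:schottky}. In the cut-pair case, the two points of $\Lambda_{\mathcal{W}}$ become two touching points. Lemma~\ref{lem:componentcorrespondence} ensures that the components on either side of $|\mathcal{W}|$ remain distinct in the quotient.
\end{itemize}

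\emph{Main obstacle.} The main difficulty is this last separation step. It requires showing that a cyclically-stabilized non-elementary ivy bud genuinely separates $\Lambda_v$ in the quotient, rather than the separation being undone by identifications along leaves of $\mathcal{L}$ outside $\mathcal{W}$. I would handle this via Lemma~\ref{lem:componentcorrespondence}, tracking which complementary components of $\widehat\C\setminus\overline{|\mathcal{L}|}$ lie on each side of the curve.
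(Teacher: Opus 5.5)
Your backward direction, the reduction to elementary $G_{\cW}$, and the rank-two case are fine. The cyclic cases, however, do not go through as written, and they carry the content of the lemma.

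\textbf{The separation step fails.} You infer that the closure of $\fil(|\cW|)$ separates $\Lambda_v$, but this does not follow. $|\cW|$ lies in a single complementary component $\Delta$ of $\Lambda_v$. A connected set in $\Delta$ joining the ideal endpoints $a,b\in\partial^I\Delta$ separates $\Delta$, not $\Lambda_v$. For example, a crosscut of a peripheral disk of a carpet does not make its endpoints a cut pair.

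In the rank-one parabolic case, $p$ is indeed a cut point of $\Lambda_v$, because the parabolic is accidental for $\stab(\Delta)$. But cut points of this kind are exactly what the acylindrical lamination destroys, so nothing forces a cut point of $\Lambda(G)/\sim_{\cL}$.

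Moreover, any Jordan curve in $\fil(|\cW|)\cup\Lambda_{\cW}$ must traverse infinitely many segment leaves of $\cW$. Its image in the quotient therefore meets the Schottky set in infinitely many touching points, so it cannot witness a cut point or cut pair. This is exactly the obstacle you flag, and it is left unresolved.

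\textbf{The missing idea: use touching, not separation.} The paper argues this way and never classifies $G_{\cW}$.

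\begin{enumerate}
\item If $\Lambda_{\cW}$ is finite, then $\partial^I|\cW|$ consists of two prime ends $a,b$ of $\Delta$.
\item The two extremal paths $\gamma_1,\gamma_2$ in $|\cW|$ from $a$ to $b$ lie on the boundaries of two distinct components $U_1\neq U_2$ of $\Delta\setminus\bigcup_{\cW'}\fil(|\cW'|)$.
\item By Lemma~\ref{lem:componentcorrespondence}, these give distinct complementary components $\widehat U_1,\widehat U_2$ of $\Lambda(G)/\sim_{\cL}$.
\item Each $\gamma_i$ contributes a proper arc $I_i\subsetneq\partial\widehat U_i$ whose endpoints are the images of $a$ and $b$.
\item Two peripheral circles of a Schottky set meet in at most one point, so the images of $a$ and $b$ coincide.
\item A proper arc with coinciding endpoints is a point, so each $I_i$ is degenerate and $\gamma_1,\gamma_2$ are the same leaf.
\end{enumerate}

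Your last sub-bullet ("two touching points", "the components on either side remain distinct") captures only the situation where the images of $a$ and $b$ differ. The degenerate situation, where they coincide, is where the work lies. It is automatic in your rank-one parabolic case, where both prime ends lie over $p$. It also occurs in the loxodromic case when $a,b$ are endpoints of a leaf of $\cL$. Here sharing two boundary points gives no contradiction. You need the additional step that $I_i$ is a proper arc of $\partial\widehat U_i$, hence a single point. That is incompatible with $\gamma_i$ running along some $C\in\cD_v$.
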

\begin{proof}
    The `if' direction is straightforward.
    We now show the `only if' direction.
    Note that $\Lambda_{\cW}$ is finite if and only if $\partial^I |\mathcal{W}|$ consists of two points $a, b$ in the ideal boundary $\partial^I\Delta$.
    Let $\gamma_1, \gamma_2$ be the two extremal paths in $|\mathcal{W}|$ connecting $a$ to $b$. Here extremal means that $|\mathcal{W}|$ is bounded in between $\gamma_1$ and $\gamma_2$.
    Then there exist two complementary components $U_1, U_2$ of $\Delta \setminus \bigcup_{\mathcal{W}'} \fil(|\mathcal{W}'|)$ so that $\gamma_i \subsetneq \partial U_i$.
    Therefore, by Lemma~\ref{lem:componentcorrespondence}, there exist two complementary components $\widehat U_1, \widehat U_2$ of $\Lambda(G)/\sim_\mathcal{L}$ so that  $\gamma_i$ projects to a {\em proper} interval $I_i \subseteq \partial \widehat U_i$. Note that $\{a, b\}$ projects to the endpoints of both $I_1$ and $I_2$. Since $\Lambda(G)/\sim_\mathcal{L}$ is a Schottky set, the boundaries $\partial \widehat U_1$ and $\partial \widehat U_2$ intersect in exactly one point. It follows that each $I_i$ must be a singleton. Consequently, $\gamma_1$ and $\gamma_2$ correspond to the same leaf of $\mathcal{L}$. Therefore, $\mathcal{W}$ consists of a single leaf $l$ connecting $a, b$.
\end{proof}

\begin{prop}\label{prop:homeospidertypeIII}
    Let $h \in \Homeo^+(\Lambda(G), \mathcal{L})$ and let $\mathcal{W}$ be an ivy bud at some {\nonel} vertex $v$. Then there exists some ivy bud $\mathcal{W}'$ so that $h(\Lambda_\mathcal{W}) = \Lambda_{\mathcal{W}'}$.
\end{prop}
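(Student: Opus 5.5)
We plan to characterize the sets $\Lambda_{\mathcal{W}}$ intrinsically, in terms of data that any $h \in \Homeo^+(\Lambda(G), \mathcal{L})$ preserves. Since $h$ permutes the components of $\Lambda(G)$, we have $h(\Lambda_v) = \Lambda_{v'}$ for some {\nonel} vertex $v'$. By Lemma~\ref{lem:preservingLDelta}, each complementary component $\Delta$ of $\Lambda_v$ is carried to a complementary component $\Delta'$ of $\Lambda_{v'}$, with $h(\mathcal{L}_\Delta) = \mathcal{L}_{\Delta'}$. It therefore suffices to describe $\Lambda_{\mathcal{W}}$, for an ivy bud $\mathcal{W}$ in $\Delta$, using only $\Lambda_v$, $\Delta$, the lamination $\mathcal{L}$, and the complementary components of $\overline{|\mathcal{L}|}$ (equivalently, the components of $\Delta \setminus \bigcup \fil(|\mathcal{W}|)$, via Lemma~\ref{lem:componentcorrespondence}).

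The key characterization is the following. The ivy buds in $\Delta$ correspond to the gaps of $\Delta \setminus \bigcup_{\mathcal{U}} \mathcal{U}$, taken on the ideal boundary $\partial^I \Delta$. Concretely, the complement $\partial^I\Delta \setminus \bigcup_{\mathcal{U}} \partial^I \mathcal{U}$, together with the leaves of $\mathcal{L}_\Delta$, cuts $\Delta$ into regions. Each non-elementary ivy bud $\mathcal{W}$ is exactly the ivy bud contained in one such region $R$, and $\Lambda_{\mathcal{W}}$ is the set of points of $\Lambda_v$ (the image of $\partial^I\Delta$ under the Carathéodory extension) accumulated by $R$. This follows from Lemma~\ref{lem:intersectionboundary}, which gives $\Lambda_{\mathcal{W}} = \overline{|\mathcal{W}|} \cap \Lambda_v$, combined with the fact that $\fil(|\mathcal{W}|)$ is separated from the other buds by the regions $\mathcal{U}$. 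So we will show that the regions of $\Delta \setminus |\mathcal{L}_\Delta|$ not of the form $\mathcal{U}$ are in bijection with the ivy buds, and that the ideal boundary of each such region meets $\partial^I\Delta$ in the set corresponding to $\Lambda_{\mathcal{W}}$.

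Since $h(\mathcal{L}_\Delta) = \mathcal{L}_{\Delta'}$, and since $h$ maps components $\mathcal{U}$ to components $\mathcal{U}'$ (as shown in the proof of Lemma~\ref{lem:preservingLDelta}), $h$ maps the complementary regions of $\mathcal{L}_\Delta$ that are not of type $\mathcal{U}$ to the corresponding regions for $\Delta'$. Taking boundary traces on $\Lambda_v$, we obtain $h(\Lambda_{\mathcal{W}}) = \Lambda_{\mathcal{W}'}$.

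Elementary ivy buds need separate treatment. By Lemma~\ref{lem:singleleaf}, they are exactly the buds whose limit set is finite, namely a single leaf $l$ with $\Lambda_{\mathcal{W}} = \partial l$. Because $h$ preserves $|\mathcal{L}|$ and $\Lambda_v$, the leaf $h(l)$ is a leaf with both endpoints in $\Lambda_{v'}$ lying in $\Delta'$, and it is therefore an elementary ivy bud at $v'$.

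The main obstacle is the second step: making precise that the regions not of type $\mathcal{U}$ each contain exactly one ivy bud. One must rule out a single complementary region containing two buds, or a bud straddling a leaf of $\mathcal{L}_\Delta$. The first possibility would be handled as in Lemma~\ref{lem:unboundedspider}: a separating curve between two buds in the same region would give a curve in the complement of the Schottky set $\Lambda(G)/\sim_{\mathcal{L}}$ separating limit points. The second is excluded because the leaves of $\mathcal{L}_\Delta$ lie, up to homotopy, inside the regions $\mathcal{U}$.
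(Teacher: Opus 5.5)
Your proposal follows essentially the paper's proof. Lemma~\ref{lem:preservingLDelta} gives $h(\mathcal{L}_\Delta)=\mathcal{L}_{\Delta'}$; a non-elementary bud's limit set is identified with the boundary trace of a gap of $\mathcal{L}_\Delta$ not arising from some $\mathcal{U}$ (the paper treats $\Lambda_{\mathcal W}=\partial\Delta$ as a separate case, which your formulation absorbs when $\mathcal{L}_\Delta$ has no leaves); and elementary buds are handled by Lemma~\ref{lem:singleleaf}. The gap--bud correspondence you flag as the main obstacle is only asserted in the paper, and your Lemma~\ref{lem:unboundedspider}-style closed-curve argument is not the right mechanism for it: by Lemma~\ref{lem:unboundedspider} two buds both accumulate on $\partial\Delta$, so a separator avoiding the fills must be a crosscut inside a single $\mathcal{U}$ with two distinct ideal endpoints, and the leaves of $\mathcal{L}_{\mathcal{U}}$ then put the two buds in different gaps directly.
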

\begin{proof}
    Let $\Delta$ be the complementary component of $\Lambda_v$ so that $|\mathcal{W}| \subseteq \Delta$, and let $\Delta' = h(\Delta)$.

    If $\Lambda_\mathcal{W}$ is finite, then $\mathcal{W}$ consists of a single leaf by Lemma~\ref{lem:singleleaf}. Since $h$ preserves $\mathcal{L}$, we have $h(\Lambda_\mathcal{W}) = \Lambda_{\mathcal{W}'}$.

    If $\Lambda_\mathcal{W} = \partial \Delta$, then $h(\Lambda_\mathcal{W}) = h(\partial \Delta) = \partial \Delta' = \Lambda_{\mathcal{W}'}$.

    If $\Lambda_\mathcal{W}$ is a Cantor set, then there is a gap $A$ of $\mathcal{L}_\Delta$ so that $\Lambda_\mathcal{W} = \partial \Delta \cap \partial A$. 
    Let us first observe that $h(\mathcal{L}_\Delta) = \mathcal{L}_{\Delta'}$ (Lemma~\ref{lem:preservingLDelta}). Since the ideal boundary of each gap of $\mathcal{L}_\Delta$ is either $\partial^I \mathcal{U}$ as in \eqref{eqn:KU} or $\Lambda_{\mathcal{W}_i}$ for some ivy bud $\mathcal{W}_i$ and  that, for each $\mathcal{U}$, there exists $\mathcal{U}'$ so that $h$ sends $\partial^I \mathcal{U}$ to $\partial^I {\mathcal{U}'}$, it follows that $h(\Lambda_\mathcal{W}) = \Lambda_{\mathcal{W}'}$ for some ivy bud $\mathcal{W}'$ of $\Delta'$. 
\end{proof}

\begin{prop}\label{prop:schottkysettypeIII}
    Let $v$ be a {\nonel} vertex and let $H_v= \{h\in \Homeo^+(\Lambda(G), \mathcal{L}): h(\Lambda_v) = \Lambda_v\}$. There exists an $H_v$-invariant acylindrical lamination $\widetilde{\mathcal{L}}_v$ for $\Lambda_v$, i.e.,
    \begin{enumerate}
        \item for any $h \in H_v$, we have $h(\widetilde{\mathcal{L}}_v) = \widetilde{\mathcal{L}}_v$; and
        \item $\Lambda_v/\sim_{\widetilde{\mathcal{L}}_v}$ is homeomorphic to a Schottky set.
    \end{enumerate}
\end{prop}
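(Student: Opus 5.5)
The natural candidate is
$$
\widetilde{\mathcal{L}}_v := \mathcal{L}_v^{\circ} \cup \bigcup_{\Delta} \mathcal{L}_\Delta,
$$
where $\Delta$ ranges over the complementary components of $\Lambda_v$. Here $\mathcal{L}_\Delta$ is the lamination of \eqref{eqn:LDelta}, and $\mathcal{L}_v^{\circ}$ consists of the leaves of $\mathcal{L}_v$ of the second type, namely complete geodesics with both ends in $\Lambda_v$, which are exactly the elementary ivy buds. Equivalently, $\widetilde{\mathcal{L}}_v$ collects the boundary geodesics of the gaps whose ideal boundaries are the sets $\Lambda_{\mathcal{W}}$ and $\partial^I\mathcal{U}$. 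Each non-elementary ivy bud $\mathcal{W}$ is thus replaced by the single "hull" gap of $\Lambda_{\mathcal{W}}$ in $\Delta$. Each gap $\mathcal{U}$ of $\Delta\setminus\bigcup\fil(|\mathcal{W}|)$ keeps its ideal boundary $\partial^I\mathcal{U}$. The Ahlfors finiteness theorem for $G_v$, already invoked after \eqref{eqn:LDelta}, ensures that this is a locally finite $G_v$-invariant lamination.

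For invariance (1), take $h\in H_v$. Then $h$ permutes the complementary components $\Delta$ of $\Lambda_v$. By Lemma~\ref{lem:preservingLDelta}, $h(\mathcal{L}_\Delta)=\mathcal{L}_{h(\Delta)}$. Elementary ivy buds are single leaves of $\mathcal{L}$ and are mapped to such leaves, since $h$ preserves $\mathcal{L}$ and, by Lemma~\ref{lem:singleleaf}, $\Lambda_{\mathcal{W}}$ is finite exactly for these. Proposition~\ref{prop:homeospidertypeIII} shows that the limit sets $\Lambda_{\mathcal{W}}$ of non-elementary buds are permuted. Since the leaves of $\widetilde{\mathcal{L}}_v$ are determined by their endpoints on $\Lambda_v$ (as geodesic representatives in $\Delta$), and the equivalence $\sim$ in $H_v$ only records the action on $\Lambda_v$, it follows that $h(\widetilde{\mathcal{L}}_v)=\widetilde{\mathcal{L}}_v$.

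For (2), I would identify $\Lambda_v/\sim_{\widetilde{\mathcal{L}}_v}$ with the image of $\Lambda_v$ in the Schottky set $\Lambda(G)/\sim_{\mathcal{L}}$. The image of $\Lambda_v$ under the quotient map is a closed connected subset. Its complementary components correspond to the gaps of $\widetilde{\mathcal{L}}_v$: each gap $\mathcal{U}$ lies in a single complementary component of the Schottky set by Lemma~\ref{lem:componentcorrespondence}, and each non-elementary bud's fill collapses into the complement of $\Lambda_v$. Two points of $\Lambda_v$ should be identified in this image exactly when they are endpoints of a leaf of $\widetilde{\mathcal{L}}_v$.

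The Schottky property can then be checked in two ways. The first is the topological characterization used in \cite{ph:lp:gw:schottky}: the quotient should be connected, locally connected, and without cut points or cut pairs, with complementary components that are Jordan domains having pairwise at most one common boundary point. The second is to argue 3-dimensionally. The leaves of $\widetilde{\mathcal{L}}_v$ project to a multicurve on $\partial_0 X_v$, where $X_v$ is the piece for $v$, and acylindricity of $(X_v, P\cup P_{\widetilde{\mathcal{C}}})$ follows from acylindricity of $(N,P\cup P_{\mathcal{C}})$: an essential annulus or disk in $X_v$ avoiding the new curves would yield one in $N$ avoiding $\mathcal{C}$, using that the ivy buds absorb the compression disks. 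Proposition~\ref{prop:acylindricalSchottky} then finishes.

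The main obstacle is this last step. One must show that collapsing along $\widetilde{\mathcal{L}}_v$ creates no cut pairs, which is the analogue of Lemma~\ref{lem:singleleaf}'s argument that two complementary components meet in at most one point, and that the resulting multicurve is genuinely acylindrical. I would argue by contradiction as in Lemma~\ref{lem:singleleaf}. A bad pair of gaps sharing two boundary points would project to two complementary components of $\Lambda(G)/\sim_{\mathcal{L}}$ with intersecting boundaries along a nondegenerate set, or to a separating curve as in Lemma~\ref{lem:unboundedspider}, contradicting that $\Lambda(G)/\sim_{\mathcal{L}}$ is a Schottky set.
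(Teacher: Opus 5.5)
There is a genuine gap. Your lamination is exactly the paper's \emph{starting} lamination $\widetilde{\mathcal{L}}_v^0=\bigcup_\Delta\mathcal{L}_\Delta$; the elementary leaves already belong to $\mathcal{L}_\Delta$. Your argument for (1) is essentially the paper's check, via Lemma~\ref{lem:preservingLDelta}, that $\widetilde{\mathcal{L}}_v^0$ is $H_v$-invariant.

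However, $\widetilde{\mathcal{L}}_v^0$ is in general \emph{not} acylindrical, so (2) cannot be proved for it. Here is an example. Suppose $\Lambda_v$ is a circle and on one side the ivy bud is full, $\Lambda_{\mathcal{W}}=\partial\Delta$, as in Figure~\ref{fig:spiderwebFullBoundary}. That side contributes no leaves. The other side contributes only disjoint simple closed geodesics (elementary leaves and hull boundaries). A multicurve on one side of a closed surface never cuts it into disks, so the quotient is a circle or has cut pairs.

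Both of your arguments for (2) break on this.
\begin{itemize}
\item The image of $\Lambda_v$ in $\Lambda(G)/\sim_{\mathcal{L}}$ is only $\Lambda_v/\sim_{\mathcal{L}_v^\circ}$. The boundary geodesics of the hull of a Cantor set $\Lambda_{\mathcal{W}}$ are in general not leaves of $\mathcal{L}$, so their endpoints stay distinct in the Schottky set. In Figure~\ref{fig:spiderweb} the quasicircle $\Lambda_v$ simply embeds. Hence your quotient is not a subset of $\Lambda(G)/\sim_{\mathcal{L}}$ and inherits nothing from it.
\item A gap spanned by $\Lambda_{\mathcal{W}}$ is not a complementary component of $\Lambda(G)/\sim_{\mathcal{L}}$. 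It contains $\fil(|\mathcal{W}|)$ together with the pieces of $\Lambda(G)$ lying behind the curves of $\mathcal{D}_{\mathcal{W}}$. So two such gaps meeting in two points contradict nothing.
\item In the $3$-dimensional version, take an essential annulus whose boundary curves lie in the part of $\partial_0 X_v$ occupied by a non-elementary bud. It avoids your new multicurve but crosses the arcs of $\mathcal{C}_v$ forming the bud. So it does not produce an annulus in $N$ disjoint from $\mathcal{C}$.
\end{itemize}

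The missing idea is the enlargement step, which is the heart of the paper's proof. The paper treats $\widetilde{\mathcal{L}}_v^0$ only as a starting point and enlarges it through \emph{admissible} laminations. These contain $\widetilde{\mathcal{L}}_v^0$, have finitely many $H_v$-orbits of leaves, and each leaf is either an elementary bud or a lift of a simple closed curve on the orbifold $S_{\mathcal{W}}$ of $H_{\mathcal{W}}$. By Corollary~\ref{cor:finiteindexHW}, $H_{\mathcal{W}}$ is a finite extension of the Fuchsian group $G_{\mathcal{W}}$, hence topologically Fuchsian by Tukia. Working with $H_{\mathcal{W}}$ rather than $G_{\mathcal{W}}$ is exactly what keeps the added curves $H_v$-invariant.

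Given a cut pair $\{a,b\}$ of the current quotient, the Schottky property of $\Lambda(G)/\sim_{\mathcal{L}}$ is used only to show two things: at most one adjacent piece comes from some $\partial^I\mathcal{U}$, and at most one of the associated curves $\gamma_j\subset S_j\subseteq S_{\mathcal{W}_j}$ is peripheral. One then adds the $H_v$-saturation of lifts of essential curves crossing the remaining $\gamma_j$. The process terminates because each step lowers the maximal number of disjoint non-homotopic essential cylinders of the pared manifold of $G_v$. This is also why the paper's $\widetilde{\mathcal{L}}_v$ is not canonical, whereas your candidate is.
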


\begin{proof}
    We first associate a $2$-orbifold $S_{\cW}$ to $H_{\cW}$ for each non-elementary ivy bud at $v$. 
    Let $\cW$ be an ivy bud contained in a component $\Delta$ of $\widehat\C\setminus \Lambda_v$. 
    By Corollary~\ref{cor:finiteindexHW}, $H_{\mathcal{W}}$ is a finite extension of $G_{\mathcal{W}}$ and its action on $\partial^I\Delta$ is a convergence action. 
    Since the stabilizer $G_{\Delta}$of $\Delta$ is a surface group and the action of $G_{\Delta}$ on the ivy buds contained in $\Delta$ is cofinite, the stabilizer $G_{\cW}$ on $\partial^I\Delta$ is a Fuchsian group. Thus $H_{\mathcal{W}}$ is also topologically conjugate to a Fuchsian group \cite[Theorem 6B]{tukia:fuchs}. 
    We denote by $S_{\cW}$ the corresponding quotient $2$-orbifold for $H_{\cW}$.
    
    Let $\widetilde{\mathcal{L}}_v^0:= \bigcup_\Delta \mathcal{L}_\Delta$, where $\mathcal{L}_\Delta$ is constructed in \eqref{eqn:LDelta} and where the union is over all the complementary components of $\Lambda_v$.
    We say that a lamination $\widetilde{\mathcal L}_v$ is admissible if 
    \begin{itemize}
        \item $\widetilde{\mathcal L}_v$ contains $\widetilde{\mathcal L}_v^0$;
        \item it consists of finitely many $H_v$-orbits of leaves, in particular, it is $H_v$-invariant; and
        \item each leaf corresponds to either the leaf of an elementary ivy bud $\mathcal{W}$ at $v$, or a simple closed curve on $S_{\cW}$ for some non-elementary ivy bud $\mathcal W$ at $v$.
    \end{itemize} 
    Let us check that the initial lamination $\widetilde{\mathcal L}_v^0$ is admissible. First, Lemma~\ref{lem:preservingLDelta} ensures that it is $H_v$-invariant.
    Each leaf of $\widetilde{\mathcal L}_v^0$ corresponds to either the leaf of some elementary ivy bud or to a component of $\partial S_{\cW}$ of some non-elementary ivy bud. 
    By proposition~\ref{prop:homeospidertypeIII}, $H_v$ acts on $\{\Lambda_{\mathcal{W}}: \mathcal{W} \text{ is an ivy bud at } v\}$ and has finitely many orbits since $G_v$ has only finitely many orbits.
    Since each orbifold $S_{\cW}$ has finitely many boundary components, $\widetilde{\mathcal L}_v^0$ consists of only finitely many $H_v$-orbits of leaves.
    We shall enlarge $\widetilde{\mathcal L}_v^0$ through admissible $H_v$-laminations until the lamination becomes acylindrical.

    Suppose we are given an admissible cylindrical lamination $\widetilde{\mathcal L}_v$. 
    We claim that there is a strictly larger admissible lamination that contains $\widetilde{\mathcal L}_v$. Since any $H_v$-invariant lamination is $G_v$-invariant, this process reduces the maximal number of disjoint non-homotopic essential cylinders in the quotient pared manifold for $G_v$. Thus the proposition follows from the claim.

    Let $a,b \in \Lambda_v$ project to a cut pair in the quotient $\Lambda_v/\sim_{\widetilde{\mathcal{L}}_v}$. (The case of a cut point in the quotient, or the case where $\Lambda_v/\sim_{\widetilde{\mathcal L}_v}$ is a circle can be treated similarly.)
    Then, the pair $\{a,b\}$ is on the boundary of $N \geq 2$ components $V_j, j = 1, 2,..., N$ of $\widehat\C \setminus (\Lambda_v \cup |\widetilde{\mathcal{L}}_v|)$. Let $\Lambda_j = \partial V_j \cap \Lambda_v$. 
    Since $\widetilde{\mathcal{L}}_v \supseteq \widetilde{\mathcal{L}}_v^0$, each component $V_j$ is a subset of $\widehat\C \setminus (\Lambda_v \cup |\widetilde{\mathcal{L}}_v^0|)$.
    Thus $\Lambda_j$ is a subset of either
    \begin{enumerate}
        \item\label{cor:altypeIII:1} $\Lambda_\mathcal{W}$ for a non-elementary ivy bud $\mathcal{W}$; or
        \item\label{cor:altypeIII:2} $\pi(\partial^I \mathcal{U})$ where $\partial^I \mathcal{U}$ is as in \eqref{eqn:KU} and $\pi: \partial^I \Delta \rightarrow \partial\Delta$ is the canonical projection.
    \end{enumerate}
    Since $\Lambda(G)/\sim_\mathcal{L}$ is a Schottky set, at most one set among $\{\Lambda_j\}$ is of the second kind \eqref{cor:altypeIII:2}.
    Thus, after changing indices if necessary, for $j = 1,..., N-1$, we have
    $$
    \Lambda_j \subseteq \Lambda_{\mathcal{W}_{j}}, \text{ for some non-elementary ivy bud }\mathcal{W}_{j}. 
    $$

    Let $H_{\Lambda_j} \leq H_{\cW_j}$ be the stabilizer of $\Lambda_j$ in $H_{\cW_j}$.
    Since $\widetilde{\mathcal{L}}_v$ is admissible, its leaves correspond to lifts of simple closed curves on $S_{\cW}$. Thus, we have a suborbifold $S_j \subseteq S_{\cW_j}$ corresponding to $\Lambda_j$.
    The pair $\{a,b\}$ corresponds to a simple closed curve $\gamma_j$ on $S_j$. Note that $\gamma_j$ may be homotopic to $\partial S_j$. Since $\Lambda(G)/\sim_{\mathcal{L}}$ is a Schottky set, this occurs for at most one of the indices, in which case all $\Lambda_j$ are of the first kind \eqref{cor:altypeIII:1}.
    Thus, after changing the indices if necessary, $\gamma_j$ is essential for all $j = 1,..., N-1$.
    Let $\mathcal{C}_j\subset S_j$ be  an essential simple closed curve that intersects $\gamma_j$ essentially in $S_j$. 
    Its lift is an $H_{\Lambda_j}$-invariant lamination $\mathcal{L}_j$. 
    Let us consider the union $\tilde{\cL}_v'$ of $\tilde{\cL}_v$ with $\mathcal{L}_j, j = 1,..., N-1$, and their $H_v$-translates. Since $\{\Lambda_{\cW}\}$ forms a null sequence,
    we may verify that $\tilde{\cL}_v'$ is still a lamination, admissible by construction.
    This proves the claim and thus the proposition.
    \end{proof}

\subsection*{Action on ivy buds from the boundary map}
Let $\mathcal{W}$ be an ivy bud. Recall $\mathcal{L}_{\mathcal{W}}$ is defined in \eqref{eqn:L_W}. 
Let $\Delta$ be the complementary component of $\Lambda_v$ that contains $\mathcal{W}$. Let $\partial^I|\mathcal{W}| \subseteq \partial^I\Delta$ be the ideal boundary of $|\mathcal{W}|$. Note that if $\Delta$ is a Jordan domain, then $\partial^I|\mathcal{W}| = \Lambda_\mathcal{W}$.
The following shows that the action of $\Homeo^+(\Lambda(G), \mathcal{L})$ on the ivy buds at a {\nonel} vertex is determined by its action on its boundary.
\begin{prop}\label{prop:boundaryactiondet}
    Let $v$ be a {\nonel} vertex, and $\mathcal{W}$ be an ivy bud at $v$. Let $h \in \Homeo^+(\Lambda(G), \mathcal{L})$ so that $h|_{\partial^I|\mathcal{W}|} = \id$. Then $h|_{\partial l} = \id$ for all $l \in \mathcal{L}_{\mathcal{W}}$.
\end{prop}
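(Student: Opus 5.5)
The plan is to reduce the statement to the two-point rigidity of Corollary~\ref{cor:twopointrigidity}. It suffices to find two bordering leaves $l_1,l_2\in\mathcal{L}_{\mathcal{W}}$ with $h|_{\partial l_1\cup\partial l_2}=\id$ in the oriented sense; the corollary then gives $h|_{\partial l}=\id$ for every $l\in\mathcal{L}_{\mathcal{W}}$. If $\mathcal{W}$ is elementary, it is a single leaf $l$ whose two ends are the two points of $\partial^I|\mathcal{W}|\subseteq\partial^I\Delta$. Both ends are fixed by $h$, and $h$ preserves the component $\Delta$ together with its orientation, so $h$ maps $l$ to itself preserving orientation and we are done. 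So assume $\mathcal{W}$ is non-elementary. Then $\Lambda_{\mathcal{W}}$ is infinite by Lemma~\ref{lem:singleleaf}, and $\partial^I|\mathcal{W}|$ is infinite too (a Cantor set or all of $\partial^I\Delta$).

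First, $h$ preserves $|\mathcal{W}|$ in the following sense. Since $h$ fixes $\partial^I|\mathcal{W}|$ pointwise, it fixes $\Lambda_{\mathcal{W}}$, and $h(\Delta)=\Delta$ because $h$ fixes infinitely many boundary points of $\Delta$. By Proposition~\ref{prop:homeospidertypeIII}, $h(\Lambda_{\mathcal{W}})=\Lambda_{\mathcal{W}'}$ for some ivy bud $\mathcal{W}'$ in $\Delta$. Comparing ideal boundaries, together with Lemma~\ref{lem:preservingLDelta} (which says $h$ permutes the gaps of $\mathcal{L}_\Delta$), shows that the gap of $\mathcal{L}_\Delta$ facing $\mathcal{W}$ is sent to itself. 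Hence $h$ maps the leaves of $\mathcal{L}_{\mathcal{W}}$ ending on $\partial\Delta$ to leaves of $\mathcal{L}_{\mathcal{W}}$.

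Next, choose the leaves. Take a leaf $l\in\mathcal{L}_{\mathcal{W}}$ of the second kind, meaning one with an endpoint in $\Lambda_{\mathcal{W}}$. If none exists, use leaves of the first kind that accumulate on $\partial^I|\mathcal{W}|$. Using the $G_{\mathcal{W}}$-action, pick a Jordan curve $C\in\mathcal{D}_{\mathcal{W}}$ with consecutive leaves $l_1,l_2$ bordering at $C$ whose ends in $\partial^I\Delta$ lie in $\partial^I|\mathcal{W}|$ and are distinct. The key point is that the cyclic order on $\partial^I\Delta$ determines the leaves. The leaves of $\mathcal{L}_{\mathcal{W}}$ meeting $\partial\Delta$ are determined by their ideal endpoints, since two such leaves with the same ends would bound a region violating the Schottky property, as in the proof of Lemma~\ref{lem:singleleaf}. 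Because $h$ fixes those endpoints and maps $\mathcal{L}_{\mathcal{W}}$ to itself, $h$ fixes each such leaf with its orientation. For a leaf joining two Jordan curves of $\mathcal{D}_v$, I would use the nerve: $h$ induces a simplicial map of the nerve of $\Lambda(G)/\sim_{\mathcal{L}}$. The complementary components adjacent to $\partial\Delta$ are identified, through Lemma~\ref{lem:componentcorrespondence}, by the intervals of $\partial^I\Delta\setminus\partial^I|\mathcal{W}|$, and $h$ fixes all of these intervals. Consequently $h$ fixes the vertices of some induced cycle $\mathcal{C}_C$ on at least three consecutive vertices, which pins down two bordering leaves.

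The main obstacle is the case where every leaf of $\mathcal{W}$ joins two curves of $\mathcal{D}_v$, so that no leaf has an ideal endpoint. There I would argue that each $C\in\mathcal{D}_{\mathcal{W}}$ is separated from $\partial\Delta$ only through complementary components $\mathcal{U}$ whose ideal boundaries $\partial^I\mathcal{U}$ are unions of arcs of $\partial^I\Delta\setminus\partial^I|\mathcal{W}|$. The plan is to show that the outermost curves of $\mathcal{D}_{\mathcal{W}}$, those adjacent to such $\mathcal{U}$, contain two consecutive edges whose three vertices are $h$-fixed components. Once two bordering leaves are fixed, Corollary~\ref{cor:twopointrigidity} finishes the proof.
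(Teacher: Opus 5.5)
Your reduction to Corollary~\ref{cor:twopointrigidity} and your elementary case agree with the paper, but the non-elementary case has a genuine gap.

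\textbf{The leaf structure is misread.} In a non-elementary ivy bud, no leaf ends on $\partial\Delta$. A leaf of $\mathcal{L}_v$ with both ends in $\Lambda_v$ lies entirely in $K_v$. It is disjoint from $|\mathcal{D}_v|$ and from the other leaves, so on its own it is a component of $|\mathcal{L}_v|\cup|\mathcal{D}_v|$, i.e.\ an elementary ivy bud. Hence, when $\mathcal{W}$ is non-elementary, every leaf of $\mathcal{W}$ is a segment joining two curves of $\mathcal{D}_v$. The full leaves of $\mathcal{L}_{\mathcal{W}}$ end in $\Lambda(G)\setminus\Lambda_v$, inside the disks $D_C^-$.

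Consequences for your proposal:
\begin{itemize}
\item Your paragraph about leaves ``determined by their ideal endpoints'' is vacuous.
\item What you call the main obstacle is in fact the whole proof, and for it you only state a plan.
\item Since $h$ need not preserve $\mathcal{D}$, Proposition~\ref{prop:homeospidertypeIII} and Lemma~\ref{lem:preservingLDelta} do not give $h(\mathcal{L}_{\mathcal{W}})=\mathcal{L}_{\mathcal{W}}$. They only control $\Lambda_{\mathcal{W}}$ and the gaps of $\mathcal{L}_\Delta$.
\end{itemize}

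\textbf{The Cantor case lacks a mechanism.} When $\partial^I|\mathcal{W}|$ is a Cantor set, $h$ fixes each outer component $\mathcal{U}$ facing a complementary interval. This alone does not give three consecutive fixed vertices on an induced cycle, since distinct outer components need not be adjacent in the nerve. The paper supplies the missing step:
\begin{itemize}
\item take two such components $\mathcal{U}_1,\mathcal{U}_2$, and observe that $\widehat h$ fixes the arcs $I_i\subseteq\partial\widehat{\mathcal{U}}_i$ projected from the extremal paths of $|\mathcal{W}|$;
\item consider the chains of touching components of a fixed length $l$ in $\Delta$ joining $I_1$ to $I_2$;
\item these are finitely many and linearly ordered, so $\widehat h$ fixes each one, producing two fixed bordering leaves.
\end{itemize}

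\textbf{The full-boundary case is missing entirely.} The decisive gap is the case $\partial^I|\mathcal{W}|=\partial^I\Delta$, as in Figure~\ref{fig:spiderwebFullBoundary}. Here $\partial^I\Delta\setminus\partial^I|\mathcal{W}|$ is empty, so every component of $\Delta\setminus\bigcup_{\mathcal{W}'}\fil(|\mathcal{W}'|)$ has empty ideal boundary. Your strategy then has no $h$-fixed vertex of the nerve to anchor on. Being the identity on the circle at infinity does not by itself single out any fixed curve or leaf of $\mathcal{W}$.

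The paper uses a different idea here:
\begin{itemize}
\item There are finitely many $G$-orbits of bordering pairs of bounded cycle length. Applying the pigeonhole principle to $h^j(l_1,l_2)$, $j\ge 0$, gives $k\ge 1$, some $j$, and $g\in G$ with $h^k=g$ on the bordering pair $h^j(l_1),h^j(l_2)$.
\item Two-point rigidity propagates this equality to the whole ivy bud and hence to $\partial^I|\mathcal{W}|$. There $h^k=\id$, which forces $g=\id$. So $h^k$ fixes every leaf of $\mathcal{L}_{\mathcal{W}}$.
\item To pass from $h^k$ to $h$, the paper adapts Ker\'ekj\'art\'o's theorem: a periodic homeomorphism of the closed disk that is the identity on the boundary is the identity. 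The adaptation replaces arc crosscuts by coarse crosscuts built from $\mathcal{W}$.
\end{itemize}
Neither the periodicity trick nor this final step appears in your proposal, and without them the full-boundary case is not proved.
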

\begin{proof}
    We consider three cases.
    Suppose that $\partial^I |\mathcal{W}|$ consists of two points $a, b$ in the ideal boundary $\partial^I\Delta$. By Lemma~\ref{lem:singleleaf}, $\mathcal{W}$ consists of a single leaf $l$ connecting $a, b$ that are fixed by $h$ so  the proposition follows.

    Suppose that $\partial^I |\mathcal{W}|$ is a Cantor set on $\partial^I\Delta$ (see Figure~\ref{fig:spiderweb}). Let $(a,b), (c,d)$ be two complementary components of $\partial^I\Delta \setminus \partial^I |\cW|$. Let $\gamma_1, \gamma_2$ be the extremal paths in $|\mathcal{W}|$ connecting $a$ to $b$ and $c$ to $d$ respectively. Here, extremal means that $\gamma_1$ (or $\gamma_2$) separates the interval $(a, b) \subseteq \partial^I\Delta$ (or $(c,d)$) from $|\mathcal{W}|$.
    Then there exist two complementary components $\mathcal{U}_1, \mathcal{U}_2$ of $\Delta \setminus \bigcup_{\mathcal{W}'} \fil(|\mathcal{W}'|)$ so that $\gamma_i \subseteq \partial \mathcal{U}_i$.
    Note that there is a finite chain $V_i, i =0,..., l+1$ of components of $\Delta \setminus \bigcup_{\mathcal{W}'} \fil(|\mathcal{W}'|)$ so that
    \begin{enumerate}
        \item\label{condW:1} $V_0 = \mathcal{U}_1$ and $V_{l+1} = \mathcal{U}_2$;
        \item\label{condW:2} $\partial V_i \cap \partial V_{i+1} \neq \emptyset$ and $\partial V_i \cap \partial V_{i+1} \subseteq |\mathcal{W}|$.
    \end{enumerate}
    Let $\widehat {\mathcal{U}}_1, \widehat {\mathcal{U}}_2$ be the complementary components of $\Lambda(G)/\sim_\mathcal{L}$ associated to $\mathcal{U}_1, \mathcal{U}_2$ respectively. Let $I_i \subseteq \partial \widehat {\mathcal{U}}_i$ be the projection of $\gamma_i$.
    Since $h|_{\Lambda_\mathcal{W}} = \id$, we conclude that the induced map $\widehat h : \Lambda(G)/\sim_\mathcal{L} \rightarrow \Lambda(G)/\sim_\mathcal{L}$ satisfies $\widehat h(I_i) = I_i$.
    Note that $V_0,..., V_{l+1}$ induces a chain of touching circles of length $l$ in $\Lambda(G)/\sim_\mathcal{L}$ that connects $I_1 \subseteq \partial U_1$ to $I_2\subseteq \partial U_2$. For a fixed $l$, there are only finitely many such chains, and there is a linear order on the collection of such chains. 
    Therefore, $\widehat h$ fixes the chain induced by $V_0,..., V_{l+1}$. Thus, there exist two bordering leaves $l_1, l_2 \in \mathcal{L}_\mathcal{W}$ so that $h|_{\partial l_1 \cup \partial l_2} = \id$. By Corollary~\ref{cor:twopointrigidity}, $h|_{\partial l} = \id$ for all $l \in \mathcal{L}_\mathcal{W}$.

    Finally suppose that $\partial^I |\mathcal{W}| = \partial^I\Delta$. 
    Let $l_1, l_2 \in \cL_{\cW}$ be a pair of bordering leaves contained in an induced simple cycle of length some $N$. 
    Since there are only finitely many $G$-orbits of such pairs of bordering leaves, by the pigeonhole principle, there exists $k, j\in \Z_{\geq 1}$ and $g \in G$ so that $h^k(h^j(l_1)) = g(h^j(l_1)), h^k(h^j(l_2)) = g(h^j(l_2))$. By Corollary~\ref{cor:twopointrigidity}, $h^k|_{\partial l} = g|_{\partial l}$ for all $l \in h^j(\cL_{\cW})$. In particular, $h^{k} = g$ on $\partial^I|\cW| = h^j(\partial^I|\cW|)$. Since $h|_{\partial^I|\cW|} = \id$, we have $g = \id$. Thus, $h^k|_{\partial l} = \id$ for all $l \in h^j(\cL_{\cW})$. Since $h$ is invertible, $h^k|_{\partial l} = \id$ for all $l \in \cL_{\cW}$.
    Note that this means $h^k$ projects to the identity map on the corresponding subset of $\cW$ in the quotient $\Lambda(G)/\sim_{\cL}$ (see Figure~\ref{fig:spiderwebFullBoundary}).
    The proof of \cite[Proposition 3.2]{CK94} (K\'er\'ekjart\`o's theorem) that any periodic homeomorphism of the closed disk 
    which is the identity on the boundary must be the identity can be adapted to show that $h|_{\partial l} = \id$ holds for all $l \in \cL_{\cW}$
    by replacing classical (arc) crosscuts with coarse crosscuts provided by $\cW$ in their argument.
\end{proof}

\subsection{Proof of Theorem~\ref{thm:carpetfreeSchottkyimpliesrigid}}\label{sec:proofThmRigid}
We proceed by induction on $\height(G)$.
\subsection*{The base case.} 
Suppose that $\height(G) = 0$.
Let $\widehat G$ be a de-parabolization of $G$.
Then $\widehat G$ is either a cocompact Fuchsian group or a carpet group.

Suppose that $\widehat G$ is a cocompact Fuchsian group.
Let $\mathcal{L}$ be the corresponding lamination so that $\Lambda(G) = \Lambda(\widehat G)/\sim_\mathcal{L}$.
By Proposition~\ref{prop:homeomisom}, we have  $\Homeo^+(\Lambda(G)) \cong \Homeo^+(\Lambda(\widehat G), \mathcal{L})$.
By applying Corollary~\ref{cor:finiteindexfuchsian} to the case $\mathcal{L}_\partial = \emptyset$, we deduce that $\widehat G$ has finite index in $\Homeo^+(\Lambda(\widehat G), \mathcal{L})$.
Therefore, $G$ has finite index in $\Homeo^+(\Lambda(G)) = \HQ^+(\Lambda(G))$ (this case also follows from \cite[Lemma 21]{KK00}).

Suppose that $\widehat G$ is a carpet group. Then $\HQ^+(\Lambda(G)) = \QS(\Lambda(G))$ by definition. 
The conclusion then follows for instance from \cite[Theorem 6.6]{haiss:lec}.

\subsection*{The induction step} Let $n\ge 0$.
Suppose that any geometrically finite Kleinian group with Schottky limit set of height at most $n$ is a finite index subgroup of $\HQ^+(\Lambda(G))$.

Let $G$ be a geometrically finite Kleinian group with Schottky limit set of height $\height(G) = n+1$.
Let $\widehat G$ be a de-parabolization of $G$.
Let $\Lambda(\widehat G)$ be the corresponding limit set equipped with its acylindrical lamination $\mathcal{L}$ so that 
$$
\Lambda(G) = \Lambda(\widehat G)/\sim_\mathcal{L}.
$$
We consider two cases.

\medskip
\noindent{\bf Case (1):} $\Lambda(\widehat G)$ is connected.
Let $\mathcal{T}$ be the JSJ tree of $\widehat G$.
Recall that for each vertex $v\in\cT$, the JSJ splitting induces a  lamination $\mathcal{L}_v$ (see the definition in \S~\ref{subsec:separatinggeodandlam}).
The proof studies the interaction of the lamination $\cL_v$ with $\cL$. To avoid confusion, we use Greek letters for leaves in $\cL_v$ and $l$ for leaves in $\cL$.

Let $v$ be a {\twoended} vertex, and $\gamma \in \mathcal{L}_v$.
We say that two leaves $l, l' \in \cL$ are {\em bordering} at $\gamma$ if $\gamma \cap l$ and $\gamma \cap l'$ are adjacent in $\gamma \cap |\mathcal{L}|$.
Two leaves $l, l'$ are {\em bordering} if they are bordering at $\gamma$ for some {\twoended} vertex $v$ and $\gamma \in \mathcal{L}_v$.
Consider the set
\begin{equation}\label{eqn:Xbordering}
    X = \{(l, l'): l, l' \text{ are bordering}\}.
\end{equation}

The following lemma shows that $\Homeo^+(\Lambda(\widehat G), \mathcal{L})$ acts on $X$. 

\begin{lem}\label{lem:actiononborderingpair}
    Let $h \in \Homeo^+(\Lambda(\widehat G), \mathcal{L})$. Let $l, l' \in \mathcal{L}$ be a pair of bordering leaves. Then $h(l), h(l')$ are bordering.
\end{lem}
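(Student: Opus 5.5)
The plan is to show that the family of leaves $\cL_{JSJ}=\bigcup_v \cL_v$ is recoverable from the pair $(\Lambda(\widehat G),\cL)$ by purely topological data, and hence is permuted by $h$. Then the adjacency along a leaf $\gamma\in\cL_v$ can be read off from the order in which $\gamma$ meets $|\cL|$.

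\textbf{Step 1: $h$ permutes the two-ended vertex sets.} A loxodromic two-ended vertex $v$ has $\Lambda_v=\{a_v,b_v\}$, an inseparable exact cut pair of $\Lambda(\widehat G)$. Since $\widehat G$ is minimally parabolic, there are no rank-one parabolic cut points, so all two-ended vertices are of type $\mathrm{I}_l$. The notions of valence, $N(x,y)$, exact cut pair and inseparability are topological. Hence $h$, being a homeomorphism of $\Lambda(\widehat G)$, maps $\{a_v,b_v\}$ to $\{a_{v'},b_{v'}\}$ for some two-ended vertex $v'$. This is the identification of the JSJ tree with the cut pair tree of \cite{HH23}.

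\textbf{Step 2: match the leaves of $\cL_v$ with those of $\cL_{v'}$, up to isotopy.} Each leaf $\gamma\in\cL_v$ is a geodesic in a component $\Delta$ of $\Omega(\widehat G)$ joining $a_v$ to $b_v$. The $N(a_v,b_v)$ leaves correspond bijectively to the complementary regions separating consecutive components of $\Lambda\setminus\{a_v,b_v\}$. Concretely, $\gamma$ is the unique geodesic in $\Delta$ joining $a_v,b_v$, and it separates two consecutive components of $\Lambda\setminus\{a_v,b_v\}$ in the cyclic order around $a_v$. Since $h$ is an orientation-preserving ambient homeomorphism, it maps $\Delta$ to a component $\Delta'$ with $a_{v'},b_{v'}\in\partial\Delta'$. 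Therefore $h(\gamma)$ is an arc in $\Delta'$ joining $a_{v'}$ to $b_{v'}$, homotopic rel endpoints to a unique leaf $\gamma'\in\cL_{v'}$.

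\textbf{Step 3: adjacency is preserved, and this is the main obstacle.} The difficulty is that $h(\gamma)$ is not equal to $\gamma'$, only homotopic to it, so I must check that the ordered sequence of leaves of $\cL$ crossed is an invariant of the homotopy class. Two facts make this work.

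First, leaves of $\cL$ are geodesics in $\Delta$ and $h(|\cL|)=|\cL|$. A leaf $l$ meets $\gamma$ exactly when its endpoints link $\{a_v,b_v\}$ in the ideal boundary $\partial^I\Delta$. This uses that geodesics intersect minimally; the endpoints of $l$ lie in $\Lambda(\widehat G)$, or $l$ may be a loop through a rank-two cusp, which I treat via ends.

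Second, linking of endpoint pairs on $\partial^I\Delta$ is preserved by $h$, since $h$ induces an orientation-preserving homeomorphism of $\partial^I\Delta\to\partial^I\Delta'$ fixing the relevant order.

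So the set of leaves crossing $\gamma$ maps bijectively to the set crossing $\gamma'$. The linear order along $\gamma$ from $a_v$ to $b_v$ is determined by the nesting of these linked pairs, because disjoint leaves crossing $\gamma$ are ordered by separation in $\Delta$. Nesting is also preserved by $h$. Hence consecutive leaves along $\gamma$ go to consecutive leaves along $\gamma'$, which means $h(l),h(l')$ are bordering at $\gamma'$.

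The subtle point is the finiteness and discreteness of $\gamma\cap|\cL|$. I would justify it by noting that $\gamma$ projects to a closed geodesic in $\Omega(\widehat G)/\widehat G$ that meets the finite multicurve transversally, so that "adjacent" is well defined.
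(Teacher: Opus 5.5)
Your proposal is correct and follows the paper's proof. Topological invariance of the JSJ cut-pair structure produces a leaf $\gamma'$ with $h(\partial^I\gamma)=\partial^I\gamma'$, and the paper's one-line appeal to "planarity" is exactly your linking and nesting argument on the ideal boundary $\partial^I\Delta$. One small correction to Step~1: when $\widehat G$ has rank-two cut points, the two-ended vertices adjacent to a rank-two vertex have parabolic stabilizers, so their leaves $\gamma$ are loops based at the cut point rather than geodesics joining an exact cut pair. (It is these $\gamma$ that can be loops, not the leaves of $\mathcal{L}$, which join fixed points of loxodromics.) Your argument still goes through verbatim for these loops once you work with the two distinct prime ends $\partial^I\gamma$, as the paper does.
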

\begin{proof}
    Suppose $l, l'$ are bordering at $\gamma$.
    Let $\Delta$ be the component of $\Omega(\widehat G)$ that contains both $l$ and $l'$. Since $\Lambda(\widehat G)$ is connected, $\Delta$ is simply connected. Let $\partial^I\gamma$ be the endpoints of $\gamma$ in the ideal boundary $\partial^I\Delta$. Since the JSJ decomposition only depends on the topology of the limit set, we conclude that there exists $\gamma' \in \mathcal{L}_{v'}$ for some {\twoended} vertex $v'$ so that 
    $$
    h(\partial^I\gamma) = \partial^I\gamma' \subseteq \partial^I h(\Delta).
    $$
    The planarity implies that $h(l), h(l')$ are bordering at $\gamma'$.
\end{proof}

\begin{lem}\label{lem:infinitegammaid}
    Let $h \in \Homeo^+(\Lambda(\widehat G), \mathcal{L})$, and let $l_0, l_1 \in \mathcal{L}$ be a pair of bordering leaves at $\gamma_0$.
    Suppose that $h|_{\partial l_0 \cup \partial l_1} = \id$. Then for each leaf $l \in \mathcal{L}$ intersecting $\gamma_0$, we have $h|_{\partial l} = \id$.
\end{lem}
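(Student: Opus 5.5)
Here $\gamma_0\in\cL_v$ for a two-ended vertex $v$ of the JSJ tree of $\widehat G$. The leaves of $\cL$ meeting $\gamma_0$ are met in a discrete, linearly ordered set of points along $\gamma_0$, and I would show that $h$ maps $\gamma_0$ to itself and preserves this order. Then fixing two consecutive leaves forces every leaf crossing $\gamma_0$ to be fixed.

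First I would show $h(\partial\gamma_0)=\partial\gamma_0$. By Lemma~\ref{lem:actiononborderingpair} and its proof, $h(l_0),h(l_1)$ are bordering at some $\gamma'\in\cL_{v'}$ with $h(\partial^I\gamma_0)=\partial^I\gamma'$. Since $h$ fixes the ends of $l_0$ and $l_1$, it fixes the component $\Delta$ of $\Omega(\widehat G)$ containing them.

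To pin down $\gamma'=\gamma_0$, I would use planarity. Both $l_0$ and $l_1$ cross $\gamma_0$, so their endpoints lie on opposite sides of $\gamma_0$ in $\partial^I\Delta$. Because $h$ fixes these endpoints and preserves the cyclic order on $\partial^I\Delta$, the pair $h(\partial^I\gamma_0)$ must separate $\partial l_0$ and $\partial l_1$ in the same way. If $\gamma'\neq\gamma_0$, then $\gamma_0$ and $\gamma'$ are disjoint, non-crossing leaves of $\cL_{JSJ}$ in $\Delta$. I would then argue that $\gamma_0$ and $\gamma'$ cannot both cross $l_0$ and $l_1$ at \emph{adjacent} intersection points. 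Between two such disjoint $JSJ$-leaves lies a vertex piece whose limit set contains points of $\Lambda_w$ in between. Since $\cL$ is acylindrical, a leaf of $\cL$ must cross that region, which would separate the two intersection points and contradict adjacency.

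This step is the main obstacle, and I would try the acylindricity/filling argument first. If that argument is awkward, the fallback is to use the inseparable cut pair structure of $\partial\gamma_0$ directly: $h$ preserves cut pairs of $\Lambda(\widehat G)$, and the only cut pair separating the fixed endpoints of $l_0$ from those of $l_1$ in the prescribed way is $\partial\gamma_0$ itself. Orientation is handled because $h$ fixes $l_0\cap\gamma_0$ and $l_1\cap\gamma_0$ in order. So $h$ maps $\gamma_0$ to itself preserving its orientation, hence fixes both ends of $\gamma_0$.

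Next, let $\{l_n\}_{n\in I}$ be the leaves of $\cL$ crossing $\gamma_0$, indexed by the order of the intersection points along $\gamma_0$. Here $I\subseteq\Z$ is an interval: there are finitely many leaves if $\gamma_0$ joins two points and projects to a closed geodesic, and $I=\Z$ otherwise, using $\langle g_v\rangle$-invariance. The map $h$ permutes these leaves and preserves the order, since $h$ is an orientation-preserving homeomorphism of $\widehat\C$ fixing $\gamma_0$ setwise with its orientation. Equivalently, planarity shows that the order is read off from how the leaves separate $\partial^I\Delta$, which $h$ preserves. An order-preserving bijection of such an interval in $\Z$ that fixes the two consecutive indices of $l_0,l_1$ is the identity, so $h(l_n)=l_n$ for all $n$.

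Finally, each $l_n$ has one end on each side of $\gamma_0$. The map $h$ preserves the two sides, since it fixes $\partial l_0$ and these ends lie on both sides. Hence $h$ fixes each end of $l_n$, i.e.\ $h|_{\partial l_n}=\id$ in the oriented sense.
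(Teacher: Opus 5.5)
Your outline matches the paper's proof: the crossings $\gamma_0\cap|\mathcal{L}|$ are discrete and linearly ordered, two consecutive fixed crossings force every crossing leaf to be fixed, and the two sides of $\gamma_0$ then give $h|_{\partial l}=\id$. The paper simply asserts that $h$ preserves this order, so it takes for granted that $h$ carries $\gamma_0$ to itself. You are right to isolate that step, but the argument you give for it does not work.

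You claim that no JSJ leaf $\gamma'\neq\gamma_0$ can also have $l_0,l_1$ as adjacent crossings. This is false in general. Take $\gamma_0,\gamma'$ to be two boundary leaves of the gap of an MHF vertex, and suppose $l_0,l_1$ restrict there to two consecutive parallel arcs, both joining $\gamma_0$ to $\gamma'$.
\begin{itemize}
\item The paper explicitly allows several leaves of $\mathcal{L}$ to join the same two boundary leaves (Lemma~\ref{lem:descriptionlamination}, and Case (3) in the proof of Theorem~\ref{thm:toprigidityconvexcocompact}).
\item Acylindricity is a filling condition on $\mathcal{C}^+$ and $\mathcal{C}^-$ jointly, so it does not forbid parallel arcs on one side.
\end{itemize}
In that configuration $l_0,l_1$ are bordering at both $\gamma_0$ and $\gamma'$. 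Your justification is also circular. No leaf of $\mathcal{L}$ crosses $l_0$ or $l_1$, so a leaf meeting the quadrilateral bounded by $l_0,l_1,\gamma_0,\gamma'$ must cross $\gamma_0$ or $\gamma'$ between $l_0$ and $l_1$. Thus ``a leaf must cross that region'' is just non-adjacency restated. Moreover, the points of $\Lambda_w$ between $\gamma_0$ and $\gamma'$ lie on ideal arcs outside the strip between $l_0$ and $l_1$, so acylindricity says nothing about that strip. Your fallback fails for the same reason: $\partial\gamma'$ separates the ends of $l_0$ and of $l_1$ in exactly the same pattern as $\partial\gamma_0$.

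Here is an argument that does work.
\begin{enumerate}
\item Since $h$ fixes $\partial l_0$, we have $h(\Delta)=\Delta$. As in the proof of Lemma~\ref{lem:actiononborderingpair}, $h$ permutes the JSJ leaves of $\Delta$ via their ideal endpoints.
\item Crossing $l_i$ is read off from linking on $\partial^I\Delta$, and $h$ fixes $\partial l_0\cup\partial l_1$. So $h$ preserves the set $\Gamma$ of JSJ leaves of $\Delta$ crossing both $l_0$ and $l_1$.
\item $\Gamma$ is finite. The JSJ leaves crossing $l_0$ are translates, under the stabilizer of $l_0$, of finitely many of them. Those far out along $l_0$ therefore have both ideal endpoints near one end of $l_0$, and miss $l_1$.
\item Order $\Gamma$ along $l_0$. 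Since $h$ fixes both ends of $l_0$ and preserves the cyclic order on $\partial^I\Delta$, it preserves this order. An order-preserving bijection of a finite ordered set is the identity, so $h$ fixes every element of $\Gamma$, in particular $\gamma_0$.
\item $h$ cannot swap the endpoints of $\gamma_0$. They lie in two different $h$-invariant arcs of $\partial^I\Delta\setminus(\partial l_0\cup\partial l_1)$.
\end{enumerate}
From here your order and side arguments go through. One small correction: the crossing set of $\gamma_0$ is always indexed by $\Z$, since it is nonempty and invariant under the stabilizer of $\gamma_0$.
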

\begin{proof}
    Let $l \in \mathcal{L}$ be a leaf that intersects $\gamma_0$.
    Note that $\gamma_0 \cap |\mathcal{L}|$ is discrete with a linear order induced from $\gamma_0$, and any homeomorphism preserves this linear order. Since $h(l_0) = l_0$ and $h(l_1) = l_1$, we have $h(l) = l$. Thus $h(\partial l) = \partial l$.
    Note that all such curves are contained in a single component in $\Omega(G)$, and $\gamma_0$ separates the components into two parts.
    Since $h|_{\partial l_0} = \id$, we conclude that $h|_{\partial l} = \id$.
\end{proof}

 Let $\HQ^+(\Lambda(\widehat G), \mathcal{L})$ be the lift of $\HQ^+(\Lambda(G))$ in $\Homeo^+(\Lambda(\widehat G), \mathcal{L})$.
We use induction to show that the stabilizer of a point of $X$ for the action of $\HQ^+(\Lambda(\widehat G), \mathcal{L})$ is finite.
\begin{lem}\label{lem:stabbordering}
    Let $h \in \HQ^+(\Lambda(\widehat G), \mathcal{L})$ and let $l_0, l_1 \in \mathcal{L}$ be a pair of bordering leaves. Suppose $h|_{\partial l_0 \cup \partial l_1} = \id$.
    Then $h = \id$.
\end{lem}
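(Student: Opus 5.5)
The argument propagates the identity from the pair $l_0,l_1$ through the JSJ tree $\mathcal{T}$ of $\widehat G$, using rigidity on each vertex piece and the induction hypothesis on vertex groups of smaller height.

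First, by Lemma~\ref{lem:infinitegammaid}, $h|_{\partial l}=\id$ for every $l\in\mathcal{L}$ crossing $\gamma_0$, where $\gamma_0\in\mathcal{L}_{v_0}$ for a two-ended vertex $v_0$. In particular $h$ fixes the endpoints of $\gamma_0$, so $h$ fixes $\Lambda_{v_0}$ pointwise. Since the JSJ decomposition depends only on the topology of the limit set, $h$ induces an automorphism $h_*$ of $\mathcal{T}$. Because $h$ fixes all leaves crossing $\gamma_0$, it fixes the two sides of $\gamma_0$, hence the edges at $v_0$ adjacent to $\gamma_0$. I then run an induction over vertices of $\mathcal{T}$ at increasing distance from $v_0$, with the following inductive claim: if $w$ is a non-two-ended vertex adjacent to a two-ended vertex $u$ with $h_*(w)=w$, and $h$ fixes pointwise the leaves of $\mathcal{L}$ crossing some $\gamma\in\mathcal{L}_u\cap\mathcal{L}_w$, then $h$ is the identity on $\Lambda_w$ and on all $\partial l$ for $l\in\mathcal{L}$ meeting $U_w$. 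The claim is handled according to the type of $w$.

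\emph{MHF vertex.} $G_w$ is a convex cocompact Fuchsian group. Restrict $\mathcal{L}$ to the central gap to get a mixed multicurve pair $\mathcal{L}^\pm$, with $\mathcal{L}_w$ playing the role of $\mathcal{L}_\partial$. This pair is acylindrical because the quotient $\Lambda(G)$ is a Schottky set, so it has no cut pairs coming from essential cylinders in $(S\times[-1,1])$. The leaf $\gamma\in\mathcal{L}_\partial$ is linked to an arc-type leaf $l$ crossing it, so Theorem~\ref{thm:toprigidityconvexcocompact} applies to $(\gamma,l)$ and gives $h=\id$ on this piece.

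\emph{Rigid vertex.} The subgroup $G_w$ has height at most $n$. Here $\mathcal{L}$ together with $\mathcal{L}_w$ induces an acylindrical lamination on $\Lambda_w$ whose quotient is a Schottky set. The restriction of $h$ lies in the corresponding $\HQ^+$, because dynamical carpet-quotients of $G_w$ are ones of $G$. By the induction hypothesis, $G_w$ has finite index there. Then Corollary~\ref{cor:toprig:finitext} (three-point rigidity of the finite extension) shows that an element fixing at least three points (the endpoints of $\gamma$ and of a crossing leaf) is the identity.

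\emph{Rank-two vertex.} $h$ fixes the cut point, and the order on $\pi_0(\mathcal{C}^*_a)$ together with fixing one component forces every component to be fixed. The pieces beyond are then handled by continuing the induction.

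Having fixed $\Lambda_w$ and its leaves, $h$ fixes every $\gamma'\in\mathcal{L}_w$ together with the leaves crossing it, by the same linear-order argument as in Lemma~\ref{lem:infinitegammaid}. This propagates the claim to all neighbours of $w$. Since $\mathcal{T}$ is connected, $h$ is the identity on $\bigcup_v\Lambda_v$, which is dense in $\Lambda(\widehat G)$, and on all $\partial l$. Hence $h=\id$ in $\HQ^+(\Lambda(\widehat G),\mathcal{L})$.

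The main obstacle is the rigid-vertex step. There one must check that the restricted lamination on $\Lambda_w$ is genuinely acylindrical with Schottky quotient, and that $h|_{\Lambda_w}$ lies in the appropriate $\HQ^+$, so that the induction hypothesis and Corollary~\ref{cor:toprig:finitext} apply. I would first try to deduce acylindricity from Proposition~\ref{prop:rigidvertexSchottky} combined with the acylindricity of $\mathcal{L}$, arguing that any cut pair in the restricted quotient would produce a cut pair in $\Lambda(G)$.
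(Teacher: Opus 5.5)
Your plan is the paper's: propagate through the JSJ tree of $\widehat G$, use Theorem~\ref{thm:toprigidityconvexcocompact} at {\mhf} vertices, the height induction plus Corollary~\ref{cor:toprig:finitext} at {\rigid} vertices, the linear order at {\ranktwo} vertices, and finish by density. However, the {\rigid} step does not work as written.

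The relevant Schottky set is $\Lambda_w/\sim_{\cL_w}$, the limit set of the group $\widecheck G_w$ to which the induction hypothesis applies; $G_w$ itself need not have Schottky limit set. This quotient is Schottky by Proposition~\ref{prop:rigidvertexSchottky}, so the acylindricity you list as the main obstacle is already available, and no leaves of $\cL$ need to be added.

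In this quotient the two endpoints of $\gamma$ become a single point. A leaf $l\in\cL$ crossing $\gamma$ has no endpoint in $\Lambda_w$ at all: its trace on the boundary subsurface of $w$ is an arc between two JSJ curves, so $l$ leaves $U_w$ through a second leaf $\gamma'\in\cL_w$. Hence you do not have three fixed points. The missing observation is the following. Since $h$ fixes $\partial l$ and preserves the discrete order of the JSJ leaves crossed by $l$, it fixes $\gamma'$. Doing this for the translates of $l$ under the loxodromic stabilizing $\gamma$ (these also cross $\gamma$, so are fixed) yields infinitely many fixed leaves $\gamma_i\in\cL_w$, i.e.\ infinitely many fixed points of the induced map $\widecheck h_w$. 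This is how the paper argues.

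Also, $\widecheck h_w\in\HQ^+(\Lambda(\widecheck G_w))$ does not follow merely because carpet-quotient subgroups of $\widecheck G_w$ come from carpet subgroups $\widehat H\le\widehat G$. Your hypothesis controls $h$ on $\Lambda(\widehat H)/\sim_{\cL}$, but you need quasisymmetry on the different quotient $\Lambda(\widehat H)/\sim_{\cL_w}$. The paper bridges this with \cite[Theorem 6.6]{haiss:lec}, which makes $h$ virtually equivariant on $\Lambda(\widehat H)$. Virtual equivariance descends to the other quotient, and \cite[Theorem 3.3]{tukia:ihes85} then gives quasisymmetry.

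Second, your propagation silently relies on acylindricity of $\cL$. As stated, your inductive claim is false when $\gamma\cap|\cL|=\emptyset$, since its hypothesis is then vacuous; it must require $\gamma$ to be crossed by $\cL$.

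To reach every neighbour of a {\twoended} vertex $u$, you walk around the cyclic order of $\cL_u$, each neighbour sharing one leaf with the next. This reaches all of them only because acylindricity forces at most one leaf of $\cL_u$ to miss $|\cL|$, so you can go around in both directions.

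Similarly, at a {\ranktwo} vertex, knowing that every component of $\mathcal{C}^*_a$ is fixed setwise is not the pointwise hypothesis you need next. You must walk along the ordered leaves $\gamma_n$, $n\in\Z$, of $\cL_w$ through the {\mhf}/{\rigid} vertices at distance two, each containing two consecutive leaves $\gamma_n,\gamma_{n+1}$. This again uses that every $\gamma_n$ meets $|\cL|$, by acylindricity.
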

\begin{proof}
    Let $v_0$ be the {\twoended} vertex with $\gamma_0 \in \mathcal{L}_{v_0}$ such that $l_0, l_1$ are bordering at $\gamma_0$.
    Let $w$ be a vertex of $\mathcal{T}$ adjacent to $v_0$ so that $\gamma_0 \in \mathcal{L}_w$ (\S~\ref{subsec:separatinggeodandlam} for the definition of $\mathcal{L}_w$). Since $\mathcal{T}$ is bipartite, $w$ is not {\twoended}.
    
    We claim that $h|_{\Lambda_w} = \id$. Moreover, for each $\gamma \in \mathcal{L}_w$, and each leaf $l \in \mathcal{L}$ intersecting $\gamma$, we have $h|_{\partial l} = \id$.
    \begin{proof}[Proof of claim]
    If $w$ is {\mhf}, then $G_w$ is conjugate to a convex cocompact Fuchsian group. In the setting of \S~\ref{subsec:lamination} applied to $G_w$, the
    lamination $\mathcal{L}_w$ defines the boundary lamination and $\mathcal{L}$ restricts to an acylindrical lamination. 
    Note $l_0$ restricts to a leaf $l_{0,w}$ for $G_w$ and $\gamma_0, l_{0,w}$ are linked as in Definition~\ref{defn:link}. Therefore,  Theorem~\ref{thm:toprigidityconvexcocompact} 
    implies  $h|_{\Lambda_w} = \id$, proving the moreover part.
    
    If $w$ is {\rigid}, then $\Lambda_w/\sim_{\mathcal{L}_w}$ is homeomorphic to a Schottky set by Proposition~\ref{prop:rigidvertexSchottky}. Let $\widecheck{G}_w$ be the corresponding geometrically finite group associated to $\Lambda_w/\sim_{\mathcal{L}_w}$.
    By the induction hypothesis, $\height(\widecheck{G}_w) \leq n$.
    Note that there are infinitely many leaves $\gamma_i \in \mathcal{L}_w$ such that there exists a leaf $l \in \mathcal{L}$ intersecting both $\gamma_0$ and $\gamma_i$.
    Thus, by Lemma~\ref{lem:infinitegammaid}, we conclude that $h(\gamma) = \gamma$ for infinitely many leaves $\gamma \in \mathcal{L}_w$. Therefore, the induced map $\widecheck{h}_w$ of $h$ fixes infinitely many distinct points on the quotient $\Lambda_w/\sim_{\mathcal{L}_w}$. 
    Since $h$ belongs to $\HQ^+(\Lambda(\widehat G), \mathcal{L})$, it follows that we have $\widecheck{h}_w \in \HQ^+(\Lambda(\widecheck{G}_w))$. 
    Indeed, let $\Lambda(\widecheck{H})$ be a maximal carpet-quotient in $\Lambda(\widecheck{G}_w)$ that is mapped to some $\Lambda(\widecheck{H'})$. 
    Let $\widehat{H},\widehat{H'} \leq \widehat G$ be the corresponding carpet subgroups.  
    Since $h \in \HQ^+(\Lambda(\widehat G), \mathcal{L})$, it follows from \cite[Theorem 6.6]{haiss:lec} 
    that $h:\La(\hat{H})\to \La(\hat{H'})$ is  virtually equivariant. Therefore, this is also the case for $\widecheck{h}_w$, so \cite[Theorem 3.3]{tukia:ihes85}
    implies it is quasisymmetric on $\Lambda(\widecheck{H})$.
    By the induction hypothesis and Corollary~\ref{cor:toprig:finitext}, $\widecheck{h}_w = \id$. Therefore, we obtain $h|_{\Lambda_w} = \id$. The moreover part follows from the fact $h|_{\Lambda_w} = \id$.

    If $w$ is {\ranktwo}, then $\Lambda_w$ is a point, so $h|_{\Lambda_w} = \id$. Note that $\mathcal{L}_{w}$ admits a natural linear order (see Figure~ \ref{fig:ranktwoSch}), label them by $\gamma_n, n\in \Z$.
    Since $\mathcal{L}$ is acylindrical, $\gamma_n \cap |\mathcal{L}| \neq \emptyset$ for all $n$.
    Any vertex $v$ at distance $2$ from $w$ is either {\mhf} or {\rigid}. Moreover, $\mathcal{L}_v \cap \mathcal{L}_w$ consists exactly of two adjacent leaves and each leaf $\gamma \in \mathcal{L}_w$ is contained in the lamination of exactly two vertices at distance $2$ from $w$. Thus by applying the above argument for {\mhf} and {\rigid} vertices inductively on $n$, we conclude the moreover part.
    \end{proof}
    
    Since $\mathcal{L}$ is acylindrical, at most one leaf $\gamma \in \mathcal{L}_{v_0}$ satisfies $\gamma \cap |\mathcal{L}| = \emptyset$.
    Note that $\mathcal{L}_{v_0}$ admits a natural cyclic order (see Figure~\ref{fig:I_p}).
    Since for each adjacent vertex $w$ of $v_0$, $\mathcal{L}_w \cap \mathcal{L}_{v_0}$ consists of exactly two leaves, and each leaf $\gamma\in \mathcal{L}_{v_0}$ is contained in the lamination of exactly two adjacent vertices of $v_0$, by applying the argument inductively, we conclude that $h|_{\Lambda_w} = \id$ for any adjacent vertex $w$ of $v_0$.
    Therefore, for any vertex $v$ at distance $2$ from $v_0$, there exists a leaf $\gamma \in \mathcal{L}_v$ with $\gamma \cap |\mathcal{L}| \neq \emptyset$ such that  we have $h|_{\partial l} = \id$ for each $l \in \mathcal{L}$ that intersects $\gamma$.
    
    Thus, by inductively applying the argument on the distance to $v_0$, we conclude that $h|_{\Lambda_w} = \id$ for each vertex $w$ of $\mathcal{T}$. Since the union $\bigcup_w \Lambda_w$ is dense in $\Lambda(\widehat G)$, $h|_{\Lambda(\widehat G)} = \id$ follows.
\end{proof}

To finish the proof of the virtual topological rigidity 
in this case, we note that the action of $\widehat G$ on the set $X$ defined as in \eqref{eqn:Xbordering} is cofinite.
By Lemma~\ref{lem:actiononborderingpair}, $\HQ^+(\Lambda(\widehat G), \mathcal{L})$ acts on $X$ and by Lemma~\ref{lem:stabbordering}, the stabilizer of any point in $X$ is finite.
Therefore,  Proposition~\ref{prop:finiteindex} shows that $\widehat G$ has finite index in $\HQ^+(\Lambda(\widehat G), \mathcal{L})$.
Finally,  Proposition~\ref{prop:homeomisom} asserts that $G$ has finite index in $\HQ^+(\Lambda(G))$.

\medskip
\noindent{\bf Case (2):} $\Lambda(\widehat G)$ is disconnected.

Let $\mathcal{E}=\{D_1,..., D_s\}$ be a compatible core decomposition (see \S~\ref{subsec:compatiblecompressiondisks} and Proposition~\ref{prop:admiss} for its existence), and let $\mathcal{T}$ be the corresponding tree for the incompressible decomposition associated to $\mathcal{E}$.

Following the notations in \S~\ref{subsec:al}, let $\mathcal{D}$ be the lift of $\{\partial D_i\}$ in $\Omega(\widehat G)$.
Let $N_0$ be so that each Jordan curve $C \in \mathcal{D}$ induces a simple cycle of length $\leq N_0$.
Note each leaf $l \in \mathcal{L}$ corresponds to a contact point of two complementary components of $\Lambda(\widehat G)/\sim_{\mathcal{L}}$ (see \eqref{eqn:correspodencecompcontact}), so it corresponds to an edge in the nerve.
Let $X$ be the collection of pairs $(l,l')$ so that $l, l' \in \mathcal{L}$ correspond to two adjacent edges in an induced simple cycle of length $\leq N_0$ in the nerve of $\Lambda(\widehat G)/\sim_{\mathcal{L}}$.
Note that $X/\widehat G$ is finite, and $\Homeo^+(\Lambda(\widehat G), \mathcal{L})$ acts on $X$ by Proposition~\ref{prop:homeomisom}. Thus, $\HQ^+(\Lambda(\widehat G), \mathcal{L})$ also acts on $X$.

Recall that $l_0, l_1$ are {\em bordering at $C \in \mathcal{D}$} if $C \cap l_0$ and $C \cap l_1$ form the two endpoints of a connected component of $C \setminus |\mathcal{L}|$ (Definition~\ref{defn:linkedbordering_new}).
With the above notations, we have the same conclusion as in the connected case, cf. Lemma~\ref{lem:stabbordering}.
\begin{lem}\label{lem:stabborderingdisconnected}
     Let $h \in \HQ^+(\Lambda(\widehat G), \mathcal{L})$ and let $l_0, l_1 \in \mathcal{L}$ be a pair of bordering leaves. Suppose $h|_{\partial l_0 \cup \partial l_1} = \id$. Then $h = \id$.
\end{lem}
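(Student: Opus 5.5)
The plan is to mimic the proof of Lemma~\ref{lem:stabbordering}, with the incompressible-decomposition tree $\mathcal{T}$ replacing the JSJ tree, ivy buds replacing two-ended vertex laminations, and rigidity spreading outward from the curve $C$ at which $l_0,l_1$ border.

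\textbf{Step 1: propagate along the ivy bud.} Let $C\in\mathcal{D}$ be a curve at which $l_0,l_1$ are bordering. The curve $C$ lies in $\mathcal{D}_v$ for the (one or two) non-disk vertices $v$ adjacent to the disk vertex of $C$. Let $\mathcal{W}$ be the non-elementary ivy bud at $v$ containing $C$. Since $C\in\mathcal{D}\subseteq\widehat{\mathcal{D}}(N_0)$, Corollary~\ref{cor:twopointrigidity} gives $h|_{\partial l}=\id$ for all $l\in\mathcal{L}_{\mathcal{W}}$, and hence $h|_{\Lambda_\mathcal{W}}=\id$. This applies at both ends of $C$, so the conclusion holds for ivy buds at both vertices adjacent to $C$.

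\textbf{Step 2: show $h$ is the identity on $\Lambda_v$ and on every ivy bud at $v$.} Split according to the type of $v$.
\begin{itemize}
\item If $v$ is a ball or parabolic vertex, Proposition~\ref{prop:uniqueextspiderweb} says there is a single ivy bud. Step 1 therefore already covers every leaf of $\mathcal{L}_v$ and every curve of $\mathcal{D}_v$, and $\Lambda_v$ (a point or empty) is fixed.
\item If $v$ is non-elementary, first check that $h$ stabilizes $\Lambda_v$. This holds because $h$ permutes the components of $\Lambda(\widehat G)$ and fixes $\Lambda_\mathcal{W}\subseteq\Lambda_v$, which is nonempty by Lemma~\ref{lem:unboundedspider}. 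Then $h\in H_v$, so by Proposition~\ref{prop:schottkysettypeIII} it preserves the acylindrical lamination $\widetilde{\mathcal{L}}_v$, and $\Lambda_v/\!\sim_{\widetilde{\mathcal{L}}_v}$ is homeomorphic to a Schottky set. Let $\widecheck G_v$ be the corresponding geometrically finite group; its height is at most $n$. The induced map $\widecheck h_v$ lies in $\HQ^+(\Lambda(\widecheck G_v))$ by the same carpet argument as in Lemma~\ref{lem:stabbordering}: \cite[Theorem 6.6]{haiss:lec} together with \cite[Theorem 3.3]{tukia:ihes85}. The induced map fixes $\Lambda_\mathcal{W}$ pointwise. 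This set is either infinite, giving infinitely many fixed points, or a single leaf (Lemma~\ref{lem:singleleaf}); in the latter case I would use instead a non-elementary bud through $C$, which exists by Lemma~\ref{lem:threeleaves}. The induction hypothesis and Corollary~\ref{cor:toprig:finitext} then give $\widecheck h_v=\id$, so $h|_{\Lambda_v}=\id$.
\end{itemize}
Once $h|_{\Lambda_v}=\id$, $h$ fixes the ideal boundary of every ivy bud at $v$. Proposition~\ref{prop:boundaryactiondet} then gives $h|_{\partial l}=\id$ for all $l\in\mathcal{L}_v$.

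\textbf{Step 3: move to the next vertex.} Every $C'\in\mathcal{D}_v$ meets at least three leaves (Lemma~\ref{lem:threeleaves}), all of them now fixed. So $C'$ carries a bordering pair fixed by $h$, and Steps 1 and 2 apply at the vertex on the other side of $C'$. Inducting on distance in $\mathcal{T}$ gives $h|_{\Lambda_v}=\id$ for every vertex $v$ and $h|_{\partial l}=\id$ for every leaf $l$. Points of $\Lambda(\widehat G)$ not lying in any $\Lambda_v$ are ends of $\mathcal{T}$. Each such point is the intersection of a nested sequence of disks bounded by curves of $\mathcal{D}$ whose diameters shrink to zero, and $h$ preserves these disks, so it fixes these points as well. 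Hence $h=\id$.

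\textbf{Expected main obstacle.} The hardest part is the non-elementary case in Step 2: verifying that the fixed set produced by $\Lambda_\mathcal{W}$ really forces $\widecheck h_v=\id$. One needs three fixed points in the quotient (Corollary~\ref{cor:toprig:finitext}), so the argument must avoid the degenerate situation where $\Lambda_\mathcal{W}$ collapses to a point or pair in $\Lambda_v/\!\sim_{\widetilde{\mathcal{L}}_v}$. It also needs a check that the height actually drops, so that the induction hypothesis applies.
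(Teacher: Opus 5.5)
Your proposal is correct and is essentially the paper's proof: two-point rigidity (Corollary~\ref{cor:twopointrigidity}) at the two vertices adjacent to the disk vertex of $C$, then a case split. Ball and parabolic vertices are handled by Proposition~\ref{prop:uniqueextspiderweb}; non-elementary vertices by Proposition~\ref{prop:schottkysettypeIII}, the induction hypothesis, Corollary~\ref{cor:toprig:finitext} and Proposition~\ref{prop:boundaryactiondet}; and the argument then propagates outward through $\mathcal{T}$. The degenerate case you worry about cannot occur, because the ivy bud containing $C$ contains a curve of $\mathcal{D}_v$ and is therefore non-elementary, so $\Lambda_{\mathcal{W}}$ is infinite by Lemma~\ref{lem:singleleaf}, exactly as the paper observes. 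At the end it is simpler to conclude, as the paper does, from the density of $\bigcup_{l\in\mathcal{L}}\partial l$ in $\Lambda(\widehat G)$ rather than from nested disks, which $h$ is not a priori known to preserve.
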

\begin{proof}
    Let $v_0$ be the disk vertex with corresponding Jordan curve $C_0 \in \mathcal{D}$ so that $l_0, l_1$ are bordering at $C_0$. Note that there are exactly two vertices $w_1, w_2$ of $\mathcal{T}$ adjacent to $v_0$, and $C_0 \in \mathcal{D}_{w_i}$ (see \eqref{eqn:circlelaminationatv}). Neither $w_1$ nor $w_2$ is a disk vertex.
    
    We claim that for each vertex $w$ adjacent to $v_0$, $h|_{\Lambda_w} = \id$. Moreover, for any ivy bud $\mathcal{W}$ at $w$ and any leaf $l \in \mathcal{L}_\mathcal{W}$, we have $h|_{\partial l} = \id$.
    \begin{proof}[Proof of claim]
        Suppose $w$ is a ball or a parabolic vertex (i.e., type B or P) and let $\mathcal{W}$ be the unique ivy bud at $w$ (cf. Proposition~\ref{prop:uniqueextspiderweb}). By Corollary~\ref{cor:twopointrigidity}, $h|_{\partial l} = \id$ for all $l \in \mathcal{L}_{\mathcal{W}}$. 

        Suppose $w$ is non-elementary (type NE). Let $\mathcal{W}$ be the ivy bud at $w$ that contains $C_0$. Note that $\mathcal{W}$ is necessarily non-elementary. By Corollary~\ref{cor:twopointrigidity}, $h|_{\Lambda_\mathcal{W}}= \id$. Note that $h(\Lambda_w) = \Lambda_w$.
        By Proposition~\ref{prop:schottkysettypeIII}, there exists an acylindrical lamination $\widetilde{\mathcal{L}}_w$ at $\Lambda_w$ so that $h(\widetilde{\mathcal{L}}_w) = \widetilde{\mathcal{L}}_w$.
        Let $\widecheck{G}_w$ be the corresponding geometrically finite group associated to $\Lambda_w/\widetilde{\mathcal{L}}_w$.
        By the induction hypothesis, $\height(\widecheck{G}_w) \leq n$ holds. Since $\mathcal{W}$ is not a single leaf, by Lemma~\ref{lem:singleleaf}, $\Lambda_\mathcal{W} \subseteq \Lambda_w$ contains infinitely many points. Therefore, the induced map $\widecheck{h}_w$ of $h$ fixes infinitely many distinct points on the quotient $\Lambda_w/\widetilde{\mathcal{L}}_w$, and is quasisymmetric on each dynamical carpet-quotient.
        By the induction hypothesis and Corollary~\ref{cor:toprig:finitext}, $\widecheck{h}_w = \id$. Therefore, $h|_{\Lambda_w} = \id$.
        The moreover part follows from Proposition~\ref{prop:boundaryactiondet}.
    \end{proof}

    It follows from the claim that for any vertex $v$ at distance $2$ from $v_0$ (which is necessarily a disk vertex (type D)) with the corresponding Jordan curve $C_v \in \mathcal{D}$, and any $l \in \mathcal{L}$ that intersects $C_v$, we have $h|_{\partial l} = \id$. Thus, by inductively applying the above argument, we conclude that $h|_{\partial l} = \id$ for all $l \in \cL$. Since the union $\bigcup_{l\in \cL} \partial l$ is dense in $\Lambda(\widehat G)$, we have $h|_{\Lambda(\widehat G)} = \id$.
\end{proof}
To finish the proof of the virtual topological rigidity, fix a pair $(l_0, l_1) \in X$ as in Lemma~\ref{lem:stabborderingdisconnected} and let $X_0$ be the orbit of $(l_0, l_1)$ under $\HQ^+(\Lambda(\widehat G), \mathcal{L})$. By Lemma~\ref{lem:stabborderingdisconnected}, the stabilizer in $\Homeo^+(\Lambda(\widehat G), \mathcal{L})$ of any point in $X_0$ is finite. 
Since $X/\widehat G$ is finite, $X_0/\widehat G$ is also finite, and, by Proposition~\ref{prop:finiteindex}, $\widehat G$ has finite index in $\HQ^+(\Lambda(\widehat G), \mathcal{L})$.
Therefore, by Proposition~\ref{prop:homeomisom}, $G$ has finite index in $\HQ^+(\Lambda(G))$.

This proves the induction step, and the theorem follows.
\qed

\section{Quasisymmetric maps between colored Fuchsian limit
sets}\label{sec:qsfuchsian}
In this section, we construct color-preserving quasisymmetric maps between Fuchsian Cantor limit sets that are locally equivariant. This will be a key ingredient for the proof of the universality, where we glue quasisymmetries along isolated points to obtain a global quasisymmetry between limit sets.

Let $G$ be a convex cocompact Fuchsian group of the second kind that acts on $\SS^1$.
Thus, its limit set $\Lambda(G)$ is a Cantor set and the unit circle admits
a $G$-invariant partition $\SS^1= \Lambda(G) \cup \Omega(G)$. 
Let $F$ be a finite set. A {\em coloring} of $\Omega(G)$ is a $G$-invariant map $\kappa: \pi_0(\Omega(G))\to F$ that is onto. 

\begin{theorem}\label{thm:colors}
    Let $G_1, G_2$ be convex cocompact Fuchsian groups of the second kind with colorings $\kappa_i: \pi_0(\Omega(G_i))\to F, i =1,2$. 
    Let $J^i, i = 1, 2$, be a component of $\Omega(G_i)$, both of the same color.
    There exists a color-preserving quasisymmetric map $h:(\SS^1, \Lambda(G_1)) \rightarrow (\SS^1, \Lambda(G_2))$ so that
    \begin{enumerate}
        \item\label{thm:colors:eq1} $h(J^1) = J^2$;
        \item\label{thm:colors:eq2} there exists $\varepsilon > 0$ so that for each component $J$ of $\Omega(G_1)$ and $a\in \partial J$, $h$ conjugates $\stab_{G_1}(a)$ and $\stab_{G_2}(h(a))$ on $[a-\varepsilon|J|, a+ \varepsilon|J|]$.
        
        More precisely, let $g_a \in \stab_{G_1}(a)$ and $g_{h(a)} \in \stab_{G_2}(h(a))$ be the generators with repelling fixed points at $a$ and $h(a)$. Then for each $x \in [a-\varepsilon|J|, a+ \varepsilon|J|]$, we have
        $$
        h(g_a(x)) = g_{h(a)}(h(x)).
        $$
    \end{enumerate}
\end{theorem}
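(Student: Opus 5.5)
We plan to build $h$ from Markov dynamics rather than directly from the groups, following the outline in the introduction. For each $G_i$ we fix a Bowen--Series map $f_i:\SS^1\to\SS^1$ adapted to a fundamental domain whose sides are geodesics with endpoints at gap endpoints. Concretely, we choose a finite-sided Dirichlet-type fundamental domain for $G_i$ acting on $\D$ whose free sides are exactly the arcs of $\Omega(G_i)$ modulo $G_i$. On $\Lambda(G_i)$, $f_i$ is a piecewise-M\"obius, uniformly expanding Markov map. Each gap $J$ is mapped by finitely many iterates onto one of finitely many representative gaps, and colours are preserved because $\kappa_i$ is $G_i$-invariant. The limit set is the maximal invariant set of $f_i$ with a finite Markov partition. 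The first step is to show that $f_i$ restricted to $\Lambda(G_i)$ is topologically conjugate to a subshift of finite type, where gaps are recorded by the colours of the boundary intervals.

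Because the two transition matrices are unrelated, we will not conjugate $f_1$ to $f_2$. Instead we pass through piecewise-linear models. We replace each $f_i$ by a \emph{piecewise-linear expanding model} $p_i$ on a Cantor set $K_i\subset\SS^1$ with the same combinatorics, in the style of constructing a Markov Cantor set with prescribed ratios. The geometry is chosen so that a neighbourhood of each break point (gap endpoint $a$) is modelled by the linear map $x\mapsto a+\lambda_a(x-a)$, where $\lambda_a$ is the multiplier of $g_a$ at its repelling fixed point. This multiplier is real and at most finitely many values occur up to the $G_i$-action. Bounded distortion of the M\"obius pieces (Koebe) gives that the conjugacy $\phi_i$ between $f_i|_{\Lambda}$ and $p_i|_{K_i}$ extends across gaps affinely and is quasisymmetric, with the standard ``uniformly expanding Markov maps with bounded distortion are qs-conjugate to their PL models'' argument. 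By construction, $\phi_i$ conjugates $g_a$ to the linear map with multiplier $\lambda_a$ on a definite neighbourhood $[a-\varepsilon|J|,a+\varepsilon|J|]$. This is where the prescribed local expansion is used.

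The main obstacle is to produce a single quasisymmetry $\psi:K_1\to K_2$ between two PL Cantor sets with different combinatorics that preserves colours and is \emph{linear with the correct slope ratio} near every gap endpoint. Our first attempt will be a coarse matching. Both $K_i$ are uniformly perfect, uniformly disconnected, and Ahlfors-regular-like, so by the David--Semmes type argument they are quasisymmetric to the standard Cantor set. We will make this explicit and refine it with colours as follows.
\begin{enumerate}
\item Subdivide the Markov cylinders of both sets into generations of comparable sizes.
\item Choose bijections between cylinders inductively so that each cylinder's two boundary gaps keep their colours. Since every colour appears in every sufficiently deep cylinder (minimality of the Fuchsian action), we can always refine until the counts match.
\item Near each boundary gap, keep the bijection forced to be the linear ``tail'' map $g_a$-to-$g_{h(a)}$ on the first $k_0$ levels of the cusp-like chain of cylinders $g_a^n(\text{cylinder})$ accumulating at $a$. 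These chains are self-similar under $g_a$, so matching one fundamental piece determines the rest equivariantly.
\end{enumerate}
Quasisymmetry then follows from the bounded number of subdivisions per generation and the comparability of sizes.

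Finally we set $h=\phi_2^{-1}\circ\psi\circ\phi_1$, extended affinely on gaps, and post-compose with an element of $G_2$ so that $J^1\mapsto J^2$. The normalisation in step 2 at generation zero is chosen to send $J^1$ to $J^2$, which is possible because they share a colour. Property \eqref{thm:colors:eq2} holds because each of the three maps intertwines the local generators on definite $\varepsilon|J|$-neighbourhoods, with $\varepsilon$ taken as the minimum over the finitely many orbit representatives.
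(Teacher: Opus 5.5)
The reduction to Markov models is fine, but the whole theorem lives in the construction of $\psi\colon K_1\to K_2$, and that step does not work as described. There are three problems.

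\textbf{Different multipliers cannot be matched linearly.} In your models the germ at a gap endpoint $a$ is $x\mapsto a+\lambda_a(x-a)$, where $\lambda_a$ is the multiplier of $g_a$ (that is, $e^{\ell}$ with $\ell$ the length of the corresponding boundary geodesic). For corresponding endpoints one has $\lambda_a\neq\lambda_{h(a)}$ in general. Two linear germs with different multipliers are not conjugate by any linear, or even bi-Lipschitz, germ: any conjugacy satisfies $|\psi(x)-\psi(a)|\asymp|x-a|^{\log\lambda_{h(a)}/\log\lambda_a}$. So $\psi$ cannot be ``linear with the correct slope ratio'' near gap endpoints. Moreover, item (2) must hold for every $x$ in the neighbourhood. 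Your step 3 therefore has to force $\psi$ on the entire chain $g_a^{-n}(D)$, $n\ge 0$, by equivariant extension from a fundamental domain $D$, not just on the first $k_0$ levels. Note also that $g_a$ repels at $a$, so the chain accumulating at $a$ is $g_a^{-n}(D)$, not $g_a^{n}(\cdot)$.

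\textbf{The forced regions interact, and your QS argument does not cover them.} These power-law regions occur around every endpoint of every gap, at every scale, nested inside one another. The map on $D$ must already satisfy the forced conditions at all gaps inside $D$. It must also satisfy them at gaps whose $\varepsilon$-neighbourhoods straddle $\partial D$, and it must match colours with a $D'$ defined by a different group. Your justification of quasisymmetry (``bounded number of subdivisions per generation and comparability of sizes'') does not apply, because the forced maps send scale $\lambda_a^{-n}$ to scale $\lambda_{h(a)}^{-n}$. One would need a carefully designed recursive construction and a tree-type quasisymmetry criterion, and neither is supplied. As Remark~\ref{rmk:localconj} points out, a colour-preserving quasisymmetry by itself is standard. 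The local conjugacy in item (2) is the real content, and it is exactly what remains unproved.

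\textbf{Affine extension across gaps breaks item (2).} The interval $[a-\varepsilon|J|,a+\varepsilon|J|]$ contains points of $J$. On $J$ and $h(J)$, $g_a$ and $g_{h(a)}$ are M\"obius maps with different multipliers, and no affine map conjugates them.

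The missing idea is to make the local germs \emph{coincide} in a common model, rather than trying to match different germs by hand. The paper does this in several steps.
\begin{enumerate}
\item It builds Bowen--Series maps for the pinched groups $\widecheck G_i$, to be blown up later.
\item It refines them to cyclically coloured Markov maps and conjugates these topologically to $d$-adic Markov maps, with $d-1=|F|$.
\item Using Proposition~\ref{prop:qconj} (via Lemma~\ref{lem:primitivedyadicdecomp}), it modifies the partitions so that every periodic break point has the same multiplier $d^k$, for one common $k$.
\item After blowing up by $\tau$, both groups are modelled quasisymmetrically, by the conformal elevator (Theorem~\ref{thm:qsmodel.n}), on the \emph{same} coloured repeller $\mathcal{J}_d$. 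The germs at periodic boundary points are $e$-adic affine maps $\sigma_e^{-n}\circ\sigma_e^{k+n}$, all of multiplier $e^k$.
\end{enumerate}
At $h_1(a)=h_2(h(a))$ the two germs then have the same fixed point and the same slope, so they agree. Hence $h=h_2^{-1}\circ h_1$ conjugates $g_a$ to $g_{h(a)}$ on a definite neighbourhood, including the gap side, with no further work; in effect your $\psi$ is the identity. In your framework the repair is the same: choose both piecewise-linear models with one common multiplier on one common coloured Cantor set, instead of keeping the original multipliers $\lambda_a$.
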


\begin{remark}\label{rmk:localconj}
The existence of a color-preserving quasisymmetry can be established from standard methods, cf. \cite{macmanus:cantor:qcircle}. 
 But this happens to be insufficient for our purpose since we will need to glue together quasisymmetric maps that only agree on isolated points. 
 In this respect, we need to control the behavior of the quasisymmetries in the neighborhood of these points. This difficulty is overcome with the local conjugacy obtained by the conclusion \eqref{thm:colors:eq2} of the above theorem.
\end{remark}

We equip $G_i$ with a word metric.
Let $\mathbb{P}_i = \{H_{i,1},..., H_{i,k_i}\}, i = 1,2$ be a collection of representatives of the conjugacy classes of the stabilizer of a component of $\Omega(G_i)$.
Let $\cP_i = \{g H: g \in G_i, H \in \mathbb{P}_i\}, i=1,2$ be the collection of their cosets. 
The coloring of the components of $\Omega(G_i)$ induces a coloring $\kappa_i: \cP_i \rightarrow F$.

\begin{cor}\label{cor:colors} 
Let $(G_1,\cP_1)$ and $(G_2,\cP_2)$ be two convex cocompact Fuchsian groups of the second kind, equipped with colorings $\kappa_i: \cP_i\to F, i =1,2$.  Let $P^i \in \cP_i, i = 1, 2$ be of the same color. 
Then there exists a color-preserving quasi-isometry $\phi:  (G_1,\cP_1)\to (G_2,\cP_2)$ so that
\begin{enumerate}
    \item\label{cor:colors:cond:1} $\phi(P^1) = P^2$;
    \item\label{cor:colors:cond:2} there exists $M> 0$ so that for each $P\in \cP^1$, $\phi|_P: P \rightarrow \phi(P)$ lies within distance $M$ of 
    a translation under identifications $P \cong \Z$ and  $\phi(P)\cong \Z$.
\end{enumerate}
\end{cor}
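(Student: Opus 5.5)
The plan is to derive Corollary~\ref{cor:colors} from Theorem~\ref{thm:colors} through the standard dictionary between quasisymmetries of boundaries and quasi-isometries of groups.

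First, fix the geometric setting. Each $G_i$ acts cocompactly on the convex hull $\chull(\Lambda(G_i))\subset \Hyp^2$. This is a quasi-convex subset whose boundary at infinity is $\Lambda(G_i)$, and whose boundary geodesics are the hyperbolic geodesics $\beta_J$ spanning the components $J$ of $\Omega(G_i)$. By the \v{S}varc--Milnor lemma, an orbit map gives a $G_i$-equivariant quasi-isometry $G_i\to \chull(\Lambda(G_i))$. Under this map each coset $gH\in\cP_i$ is sent within bounded Hausdorff distance of the geodesic $\beta_{gJ}$, where $J$ is the component stabilized by $H$. The colorings therefore correspond: $\kappa_i(gH)=\kappa_i(gJ)$.

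Next, apply Theorem~\ref{thm:colors} with $J^i$ the component corresponding to $P^i$. This gives a color-preserving quasisymmetry $h$ with $h(J^1)=J^2$ and the local conjugacy property near every endpoint $a\in\partial J$. Restricted to the Cantor set $\Lambda(G_1)$, $h$ is quasisymmetric, hence quasi-M\"obius. Since $\chull(\Lambda(G_i))$ are Gromov hyperbolic and visual, the usual extension (cf.\ \cite{paulin:determined}, or the extension of quasi-M\"obius maps to quasi-isometries of hyperbolic fillings) yields a quasi-isometry $\Phi$ between the convex hulls with boundary value $h$. Because $h$ maps the endpoint pair $\partial J$ to $\partial h(J)$, $\Phi$ sends $\beta_J$ to within uniformly bounded distance of $\beta_{h(J)}$; the bound depends only on the quasi-isometry constants, by Morse stability. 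Composing with the orbit quasi-isometries gives $\phi$, which is color-preserving and satisfies \eqref{cor:colors:cond:1} after a bounded modification of $\phi$ on $P^1$.

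The main obstacle is \eqref{cor:colors:cond:2}, the uniform translation-likeness along each coset. Parametrize $\beta_J$ by arclength, or equivalently $P\cong\Z$ via the generator $g_a$ of $\stab(a)=\stab(J)$ acting by translation. A point at height $t$ along $\beta_J$ corresponds, via shadows, to a boundary scale of roughly $e^{-t}|J|$ near $a$ (and symmetrically near the other endpoint). For $x$ within $\varepsilon|J|$ of $a$, the conjugacy $h\circ g_a=g_{h(a)}\circ h$ shows that $\Phi\circ g_a$ and $g_{h(a)}\circ\Phi$ agree up to bounded error on the portion of $\beta_J$ beyond a fixed distance (depending on $\varepsilon$) from the projection of the midpoint. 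The induced map $\Z\to\Z$ therefore commutes with the shift up to $O(1)$ on both tails; the two tails are handled by the two endpoints of $J$. In the middle, a segment of bounded length controlled by $\varepsilon$ and the quasi-isometry constants, the discrepancy is bounded. A map $\Z\to\Z$ that coarsely commutes with $n\mapsto n+1$ in this sense lies at bounded distance from a translation, with constant $M$ uniform over $P$ because $\varepsilon$ is uniform.

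The delicate point to verify is that the shift amounts on the two tails match, so that a single translation works. This should follow from the fact that $g_a$ acts on $\beta_J$ as a translation whose repelling end is $a$ and whose attracting end is the other endpoint, together with the corresponding property of $g_{h(a)}$.
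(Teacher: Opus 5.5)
Your overall plan matches the paper's: extend $h$ to a quasi-isometry between the convex hulls, use Morse stability for the boundary geodesics $\beta_J$, exploit the local conjugacy of Theorem~\ref{thm:colors} on the two tails of each $\beta_J$, and absorb the bounded middle segment. However, the argument for \eqref{cor:colors:cond:2} has a genuine gap as written.

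A one-step estimate $d(\Phi(g_a p),g_{h(a)}\Phi(p))\le C$ on a tail only gives $|f(n+1)-f(n)-1|\le C$ for the induced map $f:\Z\to\Z$. The error then accumulates under iteration: $f(n)=2n$ satisfies this with $C=1$ and is at unbounded distance from every translation. What you need is an estimate uniform in the number of iterates,
\[
d\bigl(\Phi(g_a^{-n}p),\,g_{h(a)}^{-n}\Phi(p)\bigr)\le C\quad\text{for all } n\ge 0,
\]
for $p$ in a fixed fundamental segment of the tail. Moreover, for an arbitrary quasi-isometric extension $\Phi$, even the one-step estimate is only asserted. You need a locality statement: two $(K,C)$-quasi-isometries whose boundary maps agree on a set $S$ are uniformly close on the part of the hull spanned by $S$ (e.g.\ via centers of ideal triangles with vertices in $S$).

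The paper obtains the uniform estimate by taking $H$ to be the Douady--Earle extension of $h$. By conformal naturality, $g_2^n\circ H\circ g_1^{-n}$ is the Douady--Earle extension of $g_2^n\circ h\circ g_1^{-n}$. That map is quasisymmetric with the same control as $h$, and by the exact local conjugacy it still maps $\partial J_1$ to $\partial J_2$ and $a_1\pm\varepsilon|J_1|$ to $h(a_1\pm\varepsilon|J_1|)$. Hence the single Morse-lemma estimate at $x_0=\gamma_1\cap\alpha_1$, together with compactness on a segment of length $l_1$, holds for every $n$ with the same constant $K_0$. The equivariance $L_{J_1,+}=g_2^n\circ L_{J_1,+}\circ g_1^{-n}$ then spreads it along the whole tail without accumulation.

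In your framework the repair is analogous. Choose a neighborhood $V\subset[a-\varepsilon|J|,a+\varepsilon|J|]$ of $a$ with $g_a^{-1}(V)\subset V$, so that $h\circ g_a^{-n}=g_{h(a)}^{-n}\circ h$ on $V$ for all $n\ge 0$. Then apply the locality statement to $\Phi$ and $g_{h(a)}^{n}\circ\Phi\circ g_a^{-n}$, whose boundary maps agree exactly on $(V\cap\Lambda(G_1))\cup\{b\}$. This gives a bound independent of $n$.

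Finally, the point you single out as delicate is harmless. In the coordinates given by $g_a$ and $g_{h(a)}$ (note $g_{h(b)}=g_{h(a)}^{-1}$), both tails have slope $1$. The two offsets differ by a bounded amount because the middle segment has bounded length and $\Phi$ is coarsely Lipschitz. In the paper this is the comparison of $L_{J_1,+}$ and $L_{J_1,-}$, which share the stretch factor $l_2/l_1$.
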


\begin{proof}  
    Note that the collection of cosets $\cP_i$ is in correspondence with the components of $\Omega(G_i)$. Let $J^i \subseteq \Omega(G_i)$ be the corresponding component for $P^i$. By Theorem~\ref{thm:colors}, we have a color-preserving quasisymmetric map $h:(\SS^1, \Lambda(G_1)) \rightarrow (\SS^1, \Lambda(G_2))$. 
   Let $H: \D \rightarrow \D$ be the Douady-Earle extension of $h$ (see \cite{DE86}). Note that $H$ is real analytic and bi-Lipschitz in the hyperbolic metric and the image of the convex hull $\cC_1$ of $\Lambda(G_1)$ is quasi-isometric to the convex hull $\cC_2$ of $\Lambda(G_2)$. By the \v{S}varc-Milnor lemma, there exist equivariant quasi-isometries $\psi_i: (G_i, P_i) \rightarrow (\cC_i, \partial \cC_i)$. Let $\phi:= \psi_2^{-1} \circ H \circ \psi_1$ be their composition. 
    Thus $\phi: (G_1, \cP_1) \rightarrow (G_2, \cP_2)$ is a quasi-isometry that is color-preserving and such that $\phi(P^1) = P^2$. 
    
    It remains to verify \eqref{cor:colors:cond:2}.
    Let $J_1 = (a_1, b_1)$ be a component of $\Omega(G_1)$ and let $\gamma_1 \subseteq \partial \cC_1$ be the hyperbolic geodesic in $\mathbb{D}$ with endpoints $\partial J_1$. Let $J_2 = h(J_1) = (a_2, b_2)$, and $\gamma_2$ be the corresponding hyperbolic geodesic with endpoints $\partial J_2$. Note that $\gamma_i$ projects to a closed geodesic in the quotient $\D/G_i$, and let $l_i$ denote its length in the quotient.
    
    We claim that $H|_{\gamma_1}$ is uniformly bounded from some affine map (with respect to arc-length in the hyperbolic metric) between $\gamma_1$ and $\gamma_2$ that stretches the hyperbolic length by $l_2/l_1$.
    Let $\alpha_1$ be the geodesic connecting $a_1 \pm \varepsilon|J_1|$, let $\alpha_2$ be the geodesic joining $h(a_1 \pm \varepsilon|J_1|)$, and let $A_i$ be the closed hyperbolic half-plane bounded by $\alpha_i$ that contains $a_i$ on its ideal boundary. 
     Let $\gamma_i^+ = \gamma_i \cap A_i$, and let $L_{J_1, +}: \gamma_1^+ \rightarrow \gamma_2^+$ be the affine map  with a stretch factor $l_2/l_1$ that sends $\gamma_1 \cap \alpha_1$ to $\gamma_2 \cap \alpha_2$.
    
    We now show that there exists $K_0$, which only depends on the quasisymmetry of $h$ and $l_1, l_2$ so that 
    $$
    \dist_\D(H(x), L_{J_1, +}(x))\leq K_0 \text{ for all } x\in \gamma_1^+.
    $$
    Indeed, since $H$ is bi-Lipschitz, the Morse lemma 
    implies that $H(\gamma_1\cap\alpha_1)$ has to be close to both geodesics $\gamma_2$ and $\alpha_2$, hence close to  $\gamma_2\cap \alpha_2$, where the distance only depends on the
    quasisymmetry of $h$.
    Thus, there exists a constant $K_0'$ that depends only on the quasisymmetry of $h$ so that
    $$
    \dist_\D(H(x_0), L_{J_1, +}(x_0))\leq K_0' \text{ where } x_0 = \gamma_1 \cap \alpha_1.
    $$
    Thus,  by compactness, there exists a constant $K_0$ that only depends on the quasisymmetry of $h$ 
    and the translation lengths $l_1, l_2$ so that 
    $$
    \dist_\D(H(y), L_{J_1, +}(y))\leq K_0 \text{ for all } y\in \gamma_1^+ \text{ with } \dist_\D(y, x_0)  \le l_1.
    $$
    Let $g_i \in G_i$ be the generator of the stabilizer of $J_i$ which is repelling at $a_i$. By the local conjugacy condition, we conclude that $g_2^n\circ h\circ g_1^{-n}(a_1\pm \epsilon|J_1|) = h(a_1\pm \epsilon|J_1|)$. 
    Applying the above argument to $g_2^n\circ h\circ g_1^{-n}$, and using the fact the Douady-Earle extension is conformally natural, we conclude that
    $$
    \dist_\D(g_2^n\circ H\circ g_1^{-n}(y), L_{J_1, +} (y))\leq K_0, \text{ for all } y\in \gamma_1^+, \text{ with } \dist_\D(y, x_0) \leq l_1.
    $$
    By construction, $L_{J_1, +} = g_2^n \circ L_{J_1, +} \circ g_1^{-n}$. Therefore, for each $x \in \gamma_1^+$, there exists $n$ so that $y = g_1^n(x) \in \gamma_1^+$ with $\dist_\D(y, x_0) \le l_1$ so that 
    \begin{eqnarray*}
        \dist_\D(H(x), L_{J_1, +}(x)) & = & \dist_\D(H(g_1^{-n}(y)), L_{J_1, +}(g_1^{-n}(y))) \\ 
        &= &\dist_\D(g_2^n\circ H\circ g_1^{-n}(y), g_2^n\circ L_{J_1, +}\circ g_1^{-n}(y)) \\
        &= &\dist_\D(g_2^n\circ H\circ g_1^{-n}(y), L_{J_1, +}(y)) \leq K_0.
  \end{eqnarray*}

    Similarly, let $\beta_1$ be the geodesic connecting $b_1 \pm \varepsilon|J_1|$, let $\beta_2$ be the geodesic connecting $h(b_1 \pm \varepsilon|J_1|)$
    and let $B_i$ be the corresponding closed hyperbolic half-plane bounded by $\beta_i$. 
    Let $\gamma_i^- = \gamma_i \cap B_i$ and $L_{J_1, -}: \gamma_1^- \rightarrow \gamma_2^-$ be the affine map with a stretch factor $l_2/l_1$ that sends $\gamma_1 \cap \beta_1$ to $\gamma_2 \cap \beta_2$. Then 
    $$
    \dist_\D(H(x), L_{J_1, -}(x))\leq K_0 \text{ for all } x\in \gamma_1^-.
    $$
    Note that we have a decomposition $\gamma_i = \gamma^+_i \cup \gamma^0_i \cup \gamma^-_i$. By construction, the hyperbolic length
    of $\g^0_i$ is bounded by some constant $K_1$ that depends on $\ep$ and the quasisymmetry of $h$, i.e., $l(\gamma^0_i) \leq K_1$. 
    Let $L_{J_1}:\gamma_1 \rightarrow \gamma_2$ be the affine map with a stretch factor $l_2/l_1$ that agrees with $L_{J_1, +}$ on $\gamma^+_1$. Then there exists $K_2$, which depends on $K_1$ and $l_2/l_1$ so that 
    $$
    \dist_\D(L_{J_1}(x), L_{J_1, -}(x))\leq K_2 \text{ for all } x\in \gamma_1^-.
    $$
    Since there are only finitely many boundary curves in $\partial \cC_i/G_i$, there are only finitely many such lengths $l_1, l_2$, so the constant $K_2$ can be chosen independent of $J_1$. Thus, there exists $K_3$, independent of the interval $J_1$, so that 
    $$
    \dist_\D(H(x), L_{J_1}(x))\leq K_3 \text{ for all } x\in \gamma_1.
    $$
    By taking compositions, we see that $\phi|_P$ is within uniformly bounded distance of a translation.
    \end{proof}

\subsection*{Strategy of the proof of Theorem~\ref{thm:colors}}
It suffices to prove Theorem~\ref{thm:colors} for torsion-free Fuchsian groups as passing to a finite index subgroup would not affect the theorem.
The proof proceeds by constructing a piecewise M\"obius map on $\SS^1$, called the {\em Bowen-Series maps} and {\em blown-up Bowen-Series maps}  which model the dynamics of the Fuchsian group $G$ (see \S~\ref{subsec:bsm}). Theorem~\ref{thm:qsmodel.n} shows that this Bowen-Series map can be quasisymmetrically modeled by a special class of piecewise linear maps, called {\em $d$-adic Markov maps} and {\em blown-up $d$-adic Markov maps}, where $d$ is determined by the number of colors (\S~\ref{subsec:dadicmodel}). 
Even though these $d$-adic models are different for different Fuchsian groups, they share the same repeller, so they provide us with a quasisymmetric homeomorphism between the colored Cantor limit sets.  Moreover, one
can arrange that these $d$-adic models contain in their full dynamics the same germs at the boundary points (Proposition~\ref{prop:qconj}). This property enables us to obtain the local conjugacy.

\subsection{Conformal Markov maps on $\SS^1$ and variants}
In this section, we summarize properties of conformal Markov maps on $\SS^1$. They are a special case of $(X, \mathscr{C})$-map studied in \cite{LMM26}.

\begin{defn}\label{XCstrs}
    A finite collection $\mathcal{A} = \{A_1,..., A_m\}$ of subsets of $\SS^1$ is called a {\em partition} if
    \begin{itemize}
        \item $A_i$ is a closed interval;
        \item $\Int A_i \cap \Int A_j = \emptyset$ if $i \neq j$;
        \item $\bigcup A_i = \SS^1$.
    \end{itemize}
    We always assume the intervals $A_i$ are ordered and oriented counterclockwise on the circle with indices in $\Z/m\Z$.
    We define the set of {\em break-points} as 
    $$
    \mathrm{bk}(\mathcal{A}):= \{x \in \SS^1: x \in A_i\cap A_{i+1}\}.
    $$

    A finite collection $\mathcal{F}=\{f_1,..., f_m\}$ is called a {\em conformal Markov map} if
    \begin{itemize}
    \item for each $i$, there exists $j_i$ and $l_i$ so that 
    $$
    f_i: A_i \longrightarrow A_{j_i} \cup A_{j_i+1} \cup \ldots \cup A_{j_i+l_i}
    $$
    is a homeomorphism;
    \item $f_i$ extends to a conformal map on a neighborhood of $A_i$.
    \end{itemize}
\end{defn}

The combinatorics of a conformal Markov map  $\mathcal{F}$  is recorded by an $m \times m$ matrix, called the {\em Markov matrix} $M = M_\mathcal{F}$ given by 
$$
M_{ij} = \begin{cases}
    1 & \text{ if }A_i \subseteq f_j(A_j)\\
    0 & \text{ otherwise.}
\end{cases}.
$$
Note that the Markov matrix requires a choice of an ordering of the elements in the partition $\mathcal{A}$. Thus, $M_{\mathcal{F}}$ is well-defined up to a cyclic permutation of the indices.

\subsection*{Markov pieces, topological expansion and primitivity}
A partition $\mathcal{B}$ of $\SS^1$ is a {\em refinement} of $\mathcal{A}$ if for each $B \in \mathcal{B}$, there exists $A \in \mathcal{A}$ so that $B \subseteq A$.
Given a conformal Markov map $\mathcal{F} = \{f_1,..,. f_m\}$, one can define the pull-back partition 
$$
\mathcal{A}^1 = \mathcal{F}^*(\mathcal{A}) = \{A_{ij} :=f_i^{-1}(A_j): A_j\subset f_i(A_i)\}.
$$
We call $\mathcal{A}^0 = \mathcal{A}$ (resp.\ $\mathcal{A}^1$) the {\em level $0$} (resp.\ {\em level $1$}) Markov partitions.
By construction, 
\begin{itemize}
    \item(inclusion) $A_{ij} \subseteq A_i$, i.e., $\mathcal{A}^1$ is a refinement of $\mathcal{A}^0$; and
    \item(dynamics) $f_i: A_{ij} \longrightarrow A_j$ is a homeomorphism.
\end{itemize}
We define the level $n$ Markov partitions inductively, and denote them by $\mathcal{A}^n = \{A_{w}\}$, where $w$ is a word of length $n$ consisting of letters in $\{1,..., m\}$.

It is convenient to write a conformal Markov map as
$$
\mathcal{F} : \mathcal{A}^1 \longrightarrow \mathcal{A}^0.
$$

We say a conformal Markov map $\mathcal{F}$ is {\em topologically expanding} if
$\bigcap_{w_n} A_{w_n}$ is a singleton set for each infinite nested sequence of level $n$ Markov pieces.
A conformal Markov map $\cF$ is said to be {\em primitive} if there exists $k$ so that for each $A \in \mathcal{A}^1$, $\cF^k(A) = \SS^1$. 
This is equivalent to the fact that the Markov matrix $M_\cF$ is {\em primitive}, i.e., there exists $k$ so that $M_\cF^k$ is a positive matrix.

\subsection*{Colored conformal Markov maps}
A coloring of a conformal Markov map $\cF$ is a surjective map $\kappa: \mathrm{bk}(\mathcal{A}) \rightarrow F$ on a finite set $F$ such that $\kappa \circ \cF = \kappa$. 
It induces a coloring $\kappa: \mathrm{bk}(\cF) = \bigcup_n \mathrm{bk}(\mathcal{A}^n) \rightarrow F$. A conformal Markov map together with a coloring is called a {\em colored conformal Markov map}. We also say $\cF$ is $k$-colored if the coloring set has cardinality $k$, i.e., $|F| = k$.

\subsection*{Blown-up Markov maps}
To model convex cocompact Fuchsian groups of the second kind, we introduce the following notion.

\begin{defn}\label{def:blowup} 
A {\em blown-up conformal Markov map} of a  conformal Markov map $\cF = \{f_1.\ldots, f_m\}$ defined on $\cA = \{A_1,\ldots, A_m\}$ is given by a collection $\hat{\cF}=\{\hat{f}_1,\ldots, \hat{f}_m\}$
of continuous injective maps defined on a finite collection of $m$ pairwise disjoint closed intervals $\hat{\cA}=\{\hat{A}_1,\ldots, \hat{A}_m\}$ of $\SS^1$ 
together with a monotone continuous surjective map $\tau:\SS^1\to\SS^1$ that satisfy the following properties.
\bit
\item For each index $i$, the map  $\hat{f}_i$  is the restriction of a conformal map defined in some neighborhood of $\hat{A}_i$ with $\hat{f}_i(\hat{A}_i) \subset \SS^1$. 
\item For each index $i$, $\tau^{-1}(\Int(A_i)) = \Int(\hat{A}_i)$ holds and we have $\tau\circ \hat{f}_i= f_i\circ \tau$ on $\hat{A}_i$.
\eit
\end{defn}

The points in $\bigcup \partial \hat{A}_i$ are {\em the boundary points} for the blown-up conformal Markov map, and the complementary intervals of $\SS^1 \setminus \bigcup_{i} \hat{A}_i$ are called {\em gap intervals}.
A blown-up conformal Markov map is a genuine locally injective holomorphic map defined in a neighborhood of $\bigcup_{i} \hat{A}_i$. 
We may then define its {\em repeller} $\mathcal{J}$ as the set of points that can be iterated infinitely many times by $\hat{\cF}$.

Blown-up conformal maps will provide us the main intermediate step. The missing notions in the following theorem will be defined in the subsequent subsections.

\begin{theorem}\label{thm:qsmodel.n}
    Let $G$ be a $(d-1)$-colored convex cocompact Fuchsian group of the second kind. For each $k$ sufficiently large, there exist
    \begin{itemize}
        \item a blown-up Bowen-Series map $\cG$ for $G$ and
        \item a blown-up $d$-adic Markov map $\cF$
    \end{itemize}
  so that the following hold. 
    \begin{itemize}
        \item There exists a color-preserving quasisymmetric map $h: (\SS^1, \Lambda(G)) \rightarrow (\SS^1, \mathcal{J}_d)$ so that  $h \circ \cG = \cF \circ h$, and where $\mathcal{J}_d$ only depends on $d$.
        \item There exists $\varepsilon>0$ such that $h$ is a conjugacy on $[a-\ep |J|,a+\varepsilon |J| ]$ where $a$ is a periodic boundary point and $J$ is the gap interval bounded by $a$. 
        \item For each periodic boundary point $x$ for $\cF$, the multiplier satisfies
        $$
        \mu_{\cF}(x)  = e^k, \text{ where } e = 2d-1.
        $$
    \end{itemize}

\end{theorem}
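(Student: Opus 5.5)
I would build both dynamical systems combinatorially from a common symbolic model and then read off the conjugacy from the Markov structure. The first step is to fix, for the torsion-free group $G$, a fundamental domain bounded by geodesics whose endpoints lie in gap intervals (a Schottky-type configuration). Then I take the associated Bowen--Series map: on each arc of $\SS^1$ cut off by a side-pairing geodesic, act by the corresponding generator. Restricting to the complement of the gaps gives a blown-up conformal Markov map $\cG$ whose repeller is $\Lambda(G)$. The gap intervals $J$ are permuted by $\cG$, each boundary point $a$ of a gap is eventually periodic, and near $a$ the relevant iterate of $\cG$ is the generator $g_a$ of $\stab_G(a)$. The $(d-1)$ colours label the gap orbits. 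The freedom in choosing $\cG$ is to pass to iterates/refinements: replacing $\cG$ by a higher-level Markov partition $\cA^n$ does not change the repeller and can be arranged to keep the local germ at periodic boundary points equal to a power of $g_a$.

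The second step is to design the $d$-adic model $\cF$. I take the circle $\R/\Z$ with the map $x\mapsto e x$, $e=2d-1$, blow up a suitable $d-1$ periodic orbits of points into gaps (one per colour), and pull back so that the repeller is a fixed Cantor set $\mathcal{J}_d$ depending only on $d$. The combinatorics of $\cG$ must then be matched. I would refine $\cG$ (passing to level $k$ partitions, which is where ``$k$ sufficiently large'' enters) until every Markov piece maps over many pieces, and the Markov matrix is primitive with enough room to subdivide. Then I would re-encode it as a subshift on $e^k$ symbols with the same coloured gap structure. Concretely, I would produce a topological conjugacy $h$ between $\cG|_{\Lambda(G)}$ and a piecewise-linear blown-up map whose branches are affine with slope $e^k$. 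This forces the multiplier $\mu_\cF(x)=e^k$ at each periodic boundary point. The orbits of both systems are coded by the same admissible sequences, and $h$ is defined via the coding and extended affinely/monotonically over gaps.

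Quasisymmetry of $h$ follows from the standard principle that a conjugacy between two expanding conformal (or piecewise-linear) Markov maps with bounded distortion is quasisymmetric: both have bounded geometry (adjacent level-$n$ pieces and gaps have comparable lengths, by Koebe distortion for $\cG$ and linearity for $\cF$), and $h$ maps level-$n$ pieces to level-$n$ pieces. The local-conjugacy statement near $a$ is handled by making the germs agree. Near a periodic boundary point, $\cG^p$ is the hyperbolic Möbius map $g_a$, which is linearisable, while $\cF^p$ is linear with multiplier $e^{k}$ (after adjusting the period). I would first modify $\cG$ near boundary points by a local quasiconformal change of coordinates so that its germ is linear with multiplier matching $e^k$; equivalently, I would choose $h$ on a fundamental annulus $[g_a^{-1}(x),x]$ and propagate it by the dynamics on $[a-\varepsilon|J|,a+\varepsilon|J|]$. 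Then $h\circ g_a=g_{h(a)}\circ h$ there by construction, compatible with $h\circ\cG=\cF\circ h$.

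I expect the main obstacle to be the combinatorial matching step. One must show that, after sufficiently many refinements, the Markov combinatorics of an arbitrary coloured Bowen--Series map can be realised by the universal $e^k$-adic blown-up model while keeping all periodic multipliers equal to $e^k$, and that the multiplier of $g_a$ can be reconciled with $e^k$ via the locally conjugated linear germs. I would attack this by counting: primitive matrices with large powers allow splitting pieces into exactly $e^k$ subpieces in a colour-compatible way.
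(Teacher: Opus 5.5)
\textbf{There is a genuine gap, in the step you flag yourself: producing $\cF$.}

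The mechanism you sketch cannot work. You propose a blown-up piecewise-linear map all of whose branches have slope $e^k$, equivalently a re-encoding on $e^k$ symbols obtained by splitting pieces into exactly $e^k$ subpieces. There are three problems.
\begin{enumerate}
\item \emph{Entropy.} A topological conjugacy preserves entropy. The entropy of $\cG$ is $\log\lambda$, where $\lambda$ is the Perron eigenvalue of its Markov matrix, and refining the partition does not change $\lambda$. For a Schottky-type Bowen--Series map of a free group of rank $r$, $\lambda=2r-1$. On the other hand, a Markov map with all slopes $e^k$ has a repeller of Hausdorff dimension $\log\lambda/(k\log e)$. Its repeller is supposed to be $\mathcal{J}_d$, which has dimension $\log d/\log e$. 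This forces $\lambda=d^k$, which fails in general.
\item \emph{Multipliers.} Constant slope $e^k$ gives multiplier $e^{kp}$ at a boundary point of period $p$, not $e^k$. The theorem needs the first-return multiplier of \emph{every} periodic boundary point to be the same $e^k$, independently of its period. This uniformity is exactly what the proof of Theorem~\ref{thm:colors} uses to make the germs $\sigma_e^{-n}\circ\sigma_e^{k+n}$ coming from two different groups coincide.
\item \emph{Colours.} You do not address the colour rigidity of the model. The endpoints of a scaled $d$-adic interval always carry colours $j$ and $j+1 \bmod (d-1)$, so an arbitrary coloured Markov partition cannot be matched directly.
\end{enumerate}

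\textbf{The missing idea is to let the slopes vary and prescribe only the multipliers, cycle by cycle.}
\begin{enumerate}
\item Use primitivity to refine to a cyclically coloured partition (Lemma~\ref{lem:primitiverefinement}).
\item Realise that partition by a $d$-adic Markov map whose branches are $\sigma_d^{-n}\circ\sigma_d^{m}$ between scaled $d$-adic intervals with the same boundary colours. The slopes $d^{m-n}$ vary, and the map is topologically conjugate to the original by expansion (Lemma~\ref{lem:topologicalconj}).
\item For each periodic break-point, re-subdivide only the Markov interval adjacent to it. Use that the first piece of a scaled $d$-adic decomposition into $1+r(d-1)$ pieces can have any relative length $d^{-j}$, $1\le j\le r$ (Lemma~\ref{lem:primitivedyadicdecomp}). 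This yields multiplier exactly $d^k$ for every $k\ge K$, where $K$ is the largest $\log_d$ of the initial multipliers (Proposition~\ref{prop:qconj}). This, not passing to level-$k$ partitions, is where ``$k$ sufficiently large'' comes from.
\item Blow up by $\tau$. This turns $\sigma_d$ into $\sigma_e$ on $\mathcal{J}_d$, and $d^k$ into $e^k$.
\end{enumerate}

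\textbf{The difficulty you anticipate at $a$ is not one.} The multipliers of $\cG$ never need to match $e^k$: a conjugacy between two linear germs with different multipliers is a quasisymmetric power-type map. Both the quasisymmetry and the $\varepsilon$-local conjugacy come from extending the two blown-up maps to expanding coverings of $\SS^1$ that agree with them on a definite neighbourhood of the bridge intervals, and then applying the conformal elevator principle.

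You should not alter $\cG$ by a local quasiconformal change of coordinates near $a$. Theorem~\ref{thm:colors} needs $\cG^p$ to be the group element $g_a$ there.

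Starting from a Schottky-type fundamental domain instead of pinching to $\widecheck G$ is harmless.
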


\subsection{$d$-adic models for colored conformal Markov maps}\label{subsec:dadicmodel}

\subsection*{$d$-adic Markov maps}
Let $\sigma_d: \SS^1\cong \R/\Z \rightarrow \SS^1$ be the multiplication by $d$ map, i.e., $\sigma_d(x) = dx$ mod $1$.
Note that $\sigma_d$ has $d-1$ fixed points, $p_j:=\frac{j}{d-1}, j =0,..., d-2$. 
They induce a partition $\mathcal{A}:=\{[\frac{j}{d-1}, \frac{j+1}{d-1}]\}$ of $\SS^1$.
Let $F = \{0,..., d-2\}$. We define the coloring $\kappa:\mathrm{bk}(\mathcal{A}) \rightarrow F$ so that $\kappa(p_j) = j$.
We call the intervals of the pullbacks of this partition {\em scaled d-adic intervals}:
\begin{defn}[Scaled $d$-adic intervals]\label{defn:dadicint}
    Let $[s, t] \subseteq \mathbb{S}^1\cong \R/\Z$ be an interval. We say it is {\em scaled d-adic} if it is of the form $[\frac{j}{(d-1)d^n}, \frac{j+1}{(d-1)d^n}]$, where we allow the numerators and denominators to have a common factor. 
    The {\em depth} of a scaled $d$-adic  $I = [\frac{j}{(d-1)d^n}, \frac{j+1}{(d-1)d^n}]$ is $\dep(I) = n$.
    We define the coloring of a boundary point of a scaled $d$-adic interval by $\kappa(\frac{j}{(d-1)d^n}) := \kappa(\sigma_d^n(\frac{j}{(d-1)d^n}))$.
\end{defn}
Note that the boundary colors of a scaled $d$-adic interval is $j$ and $j+1 \mod d-1$.
We have the following lemma by construction.
\begin{lem}\label{lem:dadicdynamics}
    Let $I,J$ be scaled d-adic intervals with the same boundary colors. Let $m = \dep(I), n = \dep(J)$. Then there exists an inverse branch so that 
    $$
    \sigma_d^{-n}\circ \sigma_d^m: \Int(I) \rightarrow \Int(J)
    $$
    is a homeomorphism.
\end{lem}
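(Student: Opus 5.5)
The plan is to construct the branch explicitly from the affine structure of $\sigma_d$ on $\R/\Z$. Write $I=[\frac{i}{(d-1)d^m},\frac{i+1}{(d-1)d^m}]$ and $J=[\frac{j}{(d-1)d^n},\frac{j+1}{(d-1)d^n}]$.

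First I will verify that $\sigma_d^m$ maps $\Int(I)$ homeomorphically onto the open interval $(\frac{i}{d-1},\frac{i+1}{d-1})$ mod $1$. Indeed, on $\R$ the map $x\mapsto d^m x$ sends $I$ affinely onto $[\frac{i}{d-1},\frac{i+1}{d-1}]$, an interval of length $\frac1{d-1}\le 1$. Its interior therefore projects injectively to $\R/\Z$, where it is one of the level-$0$ intervals $(p_{i'},p_{i'+1})$ with $i'\equiv i \pmod{d-1}$. Here I use that $\frac{i}{d-1}$ mod $1$ equals $\frac{i \bmod (d-1)}{d-1}$. By Definition~\ref{defn:dadicint}, the colors of the endpoints of $I$ are the colors of the images $\frac{i}{d-1},\frac{i+1}{d-1}$, namely $i' $ and $i'+1 \bmod d-1$. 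The same computation for $J$ gives that $\sigma_d^n$ maps $\Int(J)$ homeomorphically onto $(p_{j'},p_{j'+1})$, with $j'\equiv j$ and boundary colors $j',j'+1$.

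Next I use the hypothesis that $I$ and $J$ have the same boundary colors. This gives $\{i',i'+1\}=\{j',j'+1\}$ mod $d-1$. For $d-1\ge 3$ this forces $i'=j'$. For $d-1=2$ the two open arcs $(p_0,p_1)$ and $(p_1,p_0)$ have the same color set, so I will read "same boundary colors" as including the orientation: left endpoint color to left endpoint color. Ordered colors give $i'=j'$ in all cases. In the degenerate case $d-1=1$ there is one arc, $\SS^1$ minus a point, and the claim is immediate.

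Then I define the inverse branch $\sigma_d^{-n}:(p_{j'},p_{j'+1})\to\Int(J)$ as the inverse of the homeomorphism $\sigma_d^n|_{\Int(J)}$, concretely $y\mapsto (y+k)/d^n$ for the appropriate integer lift $k$. Composing gives the required homeomorphism $\Int(I)\to\Int(J)$; in fact it is affine with slope $d^{m-n}$.

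The only real subtlety is the interpretation of "same boundary colors" when $d=3$, together with checking that the affine image of length $\frac1{d-1}$ does not wrap around the circle. The rest is routine.
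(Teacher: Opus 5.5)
Your proposal is correct and follows the paper's proof. Both arguments push $\Int(I)$ and $\Int(J)$ forward by $\sigma_d^m$ and $\sigma_d^n$ homeomorphically onto the same level-$0$ arc $(p_{j'},p_{j'+1})$ and then invert the second map. Your remark that for $d=3$ ``same boundary colors'' must be read with orientation is a genuine clarification of the paper's phrase ``the interval with the same boundary color''; without it, $I=[0,\frac12]$ and $J=[\frac12,1]$, both of depth $0$, would be a counterexample.
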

\begin{proof}
    Let $A:=[\frac{j}{d-1}, \frac{j+1}{d-1}] \in \mathcal{A}$ be the interval with the same boundary color as $I$ and $J$. Then $\sigma_d^m: \Int(I) \rightarrow \Int(A)$ and $\sigma_d^n: \Int(J) \rightarrow \Int(A)$ are homeomorphisms, and the lemma follows. 
\end{proof}

\begin{defn}
A partition $\mathcal{A}= \{A_1,..., A_m\}$ of $\SS^1$ is a {\em scaled $d$-adic partition} if each interval $A_i$ is a scaled $d$-adic interval.

A map of the form $\sigma_d^{-n} \circ \sigma_d^m$ is called a {\em $d$-adic linear map}. They are precisely maps of the form
$$
f(x) = d^ax+ b \mod 1, \text{ where } a \in \Z, b \in \Z[1/d].
$$

A conformal Markov map $\cF$ is called a {\em $d$-adic Markov map} if
\begin{itemize}
    \item $\mathcal{A}$ is a scaled $d$-adic partition; and
    \item each map is $d$-adic linear.
\end{itemize}
Note that a $d$-adic Markov map is $(d-1)$-colored.
\end{defn}

We record the following combinatorial lemma for scaled $d$-adic decompositions that builds on \cite[Lemma 4.27]{LMM26}.
\begin{lem}\label{lem:primitivedyadicdecomp}
    Let $A = [s,t]$ be a scaled $d$-adic interval. Suppose that it is decomposed into $M\geq 2$ scaled $d$-adic intervals $A = B_1 \cup B_2 \cup \ldots \cup B_M$.
    Then $M = 1+k(d-1)$ for some $k \geq 1$ and 
    $$
    |B_1|/|A| \in \left\{\frac{1}{d^{j}}: j = 1,\ldots, k\right\}.
    $$
    Moreover, for any $a \in \{\frac{1}{d^{j}}: j = 1,\ldots, k\}$, there is a scaled $d$-adic decomposition $A = B_1 \cup B_2 \cup \ldots \cup B_M$ with $|B_1|/|A| = a$.
\end{lem}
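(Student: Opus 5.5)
The plan is to pass to the model case by the affine normalization of the circle and then argue by induction on the number of pieces, tracking the tree structure of the decomposition. By an affine rescaling $x\mapsto d^{\dep(A)}x - c$ (a $d$-adic linear map), we may assume $A=[0,\frac{1}{d-1}]$ has depth $0$. Scaled $d$-adic subintervals of $A$ are then exactly the intervals $[\frac{j}{(d-1)d^n},\frac{j+1}{(d-1)d^n}]$ contained in $A$. These are the nodes of the rooted $d$-ary tree obtained by repeatedly splitting an interval of depth $n$ into its $d$ children of depth $n+1$. Any two scaled $d$-adic intervals are either nested or have disjoint interiors. This is the standard nested property of $d$-adic intervals, and it survives the common scaling by $\frac{1}{d-1}$.

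The key step is to show that a decomposition $A=B_1\cup\dots\cup B_M$ into scaled $d$-adic intervals corresponds to a finite complete subtree. The $B_i$ are its leaves, and each internal node has all $d$ children. We prove this by induction on $\max_i \dep(B_i)$. Take a $B_i$ of maximal depth $n\ge 1$ and its parent $P$ of depth $n-1$. The interval $P$ is the union of $d$ siblings of depth $n$. Each point of $P$ lies in some $B_j$, and $B_j$ is either contained in $P$ or contains $P$. The latter is excluded, since it would overlap $B_i$ with nonempty interior. So every $B_j$ meeting $\Int P$ lies in $P$ and has depth at least $n$, hence depth exactly $n$ by maximality. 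Therefore all $d$ siblings occur among the $B_j$. Replacing them by $P$ gives a decomposition with $M-(d-1)$ pieces.

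Iterating this replacement terminates at the trivial decomposition $\{A\}$. Hence $M=1+k(d-1)$, where $k\ge1$ is the number of internal nodes and $M\ge 2$ forces $k\ge 1$. For the ratio, $B_1$ is the leftmost leaf, so it is $[0,\frac{1}{(d-1)d^j}]$ with $j=\dep(B_1)\ge1$. The leftmost path from the root down to $B_1$ passes through $j$ internal nodes, namely the ancestors of depths $0,\dots,j-1$, all of which are distinct. Thus $j\le k$ and $|B_1|/|A|=d^{-j}$.

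For the converse, given $1\le j\le k$, we build the subtree explicitly. First split the leftmost nodes at depths $0,\dots,j-1$, which uses $j$ internal nodes and makes $B_1$ the leftmost child at depth $j$. Then split $k-j$ further times, each time a leaf that is not the leftmost one. A suitable leaf always exists, for instance the rightmost leaf, since $d\ge2$. This yields $M=1+k(d-1)$ pieces with $|B_1|/|A|=d^{-j}$.

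The main point to check is the maximal-depth sibling argument, which relies on the nested-or-disjoint property for scaled intervals. Everything else is counting.
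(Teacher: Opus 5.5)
Your proof is correct and follows the paper's argument: the paper simply asserts that every scaled $d$-adic decomposition of $A$ arises by repeatedly splitting one piece into its $d$ children, and reads off $M=1+k(d-1)$ and $|B_1|/|A|=d^{-j}$ with $j\le k$; your maximal-depth sibling collapse supplies the justification for that assertion that the paper leaves implicit. Two harmless slips: your argument is really an induction on $M$ (or a terminating process), not on the maximal depth; and the normalizing translation by $\frac{j}{d-1}$ is not $d$-adic linear in the paper's sense, since $\frac{j}{d-1}\notin\Z[1/d]$ in general, though it still preserves the family of scaled $d$-adic intervals, which is all you use.
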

\begin{proof}
    Note that any scaled $d$-adic decomposition of $A$ is constructed inductively as follows. First, subdividing $A$ into $d$ scaled $d$-adic intervals (of depth $\dep(A)+1$). Then choose one interval in this subdivision and subdivide it into $d$ scaled $d$-adic intervals of greater depth. Iterating this procedure shows that $M = 1+k(d-1)$ for some $k \geq 1$.
    The estimate for $|B_1|/|A|$ follows immediately; see Figure~\ref{fig:dyaint}.
\end{proof}
\begin{figure}[htp]
\captionsetup{width=0.96\linewidth}
\centering
\begin{tikzpicture}[x=5cm,y=5cm]
  \draw (0,0) -- (1,0);
  \fill (0,0) circle (0.006);
  \fill (1,0) circle (0.006);
  \foreach \x/\lab in {
      0/0,
      0.5/{\tfrac12},
      0.75/{\tfrac34},
      0.875/{\tfrac78},
      0.9375/{\tfrac{15}{16}},
      1/1
  }{
    \draw (\x,0) -- (\x,-0.02);
    \node[below] at (\x,-0.02) {$\lab$};
  }
  \draw (0,0) arc [start angle=180, end angle=0, radius=0.5];
  \draw (0,0) arc [start angle=180, end angle=0, radius=0.25];
  \draw (0.5,0) arc [start angle=180, end angle=0, radius=0.125];
  \draw (0.75,0) arc [start angle=180, end angle=0, radius=0.0625];
  \draw (0.875,0) arc [start angle=180, end angle=0, radius=0.0625/2];
  \draw (0.9375,0) arc [start angle=180, end angle=0, radius=0.0625/2];  
\end{tikzpicture}
\begin{tikzpicture}[x=5cm,y=5cm]
  \draw (0,0) -- (1,0);

  \fill (0,0) circle (0.006);
  \fill (1,0) circle (0.006);

  \foreach \x/\lab in {
      0/0,
      0.0625/{\tfrac{1}{16}},
      0.125/{\tfrac18},
      0.25/{\tfrac14},
      0.5/{\tfrac12},
      1/1
  }{
    \draw (\x,0) -- (\x,-0.02);
    \node[below] at (\x,-0.02) {$\lab$};
  }

  \draw (0,0) arc [start angle=180, end angle=0, radius=0.5];

  \draw (0,0) arc [start angle=180, end angle=0, radius=0.0625/2];
  
  \draw (0.0625,0) arc [start angle=180, end angle=0, radius=0.0625/2];

  \draw (0.125,0) arc [start angle=180, end angle=0, radius=0.0625];

  \draw (0.25,0) arc [start angle=180, end angle=0, radius=0.125];

  \draw (0.5,0) arc [start angle=180, end angle=0, radius=0.25]; 
\end{tikzpicture}
\caption{An illustration of Lemma~\ref{lem:primitivedyadicdecomp} for $d = 2$. The left and right figures give the two extreme configurations of $B_1,\ldots, B_5$ for the case $M = 5 = 1+4\times (2-1)$: $|B_1|/|A| = \frac12$ on the left and $|B_1|/|A| = \frac1{2^{4}}$ on the right.}
\label{fig:dyaint}
\end{figure}

\subsection*{Cyclically colored conformal Markov maps}
Let $\cF$ be a conformal Markov map. We assume we are given a set of colors on the set of break-points $\mathrm{bk}(\mathcal{A}) = \{a_0,..., a_{m-1}\}$, i.e., a map $\kappa: \mathrm{bk}(\cA)\to \{0,1,\ldots,d-2\}$, with $d-1\ge 1$.

We say $\mathcal{\cF}$ is a {\em $(d-1)$-cyclically colored conformal Markov map}  if we can label $\mathrm{bk}(\mathcal{A})$ counterclockwise 
so that $\kappa(a_i) = i\mod d-1$. Note that this implies $d-1 \mid m$. Moreover, each refinement $\cA^n$ is also cyclically colored by pullback.

\begin{lem}\label{lem:topologicalconj}
Let $\cF:\mathcal{A}^1 \rightarrow\mathcal{A}^0$ be a topologically expanding $(d-1)$-cyclically colored conformal Markov map. 
There is a $d$-adic Markov map $\cG: \mathcal{B}^1 \rightarrow \mathcal{B}^0$ that is topologically conjugate to $\cF$.
\end{lem}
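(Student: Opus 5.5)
The plan is to build a scaled $d$-adic partition $\mathcal{B}^0$ with the same combinatorics as $\mathcal{A}^0$, then a pulled-back partition $\mathcal{B}^1$ with the same combinatorics as $\mathcal{A}^1$, choosing $d$-adic linear branches that realize the Markov matrix of $\cF$. The conjugacy then comes from symbolic coding.

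First, choose $\mathcal{B}^0$. Write $m=q(d-1)$, so that $\mathrm{bk}(\mathcal{A})$ is cyclically colored with period $d-1$. Start from the standard partition $\{[\frac{j}{d-1},\frac{j+1}{d-1}]\}$, whose break-points $p_j$ are colored $j$. Subdivide one of its intervals by a scaled $d$-adic decomposition into $1+k(d-1)$ pieces, which is possible by Lemma~\ref{lem:primitivedyadicdecomp}. The colors of the break-points of a scaled $d$-adic decomposition run cyclically, since consecutive boundary colors of a scaled $d$-adic interval are $j,j+1 \bmod d-1$. Taking $k=q-1$ therefore gives exactly $m$ intervals $B_1,\dots,B_m$ whose break-points are cyclically colored in the same way as $a_0,\dots,a_{m-1}$. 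After a rotation of labels, $B_i$ and $A_i$ have the same boundary colors.

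Next, construct $\mathcal{B}^1$ and the branches. For each $i$, $f_i$ maps $A_i$ homeomorphically onto the union $A_{j_i}\cup\dots\cup A_{j_i+l_i}$. The corresponding union $B_{j_i}\cup\dots\cup B_{j_i+l_i}$ is a union of consecutive scaled $d$-adic intervals. Since the coloring is invariant, the endpoints of $A_i$ and of its image arc carry the same colors.

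The key step is to show that this union is itself a scaled $d$-adic interval $I$, or can be arranged to be one. If that holds, Lemma~\ref{lem:dadicdynamics} supplies a $d$-adic linear homeomorphism $g_i:\Int(B_i)\to\Int(I)$. The $g_i$-preimages of the pieces $B_j$ then give a scaled $d$-adic refinement $\mathcal{B}^1$ with the same combinatorics as $\mathcal{A}^1$.

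I expect this step to be the main obstacle, because an arbitrary union of consecutive scaled $d$-adic intervals need not be scaled $d$-adic. To handle it, I will refine $\mathcal{A}$ to some $\mathcal{A}^n$ (the conjugacy class is unchanged), and I will choose the subdivision of $\mathcal{B}^0$ more cleverly. The aim is a nested, tree-like structure in which every image arc is a node of the $d$-adic tree. Alternatively, I will allow $g_i$ to be a composition of finitely many inverse branches, so that its image interval fills the required arc piece by piece. The freedom in placing $B_1$ within $A$, given by the ``moreover'' part of Lemma~\ref{lem:primitivedyadicdecomp}, should be what makes the boundary matching work.

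Finally, define the conjugacy. Since $\cF$ is topologically expanding, each point corresponds to a nested itinerary of pieces $A_w$. Each $d$-adic branch expands by a factor $d^a$ with $a\ge1$ after iteration, so the nested pieces $B_w$ also shrink to points. The map $h(\bigcap_n A_{w_n})=\bigcap_n B_{w_n}$ is then a well-defined, order-preserving, continuous bijection, hence a homeomorphism. It satisfies $h\circ\cF=\cG\circ h$ because both maps act as the shift on itineraries.
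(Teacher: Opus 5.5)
Your construction of $\mathcal{B}^0$ and your closing coding argument are fine in outline. The gap is the step you yourself flag as the main obstacle: it is left open, and it cannot be closed in the form you set it up. If $\cG$ has a single $d$-adic linear branch on each $B_i$ with image $B_{j_i}\cup\dots\cup B_{j_i+l_i}$, then this union must have length equal to a power of $d$ times $|B_i|$. That can fail for every choice of $\mathcal{B}^0$.

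Here is an example. Take $d=2$ and let $A_i=[\frac{i-1}{3},\frac{i}{3}]$ in $\R/\Z$. Set $f_i(x)=2x-\frac{i-1}{3}$; these are restrictions of rotated squaring maps, so they are conformal and uniformly expanding, and $f_i(A_i)=A_i\cup A_{i+1}$. The constant coloring is trivially cyclic and invariant. A partition of the circle into three dyadic intervals must have lengths $\frac12,\frac14,\frac14$. Then two of the unions $B_i\cup B_{i+1}$ have length $\frac34$, and no $2$-adic linear map can produce that length from a piece of length $\frac12$ or $\frac14$. This holds in particular for any composition of branches of $\sigma_2^{\pm1}$. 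So your ``alternatively'' remedy fails, and the appeal to the freedom in Lemma~\ref{lem:primitivedyadicdecomp} is not an argument.

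The refinement idea you gesture at is the right one, and here is the precise form. Put the branches on level-one pieces and their targets on level-zero pieces; this is what the notation $\cG:\mathcal{B}^1\to\mathcal{B}^0$ allows.
\begin{enumerate}
\item After choosing $\mathcal{B}^0$ as you did, subdivide each $B_i$ into scaled $d$-adic intervals $B_{ij}$ matching $A_i=\bigcup_j A_{ij}$ in number and in colors. This is possible because $\mathcal{A}^1$ is again cyclically colored, so $A_i$ contains $1+k(d-1)$ pieces. Iterated $d$-fold subdivision, as in the proof of Lemma~\ref{lem:primitivedyadicdecomp}, realizes any such number with cyclically running colors.
\item If $\cF(A_{ij})=A_k$, color invariance shows that $B_{ij}$ and $B_k$ have the same boundary colors.
\item Lemma~\ref{lem:dadicdynamics} then gives a $d$-adic linear homeomorphism $B_{ij}\to B_k$.
\end{enumerate}
The target is always a single $\mathcal{B}^0$-interval, which is scaled $d$-adic by construction, so your obstacle never arises. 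Also, $\cF$ restricted to $\mathcal{A}^1$ is the same dynamics as on $\mathcal{A}^0$, so $\cG$ is combinatorially conjugate to $\cF$.

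Separately, your shrinking argument needs repair. A branch $\sigma_d^{-n}\circ\sigma_d^m$ with $n>m$ contracts, so ``each branch expands by $d^a$ with $a\ge1$'' is false. Argue instead as follows:
\begin{enumerate}
\item Every $B_w$ is a scaled $d$-adic interval, since these branches pull scaled $d$-adic subintervals back to scaled $d$-adic intervals.
\item Topological expansion of $\cF$ forces infinitely many strict inclusions along every nested itinerary.
\item A strict inclusion of scaled $d$-adic intervals divides length by at least $d$.
\end{enumerate}
With these two repairs your itinerary conjugacy goes through; the paper takes that final step from \cite[Proposition~2.7]{LMM26}.
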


\begin{proof}
    Since $\mathcal{A}^0 = \{A_1,..., A_m\}$ is cyclically colored and has $d-1$ colors, there is a scaled $d$-adic partition $\mathcal{B}^0 = \{B_1,..., B_m\}$ so that $B_i$ has the same boundary colors as $A_i$. Since $\mathcal{A}^1$ is a refinement of $\mathcal{A}^0$ and is cyclically colored, we can subdivide each $B_i$ into scaled $d$-adic intervals to match the subdivision of $A_i$. This gives a scaled $d$-adic partition $\mathcal{B}^1$. 
    Let $A_{ij} \in \mathcal{A}^1$ and let $A_k = \mathcal{F}(A_{ij}) \in \mathcal{A}^0$.
    By Lemma~\ref{lem:dadicdynamics}, there is $m, n$ so that $\sigma_d^{-n} \circ \sigma_d^m(B_{ij}) = B_k$. This defines a $d$-adic Markov map $\cG$.
    By construction, $\cG$ is combinatorially conjugate to $\cF$. Since $\cF$ is topologically expanding, every nested Markov pieces $A_w$, with $|w| \to \infty$ contains infinitely many different intervals. Since any scaled $d$-adic interval $J$ is properly contained in a scaled $d$-adic interval $I$, we have $|J| \leq |I|/d$, we conclude that $\cG$ is topological expanding as well. It now follows from  \cite[Proposition~2.7]{LMM26} that $\cG$ is topologically conjugate to $\cF$.
\end{proof}

Every primitive colored conformal Markov map admits a cyclically colored refinement, obtained by subdividing $\cA$ and restricting $\cF$  to each new subinterval.

\begin{lem}\label{lem:primitiverefinement}
    Let $\cF:\mathcal{A}^1 \rightarrow\mathcal{A}^0$ be a primitive colored conformal Markov map. Then there is a refinement 
    $\widehat{\mathcal{F}}: \widehat{\mathcal{A}}^{1} \rightarrow \widehat{\mathcal{A}}^0$ so that $\widehat{\mathcal{F}}$ is cyclically colored.
\end{lem}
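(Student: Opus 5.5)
The plan is to refine the partition $\cA$ so that, after relabeling counterclockwise, the break-points carry the colors $0,1,\ldots,d-2,0,1,\ldots$ in cyclic order. The only tool will be pulling back by iterates of $\cF$: refining an interval $A_i$ means replacing it by its level-$n$ pieces $A_w\subseteq A_i$. Restricting $f_i$ (or a composition) to each new piece again gives a conformal Markov map on the refined partition, and the pullback coloring is still invariant because $\kappa\circ\cF=\kappa$.

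First, I will show that the colors along the circle can be made to appear in any desired local pattern. By primitivity there is $k$ with $\cF^k(A)=\SS^1$ for every $A\in\cA^1$. Hence each level piece $A_w$ contains, in its level-$(|w|+k)$ subdivision, a homeomorphic pullback of the full circle partition $\cA^0$. Therefore every color of $F$ appears among the break-points in the interior of any $A_w$, and since $\kappa$ is onto, all $d-1$ colors occur there.

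The construction then goes interval by interval. Fix the endpoints of each $A_i$, with colors $c$ and $c'$. I want to choose a subset of the break-points of some deep refinement of $A_i$, namely the points $x_0=\partial^-A_i, x_1,\ldots,x_r=\partial^+A_i$, so that $\kappa(x_{j+1})=\kappa(x_j)+1 \bmod (d-1)$. The gaps $[x_j,x_{j+1}]$ must themselves be unions of level-$n$ pieces, and I also need a Markov structure on them. To get one, I take $\widehat{\cA}^0$ to consist of these gaps and define $\widehat{\cA}^1$ as the common refinement by pullback: the image of a gap under the appropriate iterate of $\cF$ must be a union of gaps.

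This compatibility is the main obstacle. Choosing break-points freely inside each $A_i$ need not be invariant under the dynamics. To handle it, I will take $\widehat{\cA}^0=\cA^N$ for large $N$, but merge consecutive level-$N$ pieces. The merging must be done so that the retained break-points are cyclically colored, and the merging pattern must be determined by the combinatorial type of the piece, that is, by its terminal letter and the boundary colors. This makes the merged partition pull back consistently under $\cF$, and an interval mapped onto a union of merged intervals stays Markov.

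The step I expect to be hardest is proving that such a greedy choice exists. Walking counterclockwise, from the current kept point of color $c$ I need a later break-point of color $c+1$. The previous paragraph gives every color inside each piece, so such a point exists. I also need to terminate exactly at the right endpoint with the correct color increment. I would handle this by choosing the last few points using a pulled-back copy of the full cyclic color sequence of $\cA^0$, available by primitivity, positioned next to the endpoint, and by adjusting the number of laps so that the count matches modulo $d-1$.

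Finally, I would check that the result is still a primitive conformal Markov map, which is inherited from $\cF$, and then apply Lemma~\ref{lem:topologicalconj} downstream.
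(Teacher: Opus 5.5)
\textbf{Where the proposal has a gap: dynamical compatibility.}
Your opening step agrees with the paper. Primitivity puts break-points of every color inside every piece. So inside each $A_i$, with endpoint colors $c,c'$, you can choose deep break-points with colors $c+1,\dots,c'-1$ (mod $d-1$) in order. This gives a cyclically colored $\widehat{\cA}^0$ refining $\cA^0$, with break-points in some $\mathrm{bk}(\cA^n)$. This is the easy part, and no greedy walk or lap count is needed for it.

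The gap is the step you yourself single out, dynamical compatibility, and the mechanism you propose does not close it. You require that the image of each gap be a union of gaps, i.e.\ that restricting $\cF$ to the pieces of $\widehat{\cA}^0$ gives a Markov map on $\widehat{\cA}^0$. For that, the retained break-point set $T=\mathrm{bk}(\widehat{\cA}^0)$ must satisfy $\cF(T)\subseteq T$.

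A merging rule that depends only on the terminal letter (and boundary colors) of a piece at a fixed depth is not automatically compatible with $\cF$, because $\cF$ lowers depth by one. Concretely, suppose inside each level-$M$ piece $A_w$ with $\cF^M(A_w)=A_j$ you retain $(\cF^M|_{A_w})^{-1}(T_j)$ for fixed templates $T_j\subseteq \Int A_j$. Then $\cF$ sends these points to the corresponding pullbacks inside a level-$(M-1)$ piece. They lie in $T$ if and only if $f_j(T_j)\subseteq \mathrm{bk}(\cA^0)\cup\bigcup_k T_k$ for all $j$. That is exactly the original forward-invariance problem for the templates, so the argument is circular.

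Using ``the appropriate iterate of $\cF$'' on different gaps does not repair this. It replaces $\cF$ by a different induced map, which is not a refinement obtained by restricting $\cF$. The conjugacy produced afterwards by Lemma~\ref{lem:topologicalconj} would then be for that induced map, not for $\cF$.

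\textbf{How the paper proceeds.}
The paper does not try to make a merged partition dynamically closed. It fixes a cyclically colored $\widehat{\cA}^0$ refining $\cA^0$, with break-points among those of $\cA^n$. It then defines the level-one partition by pullback, $\widehat{\cA}^1:=\cF^*(\widehat{\cA}^0)$, which refines $\cA^1$, and sets $\widehat{\cF}$ equal to $\cF$ on each piece of $\widehat{\cA}^1$. By construction, each piece of $\widehat{\cA}^1$ maps homeomorphically onto a piece of $\widehat{\cA}^0$. Moreover, $\widehat{\cA}^1$ is again cyclically colored: each branch is a color-preserving homeomorphism ($\kappa\circ\cF=\kappa$), so pulled-back pieces keep boundary colors $(c,c+1)$. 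This pullback definition is the idea missing from your plan.

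If you want the stronger property that $\widehat{\cA}^1$ also refines $\widehat{\cA}^0$, i.e.\ a conformal Markov map on $\widehat{\cA}^0$ in the sense of Definition~\ref{XCstrs}, you must actually prove something more. You need a finite set $T$ with $\mathrm{bk}(\cA^0)\subseteq T\subseteq \mathrm{bk}(\cF)$, $\cF(T)\subseteq T$, and consecutive colors increasing by one. Neither your greedy selection nor the type-based merging supplies it.
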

\begin{proof}
    Since $\cF$ is primitive, an interval $A \in \mathcal{A}^0$ contains points in $\mathrm{bk}(\cF)$ of any color. Thus, there is a refinement $\widehat{\mathcal{A}}^0 \subseteq \mathcal{A}^n$ of $\mathcal{A}^0$ which is cyclically colored. Let $\widehat{\mathcal{A}}^1 = \cF^*(\widehat{\mathcal{A}}^0) \subseteq \mathcal{A}^{n+1}$ be the pulled back. Then $\widehat{\mathcal{A}}^1$ is a refinement of $\mathcal{A}^1$. Define $\widehat{\mathcal{F}}|_A = \cF|_A$ for each $A \in \widehat{\mathcal{A}}^1$. Then $\widehat{\mathcal{F}}: \widehat{\mathcal{A}}^{1} \rightarrow \widehat{\mathcal{A}}^0$ is a cyclically colored refinement of $\cF$.
\end{proof}

\begin{cor}\label{cor:topprim}
    Let $\cF:\mathcal{A}^1 \rightarrow\mathcal{A}^0$ be a primitive $(d-1)$-colored conformal Markov map. Suppose $\cF$ is topologically expanding.
    Then there is a $d$-adic Markov map $\cG: \mathcal{B}^1 \rightarrow \mathcal{B}^0$ that is topologically conjugate to $\cF$ by a color-preserving
    map.
\end{cor}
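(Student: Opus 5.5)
The plan is to compose the two preceding lemmas. First, by Lemma~\ref{lem:primitiverefinement}, the primitive colored conformal Markov map $\cF:\mathcal{A}^1\to\mathcal{A}^0$ admits a refinement $\widehat{\cF}:\widehat{\mathcal{A}}^1\to\widehat{\mathcal{A}}^0$ that is cyclically colored. The refinement uses the same underlying maps as $\cF$, only restricted to smaller intervals. Its coloring is the restriction of the coloring $\kappa$ on $\mathrm{bk}(\cF)$ to $\mathrm{bk}(\widehat{\mathcal{A}}^0)\subseteq \mathrm{bk}(\mathcal{A}^n)$. So $\widehat{\cF}$ is still $(d-1)$-colored, and by construction it is $(d-1)$-cyclically colored.

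Second, I will check that $\widehat{\cF}$ is topologically expanding. Every level-$k$ piece of $\widehat{\cF}$ is a union of, or is contained in, a level-$(n+k)$ piece of $\cF$. Since $\widehat{\mathcal{A}}^0\subseteq \mathcal{A}^n$, the level-$k$ partition of $\widehat{\cF}$ is the pullback of $\widehat{\mathcal{A}}^0$ under $k$ iterates, and so it refines $\mathcal{A}^{k}$. A nested sequence of $\widehat{\cF}$-pieces therefore sits inside a nested sequence of $\cF$-pieces of unbounded level. Its intersection is a singleton because $\cF$ is topologically expanding.

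Third, I apply Lemma~\ref{lem:topologicalconj} to $\widehat{\cF}$. This gives a $d$-adic Markov map $\cG:\mathcal{B}^1\to\mathcal{B}^0$ and a topological conjugacy $\phi$ with $\phi\circ\widehat{\cF}=\cG\circ\phi$. Since $\widehat{\cF}$ and $\cF$ coincide as maps on $\SS^1$, $\phi$ also conjugates $\cF$ to $\cG$.

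Finally, I need $\phi$ to be color-preserving; I expect this to be the only delicate point. In the proof of Lemma~\ref{lem:topologicalconj}, $\mathcal{B}^0$ is chosen so that each $B_i$ has the same boundary colors as $A_i$. The refinements $\mathcal{B}^1$ are matched combinatorially, with colors propagated by pullback on both sides: colors are $\widehat{\cF}$-invariant on one side, and on the $d$-adic side $\kappa\circ\sigma_d=\kappa$ by Definition~\ref{defn:dadicint}. Hence $\phi$ sends $\mathrm{bk}(\widehat{\mathcal A}^n)$ to $\mathrm{bk}(\mathcal B^n)$ preserving colors at every level. Because $\phi$ is a combinatorial conjugacy, this holds on all of $\mathrm{bk}(\cF)=\bigcup_n\mathrm{bk}(\mathcal{A}^n)$.
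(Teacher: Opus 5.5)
Your proposal is correct and follows the paper's route exactly. The paper states this corollary without proof, as an immediate consequence of Lemma~\ref{lem:primitiverefinement}: pass to a cyclically colored refinement, which is the same map on $\SS^1$, and then apply Lemma~\ref{lem:topologicalconj}. Your additional checks are the details the paper leaves implicit: the refinement stays topologically expanding because its level-$k$ pieces lie in level-$k$ pieces of $\cF$, and the conjugacy is color-preserving because both colorings are invariant (under $\cF$ and under $\sigma_d$ respectively).
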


\subsection*{Prescribing multipliers for $d$-adic Markov maps}
Let $\mathcal{F}$ be a topologically expanding $d$-adic Markov map. 
Let $x\in\SS^1$ be a break-point for the dynamics of $\mathcal{F}$.
Denote by $x_n^\pm$ the right and left orbits of $x$, i.e., 
$
x_n^\pm = \lim_{\ep\to 0^+} \mathcal{F}^n(x \pm\ep).$
Since the partition is finite, we note that both right and left orbits are preperiodic. Note that they may eventually map to different periodic orbits.
Given a one-sided periodic break-point $x^{\pm}$ of period $p_\pm\ge 1$, define its multiplier by
$$
\mu_{\cF}(x^\pm) =  (\cF^{p_\pm})'(x^\pm) :=\lim_{\epsilon \to 0} \frac{|\mathcal{F}^{p_\pm}(x^\pm \pm \epsilon)- x^\pm|}{\epsilon}.
$$

In the following, we show that one has some flexibility in prescribing the multipliers at periodic break-points.

\begin{prop}[Prescribing multipliers]\label{prop:qconj}
    Let $\cF: \mathcal{A}^1\rightarrow \mathcal{A}^0$ be a topologically expanding $d$-adic Markov map. Let 
    $$
    K := \max\{\log_d \left(\mu_{\cF}(x^\pm)\right): x^\pm \text{ is a periodic break-point}\}.
    $$
    Then for each $k \geq K$, there is a topologically conjugate $d$-adic Markov map $\cG: \mathcal{B}^1 \rightarrow \mathcal{B}^0$
    defined on a refinement of $\cA^0$  so that for each periodic break-point $x \in \mathrm{bk}(\mathcal{B}^0)$, the multiplier satisfies
        $$
        \mu_{\cG}(x^\pm) = d^k.
        $$
\end{prop}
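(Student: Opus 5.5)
Since every map in a $d$-adic Markov map is $d$-adic linear, $\mathcal F$ is linear with slope a power of $d$ on each piece, so each one-sided periodic break-point $x^\pm$ already has multiplier $\mu_{\cF}(x^\pm)=d^{k_{x^\pm}}$ for some integer $1\le k_{x^\pm}\le K$. The plan is to refine the partition near each periodic break-point so that each return map gains exactly $k-k_{x^\pm}$ extra factors of $d$, without changing the topological dynamics on the repeller. The combinatorial freedom comes from Lemma~\ref{lem:primitivedyadicdecomp}. If a scaled $d$-adic interval is subdivided into scaled $d$-adic intervals, the first subinterval can have relative length $d^{-j}$ for any $j$ up to the number of subdivision steps. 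If we then map that small first piece linearly onto an interval of the target with the same boundary colors (Lemma~\ref{lem:dadicdynamics}), the slope at the endpoint increases by a factor $d^{j}$.

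The construction has four steps.

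First, we list the finitely many periodic one-sided break-point cycles $x^\pm$ of $\mathcal F$. For each cycle we choose one point $y$ of the cycle and the partition interval $A\in\cA^1$ having $y$ as an endpoint on the relevant side.

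Second, we replace $\cA^0$ by a refinement $\cB^0$ (taking deeper levels $\cA^n$ if needed) in which the interval $B_0\subset A$ adjacent to $y$ carries, under the old branch, an extra subdivision. We decompose the image $\mathcal F(B_0)$, or equivalently $B_0$, via Lemma~\ref{lem:primitivedyadicdecomp} so that the piece adjacent to $y$ has relative length $d^{-(k-k_{x^\pm})}$ inside a piece with the same colors. We then define the new branch of $\cG$ on that small piece to be the $d$-adic linear map $\sigma_d^{-n}\circ\sigma_d^m$ onto the same Markov image interval. This multiplies the slope at $y$ by $d^{k-k_{x^\pm}}$. On the remaining subintervals we define $\cG$ by $d$-adic linear maps with the same combinatorics as $\mathcal F$.

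Third, we check that the new map $\cG:\cB^1\to\cB^0$ is a $d$-adic Markov map with the same combinatorics (the same Markov matrix on the refined partition) as the corresponding refinement of $\mathcal F$. Because refining does not change the symbolic dynamics, and topological expansion persists since each scaled $d$-adic subinterval shrinks by at least a factor $d$, \cite[Proposition~2.7]{LMM26} (used as in Lemma~\ref{lem:topologicalconj}) gives a topological conjugacy between $\cG$ and $\mathcal F$.

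Fourth, we check that the multiplier along each cycle is now the product of the old slopes times the inserted factor, namely $d^{k}$. We also check that the newly created break-points are not periodic, or, if they are, that they lie in a cycle we have already treated.

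The main obstacle is the second step. The modification must be made compatibly: it must preserve the Markov property and the colors, and it must not create new periodic break-points with uncontrolled multipliers. It must also handle left and right sides independently, including when both sides of the same point, or several points of one cycle, interact. I would first try to place each modification on a sufficiently deep level $\cA^n$ near a single representative of each cycle, using primitivity and the cyclic coloring to find matching target intervals. I would then argue that the newly introduced break-points are preimages of old ones, so they are strictly preperiodic and do not form new periodic cycles.
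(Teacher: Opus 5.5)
\textbf{There is a genuine gap in Step~2.} Your outline follows the paper's strategy: one modification per periodic cycle via Lemma~\ref{lem:primitivedyadicdecomp}, new $d$-adic linear branches with the old combinatorics, and conjugacy from \cite[Proposition~2.7]{LMM26}. But Step~2, which you rightly call the crux, is missing the idea that makes it work, and the remedy you suggest would not supply it.

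\textbf{Why deeper uniform levels do not help.} To keep the combinatorics, you leave the range partition fixed. The redistribution must then happen inside one range interval $A$ adjacent to $y$. Its $M=1+r(d-1)$ domain pieces $B_1,\dots,B_M$, with $B_1$ touching $y$, are replaced by $M$ scaled $d$-adic pieces $\widehat B_1,\dots,\widehat B_M$, and $\cG|_{\widehat B_i}$ is the $d$-adic linear map onto $\cF(B_i)$. Colors then match automatically because of the cyclic coloring, so finding target intervals is not the difficulty. The difficulty is that Lemma~\ref{lem:primitivedyadicdecomp} only allows $|\widehat B_1|/|A|\ge d^{-r}$. You need $|\widehat B_1|/|A|=d^{-(j_0+k-k_{x^\pm})}$, where $d^{-j_0}=|B_1|/|A|$. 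So for large $k$ you need a range interval at the cycle that contains arbitrarily many domain pieces. Passing to a deeper uniform level $\cA^n$ never gives this. Indeed, $\cF^n$ is affine on each level-$n$ interval. It carries the level-$(n+1)$ pieces of the level-$n$ interval adjacent to $y$ onto the level-$1$ pieces of the level-$0$ interval adjacent to $\cF^n(y)$. Hence both $M$ and $|B_1|/|A|$ are exactly as at level $0$.

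\textbf{The missing ingredient: non-uniform refinement.} The paper keeps the intervals of $\cA^n$ having a periodic break-point as an endpoint coarse, and refines the other intervals $N$ further levels. After pulling back, the coarse interval $A=[s,t]$ is cut into $B_1\in\cA^{n+1}$ together with a number of deep pieces that tends to $\infty$ with $N$. Then $r$ is as large as needed, and the replacement above yields $\mu_{\cG}(s^+)=d^k$.

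\textbf{A compatibility point when you implement this.} The refinement depth cannot literally be uniform off the periodic-adjacent intervals. Suppose an interval contains a piece of $\cA^{n+1}$ that maps onto a coarse periodic-adjacent interval, for instance a piece next to a strictly preperiodic preimage of $s$. Such an interval may be refined by at most one level. Otherwise $\cF$ sends a deep domain piece onto a proper subinterval of the coarse range piece, and the Markov property fails. For example, with $\sigma_2$ the piece $[\tfrac12,\tfrac12+2^{-n-1-N}]$ maps onto a proper subinterval of $[0,2^{-n}]$.

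\textbf{How to fix the depth.} Give $I\in\cA^n$ depth $\min(N,\text{number of steps } I\to\cF(J)\ (J\subseteq I,\ J\in\cA^{n+1}) \text{ needed to reach a periodic-adjacent interval})$, and take $n$ large compared to $N$. This preserves the Markov property. The pieces of $A$ other than $B_1$ are still refined $N$ levels, because their images follow the itinerary of $\cF(s^+)$ for about $n$ symbols before deviating.
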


\begin{proof}
    To construct the modified refinement, we first consider $n$ sufficiently large so that each interval $I \in \mathcal{A}^n$ contains at least one interval $J\in \mathcal{A}^{n+1}$ whose two boundary points $\partial J$ are strictly preperiodic. 
    This is possible as $\cF$ is topologically expanding. Let $(\mathcal{A}^n)' \subseteq \mathcal{A}^n$ be the collection of intervals that contain a periodic boundary point. Let $J \in \mathcal{A}^n \setminus (\mathcal{A}^n)'$ and $N \in \N$. We denote by $\mathcal{A}^n_N(J)$ the collection of intervals in $\mathcal{A}^{n+N}$ that are contained in $J$. We define 
    $$
    \mathcal{B}^n_N:= (\mathcal{A}^n)' \cup \bigcup_{J \in \mathcal{A}^n \setminus (\mathcal{A}^n)'}\mathcal{A}^n_N(J).
    $$
    Note that $\mathcal{B}^n_N$ is a scaled $d$-adic partition and is a refinement of $\mathcal{A}^n$. 
    Since a Markov interval with a periodic boundary point in $\mathcal{A}^{n+1}$ is mapped by $\cF$ to a Markov interval with a periodic boundary point in $\mathcal{A}^{n}$, we conclude that
    $$
    \cF: \mathcal{B}^{n+1}_N \longrightarrow \mathcal{B}^n_N
    $$
    is a refinement of $\cF: \mathcal{A}^1\rightarrow \mathcal{A}^0$.

    Let $A = [s, t] \in (\mathcal{A}^n)' \subseteq \mathcal{B}^{n}_N$ be an interval such that its left endpoint $s$ is periodic. The case where the right endpoint is periodic is handled identically.
    Let $B = [s, t'] \in (\mathcal{A}^{n+1})' \subseteq \mathcal{B}^{n+1}_N$ so that $B \subseteq A$.
    Since $\cF$ is topologically expanding, the number of intervals of $\mathcal{B}^{n+1}_N$ that are contained in $A$ goes to infinity as $N \to \infty$.
    Let $A = B_1 \cup ...\cup B_M$ be a subdivision of $A$ into $M = 1+ r(d-1)$ scaled $d$-adic intervals with $B_i \in \mathcal{B}^{n+1}_N$ and $B_1 = B$.
    By Lemma~\ref{lem:primitivedyadicdecomp}, if $N$ is chosen sufficiently large, we can construct a scaled $d$-adic decomposition $A = \widehat B_1 \cup ...\cup \widehat B_M$ so that 
    $$
    |\widehat B_1|/|B_1| = \frac{|\widehat B_1|}{|A|} \cdot \frac{|A|}{|B_1|} = \frac{\mu_{\cF}(s^+)}{d^{k}}.
    $$
    Note that, after modifying the partition $\mathcal{B}^{n+1}_N$ so that $B_i$ is replaced by $\widehat{B}_i$, and redefine the map on $\widehat{B}_i$ so that $\cG|_{\widehat{B}_i}$ is the unique $d$-adic linear map between $\widehat{B}_i$ and $\cF(B_i)$, it follows that 
    $$
    \cG'(s^+) = \frac{d^{k}}{\mu_{\cF}(s^+)} \cF'(s^+).
    $$ 
    Thus, the multiplier $\mu_{\cG}(s^+)$ at $s^+$ is $d^k$. The proposition follows from \cite[Prop.\,2.7]{LMM26} after such modifications on each cycle of periodic break-points.
\end{proof}

\subsection*{Blown-up $d$-adic Markov maps}
We give an explicit construction of a blown-up Markov map in the $d$-adic setting.

Let $e = 2d-1$. Let $\Omega^0= \bigcup_{i=0}^{d-2} (\frac{2i}{2(d-1)}, \frac{2i+1}{2(d-1)})$, $\Omega^n:= \sigma_e^{-n}(\Omega^0)$ and $\Omega^\infty:= \bigcup \Omega^n$.
Note that the components of $\Omega^\infty$ are $(d-1)$-colored. More precisely, the component $I$ of $\Omega^\infty$ has color $i$ if it eventually maps to $(\frac{2i}{2(d-1)}, \frac{2i+1}{2(d-1)})$. 
Set $\mathcal{J}_d = \SS^1 \setminus \Omega^\infty$. Note $\mathcal{J}_d$ is a Cantor set invariant under $\sigma_e$. Moreover, $\sigma_e: \mathcal{J}_d \rightarrow \mathcal{J}_d$ is semiconjugate to $\sigma_d$ on $\SS^1$. More precisely, there exists a projection map $\tau: \SS^1 \rightarrow \SS^1$ so that 
$$
\tau\circ \sigma_e = \sigma_d \circ \tau
$$
and that sends the closure of each component of $\Omega^\infty$ to a point, i.e., $$\tau(\Omega^\infty) = \left\{\frac{j}{(d-1)d^n}, j\in \{0,..., (d-1)d^n-1\}, n \in \N\right\}.$$ 
Let $I$ be a nontrivial closed interval. We define the pullback as 
$$
\tau^*(I):= \overline{\tau^{-1}(\Int(I))}.
$$
It is easy to verify that if $I$ is a scaled $d$-adic interval of depth $n$, then $\tau^*(I)$ is a scaled $e$-adic interval of depth $n$.
Thus, given a partition $\widecheck\cA$ of $\SS^1$, we let $\cA= \cA_b\sqcup \cA_g$ denote its pullback under $\tau$ where 
$$\left\{\begin{array}{ll} \cA_b:=\{\tau^*(A): A \in \widecheck\cA\} & \hbox{defines the {\em bridge type intervals}}\\
\cA_g:=\{\tau^{-1}(a): a\in \partial A, A \in \widecheck\cA\}\ & \hbox{defines the {\em gap type intervals}}.\end{array}\right.$$
Note that a gap type interval is the closure of a component of $\Omega^\infty$.

\begin{defn}
Given a $d$-adic Markov map $\widecheck\cF$ defined on $\widecheck\cA$ with a semiconjugacy as above, we define the 
{\em blown-up $d$-adic Markov map} as the pullback 
$$
\cF := \tau^*(\widecheck \cF)\hbox{ defined on }\cA_b := \tau^*\widecheck \cA
$$
where for each interval $\tau^*(A) \in \cA_b$, 
$\cF|_{\tau^*(A)}$ is the unique $e$-adic linear map that sends $\tau^*(A)$ to $\tau^*(\widecheck \cF(A))$.
Since the pullback sends a scaled $d$-adic interval of depth $n$ to a scaled $e$-adic interval of depth $n$, if $\widecheck \cF|_A = \sigma
_d^{-n} \circ \sigma_d^m$, then $\cF|_{\tau^*(A)} = \sigma
_e^{-n} \circ \sigma_e^m$, for an appropriate inverse branch.
\end{defn}
Note that a {\em blown-up $d$-adic Markov map} is a blown-up conformal Markov map in the sense of Definition~\ref{def:blowup} and that 
the repeller of any blown-up $d$-adic Markov map is $\mathcal{J}_d$.

\subsection{Bowen-Series maps}\label{subsec:bsm}
In this subsection, we associate a special conformal Markov map for a convex cocompact Fuchsian group $G$ of the second kind, and show that it is quasisymmetrically conjugate to a $d$-adic model.

We say $G$ has type $(g, n)$ if the corresponding hyperbolic surface has genus $g$ and $n$ nontrivial boundary components.
Let $\widecheck{G}$ be the geometrically finite Fuchsian group obtained from $G$ by pinching all peripheral curves to cusps. Then the corresponding surface $\widecheck S$ is a hyperbolic surface of finite type of genus $g$ with $n$ punctures. 
Fix an isomorphism $\Phi: G \rightarrow \widecheck{G}$.
There is a projection map $\pi: \SS^1 \rightarrow \SS^1$ that sends the closure of each component of $\Omega(G)$ to a point and semiconjugates the action of $G$ with $\widecheck{G}$.

There are two main objects in this section.
\begin{enumerate}
    \item We first construct a {\em Bowen-Series map} $\widecheck\cG$ for $\widecheck{G}$, which is a conformal Markov map on $\SS^1$; and
    \item we lift it via the semiconjugacy $\pi$ to a {\em blown-up Bowen-Series map} $\cG$ for $G$, which is a blown-up conformal Markov map in the sense of Definition~\ref{def:blowup}.
\end{enumerate}

\begin{figure}[ht]
\captionsetup{width=0.96\linewidth}
  \centering
  \includegraphics[width=0.49\textwidth]{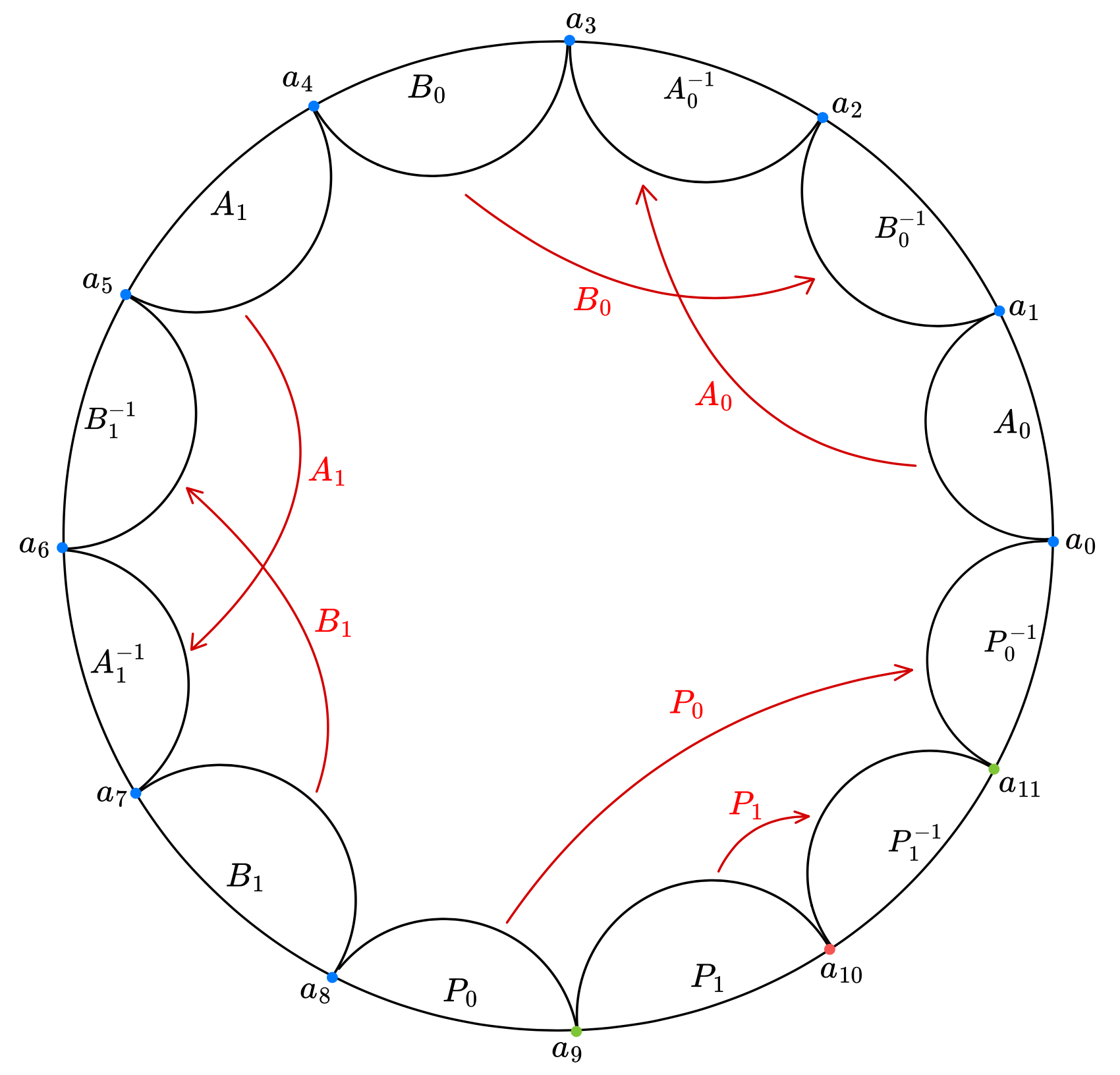}
  \includegraphics[width=0.49\textwidth]{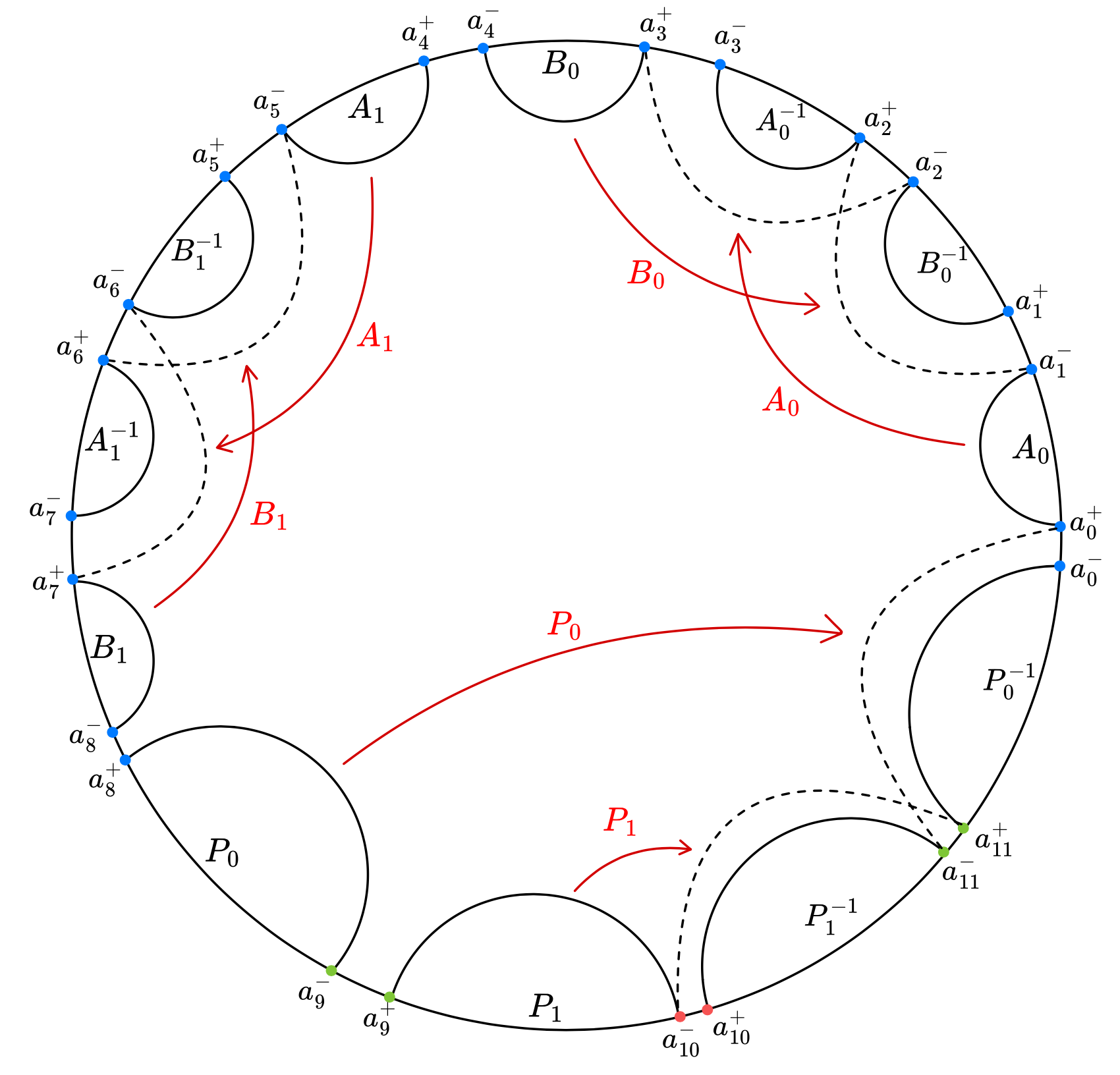}
  \caption{Left: The Bowen-Series map for the geometrically finite Fuchsian group $\widecheck G$ of the first kind with $g = 2, n = 3$. Right: The corresponding blown-up Bowen-Series map $G$ for the convex cocompact Fuchsian group of second kind with $g = 2, n = 3$. 
  Note that under $A_0$, the geodesic $\arc{a_{0}^{+},a_{1}^{-}}$ is mapped to the dashed geodesic   $\arc{a_{3}^+,a_{2}^-}$ and similarly for other generators.
  }
  \label{fig:BS}
\end{figure}

\subsection*{Bowen-Series maps for punctured surfaces} 
We follow the construction  of Bowen--Series maps  given in \cite[\S5.2]{LMM26} that we specialize to our setting.

Given a genus $g\ge 0$ closed surface $\Sigma$ with $n\ge 1$ marked points $\{p_1,\ldots, p_n\}$ subject to $2g-2+n>0$, we consider a filling 
embedded graph $\Gamma\subset \Sigma$ that consists of $g$ pairs of non-planar loops based at $p_1$ that do not separate the other
marked points to which we add segments $[p_j,p_{j+1}]$, $1\le j\le n-1$. Deleting the marked points provides us with a free presentation
$\widecheck G = \langle A_i, B_i, 0\le i\le g-1, P_j, 0\le j \le n-2\rangle$ with the standard conventions. The generators of the $n$ punctures are 
$P_jP_{j+1}^{-1}$, $0\le j\le n-3$, $P_{n-2}$ and $\prod[A_i,B_i]P_0$, up to inverses.

This defines a fundamental domain  $\widecheck \cP$ for $\widecheck G$ as follows (see Figure~\ref{fig:BS}, left). For $x,y\in\mathbb{S}^1$, we denote the hyperbolic geodesic in $\D$ connecting $x, y$ by $\arc{x,y}$.

\begin{itemize}[leftmargin=8mm]
    \item $\widecheck \cP$ is an ideal hyperbolic $N-$gon in $\D$, where $N = 4g+2(n-1)$.
    \item Denote the vertices of $\cP$ by $a_0,\ldots, a_{N-1} \in \mathbb{S}^1$ in the counter-clockwise order. Then each edge $\arc{a_i,a_{i+1}}, \ i\in\{0,\ldots, N-1\}$, is a hyperbolic geodesic in $\D$ (with indices mod $N$).
    \item ($g$ handles) There exist $2g$ loxodromic elements $A_i, B_i \in G,\  i\in\{0,\ldots, g-1\}$, such that for $i\in\{0,\cdots,g-1\}$, 
    $$
    A_i(\arc{a_{4i},a_{4i+1}}) = \arc{a_{4i+3}, a_{4i+2}}, \text{ and } B_i(\arc{a_{4i+3}, a_{4i+4}}) = \arc{a_{4i+2}, a_{4i+1}}.
    $$
    \item ($n$ punctures) There exist $n-1$ elements $P_0, \ldots, P_{n-2} \in G$ such that
    $$
    P_i(\arc{a_{4g+i}, a_{4g+i+1}}) = \arc{a_{4g+2(n-1)-i}, a_{4g+2(n-1)-i-1}}.
    $$
\end{itemize}
The vertex set induces a partition $\widecheck\cA = \{[a_i,a_{i+1}]:i\in\{0,\ldots, N-1\}\}$  of $\mathbb{S}^1$.
We define the {\em Bowen-Series map} $$\widecheck \cG= \widecheck{\cG}_{\widecheck G, \widecheck\cP}$$ associated to  $\widecheck \cP$ as the following conformal Markov map with respect to  $\widecheck\cA$:
\begin{itemize}
    \item for $i\in\{0,\ldots, g-1\}$, 
    \begin{align*}
        \widecheck{\cG}|_{[a_{4i},a_{4i+1}]} &= A_i, &
        \widecheck{\cG}|_{[a_{4i+1},a_{4i+2}]} &= B_i^{-1}, &
        \widecheck{\cG}|_{[a_{4i+2},a_{4i+3}]} &= A_i^{-1}, &
        \widecheck{\cG}|_{[a_{4i+3},a_{4i+4}]} &= B_i.
        \end{align*}
    \item For $i\in\{0,\ldots, n-2\}$,
   $$ 
        \widecheck{\cG}|_{[a_{4g+i},a_{4g+i+1}]} = P_i\quad\hbox{ and }\quad
        \widecheck{\cG}|_{[a_{4g+2(n-1)-i-1},a_{4g+2(n-1)-i}]} = P_i^{-1}.
$$ 
\end{itemize}

Note that restrictions of the parabolic generators appear as iterates of $\cG$ and they account for the $2N$ periodic one-sided break-points for $\widecheck \cG$ (two fixed points for $P^{\pm 1}_{n-2}$, $2(n-2)$  cycles of length $2$ and two  $(4g+1)$-cycles).

The coloring of the group $G$ induces a coloring of the break-points. Thus $\widecheck{\cG}: \widecheck\cA^1 \rightarrow \widecheck\cA^0 = \widecheck\cA$ is a $(d-1)$-colored conformal Markov map on $\SS^1$.
It follows from the construction that we have the following.
\begin{lem}\label{lem:symFs}
    The Bowen-Series Markov map $\widecheck{\cG} = \widecheck{\cG}_{\widecheck G, \widecheck\cP}$ is a topologically expanding, primitive, $(d-1)$-colored conformal Markov map on $\SS^1$.
\end{lem}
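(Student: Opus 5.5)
We have to show that the Bowen--Series map $\widecheck{\cG}$ has three properties: it is topologically expanding, it is primitive, and it is a $(d-1)$-colored conformal Markov map. I would check these in the reverse order, since the last two are the real content.

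\emph{Colored conformal Markov map.} Each branch is the restriction of a M\"obius element of $\widecheck G$, so it extends conformally to a neighborhood of its interval. The side-pairing relations show that each branch sends $[a_i,a_{i+1}]$ homeomorphically onto the closure of $\SS^1$ minus the arc cut off by the paired side. That image is a union of consecutive intervals of $\widecheck\cA$, which gives the Markov property. Every vertex $a_i$ is a parabolic fixed point of $\widecheck G$, i.e.\ a cusp, and the coloring of $G$ is $G$-invariant and pulls back to cusps through $\pi$ and $\Phi$. Hence $\kappa\circ\widecheck\cG=\kappa$ on break-points, because $\widecheck\cG$ acts by group elements. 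Surjectivity of the coloring holds because every cusp class appears among the vertices.

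\emph{Primitivity.} From the explicit gluing, each branch image $\widecheck\cG([a_i,a_{i+1}])$ omits only one interval of $\widecheck\cA$, namely the one it is paired with. So the image covers at least $N-1\geq 2$ intervals, and in particular contains an interval adjacent to the omitted one. Applying $\widecheck\cG$ twice, the images of those neighbors omit different intervals, so their union is all of $\SS^1$. Hence $M^2$ (or $M^3$ in small cases, e.g.\ $N=2$ with $g=0,n=2$ excluded by $2g-2+n>0$) is positive, and the matrix is primitive.

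\emph{Topological expansion.} This is the main obstacle, because the parabolic break-points make $\widecheck\cG$ non-uniformly expanding. I would use the standard Bowen--Series argument. A nested sequence of Markov pieces $A_{w_n}$ corresponds to a sequence of group words $g_{w_n}^{-1}$ whose translates of the fundamental domain $\widecheck\cP$ run along a path in the tessellation. Distinct pieces correspond to distinct tiles, and the boundary arc of a tile far from the origin has small Euclidean length. The exception is a sequence of tiles winding forever around a single cusp, i.e.\ a fixed parabolic word being iterated. Even then, the pieces shrink to the parabolic fixed point, since $P^{n}$ contracts every compact subset of $\SS^1$ minus the fixed point toward it. By local finiteness of the tessellation and the fact that $\widecheck G$ is a lattice with no free sides, $\bigcap_n A_{w_n}$ is therefore a single point in both cases.
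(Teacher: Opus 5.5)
Your proposal is correct and amounts to the routine verification that the paper leaves implicit when it says the lemma ``follows from the construction'': branches are side-pairing M\"obius maps whose images omit exactly the paired interval, primitivity follows from the fixed-point-free side pairing, and the $\widecheck G$-invariant cusp coloring is transferred from $G$ through $\pi$ and $\Phi$. Two small clean-ups. First, $N=4g+2(n-1)\ge 4$ always holds, so $M^2>0$ and there are no small cases. Second, the cusp case in the expansion argument is not really exceptional: nested Markov pieces strictly decrease, since each piece splits into $N-1\ge 3$ subpieces, and they are bounded by pairwise distinct sides of the locally finite tessellation, so these sides leave every compact set and their Euclidean diameters tend to zero.
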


With Proposition~\ref{prop:qconj} and Corollary \ref{cor:topprim}, we obtain

\begin{cor}[Topological $d$-adic models for colored Bowen-Series maps]\label{prop:topconjugate}
For each $k$ sufficiently large, there exists a $d$-adic Markov map $\widecheck \cF: \widecheck{\cB}^1 \rightarrow \widecheck{\cB}^0$ so that
    \begin{itemize}
        \item $\widecheck\cF$ is topologically conjugate to $\widecheck\cG$ by a color-preserving homeomorphism;
        \item for each periodic break-point $x$, the multiplier satisfies
        $$
        \mu_{\widecheck\cF}(x^\pm) = d^k.
        $$
    \end{itemize}
  \end{cor}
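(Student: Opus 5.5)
The plan is to chain Lemma~\ref{lem:symFs}, Corollary~\ref{cor:topprim} and Proposition~\ref{prop:qconj}, and then check that the multiplier normalization survives. First, by Lemma~\ref{lem:symFs} the Bowen--Series map $\widecheck\cG$ is a topologically expanding, primitive, $(d-1)$-colored conformal Markov map. Corollary~\ref{cor:topprim} then gives a $d$-adic Markov map $\widecheck\cF_0$ that is topologically conjugate to $\widecheck\cG$ by a color-preserving homeomorphism $\phi_0$. This map is defined on a cyclically colored refinement of $\widecheck\cA$, obtained as in Lemma~\ref{lem:primitiverefinement}, and that refinement does not change the map as a dynamical system.

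Second, apply Proposition~\ref{prop:qconj} to $\widecheck\cF_0$. Set $K$ to be the largest value of $\log_d$ of the multipliers at the periodic break-points of $\widecheck\cF_0$. This is finite, because there are finitely many periodic break-point cycles: every break-point orbit is preperiodic and the partition is finite. For every $k\ge K$ the proposition produces a $d$-adic Markov map $\widecheck\cF$ on a refinement, topologically conjugate to $\widecheck\cF_0$, with all periodic break-point multipliers equal to $d^k$. The composition of the two conjugacies conjugates $\widecheck\cG$ to $\widecheck\cF$, which gives the first bullet.

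Third, I must check that the final conjugacy is color-preserving, since Proposition~\ref{prop:qconj} does not state this. Colors are defined by $\kappa\circ\cF=\kappa$, so a break-point's color is determined by where its orbit lands in the level-$0$ partition. The conjugacy in Proposition~\ref{prop:qconj} comes from \cite[Prop.\,2.7]{LMM26} applied to combinatorially equivalent Markov maps. It therefore sends each level-$n$ Markov interval to the corresponding one, hence break-points to break-points with the same itinerary. In addition, the modified partition only changes the lengths $|\widehat B_i|$ and not the combinatorics. So the coloring is transported.

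The main obstacle I expect is this color bookkeeping together with the refinement. In the refined map, the periodic break-points are the old periodic break-points, and refinement can introduce new break-points, which are preimages. I must confirm that these new break-points are strictly preperiodic, so the multiplier condition only concerns the original cycles. Preperiodic break-points carry no condition in the statement. The choice of $n$ in the proof of Proposition~\ref{prop:qconj}, where strictly preperiodic intervals sit inside every piece, is exactly what ensures this.
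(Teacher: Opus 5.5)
Your proposal is correct and matches the paper's argument, which simply chains Lemma~\ref{lem:symFs}, Corollary~\ref{cor:topprim} and Proposition~\ref{prop:qconj}. Your extra check that the conjugacy of Proposition~\ref{prop:qconj} preserves colors fills in a point the paper leaves implicit. Your final worry is unnecessary: Proposition~\ref{prop:qconj} already prescribes the multiplier at every periodic break-point of $\mathcal{B}^0$. In any case, a refinement cannot create new periodic break-points because $\mathrm{bk}(\mathcal{A}^0)$ is forward invariant under a Markov map; this has nothing to do with the choice of $n$ in that proof.
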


\subsection*{Blown-up Bowen-Series maps} We keep the same notation.
We now lift the Bowen-Series map of $\widecheck G$ to $G$.
Let $I$ be a nontrivial closed interval of $\SS^1$. 
Recall that $\pi: \SS^1 \rightarrow \SS^1$ is the semiconjugacy between $G$ and $\widecheck{G}$.
We define the pullback of $I$ as 
$$
\pi^*(I) = \overline{\pi^{-1}(\Int(I))}.
$$
Thus, we may pullback the partition $\widecheck \cA$ and obtain a partition of $\SS^1$ for $G$:
$$
\cA = \{\pi^*(A): A \in \widecheck\cA\} \sqcup \{\pi^{-1}(a): a\in \partial A, A \in \widecheck\cA\}.
$$
We denote the vertices by $a_i^\pm, i = 0,..., N-1$, where $N = 4g+2(n-1)$, so that
$$
\pi^*([a_i, a_{i+1}]) = [a_i^+, a_{i+1}^-], \text{ and } \pi^{-1}(a_i) = [a_i^-, a_{i}^+].
$$
Similar to the case of blown-up $d$-adic intervals, there are two types of intervals:
\begin{itemize}
    \item $A = [a_i^+, a_{i+1}^-]$ is called {\em bridge type}; and 
    \item $A = [a_i^-, a_{i}^+]$ is called {\em gap type}.
\end{itemize}
Note that each gap type interval is the closure of a component of $\Omega(G) = \SS^1 \setminus \Lambda(G)$.

We define the {\em blown-up Bowen-Series map} 
$\cG:= \pi^*(\widecheck \cG)$ as the pullback of $\widecheck \cG$, which is defined on all bridge type intervals. For each $A \in \widecheck\cA$, set
$$
\cG|_{\pi^*(A)} = \Phi^{-1}(\widecheck \cG|_{A}): \pi^*(A) \rightarrow \pi^*(\widecheck \cG(A)),
$$
where $\Phi: G \rightarrow \widecheck G$ is the isomorphism.
More explicitly, this means that $\cG|_{[a_{i}^+,a_{i+1}^-]}$ has the following form (see Figure~\ref{fig:BS}, right):
\begin{itemize}
    \item For $i\in\{0,\ldots, g-1\}$, 
    \begin{align*}
        \mathcal{G}|_{[a_{4i}^+,a_{4i+1}^-]} &= A_i, &
        \mathcal{G}|_{[a_{4i+1}^+, a_{4i+2}^-]} &= B_i^{-1}, &
        \mathcal{G}|_{[a_{4i+2}^+,a_{4i+3}^-]} &= A_i^{-1}, &
        \mathcal{G}|_{[a_{4i+3}^+, a_{4i+4}^-]} &= B_i.
        \end{align*}
    \item For $i\in\{0,\ldots, n-2\}$,
    \begin{align*}
        \mathcal{G}|_{[a_{4g+i}^+, a_{4g+i+1}^-]} &= P_i, &
        \mathcal{G}|_{[a_{4g+2(n-1)-i-1}^+, a_{4g+2(n-1)-i}^-]} &= P_i^{-1}.
    \end{align*}
\end{itemize}

Observe that the repeller for $\cG$ is exactly $\Lambda(G)$ and restrictions of the generators of the gap type intervals appear as iterates of 
$\cG$ and account for the boundary periodic points as above. 

\subsection*{Quasisymmetric models for Bowen-Series maps} We may now assign a $d$-adic Markov map to any colored convex cocompact Fuchsian group of the second kind that only depends on the number of colors.

\begin{proof}[Proof of Theorem \ref{thm:qsmodel.n}]
    For $k$ sufficiently large, by Corollary~\ref{prop:topconjugate}, there exists a $d$-adic Markov map $\widecheck\cF$ topologically conjugate to $\widecheck\cG$ and the multipliers of the periodic break-points are $d^k$. Let $\cF = \tau^*(\widecheck\cF)$ be the corresponding blown-up $d$-adic Markov map.    
    Since the topological conjugacy between $\widecheck\cF$ and  $\widecheck\cG$ preserves the coloring, so does the conjugacy between $\cF$ and $\cG$. We may extend this conjugacy to a neighborhood of the bridge intervals.
 
 As both blown-up Markov maps are expanding and conformal in a neighborhood  of their repeller, the topological  conjugacy is quasisymmetric by Sullivan's  conformal elevator principle.
 One way to do this is to extend beyond a definite neighborhood of  the bridge intervals both $\cF$ and $\cG$ as smooth expanding covering maps of the same degree on $\SS^1$, 
 and apply \cite[Theorem 2.8.2]{kmp:ph:cxci}.  The existence of $\varepsilon>0$ comes from the definite neighborhood where the coverings coincide with the blown-up Markov maps.
\end{proof}

\subsection{Proof of Theorem~\ref{thm:colors}}
    We assume that $|F| = d-1$.
    We may choose the fundamental polygons $\widecheck{P}_1$, $\widecheck{P}_2$ for the pinched Bowen-Series maps 
    so that $J_1, J_2$ correspond to a gap type interval of the level $0$ partition.
    Let $\cG_1,  \cG_2$ be blown-up Bowen-Series maps for $G_1, G_2$ respectively. By Theorem~\ref{thm:qsmodel.n}, there exists $k$ such that $\cG_1, \cG_2$ are quasisymmetrically conjugate by $h_1, h_2$ to  blown-up $d$-adic Markov maps $\cF_1, \cF_2$ respectively so that the multiplier of each periodic boundary point is $e^k$, where $e = 2d-1$.
    Note that Theorem~\ref{thm:qsmodel.n} also identifies the images as $h_1(\Lambda(G_1)) = h_2(\Lambda(G_2)) = \mathcal{J}_d$.
    We may also assume that $J_1, J_2$ are mapped to the same gap type interval under $h_1, h_2$ respectively. 
    Then $h = h_2^{-1} \circ h_1$ is a color-preserving quasisymmetric map with $h(J^1) = J^2$.

We now prove the local conjugacy property. 
 Let $J$ be a component of $\Omega(G_1)$ and let $a \in \partial J$.
Then $a$ is preperiodic under $\cG_1$. Denote the preperiod and period of $a$ by $l$ and $q$. 
 Let $\widetilde a:= \cG_1^{l}(a)$ and $\widetilde{J}$ be the component of $\Omega(G_1)$ with $\widetilde{a} \in \partial \widetilde{J}$.
By construction, $\cG_1^q$ coincides with a generator $\tilde{g}_1\in G_1$ of $\stab_{G_1}(\tilde{J})$ in a neighborhood of $\tilde{a}$. 
By Theorem \ref{thm:qsmodel.n}, $h_1$ conjugates $\tilde{g}_1$ to $\cF_1^q$ in a definite neighborhood $V_{\tilde a}$ of $\tilde a$. Note that
$ \cF_1^q = \sigma_e^{-\tilde n_1}\circ \sigma_e^{k+\tilde n_1}$ on $h_1(V_{\tilde a})$, for some suitable inverse branch $\sigma_e^{-\tilde n_1}$. 
Note that $\cG_1^l$ coincides with an element of $G_1$ (possibly equal to $\id$ if $l = 0$) that conjugates $\tilde{g_1}$ to some generator $g_1$ of
$\stab_{G_1}(J)$ on the neighborhood $V_a= \cG_1^{-l}(V_{\tilde a})$. The Koebe distortion theorem implies that $V_a$ has definite size with respect to $J$.
Conjugating also $ \cF_1^q$ by $\cF_1^l$, we can conclude that $h_1|_{V_a}$ conjugates $g_1$ to some $f_a:= \sigma_e^{- n_1}\circ\sigma_e^{k+ n_1}$ on 
$h_1(V_{a})$, for some suitable inverse branch $\sigma_e^{-n_1}$ so that $f_a(h_1(a)) = h_1(a)$.

Similarly, $h(a)$ is preperiodic for $\cG_2$, so the same argument shows the existence of a definite neighborhood $V_{h(a)}$ with respect to
$h(J)$ on which
a generator $g_2$ of $\stab_{G_2}(h(J))$ is conjugate by $h_2$ to a map of the form $f_{h(a)}:=\sigma_e^{-n_2}\circ \sigma_e^{k+n_2}$ on $h_2(V_{h(a)})$ with $f_{h(a)}(h_2(h(a))) = h_2(h(a))$.
As $h_2(h(a))= h_1(a)$, it follows that $f_{h(a)}=f_a$ on the intersection, so $h$ conjugates $g_1$ to $g_2$ on $W_a :=V_a\cap h^{-1}(V_{h(a)})$. Since $h$ is quasisymmetric, 
it follows that $W_a$ also has definite size with respect to $J$. This proves the theorem.
\qed

\section{Universality}\label{sec:uni}
In this section, we give sufficient conditions for quasisymmetric and quasiconformal universality; see Theorem~\ref{thm:qsclassG} and Theorem~\ref{thm:qcext}, together with their immediate corollaries.

\subsection{Quasisymmetric universality}\label{sec:gluingqs}
In this subsection, we show that carpet-free limit sets without non-homogeneous rank-two cut points are quasisymmetrically universal.

Let $a$ be a rank-two cut point and let $\mathcal{C}_a \subseteq \Lambda(G)$ be the component that contains $a$.
The planar embedding of $\Lambda(G)$ in $\widehat\C$ gives an identification $\pi_0(\mathcal{C}^*_a) \cong \Z$, where $\mathcal{C}^*_a := \mathcal{C}_a\setminus \{a\}$.
Recall that a homeomorphism $h: (\Lambda(G), \mathfrak{P}(G)) \rightarrow (\Lambda(G'), \mathfrak{P}(G'))$ is {\em coarsely order-respecting} at $a$ if the induced bijection $h_*: \pi_0(\mathcal{C}^*_a)\rightarrow \pi_0(\mathcal{C}^*_{h(a)})$ has the form $h_*(n) = \epsilon n + c + O(1)$, where $\epsilon = \pm 1$.

\begin{theorem}\label{thm:qsclassG}
    Let $G, G'$ be geometrically finite Kleinian groups, with $\Lambda(G), \Lambda(G')\neq \widehat\C$.
    Suppose there is a homeomorphism 
    $$
    \Phi\colon (\Lambda(G), \mathfrak{P}(G)) \rightarrow (\Lambda(G'), \mathfrak{P}(G')),
    $$
    such that $\Phi$ is quasisymmetric on each dynamical carpet-quotient and is coarsely order-respecting at rank-two cut points. 
    Then $G, G'$ are quasi-isometric relative to parabolic subgroups.
    
    In particular, $(\Lambda(G), \mathfrak{P}(G))$ and $(\Lambda(G'), \mathfrak{P}(G'))$ are quasisymmetric.
\end{theorem}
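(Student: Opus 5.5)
We argue by induction on the height $\height(G)$ of the hierarchy of \S~\ref{subsec:sa}, building a quasi-isometry relative to parabolic subgroups between the Groves--Manning cusped spaces. At every stage the quasi-isometry is assembled from quasi-isometries between vertex groups of a splitting over elementary subgroups. The last sentence of the statement then follows from \cite[Prop.\,3.21]{haiss:lec}: a relative quasi-isometry between cusped spaces induces a quasisymmetry of Bowditch boundaries that respects parabolic loci.

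\textbf{Step 1: reduction to connected limit sets.} Since $\Phi$ is a homeomorphism, it permutes the nontrivial components of the limit sets. By \cite{PW02}, quasi-isometries of the free factors that respect peripheral structure assemble into a quasi-isometry of free products with free groups. We choose compatible core decompositions for $G$ and $G'$ (Proposition~\ref{prop:admiss}). The restrictions of $\Phi$ to the non-elementary vertex limit sets $\Lambda_v$ satisfy the same hypotheses. Since a component of $\Lambda(G)$ has strictly smaller height than $G$ when $\Lambda(G)$ is disconnected, the induction applies to it. The parabolic and ball vertices are elementary and are matched trivially.

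\textbf{Step 2: connected case.} We pass to de-parabolizations via Theorem~\ref{thm:tukia}. Because the JSJ tree of \S~\ref{subsec:JSJ} is determined by the topology of the limit set (\cite[\S 8]{HH23}), $\Phi$ induces an isomorphism of JSJ trees that respects vertex types, equivariant with respect to nothing but cofinite on both sides. We then build a vertex-by-vertex quasi-isometry of the tree of spaces as follows.
\begin{enumerate}
\item \emph{Rigid vertices.} $\Lambda_v/\sim_v$ is a Schottky set (Proposition~\ref{prop:rigidvertexSchottky}), and the restriction of $\Phi$ lies in $\HQ^+$. By Theorem~\ref{thm:carpetfreeSchottkyimpliesrigid} and Corollary~\ref{cor:toprig:finitext}, together with the quasisymmetry on carpet-quotients, we expect the restriction to be virtually equivariant. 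It should then be realized at bounded distance by a finite-index conjugacy, hence by a quasi-isometry between vertex groups that maps each edge coset within bounded distance of the corresponding coset by a translation.
\item \emph{MHF vertices.} We color the gaps, i.e.\ the edge cosets, by the isomorphism type of the adjacent subtree as recorded by $\Phi$. Corollary~\ref{cor:colors} then gives a color-preserving quasi-isometry that is uniformly close to a translation on every peripheral coset and sends a chosen coset to a chosen coset.
\item \emph{Two-ended and rank-one vertices.} These are quasi-lines, and only translations up to bounded error are needed.
\item \emph{Rank-two cut points.} The coarse order-respecting hypothesis says exactly that $h_*(n)=\epsilon n+c+O(1)$. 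Hence the induced map on $\Z^2$-horospheres can be taken within bounded distance of an affine map, compatible with the incident vertex maps.
\end{enumerate}

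\textbf{Step 3: gluing.} We assemble the quasi-isometry along the Bass--Serre tree. Start at a root vertex and extend outward, choosing each new vertex quasi-isometry by the normalization $\phi(P^1)=P^2$ of Corollary~\ref{cor:colors} (or its rigid analogue) so that it agrees with the previous map on the shared edge coset up to bounded error. Because every piece is within uniform distance of a translation on edge cosets, the discrepancies do not accumulate. A standard tree-of-spaces argument, in the style of Papasoglu--Whyte or \cite{BN08,CM17}, then yields a global quasi-isometry with uniform constants. Cofiniteness gives finitely many vertex types, and hence uniform constants.

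\textbf{Main obstacle.} We expect the hardest step to be obtaining \emph{uniform} control of the vertex maps on edge cosets. For MHF pieces this is handled by Theorem~\ref{thm:colors}. For rigid pieces it requires upgrading virtual equivariance to bounded-distance translations on all boundary cosets simultaneously, which should follow from equivariance under a finite-index subgroup. The rank-two cusps require a separate check that the affine-up-to-$O(1)$ map on $\Z$ is compatible with the $\Z^2$-horoball geometry, i.e.\ that it is bi-Lipschitz along the horosphere.
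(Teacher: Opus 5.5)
Your outline follows the paper's route. You reduce to connected limit sets via \cite{PW02}, get virtual equivariance at rigid vertices from Theorem~\ref{thm:carpetfreeSchottkyimpliesrigid}, and use Corollary~\ref{cor:colors} at MHF vertices and the coarse order hypothesis at rank-two vertices. You then glue along an exhaustion of the JSJ tree, post-composing each new vertex map with an element of the edge stabilizer $G_{w'}$ so that it agrees with the previous one on $X_w$ up to bounded error; this is Theorem~\ref{thm:qigluing}.

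Two parts of your write-up should be corrected.
\begin{itemize}
\item The induction on $\height(G)$ is unnecessary, and the claim supporting it is false. Height is computed from the de-parabolization, so if $\Lambda(G_0)$ is connected but $\widehat{G_0}$ is a Schottky group, then $G_0$ and $G_0*\Z$ have the same height. Since Step~2 uses no induction hypothesis, simply apply it to each component. Proposition~\ref{prop:admiss} concerns an acylindrical lamination and plays no role here.
\item Do not de-parabolize $G$ globally in Step~2. $\Lambda(\widehat G)$ can be disconnected while $\Lambda(G)$ is connected, and a non-ambient $\Phi$ need not lift. You would also lose the rank-one cosets that the relative quasi-isometry must respect. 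Work with the JSJ tree of $(N,P)$ itself, including its type $\mathrm{I}_p$ vertices, and de-parabolize only the MHF vertex groups when applying Corollary~\ref{cor:colors}. The gaps there coming from rank-one parabolic points that are not cut points are not edge cosets and need a separate common color.
\end{itemize}

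The substantive point you leave implicit is the invariant that makes the gluing work. Once a color-preserving map from Corollary~\ref{cor:colors} has been used at an MHF vertex, the tree isomorphism you are building no longer agrees with the one induced by $\Phi$. So at the next rigid vertex $v$, now sent to some $v'$ that may differ from $\Phi(v)$, the restriction $\Phi|_{\Lambda_v}$ is useless. Moreover, Corollary~\ref{cor:colors} needs colorings that are $G$- and $G'$-invariant with a common finite color set, and a coloring ``recorded by $\Phi$'' is not invariant in general.

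The paper handles both issues as follows.
\begin{itemize}
\item It colors vertices and edges by the existence of \emph{some} coarsely order-respecting, rigidly virtually equivariant homeomorphism $\Lambda(G)\to\Lambda(G')$ carrying one to the other. These colorings are invariant, and they are finite in number because $\cT/G$ and $\cT'/G'$ are finite.
\item It maintains at every stage that the partial tree isomorphism extends to one induced by such a homeomorphism $\Psi$. At a rigid vertex it then uses $\Psi|_{\Lambda_v}$, which is virtually equivariant by definition.
\end{itemize}
You need to make this invariant explicit for your Step~3 to go through.
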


Since any homeomorphism at a homogeneous rank-two cut point can be modified to be coarsely order-respecting, we have the following.
\begin{cor}\label{cor:relativeqsuniversalGF}
    Let $G$ be a geometrically finite Kleinian group. Suppose that $\Lambda(G)$ is carpet-free and every rank-two cut point is homogeneous. Then $(\Lambda(G), \mathfrak{P}(G))$ is relatively quasisymmetrically universal in $\mathfrak{X}_{\gf}$.

    In particular, if $G$ is convex cocompact and $\Lambda(G)$ is carpet free, then $\Lambda(G)$ is quasisymmetrically universal in $\mathfrak{X}_{\cc}$.
\end{cor}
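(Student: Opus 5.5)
The corollary follows from Theorem~\ref{thm:qsclassG} once we produce, for every $G'$ with $(\Lambda(G'),\mathfrak{P}(G'))$ homeomorphic to $(\Lambda(G),\mathfrak{P}(G))$, a homeomorphism satisfying its two hypotheses. The case $\Lambda(G)=\widehat\C$ is excluded from Theorem~\ref{thm:qsclassG}. There universality is handled directly: any $G'$ with homeomorphic limit set also has $\Lambda(G')=\widehat\C$, and Theorem~\ref{thm:lattice} (via Theorem~\ref{thm:qcbetweencountabledensesubsets}) gives a bi-Lipschitz map of the sphere carrying one countable dense parabolic locus to the other. To match ranks as well, run the back-and-forth construction separately on $\mathfrak{P}_1$ and $\mathfrak{P}_2$. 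So assume $\Lambda(G)\neq\widehat\C$ and fix a type-preserving homeomorphism $\Phi$.

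\textbf{Carpet hypothesis.} By Proposition~\ref{prop:finitehierarchy}, since $\Lambda(G)$ is carpet-free, its de-parabolization has no carpet terminal groups. Hence $G$ has no carpet-quotient subgroups, and the condition ``quasisymmetric on each dynamical carpet-quotient'' is vacuous.

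\textbf{Cut-point hypothesis.} We modify $\Phi$ so that it is coarsely order-respecting at every rank-two cut point; this is the main step. Take a rank-two cut point $a$ with components $L_n$, $n\in\Z$, of $\mathcal{C}_a^*$. By homogeneity all $L_n$ are homeomorphic, and the same holds at $\Phi(a)$ with components $L'_n$. Fix homeomorphisms $\psi_n\colon L_n\to L'_n$ fixing $a$, or better, homeomorphisms $\overline{L_n}\to\overline{L'_n}$ sending $a\mapsto \Phi(a)$; these exist since each closure is $L_n\cup\{a\}$ and the pieces are homeomorphic. Redefine $\Phi$ on $\mathcal{C}_a$ by $\Phi|_{L_n}=\psi_n$, which makes the induced map $n\mapsto n$. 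Continuity at $a$ holds because $\{L_n\}$ and $\{L'_n\}$ are null sequences accumulating only at $a$ and $\Phi(a)$. The delicate points are the following.
\begin{enumerate}
\item $L_n$ itself contains other rank-two cut points, so the modification must be performed coherently along the JSJ tree. We do this inductively, working outward from a base vertex in the tree of $\mathcal{C}_a$ and re-choosing the homeomorphisms of each branch to agree at the attaching cut point. The tree-of-compacta model \cite[Theorem 4.2]{HPW16} guarantees that the limit of these choices is a homeomorphism, since the branches form null sequences.
\item Each $\psi_n$ must be type-preserving. This is arranged by choosing it inside the class of type-preserving homeomorphisms, which is possible because $\Phi$ itself restricts to a type-preserving homeomorphism from some component onto $L'_n$. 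Concretely, take $\psi_n=\Phi|_{L_{\sigma(n)}}$ composed with homeomorphisms among the $L'_m$. The latter are type-preserving because pieces in the same $G'$-orbit differ by group elements, and pieces in different orbits are compared by composing with $\Phi$.
\end{enumerate}

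\textbf{Conclusion and obstacle.} Theorem~\ref{thm:qsclassG} applied to the modified homeomorphism yields the quasisymmetry. The convex cocompact statement is a special case: there are no parabolic points, so no rank-two cut points, and $\mathfrak{X}_{\cc}\subset\mathfrak{X}_{\gf}$ with empty parabolic loci. I expect the main difficulty to be the coherent infinite re-gluing in the cut-point step, especially preserving type and continuity of the limiting map.
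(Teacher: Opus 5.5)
Your route is the paper's. The paper deduces the corollary from Theorem~\ref{thm:qsclassG}, with the carpet hypothesis vacuous, plus a one-line remark that a homeomorphism can be modified at homogeneous rank-two cut points to become coarsely order-respecting. Your re-gluing along the JSJ tree, with the null-sequence limit argument, is a reasonable way to make that remark precise. Two side comments:
\begin{itemize}
\item Your $\widehat\C$ paragraph is unnecessary. The sphere contains carpets, so carpet-freeness already rules out $\Lambda(G)=\widehat\C$ (and hence $\Lambda(G')=\widehat\C$).
\item Point (2) does not by itself produce type-preserving maps. Comparing $L'_m$ and $L'_k$ in different orbits ``by composing with $\Phi$'' just reduces the question to type-preserving homeomorphisms between pieces of $\mathcal{C}_a^*$ lying in different $\stab_G(a)$-orbits. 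That is exactly what homogeneity has to supply, read in the type-preserving sense, as is natural in this relative setting. Granting that, write $\Phi(L_m)=L'_{\sigma(m)}$ and take $\psi_n$ to be such a homeomorphism $L_n\to L_{\sigma^{-1}(n)}$ followed by $\Phi$.
\end{itemize}

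The step that actually fails is the redefinition itself when $\Lambda(G)$ is disconnected. The singleton components of $\Lambda(G)$ form a nonempty $G$-invariant set (for instance, the points corresponding to ends of the incompressible-decomposition tree), so they are dense in $\Lambda(G)$. Hence every $x\in\mathcal{C}_a$ is a limit of points $e_k\notin\mathcal{C}_a$ at which your map is still $\Phi$, and continuity forces its value at $x$ to be $\Phi(x)$. Changing $\Phi$ only on $\mathcal{C}_a$ therefore destroys continuity. The modified map cannot be fed to Theorem~\ref{thm:qsclassG}, whose hypothesis is a global homeomorphism.

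The repair is to work componentwise, as the proof of Theorem~\ref{thm:qsclassG} itself does in its first line:
\begin{enumerate}
\item carry out your modification on each nontrivial component separately, where your argument is fine;
\item apply Theorem~\ref{thm:qigluing} to each matched pair of component stabilizers, with Theorem~\ref{thm:carpetfreeSchottkyimpliesrigid} supplying virtual equivariance on rigid vertices;
\item assemble the resulting quasisymmetries using \cite{PW02}.
\end{enumerate}
For connected $\Lambda(G)$, your argument goes through as written.
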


We present a general framework that starts from a homeomorphism between the limit sets of two geometrically finite groups and produces a quasi-isometry relative to their parabolic groups.

Let $X_i, i = 1,2$ be $G_i$-sets. Recall that a map $h: X_1 \rightarrow X_2$ is virtually equivariant (between $G_1$ and $G_2$) if there exist two finite index subgroups $H_i \le G_i$
and an isomorphism $\phi: H_1 \rightarrow H_2$ such that
$h (g(x)) = \phi(g) h(x)$ for all $x\in X_1, g\in H_1$.

A homeomorphism $h: (\Lambda(G), \mathfrak{P}(G)) \rightarrow (\Lambda(G'), \mathfrak{P}(G'))$ between connected geometrically finite limit sets is {\em rigidly virtually equivariant} if  for each {\rigid} vertex $v$, $h|_{\Lambda_v}$ is virtually equivariant.

\begin{theorem}\label{thm:qigluing}
    Let $G, G'$ be geometrically finite Kleinian groups with connected limit sets and $\Lambda(G), \Lambda(G') \neq \widehat\C$. If there is a rigidly virtually equivariant homeomorphism 
    $$
    \Phi\colon (\Lambda(G), \mathfrak{P}(G)) \rightarrow (\Lambda(G'), \mathfrak{P}(G')),
    $$
    such that $\Phi$ is coarsely order-respecting at rank-two cut points, then $G, G'$ are quasi-isometric relative to parabolic subgroups.
    In particular, $(\Lambda(G), \mathfrak{P}(G))$ and $(\Lambda(G'), \mathfrak{P}(G'))$ are quasisymmetric.

    Moreover, if $\Phi$ extends to a homeomorphism of $\widehat\C$, then the quasisymmetry between $(\Lambda(G), \mathfrak{P}(G))$ and $(\Lambda(G'), \mathfrak{P}(G'))$ extends to a homeomorphism on $\widehat\C$.
\end{theorem}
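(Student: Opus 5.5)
We plan to build the quasi-isometry from the JSJ decomposition (\S~\ref{subsec:JSJ}) of $G$ and $G'$, using the fact that the JSJ tree is a topological invariant of the limit set \cite{HH23}. Since $\Phi$ is a homeomorphism respecting the parabolic loci, it induces an isomorphism $\Phi_*\colon \cT\to\cT'$ of the (balanced) JSJ trees. This isomorphism preserves vertex types: $\mathrm{I}_p$, $\mathrm{I}_l$, {\mhf}, {\rigid}, {\ranktwo}. The vertex sets $\Lambda_v$ and the cyclic orders on necklaces are also topological, and $\Phi(\Lambda_v)=\Lambda'_{\Phi_*(v)}$. The groups act cofinitely, so there are finitely many orbits of vertices and edges. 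We will therefore use a tree-of-spaces model: $G$ (respectively the Groves--Manning cusped space) is quasi-isometric to a tree of spaces whose vertex spaces are the vertex groups $G_v$, coned or cusped appropriately, glued along edge groups. We construct quasi-isometries $\phi_v\colon G_v\to G'_{\Phi_*(v)}$ with uniform constants, compatible along edge spaces up to bounded error, and invoke a standard gluing lemma for trees of spaces. The lemma requires $\Phi_*$ to be a tree isomorphism and the vertex maps to agree coarsely on edge spaces with uniform constants. This gives a quasi-isometry relative to parabolic subgroups. The ``in particular'' then follows from \cite[Prop.\,3.21]{haiss:lec}, since the boundary map of a relative quasi-isometry of cusped spaces is quasisymmetric and type-preserving.

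The vertex maps are as follows. On a {\rigid} vertex, $\Phi|_{\Lambda_v}$ is virtually equivariant by hypothesis. It therefore gives a virtual isomorphism $G_v\to G'_{\Phi_*(v)}$, which is a quasi-isometry carrying each edge coset to an edge coset at bounded distance. We choose representatives in each $G$-orbit and extend equivariantly, so that only finitely many constants appear. On a {\mhf} vertex, $G_v$ is a convex cocompact Fuchsian group of the second kind, up to the rank-one cusps, which we treat via Tukia's de-parabolization (Theorem~\ref{thm:tukia}) so that they become loxodromic. Its complementary gaps are coloured by the $G_v$-orbit type of the adjacent edge and of the vertex across it, together with the induced gluing data. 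Corollary~\ref{cor:colors} supplies a colour-preserving quasi-isometry $G_v\to G'_{\Phi_*(v)}$ sending a prescribed coset to a prescribed coset. Crucially, it restricts on each peripheral $\Z$ to within bounded distance of a translation. This local-translation property is what allows agreement along cyclic edge groups. On a two-ended vertex $\mathrm{I}_l$ or $\mathrm{I}_p$, the edge spaces are copies of $\Z$ (or a horoball over $\Z$). The adjacent {\rigid} maps are virtual isomorphisms, hence translations up to bounded error on these $\Z$'s, and the {\mhf} maps are near-translations by Corollary~\ref{cor:colors}. Any two near-translations differ by a bounded amount up to a shift, so we can fix the shift by choosing base cosets consistently along the tree, starting from a root and moving outward. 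At a {\ranktwo} vertex with stabilizer $\Z^2$, the adjacent pieces are arranged in the $\Z$-order of $\pi_0(\mathcal{C}^*_a)$. The coarse order-respecting hypothesis $h_*(n)=\epsilon n+c+O(1)$ is exactly what makes the map on this horoball, assembled from the adjacent edge maps, a quasi-isometry of $\Z^2$.

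We expect the main obstacle to be the compatibility bookkeeping: making the choices equivariantly under finitely many orbits so that the errors stay uniform across the infinite tree. The {\mhf} step is where this bites. The colouring must encode enough data, namely the orbit of each adjacent cut pair together with the identity of the neighbouring {\rigid} piece, so that the coloured quasi-isometry from Corollary~\ref{cor:colors} realizes the bijection of adjacent edges prescribed by $\Phi_*$. In addition, the translation offsets must be absorbed by re-choosing base points in the next vertex, which is possible because each vertex map is chosen after its parent edge. For the ``moreover'' clause, if $\Phi$ is ambient then the induced tree isomorphism respects the planar cyclic orders at each vertex. We would choose the {\mhf} maps orientation-compatibly, so that the resulting boundary quasisymmetry preserves the planar structure and hence extends to $\widehat\C$.
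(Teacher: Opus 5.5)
Your overall architecture (JSJ tree, vertex quasi-isometries from virtual equivariance at {\rigid} vertices and from Corollary~\ref{cor:colors} at {\mhf} vertices, the $\Z^2$ map at {\ranktwo} vertices, gluing along cyclic edge groups) is the paper's. However, your handling of the tree isomorphism does not work. You fix $\Phi_*$ once and for all and ask every vertex map to induce it.

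At a {\mhf} vertex this is impossible in general. In the circle model of $\Lambda_v$, the gaps corresponding to edges of $\cT$ form finitely many nonempty $G_v$-orbits, so their endpoints are dense. A map compatible with the cyclic order that induces the same bijection on these gaps as $\Phi$ therefore coincides with $\Phi$ on $\Lambda_v$. Hence a coloured quasisymmetry realizing $\Phi_*$ at $v$ exists only if $\Phi|_{\Lambda_v}$ is already quasisymmetric in that model. This is not assumed, and no finite colouring can repair it.

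The same issue appears at {\rigid} vertices. A quasi-isometry inducing $\Phi_*$ there is determined up to bounded error by $\Phi|_{\Lambda_v}$. But $\Phi$ may twist by unboundedly large powers of the edge stabilizers along infinitely many cut pairs while remaining rigidly virtually equivariant. Then the offsets along shared edge groups are not uniformly bounded. Your idea of re-choosing base points (post-composing with an element of $G'_{w'}$) is the right one, but it changes the tree isomorphism. So you can no longer apply a gluing lemma to the fixed $\Phi_*$.

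The missing idea is to build the tree isomorphism together with the maps, using a global colouring.
In the paper, two vertices (resp.\ edges) get the same colour iff some coarsely order-respecting, rigidly virtually equivariant homeomorphism $\Lambda(G)\to\Lambda(G')$ carries one to the other. There are finitely many colours since $\cT/G$ and $\cT'/G'$ are finite. One exhausts $\cT$ by finite subtrees $\cT_n$ and maintains the invariant that the isomorphism built on $\widetilde{\cT_n}$ extends to a tree isomorphism induced by such a homeomorphism $\Psi_n$.

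At a new {\mhf} vertex, the map from Corollary~\ref{cor:colors} induces its own edge bijection. Since paired edges have equal colours, each pairing is realized by some admissible homeomorphism. Splicing these along the branches hanging off the vertex restores the invariant. At {\rigid} and {\ranktwo} vertices one uses the virtual equivariance, resp.\ coarse order, of the current $\Psi_n$ rather than of $\Phi$, aligned with the parent via an edge-stabilizer element. A colouring by local orbit data, as you propose, does not guarantee that the branches paired by the {\mhf} map can be continued.

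Likewise, for the moreover clause the paper defines colours via ambient homeomorphisms. It keeps ambient extendability at every stage, and then uses that the components of $\Omega(G)$ form a null sequence. Orientation-compatible choices at {\mhf} vertices alone do not give this.
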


Theorem~\ref{thm:qsclassG} follows directly from Theorem~\ref{thm:qigluing}:
\begin{proof}[Proof of Theorem~\ref{thm:qsclassG}]
   Since the homeomorphism preserves the parabolic loci and their ranks, it follows from \cite{PW02} that if nontrivial connected component of $\Lambda(G)$ and $\Lambda(G')$ are quasisymmetric, then $\Lambda(G)$ and $\Lambda(G')$ are quasisymmetric.
    It thus suffices to prove the case when $\Lambda(G)$ is connected.
    By Theorem~\ref{thm:carpetfreeSchottkyimpliesrigid}, $\Phi$ is virtually equivariant on each rigid vertex limit set.
    Thus the theorem follows from Theorem~\ref{thm:qigluing}.
\end{proof}

The proof of Theorem~\ref{thm:qigluing} gives the following more general statement for relatively hyperbolic groups.
\begin{theorem}\label{thm:enhance} 
Let $G, G'$ be relatively hyperbolic groups. 
If there is a type-preserving homeomorphism between their boundaries that is virtually equivariant at each parabolic and rigid vertex subgroups, then $G, G'$ are quasi-isometric relative to parabolic subgroups.
\end{theorem}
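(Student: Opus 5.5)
The plan is to build the quasi-isometry piece by piece along the JSJ tree and then assemble it. Since the hypotheses and conclusion concern relatively hyperbolic groups, I would work with their Bowditch boundaries and with a Bass--Serre/tree-of-spaces model of the group, and not with Kleinian geometry.

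\textbf{Step 1: transfer the splitting.} Using the cut point/cut pair description of the JSJ tree recalled in \S\ref{subsubsec:decomplimit} (following \cite{Bow98, HH23}), the type-preserving homeomorphism $\Phi$ between the boundaries induces a type-preserving isomorphism of the JSJ trees $\cT\to\cT'$. Parabolic vertices go to parabolic vertices, two-ended vertices to two-ended vertices, and hanging Fuchsian and rigid vertices are preserved as well. At the same time $\Phi$ maps each vertex limit set $\Lambda_v$ onto $\Lambda_{v'}$. Both groups then act cofinitely on a common combinatorial tree. I would realize $G$ (coned off, or cusped à la Groves--Manning) as a tree of spaces whose vertex spaces are the vertex subgroups $G_v$ and whose edge spaces are the edge groups.

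\textbf{Step 2: local quasi-isometries.} On each vertex I would produce a quasi-isometry $\phi_v\colon G_v\to G'_{v'}$ that respects edge cosets up to bounded error.
\begin{itemize}
\item At rigid and parabolic vertices, virtual equivariance supplies an isomorphism between finite-index subgroups. This gives a quasi-isometry that is an honest conjugacy on the peripheral structure.
\item At hanging Fuchsian vertices, I would use Corollary~\ref{cor:colors}, with colors recording the $G_v$-orbit type of the adjacent edges/neighbours. It yields a color-preserving quasi-isometry that matches a prescribed edge coset and is within bounded distance of a translation on each peripheral $\Z$.
\item At two-ended vertices, the map is a translation of $\Z$ (with $\epsilon=\pm1$).
\item In the Kleinian version, rank-two parabolic cut points are handled by coarse order-respecting, which makes the induced map on $\pi_0(\cC_a^*)\cong\Z$ a bounded perturbation of $n\mapsto \epsilon n+c$.
\end{itemize}
The key point is uniformity: there are finitely many vertex orbits, and each $\phi_v$ is chosen equivariantly after translating by group elements. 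The constants are therefore uniform over the tree.

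\textbf{Step 3: gluing.} I would glue the $\phi_v$ along edge spaces. On each edge coset, adjacent vertex maps differ by a bounded amount after composing with a translation of the edge group. The translations are absorbed because the Fuchsian maps have exactly this freedom: Corollary~\ref{cor:colors}(2) together with the ability to prescribe the matching coset. I would proceed inductively outward from a base vertex, in the spirit of Mosher--Sageev--Whyte and Behrstock--Neumann. The result is a coarse map of trees of spaces that is a uniform quasi-isometry on vertex spaces, uniformly close to an isomorphism on edge spaces, and covers a tree isomorphism. A standard lemma then says it is a quasi-isometry. Peripheral cosets go to peripheral cosets by construction, so it is a quasi-isometry relative to the parabolic subgroups. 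The boundary statement follows from \cite[Thm.~3.18]{haiss:lec}.

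\textbf{Main obstacle.} I expect the hardest part to be the gluing compatibility. Two local maps agree on an edge coset only coarsely, and the relevant shift can be unbounded along $\Z$: virtual equivariance at rigid vertices fixes the shift, so the Fuchsian and two-ended pieces must absorb it. I would handle this by first fixing the maps at rigid and parabolic vertices. Then at each hanging Fuchsian vertex I would choose the quasi-isometry by Corollary~\ref{cor:colors}, precomposed with a group element so that it matches the already-fixed data on one edge. The translation-like behaviour on the remaining edges is then passed to the next rigid neighbour via its own group action. I would also need to check that the accumulated errors stay bounded rather than grow along paths in the tree; this should follow from the uniform constant $M$ in Corollary~\ref{cor:colors}.
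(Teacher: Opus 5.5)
Your outline follows the paper's own route: the argument given for this statement is the proof of Theorem~\ref{thm:qigluing}. That proof exhausts the JSJ tree, uses virtual equivariance at rigid and parabolic vertices and Corollary~\ref{cor:colors} at hanging Fuchsian vertices, and glues across two-ended vertices by post-composing with elements of the edge group. The gluing step, however, has a genuine gap, located exactly at what you call the main obstacle: you treat the mismatch on an edge coset as a \emph{shift}, but it can be a \emph{slope}.

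Virtual equivariance at a rigid vertex $v$ only gives an isomorphism $\varphi\colon H_1\to H_2$ between finite-index subgroups of $G_v$ and $G'_{v'}$. Let $w$ be an adjacent two-ended vertex with $G_w=\langle g\rangle$ and $G'_{w'}=\langle g'\rangle$. Then $\varphi(g^k)=(g')^{\pm k'}$, where $k=[G_w:H_1\cap G_w]$ and $k'=[G'_{w'}:H_2\cap G'_{w'}]$. So on $X_w\cong\Z$ the induced map is coarsely $n\mapsto\pm(k'/k)\,n+c$, and $k\neq k'$ does occur.
\begin{itemize}
\item Post-composing with elements of $G'_{w'}$ only changes $c$.
\item Corollary~\ref{cor:colors} only produces maps of slope one, so the Fuchsian pieces cannot absorb a slope $k'/k\neq 1$ either.
\item Your check that errors do not accumulate along the tree does not detect this, because the error along a single edge is already unbounded.
\item At an edge joining two rigid vertices, both slopes are forced. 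A quasi-isometry of rigid vertex spaces respecting the edge pattern is within bounded distance of one induced by an element of a finite extension of the vertex group (this is what the rigidity of Theorem~\ref{thm:carpetfreeSchottkyimpliesrigid} gives). Its slope on each edge is therefore a ratio of translation lengths and cannot be adjusted.
\end{itemize}

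Consequently the conclusion can fail under the stated hypotheses. Let $(M_1,A_1)$ and $(M_2,A_2)$ be acylindrical pared manifolds, each with a single paring annulus, with cores $\gamma\in\Gamma=\pi_1M_1$ and $\delta\in\Delta=\pi_1M_2$. Suppose some index-two subgroup $\Gamma_2<\Gamma$ does not contain $\gamma$, and compare $G=\Gamma*_{\gamma=\delta}\Delta$ with $G'=\Gamma_2*_{\gamma^2=\delta}\Delta$.
\begin{itemize}
\item Since $\Gamma_2\langle\gamma\rangle=\Gamma$, the group $\Gamma_2$ acts transitively on $\Gamma\cdot\mathrm{Fix}(\gamma)$. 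One then builds a homeomorphism $\partial G\to\partial G'$ that is a conjugacy of $\Gamma_2$ on each $\Gamma$-piece and of $\Delta$ on each $\Delta$-piece, so the hypotheses hold.
\item Use the coordinates $G_w=\langle\gamma\rangle$ and $G'_{w'}=\langle\gamma^2\rangle=\langle\delta\rangle$. Any quasi-isometry would have slope $\pm1/2$ on the common edge coset from the $\Gamma$ side and slope $\pm1$ from the $\Delta$ side. Hence $G$ and $G'$ are not quasi-isometric.
\end{itemize}
The paper's proof makes the same unproved claim, namely that the rigid-vertex maps are $M$-close to translations on edge spaces, so following it does not close the gap. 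A correct statement needs a hypothesis making stretch factors along edges compatible (as in the invariants of \cite{CM17}). It also needs Fuchsian-piece maps realizing prescribed boundary stretches, not just Corollary~\ref{cor:colors}.

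A secondary issue: you propose to fix the rigid-vertex maps first, using $\Phi$. That does not work. Once a hanging Fuchsian vertex is treated with Corollary~\ref{cor:colors}, the induced tree isomorphism no longer agrees with the one induced by $\Phi$. Rigid vertices further out are then sent to vertices that $\Phi$ does not send them to, and the map there must come from some other homeomorphism with the required properties. Colors given by ``$G_v$-orbit types'' cannot even be compared between $G$ and $G'$, since $\Phi$ is not equivariant at Fuchsian vertices. The paper instead colors vertices and edges by the existence of such a homeomorphism, and carries along the condition that each partial tree isomorphism extends to an equivariant one.
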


\subsection*{Proof of Theorem~\ref{thm:qigluing}}
Let $\cT, \cT'$ be the JSJ trees of $G, G'$ respectively. Note that any homeomorphism $\Psi: \Lambda(G) \rightarrow \Lambda(G')$ induces a type preserving isomorphism $\psi: \cT \rightarrow \cT'$.

We define colorings on the vertex sets of $\mathcal{T}$ and $\mathcal{T}'$ so that two vertices $v \in \mathcal{T}, v'\in \mathcal{T}'$ have the same color if and only if there is a coarsely order-respecting, rigidly virtually equivariant homeomorphism $\Psi: \Lambda(G) \rightarrow \Lambda(G')$ that sends the vertex $v$ to $v'$. 
Similarly, we define colorings on the edge sets of $\mathcal{T}$ and $\mathcal{T}'$ so that two edges $[v, w]$ and $[v',w']$ have the same color if and only if there is a coarsely order-respecting, rigidly virtually equivariant homeomorphism $\Psi: \Lambda(G) \rightarrow \Lambda(G')$ that sends the vertices $v, w$ to $v', w'$.
Since the quotient graphs $\cT/G$, $\cT'/G'$ are finite, the number of colors is finite.

We say an isomorphism $\psi: \cT \rightarrow\cT'$ is {\em color-preserving} if it preserves colors of both vertices and edges.
It is an {\em equivariant isomorphism} if it is induced by a coarsely order-respecting, rigidly virtually equivariant homeomorphism $\Psi: \Lambda(G) \rightarrow \Lambda(G')$. Note that any equivariant isomorphism is color-preserving.

\subsubsection*{Construction of the quasi-isometry}
Suppose that $\cT$ is trivial. Then $\Lambda(G)$ is either a quasicircle or homeomorphic to a Schottky set.
If $\Lambda(G)$ is a quasicircle, then the theorem follows by the \v{S}varc–Milnor lemma.
If $\Lambda(G)$ is homeomorphic to a Schottky set, then $\cT$ contains a unique {\rigid} vertex. Any rigidly virtually equivariant homeomorphism is a virtually equivariant homeomorphism between $\Lambda(G)$ and  $\Lambda(G')$, so the theorem follows immediately.

Thus, we assume that $\cT$ is nontrivial. 
Let $v_0$ be a non-{\twoended} vertex in $\mathcal{T}$ and $v_0':= \phi(v_0)$.
Let us consider an exhaustion of $\cT$ by finite trees 
$$\cT_0= \{v_0\}, \; \cT_n \subseteq \cT_{n+1}, \; \bigcup_{n=1}^\infty \cT_n = \cT
$$ such that each $\cT_n$  contains exactly $n+1$ non-{\twoended} vertices that span $\cT_n$.
Let $\widetilde{\cT_n} = \{x \in \cT: d(x, \cT_n) \leq 1\}$, so that the vertices in $\widetilde{\cT_n} \setminus \cT_n$ are exactly {\twoended} vertices adjacent to $\cT_n$.

Note that the JSJ decomposition gives quasiconvex decompositions of locally finite Cayley graphs $X$ and $X'$ of $G$ and $G'$ respectively \cite[\S 1]{Bow98}.
For each vertex $v \in V(\cT)$, we choose a quasiconvex subspace $X_v \subseteq X$ which is quasi-isometric to $G_v$, so that for any {\twoended} vertex $w$ adjacent to $v_1,..., v_k$, $X_w \subseteq \bigcap X_{v_i}$. 
There exists a $G$-equivariant coarse projection $\pi: X \rightarrow \mathcal{T}$ and $R< \infty$ such that $\pi(X_v)\subset N_R(v)$ for each vertex.
For each subtree $\cT_n \subseteq \cT$, we define $X_n = \bigcup_{v\in \cT_n} X_v = \bigcup_{v\in \widetilde{\cT_n}} X_v$. 
Let $\cP_n \subseteq X_n$ be the peripheral structure from the parabolic subgroups.

In the following, we find constants $C,E,K,L\ge 0$ and inductively construct finite trees $\cT'_n \subseteq \cT'$, and sequences of $(L, C)$-coarse Lipschitz maps
$$
H_n: (X_n, \cP_n) \longrightarrow (X_n', \cP_n') \text{ and } F_n: (X_n', \cP_n') \longrightarrow (X_n, \cP_n)
$$
which induce isomorphisms $\phi_n: \widetilde{\cT_n}\rightarrow \widetilde{\cT'_n}$ and $\phi_n^{-1}:\widetilde{\cT'_n} \rightarrow \widetilde{\cT_n}$ so that
\begin{enumerate}
    \item\label{eq:con1} $H_n|_{X_n} = H_m|_{X_n}, F_n|_{X_n'} = F_m|_{X_n'}$ for all $m \geq n$;
    \item\label{eq:con2} on each vertex space $X_v \subseteq X_n$, the restriction $H_n|_{X_v}$ is a $(K, C)$-quasi-isometry onto $X_{\phi_n(v)}$, and similarly for $F_n$;
    \item\label{eq:con3} on each {\twoended} vertex space $X_w \subseteq X_n$, the restrictions $H_n|_{X_w}, F_n|_{X_{\phi_n(w)}}$ are $M$-close to a translation between $X_w$ and $X_{\phi_n(w)}$, under identifications $X_w \cong X_{\phi_n(w)} \cong \Z$;
    \item\label{eq:con4} $F_n \circ H_n$ and $H_n \circ F_n$ are $E$-close to identity;
    \item\label{eq:con5} the induced map $\phi_n: \widetilde{\cT_n}\rightarrow \widetilde{\cT'_n}$ extends to an equivariant isomorphism $\phi_{n}: \cT \rightarrow \cT'$.
\end{enumerate}
The first part of the theorem follows by taking the limit. Indeed, the limits $H: (X, \cP) \rightarrow (X', \cP'), F: (X', \cP') \rightarrow (X, \cP)$ are $(L, C)$-coarse Lipschitz maps  such that $H \circ F, F \circ H$ are $E$-close to identity. 
Thus $H$ is a quasi-isometry relative to parabolic subgroups.
We then attach combinatorial horoballs to $(G, \cP)$ and $(G', \cP')$, following Groves--Manning, to obtain cusped spaces $Y$ and $Y'$ \cite{groves:manning:dehn}. Since $(G, \cP)$ and $(G', \cP')$ are quasi-isometric, by \cite[Theorem 3.18]{haiss:lec}, the spaces $Y$ and $Y'$ are quasi-isometric (see also \cite{MS24}). Hence their boundaries are quasisymmetrically homeomorphic. It follows that $(\Lambda(G), \mathfrak{P}(G))$ and $(\Lambda(G'), \mathfrak{P}(G'))$ are quasisymmetrically homeomorphic, and the first part of the theorem follows.

\medskip
We now proceed to construct the sequence of maps. It is sufficient to define them on the set of vertices.

\noindent
\textbf{Base case: $n = 0$.}
Suppose $v_0$ is {\mhf}.
Then $G_v$ is a geometrically finite Fuchsian group. Let $\widehat G_v$ be the de-parabolized Kleinian group associated to $G_v$. Then $\widehat G_v$ is a convex cocompact Fuchsian group of the second kind.
A component of $\Omega(G_v)\subseteq \mathbb S^1$ either corresponds to an edge of $\cT$ adjacent to $v$, or to a parabolic point of $G_v$ which does not correspond to an edge of $\cT$. 
In the first case, the coloring of the edge of $\cT$ induces a coloring of the corresponding component. In the second case, we assign to all such components a common color, distinct from the edge colors.

By Corollary~\ref{cor:colors}, there exists a color-preserving $(K_0,C_0)$-quasi-isometry $H_0: (X_0, \cP_0) \rightarrow (X_0', \cP_0')$, which is $M$-close to a translation on $X_v$ for each adjacent type I vertex $v$. 
We construct $F_0: X_0' \rightarrow X_0$  similarly.
Since the boundary maps of $H_0 \circ F_0$ and $F_0\circ H_0$ are identities on the corresponding limit sets, $H_0 \circ F_0$ and $F_0\circ H_0$ are $E_0$-close to the identity map.
Since $H_0$ is color-preserving, we claim that the induced map $\phi_0: \widetilde{\cT_0}\rightarrow \widetilde{\cT'_0}$ extends to an equivariant isomorphism $\phi_0: \cT \rightarrow \cT'$.
Indeed, let $v \in \partial \widetilde{\cT_0}$ and let $[v,v_0]$ be the unique edge in $\widetilde{\cT_0}$ incident at $v$. Let $\cT_v$ be the component of $\cT\setminus (v, v_0)$ that contains $v$.
Since $[v, v_0]$ and $[\phi_0(v), v_0']$ have the same color, there exists a coarsely order-respecting, rigidly virtually equivariant homeomorphism $H_v: \Lambda(G) \rightarrow \Lambda(G')$ with induced map $h_v = \phi_0$ on $[v, v_0]$. We extend $\phi_0$ by letting $\phi_0 = h_v$ on $\cT_v$ for each $v \in \partial \widetilde{\cT_0}$ and obtain an equivariant isomorphism.

Suppose $v_0$ is {\rigid}. By assumption, $\Phi|_{\Lambda_{v_0}}$ is virtually equivariant, and the existence of desired maps $H_0, F_0$ follows immediately.

Suppose $v_0$ is {\ranktwo}, with a rank-two cut point $a_0$. 
The {\twoended} vertices adjacent to $v_0$ are in one-to-one correspondence with $\pi_0(\Lambda(G)\setminus\{a_0\})$, and we label them by $w_k, k \in \Z$, according to the linear order from the embedding of $\Lambda(G)\setminus\{a_0\}$ in $\widehat\C\setminus\{a_0\}$. There exists $A\geq 1$ such that $w_i, w_j$ are in the same $G_{v_0}$-orbit if and only if $|i-j| = 0 \mod A$.
Then $X_{v_0} \cong \Z^2$, where $X_{w_k}$ is glued to $X_{v_0}$ along $\{(\lfloor k/A \rfloor,n):n \in \Z\}$. Similarly, $X_{v_0'} \cong \Z^2$ and $X_{w_k'}$ is glued to $X_{v_0'}$ along $\{(\lfloor k/A' \rfloor,n):n \in \Z\}$. Since $\cT/G$ and $\cT'/G'$ are finite graphs, both numbers $A, A'$ are uniformly bounded.

By assumption, the map $\Phi$ is coarsely order-respecting at rank-two cut points. 
Hence, after relabeling the indices possibly by a translation and an inversion, 
$\phi(w_j) = w_{\sigma(j)}'$, with $|\sigma(j)-j| = O(1)$.
We define $H_0 = \id: X_{v_0} \cong \Z^2 \rightarrow X_{v_0'}\cong \Z^2$ as the identity map in this coordinate, and similarly $F_0 = \id$. 
Note that $H_0$ is compatible with $\phi$ and we can take $\phi_0 = \phi: \widetilde{\cT_0}\rightarrow \widetilde{\cT'_0}$ as the induced isomorphism.
One can verify the five conditions are satisfied.

The constants $K_0, C_0, M, E_0$ are uniform as $\cT/G$ and $\cT'/G'$ are finite graphs.

\medskip
\noindent
\textbf{Inductive step.} 
We choose the constants $K= K_0, C = C_0+2M, L= K+C, E = E_0 + 2M$.
Suppose $H_n: (X_n, \cP_n) \rightarrow (X_n', \cP_n'), F_n: (X_n', \cP_n') \rightarrow (X_n, \cP_n)$ are constructed so that $H_n|_{X_v}$ is a $(K, C)$-quasi-isometry onto $X_{\phi_n(v)}$.

Let $v$ be the unique non-{\twoended} vertex in $\cT_{n+1} \setminus \cT_n$, and let $v':= \phi_{n}(v)$.
Let $u \in \cT_n$ and $u'\in \cT_n'$ be the non-{\twoended} vertex that are at distance $2$ from $v$ and $v'$, and let $w, w'$ be the {\twoended} vertex in between $u, v$ and $u', v'$ respectively.

Note that $X_w \cong G_w \cong \Z$ (resp. $X_{w'} \cong G_{w'} \cong \Z$) embeds in both $X_u \cong G_u$ and $X_v \cong G_v$ (resp. $X_u \cong G_u$ and $X_v \cong G_v$).
By the induction hypothesis, $H_n:(X_u, X_w) \rightarrow (X_{u'}, X_{w'})$ is within distance $M$ of a translation between $X_w$ and $X_{w'}$. By the same argument as the base case, there exists a $(K_0,C_0)$-quasi-isometry $H_v:(X_v, \cP_v, X_w) \rightarrow (X_{v'}, \cP_{v'}, X_{w'})$, which is $M$-close to a translation between $X_w$ and $X_{w'}$.
By post-composing $H_v$ with an appropriate element in the stabilizer $G_{w'}$, we may assume that 
$d(H_v(x), H_n(x)) \leq 2M \text{ for all } x\in X_w$.
Let
$$
H_{n+1}(x) = \begin{cases}
    H_n(x) &\text{ if }x\in X_n;\\
    H_v(x) &\text{ if }x\in X_v\setminus X_w.
\end{cases}
$$
Note that by construction, $H_{n+1}|_{X_v}$ is a $(K, C)$-quasi-isometry onto $X_{v'}$, where $K = K_0, C = C_0 + 2M$. Thus, by the induction hypothesis, the property~\eqref{eq:con2} is satisfied.

We claim that $H_{n+1}: X_{n+1} \rightarrow X_{n+1}'$ is $(L, C)$-coarse-Lipschitz. 
Indeed, let $x, y\in X_{n+1}$. We connect them via some geodesic $\gamma$ in the Cayley graph restricting to $X_{n+1}$. Then $\gamma$ is decomposed into $\gamma = \gamma_0 \cup \gamma_1 \cup ... \cup \gamma_r$, where each $\gamma_i$ is contained in some vertex space $X_{v_i}$. Note that $r \leq d(x,y)$.
Since the restriction of $H_{n+1}$ to each vertex space is a $(K, C)$-quasi-isometry, the triangle inequality implies 
\begin{align*}
    d(H_{n+1}(x), H_{n+1}(y)) &\leq \sum_{i=0}^r (K|\gamma_i| + C)
    = K d(x,y) + C (r+1) \\
    &\leq (K+ C)d(x,y) + C = Ld(x,y) + C.
\end{align*}

The construction of the $(L, C)$-coarse-Lipschitz map $F_{n+1}$ that satisfies the property~\eqref{eq:con2} is exactly the same.
It is easy to verify the properties~\eqref{eq:con1} and \eqref{eq:con3} by the construction and the induction hypothesis.
By restricting to each vertex, it is easy to verify property~\eqref{eq:con4}, i.e., that $F_{n+1} \circ H_{n+1}$ and $H_{n+1} \circ F_{n+1}$ are $E = E_0 + 2M$ close to the identity.
Since $H_v$ is color-preserving, by a similar argument of gluing homeomorphisms along {\twoended} vertices as in the base case, the induced map $\phi_{n+1}: \widetilde{\cT_{n+1}}\rightarrow \widetilde{\cT'_{n+1}}$ extends to an equivariant isomorphism $\phi_{n+1}: \cT \rightarrow \cT'$. Thus, property~\eqref{eq:con5} is satisfied as well.
The construction and thus the first part of the theorem follows by induction.

\medskip

For the moreover part, if $\Phi_0$ extends to a homeomorphism on $\widehat\C$, then we first modify the definition of colors accordingly and, at each stage of the construction, we can make sure the isomorphism $\phi_n: \cT \rightarrow \cT'$ induces a homeomorphism between $(\Lambda(G), \mathfrak{P}(G))$ and $(\Lambda(G'), \mathfrak{P}(G'))$ that extends to a homeomorphism on $\widehat\C$. Thus the moreover part follows by induction  by remarking that the components of $\Omega(G)$ form a null-sequence.  \qed

\subsection{Quasiconformal universality}
In this subsection, we show that carpet-free limit sets are quasiconformally universal.
Note that any ambient homeomorphism is coarsely order-respecting at rank-two cut points.
\begin{theorem}\label{thm:qcext}
    Let $G, G'$ be geometrically finite Kleinian groups, with $\Lambda(G), \Lambda(G') \neq \widehat\C$.
    If there is a homeomorphism 
    $$
    \Phi\colon (\widehat\C, \Lambda(G), \mathfrak{P}(G)) \rightarrow (\widehat\C, \Lambda(G'), \mathfrak{P}(G')),
    $$
    such that $\Phi$ is quasisymmetric on each dynamical carpet-quotient, then there is a quasiconformal map 
    $$
    h\colon (\widehat\C, \Lambda(G), \mathfrak{P}(G)) \rightarrow (\widehat\C, \Lambda(G'), \mathfrak{P}(G')).
    $$
\end{theorem}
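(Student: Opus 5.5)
The plan is to combine Theorem~\ref{thm:qigluing} (in its ``moreover'' form) with a standard promotion of ambient homeomorphisms to quasiconformal maps, done component by component on the ordinary set.

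\textbf{Step 1: reduce to connected limit sets.} An ambient homeomorphism carries components of $\Lambda(G)$ to components of $\Lambda(G')$. It is also compatible with the incompressible decomposition, i.e.\ with the tree of stars of the circle lamination $\mathcal{D}$. So we first handle each nontrivial component separately and then assemble, as in \cite{PW02}, equivariantly along the compression disks. Since compression disks bound balls, the gluing can be performed ambiently.

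\textbf{Step 2: connected case.}
\begin{itemize}
\item As an ambient homeomorphism, $\Phi$ is coarsely order-respecting at every rank-two cut point.
\item By Theorem~\ref{thm:carpetfreeSchottkyimpliesrigid}, applied to each rigid vertex group $\Lambda_v/\sim_v$ (a Schottky set by Proposition~\ref{prop:rigidvertexSchottky}), $\Phi$ is virtually equivariant on rigid vertex limit sets. Here we use the hypothesis that $\Phi$ is quasisymmetric on dynamical carpet-quotients, so $\Phi$ lies in $\HQ^+$.
\item The ``moreover'' part of Theorem~\ref{thm:qigluing} then yields a quasisymmetry $f\colon(\Lambda(G),\mathfrak P(G))\to(\Lambda(G'),\mathfrak P(G'))$ that extends to a homeomorphism $F$ of $\widehat\C$.
\end{itemize}

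\textbf{Step 3: upgrade to a quasiconformal extension.} We use Theorem~\ref{thm:geoOmega}. Each component $\Omega$ of $\Omega(G)$ is mapped by $F$ to a component $\Omega'$ of $\Omega(G')$. There are finitely many orbits of such components, and their stabilizers are geometrically finite function groups.

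After conjugating by a quasiconformal deformation (Theorem~\ref{thm:geoOmega}(1)(c)), we may take $\Omega$ and $\Omega'$ to be circle domains up to uniformly quasiconformal maps. The boundary map $f|_{\partial\Omega}$ is quasisymmetric, and by construction in Theorem~\ref{thm:qigluing} it is locally equivariant (cf.\ Theorem~\ref{thm:colors}). We therefore extend it quasiconformally inside each $\Omega$ as follows:
\begin{itemize}
\item On the boundary circles, use Beurling--Ahlfors or Douady--Earle extensions, which are uniformly quasiconformal because the boundary quasisymmetries have uniform constants, coming from finitely many orbits and $G$-equivariance up to bounded distortion.
\item On the region between them, use an equivariant homeomorphism of the planar surface $\Omega/\stab(\Omega)$ that realizes the correct isotopy class, chosen smooth hence quasiconformal on a compact core and equivariantly lifted.
\end{itemize}
Rank-one cusps in $\partial\Omega$ need care. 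Near such a cusp, we would use local conjugacy of the parabolic generators (type preservation is given) to define the extension as an equivariant map in horodisk coordinates, which is bilipschitz there.

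\textbf{Step 4: glue and conclude.} Glue all pieces into a map $h$ on $\widehat\C$ equal to $f$ on $\Lambda(G)$. The components form a null sequence with uniformly quasiconformal extensions, and $\Lambda(G)$ has measure zero when $\Lambda(G)\ne\widehat\C$ (geometric finiteness). So $h$ is a homeomorphism that is $K$-quasiconformal off a closed measure-zero set. Removability then follows: $\Lambda(G)$ is $\mathrm{QS}$-removable because, by Theorem~\ref{thm:geoOmega}, the components of $\Omega(G)$ are uniformly John. Alternatively, we verify the metric definition directly at limit points using the quasisymmetry of $f$ together with uniform John/uniform geometry.

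The main obstacle is Step 3: obtaining uniformly quasiconformal extensions across domains that are only John, not uniform, near accidental parabolic points (tangent disks). If the removability argument fails there, I would instead pass to the de-parabolization and extend on the cusp-free model, then push down.
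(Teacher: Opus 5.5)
Your Step 2 matches the paper's opening of Proposition~\ref{prop:connected}: Theorem~\ref{thm:carpetfreeSchottkyimpliesrigid}, then the moreover part of Theorem~\ref{thm:qigluing}. Steps 3--4 break down on the components that carry the difficulty. When $\Lambda(G)$ is connected, every component $U$ of $\Omega(G)$ is simply connected. By the corollary to Theorem~\ref{thm:geoOmega}, such a $U$ is John if and only if it is a quasidisk, i.e.\ if and only if $\stab_G(U)$ has no accidental parabolics. As soon as $\Lambda(G)$ has parabolic cut points, some components do have accidental parabolics. They are then not John at all, not merely non-uniform. So Theorem~\ref{thm:geoOmega}(1)(c) gives no circle-domain model for them, and your claim in Step 4 that the components are uniformly John is false.

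The removability route is not viable in general either. Being $K$-quasiconformal off a closed null set does not make a homeomorphism quasiconformal, and good geometry of complementary components does not give removability. Moreover, the theorem allows limit sets containing carpets (e.g.\ round Schottky carpets), and carpets are not removable. The de-parabolization fallback does not help: Tukia's map $\pi$ (Theorem~\ref{thm:tukia}) collapses arcs, so a quasiconformal map upstairs gives no dilatation control downstairs.

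The missing idea is to work on the prime-end circle. The paper restricts the relative quasi-isometry to $(\stab_G(U),\cP_U)$, with the \emph{persistent} cusps as peripheral structure, and gets a quasisymmetry of $\partial^I U$. It extends this into the disk and over the quasidisk components of $\widehat\C\setminus\overline U$. It then uses conformal removability of the Basilica-type set $\partial U$ \cite{LMM26}, not of $\Lambda(G)$, to get uniform quasisymmetry on $\overline U$. The global conclusion comes from Lemma~\ref{lem:qcextension}, which uses quasisymmetry of $h$ on $\Lambda(G)$ in place of removability.

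The more serious gap is the disconnected case, which your Step 1 dismisses. An ambient homeomorphism does \emph{not} respect the incompressible decomposition. Only the non-elementary and parabolic vertices are canonical; the circle lamination $\mathcal D$ is a choice, and $\Phi$ need not carry it to any lamination of the same kind. Appealing to \cite{PW02} is fine for quasisymmetry, and the paper does this in Theorem~\ref{thm:qsclassG}. But it yields only an abstract quasi-isometry with no control of the planar embedding, and ``compression disks bound balls'' gives no bound on dilatation after infinitely many gluings.

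Step 3 does not repair this for the multiply connected components. The boundary quasisymmetry need not be equivariant; the local conjugacy of Theorem~\ref{thm:colors} holds only near gluing points of MHF pieces. So an equivariantly lifted map of $\Omega/\stab(\Omega)$ cannot be made to agree with it on $\partial\Omega$, and these components need not be John either. The totally disconnected case is not treated at all; the paper invokes \cite{Hid26}.

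The ingredients you would need are those of \S\ref{subsec:disconnectedQCUniversal}:
\begin{itemize}
\item incompressible decompositions of finite-index subgroups adapted to the colorings (Proposition~\ref{prop:newincompressiblecoredecomp}, via Lemma~\ref{lem:incompressiblefunct} and LERF), so that homeomorphisms of vertex pieces can be isotoped to respect $\mathcal D$;
\item color-preserving quasiconformal extensions sending the attaching disks to matching orbits, with a constant independent of the base point (Proposition~\ref{prop:colorpreservingcompressiondisk}, using Whyte's bi-Lipschitz matching of nets);
\item a quasiconformal deformation via Maskit combination \cite{AM77} making the annuli around the compression curves have modulus at least $L$, which is what keeps the dilatation uniform in the inductive gluing.
\end{itemize}
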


\begin{cor}\label{cor:relativeqcuniversalGF}
    Let $G$ be a geometrically finite Kleinian group with parabolic locus $\mathfrak{P}(G)$. Suppose that $\Lambda(G)$ is carpet-free. Then $(\Lambda(G), \mathfrak{P}(G))$ is relatively quasiconformally universal in $\mathfrak{X}_{\gf}$.

    In particular, if $G$ is convex cocompact and $\Lambda(G)$ is carpet free, then $\Lambda(G)$ is quasiconformally universal in $\mathfrak{X}_{\cc}$.
\end{cor}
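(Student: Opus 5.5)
We have to prove Corollary~\ref{cor:relativeqcuniversalGF}. The plan is to reduce it to Theorem~\ref{thm:qcext} and to treat the sphere case separately.

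Fix $(\Lambda(G),\mathfrak{P}(G))$ with $\Lambda(G)$ carpet-free, and let $(\Lambda(G'),\mathfrak{P}(G'))\in[(\Lambda(G),\mathfrak{P}(G))]_\ah$. By definition there is a homeomorphism $\Phi:\widehat\C\to\widehat\C$ with $\Phi(\Lambda(G))=\Lambda(G')$ that sends rank-one and rank-two parabolic points to those of the same rank. We must produce a quasiconformal map of $\widehat\C$ with the same properties.

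\textbf{Sphere case.} Suppose first that $\Lambda(G)=\widehat\C$; then $\Lambda(G')=\widehat\C$ as well. Here we need a quasiconformal map of the sphere carrying the countable dense set $\mathfrak{P}(G)$ onto $\mathfrak{P}(G')$, respecting ranks. When both parabolic loci are empty, the identity works. Otherwise, apply Theorem~\ref{thm:qcbetweencountabledensesubsets} to the pair $\mathfrak{P}(G)\to\mathfrak{P}(G')$; its bi-Lipschitz output is quasiconformal.

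Two points need care in this case.
\begin{itemize}
\item If only some cusps exist, or the ranks split, run the back-and-forth construction in that proof separately on the rank-one and rank-two loci. Interleave them so each set is matched within its own rank; each of these loci is either empty or dense, being a union of $G$-orbits.
\item When $\Lambda(G)=\widehat\C$, every parabolic point has rank two. So in fact a single application of the theorem suffices.
\end{itemize}

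\textbf{Main case.} Now assume $\Lambda(G)\neq\widehat\C$. Then $\Lambda(G')\neq\widehat\C$, since $\Phi$ is a homeomorphism of the sphere. Because $\Lambda(G)$ is carpet-free, Proposition~\ref{prop:finitehierarchy} shows the de-parabolization of $G$ has no carpet terminal groups. Hence $G$ has no carpet-quotient subgroups, so $\Lambda(G)$ contains no dynamical carpet-quotient. The hypothesis of Theorem~\ref{thm:qcext}, that $\Phi$ is quasisymmetric on each dynamical carpet-quotient, therefore holds vacuously. Theorem~\ref{thm:qcext} then gives a quasiconformal $h:(\widehat\C,\Lambda(G),\mathfrak{P}(G))\to(\widehat\C,\Lambda(G'),\mathfrak{P}(G'))$. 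This shows $[(\Lambda(G),\mathfrak{P}(G))]_\ah=[(\Lambda(G),\mathfrak{P}(G))]_\qc$.

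\textbf{Convex cocompact case.} If $G$ is convex cocompact, take any $G'$ convex cocompact with $\Lambda(G')$ ambiently homeomorphic to $\Lambda(G)$. Both parabolic loci are empty, so the ambient homeomorphism is automatically type-preserving, and the previous paragraph applies; the sphere case is immediate.

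The only genuinely delicate point is the vacuity claim, that carpet-freeness excludes every dynamical carpet-quotient. I would verify it by the following chain.
\begin{enumerate}
\item Suppose $H\le G$ is a carpet-quotient subgroup, so its de-parabolization $\widehat H$ has carpet limit set.
\item By Moore's theorem, as used in the proof of Proposition~\ref{prop:finitehierarchy}, $\Lambda(\widehat H)$ contains a buried carpet.
\item That buried carpet projects injectively into $\Lambda(H)\subseteq\Lambda(G)$, contradicting carpet-freeness.
\end{enumerate}
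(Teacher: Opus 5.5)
Your proposal is correct and matches the paper's (implicit) argument. Carpet-freeness rules out every dynamical carpet-quotient, via the buried-carpet argument from the proof of Proposition~\ref{prop:finitehierarchy}, so the hypothesis of Theorem~\ref{thm:qcext} holds vacuously, and the convex cocompact case is the special case of empty parabolic loci. Your separate sphere case is harmless but vacuous: $\widehat\C$ contains subsets homeomorphic to the Sierpi\'nski carpet and is therefore never carpet-free, which is why the paper treats $\Lambda(G)=\widehat\C$ via Theorem~\ref{thm:lattice} in the main theorems rather than in this corollary.
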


\subsubsection{$\Lambda(G)$ is connected}\label{subsec:connectedQCUniversal}
Let $\mathcal{T}$ and $\mathcal{T}'$ be the JSJ decomposition of $G$ and $G'$ respectively.
Let $\Phi_0: (\widehat\C, \Lambda(G)) \rightarrow (\widehat\C, \Lambda(G'))$ be a homeomorphism. Then $\Phi_0$ induces a type preserving isomorphism $\phi_0: \mathcal{T} \longrightarrow \mathcal{T}'$.

We will prove a stronger statement in the connected case (Proposition~\ref{prop:colorpreservingcompressiondisk}), which will be used in the disconnected case when we do the planar gluing.
Let $F$ be a finite set. A {\em coloring on $\Omega(G)$} is a $G$-invariant map $\kappa: \pi_0(\Omega(G)) \rightarrow F$ that is onto.
A homeomorphism $\Phi_0$ induces colorings $\kappa, \kappa'$ as follows.
Two components $U, V \in \Omega(G)$ are equivalent if there exists $g \in G$ and $g' \in G'$ so that $gU = \Phi_0^{-1}(g' \Phi_0(V))$.
Note that each equivalence class consists of finitely many $G$-orbits of components of $\Omega(G)$. Let $F$ be the set of equivalence classes, and let $\kappa$ be the projection map. Then $\kappa$ is a coloring of $\Omega(G)$.
We define $\kappa': \pi_0(\Omega(G')) \rightarrow F$ so that $\kappa'(U') = \kappa(U)$ if there exists $g \in G$ and $g' \in G'$ so that $gU = \Phi_0^{-1}(g'U')$.
By construction, $\Phi_0$ is color-preserving.

To prove the extension, we need the following gluing lemma for limit sets, which follows directly from \cite[Theorem 4.4]{ph:unifplanar}. 
\begin{lem}\label{lem:qcextension}
    Let $G_i, i = 1,2$ be Kleinian groups with connected limit sets. Let $h\colon (\widehat \C, \Lambda(G_1) ) \rightarrow (\widehat\C, \Lambda(G_2))$ be a homeomorphism and suppose that there exists $\eta: \R_+ \rightarrow \R_+$ so that
    \begin{itemize}
        \item $h|_{\Lambda(G_1)}$ is $\eta$-quasisymmetric;
        \item for each component $U \subseteq \Omega(G_1)$, $h|_{\overline{U}}$ is $\eta$-quasisymmetric.
    \end{itemize} 
    Then there exists $\eta': \R_+ \rightarrow \R_+$ so that $h$ is $\eta'$-quasisymmetric on $\widehat\C$. In particular, $h$ is quasiconformal on $\widehat\C$.
\end{lem}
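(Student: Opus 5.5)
The plan is to reduce the statement to the gluing theorem \cite[Theorem 4.4]{ph:unifplanar}, so everything comes down to checking its hypotheses in the present setting. That theorem asserts, roughly, that a homeomorphism of $\widehat\C$ between two continua whose complementary components are uniformly fat, uniformly relatively separated quasidisks (or, more generally, uniform domains forming a null-sequence of controlled geometry) is quasisymmetric on the whole sphere, provided it is uniformly quasisymmetric on the continuum and on the closure of each complementary component. We first normalize by Möbius maps so that a fixed point of $\Lambda(G_1)$ and its image lie at $\infty$; this way the spherical and Euclidean computations agree up to bounded distortion on compact pieces.

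The key steps, in order, are as follows. First, since $\Lambda(G_i)$ is connected, every component $U$ of $\Omega(G_i)$ is simply connected. Its boundary is locally connected by \cite{anderson:maskit}, and its stabilizer is a finitely generated function group. By Theorem~\ref{thm:geoOmega} and its corollary, or at least via the John property, each $\overline U$ has uniformly controlled geometry up to the action of $G_i$. Second, there are finitely many $G_i$-orbits of components (Ahlfors finiteness), and the components form a null-sequence. Together with the conformal elevator (Koebe distortion applied to the group elements), this yields uniform relative-separation and fatness type estimates for the family of closures $\{\overline U\}$. Third, with these geometric inputs in hand, we feed into \cite[Theorem 4.4]{ph:unifplanar} the assumed uniform $\eta$-quasisymmetry of $h$ on $\Lambda(G_1)$ and on each $\overline U$. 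The theorem produces $\eta'$ depending only on $\eta$ and the geometric constants, which are themselves uniform since only finitely many orbits of components are involved. Finally, a quasisymmetric self-map of $\widehat\C$ is quasiconformal, which gives the last assertion.

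I expect the main obstacle to be verifying the precise geometric hypotheses of \cite[Theorem 4.4]{ph:unifplanar} when $G_i$ has parabolic points. In that case components of $\Omega(G_i)$ may touch one another, for instance at accidental parabolic points, so they are not uniformly relatively separated quasidisks. My first attempt would be to observe that the cited theorem is formulated for the tame planar setting of limit sets (connected $\Lambda$ with uniformly perfect complement structure), where the required control comes from the group action. Concretely, I would check the needed inequalities on a fundamental set of components via compactness modulo $G_i$ and the cusp geometry near parabolic points. I would then transport these inequalities to all components using uniformly quasi-Möbius group elements, so that the hypotheses of the gluing theorem are satisfied with constants independent of the component.
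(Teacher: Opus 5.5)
Your reduction matches the paper's: the lemma is simply quoted from \cite[Theorem 4.4]{ph:unifplanar}, with nothing further checked. The problem is the verification you build around that citation, which contains a false step.

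You attribute to the theorem the hypothesis that complementary components are uniformly relatively separated quasidisks. That fails for almost every connected limit set, and parabolics are not the cause. For a Fuchsian or quasi-Fuchsian group, the two components of $\Omega$ have the same boundary. Whenever $\Lambda(G_1)$ has cut pairs (MHF pieces), closures of distinct components meet. Components also need not be quasidisks: accidental parabolics produce the Basilica-type boundaries that Proposition~\ref{prop:connected} treats just before invoking the lemma. So your Step 2 is false as stated. If the gluing theorem really required relative separation, it could not be applied in Proposition~\ref{prop:connected}, which is exactly where the paper uses it.

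The lemma only needs topology plus the given constant. The geometry of each $U$ is already packaged into the hypothesis that $h|_{\overline U}$ is $\eta$-quasisymmetric with a single $\eta$. So no finiteness of orbits is required. The lemma does not even assume finite generation, so Ahlfors finiteness and Theorem~\ref{thm:geoOmega} are not available anyway; the paper uses finiteness of orbits only when verifying the hypotheses inside Proposition~\ref{prop:connected}.

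Connectedness is what replaces separation. Each $U$ is simply connected with $\partial U\subseteq\Lambda(G_1)$. Hence, for $p\in U$ and $q\notin U$, a minimizing arc from $h(p)$ to $h(q)$ meets $h(\partial U)$ at some $h(r)$ with $d(h(p),h(r))\le d(h(p),h(q))$. Let $p^*\in\partial U$ be a nearest point of $\Lambda(G_1)$ to $p$. Quasisymmetry on $\overline U$ applied to $p,p^*,r$ then gives $d(h(p),h(p^*))\le\eta(1)\,d(h(p),h(q))$.

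Projection estimates of this kind reduce three-point configurations spread over several pieces to configurations inside $\Lambda(G_1)$ or inside a single $\overline U$. Weak quasisymmetry on the connected, doubling sphere then upgrades to quasisymmetry. You can drop the normalization and your first two steps and apply the sewing theorem directly.
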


\begin{prop}\label{prop:connected}
    Let $G, G'$ be geometrically finite Kleinian groups with connected limit sets. Let $\Phi_0: (\widehat\C, \Lambda(G), \mathfrak{P}(G)) \longrightarrow (\widehat\C, \Lambda(G'), \mathfrak{P}(G'))$ be a homeomorphism that is quasisymmetric on each dynamical carpet-quotient. Suppose it induces colorings $\kappa, \kappa'$ on $\Omega(G)$ and $\Omega(G')$. Then there exists a color-preserving quasiconformal map $h:(\widehat\C, \Lambda(G), \mathfrak{P}(G)) \longrightarrow (\widehat\C, \Lambda(G'), \mathfrak{P}(G'))$.
\end{prop}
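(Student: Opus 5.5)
The plan is to combine the quasisymmetry produced in the proof of Theorem~\ref{thm:qigluing} with Lemma~\ref{lem:qcextension}. First, $\Phi_0$ preserves the parabolic loci and is ambient, so it is coarsely order-respecting at every rank-two cut point. Moreover, by Theorem~\ref{thm:carpetfreeSchottkyimpliesrigid} (together with the virtual equivariance on carpet pieces given by \cite[Theorem 6.6]{haiss:lec}), $\Phi_0$ is virtually equivariant on each rigid vertex limit set. Hence the hypotheses of Theorem~\ref{thm:qigluing}, including its moreover part, are satisfied.

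I would rerun that inductive construction with two refinements. First, I enlarge the colorings of vertices and edges of $\cT$ so that they also record the $\kappa$-colors of the components of $\Omega(G)$ adjacent to the corresponding vertex limit sets, and I only allow homeomorphisms that extend ambiently and are $\kappa$-color-preserving. Second, at MHF vertices I apply Theorem~\ref{thm:colors} with these enlarged colorings. The limit of the construction is then a quasisymmetry $\psi:\Lambda(G)\to\Lambda(G')$, obtained as the boundary map of the relative quasi-isometry. It preserves parabolic loci, induces an equivariant tree isomorphism realized by an ambient homeomorphism, and hence, since the components of $\Omega(G)$ form a null sequence, induces a color-preserving bijection $U\mapsto U'$ between the components of $\Omega(G)$ and $\Omega(G')$ with $\psi(\partial U)=\partial U'$.

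Next I extend $\psi$ across each component $U$. Up to finitely many $G$-orbits, $U$ is an invariant component of a finitely generated function group $G_U=\stab_G(U)$, and similarly for $U'$. Every element of $F$ is realized by some pair of components, so I fix representatives. For one representative $U$ of each $G$-orbit, I would like to show that $\psi|_{\partial U}$ is virtually equivariant between $G_U$ and $G'_{U'}$, and then extend it to a quasiconformal map of $\overline U$, using Theorem~\ref{thm:geoOmega} (John/circle-domain models) and a Douady--Earle type extension on the Riemann surface $U/G_U$. I then transport this extension to the other translates by equivariance, with uniform constants. The main obstacle is this step: $\psi$ is generally not equivariant on $\partial U$, because the pieces coming from MHF vertices are only locally conjugate.

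To handle the obstacle, I would instead obtain uniform control directly. By conformal elevator arguments (Koebe distortion at conical points, together with the local conjugacy of Theorem~\ref{thm:colors} at parabolic and gluing points), I would show that the maps $g'\circ\psi\circ g^{-1}$ restricted to $\partial(gU)$, renormalized, form a compact family. It would then follow that the boundary maps $\partial U\to\partial U'$ are uniformly quasisymmetric. Since $\overline U$ and $\overline{U'}$ are uniformly quasi-M\"obius to circle domains (Theorem~\ref{thm:geoOmega}), Lemma~\ref{lem:qcextension}'s hypothesis that each $h|_{\overline U}$ be $\eta$-quasisymmetric reduces to extending uniformly quasisymmetric boundary maps of circle domains. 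For this I would use the Beurling--Ahlfors/Douady--Earle extension on each disk and rely on the null-sequence property. Assembling these extensions gives a homeomorphism $h$ of $\widehat\C$ that is $\eta$-quasisymmetric on $\Lambda(G)$ and on each $\overline U$, and Lemma~\ref{lem:qcextension} yields that $h$ is quasiconformal and color-preserving.
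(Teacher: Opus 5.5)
There is a genuine gap: your reduction fails at the step ``$\overline U$ and $\overline{U'}$ are uniformly quasi-M\"obius to circle domains (Theorem~\ref{thm:geoOmega})''.

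Since $\Lambda(G)$ is connected, every component $U$ of $\Omega(G)$ is simply connected. A circle-domain model for $U$ would therefore be a round disk, which forces $\partial U$ to be a Jordan curve. That covers only the case where $\stab_G(U)$ is quasi-Fuchsian.

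When $\stab_G(U)$ contains accidental parabolics, the step fails. This is the case of a geometrically finite group on the Bers boundary, and it occurs whenever $\Lambda(G)$ has cut points. Then condition (b) of Theorem~\ref{thm:geoOmega}(1) fails, since an accidental parabolic has no invariant disk in $U$. So $U$ is not John, and $\partial U$ is a Basilica-type set with cut points. In particular $\overline U$ is not even homeomorphic to a closed disk.

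For such $U$, knowing that $h|_{\partial U}$ is uniformly quasisymmetric in the spherical metric says nothing about the induced homeomorphism on prime ends, and that is the map you must extend into $U$. Incidentally, this uniform quasisymmetry is automatic by restricting the global quasisymmetry from Theorem~\ref{thm:qigluing}, so your conformal-elevator compactness step is unnecessary. Moreover, conjugating a Douady--Earle extension by Riemann maps of a non-John domain gives a map that is quasiconformal on $U$ but gives no control of quasisymmetry on $\overline U$.

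The missing ingredients are those of the paper's Basilica case.
\begin{enumerate}
\item Show that the induced circle map $\Psi^{-1}\circ h_*\circ\Phi$ is quasisymmetric, where $\Phi,\Psi$ are the Riemann maps of $U$ and $h(U)$. The paper restricts the relative quasi-isometry from Theorem~\ref{thm:qigluing} to $\stab_G(U)\to\stab_{G'}(h(U))$, taken relative only to the persistent cusps. It then applies \cite[Theorem 3.18]{haiss:lec} to the resulting cusped spaces, whose boundary is the circle of prime ends.
\item Extend this circle map quasiconformally in $\D$, push it forward to $U$, and extend over the complementary quasidisks of $\overline U$.
\item Invoke the conformal removability of the Basilica limit set $\partial U$ \cite[Corollary 1.6]{LMM26} to conclude that the resulting map is quasiconformal on $\widehat\C$, hence quasisymmetric on $\overline U$.
\item Obtain uniformity over the finitely many orbits from \cite[Cor.\,4.3]{ph:unifplanar}.
\end{enumerate}
Only then is the hypothesis of Lemma~\ref{lem:qcextension} verified for every component. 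Your treatment of the quasidisk components and the final gluing are in line with the paper.
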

\begin{proof}
    Theorem \ref{thm:qigluing} gives a color-preserving quasisymmetric map $h$ that extends to a homeomorphism on $\widehat\C$. 
    
    Let $U$ be a component of $\Omega(G)$. Since $U$ is simply connected, its stabilizer $\stab_G(U)$ is isomorphic to a surface group. Thus, we have two cases \cite{Mas70}:
    \begin{enumerate}
        \item if $\stab_G(U)$ is quasi-Fuchsian, then $U$ is a quasidisk;
        \item otherwise, $\stab_G(U)$ is a geometrically finite Kleinian group on the Bers boundary, and $\partial U$ is homeomorphic to a Basilica (see Figure~\ref{fig:ranktwoSch}, right).
    \end{enumerate}
    In the first case, the quasisymmetric map extends to a uniform quasisymmetric map on $\overline{U}$, as $h|_{\partial U}$ is quasisymmetric and $U$ is a uniform quasidisk.
    
    In the second case, we note that parabolic points on $\partial U$ are either accidental parabolic points that correspond to the cut points of $\partial U$, or {\em persistent parabolic points} that do not correspond to cut points.
    Let $\Phi:\D \rightarrow U$ (and $\Psi: \D \rightarrow h(U)$) be the uniformization map, which extends to a homeomorphism between $\SS^1$ and the ideal boundaries $\partial^IU$ (and $\partial^I h(U)$).
    The homeomorphism $h: \partial U \rightarrow \partial h(U)$ induces a homeomorphism $h_*$ between their ideal boundaries. 
    We claim that $\Psi^{-1} \circ h_* \circ \Phi$ is quasisymmetric on $\SS^1$.
    Indeed, note that the quasi-isometry between $G, G'$ restricts to a quasi-isometry 
    $$
    H: (\stab_G(U), \cP_{U}) \rightarrow (\stab_{G'}(h(U)), \cP_{h(U)}),
    $$
    where $\stab_G(U), \stab_{G'}(h(U))$ are equipped with word metrics, and $\cP_{U}, \cP_{h(U)}$ are the peripheral groups associated to the {persistent} parabolic points. Attaching horoballs in the sense of Groves--Manning to these {persistent} parabolic points, it follows that the boundary map $\Psi^{-1} \circ h_* \circ \Phi$ is quasisymmetric
    \cite[Theorem 3.18]{haiss:lec}. Therefore, $\Psi^{-1} \circ h_* \circ \Phi$ extends to a quasiconformal map on $\D$. 
    Pulling this extension back via the uniformization maps gives a quasiconformal extension of $h|_{\partial U}$ over $U$. Since the closure of each complementary component of $\overline{U}$ is a quasidisk, and $h$ is quasisymmetric on its boundary, we can extend $h$ to a homeomorphism of $\widehat\C$ which is quasiconformal on $\widehat\C \setminus \partial U$. By \cite[Corollary 1.6]{LMM26}, the Basilica limit set $\partial U$ is conformally removable, so the extension is quasisymmetric on $\overline{U}$. Since there are only finitely many orbits of such $U$ and $h$ is quasisymmetric on $\Lambda(G)$, it follows that the extension is uniformly quasisymmetric \cite[Cor.\,4.3]{ph:unifplanar}.

    The gluing lemma (Lemma~\ref{lem:qcextension}) implies that $h$ extends to a quasiconformal map $h:(\widehat\C, \Lambda(G), \mathfrak{P}(G))\rightarrow (\widehat\C, \Lambda(G'), \mathfrak{P}(G'))$, and the proposition follows.
\end{proof}

\subsubsection*{Quasiconformal extension with colored invariant sets}
Let $G$ be a non-elementary geometrically finite Kleinian group, and let $X \subseteq \Omega(G)$ be a $G$-invariant set so that $|X/G| < \infty$. Let $F$ be a finite set. A {\em coloring on $X$} is a $G$-invariant map $\kappa: X \rightarrow F$ that is onto.
For $\varepsilon > 0$, we let $\mathcal{D}_{X, \varepsilon}$ be the collection of all hyperbolic disks in $\Omega(G)$ centered at points in $X$ with radius $\varepsilon$. Note that if $\varepsilon$ is small, the disks are pairwise disjoint. The coloring on $X$ induces a coloring $\kappa: \mathcal{D}_{X, \varepsilon} \rightarrow F$.
We start with the following proposition.
\begin{prop}\label{prop:colorpreservingcompressiondisk}
    Let $G_1, G_2$ be two geometrically finite Kleinian groups with nontrivial connected limit set. 
    Suppose that $G_1, G_2$ are quasi-isometric relative to parabolic subgroups and let $\phi: (\Lambda(G_1),\mathfrak{P}(G_1)) \rightarrow (\Lambda(G_2), \mathfrak{P}(G_2))$ be an induced quasisymmetric map.
    By Proposition~\ref{prop:connected}, $\phi$ extends to a quasiconformal map $\Phi_0$ on $\widehat\C$.
    
    Let $X_i \subseteq \Omega(G_i), i =1,2$ be a $G_i$-invariant set so that $|X_i/G_i| < \infty$ with colorings $\kappa_i: X_i \rightarrow F$.
    Suppose that $\phi$ also extends to a color-preserving homeomorphism $\widetilde\phi:(\widehat\C, X_1)\rightarrow(\widehat\C, X_2)$. Then there exists a color-preserving quasiconformal extension 
    $$
    \Phi: (\widehat\C, X_1) \rightarrow (\widehat\C, X_2).
    $$
    Given a point $x_1\in X_1$, and $x_2 = \widetilde\phi(x_1)\in X_2$, the quasiconformal homeomorphism $\Phi$ can be chosen so that $\Phi(x_1) \in G_2 x_2$.
    The quasiconformal constant only depends on $G_1, G_2, X_1, X_2$ and the quasisymmetry of $\phi$ (but does not depend on $x_1$).
    
    Moreover, for all sufficiently small $\varepsilon$, $\phi$ extends to a color-preserving quasiconformal extension 
    $$
    \Phi_\varepsilon: (\widehat\C, \mathcal{D}_{X_1, \varepsilon}) \rightarrow (\widehat\C, \mathcal{D}_{X_2, \varepsilon}).
    $$
\end{prop}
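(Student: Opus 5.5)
The plan is to start from the quasiconformal extension $\Phi_0$ of $\phi$ supplied by Proposition~\ref{prop:connected} and to modify it only inside the components of $\Omega(G_1)$. Since $\Lambda(G_i)$ is connected, every component $U$ of $\Omega(G_1)$ is simply connected. There are finitely many $G_1$-orbits of such components and finitely many $G_1$-orbits of points of $X_1$. So it suffices to work one orbit of components at a time, choosing an orbit representative $U$ and then spreading the modification equivariantly. Fix such a $U$ and put $U' = \Phi_0(U) = \widetilde\phi(U)$; the latter equality holds because both maps extend $\phi$, and on $\widehat\C\setminus\Lambda$ the components are determined by their boundaries. The stabilizers $\Gamma = \stab_{G_1}(U)$ and $\Gamma' = \stab_{G_2}(U')$ act cofinitely on $X_1\cap U$ and on $X_2\cap U'$, respectively.

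The key step is to show that $\Phi_0|_{U}$ is a bounded hyperbolic distance from a map that is \emph{coarsely equivariant}. Uniformize $U$ and $U'$ by $\D$, as in the proof of Proposition~\ref{prop:connected}. The induced boundary map $\Psi^{-1}\circ h_*\circ\Phi$ is the boundary map of the restricted quasi-isometry $(\Gamma,\cP_U)\to(\Gamma',\cP_{U'})$. Replace $\Phi_0|_U$ by the Douady--Earle extension of this boundary map, conjugated back through the uniformizations. The result is bi-Lipschitz for the Poincar\'e metrics, and, exactly as in the proof of Corollary~\ref{cor:colors}, it lies within bounded hyperbolic distance $K_0$ of the quasi-isometry. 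Consequently, points of $X_1\cap U$ are sent within distance $K_0$ of $\Gamma'$-translates of points of $X_2\cap U'$.

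To fix the colors, use that $\widetilde\phi$ is a color-preserving homeomorphism. It gives a bijection between $X_1\cap U$ and $X_2\cap U'$ that is color-preserving and, on the level of the quasi-isometry, coarsely compatible with the Douady--Earle map. Concretely, I would argue that the cyclic/planar arrangement of the points relative to the peripheral structure is preserved, so the bijection moves points a bounded distance; this needs care and is the step I expect to be the main obstacle. The clean alternative is to choose, for each $x\in X_1\cap U$, a target in $X_2\cap U'$ of the same color within bounded distance of the Douady--Earle image. Density of each color class (cofiniteness) makes this possible, but it requires a bijection, which I would obtain by a Hall-marriage/back-and-forth argument with uniform bounds. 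Once we have a color-preserving bijection $X_1\cap U\to X_2\cap U'$ with bounded displacement, postcompose the Douady--Earle map by a product of uniformly bi-Lipschitz ``point-pushing'' maps supported in disjoint hyperbolic balls of bounded radius, as in the proof of Theorem~\ref{thm:qcbetweencountabledensesubsets}. A bounded-geometry coloring of the balls into finitely many disjoint families, each pushed simultaneously, makes the product quasiconformal with a constant depending only on $G_i$, $X_i$ and $\eta$.

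The normalization $\Phi(x_1)\in G_2x_2$ is arranged by postcomposing the matching in the component of $x_1$ with a $\Gamma'$-element. This does not change the constants, since everything is conformally natural. Finally, the map is quasiconformal on each $\overline{U}$ with uniform constants and agrees with $\phi$ on $\Lambda(G_1)$, so Lemma~\ref{lem:qcextension} glues these pieces into a global quasiconformal map $\Phi$. For the disks $\mathcal{D}_{X_i,\varepsilon}$, I would additionally require each pushing map to be a hyperbolic isometry (a rotation-free M\"obius map of the disk) on a small ball around the moved point. Taking $\varepsilon$ smaller than the injectivity scale then gives $\Phi_\varepsilon$ mapping each colored disk of radius $\varepsilon$ onto the corresponding one up to bounded distortion; composing with an $\varepsilon$-local radial correction makes the images exact.
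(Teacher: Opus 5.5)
Your architecture matches the paper's. You keep $\Phi_0$ on $\Lambda(G_1)$. In each component $\Delta$ of $\Omega(G_2)$ and for each color $j$, you replace the color-$j$ points of $\Phi_0(X_1)\cap\Delta$ by those of $X_2\cap\Delta$ via a bijection of bounded hyperbolic displacement, realized by point-pushing, so that $\Phi=\Phi_1\circ\Phi_0$. Your Douady--Earle replacement plays the role of the paper's uniform modification of $\Phi_0$, which makes both sets uniformly discrete and relatively dense in $\Delta^\circ$ ($\Delta$ minus horodisks at persistent cusps).

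The gap is in producing that bijection, which is the heart of the proof.

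\emph{First route.} It cannot work, because $\widetilde\phi$ is only a topological extension. Compose it with a homeomorphism supported in $\Delta$ that swaps infinitely many pairs of same-colored points of $X_2$, at hyperbolic distance tending to infinity, with supports shrinking to points of $\partial\Delta$. This is another admissible $\widetilde\phi$. So nothing bounds the displacement of $\widetilde\phi|_{X_1}$; its only use is to guarantee that every color occurring in $\Phi_0^{-1}(\Delta)$ also occurs in $\Delta$, and conversely.

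\emph{Second route.} Density does not give Hall's condition: $\Z$ and $2\Z$ are both nets in $\R$ and admit no bijection of bounded displacement. Here the two nets are orbit-unions for the different groups $\stab_{G_1}(\Phi_0^{-1}(\Delta))$ and $\stab_{G_2}(\Delta)$, with possibly different numbers of orbits of each color, so their densities genuinely differ. What makes the matching exist is non-amenability. $\Delta^\circ$ is a bounded-geometry space quasi-isometric to the non-elementary group $\stab_{G_2}(\Delta)$, so Whyte's theorem \cite{Why99} applies and yields the bijections with a uniform displacement bound. Its proof is a Hall-marriage argument in which Hall's condition comes from the linear isoperimetric inequality. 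This is the input the paper uses, and it is missing from your proposal.

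Two further steps need repair.

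\emph{Normalization.} $\Phi(x_1)\in G_2x_2$ cannot be obtained by postcomposing with some $\gamma'\in\stab_{G_2}(\Delta)$. Doing so does not change the $G_2$-orbit of the image of $x_1$, which may lie in a different $G_2$-orbit of the same color. It also destroys either the displacement bound (if applied to the matching) or the boundary values $\phi|_{\partial U}$ (if applied to the map on $U$). Instead, as in the paper, pick $\widetilde x_2\in G_2x_2\cap\Delta$ closest to $\Phi_0(x_1)$; this set is nonempty because $x_2=\widetilde\phi(x_1)\in\Delta$. Then modify the bijection by a transposition: send $\Phi_0(x_1)$ to $\widetilde x_2$, and send the former preimage of $\widetilde x_2$ to the former image of $\Phi_0(x_1)$. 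Colors are preserved and the displacement bound does not depend on $x_1$.

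\emph{Gluing.} Lemma~\ref{lem:qcextension} needs uniform quasisymmetry on each $\overline U$, not just quasiconformality inside $U$, and this is not automatic for non-quasidisk components. You also cannot ``spread equivariantly'', since $\phi$ is not equivariant. It is simpler to note that $\Phi_1=\Phi\circ\Phi_0^{-1}$ is the identity on $\Lambda(G_2)$. On each component it is bi-Lipschitz and at bounded distance from the identity for the hyperbolic metric. Since $\Lambda(G_2)$ is uniformly perfect, bounded hyperbolic displacement at $z$ is $O(\dist(z,\Lambda(G_2)))$. Hence the metric definition of quasiconformality holds at every point, and $\Phi_1$ is quasiconformal on $\widehat\C$.
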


\begin{proof}
    Let $\Phi_0$ be a quasiconformal extension of $\phi$, and let $\widetilde{X}_2 = \Phi_0(X_1)$.
    Let $\Delta$ be a component of $\Omega(G_2)$.
    For each color $j$, let $X_2(j, \Delta) \subseteq X_2$ and $\widetilde{X}_2(j, \Delta) \subseteq \widetilde{X}_2$ be the sets of color $j$ points in $\Delta$. 

    Since $\phi$ is induced by a relative quasi-isometry, it is compatible with the parabolic subgroups. Moreover $X_2(j, \Delta)$ is invariant under $\stab(\Delta)$, while $\widetilde{X}_2(j, \Delta)$ is the quasiconformal image of a set invariant under $\stab(\Phi_0^{-1}(\Delta))$. Thus after modifying the quasiconformal extension $\Phi_0$ in a uniform way if necessary, we may assume that $X_2(j, \Delta), \widetilde{X}_2(j, \Delta)$ are uniformly discrete and relatively dense in $\Delta^\circ = \Delta \setminus \bigcup D$, where $\bigcup D$ is a $\stab(\Delta)$-invariant family of horodisks based at the persistent cusps of $\stab(\Delta)$. Here $\Delta^\circ$ is equipped with the path metric induced by the hyperbolic metric on $\Delta$.
    They are non-amenable and have bounded geometry; see \cite[\S 2]{Why99}. Thus, by \cite[Theorem 1.1 and Theorem 5.1]{Why99} (see also \cite{Bog97}), there exists a bi-Lipschitz map
    $$
    H_{j, \Delta}: \widetilde{X}_2(j, \Delta) \rightarrow X_2(j, \Delta)
    $$ 
    such that $d_\Delta(x, H_{j, \Delta}(x))$ is uniformly bounded.
    Therefore, assembling these maps over all colors $j$ and components $\Delta$, we obtain a color-preserving bi-Lipschitz $H': \widetilde{X}_2 \rightarrow X_2$ such that $H'$ is at uniformly bounded distance from the identity. 
    Now let $\widetilde{x}_2\in G_2 x_2 \cap \Delta$ be a point closest to $\Phi_0(x_1)$ with respect to the $d_\Delta$-metric. Let $s \in \widetilde X_2$ so that $H'(s) = \widetilde{x}_2$. We define $H$ so that $H(x)= H'(x)$ if $x \in \widetilde X_2 \setminus \{\Phi_0(x_1), s\}$, $H(\Phi_0(x_1)) = \widetilde{x}_2$ and $H(s) = H'(\Phi_0(x_1))$. Then $H$ is $K$-bi-Lipschitz and $d_\Delta(x, H(x)) \leq M$ for some constant $M$, where $K, M$ do not depend on $x_1$.
    Since $H$ is at uniformly bounded distance from the identity, by locally pushing points, we can extend $H$ to a bi-Lipschitz map with respect to the hyperbolic metric on each component. This gives a quasiconformal map $\Phi_1$ so that $\Phi_1(\widetilde{X}_2) = X_2$ and $\Phi_1|_{\Lambda(G_2)} = \id$.
    The proposition follows by taking $\Phi:= \Phi_1 \circ \Phi_0$.

    For the moreover part, by choosing $\varepsilon$ sufficiently small, we can find $\delta>0$ such that for each $x \in X_1$, if $U_x$ denotes the hyperbolic disk of radius $\delta$ centered at $x$, then the collection $\{U_x: x\in X_1\}$ is pairwise disjoint, $U_x$ contains $D_{x, \varepsilon} = B(x, \varepsilon)$ and $\Phi(U_x)$ contains the disk $D_{\Phi(x), \varepsilon}$.
    Then a standard interpolation argument proves the moreover part.
\end{proof}

\subsubsection{$\Lambda(G)$ is disconnected}\label{subsec:disconnectedQCUniversal}
Note that $\phi: (\widehat\C, \Lambda(G),\mathfrak{P}(G)) \rightarrow (\widehat\C, \Lambda(G'),\mathfrak{P}(G'))$ preserves parabolic points and their ranks.
Therefore, if $\Lambda(G)$ is a Cantor set, then Theorem~\ref{thm:qcext} follows from \cite{Hid26}.
Thus, we assume from now on that $\La(G)$ is not a Cantor set. 

Moreover, we may adapt Proposition \ref{prop:colorpreservingcompressiondisk} as follows (we omit the proof).

\begin{lem}\label{lem:parabolicmatch} Let $v,v'$ be parabolic vertices (of the same rank) and let $X_v$ (resp. $X_{v'}$) be finitely many 
$G_v$-orbits (resp. $G_{v'}$-orbits). There exists an ambient quasiconformal conjugacy between finite index subgroups that maps $X_v$ to $X_{v'}$, and maps 
pairwise disjoint invariant neighborhoods $\cD_{X_v}$ of definite size to $\cD_{X_{v'}}$.
\end{lem}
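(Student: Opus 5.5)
The plan is to reduce to the case of a single parabolic vertex group $G_v$, which is an elementary parabolic group of rank one or two fixing a point $p_v$, and to treat the $G_v$-action on $\widehat\C\setminus\{p_v\}$ explicitly. First, after passing to finite-index subgroups of $G_v$ and $G_{v'}$ (by Selberg's lemma, both torsion free), I will normalize by M\"obius maps so that $p_v=p_{v'}=\infty$. In rank two, $G_v$ and $G_{v'}$ then act as lattices $\Z^2$ of translations of $\C$; in rank one, they act as cyclic translation groups $\langle z\mapsto z+\tau\rangle$ and $\langle z\mapsto z+\tau'\rangle$.

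Next I will build an affine conjugacy. In rank two, the real-linear map $A$ carrying one lattice basis to the other conjugates the translation groups and is bi-Lipschitz, hence quasiconformal on $\widehat\C$ and fixing $\infty$. In rank one, the complex scaling $z\mapsto (\tau'/\tau)z$ conjugates the generators and is conformal. So we may assume $G_v=G_{v'}=\Gamma$ as translation groups acting on $\C$.

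Then I will match the orbits. $X_v$ consists of finitely many $\Gamma$-orbits, and after the affine conjugacy so does the image of $X_{v'}$. In rank two, $\C/\Gamma$ is a torus and $X_v,X_{v'}$ project to finite sets. Passing to a further finite-index sublattice, I will arrange that the two projected sets have the same cardinality; this is possible since the number of orbits multiplies by the index, and I use the sublattices $k\Gamma$ on each side with appropriate $k$, with the cardinalities per color matched as in Proposition~\ref{prop:colorpreservingcompressiondisk}. Then I will choose a quasiconformal self-map of the torus isotopic to the identity sending one finite set to the other. Lifting it gives a $\Gamma$-equivariant quasiconformal map of $\C$ at bounded distance from the identity, which extends over $\infty$ to a quasiconformal map of $\widehat\C$ by removability of points. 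In rank one, the quotient $\C/\Gamma$ is an annulus (a punctured plane). Finite sets of the same cardinality, arranged by passing to $\langle z\mapsto z+k\tau\rangle$, are matched by a compactly supported isotopy of the cylinder, lifting to an equivariant map that is the identity far from the real strip containing the points. Hence it is quasiconformal and extends over $\infty$.

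For the neighborhoods, I will choose $\varepsilon$ so that the disks $\cD_{X_v}$ in the torus or cylinder quotient are disjoint, and modify the quotient map by a standard interpolation so that it is an isometric translation on each disk. Equivariance then yields a map sending $\cD_{X_v}$ to $\cD_{X_{v'}}$. I expect the main subtlety to be the count-matching via finite index and the bounded-geometry control in rank one near $\infty$, where I would rely on the maps being the identity outside a fundamental strip.
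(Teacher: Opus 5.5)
Your overall strategy is sound and is essentially the adaptation of Proposition~\ref{prop:colorpreservingcompressiondisk} that the paper has in mind; the paper omits this proof. Because $G_v\cong\Z$ or $\Z^2$ is amenable, a bounded-displacement matching of $X_v$ with $X_{v'}$ in the spirit of Whyte's theorem requires the two orbit sets to have the same density. Passing to finite-index subgroups is exactly what equalizes the orbit counts, and your point-pushing isotopy on the torus or cylinder, followed by interpolation on disks, then finishes the argument.

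However, the count-matching step fails as you wrote it in rank two. The sublattice $k\Gamma$ has index $k^2$, so with $n$ and $n'$ orbits you would need $nk^2=n'k'^2$. This is solvable only when $nn'$ is a perfect square, so it fails for instance when $n=1$, $n'=2$. Use sublattices of arbitrary index instead, for example $\Z\omega_1\oplus n'\Z\omega_2$ on the $v$-side and $\Z\omega_1\oplus n\Z\omega_2$ on the $v'$-side. Each carries $nn'$ orbits, and the two are conjugate by an orientation-preserving real-linear map, which is all you need. Your rank-one choice $\langle z\mapsto z+k\tau\rangle$ has index $k$ and is fine.

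Drop the clause about matching cardinalities per color; that cannot be done in general. Passing to an index-$m$ subgroup multiplies every color count by the same $m$, so the ratios between color counts are invariants of any equivariant conjugacy. The lemma does not ask for colors. In its application, the paper avoids the issue by arranging, through Proposition~\ref{prop:newincompressiblecoredecomp}\eqref{prop:newincompressible:1}, that all points relevant at a parabolic vertex carry a single color.

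Two minor points remain. First, in rank two your map $A$ is not conformal. So either take the $v$-side neighborhoods to be the $A$-preimages of round disks (ellipses of bounded eccentricity depending only on the two lattices), or build the linear correction into the interpolation. Second, ordered configuration spaces of a connected surface are connected, so your isotopy can also send a prescribed point of $X_v$ to a prescribed point of $X_{v'}$. This is what the inductive step in the proof of Theorem~\ref{thm:qcext} actually uses.
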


We refer to \S\,\ref{subsec:icd} for the notions that are used in the sequel.
Unlike the JSJ decomposition, 
the incompressible decomposition of a hyperbolic manifold is not canonical (only the non-elementary and parabolic vertices are), and thus such homeomorphisms 
may not respect the incompressible decompositions. To overcome this, we first construct specific incompressible decompositions as follows.

Given a geometrically finite Kleinian group $G$ with disconnected limit set, we let $V_{nd}$ denote the set of nontrivial connected components
of $\La(G)$ and of those singleton components that are parabolic points. It follows that $\Homeo^+(\cbar,\La(G),\mathfrak{P}(G))$ acts on $V_{nd}$.
To $v\in V_{nd}$, we associate the corresponding component $\La_v$. Note that the set of vertices of any incompressible decomposition will contain $V_{nd}$. We define three colorings $\kappa:V_{nd}\to F$, $\kappa_{\Omega}:\pi_0(\Omega(G))\to F_{\Omega}$ and $\kappa_v: \pi_0(\Omega(G_v)) \to F_v$ where $v \in V_{nd}$ such that
\bit
\item $\kappa(v)=\kappa(w)$ if and only if there is a homeomorphism of $(\widehat\C, \Lambda(G), \mathfrak{P}(G))$ that sends $\Lambda_v$ to $\Lambda_w$;
\item $\kappa_\Omega(U)=\kappa_\Omega(V)$ if and only if there is a homeomorphism of $(\widehat\C, \Lambda(G), \mathfrak{P}(G))$ that sends $U$ to $V$;
\item $\kappa_v(U) = \kappa_{v'}(V)$ if and only if there is a homeomorphism of $(\widehat\C,\Lambda_v, \mathfrak{P}(G_v))$ to $(\widehat\C,\Lambda_{v'}, \mathfrak{P}(G_{v'}))$ that sends $U$ to $V$ for any $v'\in V_{nd}$.
\eit
Given two ambiently homeomorphic limit sets, we will choose matching colors.

\begin{prop}\label{prop:newincompressiblecoredecomp}
    Let $G, G'$ satisfy the assumptions of Theorem~\ref{thm:qcext}, whose limit sets are disconnected and are not Cantor sets.
      There exist finite index subgroups $H< G, H'< G'$ and incompressible decompositions of $H, H'$ with the following properties. Let $\cD, \cD'$ denote the  lifts of the boundaries of the compression disks and let $\cT, \cT'$ be the corresponding trees. Then their sets of vertices are $V_{nd}\cup V_D$ and $V_{nd}' \cup V_D'$ respectively so that
      \begin{enumerate}
        \item\label{prop:newincompressible:1} For any $v \in V_{P}$ (resp. $v' \in V_P'$), all vertices $w \in \cT$ (resp. $w'\in \cT'$) at distance $2$ from $v$ have the same color; and 
        \item\label{prop:newincompressible:2} for any $v \in V_{nd}, v' \in V_{nd}'$ of the same color, any $\Omega(G_v)$-color-preserving homeomorphism $h:(\widehat\C, \Lambda_v, \mathfrak{P}_v) \rightarrow (\widehat\C, \Lambda_{v'}, \mathfrak{P}_{v'})$ is isotopic relative to $\Lambda_v$ to a ${\mathcal{D}}_v$-color-preserving homeomorphism 
    $$
    h:(\widehat\C, \Lambda_{v},\mathfrak{P}_v, {\mathcal{D}}_{v})\rightarrow (\widehat\C, \Lambda_{v'}, \mathfrak{P}_{v'}, {\mathcal{D}}_{v'}),
    $$  
    which in turn is isotopic relative to $\cD_v$ to a homeomorphism
    $$
    H: (\widehat\C, \Lambda(G), \mathfrak{P}(G), \cD) \rightarrow (\widehat\C, \Lambda(G'), \mathfrak{P}(G'),\cD').
    $$
      \end{enumerate}
\end{prop}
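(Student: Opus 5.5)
We will build the required incompressible decompositions by hand, starting from the canonical vertices $V_{nd}$ and adding disk vertices in a controlled way. The approach is to pass first to finite index subgroups $H<G$, $H'<G'$ that are torsion-free and "color-regular". The first requirement is that the stabilizer of every $v\in V_{nd}$ acts on the colored components of $\Omega(G_v)$ (colors $\kappa_v$) in a way that depends only on $\kappa(v)$. The second is that every parabolic vertex sees a single color of neighbours. For the second point we use LERF/separability (Agol, as in Lemma~\ref{lem:nonQScarpet}) and Selberg's lemma. Concretely, we take a characteristic-type finite cover of $\Omega(G)/G$ (and of $\Omega(G')/G'$) that unwraps each parabolic (type P) vertex. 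In that cover, each rank-one or rank-two cusp neighborhood has the adjacent compression disks attached along a single orbit of a single color. We obtain this by taking a common refinement, which has finite index since there are finitely many orbits of colors.

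Next we construct the compression disks. We work in the pared manifold $(N,P)$ for $H$ and its incompressible core $\Sigma$, whose components correspond exactly to the vertices of $V_{nd}$ modulo $H$. Any two core decompositions differ by handle slides, so we choose the $1$-handles joining the core components symmetrically. We fix a maximal tree of the "component graph" whose vertices are the $H$-orbits of $V_{nd}$, colored by $\kappa$. Along each edge we choose a compression disk whose boundary curve lies in a component of $\Omega/H$ of prescribed $\kappa_\Omega$-color, and we place its two feet at points of prescribed $\kappa_v$-colors. We then perform the same construction for $H'$, using the matching colors. Property \eqref{prop:newincompressible:1} follows because, after the cover, every disk incident to a parabolic vertex is placed in the same colored orbit. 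The resulting trees $\cT,\cT'$ have vertex sets $V_{nd}\cup V_D$, with no ball vertices. To achieve this we absorb ball vertices by sliding handles so that each disk joins two core components directly; this is possible because a ball vertex together with its disks can be collapsed to a single disk after handle slides.

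For \eqref{prop:newincompressible:2}, let $h$ be $\Omega(G_v)$-color-preserving. In each component $\Delta$ of $\Omega(G_v)$, the circles of $\mathcal{D}_v$ are the boundaries of finitely many $G_v$-orbits of round-ish disks with colors. By the same bounded-displacement bijection of colored discrete sets (Whyte) used in Proposition~\ref{prop:colorpreservingcompressiondisk}, together with a push isotopy supported in $\Omega(G_v)$, we isotope $h$ relative to $\Lambda_v$ so that it maps $\mathcal{D}_v$ to $\mathcal{D}_{v'}$ color-preservingly. Each disk $D_C^-$ then contains the rest of the limit set hanging off $C$, which is the subtree beyond the disk vertex. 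Its color is determined by the color of $C$, and hence by the coloring of the neighbouring vertex. We therefore extend $h$ inside each $D_C^-$ inductively along the tree, using the homeomorphism guaranteed by equality of the $\kappa$-colors and matching it on $C$ by an isotopy rel $C$. The diameters of these disks form a null sequence, so the limiting map $H$ is a homeomorphism of $\widehat\C$.

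The main obstacle is the first step: ensuring that the color data at each vertex is "regular", so that matching colors of a vertex determines the colored pattern of its attached disks. Without this, a $\kappa_v$-colored component might carry disks leading to differently colored subtrees. I would try to enforce regularity by passing to a deep finite-index subgroup via separability, so that the colored combinatorics becomes homogeneous across each orbit.
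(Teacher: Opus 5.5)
\textbf{The gap.} The gap is the step you yourself flag as the main obstacle, and the remedy you propose cannot close it. Restricting the action to a finite-index subgroup $H<G$ does not change the Bass--Serre tree $\cT$ of a given incompressible decomposition. It also does not change the colours of the vertices, which are defined by homeomorphism type, not by the group. A decomposition of $G$ lifted to $H$ therefore reproduces exactly the same colour pattern at distance $2$ from every vertex.

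So no characteristic or ``deep'' cover, and no appeal to LERF, can by itself make a parabolic vertex see a single colour. Nor can it make two equally coloured vertices carry the same colour pattern on their stars $\cD_v$. A cover only enlarges the set of admissible decompositions; the homogeneity must come from how the compression disks are chosen.

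Your spanning-tree choice does not supply this homogeneity either. Suppose $H$ has two orbits of vertices $v_1,v_2$ of the same colour and an orbit $w$ of another colour, and the quotient graph joins $v_1$ to $v_2$ and $v_2$ to $w$ through disk vertices. Then every curve of $\cD_{v_1}$ leads to a vertex of colour $\kappa(v_2)$, while $\cD_{v_2}$ also contains curves leading to colour $\kappa(w)$. An $\Omega(G_v)$-colour-preserving $h:\Lambda_{v_1}\to\Lambda_{v_2}$ then cannot be isotoped rel $\Lambda_{v_1}$ to a $\cD_v$-colour-preserving map, and no further cover repairs this.

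The same defect breaks two later steps:
\begin{itemize}
\item Your Whyte-type matching needs each colour to occur in $\Delta$ exactly when it occurs in $h(\Delta)$.
\item Your inductive extension into the disks $D_C^-$ needs the subtree beyond $C$ to be determined by the colour of $C$, which is false without homogeneity.
\end{itemize}
Finally, the component of $\Omega/H$ containing a given disk boundary is forced by the topology and cannot be freely prescribed.

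\textbf{The missing idea.} What is needed is a specific re-choice of compression disks producing a hub-and-spoke pattern; this is Lemma~\ref{lem:incompressiblefunct}. One works inside each non-simply connected component $U$ of $\Omega(G)$, with the function group $\stab_G(U)$:
\begin{itemize}
\item Fix a non-elementary piece $X_1$.
\item For every other piece $X_i$, join its compression-disk boundaries by arcs into a graph $\Gamma_i$.
\item Absorb a regular neighbourhood $N(\Gamma)$ into $X_1$: cut it into balls $B_A$ and glue them to $X_1$ along disks $D_A$ with $\partial D_A\subset X_1$.
\end{itemize}
The enlarged piece $\widehat X_1$ receives a new colour $0$, and the resulting decomposition has three properties. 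Every disk vertex is adjacent to a colour-$0$ vertex. Pieces of colour $\geq 1$ are adjacent only to colour-$0$ pieces. Every colour-$0$ piece sees all colours $0,\dots,l$ at distance $2$.

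Finite covers play only two auxiliary roles. They guarantee $|\tilde\kappa^{-1}(i)|\geq 2$, and they give hub-to-hub adjacency via the double cover dual to a non-separating curve in $\widehat X_2$. LERF is used only to promote the resulting subgroup of $\stab_G(U)$ to a finite-index subgroup of $G$.

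Since the hub colour is always placed on a non-elementary vertex, every distance-$2$ neighbour of a parabolic vertex is a hub, which gives \eqref{prop:newincompressible:1}. The hub-and-spoke homogeneity is what makes \eqref{prop:newincompressible:2} go through. Your removal of ball vertices by handle cancellation and your bounded-displacement matching are compatible with this construction and can be kept once the hub structure is in place.
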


We first derive Theorem~\ref{thm:qcext} in the disconnected case from Proposition~\ref{prop:newincompressiblecoredecomp} and then prove the latter.

\subsubsection*{Proof of Theorem~\ref{thm:qcext} in the disconnected case assuming Proposition~\ref{prop:newincompressiblecoredecomp}}
Being already dealt with, we assume that $\La(G)$ is not a Cantor set and as  previously said, we have nontrivial incompressible decompositions with at least one
\nonel\ Kleinian subgroup. Let $\Phi_0: (\widehat\C, \Lambda(G),\mathfrak{P}(G)) \longrightarrow (\widehat\C, \Lambda(G'),\mathfrak{P}(G'))$ be a homeomorphism 
and let $\cT, \cT'$ be the trees given by Proposition~\ref{prop:newincompressiblecoredecomp}.

Let us first prepare the setting. Given a non-elementary vertex $v$, we pick a $G_v$-invariant set $X_v$ consisting of finitely many orbits that intersect all the 
limit sets $\La_w$ that are at distance $2$ from $v$ in $\cT$. Proceed similarly for $G'$. By the connected case of Theorem~\ref{thm:qcext}, and since we have finitely many 
$G$ and $G'$-orbits of {\nonel} vertices by Ahlfors finiteness theorem, applying Proposition~\ref{prop:colorpreservingcompressiondisk} to each such pair $(v,X_v)$, $(v',X_{v'}')$ 
for $G$ and $G'$ provides us with a
uniform hyperbolic radius $\ep>0$ and a uniform quasiconformal constant $K_0$.
We proceed similarly for the parabolic points with Lemma \ref{lem:parabolicmatch} instead.

We consider pairwise disjoint annuli in $\Omega(H)/H = \Omega(G)/H$ and $\Omega(G')/H'$ that enclose the boundaries of the compression disks and lift them to $\Omega(G)$ so as to obtain pairwise disjoint annuli $A_C = A_v \subseteq \Omega(G)$, with core $C$ for each $v \in V_D$.
By Teichm\"uller's Round annulus theorem and since $\La(G)$ is uniformly perfect, there exists $L>0$ such that the hyperbolic diameter in $\Omega(G)$ of the core of any annulus of modulus at least $L/4$ in $\Omega(G)$ will be
smaller than $\ep/2$. Since inclusions are contractions for the Poincar\'e metrics, this also holds for the Poincar\'e metric of $\Omega(H_w) = \Omega(G_w)$ for any non-elementary vertex $w$. 
We claim that we may deform quasiconformally the complex structure of the annuli $\{A_v:\,\ v\in V_D\}$ to ensure their moduli are at least $L$ without modifying the M\"obius conjugacy classes of the vertex groups. To see this, note that $G$ can be constructed inductively from the vertex subgroups by using Maskit's combinations theorems \cite{AM77}. We may then apply this construction to hyperbolic disks of radius $\ep/2$  in $\Omega(G_v)$ centered at points from $X_v$ for the non-elementary vertices and to the domains given
by Lemma \ref{lem:parabolicmatch} for the parabolic vertices.

Let $\widetilde{A}_C = \widetilde{A}_v \subseteq A_C$ be an annulus that contains $C$ as a core curve so that $A_C \setminus \widetilde{A}_C$ consists of two annuli, each with modulus at least $ L/4$.
Let $\widetilde{C}_{\pm}$ be the two boundary components of $\widetilde{A}_C$. 

Note that non-{\disk} vertices of $\cT$ are in one-to-one correspondence with nontrivial connected components of $\widehat\C \setminus \bigcup\widetilde{A}_C$. 
Let $v$ be a non-{\disk} vertex. Denote the corresponding component by $\cK_v$. Note that $\La_v \subseteq \cK_v$.
We define
\begin{equation}\label{eq:protectionC}
    \widetilde{\mathcal{D}}_v:= \{\widetilde{C}: \widetilde{C} \subseteq \partial \cK_v\}, \quad \widetilde{\cD} = \bigcup_{v\in V_{nd}} \widetilde{\cD}_v.
\end{equation}
For each non-elementary vertex $v$, the loops $\tilde\cD_v$ are contained in the $\ep$-neighborhoods of $X_v$ in the Poincar\'e metric of $\Omega(G_v)$. 
Similar objects for $G'$ are constructed analogously with  compatible colorings.

Let $v_0$ be a {\nonel} vertex and let $v_0'$ be the corresponding {\nonel} vertex under $\Phi_0$.
Let us consider an exhaustion of $\cT$ by finite trees 
$$\cT_0= \{v_0\}, \; \cT_n \subseteq \cT_{n+1}, \; \bigcup_{n=1}^\infty \cT_n = \cT, \; \Lambda_n := \bigcup_{w\in \cT_n} \Lambda_w
$$ such that each $\cT_n$  contains exactly $n+1$ non-{\disk} vertices that span $\mathcal{T}_n$.
Let $\cK_n := \bigcup_{v \in V_{nd}\cap \cT_n} \cK_v \cup \bigcup_{w \in V_D \cap \cT_n} \widetilde{A}_w$.

In the following, we inductively construct finite trees $\cT'_n \subseteq \cT'$ and a sequence of uniformly quasiconformal maps 
$$
h_n: (\widehat\C, \cK_n,  \Lambda_n, \mathfrak{P}_n) \longrightarrow (\widehat\C, \cK_n', \Lambda_n', \mathfrak{P}_n') \text{ such that}
$$
\begin{itemize}
    \item $h_n|_{\cK_n} = h_m|_{\cK_n}$ for all $m \geq n$;
    \item For each non-{\disk} vertex $v \in \mathcal{T}_n$, $h_n(\widetilde{\mathcal{D}}_v) = \widetilde{\mathcal{D}}_{v'}$, where $v' \in \mathcal{T}'_n$ so that $h_n(\Lambda_v) = \Lambda_{v'}$;
    \item $h_n|_{\cK_n}$ extends to a homeomorphism 
    $$
    \Phi_{n+1}:(\widehat\C, \Lambda(G), \mathfrak{P}(G),\widetilde\cD) \longrightarrow (\widehat\C, \Lambda(G'), \mathfrak{P}(G'),\widetilde\cD');
    $$
\end{itemize}
The theorem follows by taking the limit.

\medskip 

\noindent
\textbf{Base case: $n = 0$.}
Let $\widetilde C \in \widetilde{\cD}_{v_0}$ and $D_{\widetilde C, v_0}$ be the topological disk bounded by $\widetilde C$ that is disjoint from $\Lambda_{v_0}$. Then $\{D_{\widetilde C, v_0}: \widetilde C \in \widetilde\cD_{v_0}\}$ and $\{D_{\widetilde C', v_0}: \widetilde C' \in \widetilde\cD_{v_0'}\}$ are invariant under $H_{v_0}$ and $H_{v_0'}$ respectively and are contained in hyperbolic disks of radius $\ep$ centered at $X_{v_0}$ and $X_{v_0'}'$.

Since $v_0$ is {\nonel},  by Proposition~\ref{prop:connected}, there exists a color-preserving quasiconformal map $\widetilde h_0: (\widehat\C, \Lambda_{v_0}, \mathfrak{P}_{v_0}) \rightarrow (\widehat\C, \Lambda_{v_0'}, \mathfrak{P}_{v_0'})$. 
We apply Proposition~\ref{prop:colorpreservingcompressiondisk} to $\{D_{\widetilde C, v_0}: \widetilde C \in \widetilde\cD_{v_0}\}$ and $\{D_{\widetilde C', v_0}: \widetilde C' \in \widetilde\cD_{v_0'}\}$, and obtain a $K_0$-quasiconformal map $h_0: (\widehat\C, \cK_0, \Lambda_0, \mathfrak{P}_0) \rightarrow (\widehat\C, \cK_0', \Lambda_0', \mathfrak{P}_0')$ extending $\widetilde h_0|_{\Lambda_0}$.
Since $h_0|_{\Lambda_{v_0}}$ is color-preserving on the star of $v_0$, by Proposition~\ref{prop:newincompressiblecoredecomp}, it extends to a homeomorphism $\Phi_1:(\widehat\C, \Lambda(G), \mathfrak{P}(G),\widetilde\cD) \rightarrow (\widehat\C, \Lambda(G'), \mathfrak{P}(G'),\widetilde\cD')$.

By our preparation, the quasiconformal constant is independent of the {\nonel} vertices.

\medskip
\noindent 
\textbf{Inductive step.} Suppose a $K$-quasiconformal map $h_n: (\widehat\C, \cK_n, \Lambda_n, \mathfrak{P}_n) \rightarrow (\widehat\C, \cK_n', \Lambda_n', \mathfrak{P}_n')$ is constructed.
Let $v$ be the unique non-{\disk} vertex in $\cT_{n+1}\setminus\cT_n$ and let $v' \in \cT'$ be the corresponding vertex by the homeomorphism $\Phi_{n+1}$.
Note that there exists a unique Jordan curve, denoted by $C_0 \in \mathcal{D}$, that separates $\Lambda_v$ and $\Lambda_n$. Similarly, let $C_0'\in \mathcal{D}$ be the Jordan curve that separates $\Lambda_{v'}$ and $\Lambda_n'$.
Let $\widetilde{C}_{0,v} \in \widetilde{\cD}_v$ and $\widetilde{C}_{0,v'} \in \widetilde{\cD}_{v'}$ be the corresponding boundary components of the annuli $\widetilde{A}_{C_0}$ and $\widetilde{A}_{C_0'}$.

We claim that there exists a color-preserving $K_0$-quasiconformal extension $h_v:(\widehat\C, \Lambda_v, \mathfrak{P}_v, \widetilde{\cD}_v) \rightarrow (\widehat\C, \Lambda_{v'}, \mathfrak{P}_{v'}, \widetilde{\cD}_{v'})$ so that $h_v(\widetilde{C}_{0,v}) = \widetilde{C}_{0,v'}$.
If $v$ is a \nonel\ vertex, the proof is identical to the base case, except that we may need to post-compose with an element of $G_{v'}$ to arrange $h_v(\widetilde{C}_{0,v}) = \widetilde{C}_{0,v'}$. If $v$ is {\para}, then, by \eqref{prop:newincompressible:1} of Proposition~\ref{prop:newincompressiblecoredecomp}, each $\widetilde{C} \in \widetilde{\cD}_{v}$ has the same color. Thus, the color-preserving quasiconformal map can be constructed directly from Lemma \ref{lem:parabolicmatch}. As in the base case, our preparation enables us to bound the quasiconformal constant independently of $n$.

We now match $h_n$ and $h_v$ along the annulus $\widetilde{A}_{C_0}$. Let $D^\pm$ be the two disk components of $\widehat\C \setminus \widetilde{A}_{C_0}$ so that $D^+$ contains $\Lambda_n$ and $D^-$ contains $\Lambda_v$.
We can glue $h_n$ and $h_v$ along the annulus $\widetilde{A}_{C_0}$ and obtain a $K$-quasiconformal map $h_{n+1}: (\widehat\C, \cK_{n+1}, \Lambda_{n+1}, \mathfrak{P}_{n+1}) \rightarrow (\widehat\C, \cK_{n+1}', \Lambda_{n+1}', \mathfrak{P}_{n+1}')$ with
$$
h_{n+1}|_{D^+}(z) = h_n|_{D^+}(z), \text{ and } h_{n+1}|_{D^-}(z) = h_v|_{D^-}(z).
$$
Since $h_{n+1}$ is color-preserving, it follows from Proposition~\ref{prop:newincompressiblecoredecomp} that it extends to a homeomorphism $\Phi_{n+2}:(\widehat\C, \Lambda(G), \mathfrak{P}(G),\widetilde\cD) \longrightarrow (\widehat\C, \Lambda(G'), \mathfrak{P}(G'),\widetilde\cD')$.
Since there are only finitely many $H$-orbits of non-{\disk} vertices, $h_v$ is uniformly quasiconformal. Since the modulus of the annulus $\widetilde{A}_{C_0}$ is bounded from below, we may choose $K$ that depends only on the modulus $L$ and the group $G$ so that the map $h_{n+1}$ is again $K$-quasiconformal.
The construction and thus the theorem now follows by induction. \qed

\subsubsection*{Construction of adapted incompressible decompositions}

For the next lemma, recall that a function group is a Kleinian group with a totally invariant component of $\Omega(G)$. This notion applies typically to the stabilizer of an ordinary component.

\begin{lem}\label{lem:incompressiblefunct}
Let $G$ be a finitely generated geometrically finite function group with a non-simply connected invariant domain $U$ and non-empty $V_{nd}$ set and let $\tilde{\kappa} : V_{nd}\to \{1,\ldots, l\}$ be a $G$-invariant coloring, $l\ge 2$. 
There exists a finite index subgroup $H$ and an incompressible decomposition of $H$  for which the vertices of the corresponding tree $\cT$ consist 
of only {\disk}, {\nonel} and {\para} types, and that satisfy the following properties.
\begin{enumerate}
    \item\label{decomposition:eqn:1} The non-{\disk} vertices are colored by $\kappa: V_{nd}(\cT) \rightarrow \{0, 1,..., l\}$ so that $\kappa(v) = \tilde\kappa(v)$ if $\tilde{\kappa}(v) \geq 2$, and $\kappa(v) \in \{0,1\}$ if $\tilde{\kappa}(v) = 1$.    
\item\label{decomposition:eqn:2a} Each {\disk} vertex $w$ is adjacent to a vertex $v$ with $\kappa(v)=0$.
\item\label{decomposition:eqn:2b} For each non-{\disk} vertex $v$ with $\kappa(v) = 0$ and for each $i \in \{0,1,..., l\}$, there is a vertex $v'$ at distance $2$ from $v$ such that $\kappa(v') = i$.
\end{enumerate}
\end{lem}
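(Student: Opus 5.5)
We would build the decomposition by hand from a Schottky-type description of the invariant component rather than from an arbitrary core decomposition. Since $U$ is an invariant component of a geometrically finite function group, Maskit's planarity theorem lets us realise $G$, up to finite index, as a free product (via the Klein--Maskit combination theorems) of the stabilisers $G_v$, $v\in V_{nd}$, with a free group. Equivalently, $U/G$ is a finite-type surface containing finitely many boundary-parallel ``holes'' that correspond to the orbits in $V_{nd}$, and compression disks are cut out by simple loops in $U/G$ that lift to Jordan curves. By Selberg's lemma we first pass to a torsion-free finite-index subgroup. Passing further to a deeper finite-index subgroup $H$ (which we may take normal and obtain through LERF, as in Lemma~\ref{lem:nonQScarpet}) multiplies the number of $H$-orbits of each colour class. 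This gives room to prescribe combinatorics.

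The plan then proceeds in three steps. First, we choose $H$ so that each colour class of $V_{nd}$ splits into at least two $H$-orbits, and, for colour $1$, into enough orbits that we can relabel some of them with colour $0$. This recolouring is what makes property~\eqref{decomposition:eqn:1} consistent. Second, in the planar surface $U/H$, we pick one hole of colour $0$ as a ``hub'' $v_0$. We then choose a system of disjoint simple arcs, or equivalently separating loops, so that every other hole, of every colour, is cut off by a loop that bounds a region touching the hub. The compression disks are taken as the disks bounded by lifts of these loops. After cutting, every piece contains exactly one element of $V_{nd}$, which rules out ball vertices. Every disk curve is adjacent to a hub-type piece, which gives \eqref{decomposition:eqn:2a}. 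Each hub is at distance $2$ from pieces of every colour in $\{0,\dots,l\}$, which gives \eqref{decomposition:eqn:2b}. Minimality of the collection and the genus/free-factor part of $H$ are handled by attaching the free generators as handles onto the hub pieces, so no genus is left in a would-be ball.

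The main obstacle is ensuring that the hub property holds $H$-equivariantly for all lifts, and not just in a fundamental domain. Concretely, the difficulty is arranging that every colour-$0$ vertex, and not merely the chosen $v_0$, sees all colours at distance $2$, while every disk vertex still has a colour-$0$ neighbour. I would try to make all colour-$0$ vertices hubs by declaring colour $0$ to consist exactly of the $H$-orbits chosen as hubs. For each hub orbit we then use enough additional orbits (from the finite-index enlargement) to attach one representative of each colour. If that turns out to be combinatorially inconsistent, the fallback is to allow colour-$1$ vertices that are not hubs, and to attach them only to hubs. This still satisfies \eqref{decomposition:eqn:1}, since those vertices keep $\kappa=1$.
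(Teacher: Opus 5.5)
Your architecture matches the paper's. You recolour one $\tilde\kappa$-colour-$1$ piece as a colour-$0$ hub, let every other piece meet the rest only through a single compression curve facing the hub, push the remaining topology into the hub, and pass to finite index to get at least two orbits of each colour.

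\textbf{Main gap: colour $0$ at distance $2$.} The genuine gap is the case $i=0$ of \eqref{decomposition:eqn:2b}, which you assert but never prove. With one hub orbit, two lifts of the hub are at distance $2$ in $\cT$ only if the hub piece is glued to itself along a compression disk, i.e.\ only if the quotient graph has a cycle. In a star decomposition every cycle is such a hub--disk--hub loop. The number of these loops is the first Betti number of the quotient graph, and it is determined by $H$ (Grushko, since the non-disk vertex groups are the canonical $G_v$). This number can vanish, for example for $H=G=G_{v_1}*G_{v_2}*G_{v_3}$. It also vanishes for a cyclic cover in which only $G_{v_2}$ maps nontrivially, even though such a cover multiplies orbits. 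In that case no colour-$0$ vertex lies at distance $2$ from the hub.

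The obstacle you single out, equivariance, is not one: everything is built on $U/H$ and lifted. Your fallback does not address colour $0$ at all.

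The paper handles this case explicitly. If $\widehat X_1$ is not glued to itself, it passes to the double cover dual to a nonseparating curve $\alpha\subset\widehat X_2$. This cover is induced by a finite-index subgroup because the compression curves avoid $\alpha$. The hub then lifts to $\widetilde X_1^\pm$, while $\widehat X_2$ lifts to a connected $\widetilde X_2$ meeting them along $C^\pm$. The paper then transfers a neighbourhood $N$ of an arc from $C^+$ to $C^-$ into $(\widetilde X_1^+)^*$, which makes $(\widetilde X_1^-)^*$ adjacent to $(\widetilde X_1^+)^*$. Finally it uses $|\kappa^{-1}(i)|\ge 2$ to recover every other colour at both hub lifts.

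Your multi-hub idea could also work, as could choosing $H$ so that its quotient graph has a cycle. In either case, realising hub--hub adjacency while every hub still sees every colour is exactly the argument that is missing.

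\textbf{Second gap: the cutting curves.} The geometric picture behind your choice of loops is wrong. $U/H$ is not planar. The orbits in $V_{nd}$ are not boundary-parallel holes but the subsurfaces $X_i$, the components of $S\cap\Sigma$, which typically have genus and several boundary curves $C_{i,j}$. A curve cutting off $X_i$ is a legitimate compression curve only if it bounds a disk in $M$, and you never check this.

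The paper builds this curve as the inner boundary of a regular neighbourhood of $\Gamma_i=\bigcup_j C_{i,j}\cup\bigcup_m\gamma_{i,m}$, where the $\gamma_{i,m}$ form a tree of arcs in $X_i$ joining the $C_{i,j}$; the result is a band sum of compression disks. The leftover components $A\subseteq N(\Gamma)$ bound handlebodies. These must be cut into balls $B_A$ by further compression disks and then glued to the hub along a disk $D_A$, using that each $\partial B_A$ meets $X_1$. This construction is what guarantees that every non-hub piece is adjacent only to the hub and that no ball vertices appear; your sketch does not supply it.
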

\begin{proof}
Since $V_{nd} \neq \emptyset$, the incompressible core $\Sigma$ of the Kleinian manifold $M$ of $G$ is nontrivial. The $1$-handles $\D\times [0,1]$ attached to $\Sigma$ needed to reconstruct $M$ give compression disks $\D\times\{1/2\}$. Cutting along these disks, and gluing the two components $\D \times [0,1/2]$ and $\D \times [1/2,1]$ to the incompressible core $\Sigma$, we may assume the incompressible core $\Sigma$ is the complement of the compression disks. It follows that all the vertices are either of disk type, of non-elementary type or of parabolic type. 

We decompose  $S:= U/G$ into  $S = \bigcup_{i=1}^k X_i$ where the $X_i$'s list the components of $S \cap \Sigma$.
The coloring of $V_{nd}$ for $G$ induces a coloring on the collection $\tilde\kappa:\{X_1,..., X_k\} \rightarrow \{1,..., l\}$.
After relabeling, we may assume that $\tilde\kappa(X_1) = 1$ and 
passing to a finite cover if necessary, we may further assume that $|\tilde\kappa^{-1}(i)| \geq 2$ for all $i \in \{1,..., l\}$.

For each $2 \leq i \leq k$, write $\partial X_i  = \bigcup_{j=1}^{s_i} C_{i,j}$, where $C_{i,j}$ is the boundary of some compression disk.
Choose a collection of arcs $\gamma_{i,m}$ so that $\Gamma_i:=\bigcup_{j}C_{i,j} \cup \bigcup_m \gamma_{i,m}$ is connected and projects to a finite tree after each $C_{i,j}$ is collapsed to a point. Set $\Gamma := \bigcup_{2 \leq i \leq k} \Gamma_i$, and let $N(\Gamma)$ be a small neighborhood of $\Gamma$ in $S$. 

We claim that $\partial N(\Gamma)$ consists of finitely many boundaries of compression disks. Any component  $C$ of $\partial N(\Gamma)$ is contained in some $X_i$, $2\le i\le k$. We first observe that $X_i\setminus \Gamma_i$ is connected so that $C$ retracts on $\Gamma_i$. If $C$ was null-homotopic in $X_i$, then $X_i$ would be a genus $0$ surface since $\Gamma_i$ is a tree with nodes, contrary to our assumption that $\cT$ contains no \ball\ type vertices. Moreover, $C$ bounds a disk in $M$ by concatenating the compression disks bounded by $C_{ij}$ along the boundaries of neighborhoods of the arcs $\gamma_{i,m}$.

Each connected component $A\subseteq N(\Gamma)$, together with the corresponding compression disks supported in $\partial N(\Gamma)$, bounds a handlebody in the 3-manifold $M$. By introducing additional compression disks to cut this handlebody, we may assume each component $A$ bounds a ball $B_A$ in $M$. Note that $\partial B_A$
intersects $X_1$ by construction. 
For each component $A\subseteq N(\Gamma)$, we choose a compression disk $D_A \subseteq \partial B_A$ whose boundary is contained in $X_1 \cap \partial A$, and add the ball $B_A$ to the component of $\Sigma$ that contains $X_1$ on its boundary. Its intersection $\widehat X_1$ with $S$ is obtained by gluing every component $A$ to $X_1$ along $\partial D_A$.
For $i \geq 2$, let $\widehat X_i = X_i \setminus N(\Gamma)$.
Then $\widehat X_i \cap \widehat X_j = \emptyset$ for all $i>j\geq 2$, and $\widehat X_i \cap \widehat X_1$ is the boundary of a compression disk for all $i \geq 2$.

We define a new coloring by setting $\kappa(\widehat X_i) = \tilde\kappa (X_i)$ if $i \geq 2$ and $\kappa(\widehat X_1) = 0$.
By construction, the pieces of color $j\geq 1$ are only adjacent to the color $0$ piece and the color $0$ piece is adjacent to pieces of any color $j\geq 1$. Thus, the conclusions
(\ref{decomposition:eqn:1}) and (\ref{decomposition:eqn:2a}) hold. If $X_1$ is glued to itself, then (\ref{decomposition:eqn:2b}) holds as well.
Otherwise, by choosing a non-separating essential simple closed curve $\alpha$ in $\widehat X_2$ and taking the double cover dual to $\alpha$ (obtained by cutting $S$ along $\alpha$, taking two copies of the cut surface, and regluing them crosswise), we obtain a further double cover $\pi: \widetilde{S} \to S$ so that $\pi^{-1}(\widehat X_i) = \widetilde X_i^\pm$ has two connected components for $i \neq 2$, and $\pi^{-1}(\widehat X_2) = \widetilde X_2$ is connected. Since $G$ is a function group with invariant domain $U$, this covering 
provides us with a finite index subgroup of $G$. The coloring $\kappa$ lifts to a coloring on $\{\widetilde X_2, \widetilde X_i^\pm, i \neq 2\}$ that still
satisfies (\ref{decomposition:eqn:1}) and (\ref{decomposition:eqn:2a}). 
Let $C^\pm = \widetilde X_2 \cap \widetilde X^\pm_1$. We choose an arc $\gamma$ connecting $C^+$ to $C^-$ in $\widehat X_2$ and let $N$ be a small neighborhood of $\gamma \cup C^+ \cup C^-$.
We define $\widetilde X_2^* = \widetilde X_2 \setminus N$, $(\widetilde X_1^+)^* = \widetilde X_1^+ \cup \overline{N}$ and $(\widetilde X_1^-)^* = \widetilde X_1^- \setminus N$ so that $\tilde{X}_2^*$ and $(\tilde{X}_1^-)^*$ are now
adjacent to $(\tilde{X}_1^+)^*$ along boundaries of compression disks supported in $\partial N$. Since $|\kappa^{-1}(i)| \geq 2$ holds for all $i$, the corresponding decomposition on this finite cover also satisfies the required property (\ref{decomposition:eqn:2b}).
\end{proof}

We may now address the proof of Proposition~\ref{prop:newincompressiblecoredecomp}.

\begin{proof}[Proof of Proposition~\ref{prop:newincompressiblecoredecomp}]
Let $U$ be a non-simply connected component of $\Omega(G)$. Then $\stab_G(U)$ is a function group with invariant domain the multiply connected set $U$ and with non-empty set $V_{nd}(\stab_G(U))$.
If the induced coloring $\widetilde\kappa_U$ on $V_{nd}(\stab_G(U))$ consists of only one color, then we let $\widetilde{H}_U = \stab_G(U)$ and let $\cD_U$ be the lift of the boundaries of the compression disks for any incompressible decomposition of $\stab_G(U)$.
Otherwise,  there exists a {\nonel} vertex in $V_{nd}(\stab_G(U))$, to which we assign the color $1$ and we apply Lemma~\ref{lem:incompressiblefunct}.
Let $\widetilde{H}_U$ be the finite index subgroup of $\stab_G(U)$ and let $\cD_U$ be the lift of the boundaries of the compression disks for the decomposition in Lemma~\ref{lem:incompressiblefunct}.
Since $\widetilde{H}_U$ is finitely generated and $G$ is LERF \cite[Theorem 9.2]{Ago13}, there exists a finite index subgroup $H_U$ of $G$ such that $H_U \cap \stab_G(U) = \widetilde{H}_U$.
For each $U'$ in the $G$-orbit of $U$, we choose $g_{U'} \in G$ so that $g_{U'}U = U'$ and such that $g_{U_2}^{-1}\circ g_{U_1} \in H_U$ whenever $U_1, U_2 \in G U$ with $U_1 \in H_U U_2$.
Let $S$ be the conformal boundary component for the quotient of $U$ and let
$$
\cD_S = \bigcup_{U' = g_{U'}U}g_{U'}\cD_U, \quad H_S := \bigcap_{U' = g_{U'} U} g_{U'} H_{U} g_{U'}^{-1}.
$$
Then (a) $H_S$ contains the normal core $\bigcap_g gH_Ug^{-1}$, so it has finite index in $G$, and (b) $\cD_S$ is invariant under $H_S$.

For each component of the conformal boundary, we do the same construction, with the understanding that if $S, S'$ have the same color induced by the coloring on $\Omega(G)$, then the special colors $1_S$ and $1_{S'}$ to apply Lemma~\ref{lem:incompressiblefunct} are selected for the vertices with homeomorphic vertex limit sets in both cases.
Set $\cD = \bigcup \cD_S$ and let $H = \bigcap H_S$ be the finite index subgroup of $G$. Then $\cD$ is invariant under $H$, and corresponds to an incompressible decomposition of $H$. By construction, $\cD$ satisfies the three properties in Lemma~\ref{lem:incompressiblefunct} in each component of $\Omega(G)$. 
We consider the same construction of $\cD'$ and $H'$ for $G'$, with the same understanding for the choice of the special colors.

Since the special color $1_S$ for each $S$ is always selected for {\nonel} vertices, property \eqref{prop:newincompressible:1} in the proposition is satisfied.
Let $v\in V_{nd}, v'\in V_{nd}'$ be two vertices of the same color.
By the three properties \eqref{decomposition:eqn:1}, \eqref{decomposition:eqn:2a} and \eqref{decomposition:eqn:2b} in Lemma~\ref{lem:incompressiblefunct},
any $\Omega(G_v)$-color-preserving homeomorphism 
$h:(\widehat\C, \Lambda_v, \mathfrak{P}_v) \rightarrow (\widehat\C, \Lambda_{v'}, \mathfrak{P}_{v'})$
admits an isotopy relative to $\Lambda(G_v)$ to a ${\mathcal{D}}_v$-color-preserving homeomorphism 
$$
h:(\widehat\C, \Lambda_{v},\mathfrak{P}_v, {\mathcal{D}}_{v})\rightarrow (\widehat\C, \Lambda_{v'}, \mathfrak{P}_{v'}, {\mathcal{D}}_{v'}),
$$
which in turn will produce a homeomorphism
$$
H: (\widehat\C, \Lambda(G), \mathfrak{P}(G), \cD) \rightarrow (\widehat\C, \Lambda(G'), \mathfrak{P}(G'),\cD').
$$
Thus property \eqref{prop:newincompressible:2} in the proposition is satisfied, and the proposition follows.
\end{proof}

\end{document}